\title{}\date{}\author{}
\DeclareMathOperator{\D}{\mathbb D}
\DeclareMathOperator{\Grad}{grad}
\DeclareMathOperator{\Div}{div}
\DeclareMathOperator{\divoperator}{div}
\DeclareMathOperator{\diffoperator}{d}
\DeclareMathOperator{\dom}{Dom}
\DeclareMathOperator{\Dop }{D}
\DeclareMathOperator{\Real}{Re}
\DeclareMathOperator{\Imag}{Im}
\DeclareMathOperator{\Sgn}{sgn}
\DeclareMathOperator{\Loperator}{L}
\DeclareMathOperator{\range}{Range}
\newcommand{\diff}{\diffoperator \!}
\newcommand{\itodiff}{\raisebox{0.3ex}{.} \diff}
\newcommand{\stratdiff}{\circ \diff}
\newcommand{\norm}[1]{\left\lVert #1 \right\rVert}
\newcommand{\normb}[2][\null]{\left\lVert #2 \right\rVert_{#1}}
\newcommand{\abs}[1]{\left\lvert #1 \right\rvert}
\newcommand{\proc}[1]{\mathcal{#1}}
\newcommand{\thmref}[1]{Theorem~\ref{#1}}
\newcommand{\lemref}[1]{Lemma~\ref{#1}}
\newcommand{\R}{{\mathbb R}}
\newcommand{\N}{{\mathbb N}}
\newcommand{\E}{{\mathbb E}}
\newcommand{\C}{{\mathbb C}}
\newcommand{\sko}{{\mathbb D}}
\newcommand{\F}{\mathcal{F}}
\newcommand{\W}{\mathcal{W}}
\newcommand{\U}{\mathcal{U}}
\newcommand{\V}{{\mathcal V}}
\newcommand{\M}{{\mathcal M}}
\newcommand{\T}{{\mathcal T}}
\newcommand{\wienmeas}{d\gamma^{\N}}
\newcommand{\dinf}{\sko^\infty}
\newcommand{\dat}{\widetilde{\delta}_{\widecheck{A}}}
\newcommand{\pf}{{\it Proof}}
\newcommand{\drp}{\sko_r^p}
\newcommand{\dqs}{\sko_s^q}
\newcommand{\conx}[1]{\overset{(#1)}{\nabla}}
\newcommand{\up}{\uparrow}
\newtheorem{cor}{Corollary}[section]
\newtheorem{crit}{Criterion}[section]
\newtheorem{defn}{\sc Definition}[section]
\newtheorem{lem}{\sc Lemma}[section]
\newtheorem*{notation}{Notation}
\newtheorem{prop}{Proposition}[section]
\newtheorem{rem}{Remark}[section]
\newtheorem*{unnumbered remark}{Remark}
\newtheorem{thm}{Theorem}[section]
\newtheorem*{example}{Example}
\newtheorem{counterexample}{Counter Example}[section]
\renewcommand{\P}{{\mathbb P}}
\renewcommand{\L}[1]{\Loperator^{#1}}
\renewcommand{\div}{\divoperator}
\renewcommand{\Re}{\mathfrak{Re}}
\renewcommand{\mid}{\mathop{/}}
\newcommand*\circled[1]{%
  \tikz[baseline=(C.base)]\node[draw,circle,inner sep=0.5pt](C) {#1};
}
\DeclareFontFamily{U}{mathx}{\hyphenchar\font45}
\DeclareFontShape{U}{mathx}{m}{n}{
      <5> <6> <7> <8> <9> <10>
      <10.95> <12> <14.4> <17.28> <20.74> <24.88>
      mathx10
      }{}
\DeclareSymbolFont{mathx}{U}{mathx}{m}{n}
\DeclareMathAccent{\widecheck}{0}{mathx}{"71}
\DeclareMathAccent{\wideparen}{0}{mathx}{"75}
\newcommand*{\justifyheading}{\raggedright}
\renewcommand{\thesection}{\arabic{section}.}
\renewcommand{\thesubsection}{\arabic{section}. \arabic{subsection}}
\titleformat{\section}{\normalfont\LARGE\bfseries\justifyheading}{\thesection}{1em}{}
\titleformat{\subsection}{\normalfont\large\bfseries\justifyheading}{\thesubsection}{1em}{}
\begin{document}

Anatole KHELIF \\
Institut de Mathématique de Jussieu \\
khelif@math.univ-paris-diderot.fr \\

Alain TARICA \\
8 rue Chausse Coq \\
Genève 1204 \\
alaintarica@bluewin.ch \\

\renewcommand{\contentsname}{\Huge SOMMAIRE}
\tableofcontents
\titlecontents{section}[0em]{\addvspace{0.5em plus 0pt}\bfseries}{\thecontentslabel~~}{}{\hfill\contentspage}[\addvspace{0pt}]
\titlecontents{subsection}[3em]{}{\thecontentslabel~~}{}{\dotfill\contentspage}[\addvspace{0pt}]

\section*{Abstract}
\section*{\large <<Stochastic Manifolds>>}
Malliavin Calculus can be seen as a differential calculus on Wiener spaces.
We present the notion of stochastic manifold for which the Malliavin Calculus plays the same role as the classical differential calculus for the $\C^\infty$ differential manifolds.
The set of the paths in a Riemmanian compact manifold is then seen as a particular case of the above structure.

\section*{Abreviations Index}

\begin{enumerate}[label=-]
\item a.s. : almost surely
\item A.M. : antisymmetrical matrix
\item C.M. : Cameron Martin space
\item l.h.s, r.h.s : left-hand side, right-hand side
\item N.C.M. : new Cameron Martin space
\item N.S.C. : necessary and sufficient condition
\item OTHN : orthonormal
\item O.U. : Ornstein-Uhlenbeck
\item S.D.E. : stochastic differential equation
\item S.M. : semimartingale
\item S.T.P. : stochastic parallel transport
\end{enumerate}

\section*{Conventions Index}

\begin{enumerate}[label=-]
\item $\D^\infty$-derivation : a derivation on $\D^\infty$ that is
  $\D^\infty$-continuous.
\item Einstein summation, unless the contrary is specified.
\item $\Grad(f {\Grad g}) = {\Grad f} \otimes {\Grad g} + f {\Grad {\Grad g}}$.
\end{enumerate}

\section*{Notations Index}

\begin{enumerate}[label=-]
\item $B_{p,q}^\lambda(H)$ : Besov space built on an Hilbert $H$ with
  indexes $\lambda,p,q$.
\item $B_{p,q}^\lambda = B_{p,q}^\lambda(\mathbb R)$.
\item $\complement_E A$ or $\complement A$ : complementary of the set
  $A$.
\item $\mathcal C_n(\Omega)$ : chaos of order $n$ in $\mathrm{L}^2(\Omega)$.
\item $\delta_j^i$ : Kronecker symbol.
\item $(\ ,\ )$ : duality bracket between a space and its dual, or
  between a distribution and some test function.
\item $\mathcal F_{|A}$ : $\mathcal F$ being a $\sigma$-algebra,
  $\mathcal F_{|A}$ is the $\sigma$-algebra : $\{A \cap F \mathrel{/} F
  \in \mathcal F \}$.
\item $f_{|A}$ : $f$ being a map, $f_{|A}$ is the restriction of $f$
  to $A \subset \dom f$.
\item $\Grad$ : the Malliavin derivative unless otherwise specified.
\item $\overrightarrow{\Grad} f$ : the classic gradient of a $\mathcal
  C^\infty$-function $f$ on an $n$-dimensional manifold.
\item $\Gamma(V_n)$ : $\mathcal C^\infty$-vector fields on the
  $n$-dimensional manifold $V_n$.
\item $\int_a^b{f \itodiff B}$ : Ito integral.
\item $\int_a^b{f \stratdiff B}$ : Stratonovich integral.
\item $h \in H$ : $h$ is a vector, element of the Cameron Martin space
  $H$ defined by $t \mapsto \int_0^t{\dot h(s) \diff s}$.
\item $h(\omega)$ : $h(\omega)$ is the vector field defined by $t
  \mapsto \int_0^t \dot h(s,\omega) \diff s$.
\item $\mathcal L(H_1,H_2)$ : vector space of the bounded linear maps
  between the Hilbert spaces $H_1, H_2$.
\item $L^{p+0}(\Omega)$ : $\bigcup_{q>p} L^q(\Omega)$.
\item $L^{p-0}(\Omega)$ : $\bigcap_{q<p} L^q(\Omega)$.
\item $\mathbb N_\ast = \mathbb N \setminus \{0\}$.
\item $(\psi_i)_{i \in I} \xrightarrow{L^p, \D^\infty} \psi$ : the net
  $(\psi_i)_{i \in I}$ converges toward $\psi$ in $L^p(\Omega),
  \D^\infty(\Omega)$.
\item $\langle\ ,\ \rangle_H$ : scalar product on the Hilbert $H$.
\item $\llbracket \tau_1, \tau_2 \rrbracket$ : stochastic interval
  delimited by the stopping times $\tau_1,\tau_2$.
\item ${}^tV,{}^th$ : transposes of the vectors $V, h$.
\item $\mathcal W$ : Wiener space.
\item $W(h)$ : Gaussian variable, centered on $0$, with law :
  \begin{displaymath}
    \frac 1 {\sqrt{2\pi {\normb[H] h}}} \, {\mathrm e}^{-\frac {x^2} {2 \normb[H] h}} \diff x.
  \end{displaymath}
\end{enumerate}

\addcontentsline{toc}{section}{\large 0. \huge  Introduction}
\section*{\large 0. \huge  Introduction}

"To do a geometry you do not need a space, you only need an algeba of functions on this would-be space." 

A. Grothendieck

The Malliavin Calculus can be seen as a differential calculus on Wiener spaces. It is then possible to establish a new dimensionless differential geometry, for which the Malliavin calculus plays the same role as the one played by the classical differential calculus in the theory of $n$-dimensional manifolds.

Moreover, is it also possible to obtain a Variational Calculus on a random structure, which is built by constraints subjected to infinitesimal variation? This sort of problem is recurrent in Physics and Econometry.

Such a Variational Calculus imposes a reasonnable space of "measurable and regular" functions, with a compatibility between associated differentiation and integration processes, from which a generalized divergence operator.

As such a problem requires an infinite dimensional space, it becomes needed to have an infinite dimensional differential calculus with a related good notion of a divergence.

The Malliavin Calculus provides such a tool. More precisely: in $\R^n$, there is compatibility between differentiation and integration because the Lebesgue measure is translation invariant. Unfortunately, in the case of an infinite dimensional topological vector space $E$, such a non-trivial translation measure does not exist. But there can be quasi-invariant measures $\mu$, that is : there is a dense subspace $H$ of $E$, such that the image measure of $\mu$ by a translation with the vector $h \in H$, admits a density relatively to $\mu$.

A natural is $E$=Wiener space $\mathcal W$, with $\mu$=Gaussian measure, and $H$ being the Cameron-Martin space which then is an Hilbert space.

More precisely, given a basis manifold $V$ and a fiber space $F$ on $V$, it is possible to endow the space of the random sections of $F$, witch a reasonable measure so that there is a Variational calculus.

Two particular cases which are extreme case have already been studied : random Brownian fields (maps from $V$ in a Gaussian space), and the set of continuous paths in a Compact Riemannian manifold.

In the first case, there has been the Wiemann (Wiener + Riemann) manifold [9, 10, 11, 14].

But it brought several very strong limitations :

\begin{enumerate}
\item a Wiemann manifold is a triple $(W, \tau, g)$ Banach $C^j$-manifold, modelled on an abstract Wiener space $(H, B)$ with $j \geq 1$. And: $\forall x \in W$, $\tau_x$ is a norm on $T_xW$ and $g(x)$ is a densely defined inner product on $T_xW$

\item The chart change maps must be of the form $I_B + K$, $K$ having to fulfill several conditions [10, 14].

\end{enumerate}

Moreover the set of continuous paths on a compact Riemannian manifold $V_n$, starting from $m_0$ (denoted in this paper $\P_{m_0}(V_n, g)$ ) cannot be naturally described as a Wiemann structure, while we will prove that $\P_{m_0}(V_n, g)$ is a $\dinf$-stochastic manifold [4].

A slightly different definition of a Wiemann manifold, $\W$, is given by 

G. Peters [2], which does not impose that $\W$ be a Riemannian manifold, but instead, that $\W$ be a measure space, its $\sigma$-algebra being generated by a locally-finite countable family of subsets of $\W$, $(\U_\alpha)_{\alpha \in \N_\star}$, each $\U_\alpha$ being a $H-C^k$ set and the family $(\U_\alpha)_{\alpha \in \N_\star}$ must admit a subordinate $\dinf$-unity partition.

Moreover the chart change maps must admit similar conditions as in the previous definition above.

The other extremal case has been studied by P. Malliavin and A. B. Cruzeiro, [4], and it also brought major constraints:

\begin{enumerate}
\item $C_0 \left( [0,1], \R^2 \right)$ and $C_0 \left( [0,1], \R^3 \right)$ are not diffeomorphic, although, as Wiener spaces, they are isomorphic.

\item $\P(m_0, V_n)$ is seen by the authors as the domain of a single chart, with the It\={o} map. But the It\={o} map does not admit a natural linear tangent map. So the authors had to enlarge the tangent space with particular processes, called tangent spaces, which are semi-martingales. So the time filtration becomes invariant. This invariance has important consequences, among them the impossibility to include the Brownian fields in this framework. And if a manifold structure could be endowed on $P_{m_0}(V_n, g)$, this structure would strictly depend on the time filtration.
\end{enumerate}

We offer here a new mathematical structure named: the $\dinf$-stochastic manifold, which overcomes the Wiemann structure, and its limitations, for which $P_{m_0}(V_n, g)$ is a particular case, $C_0 \left( [0,1], \R^n \right)$ and $C_0 \left( [0,1], \R^m \right)$, $n \neq m$ being $\dinf$-diffeomorphic.

Moreover, with such a structure:

\begin{enumerate}
\item the notion of time (filtration) does not play any role anymore

\item in the case of $\P_{m_0}(V_n, g)$, dim $V_n$ is not anymore relevant.

\item given a metric on $V_n$, the various connections compatible with the metric, induce canonical associated It\={o} maps, and $\dinf$-diffeomorphisms on $\P_{m_0}(V_n, g)$.

\end{enumerate}

To build the general theory of $\dinf$-stochastic manifolds, a source will be the Grothendieck identification of an $n$-dimensional manifold with a sheaf of $C^\infty$-functions; here, $C^\infty$ will be replaced by $\dinf(\Omega)$, and a diffeomorphism will be a map between two Gaussian spaces that will keep the $\dinf$ property through right-composition and this diffeomorphism will have a canonical "cotangent" linear map.

A generalisation of the notion of metric will be established, which will live on the "cotangent spaces" (and not on the "tangent spaces"). And $\P_{m_0} (V_n, g)$ will be a particular case of this $\dinf$-structure.

Moreover, for the general $\dinf$-structure, it is possible to define the notions of curvature and torsion, but they can become infinite. But nevertheless, a variational calculus of the curvature, function of the metric, can be realized.

Among the notable differences between a $\dinf$-stochastic structure on a set, and a $C^\infty$-$n$-dimensional manifold, we have:

\begin{enumerate}
\item in a $C^\infty$-$n$-dimensional manifold, vector fields and derivations coincide; such is not the case for a $\dinf$-stochastic manifold.

\item on a Riemannian $n$-dimensional manifold, $C^\infty$ functions can be defined either through $C^\infty$-charts reading, or by iteration of the Laplacian; both definitions coincide.

\end{enumerate}

On a $\dinf$-stochastic manifold we can define a $\dinf$-function either through charts reading or, if there is a metric and probabilistic measure, by iteration of the Ornstein-Uhlenbeck operator: these two definitions, in general, do not coincide but for $\P_{m_0}(V_n, g)$ there is an inclusion.

More diffeomorphisms give more changes of variables thus more opportunities to compute integrals.
In all the following, we suppose that all Cameron-Martin spaces have countable Hilbertian bases, but this is just for simplification for the reader and is not a loss of generality.

\section{\huge $\mathbb{D}^\infty_r$-Stochastic Manifold, $r\in \mathbb{N}_*$}
Here we will study the $\mathbb{D}^\infty_r$-stochastic manifold structure. In this particular case, any map, change of charts, admits a tangent linear map between the respective Cameron-Martin spaces; such a tangent linear map does not exist anymore in the case of the $\mathbb{D}^\infty$ structure.\\

Moreover this $\ {D}^\infty_r$ type of structure is not satisfying because it does not include as a particular case the set of the continuous paths in a compact manifold $V_n$, starting from $m_0$ (denoted $\mathbb{P}(m_0,V_n)$).\\

Reminder: 1) all $\sigma$-fields are  complete,\\
2) a Gaussian probability space $[ 13 ]$ is given by the following elements: \\
i) $(\Omega, \mathcal{F}, \mathbb{P})$ a probability space,\\
ii) a closed subspace $H$ of $L^2 (\Omega, \mathcal{F}, \mathbb{P})$ such that all the random variables belonging to $H$ have a centered Gaussian law,\\
iii) the $\sigma$-field generated by these variables is $\mathcal{F}$.\\
3) $\mathbb{D}^\infty_r(\Omega)$ is a Frechet space, and its distance is denoted by $d$: \[d(\varphi,\psi)=d(\varphi-\psi,0)= \sum_{\substack{k\geqslant 1 \\ j\in \lbrace 0,\ldots,r \rbrace }} \frac{1}{2^{k}}\cdot 1\wedge \|\varphi-\psi\|_{\mathbb{D}^k_j(\Omega)}\].

\subsection{Definition and charts exchange maps}

\begin{defn} Let $\mathcal{S}$ be a set; a stochastic chart on $\mathcal{S}$ is given by a subset of $\mathcal{S}$ denoted $\mathcal{U}$, named: domain of the chart, a Gaussian space  $(\Omega, \mathcal{F}, \mathbb{P}, H)$ and a bijection $b$ from $\mathcal{U}$ onto $\Omega$.\\

This chart is denoted $(\mathcal{U}, b, \Omega, \mathcal{F}, H)$ or in short: $(\mathcal{U}, b, \Omega)$. 
\end{defn}

\begin{defn} Two stochastic maps $(\mathcal{U}_i, b_i, \Omega_i, \mathcal{F}_i,\mathbb{P}_i,H_i), i=1,2$, will be said to be  $\mathbb{D}^\infty_r$-compatibles if and only if:\\
i) $b_1 \circ b_2^{-1}=b_{21}$ and $b_2 \circ b_1^{-1}=b_{12}$ are measurable maps between \\
$(b_2 (\mathcal{U}_2\cap \mathcal{U}_1), \mathcal{F}_2 \vert_{b_2(\mathcal{U}_1\cap \mathcal{U}_2)})$ and $(b_1 (\mathcal{U}_1\cap \mathcal{U}_2), \mathcal{F}_1 \vert_{b_1(\mathcal{U}_1\cap \mathcal{U}_2)})$ \\
ii) $b_{12}$ and $b_{21}$ exchange the $\mathbb{P}_i$-null sets of  $\mathcal{F}_i \vert_{b_i(\mathcal{U}_1\cap \mathcal{U}_2)}$, $i=1,2$\\
iii) $\forall A \subset b_1(\mathcal{U}_1\cap \mathcal{U}_2), A \in \mathcal{F}_1, \mathbb{P}_1(A)>0, \exists A_1 \subset A, A_1 \in  \mathcal{F}_1 \vert_{b_1(\mathcal{U}_1\cap \mathcal{U}_2)}$, \\
$\mathbb{P}_1(A_1)>0$ such that $\forall \varphi \in \mathbb{D}^\infty_r(\Omega_2)$, $\varphi \circ b_{12} \vert_{A_1}$ admits an extension map, denoted $\widetilde{\varphi \circ b_{12} \vert_{A_1}}$ and $\widetilde{\varphi \circ b_{12} \vert_{A_1}} \in \mathbb{D}^\infty_r(\Omega_1)$ and conversely, the same extension property is valid for $B \subset b_2(\mathcal{U}_1\cap \mathcal{U}_2), B\in \mathcal{F}_2, \mathbb{P}_2(B)>0, \varphi\in \mathbb{D}^\infty_r(\Omega_1)$. $b_{12}$ and $b_{21}$ are called charts changes, or charts maps. 
\end{defn}

\begin{defn}
A $\mathbb{D}^\infty_r$-stochastic manifold is a set $\mathcal{S}$ and a family $\mathcal{A}$ of stochastic charts, which are $\mathbb{D}^\infty_r$-compatibles, and such that the union of the domains of the charts covers $\mathcal{S}$. It is denoted: $(\mathcal{S}, (\mathcal{U}_i, b_i, \Omega_i)_{i \in I})$. \\
$(\mathcal{U}_i, b_i, \Omega_i)_{i \in I}$ being the family of charts.\\

Such a family is called an $\mathbb{D}^\infty_r$-atlas of $\mathcal{S}$.
\end{defn}

\begin{defn}
Let $(\mathcal{S}, (\mathcal{U}_i, b_i, \Omega_i)_{i \in I})$ a $\mathbb{D}^\infty_r$-stochastic manifold, and $(A, \mathcal{F}, \mathbb{P})$ a probability space with $A \subset \mathcal{S}$. The atlas $(\mathcal{U}_i, b_i, \Omega_i)_{i \in I}$ will be said to cover $A$ if and only if: $\forall B \subset A, \exists i \in I$ and $B_1 \in \mathcal{F}$, such that $B_1 \subset \mathcal{U}_i \cap B$ and $\mathbb{P}(B_1)>0$.
\end{defn}

\begin{lem}
With the notations of definition 1.4, there exist a countable family of charts from the atlas $(\mathcal{U}_i, b_i, \Omega_i)_{i \in I}$, $(\mathcal{U}_j, b_j, \Omega_j)_{j \in \mathbb{N}_*}$ such that:\[A \subset \bigcup_{j \in \mathbb{N}_*} \mathcal{U}_j.\]
\end{lem}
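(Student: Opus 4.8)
The plan is to run a measure-theoretic exhaustion argument: build a \emph{maximal} pairwise-disjoint family of positive-measure measurable sets, each trapped inside a single chart domain, observe that such a family is automatically countable because $\mathbb{P}$ has finite total mass, and then invoke the covering hypothesis of Definition 1.4 to rule out any leftover. Concretely, I would first introduce the collection
\[
\mathcal{C} = \{\, C \in \mathcal{F} : \mathbb{P}(C) > 0 \text{ and } C \subset \mathcal{U}_i \text{ for some } i \in I \,\},
\]
and consider its pairwise-disjoint subfamilies, ordered by inclusion. The union of a chain of such subfamilies is again pairwise disjoint, since any two of its members already lie in a common element of the chain; hence Zorn's Lemma yields a maximal pairwise-disjoint subfamily $\mathcal{G} \subset \mathcal{C}$.

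The countability step is where the probability measure enters. Because the members of $\mathcal{G}$ are pairwise disjoint, $\sum_{C \in \mathcal{G}} \mathbb{P}(C) \leq \mathbb{P}(A) \leq 1$, and as every term is strictly positive, only countably many can occur; thus $\mathcal{G} = \{C_j\}_{j \in \mathbb{N}_*}$. For each $j$ I choose an index $i_j \in I$ with $C_j \subset \mathcal{U}_{i_j}$, which selects the candidate countable family of charts $(\mathcal{U}_{i_j}, b_{i_j}, \Omega_{i_j})_{j \in \mathbb{N}_*}$.

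It then remains to prove $A \subset \bigcup_j \mathcal{U}_{i_j}$, and this is exactly where the covering property is used. I set $B := A \setminus \bigcup_j \mathcal{U}_{i_j}$ and argue by contradiction: if $B \neq \emptyset$, Definition 1.4 applied to the subset $B \subset A$ produces $i \in I$ and $B_1 \in \mathcal{F}$ with $B_1 \subset \mathcal{U}_i \cap B$ and $\mathbb{P}(B_1) > 0$, so $B_1 \in \mathcal{C}$. Since $B_1 \subset B$ is disjoint from every $\mathcal{U}_{i_j}$, it is disjoint from every $C_j \subset \mathcal{U}_{i_j}$, whence $\mathcal{G} \cup \{B_1\}$ is a strictly larger pairwise-disjoint subfamily of $\mathcal{C}$, contradicting maximality. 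Therefore $B = \emptyset$, i.e. $A \subset \bigcup_j \mathcal{U}_{i_j}$, as required.

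The one delicate point — and the reason the statement asserts exact set-theoretic containment rather than mere containment up to a null set — is that the covering hypothesis of Definition 1.4 is quantified over \emph{all} subsets $B \subset A$, including the possibly non-measurable remainder $A \setminus \bigcup_j \mathcal{U}_{i_j}$; it is precisely this feature that lets the maximality argument close without any measurability assumption on the leftover. The finiteness of $\mathbb{P}$ (total mass $1$) is the sole ingredient forcing the maximal disjoint family, and hence the extracted chart family, to be countable, so I would expect the countability bookkeeping to be routine and the genuine content to reside entirely in the maximality-versus-covering contradiction.
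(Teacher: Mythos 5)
Your proof is correct, and it cannot really be compared with ``the paper's own proof'' because the paper states this lemma without any proof at all (the author only gestures at this kind of argument informally, e.g.\ the phrase ``by exhaustion'' in the proof of Theorem 1.1); your write-up supplies exactly the missing standard exhaustion argument. Two remarks. First, Zorn's Lemma is a heavier tool than necessary: since any pairwise-disjoint subfamily of $\mathcal{C}$ with positive-measure members is automatically countable, one can instead build the family greedily, choosing $C_{n+1}\in\mathcal{C}$ disjoint from $C_1,\dotsc,C_n$ with $\mathbb{P}(C_{n+1})\geq\frac12\sup\{\mathbb{P}(C): C\in\mathcal{C},\ C\cap C_k=\emptyset\ \forall k\leq n\}$, and the same contradiction via Definition 1.4 closes the argument; both routes are sound, and yours has the advantage of making the countability mechanism (disjointness plus finite total mass) completely explicit. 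Second, your closing observation is the genuinely delicate point and you have it right: Definition 1.4 quantifies over arbitrary subsets $B\subset A$, not merely over $B\in\mathcal{F}$ with $\mathbb{P}(B)>0$, and it is precisely this that lets you apply the covering hypothesis to the possibly non-measurable leftover $A\setminus\bigcup_j\mathcal{U}_{i_j}$ and obtain exact set-theoretic containment rather than containment up to a $\mathbb{P}$-null set --- under the weaker measurable reading the lemma as stated would fail in general. One pedantic caveat you implicitly handle but could state: read literally, ``$\forall B\subset A$'' includes $B=\emptyset$, for which no $B_1$ with $\mathbb{P}(B_1)>0$ can exist, so Definition 1.4 must be understood with $B$ nonempty, and that is the reading your contradiction step uses.
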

The notions of a compatible chart to an atlas, or of equivalent atlases, will be given in the case of the $\mathbb{D}^\infty$-stochastic manifold. \\

\subsection{Existence of a tangent linear maps}

\hspace{-8mm}{\bf Morphisms associated to chart changes}\\

Let $\mathcal{S}$ be a set, and $(\mathcal{U}_i, b_i, \Omega_i, \mathcal{F}_i,\mathbb{P}_i,H_i),i=1,2$ two charts on $\mathcal{S}$, $\mathbb{D}^\infty_r$-compatibles. We denote again by $\mathbb{P}_1$ and $\mathbb{P}_2$ the probability measures restricted to $\mathcal{F}_1 \vert_{b_1(\mathcal{U}_1\cap \mathcal{U}_2)}$ and $\mathcal{F}_2 \vert_{b_2(\mathcal{U}_1\cap \mathcal{U}_2)}$. Let $B\in \mathcal{F}_2 \vert_{b_2(\mathcal{U}_1\cap \mathcal{U}_2)}$ such that $\mathbb{P}(B_2>0)$, and denote $\hat{B}=\lbrace \beta \in \mathbb{D}^\infty_r (\Omega_2) / \beta\vert_B =0\rbrace$ and $\stackrel{B}{\sim}$ the equivalence relation: $\varphi_1, \varphi_2 \in \mathbb{D}_r^\infty (\Omega_2): (\varphi_1-\varphi_2)\vert_B=0$, denoted: $\varphi_1 {\stackrel{B}{\sim}} \varphi_2$. $[\varphi]_B$ will be the class of $\varphi$, according to ${\stackrel{B}{\sim}}$; then $\mathbb{D}^\infty_r(\Omega_2)/_{\stackrel{B}{\sim}}$ is a Frechet space , the distance $d_B$ being built with the semi-norms:
\[\| [\varphi]_B \|_{\mathbb{D}^p_r}= \inf_{\beta\in \hat{B}} \| \varphi+\beta \|_{\mathbb{D}^p_r(\Omega_2)}\]
This definition of $\| [\varphi]_B \|_{\mathbb{D}^p_r}$ is legitimate.\\

In a same way, denoting $A=b_{12}^{-1}(B), \mathbb{P}_1(A)>0$, one can define $\mathbb{D}^\infty_r(\Omega_1)/_{\stackrel{A}{\sim}}$, which is also a Frechet space.\\

Let $F_B$ the map: $\mathbb{D}^\infty_r(\Omega_2)/_{\stackrel{B}{\sim}} \rightarrow \mathbb{D}^\infty_r(\Omega_1)/_{\stackrel{A}{\sim}}$, define by: $[\varphi]_B \rightarrow [\widetilde{\varphi\circ b_{12}\vert_A}]_A$. This definition of $F_B$ is legitimate with regard to the equivalence $\stackrel{B}{\sim}$.\\

Conversely, we can define
\[ F_A: \mathbb{D}^\infty(\Omega_1)/_{\stackrel{A}{\sim}} \rightarrow \mathbb{D}^\infty(\Omega_2)/_{\stackrel{B}{\sim}}\]
by: $F_A([\varphi]_A) \rightarrow [\widetilde{\varphi_A\circ b_{21}\vert_B}]_B$.

\begin{lem}\hfill\\
i) $F_B$ and $F_A$ are continuous\\
ii) \[(b_{12})_* \mathbb{P}_1 \ll\mathbb{P}_2\quad on \quad(b_2 (\mathcal{U}_1\cap \mathcal{U}_2), \mathcal{F}_2 \vert_{b_2(\mathcal{U}_1\cap \mathcal{U}_2)})\] 
and \[(b_{21})_* \mathbb{P}_2 \ll\mathbb{P}_1 \quad on \quad (b_1 (\mathcal{U}_1\cap \mathcal{U}_2), \mathcal{F}_1 \vert_{b_1(\mathcal{U}_1\cap \mathcal{U}_2)})\]\\
iii) If $ \lambda$ and $\mu$ are the densities: $(b_{12})_* \mathbb{P}_1/\mathbb{P}_2$ and $(b_{21})_* \mathbb{P}_2/\mathbb{P}_1$, one as: \\
$\lambda\times (\mu\circ b_{21})>0, \mathbb{P}_2$-a.s and $\mu\times (\lambda\circ b_{12})>0, \mathbb{P}_1$-a.s.
\end{lem}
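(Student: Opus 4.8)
The plan is to prove the three assertions in the order (ii), (iii), (i), because the absolute-continuity statement (ii) is the tool used in the other two. Write $\Omega_1'=b_1(\mathcal{U}_1\cap\mathcal{U}_2)$ and $\Omega_2'=b_2(\mathcal{U}_1\cap\mathcal{U}_2)$, so that $b_{12}\colon\Omega_1'\to\Omega_2'$ and $b_{21}=b_{12}^{-1}\colon\Omega_2'\to\Omega_1'$ are mutually inverse measurable bijections carrying the restricted $\sigma$-fields onto one another. For (ii): by definition $(b_{12})_*\mathbb{P}_1(B)=\mathbb{P}_1(b_{21}(B))$ for $B\in\mathcal{F}_2|_{\Omega_2'}$, so if $\mathbb{P}_2(B)=0$ then condition ii) of $\mathbb{D}^\infty_r$-compatibility (exchange of null sets) forces $\mathbb{P}_1(b_{21}(B))=0$; hence $(b_{12})_*\mathbb{P}_1\ll\mathbb{P}_2$, and symmetrically $(b_{21})_*\mathbb{P}_2\ll\mathbb{P}_1$. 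Pushing the second relation forward by $b_{12}$ and using $b_{12}\circ b_{21}=\mathrm{id}$ also gives $\mathbb{P}_2\ll(b_{12})_*\mathbb{P}_1$, so the two measures are in fact equivalent on each side; this equivalence is exactly what will make the densities strictly positive.

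For (iii): by the equivalence just obtained, $\lambda>0$ $\mathbb{P}_2$-a.s.\ and $\mu>0$ $\mathbb{P}_1$-a.s., and one may use the chain rule for Radon--Nikodym derivatives along $\mathbb{P}_1\ll(b_{21})_*\mathbb{P}_2\ll\mathbb{P}_1$. A change of variables along the bijection $b_{21}$ (using $(b_{21})_*(b_{12})_*\mathbb{P}_1=\mathbb{P}_1$) identifies $d\mathbb{P}_1/d\big((b_{21})_*\mathbb{P}_2\big)=\lambda\circ b_{12}$, while by definition $d\big((b_{21})_*\mathbb{P}_2\big)/d\mathbb{P}_1=\mu$; multiplying the two gives the cocycle identity $(\lambda\circ b_{12})\cdot\mu=1$ $\mathbb{P}_1$-a.s. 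Composing with $b_{21}$, which sends $\mathbb{P}_1$-null sets to $\mathbb{P}_2$-null sets by (ii), yields $\lambda\cdot(\mu\circ b_{21})=1>0$ $\mathbb{P}_2$-a.s., and the symmetric computation gives $\mu\cdot(\lambda\circ b_{12})=1>0$ $\mathbb{P}_1$-a.s., which is (iii) in its strong form.

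For (i): both quotients are Fréchet and $F_B$ is linear (its value depends only on $\varphi\circ b_{12}|_A$, which is linear in $\varphi$), so by the closed graph theorem it is enough to show that the graph of $F_B$ is closed. Suppose $[\varphi_n]_B\to[\varphi]_B$ and $F_B([\varphi_n]_B)=[\widetilde{\varphi_n\circ b_{12}|_A}]_A\to[\psi]_A$. At the $\mathbb{D}^1_0=L^1$ level the quotient seminorm satisfies $\|[\chi]_B\|_{\mathbb{D}^1_0}\ge\|\chi\|_{L^1(B,\mathbb{P}_2)}$ (take the representative with $\beta=0$ on $B$), so $\varphi_n\to\varphi$ in $L^1(B)$, hence $\mathbb{P}_2$-a.s.\ on $B$ along a subsequence; likewise $\widetilde{\varphi_n\circ b_{12}|_A}\to\psi$ $\mathbb{P}_1$-a.s.\ on $A$ along a further subsequence. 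Since $b_{12}$ maps $A$ onto $B$ and transports null sets in both directions by (ii), the a.s.\ convergence of $\varphi_n$ to $\varphi$ on $B$ gives $\varphi_n\circ b_{12}\to\varphi\circ b_{12}$ $\mathbb{P}_1$-a.s.\ on $A$; as $\widetilde{\varphi_n\circ b_{12}|_A}$ agrees with $\varphi_n\circ b_{12}$ on $A$, the two limits coincide, $\psi|_A=\varphi\circ b_{12}|_A=\widetilde{\varphi\circ b_{12}|_A}|_A$, whence $[\psi]_A=F_B([\varphi]_B)$. The graph is therefore closed and $F_B$ continuous; the proof for $F_A$ is identical after exchanging the roles of the two charts.

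The main obstacle is (i). The map $F_B$ is manufactured from the purely qualitative extension property of Definition~1.2(iii), with no a priori quantitative bound on the extension operator, so a direct seminorm estimate is not available; the closed graph theorem is precisely what converts well-definedness together with the fact that $\mathbb{D}^\infty_r$-convergence forces almost-sure convergence along subsequences into genuine continuity. The one point requiring care is the passage to the pointwise limit on the overlap, and this is exactly what the null-set exchange established in (ii) legitimizes.
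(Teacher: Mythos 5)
Your proposal is correct and follows essentially the same route as the paper: part (ii) from the null-set exchange of Definition 1.2(ii), part (i) by a closed-graph argument in which the quotient seminorm dominates the $L^1$-norm on the restricted set and one extracts a.s.-convergent subsequences, and part (iii) from $b_{12}\circ b_{21}=\mathrm{Id}$. You merely make explicit what the paper leaves terse — the paper checks graph closedness only at $0$ and for (iii) just cites the identity $b_{12}\circ b_{21}=\mathrm{Id}$, whereas you carry out the Radon--Nikodym cocycle computation $\lambda\cdot(\mu\circ b_{21})=1$ in full, which is a strictly sharper form of the stated positivity.
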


\pf: i) Let $\varphi_n \in \mathbb{D}_r^\infty(\Omega_2)$ and $\psi \in \mathbb{D}_r^\infty(\Omega_1)$ such that $d_B([\varphi_n]_B,0)\rightarrow 0$ and $d_A([\widetilde{\varphi_n\circ b_{21}\vert_A}]_A,[\psi]_A)\rightarrow 0$. This implies: \[\inf_{\beta\in\hat{B}} \| \varphi_n+\beta \|_{L^1(\Omega_2)}\rightarrow 0\] 
and: 
\[\inf_{\alpha\in\hat{A}} \| \widetilde{\varphi_n\circ b_{21}\vert_A}+\alpha-\psi\|_{L^1(\Omega_1)}\rightarrow 0.\] 
Then, by sequences extractions, we get $\psi \vert_A =0, \mathbb{P}_1$-a.s.\\
ii) If $Z\in \mathcal{F}_2 \vert_{b_2(\mathcal{U}_1\cap \mathcal{U}_2)}$ such that: $\mathbb{P}_2(Z)=0 \Rightarrow b_{12}^{-1}(Z)\in \mathcal{F}_1\vert_{b_1(\mathcal{U}_1\cap \mathcal{U}_2)}$ and $\mathbb{P}_1(b_{12}^{-1}(Z))=0$; Definition 1.2, ii, implies $(b_{12})_* \mathbb{P}_1 \ll \mathbb{P}_2$.\\
iii) On $b_1(\mathcal{U}_1\cap \mathcal{U}_2)$ and $b_2(\mathcal{U}_1\cap \mathcal{U}_2)$: 
\[b_{12}\circ b_{21}=\textup{Id}_{b_2(\mathcal{U}_1\cap \mathcal{U}_2)} \textup{ and } b_{21}\circ b_{12}=\textup{Id}_{b_1(\mathcal{U}_1\cap \mathcal{U}_2)}\]

\hspace{-8mm}{\bf Tangent linear map to a chart change}\\

The notations are the same as in the beginning of this chapter.
\begin{thm}
Given two $\mathbb{D}_r^\infty$-compatible charts, $(\mathcal{U}_i, b_i, \Omega_i, \mathcal{F}_i,\mathbb{P}_i,H_i)$, \\$i=1,2$, there exists $\mathbb{P}_1$-a.s on $b_1(\mathcal{U}_1 \cap \mathcal{U}_2)$, a bounded linear map, denoted $T_{12}(\omega)$, with $\mathbb{P}_1$-a.s: $T_{12}(\omega)\in \mathcal{L}(H_1,H_2)$, which verifies: $\forall A\in \mathcal{F}_1 \vert_{b_1(\mathcal{U}_1 \cap \mathcal{U}_2)}$,\\$\exists A_1\in \mathcal{F}_1 \vert_{b_1(\mathcal{U}_1 \cap \mathcal{U}_2)}, A_1 \subset A$, and $\mathbb{P}_1(A_1)>0$ such that $\forall u \in H_1, \forall v\in H_2$, $\mathbb{P}_1$-a.s on $A_1$:
\begin{equation}
<T_{12}(\omega)u,v>_{H_2}\circ b_{12}=<u,\textup{grad }[\widetilde{W(v)\circ b_{12}\vert_{A_1}}]>_{H_1}
\end{equation}
\end{thm}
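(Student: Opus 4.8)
The plan is to read the tangent linear map off the way $b_{12}$ transports the linear (first chaos) coordinates of $\Omega_2$. First I would fix $v\in H_2$ and use the Gaussian variable $W(v)$, which lies in $\mathbb{D}^\infty_r(\Omega_2)$ with constant gradient $\Grad_{H_2}W(v)=v$. Given $A\in\mathcal{F}_1|_{b_1(\mathcal{U}_1\cap\mathcal{U}_2)}$ with $\mathbb{P}_1(A)>0$, Definition 1.2(iii) furnishes $A_1\subset A$, $\mathbb{P}_1(A_1)>0$, on which the pullback $W(v)\circ b_{12}$ extends to $\psi_v:=\widetilde{W(v)\circ b_{12}|_{A_1}}\in\mathbb{D}^\infty_r(\Omega_1)$, so that $\Grad\psi_v$ is an $H_1$-valued element of $\mathbb{D}^\infty_{r-1}(\Omega_1;H_1)$. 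The first thing to check is that $\Grad\psi_v$, \emph{restricted to $A_1$}, does not depend on the chosen extension: this is exactly the locality of the Malliavin gradient (two elements of $\mathbb{D}^\infty_r$ agreeing on $A_1$ have equal gradients $\mathbb{P}_1$-a.s. on $A_1$), which makes the right-hand side of (1) intrinsic.

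Next I would produce a single $A_1$ valid for all $v$ simultaneously. Here the standing assumption that $H_2$ has a countable Hilbertian basis $(e_n)$ is used: I would treat the countable family $(W(e_n))_n$ and, more importantly, pass through the map $F_B$ of the preceding lemma. Since $v\mapsto[W(v)]_B$ is linear and continuous from $H_2$ into $\mathbb{D}^\infty_r(\Omega_2)/\!\!\stackrel{B}{\sim}$ and $F_B$ is continuous, the composition $v\mapsto[\psi_v]_A=F_B([W(v)]_B)$ is continuous and linear; composing with the (continuous, order-lowering) gradient gives a continuous linear map
\[ G:\ H_2\longrightarrow \mathbb{D}^\infty_{r-1}(\Omega_1;H_1)/\!\!\stackrel{A}{\sim},\qquad G(v)=[\Grad\psi_v]_A, \]
whose value on $A_1$ is intrinsic by the previous paragraph. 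Linearity of $v\mapsto\Grad\psi_v$ $\mathbb{P}_1$-a.s. on $A_1$ then lets me define the bilinear form $\Theta(\omega)(u,v):=\langle u,\Grad\psi_v(\omega)\rangle_{H_1}$ for $\omega\in A_1$, and, once boundedness is known, the operator $T_{12}$ as the field on the $\Omega_2$-side whose $b_{12}$-pullback represents $\Theta$, namely $\langle T_{12}(\omega)u,v\rangle_{H_2}\circ b_{12}=\Theta(\cdot)(u,v)$; this is precisely (1).

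The crux is the \emph{$\mathbb{P}_1$-a.s. boundedness} of $\Theta(\omega)$, equivalently that the pointwise linear map $S(\omega):v\mapsto\Grad\psi_v(\omega)\in H_1$ belongs to $\mathcal{L}(H_2,H_1)$ for a.e. $\omega$. Note that mere continuity of $G$ into $L^p$ does \emph{not} suffice, as the example $G(v)=W(v)$ shows ($\sup_{\|v\|\le1}|W(v)|=\infty$ a.s.); the extra input is that each $G(v)$ is a genuine gradient of a $\mathbb{D}^\infty_r$ function with $r$ finite. I would prove boundedness by applying the closed graph theorem $\omega$-wise: for a.e. fixed $\omega\in A_1$ the map $S(\omega)$ is closed, since $v_k\to v$ in $H_2$ forces $\Grad\psi_{v_k}\to\Grad\psi_v$ in every $L^p(A_1;H_1)$ by continuity of $G$, hence $\mathbb{P}_1$-a.s. along a subsequence; the dependence of the exceptional null set on the sequence is removed by working on a fixed countable dense subset of $H_2$ together with an Egorov argument, after which the closed graph theorem between the Hilbert spaces $H_2,H_1$ gives boundedness. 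An alternative route I would keep in reserve is to bound $\mathbb{E}[\|S\|_{\mathcal{L}(H_2,H_1)}^p]$ directly through the Malliavin covariance $\langle\Grad\psi_v,\Grad\psi_{v'}\rangle_{H_1}$ and the density $\lambda$ of $(b_{12})_*\mathbb{P}_1$ relative to $\mathbb{P}_2$ supplied by the preceding lemma.

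Finally I would record that the construction is independent of the representatives and compatible with shrinking $A$ to $A_1$, so that $T_{12}(\omega)$ is well defined $\mathbb{P}_1$-a.s. on $b_1(\mathcal{U}_1\cap\mathcal{U}_2)$ and satisfies (1) for every $u\in H_1$ and $v\in H_2$. I expect the boundedness step to be the only genuinely delicate point — it is exactly where the finiteness of the derivation order $r$ (the Fréchet structure of $\mathbb{D}^\infty_r$) and the continuity of $F_B$ are indispensable, in contrast with the $\mathbb{D}^\infty$ setting where no such tangent map exists; everything else reduces to linearity, locality of the gradient, and the continuity already established.
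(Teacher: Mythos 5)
Your framework is right up to the crux, and you correctly isolate the only delicate point: the $\mathbb{P}_1$-a.s.\ boundedness of $S(\omega)\colon v \mapsto \Grad\psi_v(\omega)$, where $\psi_v:=\widetilde{W(v)\circ b_{12}\vert_{A_1}}$. (Two side remarks: Definition 1.2(iii) already supplies one $A_1$ valid for \emph{all} $\varphi\in\mathbb{D}^\infty_r(\Omega_2)$ simultaneously, so your paragraph manufacturing a uniform $A_1$ is unnecessary; and your use of the locality of the gradient to make the right-hand side of (1) intrinsic matches the remark following the theorem.) But the closed-graph argument you propose for boundedness does not work. At a fixed $\omega$ the map $S(\omega)$ is simply not defined on all of $H_2$: for each $v$, $\Grad\psi_v$ is determined only up to a $\mathbb{P}_1$-null set depending on $v$, so a pointwise definition survives only on a countable ($\mathbb{Q}$-linear) subspace $D$ after discarding one null set. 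The closed graph theorem requires an operator defined on the \emph{whole} Banach space $H_2$; on the dense subspace $D$, closability is worthless --- a densely defined closed operator can perfectly well be unbounded, and a $\mathbb{Q}$-linear map on $D$ with every value finite need not be bounded. Egorov cannot repair this: it treats one sequence at a time, while what you need is equicontinuity of $S(\omega)$ on the unit ball of $D$, which is precisely the conclusion being sought. Your fallback --- estimating $\mathbb{E}\,\|S\|^p_{\mathcal{L}(H_2,H_1)}$ via the Malliavin covariance and the density $\lambda$ --- is only a sentence, and it is exactly here that genuinely new quantitative input is required.

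The paper obtains that input by a mechanism absent from your proposal. From the continuity of $F_{B_1}$ it extracts $\inf_{\alpha\in\hat A_1}\|\widetilde{\varphi\circ b_{12}\vert_{A_1}}+\alpha\|_{\mathbb{D}^p_r(\Omega_1)}\leq C(p,q)\,\|\varphi\|_{\mathbb{D}^q_r(\Omega_2)}$, and tests it on the oscillatory functions $\varphi=\sum_{l=1}^k\varphi^{(l)}e^{imW(e_l)}/m^r$: letting $m\to\infty$ kills all but the top-order term and yields
\[
\Bigl\|\sum_{l=1}^k \widetilde{\varphi^{(l)}\circ b_{12}\vert_{A_1}}\,\bigl(\overset{r}{\otimes}\Grad[\widetilde{W(e_l)\circ b_{12}\vert_{A_1}}]\bigr)\Bigr\|_{L^p(\Omega_1,\overset{r}{\otimes}H_1)}
\leq C(p,q)\,\Bigl\|\sum_{l=1}^k\varphi^{(l)}\,(\overset{r}{\otimes}e_l)\Bigr\|_{L^q(\Omega_2,\overset{r}{\otimes}H_2)}.
\]
Substituting for the $\varphi^{(l)}$ the $\mathbb{D}^\infty$-approximations of indicators of a partition of $B_1$ (Lemmas 1.3--1.5) and choosing the partition so that on $Z_l$ the norm $\|\Grad[\widetilde{W(e_l)\circ b_{12}\vert_{A_1}}]\|_{H_1}$ is maximal, one gets $\bigl\|\sup_{l\leq k}\|\Grad[\widetilde{W(e_l)\circ b_{12}\vert_{A_1}}]\|^r_{H_1}\bigr\|_{L^p(A_1)}\leq C(p,q)$ uniformly in $k$ and in the finite subfamily of the basis, from which the a.s.\ boundedness of $T_{A_1}(\omega)$ follows; the countable exhaustion and the consistency $T_{A_i}=T_{A_j}$ on overlaps, which you only gesture at, are then carried out. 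Note that this is also where the finiteness of $r$ enters, via the $m^{-r}$ normalization --- matching your correct intuition --- but without some substitute for this oscillatory-test-function and partition mechanism, your proof of the crucial step is incomplete.
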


Formula (1) dose not have any ambiguity: if there is $B\in\mathcal{F}_1\vert_{b_1(\mathcal{U}_1 \cap \mathcal{U}_2)}$ with $\mathbb{P}_1(A_1 \cap B)>0$ such that (1) is verified with $B$ instead of $A_1$, then: $(W(v)\circ b_{12}\vert_{A_1})\vert_{B}=W(v)\circ b_{12}\vert_{A_1 \cap B}$ which implies:
\[ \textup{grad }[\widetilde{W(v)\circ b_{12}\vert_{A_1}}]\vert_{A_1 \cap B}=\textup{grad }[\widetilde{W(v)\circ b_{12}\vert_{B}}]\vert_{A_1 \cap B} \]

For the demonstration of Theorem 1.1, we need the following lemmas:
\begin{lem} 
Let $(\Omega,\mathcal{F}, \mathbb{P}, H)$ a Gaussian space, $u\in H$
\[ I_a=[a,+\infty[ \subset \mathbb{R} \textup{ and } A=\lbrace  \omega\in\Omega/W(u)(\omega)\in I_a\rbrace\]
Then these exist sequences of functions $\varphi_n, \psi_n\in \mathbb{D}^\infty(\Omega)$ such that: \[\|\varphi_n\|_\infty \leqslant 1, \|\psi_n\|_\infty \leqslant 1, \varphi_n.\psi_n=0\] and 
\[\lim_n \varphi_n=\mathds{1}_A, \lim_n \psi_n=\mathds{1}_{CA}\]
\end{lem}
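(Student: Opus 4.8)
The plan is to reduce everything to a one-dimensional smoothing problem, exploiting that $W(u)$ is itself a very regular element of $\mathbb{D}^\infty$. First I would dispose of the degenerate case $u=0$: then $W(u)=0$ almost surely, so $A$ coincides (up to a null set) with $\Omega$ or with $\emptyset$ according to the sign of $a$, and one simply takes constant sequences $\varphi_n,\psi_n \in \{0,1\}$. So I assume $u\neq 0$; then $W(u)$ is a centered Gaussian of strictly positive variance $\|u\|_H$, its law is absolutely continuous, and in particular $\mathbb{P}(W(u)=a)=0$.

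Next I would construct the scalar cut-off functions. For each $n$ I choose $f_n,g_n\in C^\infty(\mathbb{R})$ with $0\le f_n,g_n\le 1$ such that $f_n\equiv 1$ on $[a,+\infty[$ and $f_n\equiv 0$ on ${]-\infty,a-\tfrac1n]}$, while $g_n\equiv 1$ on ${]-\infty,a-\tfrac2n]}$ and $g_n\equiv 0$ on $[a-\tfrac1n,+\infty[$, each monotone on its transition interval. Such functions exist by the standard mollified-step construction, and all their derivatives are bounded since they are constant outside a compact set. Because $f_n$ vanishes on ${]-\infty,a-\tfrac1n]}$ and $g_n$ vanishes on $[a-\tfrac1n,+\infty[$, the product $f_ng_n\equiv 0$ on $\mathbb{R}$. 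Moreover $f_n(x)\to \mathds{1}_{[a,+\infty[}(x)$ and $g_n(x)\to\mathds{1}_{]-\infty,a[}(x)$ for every $x\in\mathbb{R}$ (both tails stabilize and the transition zones shrink to the point $a$).

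Then I would set $\varphi_n=f_n(W(u))$ and $\psi_n=g_n(W(u))$. Since $W(u)$ lies in the first Wiener chaos it belongs to $\mathbb{D}^\infty(\Omega)$ (its Malliavin gradient is the constant $u\in H$ and its higher gradients vanish), and $f_n,g_n$ are $C^\infty$ with bounded derivatives; by the stability of $\mathbb{D}^\infty$ under composition with such functions, $\varphi_n,\psi_n\in\mathbb{D}^\infty(\Omega)$. The bounds $\|\varphi_n\|_\infty\le 1$, $\|\psi_n\|_\infty\le 1$ and the identity $\varphi_n\psi_n=(f_ng_n)(W(u))=0$ are immediate. Finally $\varphi_n=f_n(W(u))\to\mathds{1}_{\{W(u)\ge a\}}=\mathds{1}_A$ and $\psi_n=g_n(W(u))\to\mathds{1}_{\{W(u)<a\}}=\mathds{1}_{CA}$ pointwise, hence, being dominated by $1$, in every $L^p(\Omega)$ as well.

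The only genuinely delicate point is arranging the two transition zones to lie on opposite sides of the single level $a-\tfrac1n$ (where both functions vanish), so that $\varphi_n\psi_n=0$ holds \emph{exactly} while each sequence still converges to the correct indicator; everything else is the routine Malliavin-calculus fact that smooth functions of $W(u)$ with bounded derivatives remain in $\mathbb{D}^\infty$, combined with dominated convergence.
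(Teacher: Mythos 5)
Your proof is correct and is essentially the argument the paper intends: the paper's own proof of this lemma is the single word \emph{obvious}, and your smooth cut-off construction $\varphi_n=f_n(W(u))$, $\psi_n=g_n(W(u))$, with both transition zones placed to the left of $a$ and meeting only at the common zero $a-\tfrac1n$, supplies exactly the routine details (stability of $\mathbb{D}^\infty$ under composition with bounded-derivative $C^\infty$ functions, pointwise convergence, dominated convergence for the $L^p$ statements). Note that your placement of the transitions even makes the convergence hold at every $\omega$ with $W(u)(\omega)=a$, so invoking $\mathbb{P}(W(u)=a)=0$ is harmless but unnecessary.
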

\pf: obvious.

\begin{lem} 
The set of $A\in \mathcal{F}$ such that there exist sequences of functions $\varphi_n\in \mathbb{D}^\infty(\Omega)$ and $\psi_n\in\mathbb{D}^\infty(\Omega)$ with $\| \varphi_n \|_\infty \leqslant 1, \| \psi_n \|_\infty \leqslant 1, \varphi_n\times\psi_n=0$
and $\lim_n \varphi_n=\mathds{1}_A, \lim_n \psi_n=\mathds{1}_{CA}$, is a $\sigma$-field, equals to $\mathcal{F}$.
\end{lem}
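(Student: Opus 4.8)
The plan is to write $\mathcal{G}$ for the family of sets described in the statement and to prove the two inclusions $\mathcal{G}\subseteq\mathcal{F}$ and $\mathcal{F}\subseteq\mathcal{G}$. The first is immediate, since by definition every $A\in\mathcal{G}$ already lies in $\mathcal{F}$. For the second I would first establish that $\mathcal{G}$ is a $\sigma$-field and then invoke the preceding lemma, which places every half-line event $\{W(u)\in[a,+\infty[\}$ in $\mathcal{G}$; since these events generate $\mathcal{F}$, the inclusion follows. Throughout, because all the functions involved are bounded by $1$, the convergences $\lim_n\varphi_n=\mathds{1}_A$ may be read in probability, equivalently in every $L^p(\Omega)$, $p<\infty$; this uniform bound is what makes the extractions below metrizable and hence routine.

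To see that $\mathcal{G}$ is an algebra, note that $\Omega\in\mathcal{G}$ via the constant sequences $\varphi_n\equiv 1$, $\psi_n\equiv 0$, and that $\mathcal{G}$ is stable under complementation because if $(\varphi_n,\psi_n)$ witnesses $A$ then $(\psi_n,\varphi_n)$ witnesses $CA$. The construction requiring care is stability under intersection. Given witnesses $(\varphi_n,\psi_n)$ for $A$ and $(\varphi'_n,\psi'_n)$ for $B$, I would propose
\[ \varphi_n\varphi'_n \longrightarrow \mathds{1}_{A\cap B}, \qquad \chi_n:=\psi_n+\varphi_n\psi'_n \longrightarrow \mathds{1}_{C(A\cap B)}, \]
the second limit coming from the disjoint decomposition $C(A\cap B)=CA\sqcup(A\cap CB)$. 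The product condition is automatic, since $(\varphi_n\varphi'_n)\chi_n=\varphi_n\varphi'_n\psi_n+\varphi_n^2\varphi'_n\psi'_n=0$ by $\varphi_n\psi_n=0$ and $\varphi'_n\psi'_n=0$, and both candidate sequences lie in $\mathbb{D}^\infty(\Omega)$ since that space is an algebra.

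The main obstacle is the uniform bound $\|\chi_n\|_\infty\leqslant 1$, since a priori $\chi_n$ is a sum of two terms each only bounded by $1$. The point to exploit is that $\varphi_n\psi_n=0$ a.s. forces the supports $\{\varphi_n\neq 0\}$ and $\{\psi_n\neq 0\}$ to be a.s. disjoint: on $\{\psi_n\neq 0\}$ one has $\varphi_n=0$, so $\chi_n=\psi_n$ there, while on $\{\psi_n=0\}$ one has $\chi_n=\varphi_n\psi'_n$; in both cases $|\chi_n|\leqslant 1$ a.s. This gives $A\cap B\in\mathcal{G}$, and together with complementation $\mathcal{G}$ is an algebra.

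To upgrade $\mathcal{G}$ to a $\sigma$-field it suffices to show stability under increasing unions, since an algebra stable under increasing limits is a $\sigma$-field. Let $B_m\uparrow A$ with $B_m\in\mathcal{G}$ witnessed by $(\varphi^{(m)}_n,\psi^{(m)}_n)_n$. Because $\mathds{1}_{B_m}\to\mathds{1}_A$ and $\mathds{1}_{CB_m}\to\mathds{1}_{CA}$ in probability by dominated convergence, a diagonal extraction produces indices $m_k\uparrow\infty$ and $n_k$ with $\varphi_k:=\varphi^{(m_k)}_{n_k}\to\mathds{1}_A$ and $\psi_k:=\psi^{(m_k)}_{n_k}\to\mathds{1}_{CA}$ (choosing each $n_k$ large enough to control the two triangle-inequality terms simultaneously); the bounds and the relation $\varphi_k\psi_k=0$ are preserved because both are drawn from the same witnessing pair $(\varphi^{(m_k)}_{n_k},\psi^{(m_k)}_{n_k})$. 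Hence $A\in\mathcal{G}$ and $\mathcal{G}$ is a $\sigma$-field. Combining this with the preceding lemma and with the fact that the half-line events $\{W(u)\in[a,+\infty[\}$, $u\in H$, $a\in\mathbb{R}$, generate $\mathcal{F}$ yields $\mathcal{F}\subseteq\mathcal{G}$, whence $\mathcal{G}=\mathcal{F}$.
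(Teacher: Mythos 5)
Your proof is correct and follows essentially the same route as the paper: show the class is stable under complementation and finite intersections, stable under increasing unions via a diagonal extraction of the witnessing sequences, and conclude by the monotone class argument together with the preceding lemma placing the half-line events $\{W(u)\in[a,+\infty[\}$ in the class. The only cosmetic difference is your witness $\chi_n=\psi_n+\varphi_n\psi'_n$ for $C(A\cap B)$ in place of the paper's symmetric $\theta_n=\varphi_n^{(1)}\psi_n^{(2)}+\varphi_n^{(2)}\psi_n^{(1)}+\psi_n^{(1)}\psi_n^{(2)}$; both rely on the same support-disjointness observation (from $\varphi_n\psi_n=0$) to secure the bound $\|\chi_n\|_\infty\leqslant 1$.
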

\pf: We use the monotone class theorem; let $\tilde{\mathcal{F}}$ be the set of $A\in \mathcal{F}$ verifying the above properties. Then:\\
-$\phi$ and $\Omega\in \tilde{\mathcal{F}}$\\
-$\tilde{\mathcal{F}}$ is stable by complementation\\
-$\tilde{\mathcal{F}}$ is stable for finite intersections: $A_1, A_2\in \tilde{\mathcal{F}}$. \\ 
If $\varphi_n^{(1)}\rightarrow\mathds{1}_{A_1},\varphi_n^{(2)}\rightarrow\mathds{1}_{A_2}, \psi_n^{(1)}\rightarrow\mathds{1}_{CA_1},\psi_n^{(2)} \rightarrow \mathds{1}_{CA_2}$ with the above properties:
\[\theta_n=\varphi_n^{(1)}\psi_n^{(2)}+\varphi_n^{(2)}\psi_n^{(1)}+\psi_n^{(1)}\psi_n^{(2)}\in \mathbb{D}^\infty \quad(\mathbb{D}^\infty \textup{algebra})\]
\[ \theta_n\rightarrow \mathds{1}_{C(A_1\cap A_2)},\quad \varphi_n^{(1)}\varphi_n^{(2)}\rightarrow\mathds{1}_{A_1\cap A_2} \]
and $\| \theta_n \|_\infty \leqslant 1$ (check on the supports). \\

What is left to show is the stability of $\tilde{\mathcal{F}}$ for increasing sequences of items in $\tilde{\mathcal{F}}$.\\
Let $A_0\subset A_1\subset \ldots\subset A_k\subset\ldots$ an increasing sequence of items in $\tilde{\mathcal{F}}$; $A=\cup_{\substack{k}} A_k$.\\
$\forall k\in\mathbb{N}$ there exist sequences of functions $\varphi_n^{(k)}$ and $\psi_n^{(k)}$ with the related properties because $A_k\in \tilde{\mathcal{F}}$. Then with the dominated convergence theorem: 
\[\varphi_n^{(k)} \xrightarrow{L^p} \mathds{1}_{A_k} \textup{ and } \psi_n^{(k)} \xrightarrow{L^p} \mathds{1}_{CA_k} \quad (p\geqslant 1)\]
Then by extracting diagonal sequences, we get a sequence $\tilde{\varphi}_m \xrightarrow{L^p} \mathds{1}_{A}$ and a sequence $\tilde{\psi}_m \xrightarrow{L^p} \mathds{1}_{CA}$. Then we extract from $(\tilde{\varphi}_m)_m$ and $(\tilde{\psi}_m)_m$ two sequences which converges $\mathbb{P}$-a.s towards $\mathds{1}_A$ and $\mathds{1}_{CA}$.

\begin{lem}
Let $Z_1,\ldots,Z_k$ a finite partition of $\Omega$, $Z_l\in\mathcal{F}$. Then there exists, $\forall l\in \lbrace 1,\ldots,k \rbrace$, a sequence of functions $\varphi_n^{(l)}$ such that: 
\[ \varphi_n^{(l)} \in \mathbb{D}^\infty (\Omega), \| \varphi_n^{(l)} \|_\infty \leqslant 1, \forall l\neq l^\prime: \varphi_n^{(l)}\cdot \varphi_n^{(l^\prime)}=1, \lim_n \varphi_n^{(l)}=\mathds{1}_{Z_l}\]
\end{lem}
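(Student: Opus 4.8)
The plan is to deduce the statement from the preceding lemma, which asserts that every $A\in\mathcal{F}$ admits bounded $\dinf$-approximants $\varphi_n\to\mathds{1}_A$ and $\psi_n\to\mathds{1}_{CA}$ with $\|\varphi_n\|_\infty\leqslant 1$, $\|\psi_n\|_\infty\leqslant 1$ and $\varphi_n\psi_n=0$, and then to \emph{disjointify} these approximants by a product construction. Note first that the displayed requirement $\varphi_n^{(l)}\cdot\varphi_n^{(l')}$ must be read as vanishing for $l\neq l'$: since the $\varphi_n^{(l)}$ approximate the indicators of the pairwise disjoint sets $Z_l$, the value $1$ is impossible (two functions bounded by $1$ whose product is $1$ would both equal $1$), whereas $\varphi_n^{(l)}\varphi_n^{(l')}=0$ is exactly the disjointness inherited from the single-set case.

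First I would apply the preceding lemma to each $Z_l$ separately, $l\in\{1,\dots,k\}$. This produces, for every $l$, sequences $\varphi_n^{(l)},\psi_n^{(l)}\in\dinf(\Omega)$ with $\|\varphi_n^{(l)}\|_\infty\leqslant 1$, $\|\psi_n^{(l)}\|_\infty\leqslant 1$, $\varphi_n^{(l)}\psi_n^{(l)}=0$, and $\varphi_n^{(l)}\to\mathds{1}_{Z_l}$, $\psi_n^{(l)}\to\mathds{1}_{CZ_l}$ in $L^p$ for every $p$. Since there are only finitely many indices, I would pass once to a common subsequence so that all $2k$ sequences converge a.s. simultaneously.

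The key step is the disjointification. I would set
\[
\widetilde\varphi_n^{(l)}=\varphi_n^{(l)}\prod_{l'\neq l}\psi_n^{(l')}\in\dinf(\Omega),
\]
which lies in $\dinf(\Omega)$ because $\dinf$ is an algebra, and satisfies $\|\widetilde\varphi_n^{(l)}\|_\infty\leqslant 1$ as a product of functions bounded by $1$. For $l\neq m$ the product $\widetilde\varphi_n^{(l)}\widetilde\varphi_n^{(m)}$ contains the factor $\psi_n^{(m)}$ (arising in $\widetilde\varphi_n^{(l)}$ because $m\neq l$) together with the factor $\varphi_n^{(m)}$ (arising in $\widetilde\varphi_n^{(m)}$); as $\varphi_n^{(m)}\psi_n^{(m)}=0$ identically, we get $\widetilde\varphi_n^{(l)}\widetilde\varphi_n^{(m)}=0$ for every $n$. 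Finally, passing to the a.s. limit and using that $Z_1,\dots,Z_k$ is a partition, so that $Z_l\subset\bigcap_{l'\neq l}CZ_{l'}$, gives
\[
\widetilde\varphi_n^{(l)}\longrightarrow \mathds{1}_{Z_l}\prod_{l'\neq l}\mathds{1}_{CZ_{l'}}=\mathds{1}_{Z_l\cap\bigcap_{l'\neq l}CZ_{l'}}=\mathds{1}_{Z_l},
\]
and dominated convergence upgrades this to $L^p$ convergence for every $p$. Renaming $\widetilde\varphi_n^{(l)}$ as $\varphi_n^{(l)}$ then yields the lemma.

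I do not anticipate a serious obstacle: the only delicate point is to secure disjointness \emph{exactly} at each finite $n$ rather than merely asymptotically, and this is precisely what the product construction guarantees, since the building blocks already satisfy $\varphi_n^{(m)}\psi_n^{(m)}=0$ identically. The sole bookkeeping point is the simultaneous a.s. convergence of the finitely many auxiliary sequences, dispatched by the single subsequence extraction above.
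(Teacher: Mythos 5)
Your proposal is correct and is essentially identical to the paper's (one-line) proof, which likewise takes $u_n^{(l)}=\varphi_n^{(l)}\prod_{j\neq l}\psi_n^{(j)}$ built from the approximants of the preceding $\sigma$-field lemma; your additional observations — that the disjointness $\widetilde\varphi_n^{(l)}\widetilde\varphi_n^{(m)}=0$ holds exactly at each $n$ via the factor $\varphi_n^{(m)}\psi_n^{(m)}=0$, and that a common a.s.-convergent subsequence handles the limit — merely make explicit what the paper leaves implicit. You are also right that the condition ``$\varphi_n^{(l)}\cdot\varphi_n^{(l')}=1$'' in the statement is a typo for ``$=0$''.
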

\pf: $\forall l$: there exist $\varphi_n^{(l)}$ and $\psi_n^{(l)}$ as in Lemma 1.2.  $u_n^{(l)}=\varphi_n^{(l)}\cdot \prod^k_{j\neq l} \psi_n^{(j)}$ satisfies the Lemma 1.5.\\

Proof of Theorem 1.1:
Let $A\in \mathcal{F}_1 \vert_{b_1(\mathcal{U}_1\cap \mathcal{U}_2)}$, and $\mathbb{P}_1(A)>0$. There exists $A_1\subset A, A_1\in \mathcal{F}_1 \vert_{b_1(\mathcal{U}_1\cap \mathcal{U}_2)}$ and $\mathbb{P}_1(A_1)>0$ such that: $\forall \varphi\in \mathbb{D}^\infty_r(\Omega_2), \varphi\circ b_{12} \vert_{A_1}$ admits an extension; $\widetilde{\varphi\circ b_{12}\vert_{A_1}} \in \mathbb{D}_r^\infty(\Omega_1)$. Let $B_1=b_{12}(A_1)$ and $F_{B_1}$ the continuous morphism:
\[ \mathbb{D}^\infty_r(\Omega_2)/_{\stackrel{B_1}{\sim}} \rightarrow \mathbb{D}^\infty_r(\Omega_1)/_{\stackrel{A_1}{\sim}} \]
Then: $\forall p>1 \quad\exists q>1$ and: $\exists C(p,q)>0$ such that: $\forall \varphi \in \mathbb{D}^\infty_r(\Omega_2)$:
\begin{equation}
\| [\widetilde{\varphi\circ b_{12}\vert_{A_1}}]_{A_1}\|_{\mathbb{D}^p_r(\Omega_1)/_{\stackrel{A_1}{\sim}}}\leqslant  C(p,q) \|[\varphi]_{B_1}\|_{\mathbb{D}^q_r(\Omega_2)/_{\stackrel{B_1}{\sim}}}
\end{equation}
Remind: $\hat{A_1}=\lbrace  \alpha\in\mathbb{D}^\infty_r(\Omega_1)/\alpha\vert_{A_1}=0\rbrace$.\\
(2) implies:
\begin{equation*}
\hspace{25.5mm} \inf_{\alpha\in \hat{A}_1} \| \widetilde{\varphi\circ b_{12}\vert_{A_1}} +\alpha \|_{\mathbb{D}^p_r(\Omega_1)}\leqslant C(p,q)\|\varphi\|_{\mathbb{D}^q_r(\Omega_2)} \hspace{25.5mm} (2')
\end{equation*}
Now $i=\sqrt{-1}$ as usual. Let $\varphi^{(l)}\in \mathbb{D}^\infty(\Omega_2), l=1,\ldots,k$ and denote 
\[\varphi=\sum^k_{l=1}\varphi^{(l)}\times \frac{e^{imW(e_l)}}{m^r},\] 
$(e_l)_{l\in\mathbb{N}}$ being a base of $H_2$; then ($2^\prime$) becomes:
\begin{align}
\inf_{\alpha\in\hat{A}_1} &\left\Arrowvert \sum^k_{l=1} \frac{\widetilde{\varphi^{(l)}\circ b_{12}\vert_{A_1}}\cdot e^{im\widetilde{W(e_l)\circ b_{12}\vert_{A_1}}}}{m^r}+\frac{\alpha}{m^r}\right\Arrowvert_{\mathbb{D}_r^p(\Omega_1)} \nonumber \\
&\leqslant C(p,q)\left\Arrowvert \sum^k_{l=1} \varphi^{(l)} \frac{e^{imW(e_l)}}{m^r}\right\Arrowvert_{\mathbb{D}_r^q(\Omega_2)} 
\end{align}
After having computed all derivations in (3), and letting $m\rightarrow \infty$, we get:
\begin{align}
&\left\Arrowvert \sum^k_{l=1} \widetilde{\varphi^{(l)}\circ b_{12}\vert_{A_1}} (\stackrel{r}{\otimes} \textup{grad }[\widetilde{W(e_l)\circ b_{12}\vert_{A_1}}])\right\Arrowvert_{L^p(\Omega_1, \stackrel{r}{\otimes}H_1)} \nonumber \\
&\leqslant C(p,q)  \left\Arrowvert \sum^k_{l=1} \varphi^{(l)}(\omega)(\stackrel{r}{\otimes} e_l)\right\Arrowvert_{{L^q(\Omega_2, \stackrel{r}{\otimes}H_2)}}
\end{align}
Let $Z_1,\ldots,Z_k$ be a partition of $B_1=b_{12}(A_1)$; then $b_{12}^{-1}(Z_1),\ldots,b_{12}^{-1}(Z_k)$ is a partition of $A_1=b_{12}^{-1}(B_1)$. We choose for $\varphi_n^{(l)}$ is sequence of functions converging punctually towards $\mathds{1}_{Z_l}$ as in Lemma 1.3:\\
then: (4) becomes
\begin{align}
&\left\|\sum^k_{l=1} \mathds{1}_{Z_l} \circ b_{12}(\cdot)\stackrel{r}{\otimes} \textup{grad }[\widetilde{W(e_l)\circ b_{12}\vert_{A_1}}])\right\| _{L^p(\Omega_1, \stackrel{r}{\otimes}H_1)} \nonumber \\
&\leqslant C(p,q)  \left\| \sum^k_{l=1} \mathds{1}_{Z_l}(\omega)(\stackrel{r}{\otimes} e_l)\right\| _{L^q(\Omega_2, \stackrel{r}{\otimes}H_2)}\leqslant C(p,q)
\end{align}
As $Z_l, l=1,\ldots,k$ is a partition of $B_1=b_{12}(A_1)$ \\
(5) becomes:
\begin{equation}
\int\sum^k_{l=1} \mathds{1}_{Z_l}\circ b_{12} \cdot \| \textup{grad }[\widetilde{W(e_l)\circ b_{12}\vert_{A_1}}] \|_{H_1}^{pr} \mathbb{P}(d\omega)\leqslant C(p,q)^p
\end{equation}
As partition of $B_1=b_{12}(A_1)$,\\
we choose:
\begin{align*}
Z_l=b_{12} \lbrace \omega\in\Omega_1 / \forall j<l: &\| \textup{grad }[\widetilde{W(e_l)\circ b_{12}\vert_{A_1}}] \|_{H_1} > \|\textup{grad }[\widetilde{W(e_j)\circ b_{12}\vert_{A_1}}] \|_{H_1} \\ \textup{and } \forall j<l: &\| \textup{grad }[\widetilde{W(e_l)\circ b_{12}\vert_{A_1}}] \|_{H_1} \geqslant \|\textup{grad }[\widetilde{W(e_j)\circ b_{12}\vert_{A_1}}] \|_{H_1}\rbrace 
\end{align*}
Then (6) becomes:
\begin{equation}
\left\| \sup_{l\in \lbrace 1,\ldots,k \rbrace} \| \textup{grad }[\widetilde{W(e_l)\circ b_{12}\vert_{A_1}}] \|^r_{H_1} \right\|_{L^p(A_1)}\leqslant C(p,q)
\end{equation}
As (7) is valid for each subset of the Hilbertian basis of $H$, $(e_l)_{l\in \mathbb{N}_*}$, we deduce that the map: 
\[H_1 \ni e_l \rightarrow \|\textup{grad }[\widetilde{W(e_l)\circ b_{12}\vert_{A_1}}] \|_{H_1}\] 
is $L^p$-bounded, $\mathbb{P}_1$-a.s on $A_1$, uniformly relatively to $l\in\mathbb{N}_*$. \\
So the linear map $T_{A_1}$ defined by: $\forall u\in H_1, v\in H_2$:
\[<T_{A_1}(\omega)u,v>_{H_2}\circ b_{12}=<u, \textup{grad }[\widetilde{W(v)\circ b_{12}\vert_{A_1}}]>_{H_1},\]
is a bounded linear map from $H_1$ in $H_2$, $\mathbb{P}_1$-a.s on $A_1$. We can, by exhaustion, find a countable sequence of subsets of $b_1(\mathcal{U}_1\cap \mathcal{U}_2)$, denoted $A_i, i\in \mathbb{N}_*$ with $\forall i: A_i\in \mathcal{F}_1\vert_{b_1(\mathcal{U}_1\cap \mathcal{U}_2)}, \mathbb{P}_1(A_i)>0$, such that $\cup_{i\in\mathbb{N}_*}A_i=b_1(\mathcal{U}_1\cap \mathcal{U}_2)$.\\

On such each $A_i$, there exists $\mathbb{P}_1$-a.s a linear bounded operator denoted $T_{A_i}$ such that:$\forall u\in H_1, \forall v\in H_2$:
\[ <T_{A_i}(u),v>_{H_2}=<u, \textup{grad }[\widetilde{W(v)\circ b_{12}\vert_{A_i}}]>_{H_1}\circ b_{21} \]
This last equation shows that on $A_i \cap A_j, T_{A_i}=T_{A_j},\mathbb{P}_1$-a.s; so there exists a linear bounded operator from $H_1$ in $H_2$, such that: $\mathbb{P}_1$-a.s on  $b_1(\mathcal{U}_1\cap \mathcal{U}_2)$: $\forall \varphi\in \mathbb{D}^\infty_r(\Omega_2)$ and $ \forall u\in H_1$, denoted $T_{12}$:
\[ <T_{12}(\omega)\cdot u,\textup{grad }\varphi>_{H_2}=<u, \textup{grad }[\widetilde{\varphi\circ b_{12}\vert_{A_1}}]>_{H_1}\circ b_{21} \]
It is easy to show the transitivity of these $T_{ij}$: if we have three compatible charts $(\mathcal{U}_i, b_i, \Omega_i, \mathcal{F}_i,\mathbb{P}_i,H_i), i=1,2,3$ such that $\mathbb{P}_i[b_i(\mathcal{U}_1\cap \mathcal{U}_2\cap \mathcal{U}_3)]>0$, then $T_{ij}\circ T_{jk}=T_{ik}$, a.s. on $b_i(\mathcal{U}_1\cap \mathcal{U}_2\cap \mathcal{U}_3)$.\\

Last, this bounded linear map $T_{12}$, is measurable from $(b_1(\mathcal{U}_1\cap \mathcal{U}_2),\mathcal{F}_1\vert_{b_1(\mathcal{U}_1\cap \mathcal{U}_2)})$ in $\mathcal{L}(H_1,H_2)$ and is $L^{\infty-0}(b_1(\mathcal{U}_1\cap \mathcal{U}_2))$: the constant $C(p,q)$ in (7) might be dependent of $A_1$. 

\section{\huge Preliminaries}

$(\Omega, \F, \P, H)$ being a Gaussian space:
Here we state some definitions of mathematical tools that will be needed, and some of their properties, and prove:

\begin{enumerate}
\renewcommand{\labelenumi}{\roman{enumi})}
\item some theorems about the existence of unique continuous linear extensions of continuous linear operators from $\dinf(\Omega)$ in $\dinf(\Omega)$
\item $\sko_{\infty}^2(\Omega) \cap L^{\infty - 0}(\Omega) = \dinf(\Omega)$
\item Any continuous derivation on $\dinf(\Omega)$ is a strong limit of $\dinf$-vector fields, and some properties of these continuous derivations.
\end{enumerate}

Whenever no particular setting is specified, it is assumed that we deal with a Gaussian space $(\Omega, \F, \P, H)$.

The theorems proved here will be needed for the development, but the reader can also go directly to Section 3 and use the results of this section, which will be referred to when they appear.

On $\sko_r^p(\Omega)$, the two following norms are equivalent:
\begin{align*}
f \in \drp(\Omega): \| f \|_{\drp}^{(1)} &= \left[ \sum_{j=0}^{r} \int \|\Grad ^j f \|_{\overset{j}{\otimes}H}^p \P(d\omega) \right]^{\frac{1}{p}} \\
\text{and } \| f \|_{\drp}^{(2)} &= \sum_{j=0}^{r} \left( \int \|\Grad ^j f \|_{\overset{j}{\otimes}H}^p \P(d\omega) \right)^{\frac{1}{p}} \text{ (Malliavin)}
\end{align*}

We have: $\| f \|_{\drp(\Omega)}^{(1)} \leq \| f \|_{\drp(\Omega)}^{(2)} \leq r^{1-\frac{1}{p}} \| f \|_{\drp(\Omega)}^{(1)}$

\begin{notation} If $(\Omega_i, \F_i, \P_i, H_i)$ are two Gaussian spaces $(i=1,2)$, we denote by $K^{\infty}(\Omega_1 \times \Omega_2)$:
\begin{align*}
K^{\infty}(\Omega_1 \times \Omega_2) = \left\{ \sum_{j \in J} \alpha_j(\omega_1) \beta_j(\omega_2) | J \text{ finite }, \alpha_j(\omega_1) \in \dinf(\Omega_1), \beta_j(\omega_2) \in \dinf(\Omega_2) \right\}
\end{align*}

Then $K^{\infty}(\Omega_1 \times \Omega_2)$ is a $\dinf$-dense subset of $\dinf(\Omega_1, \Omega_2)$.
\end{notation}

\subsection{Some extensions of continuous linear maps}

\begin{defn}\label{def2_1}
\begin{enumerate}
\renewcommand{\labelenumi}{\roman{enumi})}
\item A subset $D \subset \dinf(\Omega)$ will be said to be 

$\dinf$-bounded iff: 

$\forall (p, r), p>1, r \in \N$, $\exists$ a constant $C(p,r)$ such that:
\begin{align*}
\sup_{f \in D} \| f \|_{\drp} \leq C(p, r)
\end{align*}

\item a process $\varphi(t, \omega): [0,1] \times \Omega \rightarrow \R$ will be said to be $\dinf$-bounded iff: 

$\forall (p, r), p>1, r \in \N_\star$, $\exists$ a constant $C(p,r)$ such that:
\begin{align*}
\sup_{t \in [0,1]} \| \varphi \|_{\drp(\Omega)} \leq C(p, r)
\end{align*}
\end{enumerate}

\end{defn}

\begin{thm}\label{thm2_1}
i) Let $(\Omega, \F, \P)$ and $(\Omega_1, \F_1, \P_1)$ be two probability spaces, and $T$ a continuous linear operator from $L^q(\Omega, H_1)$ in $L^p(\Omega, H_2)$ with $q \geq p$, $H_1$ and $H_2$ being two abstract Hilbert spaces.

Denote $\tilde{T}$ the linear operator defined on:
\begin{align*}
K_{q,p}(\Omega \times \Omega_1) = \left\{ \sum_{j \in J} \alpha_j(\omega)\beta_j(\omega_1) | J \text{ finite}, \alpha_j \in L^q(\Omega, H_1), \beta_j \in L^q(\Omega_1, \R) \right\}
\end{align*}

by $\tilde{T} \left( \sum_{j \in J} \alpha_j(.) \beta_j(.) \right) = \sum_{j \in J} \beta_j(\omega_2)(T \alpha_j)(\omega_1)$.

Then there is a unique linear continuous extension of $\tilde{T}$, denoted $\tilde{T}$, from $L^q(\Omega \times \Omega_1, H_1)$ to $L^p(\Omega \times \Omega_1, H_2)$.

ii) If $T_k$ is a sequence of continuous linear operators from $L^q(\Omega, H_1)$ to $L^p(\Omega, H_2)$, $k$-uniformly continuous, then the sequence $\tilde{T}^k$ is $k$-uniformly continuous from $L^q(\Omega \times \Omega_1, H_1)$ to $L^p(\Omega \times \Omega_1, H_2)$.
\end{thm}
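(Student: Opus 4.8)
The plan is to prove boundedness of $\tilde T$ on $K_{q,p}(\Omega \times \Omega_1)$ with operator norm at most $\|T\|$, and then to extend by density. The decisive observation is that $\tilde T$ acts slice-by-slice in the $\Omega_1$-variable. For a simple tensor $f = \sum_{j \in J} \alpha_j(\omega)\beta_j(\omega_1)$, Tonelli's theorem shows that for $\mathbb{P}_1$-a.e. fixed $\omega_1$ the slice $f(\cdot,\omega_1) = \sum_j \beta_j(\omega_1)\,\alpha_j(\cdot)$ lies in $L^q(\Omega, H_1)$, and by linearity of $T$,
\[
(\tilde T f)(\cdot,\omega_1) \;=\; \sum_j \beta_j(\omega_1)\,(T\alpha_j)(\cdot) \;=\; T\big(f(\cdot,\omega_1)\big).
\]
Since the right-hand side depends only on $f$ through its slices and not on the chosen finite representation, this identity already settles the well-definedness of $\tilde T$ on $K_{q,p}$ and reduces the whole estimate to an application of $T$ fibrewise.

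With this in hand, the first step is the computation, using Tonelli to split the joint $L^p$-norm and then the boundedness of $T$ on each slice:
\begin{align*}
\|\tilde T f\|_{L^p(\Omega \times \Omega_1, H_2)}^p
&= \int_{\Omega_1} \big\|T\big(f(\cdot,\omega_1)\big)\big\|_{L^p(\Omega, H_2)}^p \, d\mathbb{P}_1(\omega_1) \\
&\leq \|T\|^p \int_{\Omega_1} \big\|f(\cdot,\omega_1)\big\|_{L^q(\Omega,H_1)}^p \, d\mathbb{P}_1(\omega_1).
\end{align*}
Writing $g(\omega_1) = \int_\Omega \|f(\omega,\omega_1)\|_{H_1}^q \, d\mathbb{P}(\omega)$, the last integral is $\int_{\Omega_1} g(\omega_1)^{p/q}\, d\mathbb{P}_1(\omega_1)$. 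This is where the hypothesis $q \geq p$ becomes indispensable: since $p/q \leq 1$ and $\mathbb{P}_1$ is a probability measure, the concavity of $t \mapsto t^{p/q}$ together with Jensen's inequality gives $\int_{\Omega_1} g^{p/q}\, d\mathbb{P}_1 \leq \big(\int_{\Omega_1} g\, d\mathbb{P}_1\big)^{p/q}$, and by Tonelli again $\int_{\Omega_1} g\, d\mathbb{P}_1 = \|f\|_{L^q(\Omega\times\Omega_1,H_1)}^q$, so the bound collapses to $\|f\|_{L^q}^p$. Combining, I obtain $\|\tilde T f\|_{L^p(\Omega\times\Omega_1,H_2)} \leq \|T\|\,\|f\|_{L^q(\Omega\times\Omega_1,H_1)}$ for every $f \in K_{q,p}$.

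Since $K_{q,p}(\Omega \times \Omega_1)$ is dense in $L^q(\Omega \times \Omega_1, H_1)$ (finite sums of products $\alpha(\omega)\beta(\omega_1)$ being dense in the $L^q$ of a product space), the uniformly continuous operator $\tilde T$ extends to a unique continuous linear map from $L^q(\Omega\times\Omega_1, H_1)$ to $L^p(\Omega\times\Omega_1,H_2)$, with the same norm bound, and uniqueness is forced by density. For part ii), exactly the same computation yields $\|\tilde T^k\| \leq \|T_k\|$ for each $k$; hence if $(T_k)$ is $k$-uniformly continuous, i.e. $\sup_k \|T_k\| < \infty$, then $\sup_k \|\tilde T^k\| < \infty$, which is precisely the $k$-uniform continuity of the family $(\tilde T^k)$.

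The only genuinely delicate point is the inequality $\int_{\Omega_1} g^{p/q}\, d\mathbb{P}_1 \leq \big(\int_{\Omega_1} g\, d\mathbb{P}_1\big)^{p/q}$, the step that converts an $L^p$-in-$\omega_1$ control of the $L^q$-in-$\omega$ norms back into the joint $L^q$-norm; without $q \geq p$ this fails and the tensorization cannot be bounded. Everything else — the slice identity, the two applications of Tonelli, and the density of $K_{q,p}$ — is routine, and I would expect the bulk of the write-up to be the careful verification that slices are measurable in $\omega_1$ and that the density argument applies verbatim to Hilbert-valued $L^q$-spaces.
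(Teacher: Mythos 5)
Your proof is correct and follows essentially the same route as the paper's: the same slice-by-slice identity $(\tilde T f)(\cdot,\omega_1)=T\bigl(f(\cdot,\omega_1)\bigr)$, the same Tonelli splittings, and the same key estimate --- your Jensen inequality $\int g^{p/q}\,d\mathbb{P}_1 \le \bigl(\int g\,d\mathbb{P}_1\bigr)^{p/q}$ is exactly the paper's H\"older step $\|h\|_{L^1(\Omega_1)}\le \|h\|_{L^{q/p}(\Omega_1)}$ on a probability space, written for $g=h^{q/p}$. Your additional remarks on well-definedness via the slice identity and on the density of $K_{q,p}$ merely make explicit what the paper leaves implicit.
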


\begin{proof}
i) Let $\omega \in \Omega$, $\omega_1 \in \Omega_1$:

\begin{align*}
\left\| \tilde{T} \left( \sum_{j \in J} \alpha_j(.) \beta_j(.) \right) \right\|_{L^p(\Omega \times \Omega_1, H_2)}^p
&= \int \P(d\omega_1) \int \P(d\omega) \left\| T \left( \sum_{j \in J} \alpha_j(.) \beta_j(\omega_1) \right) \right\|_{H_2}^p \\
&= \int \P(d\omega_1) \left\| T \left( \sum_{j \in J} \alpha_j(.) \beta_j(\omega_1) \right) \right\|_{L^p(\Omega, H_2)}^p \\
&\leq \| T \|^p \int \P(d\omega_1) \left\| \sum_{j \in J} \alpha_j(.) \beta_j(\omega_1)  \right\|_{L^q(\Omega, H_1)}^p \\
&= \| T \|^p \left\| \left\| \sum_{j \in J} \alpha_j(.) \beta_j(\omega_1) \right\|_{L^q(\Omega, H_1)}^p \right\|_{L^1(\Omega_1, \R)}\\ 
&\leq \| T \|^p \left\| \left\| \sum_{j \in J} \alpha_j(.) \beta_j(\omega_1) \right\|_{L^q(\Omega, H_1)}^p \right\|_{L^{q/p}(\Omega_1, \R)} \text{(by Hölder inequality)} \\
&=\| T \|^p \left\| \sum_{j \in J} \alpha_j(.) \beta_j(.) \right\|_{L^{q}(\Omega \times \Omega_1, H_1)}^p
\end{align*}

ii) immediate from above.
\end{proof}

\begin{cor}\label{cor2_1}
If $T$ is a continuous linear operator of $L^{\infty - 0}(\Omega, H_1)$ in $L^{\infty - 0}(\Omega, H_2)$, then $\tilde{T}$ is a continuous linear operator from $L^{\infty - 0}(\Omega \times \Omega_1, H_1)$ to $L^{\infty - 0}(\Omega \times \Omega_1, H_2)$.
\end{cor}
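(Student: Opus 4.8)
The plan is to reduce the statement to a family of single-exponent bounds and then invoke \thmref{thm2_1} exponent by exponent. The space $L^{\infty - 0}$ carries the Fréchet topology generated by the family of seminorms $f \mapsto \| f \|_{L^q}$, $q \in [1, \infty)$; on a probability space these are monotone in $q$, so a continuous linear map $T \colon L^{\infty - 0}(\Omega, H_1) \to L^{\infty - 0}(\Omega, H_2)$ is continuous if and only if for every $p \geq 1$ there exist $q = q(p)$ and a constant $C(p) > 0$ with
\begin{equation*}
\| T f \|_{L^p(\Omega, H_2)} \leq C(p) \, \| f \|_{L^q(\Omega, H_1)}, \qquad f \in L^{\infty - 0}(\Omega, H_1).
\end{equation*}
Since $\| f \|_{L^{q'}} \leq \| f \|_{L^q}$ whenever $q' \leq q$, I may always enlarge $q$ and hence assume $q(p) \geq p$, which is exactly the hypothesis required by \thmref{thm2_1}.

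First I would fix $p$ and use the density of $L^{\infty - 0}(\Omega, H_1)$ in $L^q(\Omega, H_1)$ together with the displayed bound to extend $T$ to a genuine continuous linear operator $T^{(p)} \colon L^q(\Omega, H_1) \to L^p(\Omega, H_2)$ of norm $\leq C(p)$, agreeing with $T$ on $L^{\infty - 0}(\Omega, H_1)$. Applying \thmref{thm2_1}(i) to $T^{(p)}$ produces a continuous extension $\widetilde{T^{(p)}} \colon L^q(\Omega \times \Omega_1, H_1) \to L^p(\Omega \times \Omega_1, H_2)$ satisfying the operator-norm estimate established there, namely
\begin{equation*}
\| \widetilde{T^{(p)}} g \|_{L^p(\Omega \times \Omega_1, H_2)} \leq C(p) \, \| g \|_{L^q(\Omega \times \Omega_1, H_1)}.
\end{equation*}

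The main point to verify is that these exponentwise extensions fit together into the single operator $\tilde{T}$ of \thmref{thm2_1}. This is where the product structure is used: on the algebraic span of elementary tensors $\alpha(\omega)\beta(\omega_1)$ with $\alpha \in L^{\infty - 0}(\Omega, H_1)$ and $\beta \in L^{\infty - 0}(\Omega_1, \R)$, the defining formula $\tilde{T}(\sum_j \alpha_j \beta_j) = \sum_j \beta_j (T \alpha_j)$ does not involve $p$ at all, so all the $\widetilde{T^{(p)}}$ coincide there. Because this span is dense in every $L^q(\Omega \times \Omega_1, H_1)$ and the target spaces are Hausdorff, each $\widetilde{T^{(p)}}$ is the unique continuous extension of that common map; consequently they agree on the intersection $\bigcap_q L^q(\Omega \times \Omega_1, H_1) = L^{\infty - 0}(\Omega \times \Omega_1, H_1)$ and define there precisely the restriction of $\tilde{T}$.

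Finally I would read off continuity in the Fréchet topology. For $g \in L^{\infty - 0}(\Omega \times \Omega_1, H_1)$ and every $p$, the gluing gives $\tilde{T} g = \widetilde{T^{(p)}} g$, whence the preceding estimate yields
\begin{equation*}
\| \tilde{T} g \|_{L^p(\Omega \times \Omega_1, H_2)} \leq C(p) \, \| g \|_{L^{q(p)}(\Omega \times \Omega_1, H_1)}.
\end{equation*}
As this holds for every target seminorm index $p$, with the source seminorm index $q(p)$ and constant $C(p)$ furnished by the continuity of $T$, it is exactly the assertion that $\tilde{T} \colon L^{\infty - 0}(\Omega \times \Omega_1, H_1) \to L^{\infty - 0}(\Omega \times \Omega_1, H_2)$ is continuous. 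The only genuine obstacle is the \emph{gluing} step: one must confirm that the elementary-tensor formula is $p$-independent and that elementary tensors are dense in each $L^q$ of the product, so that the separately constructed extensions are forced to agree; the remaining steps are routine.
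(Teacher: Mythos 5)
Your proposal is correct and is essentially the argument the paper intends: Corollary~2.1 is stated without proof precisely because it follows from Theorem~2.1 applied exponent by exponent, using that on a probability space the $L^q$-norms increase in $q$ (so one may take $q(p)\geq p$) and that the elementary-tensor formula defining $\tilde T$ is independent of the exponent. Your explicit verification of the gluing step --- density of the span of $\alpha(\omega)\beta(\omega_1)$ in each $L^q(\Omega\times\Omega_1,H_1)$ forcing the exponentwise extensions to coincide on $L^{\infty-0}(\Omega\times\Omega_1,H_1)$ --- fills in exactly the detail the paper leaves implicit, and the resulting seminorm estimate $\left\lVert \tilde T g \right\rVert_{L^p} \leq C(p)\left\lVert g \right\rVert_{L^{q(p)}}$ is the required continuity.
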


\begin{thm}\label{thm2_2}
Let $(\Omega_i, \F_i, \P_i, H_i)_{i = 1,2}$ be two Gaussian spaces and $H_3$ and $H_4$ two abstract Hilbert spaces.
\begin{enumerate}
\renewcommand{\labelenumi}{\roman{enumi})}
\item If $T$ is a linear continuous operator from $\dinf(\Omega_1, H_3)$ in $\dinf(\Omega_1, H_4)$, there exists a unique linear extension $\tilde{T}$ of $T$, from $\dinf(\Omega_1 \times \Omega_2, H_3)$ in $\dinf(\Omega_1 \times \Omega_2, H_4)$.
\item If $T_k$ is a $k$-uniformly convergent sequence from $\dinf(\Omega_1, H_3)$ in $\dinf(\Omega_1, H_4)$, then the sequence $\tilde{T}_k$ is $k$-uniformly convergent from $\dinf(\Omega_1 \times \Omega_2, H_3)$ in $\dinf(\Omega_1 \times \Omega_2, H_4)$.
\end{enumerate}

\end{thm}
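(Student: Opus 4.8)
The plan is to reproduce the Fubini--Hölder argument of \thmref{thm2_1}, but to run it once for every Fréchet seminorm of $\dinf$ and to exploit the tensor splitting of the Malliavin gradient on the product space. First I would record that, since $\dinf(\Omega_1,H_4)$ carries the seminorms $\|\cdot\|_{\drp}$, continuity of $T$ is equivalent to the quantitative statement that for every $(p,r)$ there exist $(q,s)$, with $q\geq p$, and a constant $C=C(p,r)$ such that
\begin{equation*}
\| T g \|_{\drp(\Omega_1,H_4)} \leq C\, \| g \|_{\dqs(\Omega_1,H_3)} \qquad (g\in\dinf(\Omega_1,H_3)).
\end{equation*}
The extension is forced on the $\dinf$-dense subspace $K^{\infty}(\Omega_1\times\Omega_2)$: for $f=\sum_j \alpha_j(\omega_1)\beta_j(\omega_2)$ one must set $\tilde T f=\sum_j (T\alpha_j)(\omega_1)\,\beta_j(\omega_2)$, i.e.\ $\tilde T$ acts on the $\omega_1$-slice and treats $\omega_2$ as a parameter; well-definedness is the verification already made in \thmref{thm2_1}, and uniqueness of the \emph{continuous} extension is then immediate from density. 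Everything reduces to the continuity estimate for $\tilde T$ on $K^{\infty}$.

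Next I would use the structure of the product Gaussian space, whose Cameron--Martin space is $H_1\oplus H_2$ and whose gradient splits as $\Grad=\Grad_1+\Grad_2$, the two partial gradients commuting and taking values in orthogonal tensor factors. Consequently, up to constants depending only on $r$,
\begin{equation*}
\big\| \Grad^n \phi \big\|^p \ \leq\ C_r \sum_{a+b=n} \big\| \Grad_1^a\Grad_2^b \phi \big\|_{(\overset{a}{\otimes}H_1)\otimes(\overset{b}{\otimes}H_2)\otimes H_4}^p ,
\end{equation*}
with a matching lower bound. Since $\Grad_2$ differentiates only in $\omega_2$ while $\tilde T$ acts only in $\omega_1$, they commute, so that for $f\in K^{\infty}$ every mixed derivative is
\begin{equation*}
\Grad_1^a\Grad_2^b \tilde T f \;=\; \sum_j \big(\Grad_1^a T\alpha_j\big)(\omega_1)\otimes\big(\Grad_2^b\beta_j\big)(\omega_2).
\end{equation*}

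Fixing $a$, I would introduce $S_a:=\Grad_1^a\circ T$, which the displayed continuity bound sends $\dinf(\Omega_1,H_3)$ continuously into $L^p(\Omega_1,\overset{a}{\otimes}H_1\otimes H_4)$, with $\|S_a g\|_{L^p}\leq\|Tg\|_{\drp}\leq C\|g\|_{\dqs}$. The mixed derivative above is exactly the $\Omega_2$-tilde-extension, in the sense of \thmref{thm2_1} and Corollary~\ref{cor2_1}, of $S_a$ applied to $\Grad_2^b f$, the source and target being the abstract Hilbert spaces $H_3$ and $\overset{a}{\otimes}H_1\otimes H_4$. The slice-wise Fubini--Hölder estimate then yields
\begin{equation*}
\big\| \Grad_1^a\Grad_2^b \tilde T f \big\|_{L^p(\Omega_1\times\Omega_2)} \leq C\, \big\| \Grad_2^b f \big\|_{\dqs(\Omega_1\times\Omega_2)} \leq C'\, \| f \|_{\mathbb{D}^q_{s+b}(\Omega_1\times\Omega_2,H_3)} ,
\end{equation*}
and summing over $a+b\leq r$ gives $\|\tilde T f\|_{\drp(\Omega_1\times\Omega_2,H_4)}\leq C''\|f\|_{\mathbb{D}^q_{s+r}(\Omega_1\times\Omega_2,H_3)}$, which is the $\dinf$-continuity; density concludes part~(i). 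Part~(ii) is then automatic: the tilde-construction is linear in the operator and the constants above are precisely the seminorm bounds of the input, so $\widetilde{T_k}=\tilde T_k$ and the $k$-uniform convergence of $(T_k)$ passes verbatim to $(\tilde T_k)$, exactly as in \thmref{thm2_1}(ii).

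The delicate point, and the step I expect to be the real obstacle, is making this appeal to \thmref{thm2_1} legitimate. On the one hand $S_a$ is bounded only for the $\dqs$-seminorm of its argument, not for a plain $L^q$-norm, so \thmref{thm2_1} must be applied after realizing $\Grad_2^b f$ through its $\omega_1$-jet $(\Grad_1^c\Grad_2^b f)_{c\leq s}$. On the other hand the $\omega_2$-derivatives force the auxiliary Hilbert factor $\overset{b}{\otimes}H_2$ into the argument: for fixed $\omega_2$ one is applying $S_a$ to an $H_3\otimes(\overset{b}{\otimes}H_2)$-valued function, i.e.\ effectively to $S_a\otimes\mathrm{Id}_{\overset{b}{\otimes}H_2}$. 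It is exactly here that the generality of \thmref{thm2_1} and Corollary~\ref{cor2_1} over \emph{abstract} Hilbert source and target spaces is indispensable (expanding $\Grad_2^b\beta_j$ in an orthonormal basis of $\overset{b}{\otimes}H_2$ otherwise leaves one with a vector-valued, square-function boundedness to control). Verifying that the jet realization and this Hilbert-valued tensoring are compatible with the Fubini--Hölder inequality of \thmref{thm2_1} is where the care is concentrated; once it is in place, the remaining steps are the bookkeeping indicated above.
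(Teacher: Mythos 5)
Your route is genuinely different from the paper's, so let me first fix the contrast. The paper reduces to $H_3=H_4=\R$ and never differentiates: it writes $L=\tilde L_1+\tilde L_2$ for the O.U. operators of the two factors, shows $\tilde L_1,\tilde L_2$ are $\dinf$-continuous on the product by commuting them with $(1-L)^{r/2}$, and then runs an induction in which $(1-L)\circ\tilde T$ is split as $\left((1-\tilde L_1)-\tilde L_2\right)\circ\tilde T$: the first piece is the extension of the operator $(1-L_1)\circ T$ (again continuous on $\dinf(\Omega_1)$), the second is handled by the commutation $\tilde L_2\circ\tilde T=\tilde T\circ\tilde L_2$ on $K^\infty(\Omega_1\times\Omega_2)$, and each step lifts the target regularity from $\sko_r^p$ to $\sko_{r+2}^p$, with Corollary~\ref{cor2_1} as base case. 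Your gradient-splitting skeleton is a legitimate alternative: the pointwise comparison of mixed partials $\Grad_1^a\Grad_2^b$ with full gradients is correct in both directions (they are, up to multinomial constants, the components of $\Grad^{a+b}$ in the orthogonal decomposition of $\overset{a+b}{\otimes}(H_1\oplus H_2)$), and your worry (a) about $S_a=\Grad_1^a\circ T$ being bounded only for the $\dqs$-seminorm dissolves if you rerun the Fubini--H\"older computation of \thmref{thm2_1} slice-wise instead of quoting its statement: every $\omega_2$-slice of an element of $K^\infty$ lies in $\dinf(\Omega_1,\cdot)$, so the slice estimate $\|\cdot\|_{L^p(\Omega_1)}\leq C\|\cdot\|_{\dqs(\Omega_1)}$ applies directly, and H\"older in $\omega_2$ (with $q\geq p$, which you can always arrange by enlarging $q$) finishes. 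Your ``jet realization,'' by contrast, is problematic as stated: a bounded operator defined on the closed range of the jet embedding in $L^q(\Omega_1,\mathcal H)$ has no canonical bounded extension off that subspace when $q\neq 2$.

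The genuine gap is your point (b), and the cure you propose for it is wrong: the abstract-Hilbert generality of \thmref{thm2_1} and Corollary~\ref{cor2_1} does \emph{not} give boundedness of $T\otimes\mathrm{Id}_{\overset{b}{\otimes}H_2}$. Those results extend an operator across an independent second Gaussian \emph{factor of the base}; they say nothing about tensoring the \emph{fiber} of an $L^q(\Omega_1,H_3)\to L^p(\Omega_1,H_4)$-bounded operator with the identity of an auxiliary Hilbert space, and for $p\neq 2$ this is precisely the vector-valued square-function problem you yourself mention. In the paper it is the content of Theorem~\ref{thm2_4} and Corollary~\ref{cor2_4}, proved by the second-chaos randomization $\sum_j X_j\otimes e_j\leftrightarrow\sum_j W(e_j)X_j$, not a consequence of \thmref{thm2_1}. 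Without it your estimate for $\Grad_1^a\Grad_2^b\tilde T f$ with $b\geq 1$ has no proof, since the factor $\overset{b}{\otimes}H_2$ must pass through $T$ untouched on each $\omega_2$-slice. The gap is repairable: Theorem~\ref{thm2_4} and Corollary~\ref{cor2_4} appear after the present statement, but their proofs rest only on \thmref{thm2_1} and \thmref{thm2_3}, so invoking them is not circular. As written, though, the step you correctly single out as indispensable is attributed to a theorem that does not deliver it; the paper's O.U. route is organized precisely so as to avoid ever needing this tensorization.
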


\begin{proof}

\begin{enumerate}
\renewcommand{\labelenumi}{\roman{enumi})}
\item to simplify, we suppose $H_3 = H_4 = \R$. Let $L_1$ and $L_2$ be the O.U. operators on $\dinf(\Omega_1)$ and $\dinf(\Omega_2)$, and $L$ the O.U. operator on $\dinf(\Omega_1 \times \Omega_2)$. $\tilde{L}_1$ is $\dinf$-continuous on $\dinf(\Omega_1 \times \Omega_2)$: if $r \in \N_\star$ and $\varphi$ in $K^{\infty}(\Omega_1 \times \Omega_2)$, we have $\tilde{L}_1 + \tilde{L}_2 = L$, so 

\begin{align*}
\| \tilde{L}_1 \varphi \|_{\drp(\Omega_1 \times \Omega_2)} 
&= \|(1-L)^{r/2} \tilde{L}_1 \varphi \|_{L^p(\Omega_1 \times \Omega_2)} \\
&= \| \tilde{L}_1 (1-L)^{r/2} \varphi \|_{L^p(\Omega_1 \times \Omega_2)} \\
&\leq C \| (1-L)^{r/2} \varphi \|_{L^q(\Omega_1 \times \Omega_2)} \\
&\leq C \| \varphi \|_{\sko_r^q(\Omega_1 \times \Omega_2)}
\end{align*}
 $C$ being a constant, $q > 1$.

Therefore, $\tilde{L}_2$ is $\dinf(\Omega_1 \times \Omega_2)$-continuous.

We will prove by induction that $\tilde{T}$ is $\dinf(\Omega_1 \times \Omega_2)$-continuous.

Corollary \ref{cor2_1} shows that $\tilde{T}$ is continuous from $\dinf(\Omega_1 \times \Omega_2)$ 

to $L^{\infty - 0}(\Omega_1 \times \Omega_2)$.

We suppose that $\tilde{T}$ is continuous from $\dinf(\Omega_1 \times \Omega_2)$ to $\dinf_r(\Omega_1 \times \Omega_2)$.

On $K^{\infty}(\Omega_1 \times \Omega_2)$, we have $\tilde{L}_2 \circ \tilde{T} = \tilde{T} \circ \tilde{L_2}$.

So using the induction hypothesis: $\exists (q, s), q>1, s \in \N_\star$ and a constant $C_1$ such that $\forall \varphi \in K^{\infty}(\Omega_1 \times \Omega_2)$:
\begin{align*}
\| ( \tilde{L}_2 \circ \tilde{T} ) \varphi \|_{\drp} = \| ( \tilde{T} \circ \tilde{L_2} ) \varphi \|_{\drp} \leq C_1 \| \varphi \|_{\sko_s^q}
\end{align*}

Again with the induction hypothesis on $r$, there exists
 
$(q', s'), q>1, s' \in \N_\star$ and a constant $C_2$ such that:

\begin{align*}
\| ((1-\tilde{L}_1) \circ \tilde{T}) \varphi \|_{\drp} \leq C_2 \| \varphi \|_{\sko_{s'}^{q'}}
\end{align*}

Then: $\| ((1-L) \circ \tilde{T}) \varphi \|_{\drp} \leq (C_1 \vee C_2) \| \varphi \|_{s \vee s'}^{q \vee q'} $.

So $(1-L) \circ \tilde{T}$ is continuous from $K^{\infty}(\Omega_1 \times \Omega_2)$ in $\drp(\Omega_1 \times \Omega_2)$. So $\tilde{T}$ is continuous from $K^{\infty}(\Omega_1 \times \Omega_2)$ to $\sko_{r+2}^p(\Omega_1 \times \Omega_2)$ and then from $\dinf(\Omega_1 \times \Omega_2)$ to $\dinf(\Omega_1 \times \Omega_2)$.
\end{enumerate}

\end{proof}

\begin{thm}\label{thm2_3}
Let $(\Omega, \F, \P, H)$ be a Gaussian space, $\tilde{H}$ an abstract Hilbert space, $(e_i)_{i \in \N_{\star}}$ an Hilbertian basis of $\tilde{H}$, and $T$ a linear continuous operator from $L^p(\Omega)$ to $L^q(\Omega)$.

Denote $\tilde{T}$ the linear operator defined by the serie: 

if $X(\omega) = \sum_{i=1}^{\infty} f_i(\omega)e_i, X \in \dinf(\Omega, \tilde{H}), f_i(\omega) \in \dinf(\Omega)$ 

then $\tilde{T}(X) = \sum_{i=1}^{\infty}(T f_i)(\omega)e_i$.

This definition of $\tilde{T}$ is meaningful, and $\tilde{T}$ is a continuous linear operator from $L^p({\Omega, \tilde{H}})$ to $L^q({\Omega, \tilde{H}})$.
\end{thm}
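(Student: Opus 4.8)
The entire statement reduces to one square-function estimate of Marcinkiewicz--Zygmund type, which I would obtain by Gaussian randomization. Concretely, the plan is to prove that there is a constant $C_{p,q}$ such that, for every finite family $f_1,\dots,f_n\in L^p(\Omega)$,
\[
\Big\| \Big(\sum_i |Tf_i|^2\Big)^{1/2} \Big\|_{L^q(\Omega)}
\le C_{p,q}\,\|T\|\, \Big\| \Big(\sum_i |f_i|^2\Big)^{1/2} \Big\|_{L^p(\Omega)}.
\]
Since $\|X\|_{L^p(\Omega,\tilde H)}=\big\|(\sum_i|f_i|^2)^{1/2}\big\|_{L^p(\Omega)}$ for $X=\sum_i f_ie_i$, and similarly for $\tilde T(X)$ in $L^q(\Omega,\tilde H)$, both assertions of the theorem follow at once from this inequality, as I explain at the end.

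To establish the estimate I would introduce an auxiliary probability space carrying independent standard Gaussian variables $(g_i)_{i\in\N_\star}$ and use the moment identity $\E_g|\sum_i a_ig_i|^r=\gamma_r(\sum_i a_i^2)^{r/2}$, valid for all real $a_i$ and $r>0$, with $\gamma_r=\E_g|g_1|^r$. Applying it with exponent $q$ to the reals $(Tf_i)(\omega)$, then using the linearity of $T$ (the $g_i$ are constant in $\omega$, so $\sum_i g_i\,Tf_i=T(\sum_i g_if_i)$) and Fubini's theorem, I get
\[
\Big\| \Big(\sum_i |Tf_i|^2\Big)^{1/2} \Big\|_{L^q(\Omega)}^{q}
=\gamma_q^{-1}\,\E_g\Big\| T\Big(\sum_i g_if_i\Big)\Big\|_{L^q(\Omega)}^{q}
\le \gamma_q^{-1}\,\|T\|^{q}\,\E_g\Big\|\sum_i g_if_i\Big\|_{L^p(\Omega)}^{q}.
\]

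The only delicate point is to bound $\E_g\|\sum_i g_if_i\|_{L^p}^{q}$ by the desired right-hand side, and this is where the relation between $p$ and $q$ enters. When $q\ge p$ one has $q/p\ge 1$, so Minkowski's integral inequality lets me pass from $\E_g(\int|\cdot|^p)^{q/p}$ to $\int(\E_g|\cdot|^q)^{p/q}$; invoking the moment identity once more, now with exponent $q$, gives $(\E_g|\sum_i g_if_i(\omega)|^q)^{1/q}=\gamma_q^{1/q}(\sum_i|f_i(\omega)|^2)^{1/2}$, and after raising to the appropriate powers the two factors $\gamma_q$ cancel, yielding the inequality with $C_{p,q}=1$. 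The hard part is precisely this interchange of the Gaussian average with the two different Lebesgue exponents; under $q\ge p$ (the hypothesis retained throughout this section, as in Theorem~\ref{thm2_1}) it produces the clean contraction, while for arbitrary exponents the same randomization together with the full Marcinkiewicz--Zygmund theorem still yields a finite $C_{p,q}$.

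Granting the square-function inequality, I would conclude as follows. Applied to a tail $f_M,\dots,f_N$ of the expansion $X=\sum_i f_ie_i$ of an element $X\in\dinf(\Omega,\tilde H)\subset L^p(\Omega,\tilde H)$, it shows that the partial sums $\sum_{i\le N}(Tf_i)e_i$ are Cauchy in $L^q(\Omega,\tilde H)$; hence the defining series converges, and since the coefficients $f_i=\langle X,e_i\rangle_{\tilde H}$ are uniquely determined by $X$ the definition of $\tilde T$ is meaningful. Read as $\|\tilde T(X)\|_{L^q(\Omega,\tilde H)}\le C_{p,q}\|T\|\,\|X\|_{L^p(\Omega,\tilde H)}$, the same inequality gives continuity on the dense subspace of finite $\tilde H$-valued sums with $\dinf(\Omega)$ coefficients, and $\tilde T$ therefore extends uniquely to a bounded linear operator from $L^p(\Omega,\tilde H)$ to $L^q(\Omega,\tilde H)$, as claimed.
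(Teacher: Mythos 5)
Your proof is correct where it is carried out, and its engine is the same one the paper uses: the paper's auxiliary independent Gaussian space $(\Omega_1,\F_1,\P_1,H_1)$ with $Y_i=W(e_i)$ is exactly your Gaussian randomization, except that the paper works with norm \emph{equivalences} on the first chaos while you use the exact moment identity $\E_g\lvert\sum_i a_ig_i\rvert^r=\gamma_r(\sum_i a_i^2)^{r/2}$, which even yields the sharp constant $C_{p,q}=1$ in your case. Where the two arguments genuinely diverge is in how the exponents are interchanged. The paper's direct case is $q\le p$ (its case a), settled by applying $T$ for $\P(d\omega_1)$-a.e.\ fixed $\omega_1$ and then H\"older in $\omega_1$; its laborious case is $p<q$ (case b), which it reduces to equal exponents via the multiplication operators $T_gf=g\cdot Tf$ with $\frac1q+\frac1r=\frac1p$ and then unwinds by duality with $h^{q'/r}$. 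Your Minkowski interchange disposes of precisely that hard case $p\le q$ in one stroke, a real simplification over the paper's case b). Two caveats, one of which is a small gap you should close. First, your parenthetical is backwards: the convention retained in Theorem~\ref{thm2_1} ($T:L^q\to L^p$ with $q\ge p$) corresponds, for $T:L^p\to L^q$ as here, to $p\ge q$, not $q\ge p$; and Theorem~\ref{thm2_3} imposes no order at all, so both cases must be covered. Second, deferring the case $q<p$ to ``the full Marcinkiewicz--Zygmund theorem'' is borderline circular, since that theorem \emph{is} the square-function estimate being proved; fortunately this is the elementary case and closes in one line inside your own randomization, using Jensen in place of Minkowski: since $q/p<1$,
\begin{equation*}
\E_g\left\|\sum_i g_if_i\right\|_{L^p(\Omega)}^{q}
\le\left(\E_g\left\|\sum_i g_if_i\right\|_{L^p(\Omega)}^{p}\right)^{q/p}
=\gamma_p^{q/p}\left\|\Big(\sum_i|f_i|^2\Big)^{1/2}\right\|_{L^p(\Omega)}^{q},
\end{equation*}
which, combined with your first display, gives the square-function inequality with $C_{p,q}=\gamma_p^{1/p}\gamma_q^{-1/q}$; this is in substance the paper's case a), H\"older in $\omega_1$ reappearing as Jensen in $g$. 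With that line added your proof is complete, self-contained, and shorter than the paper's, since it replaces the multiplication-plus-duality detour of case b) by a single application of Minkowski's integral inequality; your concluding tail/density argument for meaningfulness and boundedness of $\tilde T$ is sound as written.
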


\begin{proof}
Let $(\Omega_1, \F_1, \P_1, H_1)$ be a Gaussian space, independent of $(\Omega, \F, \P, H)$, and $(e_j)_{j \in \N_{\star}}$ an Hilbertian basis of $H_1$.

Denote $Y_j = W(e_j)$; then the random variable $\sum_{i=1}^{\infty} f_i(\omega) Y_i(\omega_1)$ is correctly defined because its $L^p(\Omega \times \Omega_1)$ norm is equivalent to the $L^p(\Omega, \tilde{H})$ norm of X:

\begin{align*}
\| \sum_{i=1}^{\infty} f_i(\omega) Y_i(\omega_1) \|_{L^p(\Omega \times \Omega_1)}^p = \int \P (d\omega) \int \P (d\omega_1) | \sum_{i=1}^{\infty} f_i(\omega) Y_i(\omega_1) |^p
\end{align*}

and $\sum_{i=1}^{\infty} f_i(\omega) Y_i(\omega_1)$ is a Gaussian variable with law $N(0, \sqrt{ \sum_{i=1}^{\infty} |f_i(\omega)|^2})$, for almost all $\omega \in \Omega$.

Similarly, we have: $\| \tilde{T}(X) \|_{L^q({\Omega, \tilde{H}})} \sim \| \sum_{i=1}^{\infty} (T f_i)(\omega) Y_i(\omega_1) \|_{L^q{(\Omega \times \Omega_1)}}$.

To prove that $\tilde{T}$ is continuous, it is enough to show the inequality; $C$ being a constant:

\begin{align*}
\| \sum_{i=1}^{\infty} (T f_i)(.) Y_i(.) \|_{L^q{(\Omega \times \Omega_1)}} \leq C \| \sum_{i=1}^{\infty} f_i(.) Y_i(.) \|_{L^q{(\Omega \times \Omega_1)}}
\end{align*}

As $\sum_{i=1}^{\infty} f_i(\omega) Y_i(\omega_1) \in L^p(\Omega \times \Omega_1)$ we have $\P(d\omega_1)$-a.s.: 

$\sum_{i=1}^{\infty} f_i(\omega) Y_i(\omega_1) \in L^p(\Omega)$.

There are two cases to study:

a) $1 < q \leq p < + \infty$;
$T$ being continuous from $L^p(\Omega)$ in $L^q(\Omega)$, there exists a constant $C_0$ such that $\P(d\omega_1)$-a.s.:

\begin{align*}
\int \P(d\omega) | \sum_{i=1}^{\infty} (T f_i)(\omega) Y_i(\omega_1) |^q \leq C_0 \left[ \int | \sum_{i=1}^{\infty} f_i(\omega) Y_i(\omega_1) |^p \P(d\omega) \right]^{q/p}
\end{align*}

which implies the following inequalities:
\begin{align*}
\int \P(d\omega) \otimes \P(d\omega_1) | \sum_{i=1}^{\infty} (T f_i)(\omega) Y_i(\omega_1) |^q
&\leq C_0 \int \P(d\omega_1) \left[ \int | \sum_{i=1}^{\infty} f_i(\omega) Y_i(\omega_1) |^p \P(d\omega) \right]^{q/p} \\
&\leq C_0 \left[ \int \P(d\omega) \otimes \P(d\omega_1) | \sum_{i=1}^{\infty} f_i(\omega) Y_i(\omega_1) |^p \right]^{q/p} \text{ (H\"{o}lder)} 
\end{align*}

b) $1 < p < q < + \infty$
Let $r$ be such that $\frac{1}{q} + \frac{1}{r} = \frac{1}{p}$ and $g \in L^r(\Omega)$. We define $T_g f = g . Tf$. Then $T_g: L^p(\Omega) \rightarrow L^p(\Omega)$, and $\| T_g \|_{L^p} \leq \| T \| . \| g \|_{L^r}$. We apply case a) above to $T_g$, with $C_1 = \| T \|$:

\begin{align*}
\| \tilde{T}_g(X) \|_{L^p({\Omega, \tilde{H}})} \leq C_1 \|g\|_{L^r} \| X \|_{L^p({\Omega, \tilde{H}})}
\end{align*}

which implies:
\begin{align*}
\| g . \tilde{T}(X) \|_{L^p({\Omega, \tilde{H}})} \leq C_1 \|g\|_{L^r} \| X \|_{L^p({\Omega, \tilde{H}})}
\end{align*}

so: 
\setcounter{equation}{0}
\begin{align}
\left\| g . \left[ \sum_{i=1}^{\infty} (T f_i)(.)^2 \right]^{\frac{1}{2}}\right\|_{L^p{(\Omega)}} \leq C_1 \|g\|_{L^r} \| X \|_{L^p({\Omega, \tilde{H}})} \label{eq2_1}
\end{align} 

And (\ref{eq2_1}) is valid for all $g \in L^r(\Omega)$.

Let $p'$ and $q'$ be such that $\frac{1}{p} + \frac{1}{p'} = 1$ and $\frac{1}{q} + \frac{1}{q'} = 1$; $\forall h \in L^{q'}, h^{q'/r} \in L^r(\Omega)$, and $\| h^{q'/r} \|_{L^r} = \| h \|_{L^{q'}}^{q'/r}$.

With (\ref{eq2_1}) we have: 

\begin{align*}
\left\| h^{q'/r} \left[ \sum_{i=1}^{\infty} |T f_i|^2 \right]^{\frac{1}{2}} \right\|_{L^p} 
\leq C_1 \| h \|_{L^{q'}}^{q'/r} . \| X \|_{L^p({\Omega, \tilde{H}})}
\end{align*}

which implies:

\begin{align*}
\left\| h^{(q'/r + q'/p')} \left[ \sum_{i=1}^{\infty} |T f_i|^2 \right]^{\frac{1}{2}} \right\|_{L^1} 
\leq C_1 \| h \|_{L^{q'}}^{(q'/r + q'/p')} . \| X \|_{L^p({\Omega, \tilde{H}})}
\end{align*}
 
But $\frac{q'}{r} + \frac{q'}{p'} = q'(1 - \frac{1}{q}) = 1$. So:
\begin{align*}
\| h \| \tilde{T} X \|_H \|_{L^1(\Omega)} 
\leq C_1 \| h \|_{L^{q'}} . \| X \|_{L^p({\Omega, \tilde{H}})}
\end{align*}

which implies that $\tilde{T}$ is continuous from $L^p({\Omega, \tilde{H}})$ to $L^q({\Omega, \tilde{H}})$.
\end{proof}

\begin{cor}\label{cor2_2}
Let $T$ be a continuous linear operator from $\drp(\Omega)$ in $\sko_s^q(\Omega)$, and $\tilde{H}$ be an abstract Hilbert space. Then $\tilde{T}$ is a continuous linear operator from $\drp({\Omega, \tilde{H}})$ to $\sko_s^q({\Omega, \tilde{H}})$.
\end{cor}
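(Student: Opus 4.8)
The plan is to reduce the statement to \thmref{thm2_3}, which already extends a continuous operator $\L{p}(\Omega)\to\L{q}(\Omega)$ to the $\tilde H$-valued spaces, by trading the Sobolev regularity encoded in $\drp,\dqs$ for plain $\L{p},\L{q}$ integrability through the Ornstein--Uhlenbeck operator. Recall (exactly as already used in the proof of \thmref{thm2_2}) that the $\drp$-norm is equivalent to $f\mapsto\|(1-L)^{r/2}f\|_{\L{p}(\Omega)}$, so $(1-L)^{r/2}$ is an isomorphism from $\drp(\Omega)$ onto $\L{p}(\Omega)$ with inverse $(1-L)^{-r/2}$, and likewise $(1-L)^{s/2}\colon\dqs(\Omega)\to\L{q}(\Omega)$ is an isomorphism.

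First I would set $S:=(1-L)^{s/2}\circ T\circ(1-L)^{-r/2}$. Then $S$ is a composition of three continuous maps, namely $(1-L)^{-r/2}\colon\L{p}(\Omega)\to\drp(\Omega)$, then $T\colon\drp(\Omega)\to\dqs(\Omega)$, then $(1-L)^{s/2}\colon\dqs(\Omega)\to\L{q}(\Omega)$, hence a continuous linear operator $\L{p}(\Omega)\to\L{q}(\Omega)$. Applying \thmref{thm2_3} to $S$ furnishes a continuous extension $\tilde S\colon\L{p}(\Omega,\tilde H)\to\L{q}(\Omega,\tilde H)$.

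Next I would reassemble $\tilde T$ from $\tilde S$. The tilde construction, being the coefficientwise action on a fixed Hilbertian basis $(e_i)$ of $\tilde H$, is multiplicative: $\widetilde{A\circ B}=\tilde A\circ\tilde B$ on the dense set of finite sums $\sum_i f_i(\omega)e_i$, whenever $A,B$ act on the scalar spaces. Moreover the vector-valued Ornstein--Uhlenbeck operator acts coefficientwise on such sums, so its fractional powers are precisely the tilde-extensions of the scalar powers $(1-L)^{\pm r/2}$ and $(1-L)^{\pm s/2}$. Evaluating $\widetilde{(1-L)^{-s/2}}\circ\tilde S\circ\widetilde{(1-L)^{r/2}}$ on $X=\sum_i f_i e_i$ yields $\sum_i (Tf_i)\,e_i=\tilde T X$, so that $\tilde T=\widetilde{(1-L)^{-s/2}}\circ\tilde S\circ\widetilde{(1-L)^{r/2}}$. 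Each of the three factors is continuous: the outer two because $\widetilde{(1-L)^{r/2}}\colon\drp(\Omega,\tilde H)\to\L{p}(\Omega,\tilde H)$ and $\widetilde{(1-L)^{-s/2}}\colon\L{q}(\Omega,\tilde H)\to\dqs(\Omega,\tilde H)$ are isomorphisms by the vector-valued norm equivalence, the middle one by \thmref{thm2_3}. Hence $\tilde T$ is continuous from $\drp(\Omega,\tilde H)$ to $\dqs(\Omega,\tilde H)$, which is the claim.

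The hard part is the one input I have treated as known, namely the vector-valued equivalence $\|X\|_{\drp(\Omega,\tilde H)}\sim\|(1-L)^{r/2}X\|_{\L{p}(\Omega,\tilde H)}$, i.e. Meyer's inequalities in the $\tilde H$-valued setting, together with the verification that the fractional Ornstein--Uhlenbeck powers genuinely coincide with the tilde-extensions of their scalar versions. I would establish this by applying \thmref{thm2_3} to the scalar Riesz-type operators $\Grad^j(1-L)^{-j/2}$, which are continuous on $\L{p}(\Omega)$ by the scalar Meyer inequalities; their extensions control $\|\Grad^j X\|$ coefficientwise in terms of $\|(1-L)^{j/2}X\|$, and summing over $0\le j\le r$ recovers the equivalence. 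The identity $\widetilde{A\circ B}=\tilde A\circ\tilde B$ and the $\dinf$-density of the finite sums $\sum_i f_i e_i$ then make the reassembly rigorous by continuity.
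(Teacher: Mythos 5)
Your proposal is correct and takes essentially the same route as the paper: the paper likewise conjugates $T$ by Ornstein--Uhlenbeck powers, defining $T' = (Id-L)^{s/2}\circ T\circ (Id-L)^{-r/2}$ as a continuous operator from $L^p(\Omega)$ to $L^q(\Omega)$, extends it via Theorem~\ref{thm2_3}, and sets $\tilde T = (Id-L)_{\tilde H}^{-s/2}\circ \tilde T'\circ (Id-L)_{\tilde H}^{r/2}$ using the vector-valued O.U.\ operator. The additional verifications you supply --- that the fractional O.U.\ powers on $\dinf(\Omega,\tilde H)$ coincide with the tilde-extensions of their scalar versions, and that the vector-valued norm equivalence holds --- merely fill in details the paper leaves implicit.
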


\begin{proof}

We denote by $T'$ the continuous linear operator, with which the following diagram is commutative: 

\[
\begin{tikzcd}
\drp (\Omega) \arrow{r}{T} & \dqs (\Omega) \arrow{d}{(Id - L)^\frac{s}{2}} \\
L^p(\Omega) \arrow{u}{(Id - L)^{-\frac{r}{2}}} \arrow{r}{T'} & L^q (\Omega)
\end{tikzcd}
\]

Then we denote by $(Id - L)_{\tilde{H}}$ the O.U. operator on $\dinf(\Omega, \tilde{H})$. Then we use Theorem 2.3 to define $\tilde{T}: L^p({\Omega, \tilde{H}}) \rightarrow L^q({\Omega, \tilde{H}})$ and 

$\tilde{T} = (Id - L)_{\tilde{H}}^{-s/2} \circ \tilde{T}' \circ (Id - L)_{\tilde{H}}^{r/2}$.

\end{proof}

Now another extension theorem:
$H_1, H_2, H'$ being three abstract Hilbert spaces and $(e_i)_{i \in \N_{\star}}$ an Hilbertian basis of $H'$. Let $T$ be a continuous linear operator from $L^p(\Omega, H_1)$ in $L^q(\Omega, H_2)$. On the subset of $L^p(\Omega, H_1 \otimes H')$, with $J$ finite, $J \subset \N_{\star}$ defined by $\left\{ \sum_{j \in J} X_j \otimes e_j \big/ X_j \in L^p(\Omega, H_1) \right\}$, we define an operator $\tilde{T}$ by: $\tilde{T} \left( \sum_{j \in J} X_j \otimes e_j \right) = \sum_{j \in J} T X_j \otimes e_j$; and we have:

\begin{thm}\label{thm2_4}
If $p \geq q$, there exists an unique extension of $\tilde{T}$, which is continuous linear, from $L^p(\Omega, H_1 \otimes H')$ in $L^q(\Omega, H_2 \otimes H')$.
\end{thm}

\begin{proof}
We will first prove that :
\begin{align*}
\left\| \left( \sum_{j \in J} X_j \otimes e_j \right) \right\|_{L^p(\Omega, H_1 \otimes H')}
\simeq \left\| \left( \sum_{j \in J} W(e_j) X_j \right) \right\|_{L^p(\Omega \times \Omega_1, H_1)}
\end{align*}
$(\Omega_1, \F_1, \P_1, H')$ being a Gaussian space, independent of $(\Omega, \F, \P, H_1)$, but with its chaos $\mathcal{C}_1(\Omega_1)$ being generated by the $(W(e_i))_{i \in \N_{\star}}$.

Denote $U = \sum_{j \in J} X_j \otimes e_j$. Then with $\omega \in \Omega, \omega_1 \in \Omega_1$:

\begin{align*}
\| U \|_{L^p(\Omega, H_1 \otimes H')}
= \left[ \int \left( \sum_{j \in J} \| X_j \|_{H_1}^2 \right)^{\frac{p}{2}} \P(d\omega) \right]^{\frac{1}{p}}
= \left\| \left\| \sum_{j \in J} W(e_j)(\omega_1) X_j(\omega) \right\|_{L^2(\Omega_1, H_1)} \right\|_{L^p(\Omega)}
\end{align*}

Let $\omega$ be fixed and $(\Omega_2, \F_2, \P_2, H_1)$ be another Gaussian space, independent of the first two others, and whose chaos $\mathcal{C}_1(\Omega_2)$ includes the 

$W[X_j(\omega)](\omega_2), j \in J$.
Then:
\begin{align*}
\| U \|_{L^p(\Omega, H_1 \otimes H')}
= \left\| \left\| \sum_{j \in J} W(e_j)(\omega_1) W[X_j(\omega)](\omega_2) \right\|_{L^p(\Omega_1 \times \Omega_2)} \right\|_{L^p(\Omega)}
\end{align*}

$\omega$ being fixed, $\sum_{j \in J} W(e_j)(\omega_1) W[X_j(\omega)](\omega_2)$ is in $\mathcal{C}_2(\Omega_1 \times \Omega_2)$.

All $L^p$ norms being equivalent on this chaos $\mathcal{C}_2(\Omega_1 \times \Omega_2)$, we get:

\begin{align*} 
\| U \|_{L^p(\Omega, H_1 \otimes H')}
&\simeq \left\| \left\| \sum_{j \in J} W(e_j)(\omega_1) W[X_j(\omega)](\omega_2) \right\|_{L^2(\Omega_1 \times \Omega_2)} \right\|_{L^p(\Omega)} \\
&= \left\{ \int \P(d\omega) \otimes \P(d\omega_1) \otimes \P(d\omega_2) | \sum_{j \in J} W(e_j)(\omega_1) W[X_j(\omega)](\omega_2) |^p \right\}^{\frac{1}{p}}
\end{align*}

Now we fix $\omega$ and $\omega_1$: 

$\sum_{j \in J} W(e_j)(\omega_1) W[X_j(\omega)](\omega_2)$ is a Gaussian $\in \mathcal{C}_1(\Omega_2)$; as all $L^p$ norms are equivalent on $\mathcal{C}_1(\Omega_2)$,

we have: 

\begin{align} 
\| U \|_{L^p(\Omega, H_1 \otimes H')}
&\simeq \left\{ \int \P(d\omega) \otimes \P(d\omega_1) | \sum_{i,j \in J} W(e_i)(\omega_1) W(e_j)(\omega_1) \langle X_i, X_j \rangle_{H_1}(\omega) |^{\frac{p}{2}} \right\}^{\frac{1}{p}} \notag\\
&= \left\| \sum_{j \in J} W(e_j)(\omega_1) X_j(\omega) \right\|_{L^p(\Omega \times \Omega_1, H_1)} \label{eq2_2}
\end{align}

A similar computation proves that:
\begin{align}
\left\| \tilde{T} \left( \sum_{j \in J} X_j \otimes e_j \right) \right\|_{L^q(\Omega, H_1 \otimes H')}
\simeq \left\| \sum_{j \in J} W(e_j)(\omega_1)(T X_j)(\omega) \right\|_{L^q(\Omega \otimes \Omega_1, H_1)} \label{eq2_3}
\end{align}

Now we consider the operator $S$ defined on the subset of 

$L^p(\Omega \times \Omega_1, H_1)$: $\left\{ \sum_{j \in J} W(e_j) X_j | J \text{ finite } \subset \N_{\star} \right\}$, by:

\begin{align*}
S \left( \sum_{j \in J} W(e_j) X_j \right) = \sum_{j \in J} W(e_j)(\omega_1).(T X_j)(\omega)
\end{align*}

The same proof as in Theorem \ref{thm2_1}.i shows that there exists a constant $C>0$ such that:

\begin{align}
\left\| S \left( \sum_{j \in J} W(e_j) X_j \right) \right\|_{L^q(\Omega \otimes \Omega_1, H_2)} \leq C \left\|  \sum_{j \in J} W(e_j) X_j \right\|_{L^p(\Omega \otimes \Omega_1, H_1)} \label{eq2_4}
\end{align}

So (\ref{eq2_2}), (\ref{eq2_3}), (\ref{eq2_4}) imply:

\begin{align*}
\left\| \tilde{T} \left( \sum_{j \in J} X_j \otimes e_j \right) \right\|_{L^q(\Omega, H_2 \otimes H')}
&= \left\| \sum_{j \in J} T X_j \otimes e_j \right\|_{L^q(\Omega, H_2 \otimes H')} \\
\simeq \left\| \sum_{j \in J} W(e_j) T X_j \right\|_{L^q(\Omega \otimes \Omega_1, H_2)} &= \left\| S \left( \sum_{j \in J} W(e_j) X_j \right) \right\|_{L^q(\Omega \otimes \Omega_1, H_2)} \\
\leq C \left\|  \sum_{j \in J} W(e_j) X_j \right\|_{L^p(\Omega \otimes \Omega_1, H_2)} 
&\simeq C \left\| \sum_{j \in J} X_j \otimes e_j \right\|_{L^p(\Omega, H_1 \otimes H')} 
\end{align*}

Then $\tilde{T}\left(\sum_{j \in J} X_j \otimes Y_j \right)$ can be defined, using the decomposition of $Y_j$ on the basis $(e_i)_{i \in \N_{\star}}$ of $H'$.

Finally, we have an extension of $\tilde{T}$, continuous linear operator from 

$L^p(\Omega, H \otimes H')$ to $L^q(\Omega, H \otimes H')$.

This extension, denoted again $\tilde{T}$, does not depend on the Hilbertian basis of $H'$: if $B$ and $B'$ are two Hilbertian bases of $H'$, we have with obvious notations:

\begin{align*}
\tilde{T}_{(B)} \left( X_j \otimes Y_j \right) = \sum_{j \in J} T X_j \otimes Y_j = \tilde{T}_{(B')} \left( \sum_{j \in J} X_j \otimes Y_j \right)
\end{align*}

\end{proof}

\begin{cor}\label{cor2_3}
In the same setting as in Theorem \ref{thm2_4}, if $T$ is a continuous linear operator from $L^{\infty - 0}(\Omega, H_1)$ in $L^{\infty - 0}(\Omega, H_2)$, then $\tilde{T}$ is continuous linear from $L^{\infty - 0}(\Omega, H_1 \otimes H')$ in $L^{\infty - 0}(\Omega, H_2 \otimes H')$
\end{cor}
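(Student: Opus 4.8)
The plan is to reduce the statement to \thmref{thm2_4} by unwinding the Fréchet (projective-limit) topology of the $L^{\infty-0}$ spaces. Recall that on a probability space $L^{\infty-0}(\Omega,H)=\bigcap_{q\geq 1}L^q(\Omega,H)$, carried with the family of norms $\|\cdot\|_{L^q}$, which are increasing in $q$. Hence a linear map between two such spaces is continuous precisely when each target norm is controlled by a single source norm: continuity of $T$ from $L^{\infty-0}(\Omega,H_1)$ to $L^{\infty-0}(\Omega,H_2)$ means that for every $q\geq 1$ there exist $p$ and a constant $C(p,q)$ with $\|Tf\|_{L^q(\Omega,H_2)}\leq C(p,q)\,\|f\|_{L^p(\Omega,H_1)}$ for all $f$. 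Moreover, enlarging $p$ if necessary (using $\|f\|_{L^p}\leq\|f\|_{L^{p'}}$ for $p\leq p'$ on a probability space), we may always arrange $p\geq q$.

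First I would fix a target index $q$ and choose, by the observation above, a source index $p\geq q$ together with the continuity estimate for the restriction $T\colon L^p(\Omega,H_1)\to L^q(\Omega,H_2)$. Since $p\geq q$, \thmref{thm2_4} applies and yields a unique continuous linear extension $\tilde{T}\colon L^p(\Omega,H_1\otimes H')\to L^q(\Omega,H_2\otimes H')$. Performing this for every $q$ produces a whole family of such extensions, one per target exponent.

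Next I would verify that these extensions are mutually consistent, so that they assemble into a single operator on $L^{\infty-0}(\Omega,H_1\otimes H')$. The key point is that the finite tensors $\sum_{j\in J}X_j\otimes e_j$ with $X_j\in L^{\infty-0}(\Omega,H_1)$ form a dense subspace of $L^{\infty-0}(\Omega,H_1\otimes H')$ in every $L^q$ norm simultaneously: writing $U=\sum_i U_i\otimes e_i$ with $U_i(\omega)$ the partial inner product of $U(\omega)$ against $e_i$, so that $\|U(\omega)\|_{H_1\otimes H'}^2=\sum_i\|U_i(\omega)\|_{H_1}^2$ and each $U_i\in L^{\infty-0}(\Omega,H_1)$, the truncations $\sum_{i\leq N}U_i\otimes e_i$ converge to $U$ in each $L^q(\Omega,H_1\otimes H')$ by dominated convergence (the error is dominated by $\|U\|_{H_1\otimes H'}^q\in L^1$). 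On this dense subspace every extension acts through the one formula $\sum_j X_j\otimes e_j\mapsto\sum_j TX_j\otimes e_j$, independent of the pair $(p,q)$; hence by density all the extensions coincide and define a single operator $\tilde{T}$.

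Finally I would read off continuity: for each $q$ the chosen pair gives $\|\tilde{T}U\|_{L^q(\Omega,H_2\otimes H')}\leq C(p,q)\,\|U\|_{L^p(\Omega,H_1\otimes H')}$, which is exactly the continuity criterion for $\tilde{T}\colon L^{\infty-0}(\Omega,H_1\otimes H')\to L^{\infty-0}(\Omega,H_2\otimes H')$ and, in particular, shows $\tilde{T}U$ lies in every $L^q$, hence in $L^{\infty-0}$. The only genuinely delicate point is the consistency step: one must establish the simultaneous $L^q$-density of the finite tensors and the independence of the defining formula from $(p,q)$. Once that is in place, the corollary is a direct application of \thmref{thm2_4}, exponent by exponent.
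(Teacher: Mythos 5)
Your proposal is correct and is exactly the argument the paper intends: Corollary~2.3 is stated without proof precisely because it is this routine exponent-by-exponent reduction to Theorem~2.4, using that continuity between the Fréchet spaces $L^{\infty-0}=\bigcap_{q}L^{q}$ amounts to an estimate $\|Tf\|_{L^{q}}\leq C(p,q)\|f\|_{L^{p}}$ for each $q$ with $p\geq q$ available by monotonicity of the $L^{p}$-norms on a probability space. Your only addition — checking that the extensions for different pairs $(p,q)$ agree, via simultaneous $L^{q}$-density of the finite tensors $\sum_{j}X_{j}\otimes e_{j}$ — is the correct way to fill in the detail the paper leaves implicit.
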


\begin{cor}\label{cor2_4}
In the same setting as in Theorem \ref{thm2_4}, if $T$ is a continuous linear operator from $\dinf(\Omega, H_1)$ in $\dinf(\Omega, H_2)$, then $\tilde{T}$ is continuous linear from $\dinf(\Omega, H_1 \otimes H')$ in $\dinf(\Omega, H_2 \otimes H')$.
\end{cor}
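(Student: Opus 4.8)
The plan is to reduce the statement to the $L^p$-version already established in \thmref{thm2_4}, by factoring $T$ through Lebesgue spaces with the Ornstein--Uhlenbeck operator, exactly in the spirit of the proof of Corollary~\ref{cor2_2}. Recall that $\dinf(\Omega, H_1 \otimes H')$ carries the projective-limit topology defined by the seminorms $\|\cdot\|_{\sko_s^q}$, so $\tilde T$ is $\dinf$-continuous if and only if for every target index $(s,q)$ there exist a source index $(r,p)$ and a constant $C$ with $\|\tilde T U\|_{\sko_s^q(\Omega, H_2 \otimes H')} \le C\,\|U\|_{\sko_r^p(\Omega, H_1 \otimes H')}$.

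First I would fix $(s,q)$ and use the $\dinf$-continuity of $T$ to obtain $(r_0,p_0)$ and $C_0$ with $\|Tf\|_{\sko_s^q(\Omega, H_2)} \le C_0\,\|f\|_{\sko_{r_0}^{p_0}(\Omega, H_1)}$. Enlarging the indices to $p = p_0 \vee q$ and $r = r_0 \vee s$ only weakens the right-hand side, since $\sko_r^p \hookrightarrow \sko_{r_0}^{p_0}$ continuously; by density of $\dinf$ in $\sko_r^p$ this extends $T$ to a bounded operator from $\sko_r^p(\Omega, H_1)$ to $\sko_s^q(\Omega, H_2)$, and moreover arranges $p \ge q$, which is precisely the hypothesis \thmref{thm2_4} needs.

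Next I would factor through Lebesgue spaces by setting $T' = (Id-L)^{s/2} \circ T \circ (Id-L)^{-r/2}$, which is continuous from $L^p(\Omega, H_1)$ to $L^q(\Omega, H_2)$ because the $\sko_r^p$-norms are defined through the Ornstein--Uhlenbeck operator. Since $p \ge q$, \thmref{thm2_4} furnishes a continuous extension $\tilde{T'}$ from $L^p(\Omega, H_1 \otimes H')$ to $L^q(\Omega, H_2 \otimes H')$. Writing $(Id-L)_{H_1 \otimes H'}$ and $(Id-L)_{H_2 \otimes H'}$ for the Ornstein--Uhlenbeck operators on the corresponding vector-valued spaces, I would then claim the identity $\tilde T = (Id-L)_{H_2 \otimes H'}^{-s/2} \circ \tilde{T'} \circ (Id-L)_{H_1 \otimes H'}^{r/2}$, whose three factors are bounded: $(Id-L)_{H_1\otimes H'}^{r/2}\colon \sko_r^p \to L^p$ and $(Id-L)_{H_2\otimes H'}^{-s/2}\colon L^q \to \sko_s^q$ by the definition of the Sobolev norms, and $\tilde{T'}$ by \thmref{thm2_4}. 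Their composition then yields the required estimate $\|\tilde T U\|_{\sko_s^q} \le C\,\|U\|_{\sko_r^p}$, hence the $\dinf$-continuity.

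The only delicate step, and the one I expect to be the main obstacle, is verifying this factorization identity on the dense subspace $\{\sum_{j} X_j \otimes e_j\}$. Everything rests on the fact that the Ornstein--Uhlenbeck operator acts solely on the $\omega$-dependence and leaves the constant vectors $e_j \in H'$ untouched, so that its \thmref{thm2_4}-extension coincides with $(Id-L)_{H_1 \otimes H'}$; concretely, $(Id-L)_{H_1 \otimes H'}^{r/2}\bigl(\sum_j X_j \otimes e_j\bigr) = \sum_j (Id-L)^{r/2}X_j \otimes e_j$, and likewise for $H_2 \otimes H'$. Applying $\tilde{T'}$ coordinatewise and then $(Id-L)_{H_2\otimes H'}^{-s/2}$ gives $\sum_j (Id-L)^{-s/2}T'(Id-L)^{r/2}X_j \otimes e_j = \sum_j T X_j \otimes e_j = \tilde T\bigl(\sum_j X_j \otimes e_j\bigr)$, establishing the identity on a dense set and hence, by continuity, everywhere. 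I expect the index bookkeeping to be routine; isolating the commutation of the tensoring extension with the Ornstein--Uhlenbeck factorization on the dense subspace is what makes the argument go through.
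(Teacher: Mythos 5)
Your proposal is correct and follows essentially the same route as the paper, whose proof of Corollary \ref{cor2_4} simply invokes the argument of Corollary \ref{cor2_2}: factor $T$ through $L^p$ via the Ornstein--Uhlenbeck operator as $T' = (Id-L)^{s/2} \circ T \circ (Id-L)^{-r/2}$, extend $T'$ by Theorem \ref{thm2_4}, and conjugate back with the vector-valued O.U.\ operators. You have in fact spelled out two details the paper leaves implicit --- enlarging the indices so that $p \geq q$ as Theorem \ref{thm2_4} requires, and verifying the factorization identity on the dense subspace of finite sums $\sum_j X_j \otimes e_j$ --- both of which are handled correctly.
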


\begin{proof}
same proof as in Corollary \ref{cor2_2}.
\end{proof}

\subsection{$\sko_{\infty}^2 \cap L^{\infty - 0} = \dinf$}

\begin{thm}\label{thm2_5}
\begin{align*}
\sko_{\infty}^2 \cap L^{\infty - 0} = \dinf
\end{align*}
\end{thm}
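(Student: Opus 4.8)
The inclusion $\dinf \subseteq \sko_\infty^2 \cap L^{\infty-0}$ is immediate: an element of $\dinf$ has all its Malliavin derivatives in every $L^p$, hence in particular in $L^2$, and it lies in every $L^q$. The content is the reverse inclusion, and I would obtain it by \emph{upgrading $L^2$-smoothness to $L^p$-smoothness through complex interpolation}, feeding the two hypotheses into the two sides of the interpolation.

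\textbf{Reduction to the Ornstein--Uhlenbeck operator.} By Meyer's inequalities the $\sko_r^p$-norm is equivalent to $\|(1-L)^{r/2}\cdot\|_{L^p}$ (the Riesz transforms $\Grad^r(1-L)^{-r/2}$ and their inverses being $L^p$-bounded for $1<p<\infty$), so it suffices to show that $(1-L)^{r/2}f\in L^p(\Omega)$ for every $r\in\N$ and every $p>1$. Writing $f=\sum_n f_n$ for the chaos decomposition, the hypothesis $f\in\sko_\infty^2$ gives, via the exact $L^2$ identity $\|(1-L)^{s}f\|_{L^2}^2=\sum_n(1+n)^{2s}\|f_n\|_{L^2}^2$, that $(1-L)^{s}f\in L^2(\Omega)$ for all $s\ge 0$; while $f\in L^{\infty-0}$ gives $f\in L^q(\Omega)$ for all $q<\infty$. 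One should note that the naive estimate $\|(1-L)^{r/2}f\|_{L^p}\le\sum_n(1+n)^{r/2}\|f_n\|_{L^p}$ combined with the hypercontractive bound $\|f_n\|_{L^p}\le(p-1)^{n/2}\|f_n\|_{L^2}$ is useless: $\sko_\infty^2$ forces only super-polynomial, not super-exponential, decay of $\|f_n\|_{L^2}$, so $(p-1)^{n/2}$ destroys convergence. This crude bound is sharp only for chaos components concentrated in a single direction, precisely the case where $L^{\infty-0}$ already imposes super-exponential decay; for ``spread'' components it is hopelessly lossy. The correct argument must use both hypotheses jointly and never isolate a single chaos.

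\textbf{Interpolation.} Fix $r,p$ and apply Stein's analytic interpolation to the family $T_z=(1-L)^{z}$ on the strip $0\le\Re z\le N$, evaluated at $f$. On the line $\Re z=N$ I use $\|T_{N+iy}f\|_{L^2}=\|(1-L)^{iy}(1-L)^{N}f\|_{L^2}\le C(y)\|(1-L)^{N}f\|_{L^2}$, finite for every $N$ by $\sko_\infty^2$. On the line $\Re z=0$ I use $\|T_{iy}f\|_{L^q}=\|(1-L)^{iy}f\|_{L^q}\le C(q,y)\|f\|_{L^q}$, finite for every $q<\infty$ by $L^{\infty-0}$, the analytic input being that the imaginary powers $(1-L)^{iy}$ are bounded on each $L^q$, $1<q<\infty$, with norm growing at most polynomially in $|y|$ (Meyer's multiplier theorem for the O.U. operator). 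Interpolating at $z=r/2$, i.e. at $\theta=r/(2N)$, yields $(1-L)^{r/2}f\in L^{p_\theta}$ with $1/p_\theta=(1-\theta)/q+\theta/2$. Given the target $p$, choose $q=2p$; then as $N\to\infty$ one has $\theta\to0$ and $1/p_\theta\to 1/q=1/(2p)<1/p$, so for $N$ large enough $p_\theta>p$, whence on a probability space $(1-L)^{r/2}f\in L^{p_\theta}\subseteq L^p$. Since $r,p$ were arbitrary, $f\in\dinf$.

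\textbf{Main obstacle.} The delicate point is the behaviour on the imaginary line: Stein's three-lines hypothesis on a strip of width $N$ permits boundary growth $e^{a|y|}$ only with $a<\pi/N$, so as $N\to\infty$ (which is what lets $p_\theta$ exceed any prescribed $p$) the admissible rate shrinks to $0$. This is exactly why one needs $(1-L)^{iy}$ bounded on $L^q$ with \emph{polynomial} growth in $|y|$, not merely exponential; securing and quoting that estimate, together with verifying analyticity/admissibility of $z\mapsto T_zf$ on the dense subalgebra $K^\infty(\Omega)$, is the real work. The remaining steps---the passage between $(1-L)^{r/2}$ and $\Grad^r$ via Meyer, and the final $L^{p_\theta}\subseteq L^p$ inclusion---are routine.
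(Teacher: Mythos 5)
Your strategy is, at bottom, the same as the paper's: both proofs run a Phragm\'en--Lindel\"of/Stein complex-interpolation argument on the holomorphic family of Ornstein--Uhlenbeck powers, with the $\sko_{\infty}^2$ hypothesis supplying an $L^2$ bound on one boundary line and the $L^{\infty-0}$ hypothesis supplying an $L^{q}$ bound on the other, the conclusion $(1-L)^{a}f\in L^{p'}$ being read off at an interior point by duality against test functions $\varphi$ with $\|\varphi\|_{L^{q'}}\leq 1$. The paper parametrizes differently --- a single strip $0\leq\Real z\leq r+\epsilon$ for $(1-L)^{-z}g$ with $g=(1-L)^r f$, and a test-function exponent $|\varphi|^{\frac{q'}{2}+\frac{z}{r+\epsilon}q'(\frac{1}{q_0}-\frac12)}$ varying analytically in $z$, instead of your strips of width $N\to\infty$ --- but that difference is cosmetic.

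Where your write-up goes wrong is precisely at the point you yourself flag as the real work. First, the claim that $(1-L)^{iy}$ is bounded on $L^q$ with \emph{polynomially} growing norm ``by Meyer's multiplier theorem'' is false: the multiplier $(1+n)^{iy}$ is not of Meyer type (it has no limit as $n\to\infty$), and the known bounds for imaginary powers of the O.U.\ operator on $L^q$, $q\neq 2$ (Cowling's theorem for symmetric contraction semigroups, sharpened for the O.U.\ case by Garc\'ia-Cuerva, Mauceri, Meda, Sj\"ogren and Torrea) grow \emph{exponentially}, like $e^{\phi_q|y|}$ with $\phi_q=\arcsin\left|\frac{2}{q}-1\right|>0$. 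Second --- and this is what saves you --- your stated reason for demanding polynomial growth is also wrong: on a strip of width $W$ the Phragm\'en--Lindel\"of admissibility condition constrains $\log|F(z)|\leq A\,e^{a|\Imag z|}$ with $a<\pi/W$, so simple exponential growth $e^{c|y|}$ of the boundary norms is absorbable for any finite $c$ and any finite $W$ (multiply by a factor of the type $e^{\delta z^2}$, or note that $c|y|e^{-\pi|y|/W}$ is Poisson-integrable); moreover, for each fixed pair $(r,p)$ you need only one sufficiently large finite $N$, not a limit in $N$. The two mistakes therefore cancel, and your skeleton becomes sound once you replace the Meyer citation by the exponential bounded-imaginary-powers estimate. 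It is instructive that the paper avoids any such citation: it manufactures the rough-edge bound by hand from the Mellin formula, obtaining $\|(1-L)^{-(\epsilon+it)}f\|_{L^{p_0}}\leq \frac{\Gamma(\epsilon)}{|\Gamma(\epsilon+it)|}\|f\|_{L^{p_0}}$, whose constant grows like $e^{\pi t/2}$ by the Stirling asymptotics for $|\Gamma(\epsilon+it)|$, and then kills exactly this growth with the explicit compensating factor $(iz)^{1/2}e^{i\pi z/2}$ built into its auxiliary function $\theta(z)$ --- a self-contained instance of the very absorption mechanism your ``main obstacle'' paragraph asserts to be unavailable.
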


\begin{proof}
$\dinf \subset \sko_{\infty}^2 \cap L^{\infty - 0}$.
For the reverse inclusion, we will need the Phragmen-Lindelof method [16].

$\forall z \in \C$ with $\Real z \geq 0$ and $\forall f \in \sko_{\infty}^2 \cap L^{\infty - 0}$, we define $(Id - L)^{-z} f$ by: 

$f_n$ being the component of $f$ in the chaos $\mathcal{C}_n$: $(Id - L)^{-z} f = \sum_{n=1}^{\infty} \frac{1}{(1 + n)^{z}} f_n$.
This definition is meaningful and coincides with the classic definition when $\Real z > 0$:
\begin{align*}
(Id - L)^{-z} f = \frac{1}{\Gamma(z)} \int_0^{\infty} e^{-t} t^{z-1} P_t f dt
\end{align*}

If $r > 0, 1 > \epsilon > 0$ fixed, we denote by $g: g = (1-L)^r f$, then: $\forall t \geq 0: (1-L)^{-(r + \epsilon + it)} g \in L^{\infty - 0}$, because $\forall \psi \in L^{q_0}, q_0 > 1$, and $\| \psi \|_{L^{q_0}} \leq 1$, we have: 

\begin{align}
\left| \int \P(d \omega) \psi . (Id - L)^{-(r + \epsilon + it)} g \right| 
&= \left| \int \P(d \omega) \psi . (Id - L)^{-(\epsilon + it)} f \right| \notag\\
&= \frac{1}{ | \Gamma(\epsilon + it) | } \left| \int \P(d \omega) \psi \int_0^{\infty} e^{-s} s^{\epsilon + it - 1} (P_s f) ds \right| \notag\\
&\leq \frac{1}{ | \Gamma(\epsilon + it) | } \int_0^{\infty} ds ~ e^{-s} s^{\epsilon - 1} \left| \int \P(d\omega) \psi (P_s f) \right| \notag\\
&\leq \frac{\Gamma(\epsilon)}{ | \Gamma(\epsilon + it) | } \| f \|_{L^{p_0}} \| \psi \|_{L^{q_0}} \text{ with } \frac{1}{p_0} + \frac{1}{q_0} = 1 \notag\\
&\leq \frac{\Gamma(\epsilon)}{ | \Gamma(\epsilon + it) | } \| f \|_{L^{q_0}} \label{eq2_5}
\end{align}

Then (\ref{eq2_5}) implies:
\begin{align}
\| (Id - L)^{-(r + \epsilon + it)} g \|_{L^{p_0}} \leq \frac{\Gamma(\epsilon)}{ | \Gamma(\epsilon + it) | } \| f \|_{L^{p_0}} \label{eq2_6}
\end{align}

And from [3, p. 213], $| \Gamma(\epsilon + it) |$ is asymptotically equivalent to $t^{m + \frac{1}{2}} e^{-\frac{\pi}{2} t}$ when $t \uparrow \infty$, $m$ being the largest integer $< \epsilon$; if $\epsilon < 1$, $m=0$.

Let $1 < q_0 < 2$, $1 < q_0 < q' < 2$ and $\varphi \in L^{\infty - 0}$.
We denote, with $i = \sqrt{-1}$:

\begin{align*} 
\theta(z) = (iz)^{\frac{1}{2}} e^{i \pi \frac{z}{2}} \int \P(d\omega) (Id - L)^{-z} g . \Sgn \varphi . |\varphi|^{\frac{q'}{2} + \frac{z}{r+\epsilon} q'(\frac{1}{q_0} - \frac{1}{2})}
\end{align*}

with: $z \in \Delta = \left\{ z \big/ 0 \leq \Real z \leq r + \epsilon, \Imag z > 0 \right\}$

Then $|\theta(z)|$ is bounded on $\Delta$ and is continuous on $\bar{\Delta}$, because:

\begin{align*}
|\theta(z)| \leq \sqrt{|z|} e^{-\frac{\pi}{2}(\Imag z)} \frac{\Gamma(\epsilon)}{ | \Gamma(\epsilon + it) |}  \| f \|_{L_2} \| \varphi^{\frac{q'}{q_0}} \|_{L_2}
\end{align*}

Computing $|\theta(0 + it)|$ and $|\theta(r + \epsilon + it)|$, we get, $C$ being a constant:

\begin{align*} 
|\theta(it)| \leq t^{\frac{1}{2}} e^{ -\frac{\pi}{2} t } \| (Id - L)^{ r - it} f \|_{L^2} \| \varphi^{\frac{q'}{2}} \|_{L^2} \leq C \| f \|_{\sko_{2r}^2} \| \varphi \|_{L^q}^{\frac{q'}{2}} \\
|\theta(r + \epsilon + it)| \leq C t^{\frac{1}{2}} e^{ -\frac{\pi}{2} t } \| (Id - L)^{ -(r + \epsilon + it} g \|_{L^{p_0}} \| \varphi^{\frac{q'}{q_0}} \|_{L^{q_0}}
\end{align*}

Using (\ref{eq2_6}), we get:

\begin{align*}
|\theta(r + \epsilon + it)| \leq C \frac{ t^{\frac{1}{2}} e^{ -\frac{\pi}{2} t } }{| \Gamma(\epsilon + it) |} \Gamma(\epsilon) \|f\|_{L^{p_0}} \|\varphi\|_{L^{q'}}^{\frac{q'}{q_0}} 
\end{align*}

We choose $\varphi \in L^{\infty - 0}$ such that $\| \varphi \|_{L^{q'}} \leq 1$. Now, when 

$t \uparrow \infty$, $\frac{\Gamma(\epsilon)}{ | \Gamma(\epsilon + it) |} \sim e^{\frac{\pi}{2} t} t^{-\frac{1}{2}} (0 < \epsilon < 1)$, so we have:

\begin{align}
\max ( |\theta(it)|, |\theta(r + \epsilon + it)| ) \leq C \max ( \|f\|_{\sko_{2r}^2}, \|f\|_{L^{p_0}}) \label{eq2_7}
\end{align}

Then the Phragmen-Lindelof method tells us that $|\theta(z)|$ is bounded on $\bar{\Delta}$ by the r.h.s. of (\ref{eq2_7}), for all $\varphi \in L^{\infty-0}$ and $\| \varphi \|_{L^{q'}} \leq 1$.

Then: $\forall 1 < q' < 2, \exists a \in ]0, r[$ such that: $ \frac{r-a}{r+\epsilon} . q' . \left( \frac{1}{q_0} - \frac{1}{2} \right) + \frac{q'}{2} = 1$: 

$a = r - (r + \epsilon) \left( \frac{ \frac{1}{q'} - \frac{1}{2} }{ \frac{1}{q_0} - \frac{1}{2} }\right)$ and we have $a \in ]0, r[$ if $\epsilon < r \left( \frac{ \frac{1}{q_0} - \frac{1}{q'} }{ \frac{1}{q'} - \frac{1}{2} }\right)$.

Then $\theta(r-a) = \sqrt{i(r-a)} e^{i \frac{\pi}{2} (r-a)} \int \P(d\omega) \varphi (Id - L)^{-(r-a)} g$, so

 $| \int \P(d\omega) \varphi (Id - L)^{-(r-a)} g  | \leq C$

which implies: $(Id - L)^{-(r-a)} g \in L^{p'}(\Omega)$ with $\frac{1}{p'} + \frac{1}{q'} = 1$.

Finally: $\forall a \in ]0, r[$ and $\forall p' > 2$: $(Id - L)^a f \in L^{p'}(\Omega)$ then $f \in \dinf(\Omega)$.
\end{proof}

\begin{prop}\label{pr2_1}
The O.U. operator commutes with the conditional expectation.
\end{prop}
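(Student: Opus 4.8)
The plan is to diagonalise $L$ through the Wiener chaos decomposition and to reduce the statement to the fact that the relevant conditional expectation preserves this decomposition chaos by chaos. Recall that on the Gaussian space $(\Omega,\F,\P,H)$ one has $L^2(\Omega)=\bigoplus_{n\ge 0}\mathcal C_n$ and that the O.U. operator is the number operator: $L$ acts as multiplication by $-n$ on $\mathcal C_n$, so that $(Id-L)$ acts as multiplication by $(1+n)$, exactly as used in \thmref{thm2_5}. Writing $\Pi_n$ for the orthogonal projection onto $\mathcal C_n$, we have $L=\sum_{n\ge 0}(-n)\,\Pi_n$; hence, to prove that $L$ commutes with a conditional expectation $\E[\,\cdot\,|\,\mathcal G]$ it suffices to prove that $\E[\,\cdot\,|\,\mathcal G]$ commutes with every $\Pi_n$, i.e.\ that $\E[\,\cdot\,|\,\mathcal G]$ maps each chaos $\mathcal C_n$ into itself.

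Here $\mathcal G$ is the sub-$\sigma$-field generated by a closed Gaussian subspace, that is $\mathcal G=\sigma\{W(h):h\in K\}$ for a closed subspace $K\subset H$; this is the case in which the assertion holds and the one needed in the sequel. First I would record the orthogonal splitting $H=K\oplus K^\perp$: for $h\in H$ one writes $h=P_Kh+P_{K^\perp}h$, the Gaussian variables $W(P_Kh)$ and $W(P_{K^\perp}h)$ are independent, and $W(P_{K^\perp}h)$ is independent of $\mathcal G$. The computation I would then carry out is on the total family of exponential (Wick) vectors
\begin{align*}
\mathcal E(h)=\exp\!\Big(W(h)-\tfrac12\|h\|_H^2\Big)=\sum_{n\ge 0}\frac{1}{n!}\,I_n\big(h^{\otimes n}\big),\qquad I_n(h^{\otimes n})\in\mathcal C_n .
\end{align*}
Using the independence above together with $\E[\exp(W(g))]=\exp(\tfrac12\|g\|_H^2)$, a direct calculation gives $\E[\mathcal E(h)\,|\,\mathcal G]=\mathcal E(P_Kh)$. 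Matching chaos components on both sides of this identity shows that $\E[\,\cdot\,|\,\mathcal G]$ sends the $n$-th component of $\mathcal E(h)$ to the $n$-th component of $\mathcal E(P_Kh)$, hence maps $\mathcal C_n$ into $\mathcal C_n$; since the vectors $\mathcal E(h)$ are total in $L^2(\Omega)$, this yields $\Pi_n\,\E[\,\cdot\,|\,\mathcal G]=\E[\,\cdot\,|\,\mathcal G]\,\Pi_n$ for every $n$, and therefore $L\,\E[\,\cdot\,|\,\mathcal G]=\E[\,\cdot\,|\,\mathcal G]\,L$ on $\dom L$.

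An equivalent and more transparent route, which I would present in parallel, is through Mehler's formula $P_t\,\mathcal E(h)=\mathcal E(e^{-t}h)$: combined with $\E[\mathcal E(h)\,|\,\mathcal G]=\mathcal E(P_Kh)$ and the elementary fact that $P_K$ commutes with the scaling $h\mapsto e^{-t}h$, it gives
\begin{align*}
P_t\,\E[\mathcal E(h)\,|\,\mathcal G]=\mathcal E(e^{-t}P_Kh)=\E[P_t\,\mathcal E(h)\,|\,\mathcal G],
\end{align*}
so that $P_t$ and $\E[\,\cdot\,|\,\mathcal G]$ commute on a total set, hence on all of $L^2(\Omega)$. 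Differentiating at $t=0$ transfers the commutation to the generator $L$. In either route the final step is to pass from $L^2$ to $\dinf$: since $\E[\,\cdot\,|\,\mathcal G]$ is chaos-preserving and an $L^p$-contraction, the Meyer-type equivalence $\|f\|_{\drp}\simeq\|(Id-L)^{r/2}f\|_{L^p}$ used in this section shows it is $\dinf(\Omega)$-continuous, so the commutation identity, valid on a dense subspace, extends to all of $\dinf(\Omega)$.

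The main obstacle is precisely the step $\E[\,\cdot\,|\,\mathcal G]:\mathcal C_n\to\mathcal C_n$, equivalently the identification of $L^2(\Omega,\mathcal G,\P)$ with the symmetric Fock space built over $K$ inside the Fock space over $H$. This is where the Gaussian nature of $\mathcal G$ — the fact that it is generated by a subspace of $H$ and not by an arbitrary sub-$\sigma$-field — is indispensable, since it is what makes the conditional expectation grade-preserving; for a general sub-$\sigma$-field the statement fails. The remaining points (totality of the exponential vectors, the domain bookkeeping for the unbounded operator $L$, and the $\dinf$-extension) are routine given the extension and equivalence machinery already developed above.
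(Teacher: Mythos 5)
Your proof is correct, but it takes a genuinely different route from the paper. The paper's argument is a short duality computation: for $\F_t$-measurable test functions $\varphi \in \dinf$ it integrates by parts twice, using the identity $\Grad \E[f|\F_t] = \mathds{1}_{[0,t]}(\cdot)\,\E[\Grad f|\F_t]$ to shuttle the conditional expectation across the scalar product, obtaining $\int \varphi\, L\,\E[f|\F_t]\,\P(d\omega) = \int \varphi\, \E[Lf|\F_t]\,\P(d\omega)$ for all such $\varphi$. That proof is quick but presupposes the gradient–conditional-expectation commutation (itself usually established by chaos expansions) and is formulated only for the Brownian filtration. Your spectral approach — $L$ as the number operator, chaos preservation of $\E[\,\cdot\,|\,\mathcal G]$ via Wick exponentials, or the Mehler-semigroup identity $P_t\,\E = \E\,P_t$ followed by differentiation at $t=0$ — is more self-contained and buys two things the paper's proof does not make explicit: it identifies the exact scope of the proposition (conditional expectations onto $\sigma\{W(h): h\in K\}$ for a closed subspace $K\subset H$, of which $\F_t$ is the special case $K=\{h : \dot h = 0 \text{ on } (t,1]\}$, with failure for general sub-$\sigma$-fields), and it delivers the $\dinf$-continuity of the conditional expectation as a byproduct. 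One small repair: as written, "matching chaos components" in the identity $\E[\mathcal E(h)|\mathcal G] = \mathcal E(P_K h)$ is slightly circular, since projecting both sides onto $\mathcal C_n$ does not by itself tell you where $\E[\,\cdot\,|\,\mathcal G]$ sends $\mathcal C_n$; replace $h$ by $\lambda h$, note both sides are $L^2$-valued power series in $\lambda$, and identify coefficients to get $\E[I_n(h^{\otimes n})|\mathcal G] = I_n((P_K h)^{\otimes n})$, whence chaos preservation by density and closedness of $\mathcal C_n$. Your second, Mehler route avoids even this, since commutation of the bounded operators $P_t$ and $\E[\,\cdot\,|\,\mathcal G]$ on the total family of exponential vectors extends to $L^2$ directly, and the generator statement follows on $\dom L$ without any grading argument.
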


\begin{proof}
Let $(\Omega, \F, \P, H)$ be a Gaussian space and $\F_t, t \in [0, 1]$ be a filtration; $\forall \varphi \in \F_t$, $\Grad$ being the Malliavin derivative and $\Div $ its adjoint: $f \in \dinf(\Omega)$:

\begin{align*}
\int \varphi \Div \Grad \E \left[ f | \F_t \right] \P(d\omega) 
&= - \int \langle \Grad \varphi, \Grad \E \left[ f | \F_t \right] \rangle_H \P(d\omega) \\
&= - \int \langle \Grad \varphi, \E \left[ \Grad f | \F_t \right] \mathds{1}_{[0, t]}(.) \rangle_H \P(d\omega) \\
&= - \int \langle \E \left[ \Grad \varphi | \F_t \right] \mathds{1}_{[0, t]}(.), \Grad f \rangle_H \P(d\omega) \\
&= - \int \langle \Grad \E \left[ \varphi | \F_t \right], \Grad f \rangle_H \P(d\omega) \\
&= + \int f . \Div \Grad \E \left[ \varphi | \F_t \right] \P(d\omega) \\
&= + \int \E \left[ \varphi | \F_t \right] \Div \Grad f \P(d\omega) \\
&= + \int \varphi \E \left[ \Div \Grad f | \F_t \right] \P(d\omega)
\end{align*}

\end{proof}

\subsection{Existence of a sequence of $\mathbb{D}^\infty$-vector fields converging $\mathbb{D}^\infty$-strongly to wards a derivation}

\begin{thm}\label{thm2_6}
Let $(\Omega, \F, \P, H)$ be a Gaussian space and $(e_i)_{i \in \N_{\star}}$ an Hilbertian basis of $H$; we denote by $\F_n$ the $\sigma$-algebra $\F_n = \sigma( W( e_i ) \big/ i \leq n)$.
Let $\delta$ be a continuous derivation from $\dinf$ to $L^{\infty - 0}$. Then the sequence of vector fields $X_N = \sum_{i=1}^N \E \left[ \delta( W(e_i) | \F_N \right] e_i$ strongly converges towards $\delta$ in $L^{\infty - 0}(\Omega)$.
\end{thm}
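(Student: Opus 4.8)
Throughout, write $\delta_i:=\delta(W(e_i))\in L^{\infty-0}(\Omega)$ and $\partial_i\varphi:=\langle\Grad\varphi,e_i\rangle_H$, so that a $\dinf$-vector field $X=\sum_i f_ie_i$ acts by $X\varphi=\langle X,\Grad\varphi\rangle_H=\sum_i f_i\,\partial_i\varphi$, and ``$X_N\to\delta$ strongly in $L^{\infty-0}$'' is read as $X_N\varphi\to\delta\varphi$ in $L^{\infty-0}(\Omega)$ for every $\varphi\in\dinf(\Omega)$. The plan is to first pin down $\delta$ on cylindrical functions and then propagate the convergence to all of $\dinf$ by a martingale/approximation argument. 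The first step is the \emph{structure formula}: for a smooth cylindrical $\varphi=g(W(e_1),\dots,W(e_n))$,
\[
\delta\varphi=\sum_{i=1}^n \delta_i\,\partial_i\varphi .
\]
I would get this from the Leibniz rule (valid since $\delta$ is a derivation) on polynomials in the $W(e_i)$, where it is immediate, and then extend it to all smooth cylindrical $\varphi$ by approximating $g$ by polynomials $P_k$ with $P_k(W)\to\varphi$ and $\partial_iP_k(W)\to\partial_i\varphi$ in $\dinf$, using the $\dinf$-continuity of $\delta$ together with $\delta_i\in L^{\infty-0}$.

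Second, write $P_N:=\E[\,\cdot\,|\,\F_N]$. By Proposition~\ref{pr2_1} the O.U. operator $L$ commutes with $P_N$, hence so does $(Id-L)^{r/2}$; since $P_N$ is an $L^p$-contraction it follows that $P_N$ contracts each $\drp(\Omega)$, maps $\dinf$ into $\dinf$, and (the analogue of the computation in Proposition~\ref{pr2_1}) satisfies $\partial_iP_N=P_N\partial_i$ for $i\le N$. If $\varphi$ is $\F_N$-measurable cylindrical, then every $\partial_i\varphi$ is $\F_N$-measurable and $\partial_i\varphi=0$ for $i>N$, so the structure formula gives the key identity
\[
X_N\varphi=\sum_{i=1}^N \E[\delta_i|\F_N]\,\partial_i\varphi=\sum_{i=1}^N\E[\delta_i\,\partial_i\varphi\,|\,\F_N]=\E[\delta\varphi\,|\,\F_N].
\]
Hence, for any fixed cylindrical $\varphi$ (say $\F_n$-measurable) and all $N\ge n$, $X_N\varphi=\E[\delta\varphi|\F_N]\to\delta\varphi$ in $L^p$ for every $p<\infty$, i.e. in $L^{\infty-0}$, by the martingale convergence theorem (recall $\bigcup_N\F_N$ generates $\F$ and $\delta\varphi\in L^{\infty-0}\subset L^p$).

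Third, to pass to a general $\psi\in\dinf$ I would approximate $\psi$ by the cylindrical functions $P_N\psi$. The commutation above gives $\|\psi-P_N\psi\|_{\drp}=\|(Id-P_N)(Id-L)^{r/2}\psi\|_{L^p}\to0$, so $P_N\psi\to\psi$ in $\dinf$. Splitting $X_N\psi=X_N(P_N\psi)+X_N\big((Id-P_N)\psi\big)=:A_N+B_N$, the first term is controlled by the key identity: $A_N=\E[\delta(P_N\psi)|\F_N]$, and since $\delta(P_N\psi)\to\delta\psi$ in $L^{\infty-0}$ while $P_N\to Id$ in $L^p$, one checks $A_N\to\delta\psi$ in $L^{\infty-0}$. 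Everything therefore reduces to
\[
B_N=\sum_{i=1}^N\E[\delta_i|\F_N]\,(Id-P_N)\partial_i\psi\ \longrightarrow\ 0\quad\text{in }L^{\infty-0}.
\]

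\textbf{Main obstacle.} The term $B_N$ is the heart of the matter, and it \emph{cannot} be handled by the crude Cauchy--Schwarz bound $|B_N|\le\|X_N\|_H\,\|(Id-P_N)\Grad\psi\|_H$, because $X_N$ need not stay bounded in $H$: for the continuous derivation $\delta=\sum_iW(e_i)\partial_i$ (continuous since $\sum_iW(e_i)\partial_i=-L+\operatorname{tr}\Grad^2$) one has $X_N=\sum_{i=1}^NW(e_i)e_i$ with $\|X_N\|_H^2=\sum_{i=1}^NW(e_i)^2\to\infty$ a.s., so only cancellation can save the estimate. The cancellation available is the conditional centering $\E[B_N|\F_N]=0$ (each $\E[\delta_i|\F_N]$ is $\F_N$-measurable while $\E[(Id-P_N)\partial_i\psi|\F_N]=0$). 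I would combine this with the uniform bound obtained by testing the continuity of $\delta$ on the normalized Gaussians $\varphi_a=\sum_i a_iW(e_i)$, $\|a\|_{\ell^2}=1$: here $\|\varphi_a\|_{\drp}$ is bounded independently of $a$, so continuity of $\delta$ yields, for each $p$, a constant $C_p$ with $\|\sum_i a_i\delta_i\|_{L^p}\le C_p$, and after conditioning $\|\sum_i a_i\E[\delta_i|\F_N]\|_{L^p}\le C_p$ uniformly in $N$. The plan is then to promote this ``$\ell^2\to L^p$'' control of the coefficients $(\E[\delta_i|\F_N])_i$ to a bound on $B_N$ by linearizing the random, conditionally centered argument $(Id-P_N)\Grad\psi$ through the independent-copy/tensorization device of Theorems~\ref{thm2_3}--\ref{thm2_4} and hypercontractivity on the tail chaos (where all $L^p$-norms are equivalent), so that the $\dinf$-regularity of $\psi$ forces $(Id-P_N)\partial_i\psi$ to vanish fast enough to beat the growth of $\|X_N\|_H$. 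Establishing this quantitative estimate — equivalently, the equicontinuity of the family $\{X_N\}$ from $\dinf$ to $L^{\infty-0}$ — is the step I expect to demand the most care; once it is in hand, $B_N\to0$ and hence $X_N\psi\to\delta\psi$ for every $\psi\in\dinf$.
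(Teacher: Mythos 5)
Your skeleton --- the Leibniz structure formula on cylindrical functions, the identity $X_N\varphi=\E[\delta\varphi\,|\,\F_N]$ for $\F_N$-measurable cylindrical $\varphi$, martingale convergence on fixed cylindrical functions, and the reduction of the whole theorem to equicontinuity of the family $\{X_N\}$ from $\dinf$ to $L^{\infty-0}$ --- is exactly the paper's. But the one estimate on which everything rests, a bound $\|X_N\varphi\|_{L^q}\leq C(p,q,s)\|\varphi\|_{\sko_s^p}$ uniform in $N$ (the paper's (\ref{eq2_9})), is precisely the step you leave as a plan, and the plan as sketched would not deliver it: testing the continuity of $\delta$ on normalized linear functionals $\varphi_a=\sum_i a_iW(e_i)$ only controls $\sum_i a_i\,\E[\delta_i|\F_N]$ for \emph{deterministic} $a\in\ell^2$, whereas your $B_N$ pairs these coefficients with the random functions $(Id-P_N)\partial_i\psi$; neither the conditional centering $\E[B_N|\F_N]=0$ nor hypercontractivity on the tail chaos obviously converts the former into the latter, and no argument is given. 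A side remark: your witness $\delta=\sum_iW(e_i)\partial_i$ is not in fact a $\dinf$-continuous derivation, because $\operatorname{tr}\Grad^2$ is already unbounded on the second chaos, where $\Grad^2\varphi$ is an arbitrary symmetric Hilbert--Schmidt kernel, which need not be trace class. The correct example of the phenomenon you want is the paper's own $\div A\Grad$ with $A$ antisymmetric (Section 3), for which indeed $\|X_N\|_H\to\infty$ a.s.; so the obstacle you identify is genuine, only your example is defective.

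The missing idea --- and it is how the paper closes exactly this gap --- is to extract the uniform bound from the key identity itself by \emph{freezing the tail variables as parameters}, instead of trying to beat the growth of $\|X_N\|_H$ by cancellation. Given a cylindrical $\varphi_{N,k}[W(e_1),\dots,W(e_{N+k})]$, substitute real parameters $\xi_1,\dots,\xi_k$ for $W(e_{N+1}),\dots,W(e_{N+k})$: the frozen function $\varphi_{N,k,\xi}$ is $\F_N$-measurable cylindrical, so your key identity applies in full strength, $X_N\varphi_{N,k,\xi}=\E[\delta\varphi_{N,k,\xi}\,|\,\F_N]$, and the continuity of $\delta$ together with the $L^q$-contractivity of conditional expectation yields $\|X_N\varphi_{N,k,\xi}\|_{L^q}\leq C(p,q,s)\|\varphi_{N,k,\xi}\|_{\sko_s^p}$ with a constant independent of $N$ and $\xi$. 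Integrating in $\xi$ against the Gaussian weight, choosing $p>q$ and using Jensen together with the paper's inequality (\ref{eq2_8}),
\[
\frac{1}{(\sqrt{2\pi})^k}\int_{\R^k}e^{-\frac12\sum_{i=1}^k\xi_i^2}\,\|\varphi_{N,k,\xi}\|_{\sko_s^p}^p\,d\xi_1\dots d\xi_k\;\leq\;\|\varphi_{N,k}\|_{\sko_s^p}^p,
\]
one recovers $\|X_N\varphi_{N,k}\|_{L^q}\leq C(p,q,s)\|\varphi_{N,k}\|_{\sko_s^p}$ for the unfrozen function, and density of cylindrical functions extends this to all of $\dinf$, uniformly in $N$. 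With (\ref{eq2_9}) in hand, your splitting $X_N\psi=A_N+B_N$ (equivalently the paper's four-term triangle inequality built on $\E[\delta\varphi_{N,k}|\F_{N+k}]=X_{N+k}\varphi_{N,k}$) concludes exactly as you describe, since $\|(Id-P_N)\psi\|_{\sko_s^p}\to0$. Replace your tensorization/hypercontractivity sketch by this parameter-freezing argument and the proof is complete.
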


\begin{proof}
Let $f_{N,k} \left[ W(e_1), \dots, W(e_N), W(e_{N+1}), \dots, W(e_{N+k}) \right]$ be a cylindrical function; $\xi_1, \dots, \xi_k$ being parameters.
We denote $f_{N, k, \xi} = f_{N, k} \left[ W(e_1), \dots, W(e_N), \xi_1, \dots, \xi_k \right]$. Then: 

\begin{align*}
\| f_{N, k, \xi} \|_{\sko_s^q} = \sum_{j=0}^s \left( \int \| \Grad ^j f_{N, k, \xi} \|_{ \overset{j}{\otimes} H }^q \P(d\omega)  \right)^{\frac{1}{q}}
\end{align*}

so:

\begin{align}
\frac{1}{(\sqrt{2\pi})^k} \int_{\R^k} d\xi_1 \dots d\xi_k e^{-\frac{1}{2} \sum_{i=1}^k \xi_i^2 }  \| f_{N, k, \xi} \|_{\sko_s^q}^q \leq \| f_{N, k} \|_{\sko_s^q}^q \label{eq2_8} 
\end{align}

Let $\varphi \in \dinf(\Omega)$; there exists a sequence of cylindrical functions denoted $\varphi_{N, k} \left[ W(e_1), \dots, W(e_N), W(e_{N+1}), \dots, W(e_{N+k}) \right]$ which $\dinf$-converges towards $\varphi$.

Then direct computation shows that $\forall q > 1, \exists (p, s), p > 1, s \in \N_\star$ such that there exists a constant $C(p, q, s)$ with $\| X_N . \varphi_{N, k, \xi} \|_{L^q(\Omega)} \leq C(p, q, s) \| \varphi_{N, k, \xi} \|_{\sko_s^p(\Omega)}$

We choose $p>q$. Then, using (\ref{eq2_8}), we have:

\begin{align*}
\frac{1}{(\sqrt{2\pi})^k} \int \| X_N . \varphi_{N, k, \xi} \|_{L^q(\Omega)}^p e^{-\frac{1}{2} \sum_{i=1}^k \xi_i^2 } d\xi_1 \dots d\xi_k \leq C^p(p, q, s) \| \varphi_{N, k} \|_{\sko_s^p(\Omega)}^p 
\end{align*}

The l.h.s. of the above inequality is bigger than:

\begin{align*}
\left[ \frac{1}{(\sqrt{2\pi})^k} \int \| X_N . \varphi_{N, k, \xi} \|_{L^q(\Omega)}^q e^{-\frac{1}{2} \sum_{i=1}^k \xi_i^2 } d\xi_1 \dots d\xi_k \right]^{\frac{p}{q}} = \| X_N . \varphi_{N, k} \|_{L^q}^p 
\end{align*}

So we have $\| X_N . \varphi_{N, k} \|_{L^q(\Omega)} \leq C(p, q, s) \| \varphi_{N, k} \|_{\sko_s^p}$

which implies:
\begin{align}
\| X_N . \varphi \|_{L^q(\Omega)} \leq C(p, q, s) \| \varphi \|_{\sko_s^p} \label{eq2_9}
\end{align}

Then the triangle inequality
\begin{align*}
\| \delta \varphi - X_{N, k} . \varphi \|_{L^q}
&\leq \| \delta \varphi - \delta \varphi_{N,k} \|_{L^q} + \| \delta \varphi_{N, k} - \E \left[ \delta \varphi_{N,k} | \F_{N+k} \right] \|_{L^q} + \\ 
& + \| \E \left[ \delta \varphi_{N,k} | \F_{N+k} \right] - X_{N+k} . \varphi_{N,k} \|_{L^q} + \| X_{N+k} . (\varphi_{N,k} - \varphi) \|_{L^q}
\end{align*}

with $\E \left[ \delta \varphi_{N,k} | \F_{N+k} \right] = X_{N+k} . \varphi_{N,k}$; and (\ref{eq2_9}) shows that $X_N \rightarrow \delta$ and the convergence is $L^{\infty-0}$-strong from $\dinf$ in $L^{\infty-0}$.
\end{proof}

\begin{cor}\label{cor2_5}
If $\delta$ is a continuous derivation from $\dinf(\Omega)$ to $L^{\infty-0}(\Omega)$, for each $f \in \dinf(\Omega)$, $A$ being a measurable subset of $\Omega$, then $\mathds{1}_A \Grad f = 0$ implies $\mathds{1}_A \delta f = 0$.
\end{cor}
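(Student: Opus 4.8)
The plan is to exploit the approximation of an arbitrary continuous derivation $\delta$ by genuine $\dinf$-vector fields furnished by Theorem~\ref{thm2_6}, together with the elementary fact that a vector field acts on $f$ \emph{only} through $\Grad f$. The hypothesis $\mathds{1}_A \Grad f = 0$ will annihilate each approximant on $A$, and this locality then passes to the limit $\delta$.

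First I would recall that each approximating vector field $X_N = \sum_{i=1}^N \E\left[\delta(W(e_i)) | \F_N\right] e_i$ acts on $f \in \dinf(\Omega)$ by the Malliavin pairing
\begin{align*}
X_N \cdot f = \langle X_N, \Grad f \rangle_H = \sum_{i=1}^N \E\left[\delta(W(e_i)) | \F_N\right] \langle e_i, \Grad f \rangle_H.
\end{align*}
Since $\mathds{1}_A$ is a scalar random variable it commutes with the finite sum and with the $H$-inner product, so $\mathds{1}_A (X_N \cdot f) = \langle X_N, \mathds{1}_A \Grad f \rangle_H$. The hypothesis $\mathds{1}_A \Grad f = 0$ therefore yields $\mathds{1}_A (X_N \cdot f) = 0$ for every $N$.

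Next I would invoke Theorem~\ref{thm2_6}, which asserts that $X_N \cdot f \to \delta f$ strongly in $L^{\infty - 0}(\Omega)$, that is, in $L^q(\Omega)$ for every $q > 1$. Multiplication by the indicator $\mathds{1}_A$ is a contraction of $L^q(\Omega)$ into itself, hence continuous, so $\mathds{1}_A (X_N \cdot f) \to \mathds{1}_A \delta f$ in each $L^q(\Omega)$. Since the left-hand side is identically zero for every $N$, the limit must vanish: $\mathds{1}_A \delta f = 0$, which is the assertion.

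The only point demanding care — and the place where Theorem~\ref{thm2_6} is indispensable — is the passage to the limit: an abstract continuous derivation need not a priori respect the support of $\Grad f$, and it is precisely the strong $L^{\infty - 0}$ approximation by true vector fields, each of which is manifestly local with respect to $\Grad$, that transfers this locality to $\delta$. Everything else (commuting $\mathds{1}_A$ through the inner product, and the $L^q$-contractivity of multiplication by $\mathds{1}_A$) is entirely routine.
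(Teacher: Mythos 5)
Your proposal is correct and is essentially the paper's own proof: the paper likewise takes the vector fields $X_N$ from Theorem~2.6, observes that $\mathds{1}_A \Grad f = 0$ forces $\mathds{1}_A (X_N \cdot f) = 0$, and passes to the limit $N \uparrow \infty$. Your write-up merely makes explicit the two routine steps the paper leaves implicit (pulling $\mathds{1}_A$ through the $H$-pairing, and the $L^q$-continuity of multiplication by an indicator), which is a faithful elaboration rather than a different route.
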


\begin{proof}
Let $X_N$ be the sequence of vector fields as in Theorem \ref{thm2_6}. Then from the hypothesis, we have: $\mathds{1}_A(X_N . f) = 0$; and at the limit when $N \uparrow \infty$, we have $\mathds{1}_A \delta f = 0$.
\end{proof}

\begin{thm}\label{thm2_7}
If $\delta$ is a continuous derivation of $\dinf(\Omega)$, there exists a sequence of vector fields $\tilde{X}_N$, which converges $\dinf$-strongly towards $\delta$.
\end{thm}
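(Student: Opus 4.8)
The plan is to take the vector fields $X_N = \sum_{i=1}^N \E[\delta(W(e_i))\,|\,\F_N]\,e_i$ furnished by \thmref{thm2_6}, which already converge to $\delta$ strongly in $L^{\infty-0}$, and to bootstrap this convergence up the Sobolev scale until it becomes $\dinf$-strong. The engine of the bootstrap is that a $\dinf$-continuous derivation commutes with Malliavin directional derivatives up to another derivation, so one order of differentiation of $X_N f - \delta f$ can always be traded for an order-$0$ statement applied to $\delta$ and to its commutators.

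First I would record the commutation relation. For $v \in H$ write $\partial_v f = \langle \Grad f, v\rangle_H$; this is itself a $\dinf$-continuous derivation. A direct Leibniz computation shows that $[\delta,\partial_v] = \delta\circ\partial_v - \partial_v\circ\delta$ is again a derivation, and it is $\dinf$-continuous because $\delta$ is. Extending $\delta$ to $H$-valued functions by $\hat\delta(\sum_i f_i e_i) = \sum_i (\delta f_i)e_i$, which is legitimate and $\dinf$-continuous by Corollary~\ref{cor2_4}, one obtains for every $f \in \dinf(\Omega)$
\begin{equation*}
\Grad(\delta f) = \hat\delta(\Grad f) - R(f), \qquad \langle R(f), v\rangle_H = [\delta,\partial_v]f .
\end{equation*}
The same identity holds with $\delta$ replaced by the vector field $X_N$, the correction then being the $H$-valued vector field built from the commutators $[X_N,\partial_{e_i}]$. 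The map $f \mapsto R(f)$ is itself an $H$-valued derivation (its Leibniz rule follows from that of each $[\delta,\partial_v]$), so the whole construction can be iterated.

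Next I would argue by induction on the differentiation order $r$. The base case $r=0$ is exactly \thmref{thm2_6}: $X_N f \to \delta f$ in every $L^p$, $p<\infty$. For the inductive step, assume that for $\delta$ and for each commutator derivation $[\delta,\partial_{e_i}]$ the associated vector fields converge strongly in $\drp$ for all $p>1$. Writing $\Grad(X_N f - \delta f) = \hat{X}_N(\Grad f) - \hat\delta(\Grad f) - (R_N(f) - R(f))$ and applying the extension Corollary~\ref{cor2_4} to propagate the order-$r$ convergence of the scalar derivations to the $H$-valued level, both differences on the right tend to $0$ in $\drp(\Omega,H)$; hence $X_N f - \delta f \to 0$ in $\sko_{r+1}^p(\Omega)$. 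A diagonal extraction over $N$, over the basis-truncation index, and over the pairs $(p,r)$ then produces a single sequence $\tilde X_N$ with $\tilde X_N f \to \delta f$ in every $\drp(\Omega)$, i.e.\ $\dinf$-strongly. One may equally phrase the upgrade through \thmref{thm2_5}, reducing $\dinf$-convergence to simultaneous $\sko_r^2$ and $L^{\infty-0}$ convergence.

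The step I expect to be the main obstacle is identifying the correct approximants for the commutators, i.e.\ checking that $[X_N,\partial_{e_i}]$ is genuinely the \thmref{thm2_6}-approximation of $[\delta,\partial_{e_i}]$. This hinges on interchanging the conditional expectation $\E[\,\cdot\,|\,\F_N]$ with the directional derivatives $\partial_{e_i}$, which is delicate precisely when $i>N$; here the finite-dimensional structure of $\F_N = \sigma(W(e_1),\dots,W(e_N))$ together with the commutation of the O.U.\ operator with conditional expectation (Proposition~\ref{pr2_1}) is what forces the two families of vector fields to agree asymptotically. A secondary difficulty is the uniform control of the infinite sums over the Hilbertian basis, which must be arranged so that the extension corollaries apply with constants independent of the truncation.
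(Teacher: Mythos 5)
Your route is genuinely different from the paper's, but as written it has a gap I do not believe the sketch can close. The crux is the inductive step's requirement that the correction terms converge, $R_N(f)\to R(f)$ in $\drp(\Omega,H)$. Your inductive hypothesis --- \thmref{thm2_6} applied to each commutator $[\delta,\partial_{e_i}]$ separately --- yields only componentwise convergence in $i$; to conclude convergence of the $H$-valued sums you need tail control, i.e.\ $\bigl\|\sum_{i>M}\bigl([\delta,\partial_{e_i}]f\bigr)^2\bigr\|$ small uniformly in $N$, and nothing in your proposal produces it. Equicontinuity of the family $(\,[\delta,\partial_{e_i}]\,)_{i}$ gives uniform bounds, not summability: note that $R(f)\in\dinf(\Omega,H)$ only because it is \emph{defined} as $\hat{\delta}(\Grad f)-\Grad(\delta f)$. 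Worse, the identity you would naturally estimate with,
\begin{equation*}
R_N(f)-R(f)=\bigl(\hat{X}_N-\hat{\delta}\bigr)(\Grad f)-\Grad\bigl(X_N f-\delta f\bigr),
\end{equation*}
is circular for this purpose, since $\Grad(X_Nf-\delta f)$ is exactly the quantity the inductive step must bound; and because $\delta$ need not be a vector field (Section~3 exhibits a $\dinf$-continuous derivation that is not one), you cannot obtain $R_N\to R$ from convergence of $\Grad X_N$ either. Iterating to order $r$ then forces the same unsolved summability problem on the growing families $[[\delta,\partial_{e_i}],\partial_{e_j}]$, etc. Two further points: Corollary~\ref{cor2_4} extends \emph{one} continuous operator, so strong convergence $\hat{X}_N\to\hat{\delta}$ on $\dinf(\Omega,H)$ needs a separate $N$-uniform equicontinuity argument which you do not supply; and the obstacle you flag as principal --- commuting $\E[\,\cdot\,|\,\F_N]$ with $\partial_{e_i}$ --- is in fact harmless ($\partial_{e_i}\E[\,\cdot\,|\,\F_N]=\E[\partial_{e_i}\,\cdot\,|\,\F_N]$ for $i\leq N$, and both sides of your comparison vanish for $i>N$). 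It is the difficulty you call secondary that is fatal as written.

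The paper's proof avoids gradients and commutators entirely. Its key observation is that on cylindrical functions $f[W(e_1),\dots,W(e_N)]$ one has the exact identity $X_N\cdot f=\E[\delta f\,|\,\F_N]$; since the O.U.\ operator commutes with conditional expectation (Proposition~\ref{pr2_1}), $\E[\,\cdot\,|\,\F_N]$ is bounded on each $\drp$, so $X_N$ is $\dinf$-continuous on $\dinf(\Omega,\F_N,\P)$. It then extends $X_N$ to $\tilde{X}_N$ on $\dinf(\Omega,\F_N,\P)\times\dinf(\Omega,\F_N^{\perp},\P)\simeq\dinf(\Omega)$ by the tensorization \thmref{thm2_2}, whose part ii) supplies precisely the $N$-uniform bounds your scheme lacks, and concludes with a five-term triangle inequality using cylindrical approximations $f_N\to f$ and the $\drp$-martingale convergence $\E[\delta f\,|\,\F_N]\to\delta f$. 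If you try to repair your bootstrap you will find yourself importing exactly these two ingredients --- the conditional-expectation identity and the uniformly bounded extension --- at which point you have reproduced the paper's argument.
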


\begin{proof}
Let $N$ be fixed and $X_N = \sum_{i=1}^N \E \left[ \delta(W(e_i)) | \F_N \right] e_i$.
If $f \left[ W(e_1), \dots, W(e_N) \right]$ is a cylindrical function, direct calculus shows that:

\begin{align*}
X_N . f \left[ W(e_1), \dots, W(e_N) \right] = \E \left[ \delta f | \F_N \right]
\end{align*}

So $X_N$ is $\dinf$-continuous from $\dinf(\Omega, \F_N, \P)$ in itself.
We extend $X_N$ to $\dinf(\Omega, \F_N, \P) \times \dinf(\Omega, \F_N^{\perp}, \P) \simeq \dinf(\Omega, \F, \P)$ denoted $\tilde{X}_N$ 

as in Theorem 2, 2, i); $\tilde{X}_N$ is again $\dinf$-continuous and is a vector field. With Theorem 2, 2, ii), the sequence $(\tilde{X}_N)$ is $N$-uniformly bounded when the $\tilde{X}_N$ are considered as operators from $\dinf(\Omega)$ in $\dinf(\Omega)$. 

The convergence of $\tilde{X}_N$ towards $\delta$, strong convergence 
as operators, is obtained by:
$\forall f \in \dinf, \forall (p, r), p>1, r \in \N_{\star}:$

\begin{align*}
\| \delta f - \tilde{X}_N . f \|_{\sko_r^p}
&\leq \| \delta f - \E \left[ \delta f | \F_N \right] \|_{\drp}
+ \| \E \left[ \delta f | \F_N \right] - \E \left[ \delta f_N | \F_N \right] \|_{\drp}
+ \| \E \left[ \delta f_N | \F_N \right] - X_N . f_N \|_{\drp} \\
&+ \| X_N . f_N - \tilde{X}_N . f_N \|_{\drp}
+ \| \tilde{X}_N . \left( f_N - f \right) \|_{\drp}
\end{align*}

$f_N$ being a sequence of cylindrical functions, with $f_N(\omega) = f_N \left[ W(e_1), \dots, W(e_N) \right]$, converging $\dinf$ towards $f$.
\end{proof}

\begin{defn}\label{def2_2}
A process $X: [0,1] \times \Omega \rightarrow \R$ is said to be 

completely $\dinf$ iff: $\forall r \in \R$:
\begin{align*}
(1-L)^{r/2} X \in L^{\infty - 0} \left( [0,1] \times \Omega, dt \otimes \P(d\omega) \right)
\end{align*}
\end{defn}

\begin{lem}\label{lem2_1} 
i) The space of the completely $\dinf$-processes $\mathscr{S}$ is a Frechet space.

ii) If $F$ is a continuous linear map from $\mathscr{S}$ in $\mathscr{S}$, there is a unique continuous linear extension of $F$, denoted $\tilde{F}$, from the space of completely $\dinf$-processes with values in an Hilbert $\tilde{H}$, in itself. 
\end{lem}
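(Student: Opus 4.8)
The plan is to realize $\mathscr S$ as a countable projective limit of $L^q$-type spaces and to deduce (ii) from the Hilbert-valued extension machinery already built in this section. For (i), the natural seminorms are $p_{r,q}(X)=\|(1-L)^{r/2}X\|_{L^q([0,1]\times\Omega,\,dt\otimes\P(d\omega))}$. Although Definition~\ref{def2_2} quantifies over all real $r$, I would first note that it suffices to retain $r\in\N$ and $q\in\N_\star$: for $r\le 0$ the operator $(1-L)^{r/2}$ is a contraction on every $L^q$, so those conditions are automatic once $X\in L^{\infty-0}$; for $0\le r\le n$ one has $(1-L)^{r/2}=(1-L)^{(r-n)/2}(1-L)^{n/2}$ with the first factor bounded on $L^q$, so integer exponents control all intermediate ones; and since the space is a probability space, the $L^q$-norms increase in $q$, so integer $q$ suffices. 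Hence $\mathscr S=\bigcap_{r\in\N,\,q\in\N_\star}\{X:p_{r,q}(X)<\infty\}$ carries a countable separating family of seminorms, is metrizable, and admits a Fréchet-type metric $d(X,Y)=\sum_{r,q}2^{-(r+q)}\,[1\wedge p_{r,q}(X-Y)]$, exactly as for $\dinf(\Omega)$ at the start of the section.

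It then remains to prove completeness. Given a $d$-Cauchy sequence $(X_n)$, each sequence $\big((1-L)^{r/2}X_n\big)_n$ is Cauchy in $L^q([0,1]\times\Omega)$, hence converges to some $Y_{r,q}$ by completeness of $L^q$; taking $r=0,\,q=2$ defines a limit $X:=Y_{0,2}\in L^2$. The decisive point is consistency, and it follows from closedness of $(1-L)^{r/2}$: since $(1-L)^{-r/2}$ is bounded on each $L^q$, from $X_n\to X$ and $(1-L)^{r/2}X_n\to Y_{r,q}$ we get $X_n=(1-L)^{-r/2}(1-L)^{r/2}X_n\to (1-L)^{-r/2}Y_{r,q}$, whence $Y_{r,q}=(1-L)^{r/2}X$, in particular independent of $q$. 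Thus $(1-L)^{r/2}X\in L^{\infty-0}$ for every $r$, i.e. $X\in\mathscr S$, and $p_{r,q}(X_n-X)\to 0$ for all $r,q$, so $X_n\to X$ in $\mathscr S$.

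For (ii), fix an Hilbertian basis $(e_i)_{i\in\N_\star}$ of $\tilde H$ and define $\tilde F$ on finite sums by $\tilde F(\sum_i X_i e_i)=\sum_i F(X_i)e_i$, the $X_i$ being scalar completely-$\dinf$ processes; such finite sums are dense in the $\tilde H$-valued space, so continuity will simultaneously yield existence and uniqueness. To obtain the seminorm estimates I would mirror \thmref{thm2_3} and \thmref{thm2_4}: introduce an auxiliary Gaussian space $(\Omega_1,\F_1,\P_1,\tilde H)$ independent of $\Omega$, with $\mathcal C_1(\Omega_1)$ generated by $(W(e_i))$, and use the pointwise Gaussian identity $\|\sum_i X_i e_i\|_{\tilde H}\sim\|\sum_i W(e_i)X_i\|_{L^q(\Omega_1)}$ to convert the $\tilde H$-valued $L^q$-norm of a process into a scalar $L^q$-norm on $[0,1]\times\Omega\times\Omega_1$.

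Continuity of $F$ between the Fréchet spaces means that for each target pair $(r,q)$ there exist $(r',q')$ and a constant $C$ with $p_{r,q}(FZ)\le C\,p_{r',q'}(Z)$ for every scalar $Z$. Writing $W=(1-L)^{r'/2}Z$ and $F'=(1-L)^{r/2}\circ F\circ(1-L)^{-r'/2}$, exactly as in the commutative diagram of Corollary~\ref{cor2_2}, turns this into boundedness of $F'\colon L^{q'}([0,1]\times\Omega)\to L^q([0,1]\times\Omega)$. Then \thmref{thm2_3} extends $F'$ to the $\tilde H$-valued level — its proof uses only the Gaussianity of the auxiliary space $\Omega_1$, not of the base, so $[0,1]\times\Omega$ is admissible as base — and composing back with the $\tilde H$-valued O.U. powers $(1-L)_{\tilde H}^{\mp r/2}$ (built through \thmref{thm2_3} and commuting with the extension because they act diagonally on each chaos) yields $p_{r,q}(\tilde F Z)\le C\,p_{r',q'}(Z)$ for $\tilde H$-valued $Z$. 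I expect the delicate point to be precisely this commutation: one must verify that $(1-L)^{r/2}$ passes through both $\tilde F$ and the Gaussian reduction uniformly in $(r,q)$, so that the scalar continuity of $F$ transfers seminorm by seminorm without loss. Once that is secured, independence of the basis follows as in \thmref{thm2_4} from $\tilde F_{(B)}(\sum_j X_j\otimes Y_j)=\sum_j F(X_j)\otimes Y_j=\tilde F_{(B')}(\sum_j X_j\otimes Y_j)$, and density of the finite sums gives uniqueness of $\tilde F$.
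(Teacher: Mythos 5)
Your proof is correct, and in fact the paper states Lemma~\ref{lem2_1} without any printed proof; your argument supplies exactly the one the surrounding section intends. Part (i) is the standard countable-seminorm and completeness argument, made legitimate by the boundedness of $(1-L)^{-s}$ ($s\geq 0$) on each $L^q$ of the probability space $[0,1]\times\Omega$, and part (ii) reassembles the section's own tools — the commutative-diagram conjugation by O.U.\ powers from Corollary~\ref{cor2_2} together with the Gaussian-randomization extension of \thmref{thm2_3}, whose proof, as you correctly observe, only requires the auxiliary space (not the base) to be Gaussian, while the diagonal action of $(1-L)_{\tilde H}$ on components and the density of finite sums $\sum_i X_i e_i$ (by dominated convergence on the tails) settle the consistency and uniqueness points you flagged.
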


\begin{prop}\label{prop2_2}
Some properties of the convolution and the fractionnal derivation:

$f$ being a function: $[0,1] \rightarrow \R$ with $f(0) = 0$, we extend $f$ in $\tilde{f}$ by 

$\left.\tilde{f}\right|_{\R_-} = \left.\tilde{f}\right|_{\big[2, +\infty\big[} = 0$ and by an afine function on $[1,2]$ such that 

$\tilde{f}(1) = f(1)$ and $\tilde{f}(2) = 0$. Then, we denote by $\beta_s$ the function: $\R \rightarrow \R$ defined by: $\beta_s(x) = 0, x \leq 0$ and $\beta_s(x) = \frac{1}{\Gamma(1-s)} \frac{1}{x^s}$ for $x > 0$. Then:

\begin{enumerate}
\renewcommand{\labelenumi}{\roman{enumi})}
\item If $f$ is $\alpha$-H\"{o}lderian, $\forall s$ with $0 < s < \alpha < 1$, $\tilde{f} \star \beta_s \in C^1(\R)$
\item $f$ being $\alpha$-H\"{o}lderian, we have: $(\tilde{f} \star \beta_s)' \star \beta_{1-s} = f$.
\item If $f$ is a $\drp$-bounded process, $\tilde{f} \star \beta_s$ is $(1-s)$-$\drp$-H\"{o}lderian (see Definition \ref{def2_3}).
\item The convolution of $\beta_s$ with an adapted process is again an adapted process.
\end{enumerate}

\end{prop}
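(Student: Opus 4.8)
The plan is to recognize both convolutions as Riemann--Liouville fractional integrals and to reduce everything to classical fractional-calculus identities, the only genuinely new point being the transport of the deterministic estimates to the $\drp$-norm in (iii). Writing $I^{\gamma}g(x)=\frac{1}{\Gamma(\gamma)}\int_{-\infty}^{x} g(u)(x-u)^{\gamma-1}\,du$ for the fractional integral of order $\gamma\in(0,1)$, one reads off directly from the definition of $\beta_s$ that convolution with $\beta_s$ is $I^{1-s}$ and convolution with $\beta_{1-s}$ is $I^{s}$. The role of the specific extension $\tilde f$ is precisely to make $\tilde f$ continuous, compactly supported, and to satisfy $\tilde f(0)=0$: compact support makes every convolution integral converge, while $\tilde f(0)=0$ is what will discard the boundary terms in (ii). For (i) I would show that $\tilde f\star\beta_s=I^{1-s}\tilde f$ is $C^1$ with derivative equal to the Marchaud fractional derivative of order $s$,
\[
\big(\tilde f\star\beta_s\big)'(x)\;=\;\frac{s}{\Gamma(1-s)}\int_0^\infty \frac{\tilde f(x)-\tilde f(x-t)}{t^{1+s}}\,dt .
\]
Since $\tilde f$ is $\alpha$-Hölderian the numerator is $O(t^\alpha)$ near $t=0$, so the integrand is $O(t^{\alpha-1-s})$, integrable exactly because $s<\alpha$; dominated convergence then gives continuity in $x$, and the fact that this integral equals $(I^{1-s}\tilde f)'$ is the classical coincidence of the Riemann--Liouville and Marchaud derivatives for a Hölder, compactly supported $\tilde f$.

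For (ii) the key algebraic fact is the semigroup identity $\beta_s\star\beta_{1-s}=\mathds{1}_{(0,\infty)}$, the kernel of $I^{1}$ (the Heaviside function), whose distributional derivative is the Dirac mass at $0$. Granting (i), set $g=(\tilde f\star\beta_s)'$, which is continuous, so that $g\star\beta_{1-s}=I^{s}g=I^{s}\tfrac{d}{dx}I^{1-s}\tilde f$. Because $I^{1-s}\tilde f$ is $C^1$ and vanishes for $x\le 0$, there is no boundary term (this is where $\tilde f(0)=0$ enters), so $\tfrac{d}{dx}$ commutes with $I^{s}$ and
\[
I^{s}g=\frac{d}{dx}\,I^{s}I^{1-s}\tilde f=\frac{d}{dx}\,I^{1}\tilde f=\tilde f .
\]
Restricting to $[0,1]$, where $\tilde f=f$, yields $(\tilde f\star\beta_s)'\star\beta_{1-s}=f$.

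For (iii) I would run the standard Hölder estimate for fractional integration directly in the $\drp$-norm. Writing the increment $I^{1-s}\tilde f(x+h)-I^{1-s}\tilde f(x)$ as the sum of $\int_x^{x+h}\tilde f(u)(x+h-u)^{-s}\,du$ and $\int_{-\infty}^{x}\tilde f(u)\big[(x+h-u)^{-s}-(x-u)^{-s}\big]\,du$, I apply Minkowski's integral inequality to bring $\|\cdot\|_{\drp}$ inside, bound $\|\tilde f(u)\|_{\drp}$ by the uniform constant $M=\sup_u\|\tilde f(u)\|_{\drp}$ afforded by the $\drp$-boundedness of the process (including the affine part on $[1,2]$, which is $\le M$), and evaluate the two scalar kernel integrals, each equal to $h^{1-s}/(1-s)$. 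This gives $\|(\tilde f\star\beta_s)(x+h)-(\tilde f\star\beta_s)(x)\|_{\drp}\le C\,M\,|h|^{1-s}$, so the process is $(1-s)$-$\drp$-Hölderian, with no time-continuity of $f$ required. Finally (iv) is pure causality: $\beta_s$ is supported on $[0,\infty)$, hence $(\tilde f\star\beta_s)(t)=\frac{1}{\Gamma(1-s)}\int_{-\infty}^{t}\tilde f(v)(t-v)^{-s}\,dv$ depends only on the values $\tilde f(v)$ with $v\le t$, each $\F_v$-measurable and therefore $\F_t$-measurable, so the convolution is adapted.

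The main obstacle is concentrated in (i)--(ii): since $\tilde f$ is merely Hölder and not differentiable, one must justify the differentiation of $I^{1-s}\tilde f$ through the Marchaud representation (where $s<\alpha$ is exactly the integrability threshold) and then justify the commutation of $\tfrac{d}{dx}$ with $I^{s}$, which is precisely where the construction of $\tilde f$ with $\tilde f(0)=0$ and compact support is used to kill the boundary terms. Parts (iii) and (iv) are then routine, the former being a norm-valued version of a classical one-dimensional estimate and the latter a direct consequence of the one-sided support of the kernel.
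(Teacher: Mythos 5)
Your proposal is correct, and there is nothing in the paper to compare it against: Proposition 2.2 is stated there without proof, as a toolbox of classical facts about fractional integration, and your route through the Riemann--Liouville operators $I^{1-s}$, $I^{s}$ and the Marchaud derivative is the standard argument these facts rest on. All four parts check out: convolution by $\beta_s$ is indeed $I^{1-s}$; the Beta-function computation gives $\beta_s\star\beta_{1-s}=\mathds{1}_{(0,\infty)}$, and the commutation of $\frac{d}{dx}$ with $I^{s}$ is legitimately justified by differentiating under the integral in $I^{s}F(x)=\frac{1}{\Gamma(s)}\int_0^\infty t^{s-1}F(x-t)\,dt$, the domain of integration being bounded precisely because $F=I^{1-s}\tilde f$ vanishes on $\mathbb{R}_-$ (which is where $f(0)=0$ enters, as you say); in (iii) both kernel integrals equal $h^{1-s}/(1-s)$, so Minkowski's integral inequality in the Banach space $\drp$ gives the bound $\frac{2M}{\Gamma(1-s)(1-s)}\,|h|^{1-s}$, uniformly in $(t,t')$, exactly as Definition 2.3 requires; and (iv) is, as you observe, pure causality of the one-sided kernel (the affine piece on $[1,2]$ involves $f(1)\in\mathcal F_1\subseteq\mathcal F_t$, so it causes no trouble either). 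The only point where you quote a theorem rather than argue is the coincidence of $(I^{1-s}\tilde f)'$ with the Marchaud derivative for a compactly supported $\alpha$-H\"olderian function when $s<\alpha$; since your integrability analysis ($O(t^{\alpha-1-s})$ near $t=0$) is exactly the hypothesis of that classical result, this is an acceptable citation and not a gap.
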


\subsection{$\mathbb{D}^\infty$-Hölderian process and divergence of a derivation}

\begin{defn}\label{def2_3}
A real-valued process $\Phi(t, \omega)$ will be said to be 

$\dinf$-$\alpha$-H\"{o}lderian iff:
$\forall t \in [0, 1], \forall t' \in [0, 1]: \forall (p, r), p>1, r \in \N_{\star}, \exists \text{ constant } C(p, r)$ such that:
\begin{align*}
\sup_{t, t'} \| \Phi(t', \omega) - \Phi(t, \omega) \|_{\drp} \leq C(p, r) |t'-t|^{\alpha}
\end{align*}
\end{defn}

There is an analogous definition for a matrix-valued process.

\begin{thm}\label{thm2_8}
Let $X: [0,1] \times \Omega \rightarrow \R$ be an $\dinf$-$\alpha$-H\"{o}lderian process. Then $\exists s, 0 < s < 1$ such that if $Y = \frac{d}{dt} \left[ X \star \beta_{1-s} \right]$, $X = Y \star \beta_s$, and $Y$ being a completely $\dinf$-process.
\end{thm}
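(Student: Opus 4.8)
The plan is to recognise $Y$ as a Riemann--Liouville fractional derivative, to obtain the two pointwise identities from the fractional-calculus lemma (Proposition~\ref{prop2_2}), and to reserve the real work for the completely-$\dinf$ property, which I would get from an integral (Marchaud-type) representation of $Y$ read inside the Banach spaces $\drp(\Omega)$.

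First I would fix the exponent. Since $\beta_{1-s}(u)=\frac{1}{\Gamma(s)}u^{s-1}$ for $u>0$, the convolution $X\star\beta_{1-s}$ is the fractional integral of order $s$, and $Y=\frac{d}{dt}[X\star\beta_{1-s}]$ is the fractional derivative of order $1-s$; the target identity $Y\star\beta_s=X$ is the corresponding inversion formula. I choose any $s$ with $1-\alpha<s<1$, so that $\gamma:=1-s\in(0,\alpha)$. Passing to a version of $X$ whose paths are $\P$-a.s. $\alpha'$-Hölderian for some $\alpha'\in(1-s,\alpha)$ (Kolmogorov's criterion applied to the $L^p$-Hölder bounds of Definition~\ref{def2_3} with $p\to\infty$), and extending $X(\cdot,\omega)$ to $\widetilde X(\cdot,\omega)$ as in Proposition~\ref{prop2_2}, I would apply that proposition $\omega$-wise with $1-s$ in the role of $s$: part i) gives $X\star\beta_{1-s}\in C^1$, so $Y$ is well defined, and part ii) gives $Y\star\beta_s=(X\star\beta_{1-s})'\star\beta_s=X$. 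This settles the two pointwise assertions of the statement.

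The substance is that $Y$ is completely $\dinf$, i.e. $(1-L)^{r/2}Y\in L^{\infty-0}([0,1]\times\Omega)$ for every $r\in\R$ (Definition~\ref{def2_2}). The key reinterpretation is that Definition~\ref{def2_3} says precisely that $t\mapsto X(t,\cdot)$ is an $\alpha$-Hölder map into the Banach space $\drp(\Omega)$, for every $(p,r)$. In this Banach-valued setting $Y$ is given by the Marchaud representation
\begin{align*}
Y(t,\cdot)=\frac{1-s}{\Gamma(s)}\int_0^\infty \frac{\widetilde X(t,\cdot)-\widetilde X(t-u,\cdot)}{u^{2-s}}\,du ,
\end{align*}
understood as a Bochner integral in $\drp(\Omega)$, which agrees $\P$-a.s. with the pointwise fractional derivative of the previous paragraph. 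Estimating its norm directly,
\begin{align*}
\big\|Y(t,\cdot)\big\|_{\drp}\le \frac{1-s}{\Gamma(s)}\int_0^\infty u^{-(2-s)}\,\big\|\widetilde X(t,\cdot)-\widetilde X(t-u,\cdot)\big\|_{\drp}\,du ,
\end{align*}
the numerator is $\le C(p,r)\,u^{\alpha}$ for small $u$ by the Hölder bound and is uniformly bounded for large $u$ (there $\widetilde X(t-u,\cdot)=0$). The integral therefore converges \emph{exactly} because $s>1-\alpha$ makes $u^{\alpha-(2-s)}$ integrable at $0$ and $s<1$ makes $u^{-(2-s)}$ integrable at $\infty$. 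Hence $\sup_{t\in[0,1]}\|Y(t,\cdot)\|_{\drp}<\infty$ for every $(p,r)$. For $r\ge 0$ this gives $\sup_t\|(1-L)^{r/2}Y(t,\cdot)\|_{L^p}<\infty$, and for $r<0$ the same follows from $\|(1-L)^{r/2}Y\|_{L^p}\le\|Y\|_{L^p}$; integrating over $t\in[0,1]$ yields $(1-L)^{r/2}Y\in L^p([0,1]\times\Omega)$ for every $p<\infty$ and every $r\in\R$, that is, $Y$ is completely $\dinf$.

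The main obstacle is exactly this last step: converting the $L^p$-sense Hölder control of Definition~\ref{def2_3}, which is not a pathwise statement, into a uniform-in-$t$, $L^p$-in-$\omega$ bound on the fractional derivative. Reading the hypothesis as $\drp$-valued Hölder regularity and using the Marchaud representation is what makes this work, the order condition $s>1-\alpha$ being precisely what tames the singular kernel $u^{-(2-s)}$ against the $\alpha$-Hölder increment. The one point requiring care is reconciling the two viewpoints — the $\omega$-wise computation used for the identities $Y\star\beta_s=X$ and the $\drp$-valued Bochner integral used for the regularity — which is harmless because evaluation into $L^p(\Omega)$ is continuous, so the Bochner integral in $\drp$ coincides $\P$-a.s. with the pointwise fractional derivative.
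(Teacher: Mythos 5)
Your proposal is correct, but there is nothing in the paper to measure it against: Theorem~\ref{thm2_8} is stated with no proof at all, and is evidently meant to be read off from Proposition~\ref{prop2_2}, which is itself stated without proof. Your argument therefore supplies what the paper omits rather than diverging from it. The two pointwise assertions (that $Y$ exists and that $Y \star \beta_s = X$) are precisely Proposition~\ref{prop2_2}.i--ii applied with $1-s$ in place of $s$, under your choice $1-\alpha < s < 1$, and the Kolmogorov-modification step is the right way to make that $\omega$-wise application legitimate. For the only substantive claim --- that $Y$ is completely $\dinf$ --- your reading of Definition~\ref{def2_3} as $\alpha$-H\"older continuity of $t \mapsto X(t,\cdot)$ into the Banach spaces $\drp(\Omega)$, combined with the Marchaud representation estimated as a Bochner integral, is exactly the correct mechanism: the window $1-\alpha < s < 1$ is what makes $u^{\alpha-(2-s)}$ integrable at $0$ and $u^{-(2-s)}$ integrable at infinity, and the extension to all real $r$ (negative included) via contractivity of $(1-L)^{-a}$ on $L^p(\Omega)$ is sound. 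Your argument in fact also indicates how the unproven item iii) of Proposition~\ref{prop2_2} would be established, since it rests on the same trade between $\drp$-increments and the singular kernel.

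One point you should make explicit, because the theorem's statement does not: the extension $\widetilde X$ of Proposition~\ref{prop2_2} presupposes $X(0,\omega)=0$ (the proposition's hypothesis is $f(0)=0$). Without that normalization both conclusions degrade: the inversion $Y\star\beta_s = X$ picks up a contribution from the constant $X(0,\cdot)$, and your Marchaud estimate degenerates near $t=0$, where the increment across the origin contributes of order $\|X(0,\cdot)\|_{\drp}\int_t^1 u^{s-2}\,du \sim \|X(0,\cdot)\|_{\drp}\,t^{s-1}/(1-s)$, which fails to lie in $L^p(dt)$ for large $p$. So either assume $X(0,\cdot)=0$, as the paper implicitly does, or run the argument for $X - X(0,\cdot)$; note also that your tail bound (``uniformly bounded for large $u$'') uses $\sup_{t}\|\widetilde X(t,\cdot)\|_{\drp} < \infty$, which follows from the H\"older bound exactly when $X(0,\cdot)=0$ (or when $X(0,\cdot)\in\dinf$). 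With that caveat recorded, your proof is complete and stands as the missing justification of the theorem.
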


\begin{defn}\label{def2_4new}
A $\dinf$-process defined on $[0,1] \times \Omega$ with values in the $n \times n$ matrices, will be said to be $\dinf$-bounded iff:

$\forall (p, r) ~ p>1, r \in \N_\star, ~\exists C(p, r) > 0$ such that: 

$\sup_{t \in [0,1]} \| A(t, .) \|_{\drp} \leq C(p, r)$, where $\|A(t, .)\|_{\drp}$ denotes the ${\drp}$-norm of any $n \times n$ matrix norm, which are all equivalent.

\end{defn}

\begin{lem}\label{lem2_2}
Let $A$ be a $\dinf$-bounded matrix process. Then the process: $\Phi(t, \omega) = \int_0^t A dB$, $B$ being a $n$-valued Brownian motion, is $\frac{1}{2}$-$\dinf$-H\"{o}lderian.
\end{lem}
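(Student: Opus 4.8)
The plan is to write the increment as an It\^o integral and estimate its $\drp$-norm directly, using the commutation of the iterated Malliavin derivative $\Grad^{j}$ with the stochastic integral together with the Burkholder--Davis--Gundy (BDG) inequality. Assume without loss of generality $t<t'$; since $A$ is adapted, the increment is
\[
\Phi(t',\omega)-\Phi(t,\omega)=\int_t^{t'}A\,dB .
\]
Because on a probability space $\|\cdot\|_{L^p}\le\|\cdot\|_{L^2}$ for $1<p\le 2$, and the same domination then holds termwise for the norm $\|\cdot\|_{\drp}$, it suffices to prove the estimate for $p\ge 2$. Fixing such a $p$ and $r\in\N_\star$, the goal reduces to bounding, for each $j\in\{0,\dots,r\}$,
\[
\Big\|\Grad^{j}\!\int_t^{t'}A\,dB\Big\|_{L^p(\Omega,\overset{j}{\otimes}H)}\le C(p,r)\,|t'-t|^{\frac12},
\]
since summing these over $j$ reconstitutes the $\drp$-norm bound.

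Next I would compute $\Grad^{j}\int_t^{t'}A\,dB$ by iterating the first-order commutation rule
\[
\Grad_s\!\int_t^{t'}A_u\,dB_u=A_s\,\mathds{1}_{[t,t']}(s)+\int_t^{t'}\Grad_s A_u\,dB_u,
\]
valid for the adapted, Malliavin-differentiable integrand $A$ (adaptedness forces $\Grad_s A_u=0$ for $s>u$, which is exactly what makes the ``diagonal'' term appear at $u=s$). Iterating $j$ times produces two kinds of contributions: exactly one \emph{deep} term $\int_t^{t'}\Grad^{j}A_u\,dB_u$, in which every derivative passes through the integral, and $j$ \emph{boundary} terms, each of the form $\Grad^{j-1}A_{s_k}\,\mathds{1}_{[t,t']}(s_k)$, in which one of the $j$ derivatives lands on $dB$ and is evaluated on the diagonal $u=s_k$. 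I would verify this structure by induction on $j$ (the case $j=2$ already exhibits the full pattern).

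For the deep term, viewing $\Grad^{j}A_u$ as an $\overset{j}{\otimes}H$-valued (matrix-weighted) integrand, the Hilbert-space-valued BDG inequality followed by Minkowski's integral inequality (legitimate for $p\ge 2$) gives
\[
\Big\|\int_t^{t'}\Grad^{j}A_u\,dB_u\Big\|_{L^p(\Omega,\overset{j}{\otimes}H)}
\le C_p\Big(\int_t^{t'}\big\|\,\|\Grad^{j}A_u\|_{\overset{j}{\otimes}H}\big\|_{L^p(\Omega)}^2\,du\Big)^{\frac12}
\le C(p,r)\,|t'-t|^{\frac12},
\]
the last step using the $\dinf$-boundedness $\sup_u\|A_u\|_{\drp}\le C(p,r)$ of Definition 2.4. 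For each boundary term the indicator restricts the extra time variable to $[t,t']$, so that its $\overset{j}{\otimes}H$-norm squared equals $\int_t^{t'}\|\Grad^{j-1}A_{s_k}\|_{\overset{j-1}{\otimes}H}^2\,ds_k$; applying Minkowski and again the uniform $\dinf$-bound yields the same $C(p,r)\,|t'-t|^{1/2}$ estimate. Since there are at most $r$ boundary terms, the triangle inequality assembles all contributions into the required bound for each $j$, and summing over $j=0,\dots,r$ finishes the argument.

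The main obstacle is the rigorous justification of the iterated commutation formula and the precise accounting of the diagonal contributions: one must check that $\int_t^{t'}A\,dB\in\dinf(\Omega)$ with the displayed derivatives (which follows from adaptedness together with the $\dinf$-boundedness of $A$), and that the tensor/matrix-valued BDG inequality may legitimately be invoked in the $\overset{j}{\otimes}H$-valued setting. Everything else --- the reduction to $p\ge2$, the Minkowski steps, and the counting of terms --- is routine once this structure is in place.
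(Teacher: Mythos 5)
Your argument is correct as far as it goes, but it silently adds a hypothesis the lemma does not contain: adaptedness of $A$. Definition 2.4 of a $\dinf$-bounded matrix process imposes only $\sup_{t}\|A(t,\cdot)\|_{\drp}\le C(p,r)$, and the paper's own proof explicitly reads $\int_t^{t+h}A\,dB$ as a \emph{Skorokhod} integral, i.e.\ the lemma is meant to cover non-adapted integrands. Your proof breaks at exactly one point in that generality: the Burkholder--Davis--Gundy inequality. For non-adapted $A$ the Skorokhod integral $\int_t^{t'}A\,dB$ is not a martingale increment, so BDG is simply unavailable, and your estimate of the ``deep'' term collapses. (Interestingly, the structural part of your computation survives: the commutation rule $\Grad_s\,\delta(u)=u_s+\delta(\Grad_s u)$ holds for Skorokhod integrals without adaptedness, so your deep/boundary decomposition of $\Grad^j\int_t^{t'}A\,dB$ is still valid; it is only the martingale estimate that needs adaptedness. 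Also note that in your first line you do not even need adaptedness to write $\Phi(t')-\Phi(t)=\int_t^{t'}A\,dB$ --- that is linearity of the divergence applied to $A\mathds{1}_{[0,t']}$ and $A\mathds{1}_{[0,t]}$.)

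The paper's route avoids the issue entirely and is worth internalizing: write
\[
\int_t^{t+h}A\,dB=\sqrt{h}\;\Div\!\left(A\,X_h\right),\qquad
X_h: s\mapsto \frac{1}{\sqrt{h}}\int_0^s\sum_{i=1}^n\mathds{1}_{[t,t+h]}(u)\,e_i\,du,
\]
observe that $\|X_h\|_H$ is of order one and that $A\,X_h$ is $\dinf(\Omega,H)$-bounded uniformly in $t$ and $h$ (exactly your Minkowski computation: $\|A X_h\|_H^2=\frac1h\int_t^{t+h}\|A(u)\|^2du$, and similarly for the iterated gradients, using $\sup_u\|A(u)\|_{\drp}\le C(p,r)$), and then invoke the continuity of $\Div$ from $\dinf(\Omega,H)$ to $\dinf(\Omega)$ (Meyer's inequalities) to get $\|\Phi(t+h)-\Phi(t)\|_{\drp}\le C(p,r)\sqrt{h}$ in one stroke. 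In effect, the paper replaces your BDG step by the $\dinf$-continuity of the divergence operator, which is insensitive to adaptedness; your proof as written is a valid and more elementary argument in the adapted case only, and should either state that restriction or substitute Meyer for BDG to reach the lemma's actual generality.
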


\begin{proof}
$A$ being a $n \times n$ matrix, $\int_t^{t+h} A dB$ being a Skorokhod integral, we have: $\int_t^{t+h} A dB = \sqrt{h} \Div (A . X_h)$ where:

$X_h$ is the vector $X_h = s \rightarrow \frac{1}{\sqrt{h}} \int_0^s \sum_{i=1}^n \mathds{1}_{[t, t+h]}(u) e_i du$, $(e_i)_{i=1,\dots,n}$ being the canonical basis of $\R^n$, because $A X_h$ is $\dinf(\Omega, H)$-bounded.

The operator $\Div $, being continuous: 
\begin{align*}
\exists C(p,r): \| \Phi(t', \omega) - \Phi(t, \omega) \|_{\drp} \leq C(p, r) \sqrt{h}
\end{align*}
\end{proof}

\begin{defn}\label{def2_4}
Let $\delta$ be a continuous derivation of $\dinf(\Omega)$.

\begin{enumerate}
\renewcommand{\labelenumi}{\roman{enumi})} 
\item an element $T$ in $\sko^{- \infty}$ will be called divergence of $\delta$, denoted $\Div \delta$, iff: $\forall \varphi \in \dinf(\Omega): - \int \delta \varphi \P(d \omega) = + \left( \Div \delta, \varphi \right)$
( $\left( \Div \delta, \varphi \right)$ is the duality bracket).

\item a continuous derivation $\delta$ of $\dinf(\Omega)$ is said to be an adapted derivation iff: $\forall$ adapted process $\Phi(t, \omega)$, $(\delta \Phi)(t, \omega)$ is an adapted process.
\end{enumerate}
\end{defn}

\begin{rem}\label{rem2_1}
If $U$ is a vector field, $\Div U$ as in Definition \ref{def2_4} coincides with the classical definition of divergence of a $\dinf$-vector field.
\end{rem}

\begin{rem}\label{rem2_2}
\begin{enumerate}
\renewcommand{\labelenumi}{\roman{enumi})}
\item If $\Div \delta = 0$, then $\delta$ is $L^2$-antisymmetrical.
\item $(\Omega, \F, \P, H)$ being a Gaussian space, if $A$ is a $n \times n$-A.M. matrix such that $\forall \varphi \in \dinf(\Omega)$, $A \Grad \phi \in \dinf(\Omega, H)$, then $\Div A \Grad$ is a $\dinf$-derivation, and $\Div (\Div A \Grad ) = 0$. Such an $A$ is called a multiplicator and they will be studied in Section 4.
\end{enumerate}
\end{rem}

\begin{thm}\label{thm2_9}
Let $(\Omega, \F, \P, H)$ be a Gaussian space.

If $V(s, \omega): t \rightarrow \int_0^t \dot{h}(s, \omega) ds$ is a $\dinf$-vector field, then $t \rightarrow \int_0^t \E \left[ \dot{h}(s, \omega) | \F_s \right] ds$ is a $\dinf$-vector field.
\end{thm}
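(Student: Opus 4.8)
The statement asserts that the adapted-projection operator $\Pi$, which sends the velocity field $\dot h(s,\omega)$ of a $\dinf$-vector field to $\E[\dot h(s,\omega)\,|\,\F_s]$ (the conditional expectation being taken, for each fixed inner time $s$, against the Brownian filtration $\F_s$), preserves the class $\dinf(\Omega,H)$. Identifying $H$ with $L^2([0,1])$, a $\dinf$-vector field is exactly an element $V\in\dinf(\Omega,H)$ with $V(\omega)\colon t\mapsto\int_0^t\dot h(s,\omega)\,ds$, and the candidate field is $\Pi V$, the field with velocity $\Pi\dot h$. The plan is to prove that $\Pi$ is a continuous linear operator of $\dinf(\Omega,H)$ into itself; applying it to $V$ then gives the result. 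Since $\E[\cdot\,|\,\F_s]$ is an $L^2$-contraction, $\Pi$ is at once a contraction of $L^2(\Omega,H)$, hence well defined there; the work is to lift this boundedness to the whole Sobolev scale.

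First I would establish the $L^p$-boundedness of $\Pi$ on $L^p(\Omega,H)$ for every $p\in(1,\infty)$, namely
\[
\Big\|\big(\textstyle\int_0^1|\E[\dot h(s,\cdot)|\F_s]|^2\,ds\big)^{1/2}\Big\|_{L^p(\Omega)}\le C_p\,\Big\|\big(\textstyle\int_0^1|\dot h(s,\cdot)|^2\,ds\big)^{1/2}\Big\|_{L^p(\Omega)}.
\]
This is the continuous form of Stein's inequality for conditional expectations along a filtration, and it is the main analytic input: it controls the whole continuum $\{\E[\cdot|\F_s]\}_{s\in[0,1]}$ simultaneously inside the $H$-norm, something a naive $s$-by-$s$ use of the scalar $L^p$-contractivity of each $\E[\cdot|\F_s]$ cannot provide. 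Alternatively one can derive it from the Burkholder--Davis--Gundy and Doob martingale inequalities.

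Next I would show that $\Pi$ commutes with the Ornstein--Uhlenbeck operator $L$. Extending $L$ to $\dinf(\Omega,H)$ by the procedure of Theorem~\ref{thm2_3} and Corollary~\ref{cor2_2}, so that it acts on the $\omega$-variable fibrewise in $s$, Proposition~\ref{pr2_1} applied for each fixed $s$ with $t=s$ gives
\[
L\,\E[\dot h(s,\cdot)\,|\,\F_s]=\E[L\,\dot h(s,\cdot)\,|\,\F_s],
\]
that is $L\circ\Pi=\Pi\circ L$. Since $\E[\cdot|\F_s]$ preserves the Wiener chaos order (equivalently, by Proposition~\ref{pr2_1}, it commutes with $L$), the bounded operator $\Pi$ commutes with every chaos projection, hence with the semigroup $P_t=e^{tL}$ and, by functional calculus, with $(\mathrm{Id}-L)^{\pm r/2}$ for all $r$.

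Finally I would combine the two steps through the Meyer-type characterisation of the $\drp$-norm, $\|F\|_{\drp(\Omega,H)}\simeq\|(\mathrm{Id}-L)^{r/2}F\|_{L^p(\Omega,H)}$, already used implicitly in Corollary~\ref{cor2_2}. Together with the commutation $(\mathrm{Id}-L)^{r/2}\Pi=\Pi\,(\mathrm{Id}-L)^{r/2}$ and the $L^p$-bound of the first step, this yields
\[
\|\Pi V\|_{\drp(\Omega,H)}\simeq\|\Pi\,(\mathrm{Id}-L)^{r/2}V\|_{L^p(\Omega,H)}\le C_p\,\|(\mathrm{Id}-L)^{r/2}V\|_{L^p(\Omega,H)}\simeq C_p\,\|V\|_{\drp(\Omega,H)}
\]
for every $p>1$ and $r\in\N$. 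Hence $\Pi$ is continuous on each $\drp(\Omega,H)$ and therefore on $\dinf(\Omega,H)=\bigcap_{r,p}\drp(\Omega,H)$, so $\Pi V=\big(t\mapsto\int_0^t\E[\dot h(s,\cdot)|\F_s]\,ds\big)$ is again a $\dinf$-vector field. The hard part is the first step, the vector-valued (Stein) inequality controlling the family $\{\E[\cdot|\F_s]\}_{s\in[0,1]}$ uniformly inside the Cameron--Martin norm; once it is in hand, the commutation with $L$ reduces the whole Sobolev-scale estimate to the scalar Meyer inequalities already at our disposal.
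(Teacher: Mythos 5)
Your proposal is correct, but it follows a genuinely different route from the paper. The paper splits the problem along its Theorem~2.5 ($\sko_{\infty}^2 \cap L^{\infty-0} = \dinf$): it first observes, exactly as you do, that Proposition~2.1 (O.U.\ commutes with conditional expectation) together with the trivial $L^2$-contractivity of $\E[\cdot|\F_s]$ gives membership of the projected field in $\sko_{\infty}^2(\Omega,H)$ — so it uses the commutation only at the $L^2$ level; then, instead of any $L^p$-inequality for the continuum $\{\E[\cdot|\F_s]\}_{s}$, it proves $L^{\infty-0}$-membership by a self-improving integrability bootstrap: pairing with adapted fields of the form $u \mapsto \int_0^u \E[g|\F_s]\,\E[\dot h(s,\cdot)|\F_s]\,ds$, controlling $\sup_{s}|\E[g|\F_s]|$ via Doob's maximal inequality, and deducing that membership of the field in $L^p(\Omega,H)$ forces $\int_0^1 \E[\dot h(s,\cdot)|\F_s]^2\,ds \in L^{p-0}(\Omega)$, i.e.\ the field lies in $L^{2p-0}(\Omega,H)$ — the exponent doubles at each step, starting from $p=2$. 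You replace this entire bootstrap by the vector-valued Stein inequality on $L^p(\Omega, L^2([0,1]))$ together with commutation of $\Pi$ with $(\mathrm{Id}-L)^{r/2}$ and the $H$-valued Meyer equivalence of norms. The trade-off: your argument imports two heavier harmonic-analytic inputs that the paper neither proves nor cites (Stein's inequality for the diagonal family $\E[\dot h(s,\cdot)|\F_s]$, and Meyer's equivalence for Hilbert-valued functionals), but in return it delivers a strictly stronger conclusion — uniform operator bounds, i.e.\ continuity of the adapted projection $\Pi$ on every $\drp(\Omega,H)$ — and it bypasses Theorem~2.5 entirely; the paper's route is more elementary step by step (only $L^2$ commutation, Doob, and duality against adapted fields) and is designed to showcase Theorem~2.5 as the structural tool, at the price of a nonquantitative, membership-only conclusion whose hidden analytic weight sits in the Phragm\'en--Lindel\"of argument inside Theorem~2.5. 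One small point to make explicit in your write-up: you should fix a jointly measurable (optional projection) version of $(s,\omega)\mapsto \E[\dot h(s,\cdot)|\F_s](\omega)$ before applying the Stein inequality, and note that $\E[\cdot|\F_s]$ preserves each Wiener chaos (which is what justifies the fibrewise commutation with $(\mathrm{Id}-L)^{r/2}$), but both are standard and do not affect the validity of your argument.
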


\begin{proof}
The O.U. operator commutes with the conditional expectation (Proposition 2.1) so:

\begin{align*}
t \rightarrow \int_0^t \E \left[ \dot{h}(s, \omega) | \F_s \right] ds \in \sko_{\infty}^2(\Omega, H)
\end{align*}

Thanks to the Theorem 2.5, we only have to prove:

\begin{align*}
t \rightarrow \int_0^t \E \left[ \dot{h}(s, \omega) | \F_s \right] ds \in L^{\infty-0}(\Omega, H)
\end{align*}

We denote by $L_{ad}^{1+0}(\Omega, H)$ the set of $\dinf$-vector fields $Z(t, \omega) = \int_0^t \dot{Z}(s, \omega) ds$ such that $\dot{Z}(s, \omega)$ is an adapted process.

We have: 

$\E \left[ \langle V(., \omega), Z(., \omega) \rangle_H \right] < + \infty$.
so:
\begin{align*}
\E \left[ \int_0^1  \langle \dot{h}(s, \omega), \dot{Z}(s, \omega) \rangle_{\R^n} ds  \right] = \E \left[ \int_0^1 \langle \E \left[ \dot{h}(s, \omega) | \F_s \right] , \dot{Z}(s, \omega) \rangle_{\R^n} ds  \right] < + \infty
\end{align*}

which implies that: 

\begin{align}
\E \left[ \dot{h}(s, \omega) | \F_s \right] \in \left( L_{ad}^{1+0}(\Omega, H) \right)^{\star} \label{eq2_10} 
\end{align}

Let $g \in L^q(\Omega)$ with $\frac{1}{2} + \frac{1}{q} < 1$; then there exists $p'$ with $1<p'<2$ such that $u \rightarrow \int_0^u \E \left[ g | \F_s \right] . \E \left[ \dot{h}(s, \omega) | \F_s \right] ds \in L^{p'}(\Omega, H)$, 
because: 
\begin{align*}
\int \P(d\omega) \left[ \int_0^1 ds \E \left[ g | \F_s \right]^2 . \E \left[ \dot{h}(s,\omega) | \F_s \right]^2 \right]^{\frac{p'}{2}} 
\leq \int \P(d\omega) \left\{ \sup_{s \in [0, 1]} \E \left[ g | \F_s \right]^2 \int_0^1 ds \E \left[ \dot{h}(s,\omega) | \F_s \right]^2 \right\}^{\frac{p'}{2}} 
\end{align*}

From the Doob inequality:
\begin{align*}
    \sup_{s \in [0, 1]} | \E \left[ g | \F_s \right] |^2 \in L^{\frac{q}{2}}(\Omega) 
\end{align*}

and: $ \left[ \int_0^1 ds \E \left[ \dot{h}(s, \omega) | \F_s \right]^2 \right]^{\frac{1}{2}} \in L^2(\Omega)$, we have: 
\begin{align*}
\left[ \int_0^1  \E \left[ \dot{h}(s, \omega) | \F_s \right]^2 ds\right]^{\frac{p'}{2}} \in L^{\frac{2}{p'}}(\Omega)
\end{align*}

And from: $\frac{1}{q} + \frac{1}{2} < 1, \exists p'$ with $\frac{1}{q/p'} + \frac{1}{2/p'} = 1, 1 < p' < 2$ and so for this $p'$ we have:

\begin{align*}
\int \P(d\omega) \left[ \int_0^1 ds \E \left[ g | \F_s \right]^2 \E \left[ \dot{h}(s,\omega) | \F_s \right]^2 \right]^{\frac{p'}{2}} < +\infty
\end{align*}

which implies: 
\begin{align}
t \rightarrow \int_0^t \E \left[ g | \F_s \right] \E \left[ \dot{h}(s,\omega) | \F_s \right] ds \in L_{ad}^{1+0}(\Omega, H) \label{eq2_10}
\end{align}

To prove that the vector field $t \rightarrow \int_0^t \E \left[ \dot{h}(s, \omega) | \F_s \right] ds$ belongs to $L^{\infty-0}(\Omega, H)$, we use an induction:

we have already $t \rightarrow \int_0^t \E \left[ \dot{h}(s, \omega) | \F_s \right] ds \in L^2(\Omega, H)$.
Let $g \in L^{q'}(\Omega)$ with $\frac{1}{p} + \frac{1}{q'} < 1$; 
Let $t \rightarrow \int_0^t \E \left[ \dot{h}(s, \omega) | \F_s \right] ds \in L^p(\Omega, H)$; From (\ref{eq2_9}) and (\ref{eq2_10}), we get:

\begin{align*}
\int \P(d\omega) \int_0^1 ds \E \left[ g \E \left[ \dot{h}(s, \omega) | \F_s \right]^2 | \F_s \right] < + \infty
\end{align*}

which implies: 
\begin{align*}
\int \P(d\omega) g(\omega) \int_0^1 ds \E \left[ \dot{h}(s, \omega) | \F_s \right]^2 < + \infty
\end{align*}

then: $\int_0^1 \E \left[ \dot{h}(s, \omega) | \F_s \right]^2 ds \in L^{p-0}(\Omega)$
and: 

$t \rightarrow \int_0^t \E \left[ \dot{h}(s, \omega) | \F_s \right] ds \in L^{2p-0}(\Omega, H)$.

\end{proof}

\begin{thm}\label{thm2_10}
Let $V_n$ be a sequence of $\dinf$-vector fields such that the associated derivations $\delta_n$ are adapted and converge pointwise in $\dinf(\Omega)$ towards a derivation $\delta$ verifying $\Div \delta = 0$; then $\delta \equiv 0$.
\end{thm}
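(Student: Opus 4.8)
The plan is to exploit the \emph{genuine} adaptedness of the approximating vector fields $V_n$, since the adaptedness and antisymmetry of the limit alone do not suffice: the derivation $\Div A\Grad$ of Remark \ref{rem2_2} is adapted and divergence-free yet nonzero, so it is not a limit of adapted vector fields and any argument must see the sequence. Write $V_n(\cdot,\omega)=\int_0^{\cdot}\dot h_n(s,\omega)\,ds$ with $\dot h_n$ adapted, so that $\delta_n W(u)=\langle V_n,u\rangle_H$ and, because $\dot h_n(s)$ is $\F_s$-measurable, the Skorokhod integral $\Div V_n=\int_0^1\dot h_n\itodiff B$ is an It\^o integral. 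First I would record that $\Div\delta_n=-\Div V_n$ and that the pointwise convergence $\delta_n\varphi\to\delta\varphi$ in $\dinf$ forces, for every $\varphi\in\dinf(\Omega)$,
\[
\int\varphi\,\Div V_n\,\P(d\omega)=-(\Div\delta_n,\varphi)\longrightarrow-(\Div\delta,\varphi)=0,
\]
that is, $\Div V_n\to 0$ weakly against the test space $\dinf(\Omega)$.

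The core of the argument is to transfer this weak vanishing of the divergences to the derivations. Fix $u\in H$ and $g\in\dinf(\Omega)$ and consider the $\dinf$-vector field $g\,u$. Its adapted (It\^o) projection, the field with density $s\mapsto\E[g\,|\,\F_s]\,\dot u(s)$, is again a $\dinf$-vector field by Theorem \ref{thm2_9}; consequently
\[
G_{g,u}:=\int_0^1\E[g\,|\,\F_s]\,\dot u(s)\itodiff B\ \in\ \dinf(\Omega),
\]
and is therefore a legitimate test function against the convergence above. Using that $\dot h_n(s)$ is $\F_s$-measurable to insert the conditional expectation, and then the It\^o isometry for the two adapted fields $V_n$ and $G_{g,u}$, I would compute
\[
\E[g\,\delta_n W(u)]=\E\big[\langle V_n,g\,u\rangle_H\big]=\E\Big[\int_0^1\E[g|\F_s]\,\dot u(s)\cdot\dot h_n(s)\,ds\Big]=\E[\Div V_n\cdot G_{g,u}].
\]
The right-hand side tends to $0$, so $\E[g\,\delta_n W(u)]\to 0$ for every $g\in\dinf(\Omega)$.

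Finally I would pass to the limit on the left in the other, elementary way: since $\delta_n W(u)\to\delta W(u)$ in $\dinf$, hence in $L^2(\Omega)$, the same expectation converges to $\E[g\,\delta W(u)]$. Comparing the two limits gives $\E[g\,\delta W(u)]=0$ for all $g$ in the dense subspace $\dinf(\Omega)$, whence $\delta W(u)=0$ for every $u\in H$. As $\delta$ is a continuous derivation vanishing on every $W(u)$, the Leibniz rule gives $\delta f=\sum_j\partial_j\Phi\,(\delta W(u_j))=0$ on smooth cylindrical $f=\Phi(W(u_1),\dots,W(u_k))$, and $\dinf$-density together with continuity of $\delta$ yields $\delta\equiv 0$.

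The one delicate point is the membership $G_{g,u}\in\dinf(\Omega)$, and it is exactly there that the adaptedness hypothesis is consumed, through Theorem \ref{thm2_9} (whose proof in turn rests on Proposition \ref{pr2_1}, the commutation of the O.U.\ operator with conditional expectation). Everything else is the duality $\Div=\Grad^{\ast}$ specialised to adapted fields via the It\^o isometry, together with the interchange of the two limits in $\E[g\,\delta_n W(u)]$.
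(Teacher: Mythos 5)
Your proof is correct, and it takes a genuinely different route through the same duality. After the common first step (pointwise convergence plus $\Div\delta=0$ gives $\int\varphi\,\Div V_n\,\P(d\omega)\to 0$ for every $\varphi\in\dinf(\Omega)$), the paper works on the side of $V_n$ itself: it applies Theorem \ref{thm2_9} to $\Grad\Div V_n$, then uses Clark--Ocone together with $\Div V_n=\int_0^1 V_n\,dB$ (Skorokhod $=$ It\^o by adaptedness) to identify $\E\left[\Grad\Div V_n\,|\,\F_t\right]=V_n$, concluding that $V_n\to 0$ as distributions in $(\dinf(\Omega,H))^{\star}$; pairing with the fields $\varphi\,\Grad\psi$ then yields $\int\varphi\,(\delta\psi)\,\P(d\omega)=0$ for \emph{all} $\psi\in\dinf(\Omega)$ at once, so no Leibniz or density step is needed. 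You instead transpose the conditional-expectation projection onto the test object: you project $g\,u$ (Theorem \ref{thm2_9} again, resting on Proposition \ref{pr2_1}) and replace Clark--Ocone by the bare It\^o energy identity $\E[g\,\langle V_n,u\rangle_H]=\E[G_{g,u}\,\Div V_n]$ --- which is exactly the isometry that underlies the paper's Clark--Ocone step, read from the other side. Your version is more elementary in its ingredients and, as you correctly flag, consumes the adaptedness of $V_n$ twice (inserting $\E[g|\F_s]$, and making $\Div V_n$ an It\^o integral); the price is that you only obtain $\delta W(u)=0$ on the first chaos, so you must finish with Leibniz on polynomials in Gaussian variables plus the $\dinf$-continuity of $\delta$ and density (continuity is legitimately available, since Definition \ref{def2_4} only defines $\Div\delta$ for continuous derivations, and in any case follows from Banach--Steinhaus). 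Two small points: for smooth non-polynomial cylindrical $\Phi$ the Leibniz rule alone does not give the chain rule --- pass through polynomials and invoke continuity, which your density argument effectively does; and note that the paper's route buys a stronger intermediate conclusion, namely $V_n\to 0$ in $(\dinf(\Omega,H))^{\star}$, which is what feeds Remark \ref{rem2_3}, whereas your argument deliberately never reconstructs $V_n$ from $\Div V_n$.
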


\begin{proof}
\begin{align*}
\forall \varphi \in \dinf(\Omega): 
&\int \delta_n \varphi \P(d\omega) 
= \int \langle V_n, \Grad \varphi \rangle_H \P(d\omega) = - \int \varphi \Div V_n \P(d\omega)
\end{align*}

which implies that $\int \varphi \Div V_n \P(d\omega)$ converges towards 

$-\int \varphi \Div \delta . \P(d\omega) = 0$.

So $\forall V$ vector field $\in \dinf(\Omega, H)$: $\int \langle \Grad (\Div V_n), V \rangle_H \P(d\omega)$ converges towards $0$.

With Theorem \ref{thm2_9}, $t \rightarrow \int_0^t \E \left[ \Grad \Div V_n | \F_s \right] ds$ is also a $\dinf$-vector field, and an adapted process:

we have: 

\begin{align*}
\int \langle \E \left[ \Grad \Div V_n | \F_t \right], V \rangle_H \P(d\omega)
= \int \langle \Grad \Div V_n, \E \left[ V | \F_t \right] \rangle_H \P(d\omega) \rightarrow 0
\end{align*}

But using the Clark-Ocone formula:
\begin{align*}
\int_0^1 \E \left[ \Grad \Div V_n | \F_t \right] dB = \Div V_n
\end{align*}

And $V_n$ being an adapted process, using a result on the Skorokhod integral: $\Div V_n = \int_0^1 V_n . dB$, It\={o} Integral.

Then: $ \E \left[ \Grad \Div V_n | \F_t \right] = V_n$ and so $V_n$ is a sequence of $\sko^{-\infty}$ which converges towards $0$.
Then let $\varphi, \psi \in \dinf(\Omega)$: $\varphi \Grad \psi \in \dinf(\Omega, H)$
and:

\begin{align*}
(V_n, \varphi \Grad \psi) &= \int \varphi(\omega) \langle V_n, \Grad \psi \rangle_H \P(d\omega) \\
& = \int \varphi(\omega) (\delta_n \psi)(\omega) \P(d\omega) 
\rightarrow \int \varphi(\omega)(\delta \psi)(\omega) \P(d\omega)
\end{align*}

So $\int \varphi(\omega)(\delta \psi)(\omega) \P(d\omega) = 0$ which implies $\delta = 0$.
\end{proof}

\begin{rem}\label{rem2_3}
A consequence of Theorem \ref{thm2_10} is that a $\dinf$ adapted derivation is not in general a limit of a sequence of $\dinf$-adapted vector fields.
\end{rem}

\begin{defn}\label{def2_5}
Let $\theta$ be a continuous map from $\dinf(\Omega)$ to $\dinf(\Omega)$. Then a linear map $\delta$ from $\dinf(\Omega)$ to $\dinf(\Omega)$ is said to be a $\theta$-derivation iff:

\begin{align*}
\forall f, g \in \dinf(\Omega): \delta(f g) = \theta(f) \delta(g) + \theta(g) \delta(f)
\end{align*}

\end{defn}

Now we state a version of the Dini-Lipschitz theorem for an H\"{o}lderian functon $f$, piecewise continuous, on a closed interval, with values in a Frechet space $F$.

\subsection{A generalisation of the Dimi-Lipschitz theorem and interpolation between $\mathbb{D}^P_r$ spaces}

\begin{thm}\label{thm2_11} (Dini-Lipschitz)
Let $f$ be a function as above. Then its Fourier series is uniformly convergent piecewise, that is on each closed interval on which it is continuous, and converges towards the half-value of the jump on each discontinuity point.

Moreover, the convergence of its Fourier series is uniformly bounded, relatively to the partial sums of the Fourier series.
\end{thm}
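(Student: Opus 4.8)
The plan is to reduce the Fréchet-valued statement to the classical scalar Dini--Lipschitz test by means of the defining seminorms, and then to split the analysis into the behaviour on intervals of continuity and the behaviour near the finitely many jump points. First I would fix a defining sequence of seminorms $(p_k)$ for $F$ and record that the hypothesis "$f$ is $\alpha$-Hölderian with values in $F$" means precisely that for each $k$ there is a constant $C_k$ with $p_k(f(t')-f(t)) \le C_k \abs{t'-t}^\alpha$. The $F$-valued Fourier coefficients $c_n = \frac{1}{2\pi}\int_0^{2\pi} f(t) e^{-int}\,dt$ exist as Fréchet-valued integrals because $f$ is regulated (piecewise continuous and bounded), and the partial sums satisfy the Dirichlet representation $S_N f(t) = \frac{1}{2\pi}\int_{-\pi}^{\pi} f(t-s) D_N(s)\,ds$ with $D_N(s)=\sin((N+\tfrac12)s)/\sin(s/2)$, by linearity and Fubini for vector-valued integrals. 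Since $\frac{1}{2\pi}\int_{-\pi}^\pi D_N = 1$, for any candidate limit $v\in F$ one has $p_k(S_N f(t)-v) \le \frac{1}{2\pi}\int_{-\pi}^{\pi} p_k(f(t-s)-v)\,\abs{D_N(s)}\,ds$, so every estimate is reduced to the scalar modulus of continuity measured in $p_k$.

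On a closed subinterval where $f$ is continuous I would run the standard kernel split of $S_N f(t)-f(t) = \frac{1}{2\pi}\int_{-\pi}^{\pi}(f(t-s)-f(t))D_N(s)\,ds$ into the ranges $\abs{s}<\delta$ and $\delta\le\abs{s}\le\pi$. Using $\abs{D_N(s)}\le \pi/\abs{s}$ together with the Hölder bound $p_k(f(t-s)-f(t))\le C_k\abs{s}^\alpha$, the inner part is dominated by $C_k' \delta^\alpha$ uniformly in $N$ and in $t$, so choosing $\delta$ small makes it arbitrarily small. For fixed $\delta$ the outer part tends to $0$ as $N\to\infty$ uniformly in $t$ by a uniform Riemann--Lebesgue estimate, valid because the family $s\mapsto (f(t-s)-f(t))/\sin(s/2)$ is uniformly $\alpha$-Hölder on $\{\delta\le\abs{s}\le\pi\}$. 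This yields uniform convergence $S_N f\to f$ in every seminorm on the continuity interval; this is the step where the Hölder exponent is used, and it is strictly stronger than the bare Dini--Lipschitz modulus condition.

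The delicate point is the jump points. I would subtract from $f$ a finite combination $\sum_j v_j\,\sigma(\cdot - t_j)$ of translated copies of a fixed scalar sawtooth $\sigma$ (the $2\pi$-periodic extension of $(\pi-t)/2$, normalized to have unit jump) with vector coefficients $v_j=f(t_j^+)-f(t_j^-)\in F$ equal to the jumps, so that the remainder $g$ is continuous and still $\alpha$-Hölder. The preceding paragraph applies to $g$ on the whole circle, giving uniform convergence of $S_N g$. For $\sigma$ the Fourier series is a constant multiple of $\sum_{n\ge1}\sin(nt)/n$, whose partial sums converge to $\sigma$ uniformly on compact sets away from the jump, to the midpoint at the jump, and---crucially---are bounded by an absolute constant uniformly in $N$ and $t$. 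Recombining, $S_N f$ converges as claimed (to $f$ on continuity intervals and to the half-sum $\tfrac12(f(t_0^+)+f(t_0^-))$ at each jump $t_0$), and since in each seminorm the sawtooth part contributes at most a constant times $\sum_j p_k(v_j)$ while the $g$-part is uniformly bounded, the partial sums $S_N f$ are uniformly bounded in $N$ and $t$, which is the final assertion.

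The main obstacle is precisely this jump analysis: one must check that the sawtooth subtraction leaves the remainder $\alpha$-Hölder (so that the continuous-function argument applies to $g$) and invoke the uniform boundedness of the scalar partial sums $\sum_{n\le N}\sin(nt)/n$, the Gibbs bound, which is the only genuinely non-trivial classical input. By contrast the seminorm reduction and the Dirichlet split are routine once this normalization is in place, and the uniform Riemann--Lebesgue estimate follows directly from the uniform Hölder modulus of the family indexed by $t$.
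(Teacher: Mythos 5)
The paper never proves Theorem 2.11: it is stated as a recalled classical fact (and restated in Section 8 with only the comment that the piecewise-H\"olderian case has a proof ``similar'' to the piecewise-$C^1$ case), so there is no in-paper argument to compare against; your proposal supplies the missing proof, and its overall route --- seminorm reduction to the scalar case, Dirichlet-kernel split with the H\"older modulus, sawtooth subtraction at the jumps, and the uniform boundedness of the partial sums of $\sum_{n\geq 1}\sin(nt)/n$ to get the final ``uniformly bounded'' clause --- is the standard and correct one, and it does deliver all three assertions (piecewise uniform convergence, midpoint value at jumps, uniform boundedness of $S_N f$ in $N$ and $t$ in every seminorm $p_k$).

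Two precisions are needed, both repairable within your own structure. First, the hypothesis cannot be the global estimate $p_k\bigl(f(t')-f(t)\bigr)\leq C_k\,|t'-t|^{\alpha}$ you write down: a function with jumps is only \emph{piecewise} $\alpha$-H\"olderian, so that estimate holds only when $t,t'$ lie in the same continuity piece (and one must check, as you do implicitly, that subtracting the sawtooth combination $\sum_j v_j\,\sigma(\cdot-t_j)$ kills all jumps and leaves $g$ \emph{globally} H\"olderian on the circle, which is where the one-sided limits enter). Second, your middle paragraph, applied verbatim to $f$ on a closed continuity interval, is not quite right near the boundary: if the interval abuts a jump $t_j$, then for $t$ close to $t_j$ the point $t-s$ crosses the jump for small $|s|$, so neither the inner bound $p_k(f(t-s)-f(t))\leq C_k|s|^{\alpha}$ nor the claim that the family $s\mapsto (f(t-s)-f(t))/\sin(s/2)$ is uniformly $\alpha$-H\"olderian on $\{\delta\leq|s|\leq\pi\}$ holds --- indeed uniform convergence of $S_N f$ to $f$ up to a jump endpoint is false (the limit there is the midpoint, and Gibbs oscillation prevents uniformity on any neighbourhood of $t_j$). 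This is harmless because your final argument routes everything through $g$: the kernel-split paragraph is valid for the continuous, globally H\"olderian $g$ on the whole circle, and the localization away from jumps is then carried entirely by the explicit sawtooth series, whose partial sums converge uniformly on compacts away from the jump and are absolutely bounded. With ``closed interval on which $f$ is continuous'' read as a compact interval interior to a continuity piece --- which is what the theorem can only mean --- your proof is complete.
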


Last, an interpolation theorem which will often be used:

\begin{thm}\label{thm2_12}
Let $T$ be a linear continuous operator from $\drp(\Omega)$ to $\sko_s^q(\Omega)$, and $\sko_{r'}^{p'}(\Omega)$ to $\sko_{s'}^{q'}(\Omega)$. $\forall \alpha \in [0,1]$, $T$ is continuous from $\sko_{\alpha r + (1-\alpha)r'}^{(\frac{\alpha}{p} + \frac{(1-\alpha)}{p'})^{-1}}$ to $\sko_{\alpha s + (1-\alpha)s'}^{(\frac{\alpha}{q} + \frac{(1-\alpha)}{q'})^{-1}}$.
\end{thm}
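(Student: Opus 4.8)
The plan is to obtain the statement by complex interpolation of an analytic family of operators in the sense of Stein, exploiting the fact that the whole $\drp$-scale is generated by the complex powers of the Ornstein--Uhlenbeck operator $L$. Recall (the same equivalence already used in the proof of \thmref{thm2_2}) that, up to the equivalence of the norms $\|\cdot\|^{(1)},\|\cdot\|^{(2)}$, one has $\|f\|_{\drp(\Omega)}\simeq\|(1-L)^{r/2}f\|_{L^p(\Omega)}$. Hence $T$ is continuous from $\drp$ to $\dqs$ exactly when the conjugated operator $(1-L)^{s/2}\,T\,(1-L)^{-r/2}$ is continuous from $L^p(\Omega)$ to $L^q(\Omega)$, and the second hypothesis says that $(1-L)^{s'/2}\,T\,(1-L)^{-r'/2}$ is continuous from $L^{p'}(\Omega)$ to $L^{q'}(\Omega)$. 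The whole claim is thereby reduced to a scalar $L^p\to L^q$ interpolation problem, once the two conjugations are tied together into a single analytic family.

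Concretely, for $z$ in the closed strip $\overline{\Delta}=\{z\in\C:0\le\Real z\le 1\}$ I would set $\rho(z)=\tfrac12\bigl(z\,r+(1-z)\,r'\bigr)$ and $\sigma(z)=\tfrac12\bigl(z\,s+(1-z)\,s'\bigr)$, and define $U_z=(1-L)^{\sigma(z)}\,T\,(1-L)^{-\rho(z)}$, the complex powers being defined on each chaos $\mathcal C_n$ by $(1-L)^{w}=(1+n)^{w}$ exactly as in \thmref{thm2_5}. On the edge $\Real z=1$ one has $U_{1+it}=(1-L)^{it(s-s')/2}\,\bigl[(1-L)^{s/2}T(1-L)^{-r/2}\bigr]\,(1-L)^{-it(r-r')/2}$, so $U_{1+it}$ is bounded $L^p\to L^q$; symmetrically $U_{it}$ is bounded $L^{p'}\to L^{q'}$. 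For $f,g$ in the dense class $K^\infty$ the map $z\mapsto\int(U_z f)\,g\,\P(d\omega)$ is analytic in the interior of $\Delta$ and continuous on $\overline{\Delta}$, so $(U_z)$ is an admissible analytic family.

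Stein's interpolation theorem then yields, taking $\theta=\alpha$, that $U_\alpha$ is continuous from $L^{p_\alpha}$ to $L^{q_\alpha}$ with $1/p_\alpha=\alpha/p+(1-\alpha)/p'$ and $1/q_\alpha=\alpha/q+(1-\alpha)/q'$. Since $U_\alpha=(1-L)^{\sigma(\alpha)}T(1-L)^{-\rho(\alpha)}$ with $2\rho(\alpha)=\alpha r+(1-\alpha)r'$ and $2\sigma(\alpha)=\alpha s+(1-\alpha)s'$, undoing the conjugation and invoking once more the equivalence $\|\cdot\|_{\drp}\simeq\|(1-L)^{r/2}\cdot\|_{L^p}$ shows that $T$ is continuous from $\sko_{\alpha r+(1-\alpha)r'}^{p_\alpha}$ to $\sko_{\alpha s+(1-\alpha)s'}^{q_\alpha}$, which is exactly the assertion. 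For the vector-valued instances (targets $\overset{j}{\otimes}H$) the same argument applies after tensoring, using Corollary~\ref{cor2_4}.

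The main obstacle is the admissibility of the family: Stein's theorem requires the boundary norms $\|U_{it}\|_{L^{p'}\to L^{q'}}$ and $\|U_{1+it}\|_{L^p\to L^q}$ to grow no faster than $e^{a|t|}$ with $a<\pi$. Because $U_{j+it}$ differs from a fixed bounded operator only by purely imaginary powers $(1-L)^{i\tau}$ (with $\tau$ a fixed multiple of $t$), everything reduces to bounding these imaginary powers on $L^p(\Omega)$, $1<p<\infty$. The clean way is a Meyer--H\"{o}rmander--Mikhlin multiplier theorem on the chaos decomposition: writing $(1-L)^{i\tau}=\sum_n (1+n)^{i\tau}\,\Pi_n$ with $\Pi_n$ the projection onto $\mathcal C_n$, the symbol $x\mapsto(1+x)^{i\tau}$ satisfies Mikhlin-type estimates with constants polynomial in $|\tau|$, whence $\|(1-L)^{i\tau}\|_{L^p\to L^p}\le C_p(1+|\tau|)^{k}$; this polynomial growth makes admissibility automatic for \emph{all} values of $r,r',s,s'$. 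Alternatively the same control can be extracted by the analytic-continuation / Phragm\'{e}n--Lindel\"{o}f device already deployed in \thmref{thm2_5}, using the asymptotics $|\Gamma(\epsilon+it)|\sim t^{1/2}e^{-\frac{\pi}{2}|t|}$ to absorb the exponential factors. Once this growth bound is secured, Stein's theorem closes the argument.
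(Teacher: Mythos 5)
The paper itself offers no proof of \thmref{thm2_12}: it is stated without demonstration, as a known tool (``an interpolation theorem which will often be used''), so there is no internal argument to compare yours against; your proposal must stand on its own. Its skeleton is the standard and correct one. Meyer's equivalence $\|f\|_{\drp}\simeq\|(1-L)^{r/2}f\|_{L^p}$ (itself a nontrivial theorem, which the paper uses implicitly, e.g.\ in the proof of \thmref{thm2_2}) reduces the statement to the $L^p$ scale; the conjugated family $U_z=(1-L)^{\sigma(z)}T(1-L)^{-\rho(z)}$ is genuinely analytic and admissible, and your reduction to finite-chaos test functions is the right way to see it: for such $f,g$ the pairing $z\mapsto\int (U_zf)\,g\,\P(d\omega)$ is a finite combination of entire functions $(1+n)^{-\rho(z)}(1+m)^{\sigma(z)}$ times fixed scalars, with real parts of the exponents bounded on the strip. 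Stein interpolation then delivers exactly the stated conclusion, and invoking Corollary~\ref{cor2_4} for the $\overset{j}{\otimes}H$-valued instances is appropriate. (Throughout, one needs $1<p,p',q,q'<\infty$, consistent with the paper's conventions, since both Meyer's inequalities and the boundedness of imaginary powers fail at the endpoints.)

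Two of your quantitative assertions are incorrect, though they happen to offset. First, the imaginary powers $(1-L)^{i\tau}$ are \emph{not} polynomially bounded on $L^p(\Omega)$ for $p\neq 2$: Mikhlin--H\"ormander multiplier technology does not transfer to the Gaussian (non-doubling) setting, and Meyer's multiplier theorem does not apply to the oscillating symbol $(1+n)^{i\tau}$; the sharp results of Garc\'ia-Cuerva, Mauceri, Meda, Sj\"ogren and Torrea show the $L^p\to L^p$ norm grows \emph{exponentially} in $|\tau|$. What is true, and suffices, is Stein's universal multiplier theorem for symmetric diffusion semigroups: $\lambda^{i\tau}$ is of Laplace-transform type with density of size $1/\left|\Gamma(1-i\tau)\right|\sim e^{\pi|\tau|/2}|\tau|^{-1/2}$, giving $\|(1-L)^{i\tau}\|_{L^p\to L^p}\leq C_p\,e^{(\pi/2)|\tau|}$. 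Second, you misquote the admissibility hypothesis of Stein's theorem: the boundary norms need not be $O(e^{a|t|})$ with $a<\pi$; the actual condition is the far weaker $\log M_j(t)\leq C e^{b|t|}$ for some $b<\pi$. This is not pedantry in your situation: the imaginary powers appearing on the boundary are $(1-L)^{it(s-s')/2}$ and $(1-L)^{-it(r-r')/2}$, so the exponential rate is rescaled by $|s-s'|/2$ and $|r-r'|/2$, yielding $M_j(t)\sim e^{c|t|}$ with $c$ of order $\pi|s-s'|/4$ --- which exceeds $\pi$ as soon as $|s-s'|\geq 4$, and would sink the argument under the criterion as you state it. Under the correct criterion any fixed-rate exponential growth is admissible, so once these two statements are repaired the proof closes as you outlined.
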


\section{\huge $\dinf$-stochastic manifolds}

In this section we study the general notion of a $\dinf$-stochastic manifold, and the following themes:

\begin{enumerate}
\renewcommand{\labelenumi}{\roman{enumi})}

\item we will examine the notion of $\dinf$-equivalent atlases, which is more complex than in the case of $n$-dimensional differential manifolds

\item we will exhibit a $\dinf$-chart change which does not admit a linear tangent map in $\mathcal{L}(H)$

\item we will study the space of $\dinf$-continuous derivations on $\dinf(\Omega)$, denoted $Der(\Omega)$ and its dual $Der(\Omega)^\star$ (denoted also $(Der\Omega^\star)$)

\item we will study the notion of a derivation field on a $\dinf$-stochastic manifold.

\item we study then the notion of metric (this time on $Der(\Omega)^\star$), and the fundamental metric; then an important subspace of $Der(\Omega)$, denoted $\mathcal{D}_0(\Omega)$, the Levi-Civita connection, the curvature and the torsion.

\end{enumerate}

When no particular setting is specified, we assume that the context is a Gaussian space $(\Omega, \F, \P, H)$.

\subsection{Definition}

Let $\mathscr S$ be a set. The definitions of $\mathbb D^\infty$-charts, 
of two $\mathbb D^\infty$-compatible charts and of a $\mathbb D^\infty$-atlas, on $\mathscr S$
are direct generalisations of the $\mathbb D_r^\infty$ case.
We first define the notion of canonical tribe 
on a set $\mathscr S$, endowed with a $\mathbb D^\infty$-atlas:

\subsection{Canonical $\sigma$-algebra associated to a $\mathbb{D}^\infty$-stochastic manifold}

\begin{prop}
  Let $\mathscr S$ a $\mathbb D^\infty$-stochastic manifold
  with the atlas: ${\mathscr A = \left(U_i, b_i, \Omega_i\right)_{i\in I}}$.
  Then if
  \begin{equation*}
    \mathscr C_1 = \left\{A\subset\mathscr S\,/\,\forall i\in I, b_i(A\cap U_i)\in\mathcal F_i\right\}
  \end{equation*}
  and
  \begin{equation*}
    \mathscr C_2 = \left\{A\subset\mathscr S\,/\,\forall A_i\in\mathcal F_i, b_i(A\cap b_i^{-1}(A_i))\in\mathcal F_i\right\}
  \end{equation*}
  we have $\mathscr C_1 = \mathscr C_2$ and $\mathscr C_1$ is a $\sigma$-algebra.
\end{prop}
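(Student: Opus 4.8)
The plan is to exploit the single structural fact that each chart map $b_i \colon U_i \to \Omega_i$ is a \emph{bijection}, so that it commutes with all set-theoretic operations performed inside $U_i$: it carries unions to unions and intersections to intersections, and it carries the complement of a set taken \emph{within} $U_i$ to the complement taken within $\Omega_i$. I would first dispose of the equality $\mathscr C_1 = \mathscr C_2$, and then verify the three $\sigma$-algebra axioms for $\mathscr C_1$. Notably, the definition of $\mathscr C_1$ refers only to the individual charts $b_i$ and the $\sigma$-algebras $\mathcal F_i$, never to the chart change maps, so the $\mathbb D^\infty$-compatibility of the atlas plays no role.

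For the equality, the inclusion $\mathscr C_2 \subset \mathscr C_1$ is obtained by specialising the defining condition of $\mathscr C_2$ to $A_i = \Omega_i$: since $b_i^{-1}(\Omega_i) = U_i$, that condition reads $b_i(A \cap U_i) \in \mathcal F_i$, which is exactly membership in $\mathscr C_1$. For the reverse inclusion $\mathscr C_1 \subset \mathscr C_2$, I would use that $b_i^{-1}(A_i) \subset U_i$ for every $A_i \in \mathcal F_i$, so that $A \cap b_i^{-1}(A_i) = (A \cap U_i) \cap b_i^{-1}(A_i)$; applying the bijection $b_i$ gives $b_i(A \cap b_i^{-1}(A_i)) = b_i(A \cap U_i) \cap A_i$, which lies in $\mathcal F_i$ as an intersection of two $\mathcal F_i$-sets whenever $A \in \mathscr C_1$.

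To show $\mathscr C_1$ is a $\sigma$-algebra, the memberships $\emptyset, \mathscr S \in \mathscr C_1$ are immediate from $b_i(\emptyset) = \emptyset$ and $b_i(U_i) = \Omega_i$. Stability under countable unions follows because $b_i\big((\bigcup_n A_n) \cap U_i\big) = \bigcup_n b_i(A_n \cap U_i)$, a countable union of $\mathcal F_i$-sets. The one axiom that genuinely uses surjectivity of $b_i$ onto $\Omega_i$ is stability under complementation: for $A \in \mathscr C_1$ one has $(\mathscr S \setminus A) \cap U_i = U_i \setminus (A \cap U_i)$, and bijectivity yields $b_i\big((\mathscr S \setminus A) \cap U_i\big) = \Omega_i \setminus b_i(A \cap U_i) = \complement_{\Omega_i} b_i(A \cap U_i) \in \mathcal F_i$.

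I do not expect a serious obstacle: every step reduces to the bijectivity of $b_i$, and the argument is entirely local to each chart. The only point demanding care is the complement identity, which fails for a merely injective map; it is essential that $b_i$ maps $U_i$ \emph{onto} all of $\Omega_i$, so that the image of a complement taken in $U_i$ equals the complement of the image taken in $\Omega_i$. Once this is observed, both the equality $\mathscr C_1 = \mathscr C_2$ and the $\sigma$-algebra property fall out by direct bookkeeping.
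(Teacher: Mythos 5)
Your proposal is correct and follows essentially the same route as the paper: the inclusion $\mathscr C_2 \subset \mathscr C_1$ by specialising to $A_i = \Omega_i$, and the reverse inclusion via the identity $b_i\bigl(A \cap b_i^{-1}(A_i)\bigr) = b_i(A\cap U_i)\cap A_i$. The only difference is that the paper dismisses the $\sigma$-algebra verification as obvious, whereas you spell it out, correctly isolating that complementation is the one axiom requiring $b_i$ to be a bijection of $U_i$ \emph{onto} $\Omega_i$.
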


\begin{proof} ~
  \begin{enumerate}
    \item[i)] $\mathscr C_2 \subset \mathscr C_1$: let $A_i=\Omega$
    \item[ii)] $\mathscr C_1 \subset \mathscr C_2$: let $A \in \mathscr C_1$,
      \begin{align*}
         b_i\left[A\cap b_i^{-1}(A_i)\right] 
          & = b_i\left[A\cap\left(U_i\cap b_i^{-1}(A_i)\right)\right] \\
          & = b_i\left[(A\cap U_i)\cap b_i^{-1}(A_i)\right] \\
          & = b_i(A\cap U_i)\cap A_i \in \mathcal F_i
      \end{align*}
    \item[iii)] $\mathscr C_1$ is a $\sigma$-algebra: obvious.
  \end{enumerate}
\end{proof}

We denote by $\mathscr C(\mathscr S, \mathscr A)$ this $\sigma$-algebra, or in short $\mathscr C(\mathscr S)$.
We also denote by: $\mathscr N(\mathscr S,\mathscr A)$ or by $\mathscr N(\mathscr S)$:
\begin{equation*}
  \mathscr N(\mathscr S, \mathscr A) 
    = \left\{N\subset\mathscr S\,/\,\forall i\in I, \mathbb P_i\left[b_i(N\cap U_i)\right]=0 \right\}
\end{equation*}
The definition of $\mathscr N(\mathscr S,\mathscr A)$ is meaningful thanks to the Lemma 1.2,iii: we know that
if there is $i\in I$ such that ${\mathbb P_i\left[b_i(A\cap U_i)\right]>0}$, 
then $\forall j\in I$: ${\mathbb P_j\left[b_j(A\cap U_j)\right]>0}$.

\subsection{$\mathbb{D}^\infty$-morphismes between $\mathbb{D}^\infty$-stochastic manifolds}

\begin{defn}
  $\mathscr S_1$ and $\mathscr S_2$ being two $\mathbb D^\infty$-stochastic manifolds, 
  the map $\varphi: \mathscr S_1 \to \mathscr S_2$ will be said to be measurable
  iff it is measurable relatively to the $\sigma$-algebra
  $\mathscr C(\mathscr S_1)$ and $\mathscr C(\mathscr S_2)$ and if $\varphi^{-1}\left[\mathscr N(\mathscr S_2)\right]\subset\mathscr N(\mathscr S_1)$.
\end{defn}

\begin{defn}
  Let $\mathscr A=\left(U_i, b_i, \Omega_i, \mathcal F_i, \mathbb P_i, H_i\right)_{i\in I}$, a $\mathbb D^\infty$-atlas
  on the set $\mathscr S_1$, and $(V_\ell,\widetilde b_\ell, \widetilde \Omega_\ell)_{\ell\in L}=\mathscr B$ a $\mathbb D^\infty$-atlas 
  on the set $\mathscr S_2$; let $\varphi$ be a measurable map from $\mathscr S_1$
  in $\mathscr S_2$.
  
  The subset $A\subset U_i$ will be said to be
  $(\varphi,\mathscr A,\mathscr B)$-balanced iff $A\in\mathcal F_i$, $\mathbb P_i(A)>0$
  and $\exists\ell_0\in L$ such that:
  $\varphi(A)\subset V_{\ell_0}$ and $\forall f\in\mathbb D^\infty(\widetilde \Omega_{\ell_0})$, then
  $\left.f\circ \widetilde b_{\ell_0}\circ\varphi\circ b_i^{-1}\right|_{b_i(A)}$ admits an extension, denoted $\widetilde f_{i,\ell_0,A}$,
  or $\widetilde f_A$ and ${\widetilde f_{i,\ell_0,A}\in\mathbb D^\infty(\Omega_i)}$.
  $A$ will also be said to be $(\varphi, U_i,V_{\ell_0})$-balanced.
\end{defn}
\begin{rem}
  Using the $\mathbb D^\infty$-structure of the atlas $\mathscr A$, 
  it is immediate to see that if $A$ is $(\varphi, U_i, V_{\ell_0})$-balanced
  and if $A\subset U_j$, $j\neq i$, then $A$ is $(\varphi, U_j,V_{\ell_0})$-balanced.
  So this definition of the balanced set does not
  depend on the chart domain in $\mathscr A$, where $A$ lies.
\end{rem}

\begin{rem}
  If $\mathscr S$ is a $\mathbb D^\infty$-stochastic manifold, with the
  atlas ${\mathscr A = (U_i,b_i,\Omega_i)_{i\in I}}$, and denoting $\mathrm{Id}_{\mathscr S}$
  the identity on $\mathscr S$, we have that: for every $i,j\in I$
  with $U_i\cap U_j\notin\mathscr N(\mathscr S,\mathscr A)$: $U_i\cap U_j$ is a 
  $(\mathrm{Id}_{\mathscr S},U_i,U_j)$-balanced set of $\mathscr S$.
\end{rem}
 
  Now to simplify the notations, we identify the
  domain $U_i$ of a chart with its image on the
  Gaussian space $(\Omega_i,\mathcal F_i,\mathbb P_i,H_i)$ through the bijection $b_i$.
  So now $U_i$ is endowed with the $\sigma$-algebra
  $b_i^{-1}(\mathcal F_i)$ which is also the restriction to $U_i$ of
  $\mathscr C(\mathscr S, \mathscr A)$, and with a probability measure $(b_i^{-1})_*\mathbb P_i$.
  So the property of a balanced subset of $U_i$, $A$, can be 
  restated: $\forall f\in\mathbb D^\infty(V_{\ell_0}), \quad \left.f\circ\varphi\right|_A$ admits an
  extension denoted $\widetilde f_A$, with $\widetilde f_A\in\mathbb D^\infty(U_i)$.
  If $A\subset U_i$, 
  we denote by $L^0(A)$ the $\mathbb R$-valued functions 
  on $A$, measurable relatively to the $\sigma$-algebra $b_i^{-1}(\mathcal F_i)$

\begin{defn}
  Let $\mathscr S_1$ and $\mathscr S_2$ two $\mathbb D^\infty$-stochastic manifolds,
  $\mathscr S_1$ being endowed with a $\mathbb D^\infty$-atlas $\mathscr A=(U_i,b_i,\Omega_i,\mathcal F_i,\mathbb P_i,H_i)_{i\in I}$
  and $\mathscr S_2$ being endowed with the $\mathbb D^\infty$-atlas $\mathscr B=(V_\ell,\widetilde b_\ell,\widetilde \Omega_\ell)_{\ell\in L}$.
  Let $\varphi$ be a map from $\mathscr S_1$ to $\mathscr S_2$, $\varphi$ will be said to be
  a $\mathbb D^\infty$-morphism from $\mathscr S_1$ to $\mathscr S_2$, iff:
  \begin{enumerate}
    \item[i)] $\varphi$ is measureable with respect to the canonical $\sigma$-algebras 
              on $\mathscr S_1$ and $\mathscr S_2$;
    \item[ii)] $\forall i\in I$, there is a countable set of indices, denoted
               $L_i$, $L_i\subset L$, such that $U_i\in\bigcup_{\ell\in L_i}\varphi^{-1}(V_\ell)$;
    \item[iii)] $\forall i\in I, \forall \ell\in L$ with $\mathbb P_i\left[\varphi^{-1}(V_\ell)\cap U_i\right]>0$
                and $\forall A\in\mathcal F_i$, with ${A\subset\varphi^{-1}(V_\ell)\cap U_i}$ and $\mathbb P_i(A)>0$:
                $\exists A'\subset A, A'\in\mathcal F_i, \mathbb P_i(A') >0$ such that 
                $A'$ is $(\varphi,U_i,V_\ell)$-balanced;
    \item[iv)] $\forall A\subset U_i, A\in\mathcal F_i,\mathbb P_i(A)>0$ and $\forall g\in L^0(A)$
               if $\forall A_\ell$, $(\varphi,U_i,V_\ell)$-balanced, there exists $f_\ell\in\mathbb D^\infty(V_\ell)$
               such that $\left.f_\ell\circ\varphi\right|_{A\cap A_\ell}=\left.g\right|_{A\cap A_\ell}$,
               then $g$ admits an extension $\widetilde g$ such that $\widetilde g\in\mathbb D^\infty(U_i)$.
  \end{enumerate}
\end{defn}
\begin{rem}
  From iii) we see that $\forall i\in I, \exists\ell(i)\in L$ 
  such that ${\mathbb P_i[\varphi^{-1}(V_{\ell(i)})\cap U_i]>0}$
  otherwise $\mathbb P_i\left[\bigcup_{\ell\in L_i}\varphi^{-1}(V_\ell)\cap U_i\right] =\mathbb P_i(U_i) = 0$.
  And we also have that $\forall A\in\mathcal F_i, A\subset\varphi^{-1}(V_{\ell(i)})\cap U_i$
  and $\mathbb P_i(A)>0$, there exists a countable subset of $L$
  denoted $L_{\ell(i)}$ and a family of $(\varphi,U_i,V_{j_i})_{j_i\in L_{\ell(i)}}$-balanced
  sets $\varepsilon_{j_i}$, such that they form a partition 
  of $A$. And there exists a countable subset of $L$,
  $L_0$, such that:
  \begin{equation*}
    U_i = \bigcup_{i\in L_0}\left(\bigcup_{j_i\in L_{\ell(i)}}\varepsilon_{j_i}\right)
  \end{equation*}
  with $\mathbb P_i(\varepsilon_{j_i}\cap\varepsilon_{j_k})=0$, $\forall j_i\in L_{\ell(i)}$ and $\forall j_k\in L_{\ell(k)}$.
\end{rem}
\begin{prop}
  The composition of $\mathbb D^\infty$-morphisms is a
  $\mathbb D^\infty$-morphism.
\end{prop}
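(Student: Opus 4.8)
The plan is to take two $\mathbb D^\infty$-morphisms $\varphi\colon\mathscr S_1\to\mathscr S_2$ and $\psi\colon\mathscr S_2\to\mathscr S_3$, with atlases $\mathscr A=(U_i,b_i,\Omega_i)_{i\in I}$, $\mathscr B=(V_\ell,\widetilde b_\ell,\widetilde\Omega_\ell)_{\ell\in L}$, $\mathscr C=(W_m,\widehat b_m,\widehat\Omega_m)_{m\in M}$, to set $\chi=\psi\circ\varphi$, and to verify the four defining conditions i)--iv) for $\chi$, keeping the identification $U_i\simeq\Omega_i$. Conditions i) and ii) are routine. For i), $\chi$ is measurable for the canonical $\sigma$-algebras as a composition of measurable maps, and the null-set condition chains: $\chi^{-1}[\mathscr N(\mathscr S_3)]=\varphi^{-1}\big[\psi^{-1}[\mathscr N(\mathscr S_3)]\big]\subset\varphi^{-1}[\mathscr N(\mathscr S_2)]\subset\mathscr N(\mathscr S_1)$. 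For ii), given $i\in I$ I take the countable $L_i\subset L$ with $U_i\subset\bigcup_{\ell\in L_i}\varphi^{-1}(V_\ell)$ and, for each such $\ell$, the countable $M_\ell\subset M$ with $V_\ell\subset\bigcup_{m\in M_\ell}\psi^{-1}(W_m)$; then $M_i=\bigcup_{\ell\in L_i}M_\ell$ is countable and $U_i\subset\bigcup_{m\in M_i}\chi^{-1}(W_m)$.

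For iii) I first record the absolute-continuity fact underlying everything: the measurability condition together with the definition of $\mathscr N(\mathscr S_2)$ forces $(\varphi|_{U_i})_*\mathbb{P}_i\ll\mathbb{P}_\ell$ on each $V_\ell$ (a measurable $\mathbb{P}_\ell$-null subset of $V_\ell$ lies in $\mathscr N(\mathscr S_2)$, so its $\varphi$-preimage is $\mathbb{P}_i$-null), and likewise for $\psi$. Now given $i$, $m$ and $A\subset\chi^{-1}(W_m)\cap U_i$ with $\mathbb{P}_i(A)>0$: since $A$ meets the countably many $\varphi^{-1}(V_\ell)$, some $\ell^\ast$ gives $\mathbb{P}_i(\varphi^{-1}(V_{\ell^\ast})\cap A)>0$, and iii) for $\varphi$ yields a $(\varphi,U_i,V_{\ell^\ast})$-balanced $A_2\subset\varphi^{-1}(V_{\ell^\ast})\cap A$. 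Then $\varphi(A_2)\subset\psi^{-1}(W_m)\cap V_{\ell^\ast}=:B_0$, a measurable set (chart domains lie in the canonical $\sigma$-algebra) of positive $\mathbb{P}_{\ell^\ast}$-measure by absolute continuity. Using iii) for $\psi$ by exhaustion (as in the partition Remark) I cover $B_0$ up to a $\mathbb{P}_{\ell^\ast}$-null set by countably many $(\psi,V_{\ell^\ast},W_m)$-balanced sets $B_n$; absolute continuity of $(\varphi|_{A_2})_*\mathbb{P}_i$ then forces some $B_{n^\ast}$ with $\mathbb{P}_i(\varphi^{-1}(B_{n^\ast})\cap A_2)>0$. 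I claim $A'=\varphi^{-1}(B_{n^\ast})\cap A_2$ is the required balanced set: for $h\in\mathbb D^\infty(W_m)$, $B_{n^\ast}$-balancedness of $\psi$ gives an extension $f\in\mathbb D^\infty(V_{\ell^\ast})$ of $h\circ\psi|_{B_{n^\ast}}$, and $A_2$-balancedness of $\varphi$ gives an extension of $f\circ\varphi|_{A_2}$ in $\mathbb D^\infty(U_i)$; since $\varphi(A')\subset B_{n^\ast}$, on $A'$ one has $f\circ\varphi=(h\circ\psi)\circ\varphi=h\circ\chi$, so this last extension extends $h\circ\chi|_{A'}$.

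Condition iv) is where the real work lies, and it runs in the opposite direction. Assume $g\in L^0(A)$ satisfies the hypothesis of iv) for $\chi$: on every $(\chi,U_i,W_m)$-balanced $A_m$ there is $h_m\in\mathbb D^\infty(W_m)$ with $h_m\circ\chi=g$ on $A\cap A_m$. The goal is to feed iv) for $\varphi$, i.e. to produce, for each fixed $(\varphi,U_i,V_\ell)$-balanced $A_\ell$, a function $f_\ell\in\mathbb D^\infty(V_\ell)$ with $f_\ell\circ\varphi=g$ on $A\cap A_\ell$; condition iv) for $\varphi$ then delivers $\widetilde g\in\mathbb D^\infty(U_i)$ and the proof is complete. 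To build $f_\ell$ I apply iv) for $\psi$ on $V_\ell$ to the function $g'$ determined by $g'\circ\varphi=g$ on $A\cap A_\ell$. First I check that $g'$ is well defined $(\varphi|_{A\cap A_\ell})_*\mathbb{P}_i$-a.e.: iii) together with the partition Remark cover $A\cap A_\ell$ (mod null) by $(\chi,U_i,W_m)$-balanced pieces, and on each such piece $g=h_m\circ\psi\circ\varphi$ factors through $\varphi$, so $g$ is a.e. constant on $\varphi$-fibres. Transporting the $h_m$ then verifies the hypothesis of iv) for $\psi$ applied to $g'$: on each $(\psi,V_\ell,W_m)$-balanced $B_m$ the set $\varphi^{-1}(B_m)\cap A\cap A_\ell$ contains $\chi$-balanced pieces, and the associated $h_m$ satisfy $h_m\circ\psi=g'$ on $\varphi(A\cap A_\ell)\cap B_m$ a.e.; iv) for $\psi$ then yields $f_\ell=\widetilde{g'}\in\mathbb D^\infty(V_\ell)$ with $f_\ell\circ\varphi=g$ on $A\cap A_\ell$ after the a.s. identifications.

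The main obstacle is precisely this condition iv): keeping every equality in the ``modulo $\mathbb{P}$-null'' sense consistent across the three spaces, which hinges on the absolute continuity of the pushforward measures (so that ``covers mod null'' transfers between $U_i$, $V_\ell$ and $W_m$) and on the well-definedness of the transported function $g'$. Both reduce to the absolute-continuity fact recorded at the start of iii) and to the exhaustion/partition property guaranteed by the Remark following the definition of a $\mathbb D^\infty$-morphism.
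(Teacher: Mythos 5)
Your handling of conditions i), ii) and iii) is sound and follows the paper's route. In iii) your chaining is exactly the paper's (a balanced set for $\varphi$ into $V_{\ell^\ast}$, a balanced set for $\psi$ into $W_m$, then composition of the two extensions), and your bookkeeping is in fact cleaner: where the paper applies condition iii) of the second morphism directly to the direct image $\varphi_1(A)$ without addressing its measurability, you work with the preimage $B_0=\psi^{-1}(W_m)\cap V_{\ell^\ast}$, a countable balanced cover, and absolute continuity of the pushforward to select the piece $B_{n^\ast}$ whose preimage meets $A_2$ with positive measure.

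The genuine gap is in iv), precisely at the step you call the real work: the function $g'$ ``determined by $g'\circ\varphi=g$ on $A\cap A_\ell$'' is not well defined, and your justification does not repair it. Covering $A\cap A_\ell$ (mod null) by $(\chi,U_i,W_m)$-balanced pieces $A_m$ on which $g=h_m\circ\psi\circ\varphi$ shows only that $g$ is constant on $\varphi$-fibres \emph{within each piece}; two points of one fibre lying in different pieces $A_m$, $A_{m'}$ receive the values $h_m(\psi(v))$ and $h_{m'}(\psi(v))$, and nothing in the hypothesis of iv) forces the various $h_m$ — one separate extension per balanced set — to agree at a common point. Moreover, even granting fibre-constancy, $\varphi(A\cap A_\ell)$ is a direct image of a measurable set and need not be measurable, while iv) for $\psi$ can only be invoked for a function in $L^0(B)$ with $B\in\mathcal F_\ell$ and $\mathbb P_\ell(B)>0$; a fibre-constant $g$ does not by itself descend to a measurable $g'$ on such a $B$ (this would require a Doob--Dynkin-type factorization that is unavailable here). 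The paper never descends $g$: using Remark 3.3 it partitions the intermediate chart $V_{j_0}$, up to a null set, into countably many $(\varphi_2,V_{j_0},W_k)$-balanced sets $\varepsilon_\alpha$, and defines the intermediate function \emph{downstream} as $h=f_\alpha\circ\varphi_2$ on $\varepsilon_\alpha$ — a gluing of the $W_k$-side data on genuinely measurable, disjoint subsets of $V_{j_0}$ — then applies iv) for $\varphi_2$ to $h$ and feeds the resulting $\widetilde h\in\mathbb D^\infty(V_{j_0})$ into iv) for $\varphi_1$. (The paper is itself cavalier about the coherence of the different $f_\alpha$, writing a single letter $f$ throughout, but its $h$ at least exists as a measurable function, whereas your $g'$ does not without further argument.) If you rebuild $g'$ that way, as a gluing of the $h_m\circ\psi$ over a countable balanced partition of $V_\ell$ rather than as a pushforward of $g$ through $\varphi$, the measurability obstructions disappear and your proof aligns with the paper's.
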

\begin{proof}
  Let $\mathscr S_1, \mathscr S_2, \mathscr S_3$ be three $\mathbb D^\infty$-stochastic manifolds
  with respectively the $\mathbb D^\infty$-atlases
  \begin{equation*}
    \mathscr A=(U_i)_{i\in I},\quad\mathscr B=(V_j)_{j\in J},\quad\mathscr C(W_k)_{k\in K}
  \end{equation*}
  and $\varphi_1$ a $\mathbb D"\infty$-morphism from $\mathscr S_1$ to $\mathscr S_2$, and $\varphi_2$ a $\mathbb D^\infty$-morphism
  from $\mathscr S_2$ to $\mathscr S_3$.
  \begin{enumerate}
    \item[i) and ii)] are trivially verified by $\varphi_2\circ\varphi_1$
                       ($\varphi_2\circ\varphi_1$ is measurable);

    \item[iii)] Let $i_0\in I$ and $k\in K$. We have to prove that 
                for ${A\subset\varphi_1^{-1}\circ\varphi_2^{-1}(W_k)\cap U_{i_0}}$, ${A \in \mathcal F_{i_0}}$, ${\mathbb P_{i_0}(A_{i_0})>0}$ 
                $\exists A'\subset A,A'\in\mathcal F_{i_0},\mathbb P_{i_0}(A')>0$ such that
                $A'$ is $(\varphi_2\circ\varphi_1,U_{i_0},W_k)$ balanced.
                Without loss of generality, we can suppose $\exists j_0\in J$ 
                such that $\varphi_1(A)\subset V_{j_0}$.
                Then, $\varphi_2$ being a $\mathbb D^\infty$-morphism, there exists $B\subset\varphi_1(A)$,
                $B$ being $(\varphi_2,V_{j_0},W_k)$-balanced (and $\mathbb P_{j_0}(B)>0$).
                So for $g\in\mathbb D^\infty(W_k)$ there exists an extension 
                of $\left.g\circ\varphi_2\right|_B$, denoted $\widetilde g$, such that $\widetilde g\in\mathbb D^\infty(V_{j_0})$.
                But $\varphi_1$ being a $\mathbb D^\infty$-morphism, there exists a subset 
                $A'$ of $\varphi_1^{-1}(B)$ (remind $\mathbb P_{i_0}(\varphi_1^{-1}(B))>0$), which is
                $(\varphi_1,U_{i_0},V_{j_0})$-balanced.
                So $\left.\widetilde g\circ\varphi_1\right|_{A'}=\left.g\circ\varphi_2\circ\varphi_1\right|_{A'}$, and
                there exists an extension of $\left.\widetilde g\circ\varphi_1\right|_{A'}$ ($A'$ is a
                $(\varphi_1,U_{i_0},V_{j_0})$-balanced subset), denoted $\widehat g$, {which is
                $\in\mathbb D^\infty(U_{i_0})$}; and $\widehat g$ is an extension of $\left.g\circ\varphi_2\circ\varphi_1\right|_{A'}$
    \item[iv)] We have to show that if $i_0\in I$
               and $A\subset U_{i_0}$, $A\in\mathcal F_{i_0}$, $\mathbb P_{i_0}(A)>0$ and $g\in L^0(A)$;
               We suppose that $\forall A_{i_0k}$, $(\varphi_2\circ\varphi_1,U_{i_0},W_k)$-balanced set
               there exists $f\in\mathbb D^\infty(W_k)$ such that
               \begin{equation*}
                 \left.f\circ\varphi_2\circ\varphi_1\right|_{A\cap A_{i_0k}} = \left.g\right|_{A\cap A_{i_0k}}
               \end{equation*}
               Then we must show that $g$ admits an extension $\widetilde g$ 
               such that $\widetilde g\in\mathbb D^\infty(U_{i_0})$ and $\left.\widetilde g\right|_A=g$.
               As in iii) we can suppose without loss of generality
               that there exists $j_0\in J$ such that $\varphi_1(A)\subset V_{j_0}$.
               We know, from Remark 3.3, that there exists 
               a countable partition of $V_{j_0}$ made with {$(\varphi_2,V_{j_0},W_k)$-balanced}
               sets, denoted here $(\varepsilon_\alpha)_{\alpha\in\mathbb N_*}$, such that
               \begin{equation*}
                 \mathbb P_{j_0}\left[V_{j_0}\setminus\bigcup_{\alpha\in\mathbb N_*}\varepsilon_\alpha\right]=0
               \end{equation*}
               We define then $\left.h_\alpha\right|_{\varepsilon_\alpha} = \left.f\circ\varphi_2\right|_{\varepsilon_\alpha}$ and $0$ on $\complement \varepsilon_\alpha$.
               Then $h_\alpha\in L^0(V_{j_0})$; and $h=\sum_{\alpha=1}^\infty \mathbf{1}_{\varepsilon_\alpha}h_\alpha\in J^0(V_{j_0})$.
               $\forall B_{j_0k}$, $(\varphi_2,V_{j_0},W_k)$-balanced set, there exists
               a countable extracted partition of the $(\varepsilon_\alpha)_{\alpha\in\mathbb N_*}$,
               denoted $(\varepsilon_\beta)_{\beta\in\mathbb N_*}$, such that $B_{j_0k}=\bigcup_{\beta\in\mathbb N_*}\varepsilon_\beta$, $\mathbb P_{j_0}$-a.s.
               Then: 
               \begin{align*}
                 \left.h\right|_{V_{j_0}\cap B_{j_0k}}
                 & = \left.h\right|_{B_{j_0k}} \\
                 & = \left.h\right|_{\bigcup_{\beta\in\mathbb N_*}\varepsilon_\beta} \\
                 & = \sum_{\beta\in\mathbb N_*}\mathbf{1}_{\beta}h_\beta \\
                 & = \sum_{\beta\in\mathbb N_*}\left.f\circ\varphi_2\right|_{\varepsilon_\beta} \\
                 & = \left.f\circ\varphi_2\right|_{B_{j_0k}}
               \end{align*}
               $\varphi_2$ being a $\mathbb D^\infty$-morphism, there exists an extension
               $\widetilde h$ of $h$ {which is $\in \mathbb D^\infty(V_{j_0})$}.
               Then:
               \begin{align*}
                 \left.g\right|_{A\cap A_{i_0k}}
                   & = \left.f\circ\varphi_2\circ\varphi_1\right|_{A\cap A_{i_0k}} \\
                   & = \left.f\circ\varphi_2\right|_{\varphi_1(A\cap A_{i_0k})} \\
                   & = \left.\widetilde h\right|_{\varphi_1(A\cap A_{i_0k})}
               \end{align*}
               This last equality being $\mathbb P_{j_0}$-a.s.
               As $A\cap A_{i_0k}\subset\varphi^{-1}\left[\varphi(A\cap A_{i_0k})\right]$, we have
               $\left.g\right|_{A\cap A_{i_0k}}=\left.\widetilde h\circ\varphi_1\right|_{A\cap A_{i_0k}}$, $\mathbb P_{i_0}$-a.s. and $\widetilde h\in\mathbb D^\infty(V_{j_0})$.

               $\varphi_1$ being a $\mathbb D^\infty$-morphism, with iv) we have that
               there exists an extension $\widetilde g\in\mathbb D^\infty(U_{i_0})$ such that
               $\left.\widetilde g\right|_A=\left.g\right|_A$. So $\varphi_2\circ\varphi_1$ is a $\mathbb D^\infty$-morphism.
  \end{enumerate}
\end{proof}

\begin{defn} ~
  \begin{enumerate}
    \item[i)] Two $\mathbb D^\infty$-atlases on the set $\mathscr S$ are said to be
              $\mathbb D^\infty$ equivalent iff the identity $\mathrm{Id}_{\mathscr S}$ is a $\mathbb D^\infty$-isomorphism;
    \item[ii)] Let $\mathscr S$ be a $\mathbb D^\infty$-stochastic manifold with the $\mathbb D^\infty$-atlas
               $\mathscr A=(U_i,b_i,\Omega_i)_{i\in I}$; the chart $(U,b,\Omega,\mathcal F,\mathbb P,H)$, with $U\subset\mathscr S$
               is said to be $\mathbb D^\infty$-compatible with $\mathscr A$, if the atlases
               $\mathscr A$ and $\mathscr A \cup \left\{(U,b,\Omega)\right\}$ are equivalent.
               
  \end{enumerate}
\end{defn}
\begin{rem}
  We will here give the Definition 3.3.iv. 
  but without identifying the domain $U$ of a chart $(U,b,\Omega)$
  with $\Omega$:
  $\forall A\in\Omega,A\in\mathcal F_i,\mathbb P_i(A)>0$ and $\forall g\in L^0(A)$:
  If $\forall A_\ell$, $(\varphi,U_i,V_\ell)$-balanced set, there exists
  $f_\ell\in\mathbb D^\infty(\widetilde \Omega_\ell)$ such that $\left.f\circ\widetilde b_\ell\circ\varphi\circ b_i^{-1}\right|_{A\cap b_i(A_\ell)} = \left.g\right|_{A\cap b_i(A_\ell)}$
  then $g$ admits an extension $\widetilde g\in\mathbb D^\infty(\Omega_i)$.
\end{rem}
\begin{rem}
  If the $\mathbb D^\infty$-atlas $\mathscr A$ verifies the condition iv) of the 
  Definition 3.3, then $\mathbb D^\infty$-compatibility between two charts of $A$ is
  equivalent to: ${\forall\varphi\in\mathbb D^\infty(\Omega_i)}$, $\exists$ an extension $\widetilde\varphi\in\mathbb D^\infty(\Omega_j)$
  of $\left.\varphi\circ b_{ij}\right|_{b_i(U_i\cap U_j)}$, and moreover this extension is unique.
\end{rem} 

  Now we will show that there exists a derivation 
  from $\mathbb D^\infty(\Omega)$ to $\mathbb D^\infty(\Omega)$ which is not a vector field
  so to prove after this, that there are $\mathbb D^\infty$-charts,
  $\mathbb D^\infty$-compatible, which do have linear tangent maps.

\subsection{Existence of a $\mathbb{D}^\infty$-derivation which is not a vector field}

  Let $\mathscr S$ be a $\mathbb D^\infty$-stochastic manifold, with an atlas
  having only one chart $(\Omega, \mathcal F,\mathbb P,H)$.
  Let $\left\{(e_i)_{i\in\mathbb N_*}, (\varepsilon_j)_{j\in\mathbb N_*}\right\}$ be a Hilbert basis of $H$ and let $A$
  be the bounded operator on $H$ defined by:
  \begin{equation*}
    A(e_i)=\varepsilon_i\quad\text{and}\quad A(\varepsilon_j)=-e_j
  \end{equation*}
  We denote by $\delta$ the operator on $\mathbb D^\infty(\Omega)$ defined by:
  \begin{equation*}
    \forall\varphi\in\mathbb D^\infty(\Omega),\quad \delta\varphi = \mathrm{div}\,A\,\mathrm{grad}\,\varphi
  \end{equation*}
  Then direct computation shows that $\delta$ is a derivation.
  If there existed a $\mathbb D^\infty$-vector field $X$ such that
  \begin{equation*}
    \forall\varphi\in\mathbb D^\infty(\Omega), \quad X\cdot\varphi = \mathrm{div}\,A\,\mathrm{grad}\,\varphi=\delta\varphi
  \end{equation*}
  then we write:
  \begin{equation*}
    X = \sum_{i\geq 1}X^ie_i + \sum_{j\geq 1}Y^j\varepsilon_j
  \end{equation*}
  and we would have
  \begin{align*}
    X_i & = X\cdot W(e_i)=\delta(W(e_i)) = W(\varepsilon_i) \\
    \text{and } Y_j &= X\cdot W(\varepsilon_j) = \delta(W(\varepsilon_j))=-W(e_j)
  \end{align*}
  But 
  \begin{align*}
    \sum_{i,j}\left(\int_\Omega X_i^2\mathbb P(\mathrm d\omega)+\int_\Omega Y_j^2\mathbb P(\mathrm d\omega)\right) 
    & = \sum_{i,j}\left(\int_\Omega W(\varepsilon_j)^2\mathbb P(\mathrm d\omega) + \int_\Omega W(e_i)^2\mathbb P(\mathrm d\omega)\right) \\
    & = +\infty
  \end{align*}
  Now we can prove that there exists compatible
  $\mathbb D^\infty$-charts for which, the change maps do not
  admit a linear tangent map.
  Let $\mathscr S_1$ be a $\mathbb D^\infty$-stochastic manifold with an atlas
  reduced to one chart $(\Omega,\mathcal F,\mathbb P,H)$
  and $\left\{(e_i)_{i\in\mathbb N_*}, (\varepsilon_j)_{j\in\mathbb N_*}\right. h\}$ a Hilbert basis of $H$.
  Denote $X_i=W(e_i)$, $Y_j=W(\varepsilon_j)$ and $Z=h$.
  We define an inversible map $\psi$ by:
  \begin{align*}
    \overline X_i & = X_i\cos Z + Y_i\sin Z & X_i & = \overline X_i\cos\overline Z - \overline Y_i\sin\overline Z \\
    \overline Y_i & =-X_i\sin Z + Y_i\cos Z & Y_i & = \overline X_i\sin\overline Z + \overline Y_i\cos\overline Z \\
    \overline Z & = Z & Z & = \overline Z
  \end{align*}
  The system $(\overline X_i,\overline Y_j, \overline Z)$ is a Gaussian system
  and has the same laws as the system $(X_i,Y_j,Z)$.
  We define an isometric morphism again denoted $\psi$,
  from a dense domain of $L^{\infty-0}$ to $L^{\infty-0}$ by:
  \begin{equation*}
    \forall n\in\mathbb N_*, \text{if } f\in\mathscr S(\mathbb R^{2n+1}), 
    \quad (\psi f)(\overline X,\overline Y,\overline Z) = f(\psi(X),\psi(Y),\psi(Z))
  \end{equation*}
  $\mathscr S(\mathbb R^{2n+1})$ being the set of fast decreasing functions.

  This morphism preverses laws, so it can be extended 
  to a bijective and isometric map from $L^{\infty-0}(\Omega)$  into
  $L^{\infty-0}(\Omega)$. $L$ and $\overline L$ being the O.U. operator respectively
  in the charts $(\Omega,\mathcal F,\mathbb P,H)$ and $(\Omega,\overline{\mathcal F},\overline{\mathbb P},\overline H)$, we have:
  \begin{align*}
    {\psi}^{-1}\circ\overline L(\psi f)
      & = Lf + \psi^{-1}\circ\frac{\partial^2(\psi f)}{\partial\overline Z^2}-\psi^{-1}\circ\left(\overline Z\frac{\partial(\psi f)}{\partial\overline Z}\right) \\
      & = Lf + \left(\psi^{-1}\circ\frac{\partial}{\partial\overline Z}\circ\psi\right)\circ\left(\psi^{-1}\circ\frac{\partial}{\partial\overline Z}\circ\psi f\right)-\psi^{-1}\left(\overline Z\frac{\partial(\psi f)}{\partial\overline Z}\right)
      \tag{1}\label{l1}
  \end{align*}
  And
  \begin{align*}
    \psi^{-1}\circ\frac{\partial}{\partial\overline Z}(\psi f)
      & = \sum_{i=1}^nX_i\frac{\partial f}{\partial Y_i}-Y_i\frac{\partial f}{\partial X^i}+\frac{\partial f}{\partial Z} \\
      & = \mathrm{div}\,A_n\,\mathrm{grad}\,f+\frac{\partial f}{\partial Z}
  \end{align*}
  where $A_n$ is the determinist
  operator defined by $A_n(e_i)=\varepsilon_i$, $A_n(\varepsilon_i)=-e_i$, $1,\dotsc,n$.
  If we denote by $\mathrm{div}\,A_n\,\mathrm{grad}$ by $\delta_n$ we have
  \begin{align*}
    \psi^{-1}\circ\frac{\partial}{\partial\overline Z}(\psi f)
      & = \delta_nf + \frac{\partial f}{\partial Z} \\
      & = \mathrm{div}\,A\,\mathrm{grad}\,f + \frac{\partial f}{\partial Z}
  \end{align*}
  where $A$ is the bounded operator on $H$ such that
  $A(e_i)=\varepsilon_i$, $A(\varepsilon_j)=-e_j$, and $A(h)=h$.
  Now we show by induction that $\psi$ sends $\mathbb D^\infty$ in $\mathbb D^\infty$.
  We know already that $\psi$ sends $\mathbb D^\infty$ in $L^{\infty-0}$.
  Suppose that $\psi:\mathbb D^\infty\to\mathbb D_r^\infty$, let $f\in\mathbb D^\infty(\Omega)$.
  From \eqref{l1}, we see that, as $\mathrm{div}\,A\,\mathrm{grad}\,f\in\mathbb D^\infty$
  and $\left(\psi^{-1}\frac{\partial}{\partial\overline Z}\psi\right)(f)$ is also $\in\mathbb D^\infty$:
  $\left(\psi^{-1}\circ\overline L\circ\psi\right)f\in\mathbb D^\infty$ 
  so $(\overline L\circ\psi)(f)\in\mathbb D_r^\infty$ which implies
  that $\psi\in\mathbb D_{r+2}^\infty$.
  And we have seen previously that
  $\mathrm{div}\,A\,\mathrm{grad}$ is a derivation which cannot be a
  vector field. But if the linear tangent map
  of $\psi$ existed we would have
  \begin{equation*}
    T_{psi}\frac{\partial}{\partial\overline Z} = \psi^{-1}\circ\frac{\partial}{\partial\overline Z}\psi
  \end{equation*}
  Now we study the sets $\mathrm{Der}(\Omega)$ and $\mathrm{Der}(\Omega)^*$.
  \begin{defn}
    Given a Gaussian space $(\Omega,\mathcal F,\mathbb P,H)$, we
    denote $\mathrm{Der}(\Omega)$ the set of $\mathbb D^\infty$-continuous derivations
    from $\mathbb D^\infty(\Omega)$ to $\mathbb D^\infty(\Omega)$.
    
    $\mathrm{Der}(\Omega)$ is then a non-metrisable topological space
    when endowed with the single point convergence.
  \end{defn}
  \begin{defn}
    A subset $A$ of $\mathrm{Der}(\Omega)$ is said to be bounded 
    or {$\mathrm{Der}$-bounded} iff $\forall (p,q),\exists (q,s),\,(p,q>1; r,s\in\mathbb N_*)$,
    $\forall \mathbb D^\infty$-bounded subset ${D\subset\mathbb D^\infty(\Omega)}$, \\
	$\exists C(p,q,s,r,s,D)$ a 
    constant such that
    \begin{equation*}
      \forall f\in D, \quad \sup_{\delta\in A}\left\|\delta f\right\|_{\mathbb D_r^p}
      \leq C(p,q,r,s,D)\left\|f\right\|_{\mathbb D_s^q}
    \end{equation*}
  \end{defn}
  \begin{defn}
    We denote by $\mathrm{Der}(\Omega)^*$ the set of
    $\mathbb D^\infty$-linear maps on $\mathrm{Der}(\Omega)$ which are verifying this
    continuity property: let $A$ be a bounded set in $\mathrm{Der}(\Omega)$,
    and $(\delta_i)_{i\in I}$ a net in $A$, converging towards $\delta\in\mathrm{Der}$.
    Then for $u\in\mathrm{Der}(\Omega)^*$, the net $(u(\delta_i))_{i\in I}$ converges
    $\mathbb D^\infty$ towards $u(\delta)$.
  \end{defn}
  \begin{defn}
    A subset $(\alpha_i)_{i\in I}$ of $\mathrm{Der}(\Omega)^*$ is said to be 
    bounded iff for each bounded subset $A\subset\mathrm{Der}(\Omega)$,
    $\sup_{i\in I}|\alpha_i(A)|<+\infty$.
  \end{defn}
  \begin{lem}
    Let $u\in\mathrm{Der}(\Omega)^*$. Then $\forall$ bounded subset $A$ 
    of $\mathrm{Der}(\Omega)$, the set $\{u(\delta)\,/\,\delta\in A\}$ is $\mathbb D^\infty$-bounded in $\mathbb D^\infty(\Omega)$.
  \end{lem}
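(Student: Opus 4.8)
The plan is to argue by contradiction, converting an assumed failure of $\mathbb D^\infty$-boundedness of $\{u(\delta)\,/\,\delta\in A\}$ into a bounded sequence of derivations that converges to $0$ for the single-point convergence but whose images under $u$ blow up, thereby violating the very continuity property that defines $\mathrm{Der}(\Omega)^*$. The only structure available on $u$ is $\mathbb D^\infty$-linearity together with net-continuity \emph{restricted to bounded sets} (the topology on $\mathrm{Der}(\Omega)$ is non-metrisable), so the whole point will be to manufacture a sequence that is simultaneously contained in a $\mathrm{Der}$-bounded set and single-point convergent, so that this restricted continuity can actually be triggered.

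First I would negate the conclusion. Since a subset of $\mathbb D^\infty(\Omega)$ is $\mathbb D^\infty$-bounded iff it is bounded for each seminorm $\|\cdot\|_{\mathbb D^p_r}$, the failure means there is a pair $(p_0,r_0)$, $p_0>1$, $r_0\in\mathbb N_*$, with $\sup_{\delta\in A}\|u(\delta)\|_{\mathbb D^{p_0}_{r_0}}=+\infty$. Hence I may select a sequence $(\delta_n)_{n\in\mathbb N_*}$ in $A$ with
\[
\|u(\delta_n)\|_{\mathbb D^{p_0}_{r_0}}\geq n^2\qquad\text{for every }n .
\]
The renormalisation is the crux: set $\eta_n=\tfrac1n\,\delta_n$, which again lies in $\mathrm{Der}(\Omega)$. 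Two facts then have to be checked, both drawn from $A$ being $\mathrm{Der}$-bounded. First, the set $B=\{\eta_n\,/\,n\in\mathbb N_*\}$ is itself $\mathrm{Der}$-bounded: for each target $(p,r)$ the definition of boundedness of $A$ supplies $(q,s)$ and, for every fixed $f$, a constant $C$ with $\sup_{\delta\in A}\|\delta f\|_{\mathbb D^p_r}\leq C\,\|f\|_{\mathbb D^q_s}$; since $\tfrac1n\leq 1$ the same estimate holds verbatim with $\eta_n$ in place of $\delta$, with the same constant. Second, the sequence $\eta_n$ converges to the zero derivation $0\in\mathrm{Der}(\Omega)$ for the single-point convergence: for every fixed $f\in\mathbb D^\infty(\Omega)$ and every $(p,r)$,
\[
\|\eta_n f\|_{\mathbb D^p_r}=\tfrac1n\,\|\delta_n f\|_{\mathbb D^p_r}\leq \tfrac1n\,C\,\|f\|_{\mathbb D^q_s}\longrightarrow 0\quad\text{as }n\to\infty .
\]

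Finally I would invoke the continuity property of $u$. The family $(\eta_n)$ is a net (a sequence indexed by the directed set $\mathbb N_*$) lying in the $\mathrm{Der}$-bounded set $B$ and converging in $\mathrm{Der}(\Omega)$ to $0$; hence the defining property of $\mathrm{Der}(\Omega)^*$ gives $u(\eta_n)\to u(0)$ in $\mathbb D^\infty(\Omega)$. By the $\mathbb R$-linearity contained in the $\mathbb D^\infty$-linearity of $u$ we have $u(0)=0$ and $u(\eta_n)=\tfrac1n\,u(\delta_n)$, so in particular $\|u(\eta_n)\|_{\mathbb D^{p_0}_{r_0}}\to 0$. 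But
\[
\|u(\eta_n)\|_{\mathbb D^{p_0}_{r_0}}=\tfrac1n\,\|u(\delta_n)\|_{\mathbb D^{p_0}_{r_0}}\geq \tfrac1n\,n^2=n\longrightarrow +\infty ,
\]
a contradiction. Therefore $\{u(\delta)\,/\,\delta\in A\}$ is $\mathbb D^\infty$-bounded in $\mathbb D^\infty(\Omega)$. The single delicate step, and the one I expect to be the real obstacle, is exactly checking that the scalings $\eta_n=\delta_n/n$ stay inside one fixed $\mathrm{Der}$-bounded set while still tending to $0$ pointwise; both are secured at once by the equicontinuity encoded in the definition of a bounded subset of $\mathrm{Der}(\Omega)$ together with the contraction factors $\tfrac1n\leq 1$, which is why the hypothesis that $A$ be bounded (and not merely that each $\{\delta f\}$ be bounded) is essential.
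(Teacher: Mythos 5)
Your proof is correct and follows essentially the same route as the paper's: the paper likewise argues by contradiction, picking $\delta_n\in A$ with $\|u(\delta_n)\|_{\mathbb D_r^p}>n$, scaling by positive constants $\alpha_n\to 0$ so that $\{\alpha_n\delta_n\}$ remains bounded in $\mathrm{Der}(\Omega)$ and converges to $0$ pointwise, and then invoking the restricted net-continuity of $u$ to reach a contradiction. Your only addition is to make the scaling quantitative ($\|u(\delta_n)\|\geq n^2$, $\eta_n=\delta_n/n$, so $\|u(\eta_n)\|\geq n$), which tidies a choice the paper leaves implicit but changes nothing in substance.
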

  \begin{proof}
    Suppose $\exists A$ subset bounded in $\mathrm{Der}(\Omega)$, $\exists(p,r),p>1,r\in\mathbb N_*$ 
    and $\exists\delta_n\in A$ such that $\|u(\delta_n)\|_{\mathbb D_r^p} > n$.
    Let $\alpha_n$ a sequence of numbers $>0$, which converges towards $0$. 
    Then $\{\alpha_n\delta_n\,/\,n\in\mathbb N_*\}$ is bounded in $\mathrm{Der}(\Omega)$ and $\alpha_n\delta_n\to 0$ in $\mathrm{Der}$
    so $u(\alpha_n\delta_n)\to 0$ which is contradictory.
  \end{proof}
  \begin{rem}
  If $(f_i)_{i\in I}$ is a $\mathbb D^\infty$-bounded family of $\mathbb D^\infty(\Omega)$,
  then $(\mathrm{grad}\,f_i)_{i\in I}$ is a bounded family in $\mathrm{Der}(\Omega)$.
  \end{rem}
  
\subsection{Derivation field on a $\mathbb{D}^\infty$-stochastic manifold}
  
  Given a $\mathbb D^\infty$-stochastic manifold, endowed with the
  atlas $\mathscr A=(U_i,b_i,\Omega_i)_{i\in I}$, a family of $\mathbb D^\infty$-continuous derivations
  $\delta_i\in\mathrm{Der}(\Omega_i)$, $i\in I$, is said to be a {$\mathbb D^\infty$-derivation} field
  on $\mathscr S$ iff $\forall(i,j)\in I^2$ and $\forall f\in\mathbb D^\infty(\Omega_j)$:
  \begin{equation*}
    \left.\delta_jf\right|_{b_j(U_i\cap U_j)} = \left.\delta_i\left(\widetilde{f\circ b_{ij}}\right)\right|_{b_i(U_i\cap U_j)}\circ b_{ji}
  \end{equation*}
  $b_{ij}$ and $b_{ji}$ being the $\mathbb D^\infty$-chart changes between the
  charts $(U_i,b_i,\Omega_i)$ and $(U_j,b_j,\Omega_j)$, $\widetilde{f\circ b_{ij}}$ being a
  $\mathbb D^\infty$-extension of $\left.f\circ b_{ij}\right|_{b_i(U_i\cap U_j)}$.

  This definition is legitimate:
  If $\widetilde{f\circ b_{ij}}^{(1)}$ and $\widetilde{f\circ b_{ij}}^{(2)}$ are two extensions on
  $\mathbb D^\infty(\Omega_i)$, of $\left.f\circ b_{ij}\right|_{b_i(U_i\cap U_j)}$, we have:
  \begin{equation*}
    \left.\left(\widetilde{f\circ b_{ij}}^{(1)}-\widetilde{f\circ b_{ij}}^{(2)}\right)\right|_{b_i(U_i\cap U_j)}=0
  \end{equation*}
  So with Corollary 2.5, we have:
  \begin{equation*}
    \delta_i\left.\left(\widetilde{f\circ b_{ij}}^{(1)}\right)\right|_{b_i(U_i\cap U_j)} = \delta_i\left.\left(\widetilde{f\circ b_{ij}}^{(2)}\right)\right|_{b_i(U_i\cap U_j)}
  \end{equation*}
  Now, using the definition of an admissible $\mathbb D^\infty$-chart
  to a $\mathbb D^\infty$-atlas, we prove that the definition of a
  derivation field is consistent, that is: we can build
  on this $\mathbb D^\infty$-admissible chart a derivation such that
  the new derivation field (the initial one $+$ this new
  derivation) has the same action as the first derivation field.
  
  From this we can deduce that if we have a derivation 
  field associated to a $\mathbb D^\infty$-atlas, on another equivalent
  $\mathbb D^\infty$-atlas can be built a derivation field which
  has the same action as the initial one.
  
  Let $(U,b,\Omega,\mathcal F,\mathbb P)$ be a $\mathbb D^\infty$-admissible chart
  to the $\mathbb D^\infty$-atlas $\mathscr A = (U_i,b_i,\Omega_i,\mathcal F_i,\mathbb P_i)_{i\in I}$,
  then the identity is a $\mathbb D^\infty$-isomorphism between $\mathscr A$. and the atlas \\
  $\mathscr A \cup \{(U, b, \Omega,\mathcal F, \mathbb P)\}$.

  $\Omega$   can be covered by a countable collection of sets $b(U\cap U_j)$, $j\in J$. Denote by $\varphi_j$ the map change
  of charts between $\Omega_j$ and $\Omega$, $\varphi_j=b\circ b_j^{-1}$.
  
  $\forall j\in J$, we define $\left.\delta f\right|_{b(U\cap U_j)}$, $\forall f\in\mathbb D^\infty(\Omega)$, by:
  \begin{equation*}
    \left.\delta f\right|_{b(U\cap U_j)} = \left.\widetilde{\delta_j(f\circ\varphi_j)\circ\varphi_j^{-1}}\right|_{b(U\cap U_j)}
  \end{equation*}
  The symbol $\widetilde{~}$ is for the $\mathbb D^\infty(\Omega)$ extension of
  $\delta_j(f\circ\varphi_j)\circ\varphi_j^{-1}$ which exists because the chart
  $(U,b,\Omega)$ is $\mathbb D^\infty$-admissible to the $\mathbb D^\infty$-atlas $\mathscr A$.
  
  Then 
  \begin{equation*}
    \left.\delta f\right|_{b(U\cap U_j)} = \left.\widetilde{\delta_j(f\circ\varphi_j)\circ\varphi_j^{-1}}\right|_{b(U\cap U_j)}\circ\mathrm{Id}_{\mathscr S}
  \end{equation*}
  Then we define:
  \begin{equation*}
    \delta f = \sum_{j\in J}\mathbf{1}_{b(U\cap U_j)}\cdot\left.\delta f\right|_{b(U\cap U_j)}
  \end{equation*}
  The definition is legitimate because if $\omega\in b(U\cap U_{j_1})\cap b(U\cap U_{j_2})$
  the map change of charts shows:
  \begin{equation*}
    \left.\delta f\right|_{b(U\cap U_{j_1})}(\omega) = \left.\delta f\right|_{b(U\cap U_{j_2})}(\omega)
  \end{equation*}

\subsection{Metric and fundamental bilinear form on a $\mathbb{D}^\infty$-stochastic manifold}

  \begin{defn}
    Let $(\Omega,\mathcal F,\mathbb P,H)$ be a Gaussian space.
    A $\mathbb D^\infty$-valued bilinear form on $(\mathrm{Der\,\Omega})^*$ is
    a $\mathbb D^\infty$-bilinear form on $\mathrm{Der}\,\Omega$ 
    denoted $q$, which is continuous relatively to each of
    its arguments. $q$ is said to be positive definite if $\alpha\in\left(\mathrm{Der}\,\Omega\right)^*$ is such that if $q(\alpha,\alpha)=0$ then $\alpha=0$.
  \end{defn}

  \begin{rem}
    The continuity of $q$ means that if a 
    net $(\alpha_i)_{i\in I}$, included in a bounded part of $(\mathrm{Der}\,\Omega)^*$ 
    converges towards $\alpha\in(\mathrm{Der}\,\Omega)^*$, then  $\forall\beta\in(\mathrm{Der}\,\Omega)^*$, 
    $q(\alpha_i,\beta)$ converges in $\mathbb D^\infty$ towards $q(\alpha,\beta)$.
  \end{rem}
  
  \begin{notation}
    Let $q$ be a bilinear form on $(\mathrm{Der}\,\Omega)^*$,
    \begin{enumerate}
      \item[i)] $\forall f\in\mathbb D^\infty(\Omega)$, we denote $\lambda_f\in(\mathrm{Der}\,\Omega)^*$ defined by:
        \begin{equation*}
          \forall\delta\in\mathrm{Der}, \lambda_f(\delta) = \delta(f) \qquad (\in\mathbb D^\infty(\Omega))
        \end{equation*}
      \item[ii)] If $u\in(\mathrm{Der}\,\Omega)^*$, we denote $\delta_u\in\mathrm{Der}\,\Omega$ defined by:
        \begin{equation*}
          \forall f\in\mathbb D^\infty(\Omega), \delta_uf=q(\lambda_f,u)
        \end{equation*}
    \end{enumerate}
  \end{notation}
  
  \begin{defn}
    The fundamental bilinear form on $(\mathrm{Der}\,\Omega)^*$,
    also named the fundamental metric, denoted $q_0$, is defined
    by: if $(e_i)_{i\in\mathbb N_*}$ is a Hilbert basis of $H$, and $\alpha,\beta\in(\mathrm{Der}\,\Omega)^*$,
    \begin{equation*}
      q_0(\alpha,\beta) = \sum_{i=1}^\infty\alpha(e_i)\beta(e_i)
    \end{equation*}
  \end{defn}

    We have to show that this series is $\mathbb D^\infty$-convergent
    and that this definition does not depend on the
    choice of basis $(e_i)_{i\in\mathbb N_*}$.
  
  \begin{rem}
    $q_0$ being the fundamental metric, we have:
    \begin{equation*}
      \forall u\in(\mathrm{Der}\,\Omega)^*, \delta_u(f)=u(\mathrm{grad}\,f)
    \end{equation*}
  \end{rem}
  
  \begin{thm}
    The fundamental form $q_0$ is well defined
    on $(\mathrm{Der}\,\Omega)^*$, and if $\alpha, \beta\in(\mathrm{Der}\,\Omega)^*$, we have
    $q_0(\alpha,\beta)=\alpha(\delta_\beta)=\beta(\delta_\alpha)$; and $q_0$ is non degenerate.
  \end{thm}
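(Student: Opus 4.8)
The plan is to represent every element of $(\mathrm{Der}\,\Omega)^*$ by a single $\dinf$-vector field and to read off all three assertions from that representation. Throughout I identify $e_i\in H$ with the constant $\dinf$-vector field $\omega\mapsto e_i$, i.e. with the derivation $f\mapsto\langle\Grad f,e_i\rangle_H$; since $e_i=\Grad W(e_i)$ and $(W(e_i))_i$ is a $\dinf$-bounded family, the Remark that $\Grad$ of a $\dinf$-bounded family is a bounded family in $\mathrm{Der}(\Omega)$ shows that $(e_i)_i$ is a bounded subset of $\mathrm{Der}(\Omega)$. By the Lemma stating that an element of $(\mathrm{Der}\,\Omega)^*$ maps bounded subsets of $\mathrm{Der}(\Omega)$ to $\dinf$-bounded sets, each of $(\alpha(e_i))_i$ and $(\beta(e_i))_i$ is then $\dinf$-bounded in $\dinf(\Omega)$. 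The whole theorem reduces to proving that
\[ V_\alpha:=\sum_{i\geq 1}\alpha(e_i)\,e_i \]
converges in $\dinf(\Omega,H)$; granting this, $q_0(\alpha,\beta)=\langle V_\alpha,V_\beta\rangle_H\in\dinf(\Omega)$, which is at once the convergence statement and the springboard for everything else.

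For the convergence I would work one derivative level at a time, using $\dinf=\sko_\infty^2\cap L^{\infty-0}$ (Theorem~\ref{thm2_5}): it suffices to bound, for every $a\in\N$, the partial sums of $\sum_i\|\Grad^a\alpha(e_i)\|_{\overset{a}{\otimes}H}^2$ in $L^{\infty-0}(\Omega)$. The device is to evaluate $\alpha$ not on a fixed family but on normalised truncations built from $\alpha$ itself: with $W_n^\varepsilon=\bigl(\sum_{i\leq n}\alpha(e_i)^2+\varepsilon\bigr)^{1/2}$ and $\tilde V_n^\varepsilon=\sum_{i\leq n}\alpha(e_i)e_i/W_n^\varepsilon\in\dinf(\Omega,H)$ one has $\|\tilde V_n^\varepsilon\|_H\leq 1$ and $\alpha(\tilde V_n^\varepsilon)=\sum_{i\leq n}\alpha(e_i)^2/W_n^\varepsilon$, which increases to $\bigl(\sum_i\alpha(e_i)^2\bigr)^{1/2}$ as $\varepsilon\downarrow 0$, $n\uparrow\infty$. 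If the $\tilde V_n^\varepsilon$ remain in a bounded subset of $\mathrm{Der}(\Omega)$, the same Lemma forces $\sup_{n,\varepsilon}\|\alpha(\tilde V_n^\varepsilon)\|_{\drp}<\infty$ for every $(p,r)$, and a monotone passage to the limit gives $V_\alpha\in\dinf(\Omega,H)$. The pointwise bound $\|\tilde V_n^\varepsilon\|_H\leq 1$ only controls the zero-order term, whereas the higher $\drp$-norms involve $\Grad^a$ of the self-referential coefficients $\alpha(e_i)/W_n^\varepsilon$. To control these I would pass, as in Theorem~\ref{thm2_2} and Corollary~\ref{cor2_4}, to an independent Gaussian copy $(\Omega_1,\F_1,\P_1,H_1)$ carrying orthonormal Gaussians $Y_i=W(\varepsilon_i)$ and extend $\alpha$ to $\tilde\alpha\in(\mathrm{Der}(\Omega\times\Omega_1))^*$; the fields $\sum_i a_iY_ie_i$ with $\sum_i a_i^2\leq 1$ are genuinely bounded in $\mathrm{Der}(\Omega\times\Omega_1)$ and have deterministic derivatives, so evaluating $\tilde\alpha$ on them feeds clean information about $(\alpha(e_i))_i$ into the estimates. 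Reconciling this linear information with the nonlinear quantity $\sum_i\alpha(e_i)^2$ — equivalently, upgrading the uniform $\drp$-bounds on the individual $\alpha(e_i)$ to $\ell^2$-summability at each derivative order — is the genuine obstacle of the proof, and the one point where the continuity built into the definition of $(\mathrm{Der}\,\Omega)^*$ must be exploited in full.

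Granting $V_\alpha\in\dinf(\Omega,H)$, basis-independence is immediate, for $V_\alpha$ is characterised intrinsically by $\alpha(X)=\langle V_\alpha,X\rangle_H$ for every $\dinf$-vector field $X$ (take $X=e_i$ to recover the components), whence $q_0(\alpha,\beta)=\langle V_\alpha,V_\beta\rangle_H$ does not depend on the chosen Hilbert basis. For the identities, recall that $\delta_\beta$ is defined by $\delta_\beta f=q_0(\lambda_f,\beta)=\sum_i\langle\Grad f,e_i\rangle_H\,\beta(e_i)$; since $\Grad f=\sum_i\langle\Grad f,e_i\rangle_H e_i$ converges in $\dinf(\Omega,H)$, its partial sums form a convergent, hence bounded, net in $\mathrm{Der}(\Omega)$, so continuity of $\beta$ together with $\dinf$-linearity give $\delta_\beta f=\beta(\Grad f)=\langle V_\beta,\Grad f\rangle_H$, i.e. $\delta_\beta=V_\beta$ as a derivation. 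Applying the identical partial-sum/continuity argument to $\alpha$ on the field $V_\beta$ yields $\alpha(\delta_\beta)=\sum_i\beta(e_i)\alpha(e_i)=q_0(\alpha,\beta)$, and by symmetry $\beta(\delta_\alpha)=q_0(\alpha,\beta)$.

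Finally, for non-degeneracy, suppose $q_0(\alpha,\alpha)=\sum_i\alpha(e_i)^2=\|V_\alpha\|_H^2=0$ in $\dinf(\Omega)$. Then $\alpha(e_i)=0$ for all $i$, so $\alpha\bigl(\sum_i g_i e_i\bigr)=\sum_i g_i\alpha(e_i)=0$ for every $\dinf$-vector field by $\dinf$-linearity and continuity; thus $\alpha$ vanishes on all vector fields. By Theorem~\ref{thm2_7} an arbitrary $\delta\in\mathrm{Der}(\Omega)$ is the $\dinf$-strong limit of an $N$-uniformly bounded sequence of vector fields $\tilde X_N$, so $\alpha(\delta)=\lim_N\alpha(\tilde X_N)=0$ by the continuity defining $(\mathrm{Der}\,\Omega)^*$. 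Hence $\alpha=0$, so $q_0$ is positive definite and, a fortiori, non-degenerate.
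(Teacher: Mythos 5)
You have not proved the theorem: the single step on which your whole plan rests --- the $\dinf$-convergence of $V_\alpha=\sum_{i\geq 1}\alpha(e_i)e_i$, equivalently $\ell^2$-summability of the $\alpha(e_i)$ at every derivative order --- is precisely the step you leave open, and you say so yourself (``the genuine obstacle of the proof''). Your truncation device cannot close it: to invoke Lemma 3.1 on the family $\tilde V_n^\varepsilon$ you must first show that this family is bounded in $\mathrm{Der}(\Omega)$, and boundedness in $\mathrm{Der}$ already requires uniform $\drp$-control of the gradients of the self-referential coefficients $\alpha(e_i)/W_n^\varepsilon$ --- the very quantities you are trying to bound, so beyond order zero the argument is circular. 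The tensorization with an independent Gaussian copy only produces bounds on $\alpha$ evaluated against fields with deterministic $\ell^2$-coefficients, i.e.\ bounds on the $\alpha(e_i)$ that are uniform in $i$; as the ``renormalisation'' theorems of Section 9 make clear, such uniform or weak bounds do not self-improve to summability of the squares, which is a genuinely delicate point.

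The missing idea --- and it is how the paper actually proves well-definedness --- is to combine Theorem 2.7 with conditional expectations instead of normalized truncations. One first checks that $\delta_\alpha\colon f\mapsto\alpha(\Grad f)$ belongs to $\mathrm{Der}(\Omega)$ and satisfies $\delta_\alpha(W(e_k))=\alpha(e_k)$; Theorem 2.7 then provides vector fields $X_N=\sum_{k\leq N}\E\left[\alpha(e_k)\,|\,\F_N\right]e_k$ converging $\dinf$-strongly, within a bounded subset of $\mathrm{Der}$, towards $\delta_\alpha$, so that $\alpha(X_N)\to\alpha(\delta_\alpha)$ in $\dinf$. Applying $\E[\,\cdot\,|\,\F_N]$ (which commutes with the O.U.\ operator, Proposition 2.1) converts the linear pairing $\alpha(X_N)=\sum_{k\leq N}\E[\alpha(e_k)|\F_N]\,\alpha(e_k)$ into the nonnegative sum $\sum_{k\leq N}\E\left[\alpha(e_k)\,|\,\F_N\right]^2$, which therefore converges in $\dinf$ to $\alpha(\delta_\alpha)$: positivity is what turns a statement about pairings into the wanted square-summability, with no normalization at all. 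The identity $q_0(\alpha,\beta)=\beta(\delta_\alpha)$ then follows from an $L^p$ tail estimate (Cauchy--Schwarz between $\sum_j\beta(e_j)^2$ and $\sum_j\E[\alpha(e_j)|\F_N]^2$, both convergent by the same mechanism). Note also that your reduction aims at a statement strictly stronger than the theorem: the paper never shows that $\delta_\alpha$ is a $\dinf$-vector field, i.e.\ that $V_\alpha\in\dinf(\Omega,H)$; it only needs $q_0(\alpha,\beta)=\beta(\delta_\alpha)\in\dinf(\Omega)$, and the right-hand side lies in $\dinf$ for free because $\beta$ maps $\mathrm{Der}$ into $\dinf$. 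Your closing arguments --- basis independence, the two identities, and non-degeneracy via Theorem 2.7 together with the $N$-uniform boundedness of the approximating fields --- are sound once the convergence is granted, but the core of the theorem is exactly the part you did not supply.
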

  
  \begin{proof} ~ 
    \begin{enumerate}
      \item[i)]
        We know that (Theorem 2.7) given $\alpha\in(\mathrm{Der}\,\Omega)^*$,
        the sequence 
        \begin{equation*}
          {X_N=\sum_{k=1}^N\mathrm{E}\left[\alpha(e_k)\,|\,\mathcal F_n\right]e_k}
        \end{equation*}
        converges
        towards $\delta_\alpha$ ($(e_k)_{k\in\mathbb N_*}$ is a Hilbert basis of $H$, as usual).
        Then $\alpha(X_N)\xrightarrow{\mathbb D^\infty}\alpha(\delta_\alpha)$ 
        implies
        \begin{equation*}
          \mathrm{E}\left[\alpha(X_N)\,|\,\mathcal F_N\right]\xrightarrow{\mathbb D^\infty}\alpha(\delta_\alpha)
        \end{equation*}
        with $\delta_\alpha[W(e_k)]=\alpha(e_k)$, we have
        \begin{equation*}
          \sum_{k=1}^N\mathrm{E}\left[\alpha(e_j)\,|\,\mathcal F_N\right]^2\xrightarrow{\mathbb D^\infty}\alpha(\delta_\alpha)
          \tag{2}\label{l2}
        \end{equation*}
        which implies
        \begin {equation*}
          \sum_{k=1}^N\alpha(e_j)^2 < +\infty
          \tag{3}\label{l3}
        \end{equation*}
        So the definition of $q_0$ is legitimate.
      \item[ii)]
        From $X_N\xrightarrow{\mathbb D^\infty}\delta_\alpha$, we deduce: $\forall p>1$,
        $\beta(X_N)\xrightarrow{L^p}\beta(\delta_\alpha)$,
        then 
        \begin{equation*}
        \forall\varepsilon>0,\exists N_0>0,\forall k\in\mathbb N_*, \forall \ell\in\mathbb N_*,
        \left\|\beta(\delta_\alpha)-\beta\left(X_{N_0+k+\ell}\right)\right\|_{L^p} \leq \varepsilon
        \end{equation*}
        So
        \begin{align*}
          \Bigg\| \beta(\delta_\alpha) & -\sum_{j=1}^{N_0+k}\beta(e_j)\mathrm{E}\left[\alpha(e_j)\,|\,\mathcal F_{N_0+k+\ell}\right] \\
          & -\sum_{i=1}^\ell \beta\left(e_{N_0+k+i}\right)\mathrm{E}\left[\alpha(e_{N_0+k+i})\,|\,\mathcal{F}_{N_0+k+\ell}\right] \Bigg\|_{L^p} \leq \varepsilon
        \end{align*}
        Then
        \begin{align*}
          \Bigg\| \beta(\delta_\alpha) & -\sum_{j=1}^{N_0+k}\beta(e_j)\mathrm{E}\left[\alpha(e_j)\,|\,\mathcal F_{N_0+k+\ell}\right]\Bigg\|_{L^p} \\
          & \leq \varepsilon + \left\|\sum_{i=1}^\ell \beta\left(e_{N_0+k+i}\right)\mathrm{E}\left[\alpha(e_{N_0+k+i})\,|\,\mathcal{F}_{N_0+k+\ell}\right] \right\|_{L^p}
        \end{align*}
        The r.h.s. is lower or equal to
        \begin{align*}
          \varepsilon 
           &  
             + \left[
               \int \left(\sum_{i=1}^\ell \beta(e_{N_0+k+i})^2\right)^{\frac{p}{2}}
               \left(\sum_{i=1}^\ell\mathrm{E}\left[\alpha(e_{N_0+k+i})\,|\,\mathcal F_{N_0+k+\ell}\right]^2\right)^{\frac{p}{2}}
             \right]^{\frac{1}{p}} \\
           & \leq \varepsilon +
             \left[ \int\left(\sum_{i=1}^\ell\beta(e_{N_0+k+i})^2\right)^p\right]^{\frac{1}{2p}}
             \times
             \left[\int\left(\sum_{i=1}^\ell\mathrm{E}\left[\alpha(e_{N_0+k+i})\,|\,\mathcal F_{N_0+k+\ell}\right]^2\right)^p\right]^{\frac{1}{2p}}
        \end{align*}
        From \eqref{l2} we know that the series $\sum_{k=1}^\infty\mathrm{E}[\alpha(e_j)|\mathcal F_N]^2$ is
        convergent so we can find $\ell_0$ such that for all $\ell \geq \ell_0$,
        \begin{equation*}
          \left[\int\left(\sum_{i=1}^\ell\mathrm{E}\left[\alpha(e_{N_0+k+i})\,|\,\mathcal F_{N_0+k+\ell}\right]^2\right)^p\right]^{\frac{1}{2p}} \leq \varepsilon^{\frac12}
        \end{equation*}
        Now if we write $Y_N=\sum_{i=1}^N\mathrm{E}[\beta(e_i)|\mathcal F_N]e_i$
        repeating the same calculus than in $i)$, we get
        that the series $\sum_{j=1}^\infty\beta(e_j)^2$ is convergent; so we can find $\ell_1$
        such that for all $\ell > \ell_1$,
        \begin{equation*}
          \left[\int\left(\sum_{i=1}^\ell\beta(e_{N_0+k+i})^2\right)^p\right]^{\frac{1}{p}}\leq\sqrt{\varepsilon}
        \end{equation*}
        so
        \begin{equation*}
          \lim_{\ell\uparrow\infty} \left\|\beta(\delta_\alpha) - \sum_{j=1}^{N_0+k}\beta(e_j)\mathrm{E}\left[\alpha(e_j)\,|\,\mathcal F_{N_0+k+\ell}\right]\right\|\leq 2\varepsilon
        \end{equation*}
        So $q_0(\alpha,\beta) = \beta(\delta_\alpha)$ which proves that $q_0(\alpha,\beta)\in\mathbb D^\infty(\Omega)$,
        that $q_0$ is continuous for each of its arguments,
        and $q_0(\alpha,\alpha)=0$ implies $\alpha=0$.
    \end{enumerate}
  \end{proof}
  \begin{cor}
    The map $\left(\mathrm{Der}\,\Omega\right)^*\ni\alpha \to \delta_\alpha\in\mathrm{Der}\,\Omega$ is injective
  \end{cor}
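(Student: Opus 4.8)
The plan is to read off the injectivity directly from the two facts just established in the theorem: the identity $q_0(\alpha,\beta)=\alpha(\delta_\beta)=\beta(\delta_\alpha)$ and the non-degeneracy of $q_0$ (in the precise form $q_0(\alpha,\alpha)=0 \Rightarrow \alpha=0$). The only preliminary observation needed is that the assignment $\alpha\mapsto\delta_\alpha$ is additive: by definition $\delta_\alpha f = q_0(\lambda_f,\alpha)$ for every $f\in\mathbb D^\infty(\Omega)$, and $q_0$ is bilinear in its arguments, so $\delta_{\alpha-\beta} = \delta_\alpha - \delta_\beta$. Consequently, to prove injectivity it suffices to show that the kernel is trivial, that is, that $\delta_\gamma=0$ forces $\gamma=0$; applying this to $\gamma=\alpha-\beta$ then gives $\delta_\alpha=\delta_\beta \Rightarrow \alpha=\beta$.

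Next I would specialize the identity $q_0(\alpha,\beta)=\alpha(\delta_\beta)$ to the diagonal, obtaining $q_0(\gamma,\gamma)=\gamma(\delta_\gamma)$ for every $\gamma\in(\mathrm{Der}\,\Omega)^*$. If $\gamma$ lies in the kernel, so that $\delta_\gamma=0$ as an element of $\mathrm{Der}\,\Omega$, then $\gamma(\delta_\gamma)=\gamma(0)=0$ by linearity of $\gamma$ as a map on $\mathrm{Der}\,\Omega$, whence $q_0(\gamma,\gamma)=0$.

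Finally I would invoke the non-degeneracy of $q_0$, proved at the end of the theorem, which states exactly that $q_0(\gamma,\gamma)=0$ implies $\gamma=0$. Applied to our $\gamma$, this yields $\gamma=0$, so the kernel is trivial and the map $\alpha\mapsto\delta_\alpha$ is injective. I do not expect any genuine obstacle here: the whole content has already been packaged into the preceding theorem, and the corollary is simply the remark that non-degeneracy of $q_0$, read through the diagonal identity $q_0(\gamma,\gamma)=\gamma(\delta_\gamma)$, is precisely the statement that $\alpha\mapsto\delta_\alpha$ has trivial kernel.
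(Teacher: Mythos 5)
Your proof is correct and follows essentially the same route as the paper: both arguments reduce injectivity to Theorem 3.1, via the identity $q_0(\alpha,\beta)=\beta(\delta_\alpha)$ together with the non-degeneracy $q_0(\gamma,\gamma)=0\Rightarrow\gamma=0$. The only cosmetic difference is that the paper tests $q_0(\alpha-\beta,\lambda_f)$ with $f=W(e_k)$ to obtain the componentwise vanishing $\alpha(e_k)=\beta(e_k)$, whereas you pass directly through the diagonal identity $q_0(\gamma,\gamma)=\gamma(\delta_\gamma)$ and invoke non-degeneracy explicitly --- which, if anything, makes the final step more transparent than in the paper's version.
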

  \begin{proof}
    If $\delta_\alpha = \delta_\beta$, $\forall f\in\mathbb D^\infty(\Omega)$, 
    $q_0(\alpha-\beta,\lambda_f) = \lambda_f(\delta_\alpha-\delta_\beta) = 0$.
    So with $f = W(e_k)$, $q_0(\alpha-\beta,\lambda_{W(e_k)}) = \sum_{j=1}^\infty[\alpha(e_j)-\beta(e_j)]\delta_{jk} = 0$
  \end{proof}
  
  \begin{rem}
    $\forall f,g\in\mathbb D^\infty(\Omega),$
    \begin{align*}
      q_0(\lambda_f,\lambda_g) 
        & = \sum_{j=1}^\infty \lambda_f(e_j)\lambda_g(e_j) \\
        & = \sum_{j=1}^\infty \langle e_j,\mathrm{grad}\,f\rangle_H\langle e_j,\mathrm{grad}\,g\rangle_H \\
        & = \langle \mathrm{grad}\,{f},\mathrm{grad}\,g\rangle_H
    \end{align*}
  \end{rem}
  
  \begin{defn}
    Let $(\Omega,\mathcal F,\mathbb P,H)$ be a Gaussian space and $q$
    a $\mathbb D^\infty$-bilinear form on $\left(\mathrm{Der}\,\Omega\right)^*$, $\mathbb D^\infty$-valued, continuous
    for each of its arguments.
    We define a map $T_q$ from $\left(\mathrm{Der}\,\Omega\right)^*$ to $\mathrm{Der}\,\Omega$ by
    \begin{equation*}
      \forall u\in\left(\mathrm{Der}\,\Omega\right)^*,\forall f\in\mathbb D^\infty(\Omega), \quad
      (T_qu)\cdot f = q(u,\lambda_f)
    \end{equation*}
    and we denote $\mathscr D_q = \mathrm{range}\,T_q = T_q(\left(\mathrm{Der}\,\Omega\right)^*)$.
  \end{defn}
  
  \begin{lem} ~ 
    \begin{enumerate}
      \item[i)] $T$ is continuous from $\left(\mathrm{Der}\,\Omega\right)^*$ to $\mathrm{Der}\,\Omega$;
      \item[ii)] $T_q(\lambda_f)=\delta_f$, ($\delta_f(g) = \langle\mathrm{grad}\,f,\mathrm{grad}\,g\rangle_H$)
    \end{enumerate}
  \end{lem}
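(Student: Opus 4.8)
The plan is to handle the two parts in turn, establishing first that $T_q$ actually lands in $\mathrm{Der}(\Omega)$ and then reading off both the continuity and the identity (ii) from the properties of $q$ together with those of the assignment $f\mapsto\lambda_f$. The algebraic core is a Leibniz identity at the level of $(\mathrm{Der}\,\Omega)^*$: for all $f,g\in\mathbb D^\infty(\Omega)$ one has $\lambda_{fg}=f\lambda_g+g\lambda_f$, which is immediate upon evaluating both sides on an arbitrary $\delta\in\mathrm{Der}(\Omega)$, since $\lambda_{fg}(\delta)=\delta(fg)=f\,\delta(g)+g\,\delta(f)$ by the derivation property of $\delta$. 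Combined with the $\mathbb D^\infty$-bilinearity of $q$ this gives $(T_qu)(fg)=q(u,\lambda_{fg})=f\,q(u,\lambda_g)+g\,q(u,\lambda_f)=f\,(T_qu)(g)+g\,(T_qu)(f)$, so $T_qu$ is a derivation; its $\mathbb D^\infty$-linearity follows the same way from $\lambda_{f+g}=\lambda_f+\lambda_g$.

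The delicate point, and the main obstacle, is the $\mathbb D^\infty$-continuity of $T_qu$, i.e.\ that $f\mapsto q(u,\lambda_f)$ is continuous from $\mathbb D^\infty(\Omega)$ into $\mathbb D^\infty(\Omega)$. I would first verify that $f\mapsto\lambda_f$ carries $\mathbb D^\infty$-bounded convergent nets into bounded convergent nets of $(\mathrm{Der}\,\Omega)^*$: if $(f_i)$ is $\mathbb D^\infty$-bounded with $f_i\to f$, then for any $\mathrm{Der}$-bounded set $A$ the very estimate defining $\mathrm{Der}$-boundedness yields $\sup_i\sup_{\delta\in A}\|\delta f_i\|_{\mathbb D_r^p}\leq C\,\sup_i\|f_i\|_{\mathbb D_s^q}<+\infty$, so $(\lambda_{f_i})$ is bounded, while $\lambda_{f_i}(\delta)=\delta(f_i)\to\delta(f)=\lambda_f(\delta)$ by the $\mathbb D^\infty$-continuity of each $\delta$. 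Feeding this bounded convergent net into the continuity hypothesis on $q$ (continuity in each argument, in the sense of the Remark above) gives $q(u,\lambda_{f_i})\to q(u,\lambda_f)$ in $\mathbb D^\infty$, which is exactly the continuity of $T_qu$; hence $T_qu\in\mathrm{Der}(\Omega)$.

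Part (i) is then immediate: if $(u_i)$ is a bounded net in $(\mathrm{Der}\,\Omega)^*$ with $u_i\to u$, then for each fixed $f$ the continuity of $q$ in its first argument gives $(T_qu_i)\cdot f=q(u_i,\lambda_f)\to q(u,\lambda_f)=(T_qu)\cdot f$ in $\mathbb D^\infty$; since $\mathrm{Der}(\Omega)$ carries the single-point convergence topology, this says precisely that $T_qu_i\to T_qu$, so $T_q$ is continuous. For part (ii) I specialise to the fundamental metric $q_0$ and evaluate $(T_{q_0}\lambda_f)\cdot g=q_0(\lambda_f,\lambda_g)$; the Remark preceding this lemma has already computed $q_0(\lambda_f,\lambda_g)=\langle\mathrm{grad}\,f,\mathrm{grad}\,g\rangle_H=\delta_f(g)$, and as this holds for every $g\in\mathbb D^\infty(\Omega)$ we conclude $T_{q_0}\lambda_f=\delta_f$.
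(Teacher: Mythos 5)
Your proposal is correct and follows essentially the same route as the paper: part (i) is the same one-line appeal to the continuity of $q$ in its first argument along a bounded net $u_i\to u$ (giving $q(u_i,\lambda_g)\to q(u,\lambda_g)$ in $\mathbb D^\infty$, i.e.\ convergence of $T_qu_i$ to $T_qu$ in the pointwise topology of $\mathrm{Der}\,\Omega$), and part (ii) is exactly the ``straightforward calculus'' the paper invokes, namely $q_0(\lambda_f,\lambda_g)=\langle\mathrm{grad}\,f,\mathrm{grad}\,g\rangle_H$ from the preceding Remark. The only addition is your verification that $T_qu$ actually lies in $\mathrm{Der}(\Omega)$ --- the Leibniz rule via $\lambda_{fg}=f\lambda_g+g\lambda_f$ and the $\mathbb D^\infty$-continuity via bounded convergent nets fed into the continuity of $q$ --- a point the paper leaves implicit in the definition of $T_q$; this is a legitimate and welcome filling-in, not a different argument.
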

  
  \begin{proof} ~
    \begin{enumerate}
      \item[i)] If $(u_i)_{i\in I}$ is a net in a bounded part of $\left(\mathrm{Der}\,\Omega\right)^*$ 
        converging towards $u\in\left(\mathrm{Der}\,\Omega\right)^*$, we have $\forall g\in\mathbb D^\infty(\Omega)$:
        \begin{equation*}
          \left\|[T_q(u_i)-T_q(u)]\cdot g\right\|_{\mathbb D_r^p} = \left\|q(u_i-u,\lambda_f)\right\|_{\mathbb D_r^p}
        \end{equation*}
      \item[ii)] Straightforward calculus.
    \end{enumerate}
  \end{proof} 
  Now there is a result, difficult to prove:
  
  \begin{thm}
    If $u\in\left(\mathrm{Der}\,\Omega\right)^*$, then there is a bounded net $(u_i)_{i\in\mathbb N_*}$,
    $u_i\in\left(\mathrm{Der}\,\Omega\right)^*$, such that 
    \begin{enumerate}
      \item[i)] $(u_i)_{i\in I}$ is bounded in $\left(\mathrm{Der}\,\Omega\right)^*$
      \item[ii)]$\forall i\in I, u_i=\sum_{j_i\in A_i}f_{j_i}\lambda_{g_{j_i}}$, $A_i$ being a finite subset
        of $\mathbb N_*$, ${f_{j_i}, g_{j_i}\in\mathbb D^\infty(\Omega)}$ 
      \item[iii)] The net $(u_i)_{i\in I}$ converges towards $u$ in $\left(\mathrm{Der}\,\Omega\right)^*$.
    \end{enumerate}
  \end{thm}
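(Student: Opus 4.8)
The plan is to approximate $u$ by the \emph{simple $1$-forms} coming from the vector-field approximation of its associated derivation $\delta_u$. Recall from the fundamental form $q_0$ that $\delta_u\in\mathrm{Der}\,\Omega$ is characterised by $\delta_u f = q_0(\lambda_f,u)=u(\mathrm{grad}\,f)$, so that $\delta_u\bigl(W(e_k)\bigr)=u(e_k)$. Writing $u^k:=u(e_k)\in\mathbb D^\infty(\Omega)$, the previous theorem on $q_0$ already yields $\sum_{k}(u^k)^2<+\infty$. Following Theorem 2.6 and Theorem 2.7 I set
\[
  u_N=\sum_{k=1}^N \mathrm{E}\bigl[u^k\,\big|\,\mathcal F_N\bigr]\,\lambda_{W(e_k)},\qquad \mathcal F_N=\sigma\bigl(W(e_i)\,/\,i\le N\bigr).
\]
Each $\mathrm{E}[u^k|\mathcal F_N]$ lies in $\mathbb D^\infty(\Omega)$ (Proposition 2.1: the O.U. operator commutes with conditional expectation), so $u_N$ is a finite sum $\sum_{j} f_{j}\lambda_{g_{j}}$ with $f_j,g_j\in\mathbb D^\infty(\Omega)$, which is exactly condition ii). Under the identification $\alpha\mapsto\delta_\alpha$ one checks $\delta_{u_N}=\sum_{k=1}^N \mathrm{E}[u^k|\mathcal F_N]\,e_k$, i.e. precisely the vector field $X_N$ that Theorem 2.6 attaches to $\delta_u$; this is the guiding intuition behind the choice of $u_N$.

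For condition iii) I would first establish $u_N(\delta)\to u(\delta)$ when $\delta$ is a vector field $X=\sum_k X^k e_k\in\mathbb D^\infty(\Omega,H)$. Here $u_N(X)=\sum_{k=1}^N \mathrm{E}[u^k|\mathcal F_N]X^k$, while $u(X)=\sum_k X^k u^k$ (the partial sums $\sum_{k\le M}X^k e_k$ converge to $X$ as a bounded net of derivations, so continuity and $\mathbb D^\infty$-linearity of $u$ apply). Since both $\sum_k (u^k)^2$ and $\sum_k (X^k)^2$ converge, the difference is controlled by the same martingale-plus-tail estimate used in part ii) of the proof of the $q_0$-theorem: split into a head $k\le N_0$, on which $\mathrm{E}[\,\cdot\,|\mathcal F_N]\to\mathrm{id}$, and a tail bounded by Cauchy--Schwarz against the two convergent square-series. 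I then pass from vector fields to an arbitrary $\delta\in\mathrm{Der}(\Omega)$: by Theorem 2.7 there are vector fields $V_M\to\delta$ $\mathbb D^\infty$-strongly with $(V_M)$ bounded, whence $u(\delta)=\lim_M u(V_M)$ and $u_N(\delta)=\lim_M u_N(V_M)$ by continuity of $u$ and of each $u_N$; a three-$\varepsilon$ interchange of the limits in $N$ and $M$, legitimate once $(u_N)$ is bounded in $(\mathrm{Der}\,\Omega)^*$, gives $u_N(\delta)\to u(\delta)$.

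Everything therefore rests on condition i), the $(\mathrm{Der}\,\Omega)^*$-boundedness of $(u_N)$, which I expect to be the main obstacle. For a bounded set $A\subset\mathrm{Der}(\Omega)$ and $\delta\in A$ one has $u_N(\delta)=\sum_{k=1}^N \mathrm{E}[u^k|\mathcal F_N]\,\delta(W(e_k))$, and since $\{W(e_k)\}_k$ is $\mathbb D^\infty$-bounded the definition of a bounded subset of $\mathrm{Der}(\Omega)$ gives $\sup_{\delta\in A,\,k}\|\delta(W(e_k))\|_{\mathbb D_r^p}\le C(p,r)$. The difficulty is that a bare $\ell^\infty$-bound on the factors $\delta(W(e_k))$ together with mere $\ell^2$-summability of $(u^k)$ does not control the sum. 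One must use that the $W(e_k)$ are independent centred Gaussians, so that the relevant sums behave like $\bigl(\sum_k |\mathrm{E}[u^k|\mathcal F_N]|^2\bigr)^{1/2}$ --- this is where the chaos/orthogonality computations of Theorem 2.4 and the equivalence of $L^p$-norms on a fixed chaos enter --- while Doob's maximal inequality controls the martingale $N\mapsto\mathrm{E}[u^k|\mathcal F_N]$ uniformly. Combining the $\ell^2$-type estimate with $\sum_k (u^k)^2<+\infty$, and propagating it through every $\mathbb D_r^p$-norm (again using Proposition 2.1, so that the gradients of the coefficients are treated on the same footing), yields $\sup_N\sup_{\delta\in A}\|u_N(\delta)\|_{\mathbb D_r^p}<+\infty$. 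This is condition i) and closes the argument.
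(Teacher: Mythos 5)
Your approximants $u_N=\sum_{k\le N}\mathrm E\left[u(e_k)\,|\,\mathcal F_N\right]\lambda_{W(e_k)}$ are exactly the paper's $V_N$, and condition ii) is indeed immediate; but both remaining steps have genuine gaps. For the boundedness i), your sketch (Gaussian independence, equivalence of $L^p$-norms on a chaos, Doob) cannot be completed as stated: any route through a pointwise Cauchy--Schwarz bound of $u_N(\delta)=\sum_{k\le N}\mathrm E[u(e_k)|\mathcal F_N]\,\delta(W(e_k))$ needs $\sum_k\delta(W(e_k))^2<+\infty$, and this fails for genuine derivations --- the paper's own example in Section 3.4, $\delta=\mathrm{div}\,A\,\mathrm{grad}$ with $A(e_i)=\varepsilon_i$, $A(\varepsilon_j)=-e_j$, has $\delta(W(e_k))=W(\varepsilon_k)$, so $\sum_k\delta(W(e_k))^2=+\infty$ a.s.; nor do the $\delta(W(e_k))$ sit in a fixed chaos or carry any independence you could feed into the Theorem 2.4 machinery. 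The paper's Lemma 3.3 supplies the idea your sketch lacks: realize $\mathrm E[\,\cdot\,|\mathcal F_N]$ as the $L^2$-limit of Ces\`aro averages $\frac{1}{k+1}\sum_{j=0}^k\theta_N^j$ of a measure-preserving shift $\theta_N$, lift $\theta_N$ to $\widetilde\theta_N$ on $\mathrm{Der}\,\Omega$ and $\widehat\theta_N$ on $(\mathrm{Der}\,\Omega)^*$, and write $V_N(X_M)=\lim_k\frac{1}{k+1}\sum_j(\widehat\theta_N^ju)(X_M)$ with $\|(\widehat\theta_N^ju)(X_M)\|_{\mathbb D_r^p}=\|u(\widetilde\theta_N^jX_M)\|_{\mathbb D_r^p}$. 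Since $\{\widetilde\theta_N^jX_M\}$ is a bounded subset of $\mathrm{Der}\,\Omega$, the continuity of $u$ on bounded sets --- the only quantitative use of the hypothesis $u\in(\mathrm{Der}\,\Omega)^*$ --- yields the uniform $\mathbb D_r^p$-bounds. Nothing in your argument plays this role.

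For iii), your claim that the sequence $u_N$ itself converges, via a ``three-$\varepsilon$ interchange legitimate once $(u_N)$ is bounded,'' is a non sequitur: boundedness in $(\mathrm{Der}\,\Omega)^*$ gives equiboundedness on bounded parts of $\mathrm{Der}\,\Omega$, not the uniform-in-$N$ convergence $u_N(V_M)\to u_N(\delta)$ that the interchange requires, and no Banach--Steinhaus theorem is available in this non-metrisable setting. The structural obstruction is that $V_N(\delta)-\mathrm E[V_N(\delta)|\mathcal F_N]$ is orthogonal to $L^2(\mathcal F_N)$ and has no reason to tend to $0$ in norm. Accordingly the paper proves only \emph{weak} $L^2$-convergence $V_N(\delta)\rightharpoonup u(\delta)$, obtained from $\mathrm E[V_N(\delta)|\mathcal F_N]=\mathrm E[u(X_N)|\mathcal F_N]\to u(\delta)$ in $L^2$ together with $\sup_N\|V_N(\delta)\|_{L^2}<+\infty$, and must then replace the sequence by a net of barycenters (convex combinations, Mazur), which converges $L^2$-strongly; the uniform $\mathbb D_r^p$-bounds plus the interpolation Theorem 2.12 finally upgrade this to $\mathbb D^\infty$-convergence. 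This is precisely why the theorem concludes with a \emph{net} of finite sums $\sum f_{j_i}\lambda_{g_{j_i}}$ rather than your sequence. Your head/middle/tail estimate does plausibly give $u_N(X)\to u(X)$ in $L^p$ for a fixed vector field $X$, but the extension to an arbitrary $\delta\in\mathrm{Der}\,\Omega$ is exactly the step that fails, and it is the step the barycenter construction is there to repair.
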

  
  To prove this result we need a lemma:
  \begin{lem}
    Let $(\Omega,\mathcal F,\mathbb P,H)$ be a Gaussian space, $(e_i)_{i\in\mathbb N_*}$
    a Hilbert basis of $H$; denote $\mathcal F_N = \sigma[W(e_1),\dotsc,W(e_N)]$ 
    the  $\sigma$-algebra generated by $\sigma(W(e_i))$, $i\in\{1,\dotsc,N\}$
    and by $\mathcal F_N^\bot$ the $\sigma$-algebra $\sigma[W(e_{N+1}),\dotsc]$.
    
    Let $\delta\in\mathrm{Der}\,\Omega$; with 
    \begin{equation*}
      X_M=\sum_{j=1}^M\mathrm{E}[\delta(W(e_j))|\mathcal F_M]e_j
    \end{equation*}
    and 
    \begin{equation*}
      V_N = \sum_{i=1}^N\mathrm{E}[u(e_i)|\mathcal F_N]\lambda_{W(e_i)} \quad (V_N\in\left(\mathrm{Der}\,\Omega\right)^*)
    \end{equation*}
    we have $\sup_{N,M}\left\|V_N(X_M)\right\|_{\mathbb D_r^p} < +\infty$ and $\sup_N\|V_N(\delta)\|_{L^2}<+\infty$.
    We remind that we denote by $(e_i)_{i\in\mathbb N_*}$ a Hilbert basis of $H$.
  \end{lem}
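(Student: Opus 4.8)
The plan is to unwind the definitions, reduce $V_N(X_M)$ to a Cameron--Martin pairing, and then replace the naive Cauchy--Schwarz estimate (which fails) by a Leibniz identity that trades the Cameron--Martin norm of $X_M$ for its operator bound as a derivation. First I would write, using $\mathrm{grad}\,W(e_i)=e_i$ and the definition of $\lambda_{W(e_i)}$,
\[
V_N(X_M)=\sum_{i=1}^N\mathrm{E}[u(e_i)\mid\mathcal F_N]\,X_M(W(e_i))=\big\langle X_N^{(u)},X_M\big\rangle_H,\qquad X_N^{(u)}:=\sum_{i=1}^N\mathrm{E}[u(e_i)\mid\mathcal F_N]\,e_i .
\]
Here $X_N^{(u)}$ is exactly the sequence of $\mathbb D^\infty$-vector fields that Theorem \ref{thm2_7} attaches to the derivation $\delta_u$ (recall $u(e_i)=\delta_u(W(e_i))$), while $X_M$ is the one attached to $\delta$. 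From \eqref{l2} we have $\|X_N^{(u)}\|_H^2=\sum_{i=1}^N\mathrm{E}[u(e_i)\mid\mathcal F_N]^2\xrightarrow{\mathbb D^\infty}q_0(u,u)$, so the family $\big(\|X_N^{(u)}\|_H\big)_N$ is $\mathbb D^\infty$-bounded. The $u$-side is therefore under control; the whole difficulty sits on the $\delta$-side, since $\|X_M\|_H^2=\sum_{i\le M}\mathrm{E}[\delta(W(e_i))\mid\mathcal F_M]^2$ need not stay bounded (this is precisely the fact that $\delta$ is a derivation and not a vector field). Hence the pointwise bound $|V_N(X_M)|\le\|X_N^{(u)}\|_H\,\|X_M\|_H$ is useless and must be bypassed.

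The device is the Leibniz rule. For \emph{any} derivation $\delta$ of $\mathbb D^\infty(\Omega)$, putting $G_N:=\sum_{i=1}^N\mathrm{E}[u(e_i)\mid\mathcal F_N]\,W(e_i)$ and expanding $\delta\big(\mathrm{E}[u(e_i)\mid\mathcal F_N]\,W(e_i)\big)$ termwise yields the identity
\[
V_N(\delta)=\delta(G_N)-\sum_{i=1}^N W(e_i)\,\delta\big(\mathrm{E}[u(e_i)\mid\mathcal F_N]\big).
\]
When $\delta=X_M$ one has $\delta(G_N)=\langle X_M,\mathrm{grad}\,G_N\rangle_H$, and since $\{X_M\}$ is $M$-uniformly bounded as a family of operators $\mathbb D^\infty\to\mathbb D^\infty$ (Theorem \ref{thm2_7}), one gets $\|X_M(G_N)\|_{\mathbb D_r^p}\le C(p,r)\,\|G_N\|_{\mathbb D_s^q}$ with $C(p,r)$ independent of $M$; the second sum is handled in the same way after one further Leibniz expansion. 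Thus everything is reduced to $N$-uniform $\mathbb D^\infty$-bounds on $G_N$ and on the companion quantity arising from the second sum.

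The heart of the proof is then the claim that $\{G_N\}_N$ is $\mathbb D^\infty$-bounded. I would establish this from \eqref{l2} together with Proposition \ref{pr2_1} (the O.U.\ operator commutes with $\mathrm{E}[\cdot\mid\mathcal F_N]$), the Meyer-type norm equivalence $\|\cdot\|_{\mathbb D_s^q}\simeq\|(1-L)^{s/2}\cdot\|_{L^q}$, and the interpolation Theorem \ref{thm2_12}. Concretely, a direct computation (the Wiener-space divergence formula) writes $G_N=\mathrm{div}\,X_N^{(u)}+\mathrm{trace}\,\mathrm{grad}\,X_N^{(u)}$, and each successive gradient of $G_N$ is expressed through the $\mathcal F_N$-conditioned coefficients $\mathrm{E}[u(e_i)\mid\mathcal F_N]$, whose square-sums converge in $\mathbb D^\infty$ by \eqref{l2}; Doob's martingale inequality controls the conditional expectations uniformly in $N$. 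This is the step I expect to be the main obstacle, for it is exactly where the unboundedness of $\|X_M\|_H$ is absorbed and where the full $\mathbb D^\infty$ (not merely $L^2$) control is extracted. Granting it, $\sup_{N,M}\|V_N(X_M)\|_{\mathbb D_r^p}<\infty$ follows.

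Finally, the second bound follows from the first by a limiting argument. For fixed $N$, $V_N=\sum_{i\le N}\mathrm{E}[u(e_i)\mid\mathcal F_N]\,\lambda_{W(e_i)}$ is a \emph{finite} $\mathbb D^\infty$-combination, and $X_M(W(e_i))\xrightarrow{\mathbb D^\infty}\delta(W(e_i))$ by Theorem \ref{thm2_7}, so $V_N(X_M)\xrightarrow{\mathbb D^\infty}V_N(\delta)$ as $M\to\infty$. Passing the uniform bound of the first part through this limit gives $\sup_N\|V_N(\delta)\|_{L^2}<\infty$ (indeed the same in $\mathbb D_r^p$, of which the $L^2$ statement is all that is needed downstream).
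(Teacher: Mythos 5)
Your reduction $V_N(X_M)=\langle X_N^{(u)},X_M\rangle_H$, the diagnosis that Cauchy--Schwarz fails on the $\delta$-side, and the Leibniz identity $V_N(X_M)=X_M(G_N)-\sum_{i\le N}W(e_i)\,X_M\big(\mathrm E[u(e_i)\mid\mathcal F_N]\big)$ are all correct, but the step you yourself call the heart --- that $\{G_N\}_N$ is $\mathbb D^\infty$-bounded --- is false, and the proof collapses there. Take $u=\lambda_f$ with $f=\sum_{i\ge1}\frac1i\big(W(e_i)^2-1\big)$: $f$ lies in the second chaos, on which all $\mathbb D_r^p$-norms are equivalent, so $f\in\mathbb D^\infty$ and $u\in(\mathrm{Der}\,\Omega)^*$, with $q_0(u,u)=\sum_i\frac{4}{i^2}W(e_i)^2\in\mathbb D^\infty$ as it must be. Then $u(e_i)=\frac2i W(e_i)$ is already $\mathcal F_N$-measurable for $i\le N$, so $G_N=2\sum_{i\le N}\frac1i W(e_i)^2$ and $\|G_N\|_{L^2}\ge\mathrm E[G_N]=2\sum_{i\le N}\frac1i\sim 2\log N\to\infty$. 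Your intended estimate $\|X_M(G_N)\|_{\mathbb D_r^p}\le C\,\|G_N\|_{\mathbb D_s^q}$ therefore yields nothing; and centering does not rescue it, since with $f=\big(\sum_{i\ge2}\frac1i(W(e_i)^2-1)\big)W(e_1)$ the function $G_N$ contains the mean-zero, $L^2$-divergent term $2\big(\sum_{i\le N}\frac1i\big)W(e_1)$. The divergence-formula rewriting meets the same wall: $\mathrm{trace}\,\mathrm{grad}\,X_N^{(u)}$ is precisely the trace-class quantity that diverges in these examples. Moreover your treatment of the second sum is circular: expanding $\sum_{i\le N}W(e_i)\,X_M\big(\mathrm E[u(e_i)\mid\mathcal F_N]\big)$ by one further Leibniz step merely reproduces the identity you started from, so no independent bound on it is obtained.

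The structural reason no argument of this shape can work is that you use $u$ only through the finiteness of $q_0(u,u)$, i.e.\ through $\|X_N^{(u)}\|_H$; the conditional expectations block the $\mathbb D^\infty$-linearity of $u$, and the Der-boundedness of $\{X_M\}$ only controls pairings $\langle X_M,\mathrm{grad}\,g\rangle_H$ with $\{g\}$ $\mathbb D^\infty$-bounded --- and $X_N^{(u)}$ is not a gradient. The paper's proof supplies exactly the missing mechanism: it represents $\mathrm E[\,\cdot\mid\mathcal F_N]$ as the $L^2$ mean-ergodic limit $\lim_k\frac1{k+1}\sum_{j=0}^k\theta_N^j$ of powers of the shift morphism $\theta_N$ (fixing $W(e_1),\dotsc,W(e_N)$ and shifting the remaining coordinates), lifts $\theta_N$ to $\widetilde\theta_N$ on $\mathrm{Der}\,\Omega$ and $\widehat\theta_N$ on $(\mathrm{Der}\,\Omega)^*$, checks that the averages of $(\widehat\theta_N^ju)(e_i)$ converge to $\mathrm E[u(e_i)\mid\mathcal F_N]$ for $i\le N$ and to $0$ in the $\mathcal F_N^\bot$ directions, and then uses the $\mathbb D^\infty$-linearity of each $\widehat\theta_N^ju$ to rewrite $V_N(X_M)=\lim_k\frac1{k+1}\sum_j(\widehat\theta_N^ju)(X_M)$ with $(\widehat\theta_N^ju)(X_M)=\theta_N^j\big(u(\widetilde\theta_N^jX_M)\big)$; the uniform bound then follows from Lemma 3.1 applied to the Der-bounded family $\{\widetilde\theta_N^jX_M\}$ --- that is, from the continuity of $u$ on bounded parts of $\mathrm{Der}$, the hypothesis your argument never invokes. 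Your final limiting step, deducing $\sup_N\|V_N(\delta)\|_{L^2}<+\infty$ from the first bound via $X_M\to\delta$, is fine, but only once the first bound has been secured by an argument of this kind.
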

  \begin{proof} ~
    \begin{enumerate}
      \item[a)]
        We denote by $\theta_N$ the $L^2$-isometric map:
        \begin{align*}
          \theta_N[W(e_i)] & = W(e_i), \quad i \leq N \\
          \theta_N[W(e_{N+i})] & = W(e_{N+i+1})
        \end{align*}
        then we extend $\theta_N$ 
        on the set of polynomials in Gaussian variables,
        by considering this extension of $\theta_N$ as a morphism, and
        then extend this $\theta_N$ again to $L^2(\Omega)$, thanks that $\theta_N$
        leaves laws invariants; and $\theta_N$ commutes with the O.U. operator.
        
        We will show in this section that
        \begin{equation*}
          \forall f\in L^2(\Omega),\quad \lim_{k\uparrow\infty}\left[\frac{1}{k+1}\sum_{j=0}^k\theta_N^jf\right] = \mathrm E[f|\mathcal F_N]
          \tag{4}\label{l4}
        \end{equation*}
        the limit being $L^2(\Omega)$.
        
        It is enough to prove that this is true for a dense
        subset of $L^2$, and we choose the subset composed
        by finite linear combinations of products of
        Hermite polynomials $P$ in Gaussian variables, $P_1$ and $P_2$,
        $P_1$ and $P_2$ being polynomials on Gaussian variables
        respectively in $\mathcal F_N$ and $\mathcal F_N^\bot$.
        
        If $P = P_1\times P_2$, $P_2$ being a constant, the result is
        trivial and is independent of $N$.
        
        If $P_2$ is not a constant, let $\alpha=\max |r_1-r_2|$,
        $r_1$ and $r_2$ being rhe indices of the Gaussian variables
        appearing in $P_1$ and $P_2$, then 
        \begin{equation*}
          \mathrm{E}[P_1\times P_2|\mathcal F_N] = 0
        \end{equation*}
        Let $m_0\in\mathbb N, m_0 > \alpha +1$.        
        Then $\forall(b,d)\in\mathbb N_*^2, b\neq d$:
        \begin{equation*}
          \left\langle \theta_N^{m_0b}(P_1\times P_2),\theta_N^{m_0d}(P_1\times P_2)\right\rangle_H=0
          \tag{5}\label{l5}
        \end{equation*}
        We have, with $k>m_0$, $k=m_0\alpha+r$, $0 \leq r\leq m_0-1$. 
        
        Then:
        \begin{align*}
          & \left\|
            \frac{1}{k+1} \sum_{j=0}^k\theta_N^j(P_1\times P_2)
          \right\|_{L^2} \\ 
          & \qquad \leq \frac{1}{k+1} \left\{\left\|\sum_{\beta=0}^{m_0-1}\sum_{\gamma=0}^{\alpha-1}\theta_N^\beta\theta_N^{m_0 \gamma}(P_1\times P_2) \right\|_{L^2}  +  \left\| \theta_N^{m_0\alpha}\left(\sum_{\ell=1}^r\theta_N^\ell(P_1\times P_2)\right) \right\|_{L^2} \right\} \\
          & \qquad \leq \frac{1}{k+1}\left\{ m_0 \left\|\sum_{\gamma=0}^{\alpha-1}\theta_N^{m_0\gamma}\theta_N^{m_0\gamma}(P_1\times P_2)\right\|_{L^2} + r \left| P_1 \times P_2 \right\|_{L^2}\right\}
        \end{align*}
        So:
        \begin{equation*}
          \left\|\frac{1}{k+1}\sum_{j=0}^k\theta_N^j(P_1\times P_2)\right\|_{L_2}
          \leq \frac{1}{k+1}[m_0\sqrt{\alpha} + r] \left\|P_1 \times P_2\right\|_{L^2}
        \end{equation*}
        which converges towards 0.
      \item[b)] 
        Let $\widetilde\theta_N: \mathrm{Der}\,\Omega\to\mathrm{Der}\,\Omega$ by: 
        \begin{equation*}
          \forall\delta\in\mathrm{Der}\,\Omega, \forall f\in\mathbb D^\infty(\Omega):\quad
          \widetilde\theta_N(\delta)\cdot f = \theta_N^{-1}[\delta(\theta_Nf)]
        \end{equation*}
        Then direct calculus shows that:
        \begin{equation*}
          \left(\widetilde\theta_N\right)^n = \widetilde{\left(\theta_N^n\right)}
        \end{equation*}
        Now let $\widehat\theta_N: \left(\mathrm{Der}\,\Omega\right)^*\to \left(\mathrm{Der}\,\Omega\right)^*$ defined by:
        \begin{equation*}
          \forall u\in\left(\mathrm{Der}\,\Omega\right)^*, \forall \delta\in\mathrm{Der}\,\Omega:\quad
          \left(\widehat\theta_Nu\right)\cdot\delta = \theta_N[u(\widetilde\theta_N\delta)]
        \end{equation*}
        One can check that $\widehat\theta_N$ is $\mathbb D^\infty$-linear and that
        \begin{equation*}
          \widehat\theta_N : \left(\mathrm{Der}\,\Omega\right)^*\to\left(\mathrm{Der}\,\Omega\right)^*
        \end{equation*}       
      \item[c)]
        If $A \subset \mathrm{Der\,\Omega}$ is a bounded subset of $\mathrm{Der}\,\Omega$, 
        then the set ${\{(\widehat\theta_Nu^n)\cdot\delta\,/\,n\in\mathbb N,\delta\in A\}}$ is a
        $\mathbb D^\infty$-bounded subset of $\mathbb D^\infty(\Omega)$. For $\forall (p,r)$
        \begin{align*}
          \sup_n\sup_{\delta\in A} \left\|\left(\widehat\theta_Nu^n\right)\cdot\delta\right\|_{\mathbb D_r^p} 
          & = \sup_n\sup_{\delta\in A} \left\|\theta_N^n\left(u(\widetilde\theta^n\delta)\right)\right\|_{\mathbb D_r^p} \\
          & = \sup_n\sup_{\delta\in A} \left\|u\cdot(\left(\widetilde\theta_N^n(\delta)\right)\right\|_{\mathbb D_r^p}
        \end{align*}
        and it is easy to check that $\{\widetilde\theta_N^n(\delta)\,/\,n\in\mathbb N, \delta\in A\}$
        is a bounded subset in $\mathrm{Der}\,\Omega$
        
      \item[d)]
        Now we will show that $i\leq N$ and $W(e_i)\in\mathcal F_N$ implies
        \begin{equation*}
          \lim_{k}\left[\frac{1}{k+1}\sum_{j=0}^k\left(\widehat\theta^ju\right)(e_i)\right]
           = \mathrm{E}[u(e_i)|\mathcal F_N]
           \tag{6}\label{l6}
        \end{equation*}
        the limit being $L^2(\Omega)$, and that for $e_{N+i}$, $W(e_{N+i})\in\mathcal F_N^\bot$ 
        ($i\in\mathbb N_*)$ then the limit of the above sum is $0$.
        \begin{enumerate}
          \item[d.1)] First we prove that
            \begin{equation*}
              \forall\ell\leq N: \widetilde\theta_N^j(e_\ell)=e_\ell\quad(\forall j\in\mathbb N_*)
            \end{equation*}
            We denote by $e_a$ a basis vector with $a\leq N$ 
            and by $e_b$ a basis vector with $b>N$.
            $P[W(e_a),W(e_b)]$ being a polynomial built on
            Gaussian variables belonging to $\mathcal F_N$ (the $W(e_a)$) and
            belonging to $\mathcal F_N^\bot$ (the $W(e_b)$), we have:
            \begin{align*}
              \widetilde{\theta_N}^j(e_\ell)\cdot\left(P[W(e_a),W(e_b)]\right)
              & = \theta_N^{-j}\left\{e_\ell\cdot\theta_N^j\left(P[W(e_a),W(e_b)]\right)\right\} \\
              & = \theta_N^{-j}\left\{e_\ell\cdot P[W(e_a),W(e_{b+j}]\right\} \\
              & = e_\ell\cdot P[W(e_a),W(e_b)]
            \end{align*}
            which proves $\widetilde\theta_N^j(e_\ell) = e_\ell$.

            Then \eqref{l6} becomes:
            \begin{equation*}
              \lim_k\left[\frac{1}{k+1}\sum_{j=0}^k(\widehat\theta^ju)(e_i)\right]
               = \lim_k \left[\frac{1}{k+1}\sum_{j=0}^k\theta_N^j(u(e_i))\right]
            \end{equation*}
            And with \eqref{l4} (from a)) we get:
            \begin{equation*}
              \lim_k\left[\frac{1}{k+1}\sum_{j=0}^k(\widehat\theta^ju)(e_i)\right]
               = \mathrm E[u(e_i)|\mathcal F_N]
            \end{equation*}
            
          \item[d.2)] For this case, where we consider $e_{N+i}$,
            (then $W(e_{N+i})\in\mathcal F_N^\bot$), we first use a bijection
            between $\{N+1, N+2, \dotsc\}$ and $\mathbb Z$; then $\theta$ is rewritten
            \begin{align*}
              \theta(f) & = f \qquad\text{if $f \in\mathcal F_N$} \\
              \theta(W(e_b)) & =W(e_{b+1}) \qquad\text{where now $b\in\mathbb Z$} \\
              \mathcal F_N^\bot & = \sigma\left[W(e_b)\,/\,b\in\mathbb Z\right]
            \end{align*}
            Then $\theta$ is again extended as a morphism from $L^2(\Omega)$
            to $L^2(\Omega)$, which is unitary on $L^2(\Omega)$, commutes
            with the O.U. operator, and leaves invariant laws and
            the chaos $\mathscr C_n$; and $X_N$ can be rewritten:
            \begin{equation*}
              X_M = \sum_{i=1}^N \mathrm E[\delta(W(e_i))|\mathcal F_M]e_i
              + \sum_{b\in B}\mathrm E[\delta(W(e_b))|\mathcal F_M]e_b
            \end{equation*}
            where $B$ is a finite subset of $\mathbb Z$.
          
            As in d.1) we show first that
            \begin{equation*}
              \widetilde\theta_N^j (e_\ell) = e_{\ell-j}\quad \forall j\in\mathbb N
            \end{equation*}
            with $e_\ell\in\mathcal F_N^\bot$, and with the new definition of $\mathcal F_N^\bot$
            we have $e_{\ell-j}\in\mathcal F_N^\bot$. Direct calculus proves:
            \begin{align*}
              \widetilde\theta_N^j(e_\ell)\cdot W(e_a)
               & = 0 \\
               & = e_{\ell-j}\cdot W(e_a) \\
               & = \langle e_{\ell-j}, e_a\rangle_H
            \end{align*}
            with $W(e_a)\in\mathcal F_N$.

            With $W(e_b)\in\mathcal F_N^\bot$ we have
            \begin{equation*}
              \widetilde\theta_N^j(e_\ell)(W(e_b))
               = \theta_N^{-j}[e_\ell\cdot W(e_{b+j})]
               = \theta_N^{-j}(\delta_{b+j}^\ell)
               = \delta_{b+j}^\ell 
            \end{equation*}
            $\delta_{b+j}^\ell$ being the Kronecker symbol.
            
            So $\widetilde\theta_N^j(e_\ell)=e_{\ell-j}$.
            
            Then \eqref{l6} becomes, with $W(e_\ell)\in\mathcal F_N^\bot$:
            \begin{equation*}
              \lim_k\left[\frac{1}{k+1}\sum_{j=0}^k(\widehat\theta_N^ju)(e_\ell)\right]
              = \lim_k\left[\frac{1}{k+1}\sum_{j=0}^k\theta_N^j(u(e_{\ell-j}))\right]
            \end{equation*}
			so :
            \begin{equation*}
              \left\| \lim_k\left[ \frac{1}{k+1}\sum_{j=0}^k(\theta_N^ju)(e_\ell)\right] \right\|_{L^2(\Omega)}
			  \leq \lim_k \left[\frac{1}{k+1} \left\|\sum_{j=0}^ku(e_{\ell-j})\right\|_{L^2} \right]
            \end{equation*}
            But we know with Theorem 3.1 that 
            \begin{align*}
              q_0(u,u) & < \infty \\
              q_0(u,u) & = \sum_{i\leq N} u(e_i)^2 + \sum_{\ell \in\mathbb Z}u(e_\ell)^2
            \end{align*}
            Using this result, we deduce that \eqref{l6} in the case
            of $e_\ell$, with ${W(e_\ell)\in\mathcal F_N^\bot}$, converges $L^2$ towards 0.
          \end{enumerate}
          
          We recapitulate:
            \begin{equation*}
              \lim_k\left[\frac{1}{k+1}\sum_{j=0}^k\left(\widehat\theta_N^ju\right)(e_i)\right]
               = \mathrm E[u(e_i)|\mathcal F_N]\qquad\text{for $i \leq N$}
               \tag{7}\label{l7}
            \end{equation*}
          and
            \begin{equation*}
              \lim_k\left[\frac{1}{k+1}\sum_{j=0}^k\left(\widehat\theta_N^ju\right)(e_\ell)\right] = 0
              \tag{8}\label{l8}
            \end{equation*}
          for $e_\ell$ such that $W(e_\ell)\in\mathcal F_N^\bot$, 
          the convergenge being $L^2$ and being independent
          of $N$.
        
      \item[e)] We compute $V_N(X_M)$:
        \begin{equation*}
          V_N(X_M) = \sum_{i=1}^N\mathrm E[u(e_i)|\mathcal F_N]\lambda_{W(e_i)}(X_M)
          \tag{9}\label{l9}
        \end{equation*}
        From \eqref{l7} and \eqref{l9}, we get:
        \begin{equation*}
          V_N(X_M) = \sum_{i=1}^N \lim_k\left[\frac{1}{k+1}\sum_{j=1}^k(\widehat\theta_N^ju)(e_i)\right]\cdot\lambda_{W(e_i)}(X_M)
        \end{equation*}

     As each $\widehat\theta_N^ju\in\left(\mathrm{Der}\,\Omega\right)^*$ and is $\mathbb D^\infty$-linear, we have
     \begin{equation*}
       V_N(X_M) 
        = \lim_k\left[\frac{1}{k+1}\sum_{j=1}^k\left(\widehat\theta_N^ju\right)\left(\sum_{i=1}^N\lambda_{W(e_i)}(X_M)\cdot e_i\right) \right]
       \tag{9}
       \label{l9}
     \end{equation*}
     From \eqref{l8}, we know that
     \begin{equation*}
       \lim_k\left[\frac{1}{k+1}\sum_{j=1}^k(\widehat\theta_N^ju)(e_b)\right]=0
     \end{equation*}
     $X_M$ can be decomposed, with respect to $\mathcal F_N$ and $\mathcal F_N^\bot$:
     \begin{equation*}
       X_M = \sum_{i=1}^N\mathrm E[\delta(W(e_i))|\mathcal F_M]e_i 
           + \sum_{b\in B}\mathrm E[\delta(W(e_k))|\mathcal F_M]e_b
     \end{equation*}
     with $\mathrm{Card}\,B$ finite. Using \eqref{l8}, we can rewrite \eqref{l9} as:
     \begin{equation*}
       V_N(X_M)
        = \lim_k\left[\frac{1}{k+1}\sum_{j=1}^k\left(\widehat\theta_N^ju\right)\left(\sum_{i=1}^N\lambda_{W(e_i)}(X_M)e_i + \sum_{b\in B}\lambda_{W(e_b)}(X_M)e_b\right)\right]
     \end{equation*}
     But
     \begin{equation*}
       \sum_{i=1}^N\lambda_{W(e_i)}(X_M)e_i + \sum_{b\in B}\lambda_{W(e_b)} = X_M
     \end{equation*}
     So :
     \begin{equation*}
       V_N(X_M) = \lim_k\left[\frac{1}{k+1}\sum_{j=1}^k\left(\widehat\theta_N^ju\right)(X_M)\right]
     \end{equation*}
     Then we prove that $\left\|(\widehat\theta_N^ju)X_M\right\|_{\mathbb D_r^p}$ is uniformly,
     in $j$ and $M$, $\mathbb D^\infty$-bounded.
     \begin{align*}
       \|(\widehat\theta_N^ju)(X_M)\|_{\mathbb D_r^p}
        & = \|\theta_N^j(1-L)^{\frac{r}{2}}(u\cdot(\widetilde\theta_N^j(X_M)))\|_{L^p} \\
        & = \|(1-L)^{\frac{r}{2}}\{u(\widetilde\theta_N^j(X_M))\}\|_{L^p} \\
        & = \| u\cdot(\widetilde\theta_N^j(X_M))\|_{\mathbb D_r^p}
     \end{align*}
     But $\{\widetilde\theta_N^j(X_M)\,/\,j\in\mathbb N_*; N,M\in\mathbb N_*\}$ is a bounded subset of $\mathrm{Der}$:
     let $f\in D$, $D$ being a $\mathbb D^\infty$-bounded set of $\mathbb D^\infty(\Omega)$:
     \begin{equation*}
       \|\widetilde\theta_N^j(X_M)\|_{\mathbb D_r^p}
        = \|\theta_N^{-j}(X_M(\theta_N^jf))\|_{\mathbb D_r^p} 
        = \|X_M(\theta_N^jf)\|_{\mathrm D_r^p}
     \end{equation*}
      From $\{\theta_N^jf\,/\,j\in\mathbb N_*; N\in\mathbb N_*; f\in D\}$ is a $\mathbb D^\infty$-bounded set
      and that $X_M\to\delta$ in $\mathrm{Der}\,\Omega$, we get that 
      the set $\{\widetilde\theta_N^j(X_M)\,/\,{j\in\mathbb N_*}; {N\in\mathbb N_*}; {M \in\mathbb N_*}\}$ is bounded in $\mathrm{Der}$,
      uniformly in $j$, $N$, $M$. And so is the set
      \begin{equation*}
        \{(\widehat\theta_N^ju)(X_M)\,/\,{j\in\mathbb N_*};{N\in\mathbb N_*}; {M\in\mathbb N_*}\}
      \end{equation*} 
      uniformly in $j$, $N$, $M$. 
      Which proves that the set ${\{ V_N(X_M)\,/\,N\in\mathbb N_*; M\in\mathbb N_*\}}$ is 
      uniformly in $N$, $M$, $\mathbb D^\infty$-bounded.
    \end{enumerate}
  \end{proof}
  
  Now we go back to the proof of Theorem 3.2: 
  we have by direct computation:
  \begin{equation*}
    \mathrm E[V_N(\delta)|\mathcal F_N] = \mathrm E[u(X_N)|\mathcal F_N]
  \end{equation*}
  and 
  \begin{equation*}
    \lim_{N\uparrow\infty}\mathrm E[u(X_N)|\mathcal F_N] = u(\delta)
  \end{equation*}
  this limit being a $L^2$-limit. 
  But from e) we also have $\sup_N\|V_N(\delta)\|_{L^2} < +\infty$.
  
  These two properties of the sequence $V_N(\delta)$
  imply $V_N(\delta)\rightharpoonup u(\delta)$, $(V_N(\delta))_N$ converges weakly
  towards $u(\delta)$.
  
  As the $(V_N(\delta))$ converges weakly towards $u(\delta)$,
  there is a net of barycenters, built on the $V_N(\delta)$,
  which will strongly converge towards $u(\delta)$. 
  Each item of this net has the form:
  \begin{equation*}
    B(A_j) = \sum_{j=1}^n\alpha_j\left(\sum_{i_j\in A_j}\mathrm E[u(e_{i_j})|\mathcal F_{A_j}]\lambda_{W(e_{ij})}\right)
  \end{equation*}
  where $\alpha_j\geq 0$ and $A_j$ being
  a finite subset of $\mathbb N_*$. 
  Then $\|B(A_j)\|_{\mathbb D_r^p}<+\infty$, independently of the $(A_j)$

  Then the net $\{B(A_j)(\delta)\,/\,A_j\in\mathbb N_*\}$ converges towards $u(\delta)$
  and we also have $\sup_{A_j}\|B(A_j)(\delta)\|_{\mathbb D_r^p}<+\infty$.
  
  From these two properties, using the interpolation (Theorem 2.12),
  we deduce that the net $B(A_j)(\delta)$ converges $D^\infty$
  towards $u(\delta)$.

  \begin{cor}
    Given a bilinear positive form $q$ on $\mathrm{Der}^*$,
    the map $\left(\mathrm{Der}\,\Omega\right)^*\ni u\to T_u\in\mathrm{Der}\,\Omega$ is injective
  \end{cor}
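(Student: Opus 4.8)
The plan is to prove injectivity by showing that the kernel of the linear map $u\mapsto T_qu$ is trivial, and the positivity of $q$ will be the decisive ingredient. So I would begin by supposing $T_qu=0$ for some $u\in(\mathrm{Der}\,\Omega)^*$ and aim to deduce $u=0$. Unwinding the defining relation $(T_qu)\cdot f=q(u,\lambda_f)$, the hypothesis $T_qu=0$ reads $q(u,\lambda_f)=0$ for every $f\in\mathbb D^\infty(\Omega)$.

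The next step is to upgrade this vanishing from the pure forms $\lambda_f$ to all finite $\mathbb D^\infty$-linear combinations of them. Using the $\mathbb D^\infty$-bilinearity of $q$, for any finite family $f_j,g_j\in\mathbb D^\infty(\Omega)$ one has
\begin{equation*}
  q\Bigl(u,\sum_{j}f_j\lambda_{g_j}\Bigr)=\sum_{j}f_j\,q(u,\lambda_{g_j})=0,
\end{equation*}
so that $q(u,\cdot)$ annihilates the entire set of elements of the elementary shape $\sum_j f_j\lambda_{g_j}$.

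Here the hard work already done pays off: by Theorem 3.2 there is a bounded net $(u_i)_{i\in I}$ in $(\mathrm{Der}\,\Omega)^*$, each $u_i=\sum_{j_i\in A_i}f_{j_i}\lambda_{g_{j_i}}$ of exactly this form, converging to $u$. The previous step gives $q(u,u_i)=0$ for every $i$. Since the net is contained in a bounded part of $(\mathrm{Der}\,\Omega)^*$ and $u_i\to u$, the continuity of $q$ in its second argument lets me pass to the limit, yielding $q(u,u)=\lim_i q(u,u_i)=0$. Finally, since $q$ is positive definite, $q(u,u)=0$ forces $u=0$, which establishes injectivity.

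I expect the corollary itself to be routine; the real obstacle is entirely absorbed into Theorem 3.2, whose proof (the $(\mathrm{Der}\,\Omega)^*$-density of the elementary forms $\sum_j f_j\lambda_{g_j}$ through a \emph{bounded} net) is the difficult input. The only points requiring care above are bookkeeping ones: checking that the continuity hypothesis on $q$ is applicable, which it is precisely because Theorem 3.2 furnishes a net lying in a bounded part of $(\mathrm{Der}\,\Omega)^*$, and using bilinearity in the second slot, consistently with the relation $(T_qu)\cdot f=q(u,\lambda_f)$. This also exhibits the statement as the natural generalisation of Corollary 3.1, which is the special case $q=q_0$.
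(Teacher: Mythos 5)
Your proposal is correct and follows the paper's own proof essentially verbatim: both arguments take $T_qu=0$, invoke Theorem 3.2 to produce a bounded net $u_i=\sum_j f_j\lambda_{g_j}$ converging to $u$, use $\mathbb D^\infty$-bilinearity to get $q(u,u_i)=\sum_j f_j\,(T_qu)(g_j)=0$, pass to the limit via the continuity of $q$ on bounded parts, and conclude $u=0$ from positive definiteness. Nothing further is needed.
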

  
  \begin{proof}
    Let $u\in\left(\mathrm{Der}\,\Omega\right)^*$ such that $T_u=0$.
    There exists a net \\
	$u_{F_i}=\sum_{f\in F_i}a_f\lambda_{b_f}$, $a_f$ and $b_f \in \mathbb D^\infty(\Omega)$
    and $i\in I$, which converges towards $u$ in $\left(\mathrm{Der}\,\Omega\right)^*$.
    Then $q\left(u,\sum_{f\in F_i}a_f\lambda_{b_f}\right) = \sum_{f\in F_i}T_u(b_f)$ converges
    towards $0$; so $q(u,u) = 0$.
  \end{proof}
  
\subsection{Metric, Levi-Civita connection, curvature}
  
  We now study the notions of Levi-Civita connection, the curvature, and the torsion.

  Let $q$ be a bilinear form, positive an non degenerate
  on $\left(\mathrm{Der}\,\Omega\right)^*$; $q$ induces a map $T_q$:
  \begin{equation*}
    \forall\alpha\in\left(\mathrm{Der}\,\Omega\right)^*, (T_q\alpha)\cdot f=q(\alpha,\lambda_f)=\delta_\alpha(f)
  \end{equation*}
  We denote by $\mathscr D_q = \left\{T_q\alpha\,/\,\alpha\in\left(\mathrm{Der}\,\Omega\right)^*\right\}\subset\mathrm{Der}\,\Omega$
  
  \begin{defn}[of the bilinear form $q_0$ on $\mathscr D_q$]
    On $\mathscr D_q$ we define $\widehat q(\delta_\alpha,\delta_\beta) = q(\alpha,\beta)$.
    This definition is legitimate because $T_q$ is injective.
  \end{defn}
  
  \begin{defn}[of the Levi-Civita connection associated to $\widehat q$]
    Let $\delta_\alpha,\delta_\beta,\delta_\gamma\in\mathscr D_q$, we define $\nabla_{\delta_\alpha}\delta_\beta \in\mathrm{Der}\,\Omega$ by
    \begin{align*}
      2\gamma(\nabla_{\delta_\alpha}\delta_\beta) 
          = & \delta_\alpha\cdot\widehat q(\delta_\beta,\delta_\gamma) 
          + \delta_\beta\cdot\widehat q(\delta_\alpha,\delta_\gamma)
          - \delta_\gamma\cdot\widehat q(\delta_\alpha,\delta_\beta) \\
          & - T^{-1}(\delta_\alpha)([\delta_\beta,\delta_\gamma])
          + T^{-1}(\delta_\beta)([\delta_\gamma,\delta_\alpha]) 
          + T^{-1}(\delta_\gamma)([\delta_\alpha,\delta_\beta])
    \end{align*}
  \end{defn}

    Each term of the r.h.s. of the above equation is
    meaningful because ${T^{-1}:\mathscr D_q\to\left(\mathrm{Der}\,\Omega\right)^*}$ and
    $[\delta_\alpha,\delta_\beta]\in\mathrm{Der}\,\Omega$. We now denote $\nabla_{\delta_\alpha}\delta_\beta$ by $\nabla_\alpha\beta$.
    Then we define ${(\nabla_\alpha\beta)\cdot f=\lambda_f(\nabla_\alpha\beta)}$, where $f\in\mathbb D^\infty(\Omega)$
    and $\lambda_f\in\left(\mathrm{Der}\,\Omega\right)^*$.

    Now we write improperly $\widehat q(\nabla_\alpha\beta,\delta_\gamma)=\gamma(\nabla_\alpha\beta)$.
    With this notation, it is easy to show that:
    \begin{itemize}
      \item $\nabla_\alpha\beta$ verifies the Leibniz formula,
      \item $\delta_\alpha\cdot\widehat q(\delta_\beta,\delta_\gamma)=\widehat q(\nabla_\alpha\beta,\delta_\gamma)+\widehat q(\delta_\beta,\nabla_\alpha\gamma)$ (compatibility with the metric)
      \item $\widehat q(\nabla_\alpha\beta,\delta_\gamma)-\widehat q(\nabla_\beta\alpha,\delta_\gamma)=(T^{-1}\delta_\gamma)([\delta_\alpha,\delta_\beta])$ (the torsion is zero)
    \end{itemize}
    which implies $\forall\gamma\in\left(\mathrm{Der}\,\Omega\right)^*$, 
    \begin{equation*}
      \gamma(\nabla_\alpha\beta-\nabla_\beta\alpha-[\delta_\alpha,\delta_\beta])=0
    \end{equation*}
    or $\nabla_\alpha\beta-\nabla_\beta\alpha-[\delta_\alpha,\delta_\beta]=0$.

  \begin{lem}
    $\delta_\alpha\in\mathscr D_q \Rightarrow \nabla_\alpha\alpha\in\mathscr D_q$.
  \end{lem}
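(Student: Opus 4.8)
The plan is to reduce the statement to a representability question and then to exploit the vanishing of a single term in the Koszul formula. Recall that $\mathscr D_q = T_q\bigl((\mathrm{Der}\,\Omega)^*\bigr)$, and that by the density of the elements $\sum_j f_j\lambda_{g_j}$ (Theorem 3.2), together with the $\mathbb{D}^\infty$-bilinearity and continuity of $q$, one has $\gamma(T_qu)=q(u,\gamma)$ for all $u,\gamma\in(\mathrm{Der}\,\Omega)^*$: indeed for $\gamma=\sum_j f_j\lambda_{g_j}$ one computes $\gamma(T_qu)=\sum_j f_j\,(T_qu)(g_j)=\sum_j f_j\,q(u,\lambda_{g_j})=q(u,\gamma)$, and the general case follows by continuity. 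Since $(\mathrm{Der}\,\Omega)^*$ separates the points of $\mathrm{Der}\,\Omega$, proving $\nabla_\alpha\alpha\in\mathscr D_q$ is equivalent to producing a single $u\in(\mathrm{Der}\,\Omega)^*$ with $\gamma(\nabla_\alpha\alpha)=q(u,\gamma)$ for every $\gamma$; this $u$ then satisfies $\nabla_\alpha\alpha=T_qu$.

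First I would specialise the defining Koszul identity to $\beta=\alpha$. The crucial point is that $[\delta_\alpha,\delta_\alpha]=0$, so the term $T^{-1}(\delta_\gamma)\bigl([\delta_\alpha,\delta_\beta]\bigr)=\gamma\bigl([\delta_\alpha,\delta_\alpha]\bigr)$ disappears. This is exactly the term that obstructs membership in $\mathscr D_q$ for a general $\nabla_\alpha\beta$: it feeds $\gamma$ into the bracket $[\delta_\alpha,\delta_\beta]$, which need not lie in $\mathscr D_q$. Collecting the two symmetric bracket contributions, what remains is
\[
\gamma(\nabla_\alpha\alpha)=\delta_\alpha\bigl(q(\alpha,\gamma)\bigr)-\tfrac12\,\delta_\gamma\bigl(q(\alpha,\alpha)\bigr)-\alpha\bigl([\delta_\alpha,\delta_\gamma]\bigr).
\]
The middle term is already in the desired shape, since $\delta_\gamma=T_q\gamma$ gives $\delta_\gamma\bigl(q(\alpha,\alpha)\bigr)=q\bigl(\lambda_{q(\alpha,\alpha)},\gamma\bigr)$, contributing $-\tfrac12\lambda_{q(\alpha,\alpha)}$ to the sought $u$.

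The remaining task is to represent $\gamma\mapsto\delta_\alpha\bigl(q(\alpha,\gamma)\bigr)-\alpha\bigl([\delta_\alpha,\delta_\gamma]\bigr)$ as $q(u_1,\gamma)$. Here I would approximate $\alpha$ in $(\mathrm{Der}\,\Omega)^*$ by the finite sums $\alpha_i=\sum_j f_j\lambda_{g_j}$ provided by Theorem 3.2, so that $\delta_{\alpha_i}=T_q\alpha_i=\sum_j f_j\,\delta_{g_j}\in\mathscr D_q$ genuinely lie in the range of $T_q$. For such finitely-generated arguments the two terms are expanded by the Leibniz rule and the identity $q(\alpha_i,\gamma)=\gamma(\delta_{\alpha_i})$; because the obstructing term $\gamma\bigl([\delta_{\alpha_i},\delta_{\alpha_i}]\bigr)$ is absent, the result assembles into $q\bigl(u_1^{(i)},\gamma\bigr)$ with $u_1^{(i)}$ an explicit finite combination of the $f_j$, $g_j$ and their gradients. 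One then passes to the limit: continuity of $q$ in each argument, continuity of $T_q$, and the boundedness Lemma ensuring that $\{u(\delta):\delta\in A\}$ stays $\mathbb{D}^\infty$-bounded on bounded sets $A$ yield a bounded net $\bigl(u_1^{(i)}\bigr)$ whose weak limit is upgraded to a $\mathbb{D}^\infty$-limit $u_1\in(\mathrm{Der}\,\Omega)^*$ by the interpolation Theorem 2.12, exactly as in the proof of Theorem 3.2. Setting $u=u_1-\tfrac12\lambda_{q(\alpha,\alpha)}$ gives $\gamma(\nabla_\alpha\alpha)=q(u,\gamma)$ for all $\gamma$, hence $\nabla_\alpha\alpha=T_qu\in\mathscr D_q$.

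The algebraic heart of the argument, the cancellation at $\beta=\alpha$, is immediate; the main obstacle I expect is the limiting step. One must check that the candidate representing net $\bigl(u_1^{(i)}\bigr)$ is genuinely bounded in $(\mathrm{Der}\,\Omega)^*$ and that its limit reproduces the derivation $\nabla_\alpha\alpha$ on all of $\mathbb{D}^\infty(\Omega)$, i.e.\ the boundedness and convergence bookkeeping that turns the pointwise identity $\gamma(\nabla_\alpha\alpha)=q(u,\gamma)$ into an identity of derivations inside $\mathscr D_q$. It is the control of this infinite-dimensional limit, resting on Theorems 2.12, 3.1 and 3.2 and on the $\mathbb{D}^\infty$-boundedness of $(\mathrm{Der}\,\Omega)^*$-images, that carries the real weight.
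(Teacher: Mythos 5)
Your algebraic core coincides exactly with the paper's: specialise the Koszul formula to $\beta=\alpha$ so that the term $\gamma([\delta_\alpha,\delta_\alpha])$ vanishes, obtaining $\gamma(\nabla_\alpha\alpha)=\delta_\alpha\cdot\widehat q(\delta_\alpha,\delta_\gamma)-\tfrac12\,\delta_\gamma\cdot\widehat q(\delta_\alpha,\delta_\alpha)+\alpha([\delta_\gamma,\delta_\alpha])$, and the bridging identity $\gamma(T_qu)=q(u,\gamma)$ that you prove by density via Theorem 3.2 is indeed what the paper uses implicitly. But from there the paper takes a much shorter route that makes your entire approximation scheme unnecessary. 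It replaces $\delta_\gamma$ by a \emph{free} variable $\delta\in\mathrm{Der}\,\Omega$ and considers
\begin{equation*}
  \rho(\delta)=\delta_\alpha\cdot\alpha(\delta)-\tfrac12\,\delta\cdot\widehat q(\delta_\alpha,\delta_\alpha)+\alpha([\delta,\delta_\alpha]),
\end{equation*}
which is well defined for \emph{every} $\delta$ (not only $\delta=\delta_\gamma\in\mathscr D_q$), since $\alpha(\delta)$, $\delta\cdot\widehat q(\delta_\alpha,\delta_\alpha)$ and $\alpha([\delta,\delta_\alpha])$ all make sense on $\mathrm{Der}\,\Omega$. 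The point is that $\rho$ is $\mathbb D^\infty$-linear: computing $\rho(f\delta)$, the Leibniz defect $+(\delta_\alpha f)\,\alpha(\delta)$ of the first term cancels against the defect $-(\delta_\alpha f)\,\alpha(\delta)$ coming from $[f\delta,\delta_\alpha]=f[\delta,\delta_\alpha]-(\delta_\alpha f)\delta$ in the third. Hence $\rho\in(\mathrm{Der}\,\Omega)^*$ directly, $\gamma(\nabla_\alpha\alpha)=\rho(\delta_\gamma)=q(\rho,\gamma)=\widehat q(\delta_\rho,\delta_\gamma)$, and non-degeneracy of $\widehat q$ gives $\nabla_\alpha\alpha=\delta_\rho\in\mathscr D_q$ in one step. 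Note that the cancellation of second-order terms you observe at the finite-rank level (the $f^2\delta_g(\gamma(\delta_g))$-type terms cancelling between $\delta_{\alpha_i}(q(\alpha_i,\gamma))$ and $\alpha_i([\delta_{\alpha_i},\delta_\gamma])$) is precisely this same Leibniz cancellation, rediscovered after approximating.

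The price of your detour is the limiting step, and there the argument as sketched has a real soft spot. You need $\alpha_i([\delta_{\alpha_i},\delta_\gamma])\to\alpha([\delta_\alpha,\delta_\gamma])$, i.e.\ convergence of a net of functionals evaluated on a \emph{moving} net of arguments. Writing the difference as $\alpha_i([\delta_{\alpha_i}-\delta_\alpha,\delta_\gamma])+(\alpha_i-\alpha)([\delta_\alpha,\delta_\gamma])$, the second piece converges by pointwise convergence, but for the first piece the boundedness of $(\alpha_i)$ in $(\mathrm{Der}\,\Omega)^*$ (Definition 3.8, Lemma 3.1) gives only uniform boundedness on bounded parts of $\mathrm{Der}\,\Omega$, not the equicontinuity needed to conclude $\alpha_i(\eta_i)\to 0$ from $\eta_i\to 0$; none of the results you cite (Theorems 2.12, 3.1, 3.2) supplies this directly, so an additional argument of the barycentre-plus-interpolation type used in the proof of Theorem 3.2 would have to be built in. Your plan is therefore workable in spirit but carries a genuine unfinished obligation, and it is exactly the obligation that the paper's direct definition of $\rho$ on all of $\mathrm{Der}\,\Omega$ was designed to sidestep.
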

  
  \begin{proof}
    Using the definition of the Levi-Civita connection, we have
    \begin{equation*}
      \widehat q(\nabla_\alpha\alpha,\delta_\beta)
       = \delta_\alpha\cdot\widehat q(\delta_\alpha,\delta_\beta)
       + \alpha([\delta_\beta,\delta_\alpha])
       - \frac12\delta_\beta\cdot\widehat q(\delta_\alpha,\delta_\alpha)
    \end{equation*}
    For all $\delta\in\mathrm{Der}\,\Omega$, the map which associates $\delta$ with
    \begin{equation*}
      \delta_\alpha\cdot\alpha(\delta)-\frac12\delta\cdot\widehat q(\delta_\alpha,\delta_\alpha)+\alpha([\delta,\delta_\alpha])
    \end{equation*}
    is an element of $\left(\mathrm{Der}\,\Omega\right)^*$; we denote it by $\rho$, then 
    we have:
    \begin{equation*}
      \widehat q(\nabla_\alpha\alpha,\delta_\beta) = q(\rho,T_q^{-1}(\delta_\beta)) = \widehat q(\delta_\rho,\delta_\beta)
    \end{equation*}
    Since $\widehat q$ is non degenerate, $\nabla_\alpha\alpha=\delta_\rho\in\mathscr D_q$.
  \end{proof}

  \begin{cor}  
    $\delta_\alpha,\delta_\beta\in\mathscr D_q \Rightarrow \nabla_\alpha\beta+\nabla_\beta\alpha\in\mathscr D_q$.
  \end{cor}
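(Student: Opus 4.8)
The plan is to deduce this corollary from the preceding Lemma by a polarisation argument, exploiting the fact that $\mathscr D_q$ is a linear subspace of $\mathrm{Der}\,\Omega$. First I would record two linearity facts. Since $\mathscr D_q=\mathrm{range}\,T_q$ is the image of the linear map $T_q\colon\left(\mathrm{Der}\,\Omega\right)^*\to\mathrm{Der}\,\Omega$, it is stable under finite linear combinations; and because $T_q$ is linear, $\delta_{\alpha+\beta}=\delta_\alpha+\delta_\beta$ for all $\alpha,\beta\in\left(\mathrm{Der}\,\Omega\right)^*$.

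Next I would check that the map $(\alpha,\beta)\mapsto\nabla_\alpha\beta$ is bilinear. Inspecting the defining identity for $2\gamma(\nabla_\alpha\beta)$, each of its six terms is built from the assignments $\alpha\mapsto\delta_\alpha$ and $\alpha\mapsto T^{-1}(\delta_\alpha)=\alpha$ (both linear, the latter because $T_q$ is injective by the Corollary preceding the Lemma), from the bilinear Lie bracket $[\,\cdot\,,\,\cdot\,]$ on $\mathrm{Der}\,\Omega$, from the bilinear form $\widehat q$, and from the action of a derivation on $\mathbb D^\infty(\Omega)$ (linear in the derivation). Hence, for fixed $\gamma$, the right-hand side is linear in $\alpha$ and linear in $\beta$; by the uniqueness guaranteed by non-degeneracy of $q$, the element $\nabla_\alpha\beta\in\mathrm{Der}\,\Omega$ it determines inherits this bilinearity. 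In particular $Q(\alpha):=\nabla_\alpha\alpha$ is a quadratic map whose associated symmetric form is $\tfrac12\left(\nabla_\alpha\beta+\nabla_\beta\alpha\right)$.

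Finally I would polarise. Applying the Lemma to the three elements $\alpha$, $\beta$ and $\alpha+\beta$ of $\left(\mathrm{Der}\,\Omega\right)^*$ (the last being admissible since $\delta_{\alpha+\beta}=\delta_\alpha+\delta_\beta\in\mathscr D_q$) yields $\nabla_\alpha\alpha,\ \nabla_\beta\beta,\ \nabla_{\alpha+\beta}(\alpha+\beta)\in\mathscr D_q$. Bilinearity then gives the identity
\[
  \nabla_\alpha\beta+\nabla_\beta\alpha
   = \nabla_{\alpha+\beta}(\alpha+\beta)-\nabla_\alpha\alpha-\nabla_\beta\beta,
\]
and since $\mathscr D_q$ is a linear subspace the right-hand side lies in $\mathscr D_q$, which is exactly the claim.

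I expect the only genuine work to be the bilinearity verification of the second step; everything else is formal once that is in hand. The subtle point there is the term $T^{-1}(\delta_\alpha)\bigl([\delta_\beta,\delta_\gamma]\bigr)$: one must use that $T_q$ is injective, so that $T^{-1}$ is a genuine linear inverse on $\mathscr D_q$ with $T^{-1}(\delta_\alpha)=\alpha$, together with the bilinearity of the bracket of derivations, to see that this term, and by symmetry the two remaining bracket terms, are separately linear in $\alpha$ and in $\beta$.
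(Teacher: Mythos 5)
Your proposal is correct and follows essentially the same route as the paper, whose entire proof is the one-line polarisation identity $\nabla_{\alpha+\beta}(\alpha+\beta)-\nabla_\alpha\alpha-\nabla_\beta\beta\in\mathscr D_q$ combined with the preceding lemma. Your additional verification that $(\alpha,\beta)\mapsto\nabla_\alpha\beta$ is bilinear (including the role of $T^{-1}(\delta_\alpha)=\alpha$) merely makes explicit what the paper leaves implicit.
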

  
  \begin{proof}
    $\nabla_{\alpha+\beta}(\alpha+\beta)-\nabla_\alpha\alpha-\nabla_\beta\beta\in\mathscr D_q$.
  \end{proof}
  
  \begin{defn}[formal definition of the curvature]
    Let $\widehat q$ be the positive non degenerate bilinear form
    on $\mathscr D_q$, for $\delta_\alpha,\delta_\beta\in\mathscr D_q$, we define only
    formally first:
    \begin{align*}
      R(\delta_\alpha,\delta_\beta,\delta_\alpha,\delta_\beta) 
        =  \widehat q(\nabla_\alpha(\nabla_\beta\alpha),\delta_\beta)
        - \widehat q(\nabla_\beta(\nabla_\alpha\alpha),\delta_\beta)
        - \widehat q(\nabla_{[\delta_\alpha,\delta_\beta]}\delta_\alpha,\delta_\beta)
    \end{align*}
  \end{defn}

    Using the compatibility with the Levi-Civita 
    connection and the torsion being null, we get,
    still formally:
    \begin{align*}
      \delta_\alpha\cdot\widehat q(\nabla_\beta\alpha,\delta_\beta)
        & = \widehat q(\nabla_\alpha(\nabla_\beta\alpha),\delta_\beta) + \widehat q(\nabla_\beta\alpha,\nabla_\alpha\beta) \\
      \widehat q(\nabla_{[\delta_\alpha,\delta_\beta]},\delta_\alpha)
        & = \widehat q([[\delta_\alpha,\delta_\beta],\delta_\alpha],\delta_\beta) + \widehat q(\nabla_\alpha [\delta_\alpha,\delta_\beta],\delta_\beta)
    \end{align*}
    So, still formally,
    \begin{align*}
      R(\delta_\alpha,\delta_\beta,\delta_\alpha,\delta_\beta)
        =\, & \delta_\alpha\cdot\widehat q(\nabla_\beta\alpha,\delta_\beta)
        - \widehat q(\nabla_\beta\alpha,\nabla_\alpha\beta)
        - \delta_\beta\cdot\widehat q(\nabla_\alpha\alpha,\delta_\beta) \\
        & + \widehat q(\nabla_\alpha\alpha,\nabla_\beta\beta)
        - \widehat q(\nabla_{[\delta_\alpha,\delta_\beta]}\delta_\alpha,\delta_\beta)
        \tag{10}\label{l10}
    \end{align*}
    The zero torsion implies:
    \begin{align*}
      \widehat q (\nabla_{[\delta_\alpha,\delta_\beta]}\delta_\alpha,\delta_\beta)
        = \widehat q([[\delta_\alpha,\delta_\beta],\delta_\alpha],\delta_\beta)
        + \delta_\alpha\cdot\widehat q([\delta_\alpha,\delta_\beta],\delta_\beta)
        - \widehat q([\delta_\alpha,\delta_\beta],\nabla_\alpha\beta)
        \tag{11}\label{l11}
    \end{align*}
    Using \eqref{l11} in \eqref{l10}, and denoting $\{\delta_\alpha,\delta_\beta\} = \frac12(\nabla_\alpha\beta+\nabla_\beta\alpha)\in\mathscr D_q$:
    \begin{align*}
      R(\delta_\alpha,\delta_\beta,\delta_\alpha,\delta_\beta)
      =~ & \delta_\alpha\cdot\widehat q(\nabla_\beta\alpha,\delta_\beta)
      - \widehat q(\{\delta_\alpha,\delta_\beta\},\{\delta_\alpha,\delta_\beta\}) \\
      & + \frac34\widehat q([\delta_\alpha,\delta_\beta],[\delta_\alpha,\delta_\beta])
      - \delta_\beta\cdot\widehat q(\nabla_\alpha\alpha,\delta_\beta)\\
      & + \widehat q(\nabla_\alpha\alpha,\nabla_\beta\beta)
      - \widehat q([[\delta_\alpha,\delta_\beta],\delta_\alpha],\delta_\beta)\\
      & - \delta_\alpha\cdot\widehat q([\delta_\alpha,\delta\beta],\delta_\beta)
      + \widehat q([\delta_\alpha,\delta_\beta],\{\delta_\alpha,\delta_\beta\})
      \tag{12}\label{l12}
    \end{align*}
    As $\widehat q([[\delta_\alpha,\delta_\beta],\delta_\alpha],\delta_\beta) = \beta\cdot([[\delta_\alpha,\delta_\beta],\delta_\alpha])$, 
    \begin{equation*}
      \widehat q([\delta_\alpha,\delta_\beta],\delta_\beta) = \beta\cdot([\delta_\alpha,\delta_\beta])
    \end{equation*}
    and
    \begin{equation*}
      \widehat q([\delta_\alpha,\delta_\beta],\{\delta_\alpha,\delta_\beta\})
        = T_q^{-1}(\{\delta_\alpha,\delta_\beta\})\cdot([\delta_\alpha,\delta_\beta])
    \end{equation*}
    The only element in \eqref{l12} which does not have 
    any meaning is $\widehat q([\delta_\alpha,\delta_\beta],[\delta_\alpha,\delta_\beta])$.
    
    Then if we have on $\left(\mathrm{Der}\,\Omega\right)^*$ two non degenerate
    positive bilinear forms $q_1$ and $q_2$, and if 
    the difference of $\widehat{q_1-q_2}$ is defined on all $\mathrm{Der}\,\Omega$,
    the difference of the associated curvatures, $R_1-R_2$,
    is meaningful.

\section{\huge Multiplicators, derivations}
	
	Here we will characterize $\dinf$-continuous derivations which are also adapted and with zero-divergence. They bijectively correspond to some particularly important operators, named multiplicators, which we will first study. The general setting unless otherwise specified is a Wiener space $(\Omega, \F, \P, H)$ where $H$, the C-M. space is the set of functions $[0, 1] \rightarrow \R^n$ verifying the usual conditions of the C-M. space.

\subsection{Definition of $\mathbb{D}^\infty$-bounded processes and of multiplicators}
	
\begin{defn}
A $\dinf$ process $A(t, \omega)$, defined on $[0,1] \times \R$ with values in the $n \times n$ antisymmetrical matrices (denoted in short: $n \times n$-A.M.), will be said to be $\dinf$-bounded iff:

$\forall (p, r), p>1, r \in \N_{\star}, \exists C(p,r)>0$ such that: 

$\sup_{t \in [0, 1]} \| |A(t,\omega)| \|_{ \sko^{p}_{r} } \leq C(p, r)$, where $|A(t, \omega)|$ denotes any $n \times n$ matrix norm, which are all equivalent.
\end{defn}

\begin{notation}
A $\dinf$-vector field $X(\omega)$ is a map from $\Omega$ in $H$; this vector field generates a process on $[0,1] \times \Omega$, with values in $\R^n$, and then is denoted: $X(t, \omega)$.
\end{notation}

\begin{defn}
An adapted vector field $X$, is a vector field $X(t, \omega)$ which, when read as a process $X(t, \omega)$ is an adapted process.
\end{defn}

\begin{defn}
A $\dinf$-process $A(t, \omega): [0,1] \times \Omega \rightarrow n \times n - A.M.$ will be said to be a multiplicator iff its image of a $\dinf$-vector field $V(\omega)$ is again a $\dinf$-vector field. That means: if $t \rightarrow \int_{0}^{t} \dot{V}(s, \omega) ds$ is a $\dinf$-vector field, then $t \rightarrow \int_{0}^{t} A(s, \omega) \dot{V}(s, \omega) ds$ is again a $\dinf$-vector field.
\end{defn}

\begin{lem}\label{lem4_1}

Let $A = (a_{ij})$ be a $n \times n$-A.M. matricial process on 

$[0,1] \times \Omega$:

\begin{enumerate}
\renewcommand{\labelenumi}{\roman{enumi})}
\item $A$ is a multiplicator $\Leftrightarrow$ $\forall i, j: a_{ij}$ is a multiplicator.
\item $A$ is a multiplicator implies: $A$ is a linear continuous operator.
\item $A$ multiplicator implies $A$ is $\dinf$-bounded.
\end{enumerate}

\end{lem}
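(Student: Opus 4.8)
My plan is to treat the three assertions in order, reducing everything to the scalar notion of multiplicator and to the closed graph theorem. For (i) I read ``$a_{ij}$ is a multiplicator'' in the scalar sense: the $\R$-valued process $a_{ij}$ sends every scalar $\dinf$-vector field to a scalar $\dinf$-vector field. The elementary fact I would use is that, writing $H = H^{(1)}\otimes\R^n$ with $H^{(1)}$ the scalar Cameron-Martin space, the coordinate projection $\pi_i = \mathrm{id}\otimes e_i^{*}$ is a bounded operator on $H$ commuting with the Malliavin gradient (which acts only on the $\Omega$-variable); hence each component of a $\dinf$-vector field is a scalar $\dinf$-vector field, and conversely assembling $n$ scalar ones yields an $\R^n$-valued one. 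For the direct implication I would probe $A$ with the one-coordinate fields $\dot V = \dot h\,e_j$: then $(A\dot V)_i = a_{ij}\dot h$, so the $i$-th component of the $\dinf$-vector field $AV$ is $t\mapsto\int_0^t a_{ij}\dot h\,ds$, proving $a_{ij}$ is a scalar multiplicator. For the converse I would decompose $\dot V = \sum_j \dot V_j e_j$; each $V_j$ is a scalar $\dinf$-vector field, so $t\mapsto\int_0^t a_{ij}\dot V_j\,ds$ is one too, the finite sum over $j$ is again $\dinf$, and reassembling the $n$ components gives $AV\in\dinf(\Omega,H)$.

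For (ii) I would invoke the closed graph theorem. The space $\dinf(\Omega,H)$ is a Fréchet space for the increasing family of seminorms $\|\cdot\|_{\drp}$, and $V\mapsto AV$ is linear and, by the multiplicator hypothesis, defined on all of $\dinf(\Omega,H)$. It remains to show its graph is closed. If $V_n\to V$ and $AV_n\to W$ in $\dinf(\Omega,H)$, then in particular $\dot V_n\to\dot V$ and $A\dot V_n\to\dot W$ in $L^2(\Omega\times[0,1])$; extracting a subsequence along which both converge $dt\otimes\P$-a.e.\ and using that $A(s,\omega)$ acts as a fixed matrix at each point, the a.e.\ limit $A\dot V_n\to A\dot V$ forces $\dot W = A\dot V$ a.e. Thus $W = AV$, the graph is closed, and $A$ is a continuous operator from $\dinf(\Omega,H)$ to itself.

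For (iii) I would exploit this continuity through a concentration argument in time. Fix $t\in[0,1)$ and $h>0$, set $g_h = \tfrac{1}{\sqrt h}\mathds{1}_{[t,t+h]}$ so that $\|g_h\|_{L^2([0,1])}=1$, and probe with the deterministic fields $V_h^{(k)}$ whose derivative is $g_h\,e_k$, $k=1,\dots,n$. Being constant in $\omega$, all their Malliavin derivatives vanish and $\|V_h^{(k)}\|_{\dqs}=1$ for every $(q,s)$, so continuity yields a constant $C(p,r)$ with $\|AV_h^{(k)}\|_{\drp}\le C(p,r)$ uniformly in $h$ and $t$. A direct computation gives $\|\Grad^j(AV_h^{(k)})(\omega)\|^2 = \tfrac1h\int_t^{t+h}\|\Grad^j(A(s,\omega)e_k)\|^2\,ds$, which by Lebesgue differentiation tends to $\|\Grad^j(A(t,\omega)e_k)\|^2$ for a.e.\ $(t,\omega)$ as $h\downarrow 0$. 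Passing to the limit by Fatou's lemma gives $\|A(t,\cdot)e_k\|_{\drp}\le C(p,r)$ for a.e.\ $t$ and every $k$; summing over $k$ and using the equivalence of matrix norms bounds $\||A(t,\cdot)|\|_{\drp}$ by a constant independent of $t$, so $A$ is $\dinf$-bounded (the essential supremum in $t$ being promoted to a genuine supremum by the time-regularity of the $\dinf$-process).

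The main obstacle is the last step of (iii): justifying the interchange of the limit $h\downarrow 0$ with the $\drp$-norm. This is precisely where the uniform bound coming from continuity does the work, since it supplies the uniform integrability needed to apply Fatou to the a.e.-convergent integrands produced by Lebesgue differentiation. The delicate point is that the probe fields must have $\dqs$-norm bounded independently of both $h$ and $t$; this holds because $V_h^{(k)}$ factors as the primitive of $g_h$, of unit $H^{(1)}$-norm, tensored with the constant vector $e_k$, so that its norm equals $1$ for all $h,t$ and all indices.
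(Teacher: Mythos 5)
Your proposal is correct and follows essentially the same route as the paper: (i) by componentwise probing and reassembly (the paper calls this trivial), (ii) by the closed graph theorem on the Fr\'echet space $\dinf(\Omega,H)$, and (iii) by testing $A$ against the normalized indicator fields $u\mapsto\frac{1}{\sqrt\epsilon}\int_0^u\mathds 1_{[t,t+\epsilon[}(s)\,ds\;e_k$, whose $\dinf$-norms are uniformly $1$, to get $\bigl\|\bigl[\frac1\epsilon\int_t^{t+\epsilon}\|\cdot\|^2\,ds\bigr]^{1/2}\bigr\|_{L^p}\le C$ uniformly in $(t,\epsilon)$. The only differences are cosmetic --- you use gradient seminorms where the paper uses $(1-L)^{r/2}$ (equivalent by Meyer's inequalities), and you make explicit the Lebesgue-differentiation/Fatou passage to the limit $\epsilon\downarrow 0$ that the paper leaves implicit.
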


\begin{proof}

\begin{enumerate}
\renewcommand{\labelenumi}{\roman{enumi})}
\item trivial.
\item direct application of the closed graph theorem.
\item let $(e_i)_{i \in \{1, \dots, n\} }$ be the canonical basis of $\R^n$ and 

$X_k(t, \epsilon, \omega) : u \rightarrow \frac{1}{\sqrt{\epsilon}} \int_0^u \mathds{1}_{[t, t+\epsilon[} (s) ds . e_k$ a $\dinf$-vector field.
\end{enumerate}

\begin{align*}
\left( (1-L)^{r/2} A X_k \right) (t, \epsilon, \omega): u \rightarrow \sum_{j=1}^{n} \frac{1}{\sqrt{\epsilon}} \int_{0}^{u} \left( (1-L)^{r/2} a_{kj} \right)(s, \omega) \mathds{1}_{[t, t+\epsilon[} (s) ds . e_j
\end{align*}

which implies:


\begin{align*}
\left\| (1-L)^{r/2} A . X_k (t, \epsilon, \omega) \right\|_{H}^{2} &= \sum_{j=1}^{n} \frac{1}{\epsilon} \int_{0}^{1} du |(1-L)^{r/2} a_{kj}(u, \omega)|^{2} \mathds{1}_{[t, t+\epsilon[}(u) \\
&= \sum_{j=1}^{n} \frac{1}{\epsilon} \int_{t}^{t+\epsilon} |(1-L)^{r/2} a_{kj}(u, \omega)|^2 du \\
&\geq \frac{1}{\epsilon} \int_{t}^{t+\epsilon} du |(1-L)^{r/2} a_{kj}|^{2}, \forall k,j
\end{align*}

By the continuity of $A$, we get:

\begin{align*}
\forall (p, r) ~ \exists (q, s), \exists \text{ constant } C&: \\
\| A .X_k (t, \epsilon) \|_{\sko_{r}^{p}(\Omega, H)} &\leq C \| X_k (t, \epsilon) \|_{\sko_{s}^{q}(\Omega, H)} \leq \text{C}
\end{align*}

Combining these two inequalities, we have:

\begin{align*}
\left\| \left[ \int_{t}^{t+\epsilon} \frac{1}{\epsilon} du |(1-L)^{r/2} a_{kj}|^2 \right]^{\frac{1}{2}} \right\|_{L^p(\Omega)} \leq \text{C}
\end{align*}

\end{proof}

\section*{\Large Examples of multiplicators}

\begin{crit}
Let $A(t, \omega): [0,1] \times \Omega \rightarrow n \times n$-A.M. be a 

$\dinf$-matricial process such that: $\forall f \in \sko_{\infty}^{2}(\Omega, \R^n)$: $f \rightarrow Af$ is continuous from $\sko_{\infty}^{2}$ to $\sko_{\infty}^{2}$, $t$-uniformly. Then $A$ is a multiplicator. 

The proof will be given later (Lemma \ref{lem4_7}).
\end{crit}

\begin{crit}
Let $A(t, \omega): [0,1] \times \Omega \rightarrow n \times n$-A.M. be a 

$\dinf$-matricial process such that with $A = (a_{ij}), \forall i, j, r$ such that: 

$\sup_{t \in [0,1]} (1-L)^{r/2} a_{ij}$ is bounded by a function $\in L^{\infty - 0}(\Omega)$; then $\sup_{t \in [0,1]} \| D^{r} a_{ij} \|_{\overset{r}{\otimes} H}$ is also bounded by a function $\in L^{\infty - 0}(\Omega)$ and A is a multiplicator.
\end{crit}

\begin{proof}
Let $f \in \dinf(\Omega)$; we denote $D_i f = \langle \Grad f, e_i \rangle_{H}$, 
$(e_i)_{i \in \N_{\star}}$ being an Hilbertian basis of $H$.

With the Mehler formula:

\begin{align*} 
\left[ D_i \left((1-L)^{-1} f \right) \right](x) = D_i \left[ \int_{0}^{\infty} e^{-t} dt \int_{\R^n} f(x e^{-t} + y \sqrt{1 - e^{-2t}}) \wienmeas(y) \right]
\end{align*}

$\wienmeas(y)$ being the Wiener measure; so:

\begin{align*}
\left[ D_i \left((1-L)^{-1} f \right) \right](x) = \int_{0}^{\infty} \frac{ e^{-2t} dt }{ (1 - e^{-2t})^{ \frac{1}{2} }} \int_{\R^n} y^{i} f(x e^{-t} + y \sqrt{1 - e^{-2t}} ) \wienmeas(y)
\end{align*}

The $y^i$ are i.i.d. Gaussian variables.
So Bessel-Perseval implies:

\begin{align*}
\left\| \Grad [(1-L)^{-1} f] \right\|_{H}^{2}(x) &= \sum_{i=1}^{\infty} \left| D_i (1-L)^{-1} f \right|^2 (x) \\
&= \sum_{i} \left[ \int_{\R^n} \wienmeas(y) y^{i} \left\{ \int_{0}^{\infty} \frac{ e^{-2t} }{ \sqrt{ 1 - e^{-2t} } } f(x e^{-t} + y \sqrt{1 - e^{-2t}}) dt \right\} \right]^{2} \\
&\leq \left[ \int_{\R^n} \wienmeas(y) \int_{0}^{\infty} \frac{ e^{-2t} dt }{ (1 - e^{-2t})^{ \frac{1}{2} }} f(x e^{-t} + y \sqrt{1 - e^{-2t}}) dt \right]^{2}
\end{align*}
which implies:
\setcounter{equation}{0}
\begin{align} 
\| \Grad (1-L)^{-1} f \|_{H} (\omega) &\leq \int_{0}^{\infty} \frac{ e^{-2t} dt }{ (1 - e^{-2t})^{ \frac{1}{2} }} \left( P_t f \right)(\omega) \label{eq4_1a}
\end{align}

where $P_t$ is the generator of the O.U. semi-group. 

Now, we have by hypothesis: $(1-L) a_{ij} \leq \alpha_{ij}, \alpha_{ij} \in L^{\infty - 0}(\Omega)$, so:

$\| \Grad a_{ij} \|_{H}(\omega) = \| \Grad (1-L)^{-1} (1-L) a_{ij} \|_{H} (\omega)$;
Using (\ref{eq4_1a}):

\begin{align*}
\| \Grad a_{ij} \|_{H}(\omega) &\leq \int_{0}^{\infty} \frac{ e^{-2t} dt }{ (1 - e^{-2t})^{ \frac{1}{2} }} P_t \left[ (1-L) a_{ij} \right] \leq \int_{0}^{\infty} \frac{ e^{-2t} dt }{ (1 - e^{-2t})^{ \frac{1}{2} }} \left(P_t \alpha_{ij}\right) dt \\
\text{so:} &\int \P( d\omega ) \left( \int_{0}^{1} ds \| \Grad a_{ij} \|_{H}^2 \right)^{p/2} \leq \int \P( d\omega ) \int_{0}^{1} ds \| \Grad a_{ij} \|_{H}^{p} \\
&\leq \int \P( d\omega ) \left| \int_{0}^{\infty} \frac{ e^{-2t} dt }{ (1 - e^{-2t})^{ \frac{1}{2} }} dt . P_t \alpha_{ij} \right|^{p} \leq \int \P( d\omega ) \int_{0}^{\infty} \frac{ e^{-2pt} dt }{ (1 - e^{-2t})^{ \frac{p}{2} }} | P_t \alpha_{ij} |^p \\
&\leq \int_{0}^{\infty} \frac{ e^{-2pt} dt }{ (1 - e^{-2t})^{ p }} \|\alpha_{ij} \|_{L^{p}(\Omega)}^{p}
\end{align*}
\end{proof}

\begin{rem}
If the $\overset{r}{\otimes} H$-norms of the iterated Malliavin derivation of $f \in \dinf(\Omega)$ are bounded by elements of $L^{\infty - 0}$, this does not imply that the iterated O.U. of $f$ are bounded by $L^{\infty - 0}$ functions.

Example: $f = \cos \frac{ B_t }{ \sqrt{t} }$, $B_t$ being Brownian.

\end{rem}

\begin{rem}
If $A$ is a $\dinf$ process from $[0,1] \times \Omega$, valued in the 

$n \times n$-A.M. but with the items being vectors of $H$, $A = (h_{ij})$, 

if $\sup_{t \in [0,1]} \| (1-L)^{r/2} h_{ij} \|_H$ is bounded by a function in $L^{\infty - 0}(\Omega)$ then $\Grad A = (\Grad h_{ij})$ is a multiplicator from $\dinf (\Omega, H)$ to $\dinf(\Omega, H \otimes H)$
\end{rem}

\subsection{Example of $\mathbb{D}^\infty$-bounded processes which is not a multiplicator}

\begin{prop}
The set of $\dinf$-multiplicators is strictly included in the set of $\dinf$-bounded processes.
\end{prop}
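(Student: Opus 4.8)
The inclusion of multiplicators into $\dinf$-bounded processes is exactly Lemma \ref{lem4_1}, iii); so the whole content of the Proposition is the \emph{strictness}, i.e.\ the construction of a single $\dinf$-bounded antisymmetric process $A(t,\omega)$ that fails to be a multiplicator. The plan is to exploit the conceptual gap between the two notions: $\dinf$-boundedness is a bound on $\|A(t,\cdot)\|_{\sko^p_r}$ which is \emph{pointwise in $t$} and of $L^p(\Omega)$-type, whereas the multiplicator property asks that $t\mapsto\int_0^t A(s,\omega)\dot V(s,\omega)\,ds$ lie in $\dinf(\Omega,H)$, a condition that genuinely couples the time variable $s$ and the chance variable $\omega$. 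In particular it is sensitive to the \emph{pointwise in $\omega$} behaviour of the Ornstein--Uhlenbeck iterates $(1-L)^{r/2}A(t,\cdot)$, which --- as the preceding Remark stresses with $\cos(B_t/\sqrt{t})$ --- need not be $L^{\infty-0}$-bounded even when $A$ is $\dinf$-bounded.

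Concretely I would take $n=2$ and $A(t,\omega)=a(t,\omega)\,J$ with $J=\begin{pmatrix}0&1\\-1&0\end{pmatrix}$ and $a(t,\omega)=\cos\!\big(W(u_t)\big)$, where $u_t=\mathbf{1}_{[0,t]}/\sqrt{t}$ is a unit vector of $H$ (so $W(u_t)=B_t/\sqrt{t}$). The first step is to check that $A$ is $\dinf$-bounded: for each fixed $t$, $a(t,\cdot)$ is a \emph{fixed} bounded smooth function of the single standard Gaussian $W(u_t)$, whence every $\sko^p_r$-norm of $a(t,\cdot)$ equals a constant independent of $t$; moreover all Malliavin derivatives satisfy $\|\Grad^j a(t,\cdot)\|_{\overset{j}{\otimes}H}\le 1$ pointwise, since $\Grad^j a=\pm\cos(W(u_t))\,u_t^{\otimes j}$ or $\pm\sin(W(u_t))\,u_t^{\otimes j}$ and $\|u_t\|_H=1$. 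In contrast $(1-L)a(t,\omega)=2\cos(B_t/\sqrt{t})-(B_t/\sqrt{t})\sin(B_t/\sqrt{t})$ carries the factor $W(u_t)$, so by the law of the iterated logarithm it is unbounded in $t$ for a.e.\ $\omega$: this is precisely the defect that a multiplicator must not have.

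The heart of the proof is then to produce a $\dinf$-vector field $V$ for which $AV$ leaves $\dinf(\Omega,H)$. The plan is to \emph{correlate} $V$ with $a$ and to concentrate the weight of $\dot V$ near the singular time $t=0$: take $\dot V(s,\omega)=\varphi\!\big(W(u_s)\big)\,w(s)\,v$ for a suitable scalar profile $\varphi$, a fixed direction $v$, and a time weight $w$ with $\int_0^1 w(s)^2\,ds<\infty$ but critically non-integrable at higher order (e.g.\ $w(s)\sim s^{-1/4}$), so that $V$ is a genuine $\dinf$-vector field while $\int_0^1|\dot V|^4\,ds=\infty$. One then computes $(1-L)^{r/2}\big(\int_0^{\cdot}a\dot V\,ds\big)$ by Leibniz / \emph{carré du champ}, isolates the divergence-type term in which the full O.U.\ weight falls on $a$ (thereby inheriting the unbounded iterate above), and shows that this term fails to lie in $L^p(\Omega,H)$ for a suitable pair $(p,r)$.

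The step I expect to be the main obstacle is exactly this last verification, because $\dinf$-boundedness forces a strong \emph{averaging} phenomenon that tends to rescue every naive candidate. Indeed, writing $\mu_\omega(ds)=|\dot V(s,\omega)|^2\,ds$ with total mass $M(\omega)=\int_0^1|\dot V|^2\,ds\in\dinf$, Jensen's inequality applied to the probability measure $\mu_\omega/M$ shows that the zeroth-order term $\|AV\|_{L^p(\Omega,H)}$, and likewise the first Malliavin order, are \emph{always} finite for $\dinf$-bounded $A$: the summability of the time weight (or of interval lengths, in a discretised variant) produces a convex combination controlled by the uniform $L^p$-bounds. Consequently the failure can only be forced at higher O.U.\ order, and only through a genuine $\omega$-correlation between $A$ and $V$ that defeats this averaging while keeping $V$ inside $\dinf(\Omega,H)$. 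Making this resonance quantitative --- balancing the near-$0$ asymptotics of $(1-L)^{r/2}\cos(B_s/\sqrt{s})$ against the chosen weight $w$ so that a single $\sko^p_r$-norm provably diverges --- is the delicate point, and is where the bulk of the work will go.
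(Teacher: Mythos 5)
Your witness for strictness does not work: $A(t,\omega)=\cos\bigl(B_t/\sqrt t\bigr)J$ is in fact a multiplicator, and your own observation already proves it. Writing $u_t$ for the unit vector of $H$ with $\dot u_t=\mathds{1}_{[0,t]}/\sqrt t$, you correctly note $\Grad^j\cos(W(u_t))=\pm\cos(W(u_t))\,u_t^{\otimes j}$ or $\pm\sin(W(u_t))\,u_t^{\otimes j}$, so $\sup_{t,\omega}\|\Grad^j a(t,\omega)\|_{\overset{j}{\otimes}H}\le 1$ for every $j$. Leibniz plus Cauchy--Schwarz in the time variable then gives, \emph{pointwise in $\omega$}, $\|\Grad^r(AV)\|_{\overset{r}{\otimes}H\otimes H}\le C(r)\sum_{j\le r}\|\Grad^{j}V\|_{\overset{j}{\otimes}H\otimes H}$ for every $V\in\dinf(\Omega,H)$; by the equivalence of the gradient norms with the $(1-L)^{r/2}$-norms stated at the head of Section 2, $AV\in\sko_r^p(\Omega,H)$ for all $(p,r)$, i.e.\ $A$ is a multiplicator. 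The Remark you lean on only records that $\cos(B_t/\sqrt t)$ fails the \emph{sufficient} Criterion 4.2 (its O.U.\ iterates are not dominated by an $L^{\infty-0}$ function); failing a sufficient condition produces nothing. The same pointwise Leibniz bound blocks your fallback plan: for $\dot V(s,\omega)=\varphi(W(u_s))w(s)v$ with $w\in L^2([0,1])$, every iterate $\Grad^j\dot V(s,\cdot)$ is dominated by $C_j\,w(s)$ pointwise, so $AV\in\dinf(\Omega,H)$ however you correlate $\varphi$ with $a$ and however you choose $(p,r)$. In general, any process whose Malliavin iterates are uniformly bounded in $(t,\omega)$ is automatically a multiplicator, so a counterexample must have gradients that are unbounded in $\omega$, with the unboundedness organized along the time axis.

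Your structural claim --- that $\dinf$-boundedness always rescues the zeroth and first order, so failure can only be forced at higher O.U.\ order --- is also backwards, and this is exactly where the paper's construction lives: it breaks at order zero, in $L^2(\Omega,H)$. With $X_k$ i.i.d.\ standard Gaussians, $e_n$ the normalized indicator vector of the $n$-th dyadic interval, and $\varphi_n=\varphi(\cdot-\sqrt{2\log n})$ a cut-off, the paper takes $C(t,\omega)=\sum_k e^{\sqrt{1+X_k^2}}\,\mathds{1}_{]1-2^{-k},1-2^{-k-1}]}(t)$, which is $\dinf$-bounded because at each fixed $t$ it is one fixed smooth function of a single Gaussian (norms $t$-independent by equidistribution), and $V=\sum_n\bigl(\prod_{k<n}\varphi_k(X_k)\bigr)(1-\varphi_n(X_n))\,e^{-\frac12\sqrt{1+X_n^2}}\,e_n$, a genuine element of $\dinf(\Omega,H)$ because the damping factor and the rarity of $\{X_n>\sqrt{2\log n}\}$ make the series for $\|V\|_{\sko_r^2}$ converge. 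Multiplication by $C$ flips the damping into $e^{+\frac12\sqrt{1+X_n^2}}$ on precisely the events where $V$ carries its mass, and $\E\|C\,V\|_H^2$ diverges (general term of size $e^{c\sqrt{\log n}}/(n\sqrt{\log n})$, $c>0$). Your Jensen argument fails because $\dinf$-boundedness controls $\sup_s\|a(s,\cdot)\|_{L^p(\Omega)}$, whereas $\E\int_0^1|a|^2|\dot V|^2\,ds$ couples $a$ with $\dot V$ in $\omega$: a H\"older bound would need $\int_0^1\|\dot V(s,\cdot)\|_{L^q(\Omega)}^2\,ds<\infty$, which a $\dinf$-vector field need not satisfy (here $\dot V(s,\cdot)$ carries the factor $2^{(n+1)/2}$ on the $n$-th interval). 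So the true dividing line is not the O.U.\ order but whether rare, $\omega$-large values of $A$ can be resonated against a $V$ whose $H$-mass concentrates on those events with exactly compensating damping.
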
 

\begin{proof}
Let 
\begin{align*}
f(x) &= 0 \text{ if } x \in ]-\infty, 0] \cup [1, +\infty[ \\
f(x) &= e^{ - \frac{1}{ x(1-x) } } \text{ if } x \in ]0, 1[
\end{align*}

and $\varphi(x) = \int_{x}^{\infty} f(t) dt / \int_{-\infty}^{+\infty} f(t) dt$; then $\varphi(x) = 1$ if $x < 0$, $\varphi(x) = 0$ if $x > 1$, and $\varphi$ is strictly decreasing on $]0, 1[$.

Let $\varphi_n = \varphi( x - \sqrt{2 \log n} )$.

An Hilbertian basis of $L^2([0,1])$ is: $k \in \N_{\star}$:
\begin{align*}
e_k: t \rightarrow \int_{0}^{t} 2^{ \frac{ k+1 }{ 2 } } \mathds{1}_{ ] 1-\frac{1}{2^k}, 1 - \frac{1}{2^{k+1}} ] }(s) ds
\end{align*}

Then we define the vector field $V$ as:

\begin{align*}
V = \sum_{n=1}^{\infty} \left( \prod_{k=1}^{n-1} \varphi_k(X_k) \right) \left( 1 - \varphi_n(X_n) \right) e^{ - \frac{1}{2} \sqrt{1 + X_{n}^{2}} } e_n
\end{align*}

$X_1, \dots , X_n, \dots$ being independent, centered, identically distributed 

Gaussian variables.

The candidate multiplicator process is:

\begin{align*}
C = \sum_{k=0}^{\infty} e^{ \sqrt{1 + X_{k}^{2}} } \mathds{1}_{ ] 1 - \frac{1}{2^k}, 1 - \frac{1}{2^{k+1}} ] } (t)
\end{align*}

\begin{enumerate}
\renewcommand{\labelenumi}{\roman{enumi})}
\item $C$ is $\dinf$-bounded: let $t = t_0$ and $k_0$ be such that $t \in ] 1 - \frac{1}{2^{k_0}}, 1 - \frac{1}{2^{{k_0}+1}} ]$; we have:
\begin{align*}
\left\| C(t_0, \omega) \right\|_{\sko_r^p}^{p} &= \left\| (1-L)^{r/2} C(t_0, \omega) \right\|_{L^p}^p
&= \int \left|(1-L)^{r/2} e^{ \sqrt{1 + X_{k_0}^{2}} } \right|^p \P(d\omega) < +\infty,
\end{align*}
uniformly relatively to $t$.

\item Now we show that $V \in \dinf(\Omega, H)$. 
It is enough to show that $V \in L^{\infty - 0} \cap \sko_{\infty}^{2}$ (Theorem 2, 5). 
We have $\| V \|_{H} \leq 1$. We will show that $V \in \sko_{\infty}^{2}$ by bounding $\| L^r V \|_{L^2 (\Omega, H)}, r \in \N_{\star}$, with a convergent serie:

Let $\psi_k(X_k) = \varphi_k(X_k)$ if $k \leq n-1$, $\psi_n(X_n) = \left(1 - \varphi_n(X_n) \right) e^{ - \frac{1}{2} \sqrt{1 + X_n^2} }$ if $k=n$.

Then: 
\begin{align*}
\int \P(d\omega) \left| L^r \left( \prod_{k=1}^{n} \psi_k(X_k) \right) \right| ^ 2 = \int \left[ \prod_{k=1}^{n} \psi_k(X_k) \right] . \left[ L^{2r} \left\{ \prod_{j=1}^{n} \psi_j(X_j)  \right\} \right] \P(d\omega) \\
= \sum_{\alpha_1 + \dots + \alpha_n = 2r} \int \left( \prod_{k=1}^{n} \psi_k(X_k) \right) \frac{ (2r)! }{ \alpha_1!\dots \alpha_n! } \left( \prod_{i=1}^{n} L^{\alpha_i} ( \psi_i( X_i ) ) \right) \P(d\omega) \\
= \sum_{\alpha_1 + \dots + \alpha_n = 2r} \frac{ (2r)! }{ \alpha_1!\dots \alpha_n! } \prod_{k=1}^{n} \int \P(d\omega) \psi_k(X_k) L^{\alpha_k} \left( \psi_k( X_k ) \right)
\end{align*}

For each factor $I_k = \int \P(d\omega) \psi_k(X_k) L^{\alpha_k} . \psi_k$ we have four possibilities:

a) $1 \leq k \leq n-1$ and $\alpha_k \neq 0$:
Then: 
\begin{align*}
I_k^{(1)} = \frac{1}{\sqrt{2 \pi}} \int_{\sqrt{2 \log k}}^{\sqrt{2 \log k}+1} \varphi( x - \sqrt{ 2 \log k } ) L^{\alpha_k} [ \varphi(x - \sqrt{2 \log k }) ] e^{ -\frac{x^2}{2} } dx
\end{align*}
then:

\begin{align*}
| I_k^{(1)} | &\leq \frac{1}{\sqrt{2 \pi}} \frac{1}{k} \int_{\sqrt{2 \log k}}^{\sqrt{2 \log k}+1} | L^{\alpha_k} \varphi(x - \sqrt{2 \log k }) | dx \\
&= \frac{1}{\sqrt{2 \pi}} \frac{1}{k} \int_{0}^{1} | L^{\alpha_k} \varphi(u) | du \leq \frac{1}{k \sqrt{2\pi}} M_1
\end{align*}
with $M_1 = \sup_{1 \leq j \leq r} \int_{0}^{1} | L^{j} \varphi(u) | du$.

b) $ 1 \leq k \leq n-1$ and $\alpha_k = 0$:

\begin{align*}
| I_k^{(2)} | = \frac{1}{\sqrt{2 \pi}} \int_{-\infty}^{\sqrt{2 \log k}+1} \varphi( x - \sqrt{ 2 \log k } )^2 e^{ -\frac{x^2}{2} } dx \leq \frac{1}{\sqrt{2 \pi}} \int_{-\infty}^{\sqrt{2 \log k}+1} e^{ -\frac{x^2}{2} } dx \leq 1
\end{align*}

c) $k = n, \alpha_n = 0$
\begin{align*}
I_n^{(1)} &= \frac{1}{\sqrt{2 \pi}} \int_{\sqrt{2 \log n}}^{+\infty} \left( 1 -  \varphi( x - \sqrt{ 2 \log k } ) \right)^2 e^{-\sqrt{1 + x^2}} e^{ -\frac{x^2}{2} } dx \\
&\leq \frac{1}{\sqrt{2 \pi}} \int_{\sqrt{2 \log n}}^{+\infty} e^{-\sqrt{1 + x^2}} e^{ -\frac{x^2}{2} } dx \\
&\leq \frac{1}{\sqrt{2 \pi}} \cdot \frac{1}{e^{\sqrt{1 + 2 \log n}}} \int_{\sqrt{2 \log n}}^{\infty} e^{ -\frac{x^2}{2} } dx \\
&\leq \frac{1}{\sqrt{2 \pi}} \cdot \frac{1}{e^{\sqrt{1 + 2 \log n}}} \cdot \frac{2}{n} \cdot \frac{1}{ \sqrt{2 \log n} }
\end{align*}

d) $k = n, \alpha_n \neq 0$:
\begin{align*}
I_n^{(2)} = \int_{\sqrt{2 \log n}}^{+\infty} \left( 1 -  \varphi( x - \sqrt{ 2 \log n } ) \right) e^{-\frac{1}{2} \sqrt{1 + x^2}} L^{\alpha_n} \left( \left( 1 -  \varphi( x - \sqrt{ 2 \log k } ) \right) e^{-\frac{1}{2} \sqrt{1 + x^2}} \right) e^{ -\frac{x^2}{2} } dx 
\end{align*}

and $L^{\alpha_n} = P_{\alpha_n} \left( \frac{\partial}{\partial x} \right)$ where $P_{\alpha_n}$ is a polynomial such that: 

$1 \leq deg( P_{\alpha_n} ) \leq 4r$.

So:

\begin{align*}
I_n^{(2)} &\leq \frac{1}{\sqrt{2 \pi}} \int_{\sqrt{2 \log n}}^{\infty} e^{-\frac{1}{2} \sqrt{1 + x^2}} P_{\alpha_n} \left( \frac{\partial}{\partial x} \right) \left( \left( 1 -  \varphi( x - \sqrt{ 2 \log n } )\right) e^{-\frac{1}{2} \sqrt{1 + x^2}} \right) e^{ -\frac{x^2}{2} } dx \\
\text{and: } I_n^{(2)} &\leq \frac{1}{ \sqrt{2 \pi} e^{\frac{1}{2} \sqrt{1 + 2\log n}} } \int_{\sqrt{2 \log n}}^{\infty} P_{\alpha_n} \left( \frac{\partial}{\partial x} \right) \left( \left( 1 -  \varphi( x - \sqrt{ 2 \log n } )\right) e^{-\frac{1}{2} \sqrt{1 + x^2}} \right) e^{ -\frac{x^2}{2} } dx
\end{align*}

But $P_{\alpha_n} \left( \frac{\partial}{\partial x} \right) \left\{ \left( 1 -  \varphi( x - \sqrt{ 2 \log n } )\right) e^{-\frac{1}{2} \sqrt{1 + x^2}} \right\}$ can be rewritten as:

\begin{align*}
P_{\alpha_n} \left( \frac{\partial}{\partial x} \right) \left\{ \left( 1 -  \varphi( x - \sqrt{ 2 \log n } )\right) e^{-\frac{1}{2} \sqrt{1 + x^2}} \right\} = e^{-\frac{1}{2} \sqrt{1 + x^2}} \left[ \sum_{i=1}^{\alpha_n + 1} \frac{ P_i(x, \varphi^{(j)} )}{ (1 + x^2)^{i/2} } \right]
\end{align*}

where $P_i$ is a polynomial in $x$, and $\varphi^{(j)}$ being the $j^{\text{th}}$ derivative of $\varphi$ $(j=1, \dots, \alpha_n)$.

Using the same type of development for $P_{2r} \left( \frac{\partial}{\partial x} \right)$, $| \varphi^{(j)} | \leq 1$, and substituting each coefficient with its module, we get:
$\forall \alpha_n: 1 \leq \alpha_n \leq 2r:$

\begin{align*}
\left| P_{\alpha_n} \left( \frac{\partial}{\partial x} \right) \left\{ \left( 1 -  \varphi( x - \sqrt{ 2 \log n } )\right) e^{-\frac{1}{2} \sqrt{1 + x^2}} \right\} \right| \leq e^{-\frac{1}{2} \sqrt{1 + x^2}} \left[ \sum_{i=1}^{2n+1} \frac{ | P_i(x, 1) |}{ (1 + x^2)^{1/2} } \right] = R(x)
\end{align*}

So there exists $N_0 \in \N$ such that: $\forall x \geq \sqrt{2 \log N_0}$, we have $R(x) \leq 1$;

Which implies:

\begin{align*}
I_n^{(2)} \leq \frac{1}{ \sqrt{2 \pi} } \cdot \frac{1}{e^{\frac{1}{2} \sqrt{1 + 2\log n}} } \int_{\sqrt{2 \log n}}^{\infty} e^{ -\frac{x^2}{2} } dx = \frac{1}{ \sqrt{2 \pi} } \cdot \frac{1}{e^{\frac{1}{2} \sqrt{1 + 2\log n}} } \cdot \frac{1}{n \sqrt{2 \log n}}
\end{align*}

Now, the entry of rank $n$ of the serie defining $\|V\|_{\sko_{r}^{2}}$ can be rewritten as:

\begin{align} 
\sum_{\alpha_1 + \dots + \alpha_n = 2r} \frac{ (2r)! }{ \alpha_1! \dots \alpha_n! } \prod_{k=1}^{n-1} I_k I_n + \sum_{\alpha_n = 1}^{2r} \left( \sum_{\alpha_1 + \dots + \alpha_{n-1} = 2r - \alpha_n} \frac{ 2r! }{ \alpha_1!\dots \alpha_n! } \prod_{k=1}^{n-1} I_k I_n \right) \tag{1'}\label{eq4_1b}
\end{align}

Let $J$ be a finite subset of $\{2, \dots, n-1\}$, of size $\lambda$ with: $0 \leq \lambda \leq 2r$. We denote by $P(J) = \prod_{k \in J} I_k \leq \text{C} \prod_{k \in J} \frac{1}{k}$, $C$ being a constant.

Then we know that there is a set $\widetilde{J}$ of $\alpha_k, k \in J$, such that for each $\alpha_k \in \widetilde{J}$: $\alpha_k \neq 0$ and $\alpha_k \leq 2r$ and $\sum_{k \in J} \alpha_k = 2r$.

From the above decomposition of $2r$ we can construct another decomposition of $2r$ such that $\forall j \in J$: $\alpha_j = 1$, and 

$\alpha_1 = 2r - \lambda (= 2r - \sum_{j \in J} j )$.

Then: $P(J) I_{\alpha_{1}} \leq \text{C} \prod_{k \in J} \frac{1}{k}$.

Now this item $\prod_{k \in J} \frac{1}{k}$ appear in the development of:
\begin{align*}
\left( 1 + \dots + \frac{1}{n-1} \right)^{\alpha_1} \underbrace{\left( 1 + \dots + \frac{1}{n-1} \right) \dots \left( 1 + \dots + \frac{1}{n-1} \right) }_{\lambda \text{ times}} =  \left( 1 + \dots + \frac{1}{n-1} \right)^{2r}
\end{align*}

The number of sums that can be used with $\lambda$ integers to obtain $2r$ (each of the integers beging less or equal to $2r$) is bounded by $C(r)$, a constant which depends only on $r$, and not on $n$.

So each $\sum_{\alpha_1 + \dots + \alpha_\lambda = 2r} \frac{ (2r)! }{ \alpha_1! \dots \alpha_\lambda! } \prod_{k \in J} I_k$ can be bounded by $(2r)! C(r) \left( 1 + \dots + \frac{1}{n-1} \right)^{2r}$. So (\ref{eq4_1b}) can be bounded by: $C(r)$, being an "absorbing" constant:

\begin{align*}
\frac{ C(r) \left( 1 + \dots + \frac{1}{n-1} \right)^{2r} }{ e^{\sqrt{1 + 2\log n}} n \sqrt{2 \log n} } + 
\sum_{\alpha_n = 1}^{2r} \frac{ C(r) \left( 1 + \dots + \frac{1}{n-1} \right)^{2r - \alpha_n} }{ e^{\sqrt{1 + 2\log n}} n \sqrt{2 \log n} } \leq \frac{ C(r) \left( \log n \right)^{2r} }{ e^{\sqrt{1 + 2\log n}} n \sqrt{2 \log n} }
\end{align*}

This last inequality shows that the serie defining $\|V\|_{\sko_{r}^{2}}$ is bounded by a convergent serie, so $V \in \dinf(\Omega, H)$.

\item Now we prove that $C.V \notin L^2(\Omega, H)$, which will prove that although $C$ is $\dinf$-bounded, it is not a multiplicator. We have:

\begin{align*}
C.V (t, \omega) = \sum_{n=1}^{\infty} \left( \prod_{k=1}^{n-1} \varphi_k(X_k) \right) \left(1 - \varphi_n(X_n)\right) e^{\frac{1}{2} \sqrt{1+X_n^2}} e_n(t)
\end{align*}

So:

\begin{align*}
\int \| C.V \|_{H}^{2} \P(d\omega) = \sum_{n=1}^{\infty} \int \prod_{k=1}^{n-1} (\varphi_k(X_k))^2 \left(1 - \varphi_n(X_n)\right)^2 e^{\sqrt{1+X_n^2}} \P(d\omega)
\end{align*}

But: 
\begin{align*}\int \varphi_k(X_k)^2 \P(d\omega) \geq 1 - \frac{1}{\sqrt{2\pi}} \int_{\sqrt{2 \log k}}^{+\infty} e^{-\frac{x^2}{2}} dx \geq 1 - \frac{1}{(\sqrt{4n \log k}) k}
\end{align*}

We fix $\epsilon, 0 < \epsilon < 1$:

\begin{align*}
\int (1 - \varphi_n(X_n))^2 e^{\sqrt{1+X_n^2}} \P(d\omega) &\geq \frac{1}{\sqrt{2\pi}} \int_{\sqrt{2 \log n} + \epsilon}^{\sqrt{2 \log n} + 1 + \epsilon} \left(1 - \varphi_n(x_n)\right)^2 e^{\sqrt{1+x_n^2}} e^{-\frac{x_n^2}{2}} dx_n \\
&\geq \left(1 - \varphi(\epsilon)\right)^2 e^{\sqrt{1 + 2\log n}} e^{- \frac{1}{2} (1 + \epsilon + \sqrt{2 \log n})^2}
\end{align*}

Now we will show that the serie with the general term:

\begin{align}
\prod_{k=1}^{n} \left( 1 - \frac{1}{k \sqrt{ 4\pi \log k}} \right) \cdot \frac{ (1 - \varphi(\epsilon))^2 }{\sqrt{2 \pi}} \int_{\sqrt{2 \log n} + \epsilon}^{\sqrt{2 \log n} + 1 + \epsilon} e^{\sqrt{1+x^2}} e^{-\frac{x^2}{2}}  dx \label{eq4_2}
\end{align}

is divergent.

We need the following lemma:

\begin{lem}\label{lem4_2}
Let $a_k$ be a sequence of numbers such that: 

$0 < a_k < \frac{1}{2}$, $\sum_{k} a_k = + \infty$, $\sum_{k} a_k^2 < + \infty$

Then there exists a constant $C_0 > 0$, with
$\forall n:$

$\prod_{k=1}{n} (1-a_k) \geq C_0  e^{- \sum_{k=1}^{n} a_k }$

\end{lem}

We have: $\left( \sqrt{ \frac{ \log x }{ \pi } } \right)' = \frac{1}{x \sqrt{ 4 \pi \log x }}$, so: $\frac{1}{k \sqrt{ 4 \pi \log k }} \leq \sqrt{ \frac{ \log k }{ \pi } } - \sqrt{ \frac{ \log (k-1) }{ \pi } }$. 

With Lemma \ref{lem4_2} we get: $\prod_{k=2}^{n-1} \left( 1 - \frac{1}{k \sqrt{ 4 \pi \log k }} \right) \geq C_0 e^{- \sqrt{ \frac{ \log (n-1) }{ \pi } }}$

Then the general term in (\ref{eq4_2}) is bigger than

\begin{align*} e^{- \sqrt{ \frac{ \log (n-1) }{ \pi } }} \cdot \frac{ (1 - \varphi(\epsilon))^2 }{\sqrt{2 \pi}} \cdot \int_{\sqrt{2 \log n} + \epsilon}^{\sqrt{2 \log n} + 1 + \epsilon} e^{\sqrt{1+x^2}} e^{-\frac{x^2}{2}} dx
\end{align*}

and 

\begin{align*}
\int_{\sqrt{2 \log n} + \epsilon}^{\sqrt{2 \log n} + \epsilon + 1} e^{\sqrt{1+x^2}} e^{-\frac{x^2}{2}} dx &= \int_{\sqrt{2 \log n} + \epsilon}^{\sqrt{2 \log n} + 1 + \epsilon} e^{\sqrt{1+x^2}} e^{-\frac{x^2}{2}} \frac{x dx}{x} \\
&\geq \frac{1}{2 (\sqrt{2 \log n} + \epsilon + 1)} \cdot \int_{(\sqrt{2 \log n} + \epsilon)^2}^{(\sqrt{2 \log n} + \epsilon + 1)^2} e^{\sqrt{1+u}} e^{-\frac{u}{2}} du \\
&\geq \frac{ e^{\sqrt{1 + (\sqrt{2 \log n} + \epsilon)^2}} }{ 2(\sqrt{2 \log n} + \epsilon + 1) } \int_{(\sqrt{2 \log n} + \epsilon)^2}^{(\sqrt{2 \log n} + \epsilon + 1)^2} e^{-\frac{u}{2}} du \\
&= \frac{1}{\sqrt{2 \log n} + \epsilon + 1} \cdot e^{\sqrt{1 + (\sqrt{2 \log n} + \epsilon)^2}} \cdot \left[ e^{-\frac{(\sqrt{2 \log n} + \epsilon)^2}{2}} - e^{-\frac{(\sqrt{2 \log n} + \epsilon + 1)^2}{2}} \right]
\end{align*}

Now: $e^{-\frac{a^2}{2}} - e^{-\frac{(a+1)^2}{2}} \sim e^{-\frac{a^2}{2}}$ when $a \uparrow \infty$ and $e^{\sqrt{1 + (\sqrt{2 \log n} + \epsilon)^2}} \sim e^{\sqrt{2 \log n} + \epsilon}$ when $n \uparrow \infty$.

So the general term above is equivalent to, $C_1$ being a constant:

\begin{align*}
C_1 \cdot (1 - \varphi(\epsilon))^2  e^{-\sqrt{\frac{\log(n-1)}{\pi}}}  \frac{ e^{\sqrt{2 \log n} + \epsilon} }{ \sqrt{2 \log n} + 1 + \epsilon }  e^{-\log n}  e^{-\epsilon \sqrt{2 \log n} }  e^{-\frac{\epsilon^2}{2} }
\end{align*}

which is equivalent to:

\begin{align}
C_1 \cdot \frac{ (1 - \varphi(\epsilon))^2 e^{-\frac{\epsilon^2}{2} + \epsilon }}{ \sqrt{2 \log n} + 1 + \epsilon }  \frac{1}{n}  e^{ \left( - \frac{1}{\sqrt{\pi}} - \epsilon \sqrt{2} + \sqrt{2} \right) \sqrt{\log n} } \label{eq4_3}
\end{align}

So choosing $\epsilon$ such that: $0 < \epsilon < 1 - \frac{1}{\sqrt{2 \pi}}$, (\ref{eq4_3}) is the general of a divergent serie which is less than: $\int \| C.V \|_H^2 \P(d\omega)$.

\end{enumerate}

\end{proof}

\subsection{Identity of $\mathbb{D}^\infty$-derivations which null-divergence adapted multiplicators}

Now we will study the relation between adapted multiplicators and adapted derivations with null divergence.

Let $A = (a_j^i)$ be an adapted multiplicator process taking its values in $n \times n$-A.M. We define a map $D_A$ from the Gaussian variables in $\dinf(\Omega)$ by: 

let $B$ be an $\R^n$-valued Brownian on $W(h_1, \dots, h_n)$: 

$D_A[ W(h_1, \dots, h_n) ] = - \int_{0}^{1} (\dot{h_1}, \dots, \dot{h_n}) A dB$

$D_A$ can be extented on the set of polynomials in Gaussian variables with the Leibniz rule.

$A$ being an antisymmetrical matrix, $\Div A \Grad$ is a derivation; $P$ being a polynomial of Gaussian variables, and $A$ being adapted, $\Div A \Grad P$ can be written with an It\={o} integral which coincides with $D_A P$. So $D_A$ can be extended in a $\dinf$-continuous derivation from $\dinf$ to $\dinf$.

\begin{thm}\label{thm4_1}

Conversely, each $\dinf$-continuous derivation $\delta$ of $\dinf$ in $\dinf$, which is adapted and with a null divergence, has the form $D_A$, $A$ being an adapted multiplicator process with values in the $n \times n$-A.M. .
\end{thm}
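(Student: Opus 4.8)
The plan is to recover the matrix process $A$ from the Clark--Ocone representation of $\delta W(h)$, $h\in H$, and then to verify successively that this $A$ is adapted, antisymmetric, and a multiplicator, and finally that $\delta=D_A$. Since $\Div\delta=0$, for every $h\in H$ we have $\E[\delta W(h)]=-(\Div\delta,W(h))=0$, and as $\delta$ maps $\dinf$ into $\dinf\subset\mathbb D^{1,2}$, the Clark--Ocone formula gives $\delta W(h)=\int_0^1 Z^h(s,\omega)\cdot dB(s)$ with adapted integrand $Z^h(s,\omega)=\E[\Grad_s\,\delta W(h)\mid\F_s]$. The map $h\mapsto Z^h$ is linear because $h\mapsto\delta W(h)$ is, and the whole difficulty is to show that $Z^h(s)$ depends on $\dot h$ only through its pointwise value at $s$.

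First I would establish this time-locality of $Z^h$, using adaptedness of $\delta$ in two ways. Taking $\dot h$ supported in $[a,b]$ and applying $\delta$ to the adapted process $t\mapsto W(h_{\wedge t})$ (where $\dot h_{\wedge t}=\dot h\,\mathbf 1_{[0,t]}$), whose value at $t=b$ is $W(h)$, shows that $\delta W(h)$ is $\F_b$-measurable, so its Clark--Ocone integrand vanishes for $s>b$. For the left end I would first note that $\delta$ preserves $\F_a$-measurability: any $\F_a$-measurable $\psi\in\dinf$ is a $\dinf$-limit of polynomials in variables $W(g)$ with $\dot g$ supported in $[0,a]$, and each such polynomial is the value at time $a$ of an adapted process, so adaptedness and $\dinf$-continuity of $\delta$ force $\delta\psi$ to be $\F_a$-measurable. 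Then for $\F_a$-measurable $\psi\in\dinf$ the null-divergence antisymmetry gives $\E[\psi\,\delta W(h)]=-\E[W(h)\,\delta\psi]$; since $\delta\psi$ is $\F_a$-measurable and $W(h)=\int_a^b\dot h\cdot dB$ is centred and independent of $\F_a$, the right-hand side is $0$, whence $\E[\delta W(h)\mid\F_a]=0$ and $Z^h(s)=0$ for $s<a$. Thus $Z^h$ is supported in $[a,b]$ whenever $\dot h$ is.

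From this support property the matrix is extracted by additivity. For $\dot h=\mathbf 1_E v$ with $E\subset[0,1]$ measurable and $v\in\R^n$, writing $[0,1]=E\cup E^c$ and using linearity together with the support property, one gets $Z^{\mathbf 1_E v}(s)=\mathbf 1_E(s)\,\Psi_v(s,\omega)$ with $\Psi_v(s,\omega):=Z^{\mathbf 1_{[0,1]}v}(s,\omega)$ independent of $E$; setting $A(s,\omega)v:=\Psi_v(s,\omega)$ defines, by linearity in $v$, an adapted $n\times n$ matrix process with $Z^h(s)=A(s,\omega)\dot h(s)$ for step functions, hence for all $\dot h\in L^2$ by continuity. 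Antisymmetry of $A$ then follows once more from the null-divergence identity: for deterministic $h,g$, Itô isometry turns $\E[\delta W(h)\,W(g)]=-\E[W(h)\,\delta W(g)]$ into $\E\int_0^1\dot g^{\,T}A\dot h\,ds=-\E\int_0^1\dot h^{\,T}A\dot g\,ds$, i.e. $\E\int_0^1\dot g^{\,T}(A+A^{T})\dot h\,ds=0$ for all deterministic $\dot h,\dot g$ localised in time, whence $A+A^{T}=0$.

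It remains to show $A$ is a multiplicator and that $\delta=D_A$. For the multiplicator property I would transfer the $\dinf$-continuity of $\delta$ to $A$ as in the proof of Lemma 4.1.iii: evaluating $\delta=\Div A\Grad$ on the time-localised Gaussian variables built from $X_k(t,\epsilon,\cdot)$ and using continuity of $\Div$ and $\Grad$ yields uniform bounds $\sup_t\|(1-L)^{r/2}a_{ij}(t,\cdot)\|_{L^p}\le C(p,r)$, so $A$ is $\dinf$-bounded; with Lemma 4.1 this makes $A$ a multiplicator, and by Remark 2.2.ii $D_A=\Div A\Grad$ is an adapted derivation with null divergence. Finally, by construction $\delta W(h)=\int_0^1 A\dot h\cdot dB=D_A W(h)$ for every $h\in H$, and since both $\delta$ and $D_A$ are $\dinf$-continuous derivations agreeing on the generators $W(h)$, the Leibniz rule and density of polynomials give $\delta=D_A$. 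The main obstacle is the locality step combined with the regularity transfer: proving rigorously that the adapted Clark--Ocone integrand is a genuine pointwise-in-time multiplication operator rather than merely a bounded causal operator, and that the resulting $A$ inherits the full $\dinf$-boundedness from the continuity of $\delta$ uniformly in the time variable.
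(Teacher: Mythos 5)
Your construction of $A$ via the Clark--Ocone integrand, the locality argument, the measurability transfer $\delta(\F_a)\subset\F_a$, and the $\dinf$-boundedness extracted from time-localised Gaussian test variables all run parallel to the paper's Lemmas \ref{lem4_3} and \ref{lem4_4}. But your closing step, ``so $A$ is $\dinf$-bounded; with Lemma \ref{lem4_1} this makes $A$ a multiplicator,'' is a genuine and fatal gap: Lemma \ref{lem4_1} gives only the implications multiplicator $\Rightarrow$ continuous $\Rightarrow$ $\dinf$-bounded, and the paper's Proposition 4.1 constructs an explicit $\dinf$-bounded process which is \emph{not} a multiplicator, so the converse you invoke is false. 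The multiplicator property is precisely the hard core of Theorem \ref{thm4_1}, and the paper has to work for it: it shows that the chaos-one variables $\frac{B^i(s+\epsilon)-B^i(s)}{\sqrt{\epsilon}}$ are $\epsilon$-uniform multiplicators (Corollary \ref{cor4_1}, itself resting on Lemmas \ref{lem4_5} and \ref{lem4_7}), that the $\delta\left[\frac{B^j(s+\epsilon)-B^j(s)}{\sqrt{\epsilon}}\right]$ are $\epsilon$-uniform multiplicators, writes ${}^t a_i^j$ as the $L^2$-weak limit of the products $F^{i,j}(\epsilon)$ (via the It\={o} expansion, the $\dinf$-boundedness of $A$, and $\lim_{\epsilon\downarrow 0}\F_s^{\perp}\cap\F_{s+\epsilon}=\{0\}$ from right-continuity of the filtration), then upgrades weak to strong convergence through barycenters --- which stay uniformly multiplicators --- and concludes by interpolation (Theorem \ref{thm2_12}) that the limit is a multiplicator. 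Your proposal has no substitute for this weak-limit/barycenter/interpolation machinery, and without it your final step also fails, since extending $\Div A \Grad$ from polynomials to a $\dinf$-continuous derivation (needed to compare with $\delta$ by density) requires exactly the multiplicator property, not mere $\dinf$-boundedness.

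There is a second, smaller gap in your antisymmetry step: testing the null-divergence identity only against \emph{deterministic} $\dot h,\dot g$ yields $\int_0^1 \dot g^{\,T}\,\E\left[(A+A^T)(s)\right]\dot h\,ds=0$, i.e.\ only $\E\left[(A+A^T)(s)\right]=0$ for a.e.\ $s$, not $A+A^T=0$ $\P$-a.s. To localise in $\omega$ you must insert a random $\F_a$-measurable factor: apply $\int\delta(\psi\, W(h)W(g))\,\P(d\omega)=0$ with $\dot h,\dot g$ supported in $(a,b]$, use $\delta\psi\in\F_a$ and $\E[\delta\psi]=0$ to kill the extra term, condition on $\F_a$ and let $b\downarrow a$ --- essentially your own locality technique, which you stopped using one step too early. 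The paper avoids this issue differently: Lemma \ref{lem4_3} shows that $\delta(M_s)\,M_s$ is a martingale for $M_s=\E\left[W(h_1,\dots,h_n)\,|\,\F_s\right]$, from which the \emph{pathwise} bracket identity $\int_0^t\left[d(\delta B_s^i),dB_s^j\right]+\int_0^t\left[d(\delta B_s^j),dB_s^i\right]=0$ follows, giving $\int_0^t(a_j^i+a_i^j)\,ds=0$ for all $t$, a.s. This part of your argument is repairable; the multiplicator step is the one that needs an entirely missing idea.
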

The proof of Theorem \ref{thm4_1} needs several lemmas. We first define the coefficients $a_j^i$ of the candidate $A$: let $(B_t^1, \dots, B_t^n)$ be an n-uple of independent Brownian processes. Then by the Clark-Ocone formula we have: 

$\delta B_1^i = \int_0^1 {}^t \E \left[ \Grad (\delta B_1^i)_j(s) | \F_s \right] dB_s^j$, $\Grad (\delta B_1^i)_j$ being the $j^{\text{th}}$ component of the vector $\Grad (\delta B_1^i)$ which is a process, for which the value in $s$ is taken. We define $A = {}^t(a_j^i)(t, \omega) = {}^t \E \left[ \Grad (\delta B_1^i)_j(t) | \F_t \right]$ and will show that $A$ is an adapted multiplicator with values in $n \times n$-A.M., and that $D_A = \Div A \Grad = \delta$. 

It is obvious from the definition of $A$ that $A$ is adapted. We first prove that $A$ is antisymmetric.

\begin{lem}\label{lem4_3}
$\delta$ being an $\dinf$-continuous derivation, adapted and with null divergence, then $B_t$ being a Brownian process:

i) $\delta B_t, t \in [0, 1]$ is a martingale.

ii) $\forall W(h_1, \dots, h_n): \delta \left( \E \left[ W(h_1, \dots, h_n) | \F_t \right] \right)$ is a $\F_t$-martingale.

iii) $\delta \left( \E \left[ W(h_1, \dots, h_n) | \F_t \right] \right) \E \left[ W(h_1, \dots, h_n) | \F_t \right]$ is a $\F_t$-martingale.

iv) ${}^t a_j^i = \E \left[ \Grad ( \delta B_1^i)_j(s) | \F_s \right]$ is antisymmetrical.
\end{lem}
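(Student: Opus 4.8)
The four assertions share a single engine: under $\Div\delta=0$ the derivation $\delta$ is $L^2$-antisymmetric (Remark 2.2.i), namely $\int(\delta\varphi)\psi\,d\mathbb{P}=-\int\varphi(\delta\psi)\,d\mathbb{P}$, while adaptedness forces $\delta$ to preserve $\mathcal{F}_s$-measurability. The plan is to isolate this last fact first as the crucial preliminary: \emph{if $\varphi\in\mathbb{D}^\infty(\Omega)$ is $\mathcal{F}_s$-measurable, then so is $\delta\varphi$.} To see it, apply $\delta$ to the adapted process $t\mapsto\mathbb{E}[\varphi\,|\,\mathcal{F}_t]$, which lies in $\mathbb{D}^\infty$ for each $t$ since the O.U. operator commutes with conditional expectation (Proposition 2.1); by Definition 2.4.ii the process $\delta(\mathbb{E}[\varphi\,|\,\mathcal{F}_t])$ is again adapted, and evaluating at $t=s$, where $\mathbb{E}[\varphi\,|\,\mathcal{F}_s]=\varphi$, shows $\delta\varphi$ is $\mathcal{F}_s$-measurable.

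For (i) note that $B_t$ is adapted, so $\delta B_t$ is $\mathcal{F}_t$-measurable, and integrability is automatic as $\delta B_t\in\mathbb{D}^\infty$. For $s<t$ and any $\mathcal{F}_s$-measurable $\varphi\in\mathbb{D}^\infty$, antisymmetry gives $\int\varphi\,\delta(B_t-B_s)\,d\mathbb{P}=-\int(B_t-B_s)\,\delta\varphi\,d\mathbb{P}$; since $\delta\varphi$ is $\mathcal{F}_s$-measurable while the increment $B_t-B_s$ is independent of $\mathcal{F}_s$ with zero mean, the right-hand side factorizes and vanishes, whence $\mathbb{E}[\delta B_t-\delta B_s\,|\,\mathcal{F}_s]=0$. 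Part (ii) is the same argument with $B_t$ replaced by the martingale $M_t=\mathbb{E}[W(h_1,\dots,h_n)\,|\,\mathcal{F}_t]$, whose increments $M_t-M_s$ are again independent of $\mathcal{F}_s$ and centered because $W(h_1,\dots,h_n)$ is a first-chaos (Wiener-integral) variable.

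Part (iii) is the substantive step. With $M_t$ as above and $N_t=\delta M_t$, both are martingales by (i)–(ii), and $N_tM_t$ is a martingale iff $\mathbb{E}[(N_t-N_s)(M_t-M_s)\,|\,\mathcal{F}_s]=0$. The key is the Leibniz rule, which turns this product into $\tfrac12\,\delta\bigl((M_t-M_s)^2\bigr)$: indeed $(N_t-N_s)(M_t-M_s)=\delta(M_t-M_s)\cdot(M_t-M_s)=\tfrac12\,\delta\bigl((M_t-M_s)^2\bigr)$. Testing against an arbitrary $\mathcal{F}_s$-measurable $\varphi$ and using antisymmetry yields $\int\varphi\,\delta\bigl((M_t-M_s)^2\bigr)\,d\mathbb{P}=-\int(M_t-M_s)^2\,\delta\varphi\,d\mathbb{P}$; since $(M_t-M_s)^2$ is independent of the $\mathcal{F}_s$-measurable $\delta\varphi$, this equals $-\mathbb{E}[(M_t-M_s)^2]\,\mathbb{E}[\delta\varphi]$, and $\mathbb{E}[\delta\varphi]=\int\delta\varphi\,d\mathbb{P}=0$ by $\Div\delta=0$. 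Hence $\mathbb{E}[\delta((M_t-M_s)^2)\,|\,\mathcal{F}_s]=0$, which is exactly the required orthogonality.

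For (iv), the Clark–Ocone representation together with $\mathbb{E}[\delta B_1^i]=0$ gives $\delta B_t^i=\int_0^t\sum_k a_k^i(s)\,dB_s^k$ with $a_k^i(s)=\mathbb{E}[(\Grad\delta B_1^i)_k(s)\,|\,\mathcal{F}_s]$. Applying (iii) to $W=B_1^i+B_1^j$ and subtracting the two diagonal martingales $\delta(B_t^i)B_t^i$ and $\delta(B_t^j)B_t^j$ shows the cross term $\delta(B_t^i)B_t^j+\delta(B_t^j)B_t^i$ is a martingale; computing its finite-variation (bracket) part by the It\^o product rule gives $\int_0^t\bigl(a_j^i+a_i^j\bigr)(s)\,ds$, which must therefore vanish for all $t$, forcing $a_j^i=-a_i^j$. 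I expect (iii) to be the main obstacle: the whole argument hinges on recognizing the orthogonality of the two martingales as $\tfrac12\,\delta$ of a square and then collapsing it through antisymmetry and null divergence; once that identity is in hand, (iv) reduces to a short It\^o computation.
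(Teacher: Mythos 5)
Your proposal is correct, and for (i), (ii) and (iv) it follows essentially the paper's own route: antisymmetry of $\delta$ from $\Div \delta = 0$, preservation of $\F_s$-measurability from adaptedness (a fact the paper uses implicitly and which you rightly isolate and justify via Definition 2.4.ii applied to the adapted process $t \mapsto \E[\varphi \,|\, \F_t]$ together with Proposition 2.1), and in (iv) the same polarization $W = B_1^i + B_1^j$, Clark--Ocone identification $\delta B_t^i = \E[\delta B_1^i \,|\, \F_t] = \int_0^t a_k^i \, dB^k$, and vanishing of the finite-variation part $\int_0^t (a_j^i + a_i^j)\, ds$. Where you genuinely diverge is (iii). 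The paper conditions first: it expands $\E[M_s^2 \,|\, \F_u]$ by the It\^o formula as $2\int_0^u M\, dM$ plus the \emph{deterministic} bracket $\int_0^s \sum_i \dot h_i^2 \, d\rho$, kills the deterministic part with $\delta$, and applies It\^o again to land on $M_u\, \delta M_u$, the commutation $\delta\, \E[\,\cdot\, | \F_u] = \E[\delta \,\cdot\, | \F_u]$ from step (ii) doing the bookkeeping. You instead work with increments: the orthogonality criterion for products of martingales reduces the claim to $\E\left[\delta\left((M_t - M_s)^2\right) | \F_s\right] = 0$, which you obtain from the Leibniz rule, antisymmetry, independence of $(M_t - M_s)^2$ from the $\F_s$-measurable $\delta\varphi$, and $\E[\delta\varphi] = 0$ from the null divergence. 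Both mechanisms ultimately exploit the first-chaos structure of $M$ (the paper through the deterministic bracket, you through independence of Gaussian increments), but your version dispenses with stochastic calculus in (iii) entirely --- It\^o enters only in (iv) --- and it displays the hypothesis $\Div\delta = 0$ acting at exactly one visible point, whereas in the paper it enters (iii) only indirectly, through the antisymmetry of Remark 2.2.i. One shared implicit step worth noting: concluding that a conditional expectation vanishes from testing against $\F_s$-measurable $\varphi \in \dinf$ uses the density of such test functions in $L^2(\F_s)$; you leave this tacit, but so does the paper.
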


\begin{proof}


i) straightforward: $\forall \varphi \in \F_s:$
$\int \varphi \left[ \delta B_s - \E \left[ \delta B_t | \F_s \right] \right] = \int \varphi \delta \left( B_s - B_t \right) = 0$,

$\delta$ being adapted.

ii) $\forall \varphi \in \F_s$:
\begin{align*}
\int \varphi \E \left[ \delta W(h_1, \dots, h_n) | \F_s \right] &= - \int \delta \left( \E \left[ \varphi | \F_s \right] \right) W(h_1, \dots, h_n) \\
&= - \int \delta \varphi . \E \left[ W(h_1, \dots, h_n) | \F_s \right] \\
&= + \int \varphi . \delta \E \left[ W(h_1, \dots, h_n) | \F_s \right]
\end{align*}

implies $\E \left[ \delta W(h_1, \dots, h_n) | \F_s \right] = \delta \E \left[ W(h_1, \dots, h_n) | \F_s \right]$

iii) Denote $M_s = \E \left[ W(h_1, \dots, h_n) | \F_s \right]$. Let: $0 \leq u \leq s$.

\begin{align*}
\forall \varphi \in \F_u: \int \frac{1}{2} \delta (M_s)^2 \varphi &= - \int \frac{1}{2} \delta \varphi . \E \left[ M_s^2 | \F_u \right] (\delta \text{ adapted}) \\
&= - \int \frac{1}{2} \delta \varphi . \E \left[ 2 \int_0^s M dM + \int_0^s [dM, dM] \bigg| \F_u \right] \\
&= + \int \frac{1}{2} \varphi \delta \left\{ \E \left[ 2 \int_0^s M dM + \int_0^s [dM, dM] \bigg| \F_u \right] \right\}
\end{align*}

But $dM = \sum_{i=1}^n \dot{h_i} dB_i$, so $\int_0^s [dM, dM] = \int_0^s \sum_{i=1}^n \dot{h_i^2} d\rho$ 

Then: 
\begin{align*}\int \frac{1}{2} \varphi . \delta( M_s )^2 = \int \frac{1}{2} \varphi . \delta \left( \E \left[ 2 \int_0^s M dM \right] \right)
&& \left( \delta \left[ \int_0^s \sum_{i=1}^n \dot{h_i^2} d\rho \right] = 0 \right)
\end{align*}

So:

\begin{align*}
\int \frac{1}{2} \varphi \delta( M_s )^2 &= \int \varphi . \delta \left( \left[ \frac{1}{2} M_u^2 - \frac{1}{2} \int_0^u [dM, dM] \right] \right) \text{ (It\={o} formula)} \\
&= \int \varphi . M_u \delta M_u 
\end{align*}

iv)
\begin{align*}
\delta \left( B_t^i B_t^j \right) &= B_t^j \delta B_t^i + B_t^i \delta B_t^j \\
&= \int_0^t \delta B_s^i  dB_s^j + \int_0^t B_s^j d(\delta B_s^i) + \int_0^t \delta B_s^j dBs^i + \int_0^t B_s^i d( \delta B_s^j) \\ 
&+ \int_0^t \left[ d( \delta B_s^i ), dB_s^j \right] + \int_0^t \left[ d( \delta B_s^j ), dB_s^i \right]
\end{align*}

iii) implies: 
\begin{align}
\int_0^t \left[ d( \delta B_s^i ), dB_s^j \right] + \int_0^t \left[ d( \delta B_s^j ), dB_s^i \right] = 0 \label{eq4_4}
\end{align}

Now: $\delta B_s^i = \E \left[ \delta B_1^i | \F_s \right] = \int_0^s {}^t a_j^i(\rho) dB_{\rho}^{j}$, $^t a_i^j$ being the transposed matrix of $a_i^j$.

Then: $d( \delta B_1^i) = {}^t a_k^i(s, \omega) dB_s^k$. (\ref{eq4_4}) becomes: 

\begin{align*}
\int_0^t \left[ a_j^i(s, \omega) + a_i^j(s, \omega) \right] ds = 0, \forall t \in [0, 1]
\end{align*}
\end{proof}

\begin{lem}\label{lem4_4}
$A = (a_j^i)$ is $\dinf$-bounded.
\end{lem}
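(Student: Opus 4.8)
The plan is to extract the pointwise value of $A(t,\cdot)$ from the $\dinf$-continuity of $\delta$, by testing $\delta$ against standard Gaussian variables supported on a shrinking interval $[t,t+\epsilon]$ and then letting $\epsilon\downarrow 0$. First I collect the inputs. As $\delta$ is $\dinf$-continuous and $B_1^i=W(e_i\mathds 1_{[0,1]})\in\dinf(\Omega)$, we have $\delta B_1^i\in\dinf(\Omega)$, so the integrand ${}^t a_j^i(\rho)=\E[\Grad(\delta B_1^i)_j(\rho)\,|\,\F_\rho]$ is well defined; by Lemma~\ref{lem4_3}.i the process $\tau\mapsto\delta B_\tau^i=\E[\delta B_1^i\,|\,\F_\tau]$ is a martingale with It\^o representation $\delta B_\tau^i=\int_0^\tau {}^t a_j^i(\rho)\,dB_\rho^j$ (Einstein summation). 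Fix $p>1$, $r\in\N_\star$, an index $i$, a time $t\in[0,1)$ and a small $\epsilon>0$, and put $g_\epsilon=\frac1{\sqrt\epsilon}\mathds 1_{[t,t+\epsilon]}e_i$, so $\|g_\epsilon\|_H=1$ and $W(g_\epsilon)=\frac1{\sqrt\epsilon}\bigl(B_{t+\epsilon}^i-B_t^i\bigr)$ is an $N(0,1)$ variable whose $\dqs$-norm equals $2^{s/2}\|N(0,1)\|_{L^q}$, independent of $t,\epsilon,i$. Linearity of $\delta$ and the representation above give
\[
  \delta W(g_\epsilon)=\frac1{\sqrt\epsilon}\bigl(\delta B_{t+\epsilon}^i-\delta B_t^i\bigr)=\frac1{\sqrt\epsilon}\int_t^{t+\epsilon}{}^t a_j^i(\rho)\,dB_\rho^j .
\]

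Next I move to order $r$, using on $\drp$ the equivalent norm $\|f\|_{\drp}\asymp\|(1-L)^{r/2}f\|_{L^p}$. Since the O.U. operator commutes with the conditional expectation (Proposition~\ref{pr2_1}) and $\Grad(1-L)^{r/2}=(2-L)^{r/2}\Grad$, the variable $G^i:=(1-L)^{r/2}\delta B_1^i\in\dinf(\Omega)$ has Clark--Ocone integrand $w^i(\rho)=(2-L)^{r/2}\,{}^t a^i(\rho)$, and $\E[G^i\,|\,\F_\tau]=(1-L)^{r/2}\delta B_\tau^i$ is the martingale $\int_0^\tau w^i\,dB$. Hence $(1-L)^{r/2}\delta W(g_\epsilon)=\frac1{\sqrt\epsilon}\int_t^{t+\epsilon}w_j^i(\rho)\,dB_\rho^j$, and $\dinf$-continuity of $\delta$ gives, uniformly in $t,\epsilon,i$,
\[
  \Bigl\|\tfrac1{\sqrt\epsilon}\textstyle\int_t^{t+\epsilon}w_j^i\,dB^j\Bigr\|_{L^p}\asymp\|\delta W(g_\epsilon)\|_{\drp}\le C(p,r)\,\|W(g_\epsilon)\|_{\dqs}=:C' .
\]

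Finally I extract the pointwise bound. By the lower Burkholder--Davis--Gundy inequality,
\[
  C'\ \ge\ c_p\,\Bigl\|\Bigl(\tfrac1\epsilon\int_t^{t+\epsilon}\textstyle\sum_j|w_j^i(\rho)|^2\,d\rho\Bigr)^{1/2}\Bigr\|_{L^p}.
\]
Because $\Grad G^i\in\dinf(\Omega,H)\subset L^2(\Omega,H)$, the $L^p(\Omega)$-valued map $\rho\mapsto w^i(\rho)$ is Bochner-integrable on $[0,1]$, so at almost every Lebesgue point $t$ the inner average converges to $\sum_j|w_j^i(t)|^2$; by Fatou, $\|(\sum_j|w_j^i(t)|^2)^{1/2}\|_{L^p}\le c_p^{-1}C'$. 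As $w^i=(2-L)^{r/2}\,{}^t a^i$ and the multiplier $(1-L)^{r/2}(2-L)^{-r/2}$ is $L^p$-bounded (Meyer), this gives $\sup_t\|{}^t a^i(t)\|_{\drp}<\infty$; since $p,r,i$ are arbitrary and the entries of $A$ are the ${}^t a_j^i$, $A$ is $\dinf$-bounded.

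The main obstacle is this last extraction step: turning uniform control of the normalized stochastic integrals into a pointwise-in-$t$ bound. Two technical points arise. First, Lebesgue differentiation only yields the estimate for almost every $t$, so the conclusion is literally an essential supremum; this is harmless because $\int_0^\cdot{}^t a^i(s)\,ds$ is a $\dinf$-vector field by Theorem~\ref{thm2_9}, which pins down a canonical measurable version of $\rho\mapsto{}^t a^i(\rho)$ (and the attendant $\tfrac12$-H\"older estimates of Lemma~\ref{lem2_2} then upgrade the essential supremum to a genuine $\sup_{t\in[0,1]}$). Second, one must verify that the commutation identities (Proposition~\ref{pr2_1} and $\Grad(1-L)^{r/2}=(2-L)^{r/2}\Grad$) together with the lower BDG inequality survive uniformly as $\epsilon\downarrow 0$, which is where the bulk of the routine estimation sits.
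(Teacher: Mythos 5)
Your proof is correct in substance, but it takes a genuinely different route from the paper's. The paper argues by polarization with an independent Brownian increment: it multiplies $\delta\bigl[B^i_{t+\epsilon}-B^i_t\bigr]/\sqrt{\epsilon}$ by $\bigl(B^k_{t+\epsilon}-B^k_t\bigr)/\sqrt{\epsilon}$ and applies the It\=o product formula, so that conditionally on $\F_t$ the product equals $\frac{1}{\epsilon}\int_t^{t+\epsilon}{}^t a^i_k\,ds$ plus two martingale integrals; letting $\epsilon\downarrow 0$, the product converges in $L^1(\Omega)$ to ${}^t a^i_k$ while remaining uniformly $\dinf$-bounded (both factors are: the first as an $N(0,1)$ variable in the first chaos, the second by $\dinf$-continuity of $\delta$ applied to your normalized Gaussian $W(g_\epsilon)$ — the same starting point as yours — and $\dinf$ is an algebra), whence each entry $a^i_k$ is $\dinf$-bounded. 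You never polarize: instead you push the order-$r$ estimate through the Clark--Ocone representation by conjugating with $(1-L)^{r/2}$ (via $\Grad(1-L)^{r/2}=(2-L)^{r/2}\Grad$ and Proposition 2.1, both valid chaos-by-chaos), control the averaged quadratic variation by the lower Burkholder--Davis--Gundy inequality plus Doob, localize at $t$ by Lebesgue differentiation, and return with the multiplier $(1-L)^{r/2}(2-L)^{-r/2}$. Your route buys an explicit uniform constant $C(p,r)$ and all entries of row $i$ at once; the paper's buys entrywise identification with lighter machinery, though its final step (an $L^1$-limit of a $\dinf$-bounded family is $\dinf$-bounded) silently needs the same Fatou/weak-compactness reasoning you make explicit, and both arguments share the a.e.-in-$t$ caveat at the averaging step (the paper's $\frac{1}{\epsilon}\int_t^{t+\epsilon}{}^t a^i_k\,ds\to{}^t a^i_k(t)$ in $L^1$ likewise only holds at Lebesgue points). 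One small repair to your write-up: Bochner integrability of $\rho\mapsto|w^i(\rho)|^2$ into $L^{p/2}(\Omega)$ does not follow from the inclusion $\dinf(\Omega,H)\subset L^2(\Omega,H)$ alone when $p>2$; use instead $\Grad G^i\in L^p(\Omega,H)$ together with the mixed-norm Minkowski inequality $\bigl(\int_0^1\|w^i(\rho)\|_{L^p}^2\,d\rho\bigr)^{1/2}\le\bigl\|\bigl(\int_0^1|w^i(\rho)|^2\,d\rho\bigr)^{1/2}\bigr\|_{L^p}$, valid for $p\ge 2$ (which suffices, the $\drp$-norms being monotone in $p$); with that, your Fatou step becomes superfluous, since vector-valued Lebesgue differentiation already yields $L^{p/2}(\Omega)$-convergence of the averages at a.e.\ $t$.
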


\begin{proof}

\begin{align*}
\E \left[ \frac{ B_{t+\epsilon} - B_t^k}{\sqrt{\epsilon}} \cdot \frac{ \delta \left[ B_{t+\epsilon}^i - B_t^i\right] }{\sqrt{\epsilon}}\right] &= \E \left[ \frac{1}{\epsilon} \int_{t}^{t+\epsilon} {}^t a_j^i dB^d . \int_{t}^{t+\epsilon} dB^k \bigg| \F_t \right] \\
&= \E \left[ \frac{1}{\epsilon} \int_{t}^{t+\epsilon} M^{(1)} . {}^t a_j^i dB^j + \frac{1}{\epsilon} \int_{t}^{t+\epsilon} M^{(2)} dB^k + \frac{1}{\epsilon} \int_{t}^{t+\epsilon} {}^t a_k^i ds \bigg| \F_t \right].
\end{align*}

with It\={o} formula, $M^{(1)}$ and $M^{(2)}$ being the obvious martingales.

Let $\epsilon \rightarrow 0$: the r.h.s. of the above equation converges $L^1(\Omega)$ towards ${}^t a_k^i$, and is $\dinf$-bounded, so $a_k^i$ is $\dinf$-bounded. 

\end{proof}

Now $A$ being antisymmetric, $\Div A \Grad$ is a derivation on the domain of the polynomials in Gaussian variables, which coincides with $\delta$ on this domain.

The only property left to verify is that $A$ is a multiplicator and then: $D_A = \Div A \Grad$ will be a derivation on $\dinf(\Omega)$, which will coincide with $\delta$ on $\dinf(\Omega)$.

For this, the following lemmas are needed:

\begin{lem}\label{lem4_5}

i) If $P \in \mathcal{C}_n, Q \in \mathcal{C}_m, \exists k(n) \in \N$ so that: 

$\| PQ \|_{L^2} \leq k(n) (m+1)^n \| P \|_2 \| Q \|_2$.

ii) The multiplication by an element of $\mathcal{C}_n$ sends $\sko_{\infty}^2$ to $\sko_{\infty}^2$.

\end{lem}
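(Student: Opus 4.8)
The plan is to reduce both statements to the multiplication (product) formula for multiple Wiener--It\^o integrals, combined with the orthogonality of the chaoses and elementary combinatorial estimates; part ii) will then follow from part i) and the Hilbertian description of $\sko_\infty^2$.

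For i), I write $P = I_n(f)$ and $Q = I_m(g)$ with symmetric kernels $f, g$, so that $\|P\|_2^2 = n!\,\|f\|^2$ and $\|Q\|_2^2 = m!\,\|g\|^2$ (the kernel norms being taken in $\overset{n}{\otimes}H$, $\overset{m}{\otimes}H$). The product formula gives
\[
PQ = \sum_{r=0}^{n\wedge m} r!\binom{n}{r}\binom{m}{r}\,I_{n+m-2r}\!\left(f\,\widetilde\otimes_r g\right),
\]
and by orthogonality of the chaoses together with $\|I_k(\varphi)\|_2^2 = k!\,\|\varphi\|^2$,
\[
\|PQ\|_2^2 = \sum_{r=0}^{n\wedge m}\left(r!\binom{n}{r}\binom{m}{r}\right)^2 (n+m-2r)!\;\|f\,\widetilde\otimes_r g\|^2 .
\]
Next I apply the contraction inequality $\|f\,\widetilde\otimes_r g\|\leq\|f\otimes_r g\|\leq\|f\|\,\|g\|$ and substitute $\|f\|^2=\|P\|_2^2/n!$, $\|g\|^2=\|Q\|_2^2/m!$, which isolates the purely combinatorial coefficient $c_r = (r!)^2\binom{n}{r}^2\binom{m}{r}^2(n+m-2r)!/(n!\,m!)$. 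The crux is to show $\sum_r c_r\leq k(n)^2(m+1)^n$; I will do this by the bounds $m!/(m-r)!\leq m^r$ and $(n+m-2r)!/(m-r)!\leq(n+m)^{n-r}$ (a product of $n-r$ consecutive integers, each $\leq n+m$, valid since $r\leq n$), which give
\[
\frac{\binom{m}{r}^2(n+m-2r)!}{m!}\leq\frac{(n+m)^n}{(r!)^2}\leq\frac{(n+1)^n(m+1)^n}{(r!)^2},
\]
whence $c_r\leq\binom{n}{r}^2(n+1)^n(m+1)^n/n!$ and $\sum_{r=0}^{n}c_r\leq(m+1)^n(n+1)^n\binom{2n}{n}/n!$. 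Taking $k(n)$ to be any integer $\geq\sqrt{(n+1)^n\binom{2n}{n}/n!}$ proves i) (indeed with the sharper exponent $n/2$, which a fortiori yields the stated $(m+1)^n$).

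For ii), I write $Q=\sum_{m\geq0}Q_m$ with $Q_m\in\mathcal C_m$; since $(1-L)$ acts as multiplication by $1+m$ on $\mathcal C_m$, membership $Q\in\sko_\infty^2$ is equivalent to $\sum_m(1+m)^t\|Q_m\|_2^2<\infty$ for every $t$. The decisive structural fact is that $PQ_m$ has nonzero chaos components only in the orders $|n-m|\leq k\leq n+m$ with $k\equiv n+m\pmod 2$, so for each fixed output level $k$ at most $2n+1$ input levels $m$ contribute, namely $m\in[\,|k-n|,\,k+n\,]$. Projecting onto $\mathcal C_k$, applying Cauchy--Schwarz over this bounded index set and the estimate of i),
\[
\|(PQ)_k\|_2^2\leq(2n+1)\,k(n)^2\,\|P\|_2^2\sum_{m=k-n}^{k+n}(m+1)^{2n}\|Q_m\|_2^2 .
\]
Multiplying by $(1+k)^s$, summing over $k$, and exchanging the order of summation (for fixed $m$ only $k\in[m-n,m+n]$ contribute, where $(1+k)^s\leq C(n,s)(1+m)^s$) reduces everything to $\sum_m(1+m)^{2n+s}\|Q_m\|_2^2=\|(1-L)^{(2n+s)/2}Q\|_2^2$, finite for all $s$ because $Q\in\sko_\infty^2$. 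Hence $(1-L)^{s/2}(PQ)\in L^2$ for every $s$, i.e. $PQ\in\sko_\infty^2$.

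I expect the main obstacle to be the combinatorial estimate in i): the delicate point is that the factorial ratios $m!/(m-r)!$ and $(n+m-2r)!/(m-r)!$ conspire to produce the same power $m^n$ uniformly in $r$, which is precisely what forces the coefficient to grow like $(m+1)^n$ and no faster. Once i) is secured, part ii) is essentially bookkeeping of the finite band of contributing chaos levels and a routine interchange of the double sum.
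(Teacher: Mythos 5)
Your proof is correct, but it takes a genuinely different route from the paper's. For i) the paper never invokes the Wiener--It\^o product formula: it sets $A_{n,m} = \sup\{\|PQ\|_{L^2} : P\in\mathcal{C}_n,\ \|P\|_2\leq 1,\ Q\in\mathcal{C}_m,\ \|Q\|_2\leq 1\}$, represents $P$ and $Q$ as Clark--Ocone martingales $P(t)=\int_0^t \E[(\Grad P)(s)|\F_s]\,dB_s$, expands $PQ$ by It\^o's formula into two stochastic integrals and one Lebesgue integral whose $L^2$-norms are controlled by $A_{n-1,m}$, $A_{n,m-1}$ and $A_{n-1,m-1}$ respectively (using that $\E[(\Grad P)(t)|\F_t]\in\mathcal{C}_{n-1}$), and deduces the recursion $A_{n,m}\leq A_{n-1,m}+A_{n,m-1}+A_{n-1,m-1}$, which the ansatz $A_{n,m}=k(n)(m+1)^n$ with $k(n)=k(n-1)(2^{n-1}+1)$ is then checked to satisfy. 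Your direct computation via $PQ=\sum_r r!\binom{n}{r}\binom{m}{r}I_{n+m-2r}(f\,\widetilde{\otimes}_r g)$ and the contraction inequality leans on heavier standard machinery (available in Nualart, cited as [14]), but it buys more: an explicit constant and the sharper exponent $(m+1)^{n/2}$, and your factorial estimates are correct as written. For ii) the paper argues qualitatively --- a fast-decreasing sequence $\|\alpha_m\|_{L^2}$ stays fast decreasing after multiplication by a fixed chaos element, using the same band structure $|n-m|,\dots,n+m$ you exploit --- whereas your Cauchy--Schwarz over the at most $2n+1$ contributing levels upgrades this to a quantitative operator bound of type $\sko_{s+2n}^2\to\sko_s^2$, which is in fact what later uses of this lemma (e.g.\ Corollary 4.1, where uniformity of the multiplication bound matters) implicitly need. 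Two points you gloss over are routine but deserve a line: the identification of the $L^2$-limit of $\sum_m PQ_m$ with $PQ$ (your estimate makes the partial sums Cauchy in $L^2$, and they converge in probability to $PQ$, so the limits agree), and the applicability of the product formula in the vector-valued Wiener setting, which holds for any isonormal Gaussian process over $H$.
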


\begin{proof}

i) let $A_{n,m} = \sup \left\{ \| PQ \|_{L^2} | P \in \mathcal{C}_n, \|P\|_2 \leq 1, Q \in \mathcal{C}_m, \|Q\|_2 \leq 1\right\}$
and define $P(t) = \E \left[ P | \F_t \right] = \int_0^t \E \left[ (\Grad P)(s) | \F_s \right] dB_s$
and 

$Q(t) = \E \left[ Q | \F_t \right] = \int_0^t \E \left[ (\Grad Q)(s) | \F_s \right] dB_s$

From the It\={o} formula, we get:

\begin{align}
P(1)Q(1) - P(0)Q(0) &= PQ \notag\\
&= \int_0^1 \left[ \int_0^t \E \left[ (\Grad Q)(s) | \F_s \right] dB_s \right] . \E \left[ (\Grad P)(t) | \F_t \right] dB_t \notag\\
& + \int_0^1 \left[ \int_0^t \E \left[ (\Grad P)(s) | \F_s \right] dB_s \right] . \E \left[ (\Grad Q)(t) | \F_t \right] dB_t \notag\\
& + \int_0^1 dt \E \left[ (\Grad P)(t) | \F_t \right] . \E \left[ (\Grad Q)(t) | \F_t \right] \label{eq4_5}
\end{align}

The square of the $L^2$-norm of the above first integral is bounded by:

\begin{align*}
\int_0^1 dt \E \left[ \E \left[ (\Grad P)(t) | \F_t \right]^2 \cdot \E \left[ Q(t) | \F_t \right]^2 \right]
\leq \int_0^1 dt A_{n-1, m}^2 \| Q \|_{L^2}^2 \cdot \E \left[ \E \left[ (\Grad P)(t) | \F_t \right]^2 \right]
\end{align*}

and:

\begin{align*}
\int_0^1 dt \E \left[ \E \left[ (\Grad P)(t) | \F_t \right]^2 \right] = \E \left[ \left( \int_0^1 \E \left[ (\Grad P)(t) | \F_t \right] dB_t \right)^2 \right] = \E \left[ P^2 \right]
\end{align*}

So the $L^2$-norm of the first above integral is bounded by $A_{n-1, m}$; similarly, the $L^2$-norm of the second integral is bounded by $A_{n, m-1}$. For the third and last integral in (\ref{eq4_5}), we have:

\begin{align*}
\left\| \int_0^1 dt \E \left[ (\Grad P)(t) | \F_t \right] . \E \left[ (\Grad Q)(t) | \F_t \right] \right\|_{L^2(\Omega)}
\leq \int_0^1 dt \left\| \E \left[ (\Grad P)(t) | \F_t \right] . \E \left[ (\Grad Q)(t) | \F_t \right] \right\|_{L^2(\Omega)} \\
 \leq \int_0^1 dt A_{n-1, m-1} \left\| \E \left[ (\Grad P)(t) | \F_t \right] \right\|_{L^2} . \left\| \E \left[ (\Grad Q)(t) | \F_t \right] \right\|_{L^2} \\
 \leq A_{n-1,m-1} \left[  \int_0^1 dt \| \E \left[ (\Grad P)(t) | \F_t \right] \|_{L^2} \right]^{\frac{1}{2}} \left[ \int_0^1 dt \| \E \left[ (\Grad Q)(t) | \F_t \right] \|_{L^2} \right]^{\frac{1}{2}}
\end{align*}

So at the end:

\begin{align*}
\| PQ \|_{L^2} \leq (A_{n-1, m} + A_{n, m-1} + A_{n-1, m-1}) \| P \|_{L^2} . \| Q \|_{L^2} 
\end{align*}

So we have: $ A_{n-1, m} + A_{n, m-1} + A_{n-1, m-1} \geq A_{n,m} $.

Let's write: $A_{n,m} = k(n)(m+1)^n$ 
then $k(n) = k(n-1)(2^{n-1} + 1)$ fits. 

ii) Let $\alpha \in \sko_{\infty}^2(\Omega)$; $\alpha = \sum_{n=1}^{\infty} \alpha_n, \alpha_n \in \mathcal{C}_n(\Omega)$.

Let $\beta \in \mathcal{C}_m$; the sequence of $L^2$-norms $\| \alpha_n \|_{L^2}$ is fast decreasing; each $\beta \alpha_n$ is then a polynomial with terms belonging to the chaos of orders 

$ |n-m|, \dots, n+m $; and the multiplication of a fast decreasing $L^2$-norm sequence, by a fixed polynomial is again a sequence of fast decreasing $L^2$-norms.

\end{proof}

\begin{lem}\label{lem4_6}

\begin{enumerate}
\renewcommand{\labelenumi}{\roman{enumi})}
\item Let $A$ be a $\dinf$-bounded process with values in 

$n \times n$-A.M., and $\delta_A$ defined on the finite linear combinations of polynomials in Gaussian variables, by $\Div A \Grad$.

Then: $\delta( \mathcal{C}_m ) \subset \dinf(\Omega)$, and if $P$ is a polynomial in Gaussian variables, $\delta(P) \in \dinf(\Omega)$.

\item Moreover if $\delta_A$ is defined on $\sko_{\infty}^2$ and sends continuously $\sko_{\infty}^2$ in $\sko_{\infty}^2$, then $\delta_A$ sends $\dinf$-continuously $\dinf(\Omega)$ in $\dinf(\Omega)$.

\end{enumerate}

\end{lem}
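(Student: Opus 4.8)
The plan is to handle the two assertions separately: the first by a direct chaos computation resting on Lemma \ref{lem4_5} and \ref{thm2_5}, the second by feeding that computation into the standing hypothesis. For i), by linearity I may assume $P\in\mathcal C_m$, so that $\Grad P$ is a finite-chaos element of $\sko_\infty^2(\Omega,H)$ whose density $\dot{(\Grad P)}(s,\cdot)$ has chaos order $\le m-1$ for each $s$. I will show $A\Grad P\in\dinf(\Omega,H)$ and then conclude $\delta_A P=\Div(A\Grad P)\in\dinf(\Omega)$ from the continuity of the divergence $\Div:\dinf(\Omega,H)\to\dinf(\Omega)$. Membership in $\dinf(\Omega,H)$ will be obtained from the $H$-valued analogue of \ref{thm2_5} (justified coordinatewise, or via the tensorization Corollary \ref{cor2_4}), i.e.\ by checking separately that $A\Grad P$ lies in $L^{\infty-0}(\Omega,H)$ and in $\sko_\infty^2(\Omega,H)$.

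For the $L^{\infty-0}$ bound I would write $\|A\Grad P\|_H^2=\int_0^1|A(s)\dot{(\Grad P)}(s)|^2\,ds$ and apply Minkowski's integral inequality followed by Hölder to get $\|A\Grad P\|_{L^p(\Omega,H)}\le\bigl(\sup_s\|A(s)\|_{L^{2p}}\bigr)\bigl(\int_0^1\|\dot{(\Grad P)}(s)\|_{L^{2p}}^2\,ds\bigr)^{1/2}$, whose first factor is finite because $A$ is $\dinf$-bounded and whose second is finite because $\Grad P$ is of finite chaos. For the $\sko_\infty^2$ bound I would expand $\Grad^{k}(A\Grad P)$ by the Leibniz rule into terms of the shape $\Grad^{k_1}A\otimes\Grad^{k_2+1}P$ and estimate each $L^2$ norm by Lemma \ref{lem4_5}: multiplication by the fixed finite-chaos factor arising from $\Grad^{k_2+1}P$ maps $\sko_\infty^2$ into $\sko_\infty^2$ with the explicit constant $k(n)(m+1)^n$, so that after integrating in $s$ the series defining $\|A\Grad P\|_{\sko_r^2}$ is dominated by a convergent one. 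This is exactly where the finite order $m$ of $P$ is indispensable.

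For ii), since $\dinf\hookrightarrow\sko_\infty^2$ continuously, the hypothesis already makes $\delta_A$ continuous from $\dinf$ into $\sko_\infty^2$; by the closed graph theorem between these Fréchet spaces it then suffices to prove $\delta_A(\dinf)\subset\dinf$, which by \ref{thm2_5} reduces to showing $\delta_A f\in L^{\infty-0}(\Omega)$ for every $f\in\dinf$. I would approximate $f$ by polynomials $P_k\xrightarrow{\dinf}f$; part i) gives $\delta_A P_k\in\dinf$ and the hypothesis gives $\delta_A P_k\to\delta_A f$ in $\sko_\infty^2$, hence $\P$-a.s.\ along a subsequence. It then remains to produce, for each $p$, a uniform bound $\sup_k\|\delta_A P_k\|_{L^p}<\infty$, after which Fatou forces $\delta_A f\in L^p$ for all $p$, i.e.\ $\delta_A f\in L^{\infty-0}$. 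Finally, interpolating (Theorem \ref{thm2_12}) between the $\sko_r^2$-bounds supplied by the hypothesis and these $L^{\infty-0}$-bounds yields $\|\delta_A f\|_{\drp}\le C\|f\|_{\dqs}$ for every $(p,r)$, which is the asserted $\dinf$-continuity.

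The main obstacle, and the reason ii) needs its extra hypothesis at all, is that multiplication by a merely $\dinf$-bounded process does \emph{not} preserve $L^2(\Omega,H)$: the counterexample of the previous subsection produces a $\dinf$-bounded $C$ and a $V\in\dinf(\Omega,H)$ with $C\cdot V\notin L^2$, so the $\sko_\infty^2$-level control cannot be extracted from $\dinf$-boundedness alone. In i) this control is rescued by the finite-chaos structure of $\Grad P$ through the product bounds of Lemma \ref{lem4_5}; in ii) it is precisely what the assumption ``$\delta_A$ continuous on $\sko_\infty^2$'' provides, and the genuinely delicate point is extracting from it the uniform $L^p$ bounds on $\delta_A P_k$. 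Throughout, the technical heart is reconciling the pointwise-in-$t$ estimates on $A(t,\cdot)$ with the integrated Cameron-Martin norm, uniformly in $t$.
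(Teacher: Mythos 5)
Part i) of your proposal is correct and is essentially the paper's argument in different clothing: where you apply Minkowski's integral inequality and H\"older together with the product bounds of Lemma \ref{lem4_5}, the paper instead normalizes the density $Z_j$ of $\Grad Q$ by $\sqrt{1+\|Z(s,\cdot)\|_{L^2}}$ and invokes the equivalence of all $L^p$-norms on the fixed chaos $\mathcal C_{m-1}$; both arguments exploit the same fact, namely that the finite chaos order of $\Grad P$ upgrades the $\dinf$-boundedness of $A$ to membership $A\Grad P\in\dinf(\Omega,H)$, after which continuity of $\Div$ concludes.

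Part ii), however, contains a genuine gap, which you flag yourself without closing: the uniform bounds $\sup_k\|\delta_A P_k\|_{L^p}<\infty$ for $p>2$ are not a remaining detail but the entire difficulty, and nothing in your outline produces them. The hypothesis only controls the $L^2$-scale norms $\sko_r^2$, and, as your own citation of the counterexample shows, $\dinf$-boundedness of $A$ cannot supply the integrability gain; moreover the interpolation step you invoke at the end presupposes exactly the $L^{\infty-0}$ bounds it is meant to deliver, so the scheme is circular at its crucial point. The paper fills this hole with a specific identity exploiting the antisymmetry of $A$: since $\langle\Grad P,A\Grad P\rangle_H=0$, one has $\delta_A(e^{iP})=i e^{iP}\delta_A P$, hence $\delta_A^2(e^{iP})=i e^{iP}\delta_A^2P-e^{iP}(\delta_A P)^2$ and therefore $(\delta_A P)^2=i\,\delta_A^2P-e^{-iP}\delta_A^2(e^{iP})$; the right-hand side is controlled in $\sko_\infty^2\subset L^2$ by two applications of the standing hypothesis, so $\delta_A P\in L^4$ with bounds, giving $\delta_A:\dinf\to\sko_\infty^2\cap L^4$. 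Interpolation then yields $\delta_A:\sko_\infty^{2+}\to\sko_\infty^{2+}$, and the Leibniz rule $\delta_A(fg)=f\,\delta_A g+g\,\delta_A f$ with $g\in\sko_\infty^{2+}$ shows that multiplication by $\delta_A f$ preserves $\sko_\infty^{2+}$; taking powers gives $(\delta_A f)^n\in\sko_\infty^{2+}$ for all $n$, hence $\delta_A f\in L^{\infty-0}$, and Theorem \ref{thm2_5} yields $\delta_A f\in\dinf$, the continuity then following from the closed graph theorem exactly as you set it up. Without this antisymmetry bootstrap, or an equivalent mechanism for converting $L^2$-scale control into $L^p$ control, your approximation-plus-Fatou plan cannot be completed.
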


\begin{rem}
$\sko_{\infty}^{2}$ is not an algebra, so $\delta_A$ cannot be called a derivation on $\sko_{\infty}^2$.
\end{rem}

\begin{proof}

\begin{enumerate}
\renewcommand{\labelenumi}{\roman{enumi})}
\item Let $Q \in \mathcal{C}_m(\Omega)$. Then $\Grad Q \in \mathcal{C}_{m-1}(\Omega, H)$ which can be written $(m > 1)$:

\begin{align*}
(\Grad Q)_j(t, \omega) = \int_0^t Z_j(s, \omega) ds, Z_j \in \mathcal{C}_{m-1}(\Omega), j=1, \dots n
\end{align*}

Then $A (\Grad Q)$ can be written:

\begin{align}
(A . \Grad Q)_k(t, \omega) &= \int_0^t A_k^j Z_j(s, \omega) ds, k = 1, \dots, n \notag\\
&= \int_0^t A_k^j \frac{ Z_j(s, \omega) }{ \sqrt{1 + \|Z(s, .)\|_{L^2(\Omega, \R^n)}} } \sqrt{1 + \|Z(s, .)\|_{L^2(\Omega, \R^n)} } ds \label{eq4_6} 
\end{align}

In this last integral $\frac{ Z_j }{ \sqrt{1 + \|Z(s, .)\|_{L^2(\Omega, \R^n)}} }$ belongs to $\mathcal{C}_{m-1}(\Omega)$ on which all $L^p$ norms are equivalent.

So $A_k^j \frac{ Z_j (s, \omega)}{ \sqrt{1 + \|Z(s, .)\|_{L^2(\Omega, \R^n)}} }$ is $\dinf$-bounded; and the measure $\sqrt{1 + \|Z(s, .)\|_{L^2(\Omega, \R^n)}} . ds$ is $L^2([0,1])$-bounded, so the r.h.s. integral in (\ref{eq4_6}) is $\dinf$-bounded, which implies $\delta_A Q \in \dinf$.

\item $\delta_A^2(e^{iP})$, $P$ being a polynomial in Gaussian variables is meaningful because $P \in \sko_{\infty}^2$ implies $\delta_A(e^{iP}) \in \sko_{\infty}^2$ so $\delta_A (\delta_A . e^{iP}) \in \sko_{\infty}^2$. 
Then $\Grad (\delta_A e^{iP})$ formally equals to $ i e^{iP} \Grad (\delta_A P) - e^{iP} \delta_A P . \Grad P$ which is meaningful as a definition because $\dinf$ is an algebra.

Then $\Div \left[ A \Grad e^{iP} \right] = i e^{iP} \delta_A P = \delta_A( e^{iP} )$. 

And $\delta_A^2( e^{iP} ) = \Div A \Grad (\delta_A(e^{iP}))$ which formally equals to: 

\begin{align*}
i e^{iP} \Div [A \Grad (\delta_A P)] - ie^{iP} \delta_A P . \langle \Grad P, A (\Grad P) \rangle_H - e^{iP}(\delta_A P)^2 = ie^{iP} \delta_A^2 P - e^{iP}(\delta A P)^2 
\end{align*}
since $A$ is antisymmetric.

Each of the two terms in the above sum is meaningful because 

$\dinf \times \sko_{\infty}^2 = \sko_{\infty}^{2 - 0}$;
so: $\delta_A^2( e^{iP} ) = i e^{iP} \delta_A^2 P - e^{iP}(\delta_A P)^2$
and 
\begin{align}
(\delta_A P)^2 = i \delta_A^2 P - e^{-iP} \delta_A^2 ( e^{iP} ) \label{eq4_7}
\end{align}

So with a sequence of polynomials in Gaussian variable $\dinf$-converging towards $f \in \dinf(\Omega)$, with (\ref{eq4_7}) we see that $\delta_A f \in L^4(\Omega)$.

Then we have: $\delta_A: \sko_{\infty}^2 \rightarrow \sko_{\infty}^2$ and also: $\delta_A: \dinf \rightarrow \sko_{\infty}^2 \cap L^4(\Omega)$.

By interpolation, we get: $\delta_A: \sko_{\infty}^{2+} \rightarrow \sko_{\infty}^{2+}$.

Now let $f \in \dinf(\Omega)$ and $g \in \sko_{\infty}^{2+}$: then $fg \in \sko_{\infty}^{2+}$ and 

$\delta_A( fg ) = f \delta_a g + g \delta_A f$ which implies: $g \delta_A f \in \sko_{\infty}^{2+}(\Omega)$.

So the operator multiplication by $\delta_A f$ is such that: $\sko_{\infty}^{2+} \rightarrow \sko_{\infty}^{2+}$; so $\delta_A f \in \sko_{\infty}^{2+}$ (with $g=1$). 

Then all powers of $\delta_A f$ are in $\sko_{\infty}^{2+}$, which implies $\delta_A f \in \dinf$.

The $\dinf$-continuity of $\delta: \dinf \rightarrow \dinf$ is obtained by the closed graph theorem and by the continuity of $\delta_A: \sko_{\infty}^2 \rightarrow \sko_{\infty}^2$.
\end{enumerate}

\end{proof}

\begin{defn}
A $\sko_{\infty}^2$ multiplicative operator $A$, process from 

$[0,1] \times \Omega$ to the $n \times n$-A.M. is an operator which acts $\sko_{\infty}^2$-continuously by simple multiplication on functions: $\Omega \rightarrow \R^n$ that is: 

$\forall r > 1, \exists r' > 1, \exists C(r, r') > 0: \forall f = (f_1, \dots, f_n)$:

\begin{align*}
\sup_{t \in [0,1]} \| A . f \|_{ \sko_{r}^2 (\Omega) } \leq C(r, r') \| f \|_{ \sko_{r'}^2 (\Omega) } 
\end{align*}
\end{defn}

\begin{lem}\label{lem4_7}
Let $A(t, \omega)$ be a $\sko_{\infty}^2$-multiplicative operator, $n \times n$-A.M. valued. Then $A(t, \omega)$ is a multiplicator from $\dinf(\Omega, H)$ in $\dinf(\Omega, H)$.

\end{lem}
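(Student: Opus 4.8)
The plan is to show that the image $AV$ of an arbitrary $\dinf$-vector field $V$ is again a $\dinf$-vector field, and the natural device is \thmref{thm2_5}: since $\sko_\infty^2(\Omega,H)\cap L^{\infty-0}(\Omega,H)=\dinf(\Omega,H)$, it suffices to establish the two memberships $AV\in\sko_\infty^2(\Omega,H)$ and $AV\in L^{\infty-0}(\Omega,H)$ separately. Writing $V(t)=\int_0^t\dot V(s)\,ds$ and $(AV)(t)=\int_0^t A(s)\dot V(s)\,ds$, I would first record that the regularity carried by $V\in\dinf(\Omega,H)$ is, at the level of the density, exactly an $\sko_\infty^2$ regularity: for every $r$ one has $\int_0^1\|\dot V(s)\|_{\sko_r^2(\Omega,\R^n)}^2\,ds=\|V\|_{\sko_r^2(\Omega,H)}^2<+\infty$, so that $\dot V(s)\in\sko_\infty^2$ for almost every $s$, with square-integrable norms. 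This matches the hypothesis on $A$ perfectly and explains why the statement is phrased with $\sko_\infty^2$-multiplicativity.

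The $\sko_\infty^2(\Omega,H)$ membership is the soft half. Using the equivalence $\|\cdot\|_{\sko_r^2}\simeq\|(1-L)^{r/2}\cdot\|_{L^2}$ employed repeatedly above, and the fact that $(1-L)^{r/2}$ commutes with the deterministic time integral, I would get
\[
\|AV\|_{\sko_r^2(\Omega,H)}^2\simeq\int_0^1\|A(s)\dot V(s)\|_{\sko_r^2(\Omega,\R^n)}^2\,ds.
\]
The $t$-uniform $\sko_\infty^2$-multiplicativity then gives $\|A(s)\dot V(s)\|_{\sko_r^2}\le C(r,r')\|\dot V(s)\|_{\sko_{r'}^2}$ for almost every $s$, and integrating in $s$ yields $\|AV\|_{\sko_r^2(\Omega,H)}\le C(r,r')\|V\|_{\sko_{r'}^2(\Omega,H)}$, whence $AV\in\sko_\infty^2(\Omega,H)$.

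The hard part is the $L^{\infty-0}(\Omega,H)$ membership, because $\sko_\infty^2$-multiplicativity is only an $L^2$-based bound and by itself carries no $L^p$-control for $p>2$. Here I would route through the associated derivation $\delta_A=\Div A\,\Grad$: since $\Grad$ maps $\sko_r^2$ into $\sko_{r-1}^2(\Omega,H)$, $A$ is $\sko_\infty^2$-multiplicative, and $\Div$ maps $\sko_r^2(\Omega,H)$ into $\sko_{r-1}^2$, the operator $\delta_A$ is $\sko_\infty^2$-continuous. \lemref{lem4_6}.ii then upgrades this to $\delta_A\colon\dinf\to\dinf$, and the Leibniz/$e^{iP}$ mechanism in its proof is precisely what crosses from $L^2$ to $L^{\infty-0}$; by testing $\delta_A$ against the coordinate variables $W(e_i)$ and their products and inverting the resulting Skorokhod/Clark--Ocone relation (as in the $\dinf$-boundedness argument used for $\lemref{lem4_4}$) one recovers that $A$ itself is $\dinf$-bounded, i.e. $\sup_t\|a_{ij}(t)\|_{\sko_r^p}\le C(p,r)$ for all $p,r$. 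With $A$ $\dinf$-bounded I would then close the $L^{\infty-0}$ estimate by an isometric linearisation in the spirit of \thmref{thm2_4}: introducing an independent Gaussian space $(\Omega_1,\F_1,\P_1,H)$ with white noise $dB'$, the pointwise equivalence of Gaussian $L^p$-norms gives $\big\||AV|_H\big\|_{L^p(\Omega)}\simeq\big\|\int_0^1\langle A(s)\dot V(s),dB'(s)\rangle\big\|_{L^p(\Omega\times\Omega_1)}$, and antisymmetry recasts the integrand as $-\langle\dot V(s),A(s)\,dB'(s)\rangle$; the extension Corollary~\ref{cor2_4} together with the interpolation \thmref{thm2_12} then deliver $L^p$-control for every $p$, hence $AV\in L^{\infty-0}(\Omega,H)$, and \thmref{thm2_5} concludes.

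I expect the genuine obstacle to be this last $L^{\infty-0}$ estimate. The $\sko_\infty^2$ hypothesis is matched only to the $L^2$-computation, so any honest argument must inject higher integrability from elsewhere --- either the quadratic gain of the $e^{iP}$ identity of \lemref{lem4_6}, or a maximal/continuity estimate in the time variable controlling $\sup_s|A(s,\cdot)|$ in $L^{\infty-0}$. The delicate point is that the slice-wise density $\dot V(s)$ is generally only in $\sko_\infty^2$ (not in $\dinf$) for fixed $s$, so the passage to the \emph{integrated} field $AV\in\dinf(\Omega,H)$ cannot be made slice by slice; reconciling these two regularities, with the chaos multiplication estimates of \lemref{lem4_5} controlling the products that appear, is where the real work lies.
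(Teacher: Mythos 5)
Your frame --- reduce via \thmref{thm2_5} to $AV\in\sko_{\infty}^2(\Omega,H)$ plus $AV\in L^{\infty-0}(\Omega,H)$ --- and your soft half are correct, and your passage through $\delta_A=\Div A\Grad$ with \lemref{lem4_6}.ii is indeed part of the paper's own strategy. But the $L^{\infty-0}$ half, which you rightly flag as the obstacle, has two genuine gaps. First, the recovery of $\dinf$-boundedness of $A$ ``as in \lemref{lem4_4}'' is not available here: that argument rests on the derivation being \emph{adapted}, so that $\delta B_s^i=\E\left[\delta B_1^i\,|\,\F_s\right]$ is a martingale with It\={o} representation $\int_0^s {}^t a_j^i\,dB^j$ (\lemref{lem4_3}), and nothing in the definition of $\sko_{\infty}^2$-multiplicativity makes $A$, hence $\delta_A$, adapted; for non-adapted $A$ the quantity $\delta_A B_t^i$ is only a Skorokhod integral and the conditional-expectation identities you want to invert fail. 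This is precisely the difficulty the paper engineers away before doing anything else: it conjugates by the $L^2$-isometric time-rescaling morphism $\theta$ and replaces $A$ by $\widecheck{A}(s,\cdot)=\theta A(2s-1,\cdot)\mathds{1}_{]\frac12,1]}(s)$, whose randomness lies in $\F_{\frac12}$ while its time support is $[\frac12,1]$, so that $\widecheck{A}$ --- and the derivation $\widetilde{\delta}_{\widecheck{A}}$ obtained after the \thmref{thm2_2} extension to $\sko_{\infty}^2(\Omega)$ --- is adapted by construction. Your proposal never addresses adaptedness at all.

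Second, and fatally, even granting $\dinf$-boundedness of $A$, your concluding step cannot work, because $\dinf$-boundedness is strictly weaker than the multiplicator property: the counterexample proposition of Section 4 exhibits a $\dinf$-bounded process $C$ with $C.V\notin L^2(\Omega,H)$, so any argument passing from ``$A$ is $\dinf$-bounded'' to ``$AV\in L^{\infty-0}(\Omega,H)$'' proves too much. Concretely, your Gaussian linearisation correctly gives $\left\|\,\|AV\|_H\right\|_{L^p(\Omega)}\simeq\left\|\int_0^1\langle A\dot V,dB'\rangle\right\|_{L^p(\Omega\times\Omega_1)}$, but \thmref{thm2_12} needs a second endpoint with $p>2$ to interpolate against, and nothing in your chain produces one --- the hypothesis is purely $L^2$-based, and Corollary 2.4 presupposes a $\dinf\to\dinf$ continuity of the very kind being proved (there is also a circularity risk in invoking \lemref{lem4_6}.ii before any boundedness of $A$ is established, since its proof manipulates $\delta_A P$ for polynomials $P$ via \lemref{lem4_6}.i). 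The paper's actual crossing from $L^2$ to $L^{\infty-0}$ goes elsewhere: knowing $\widetilde{\delta}_{\widecheck{A}}:\dinf\to\dinf$ and that it is adapted, it applies it to the It\={o} integral $\int_0^1 {}^tV_i\,dB^i$ of an adapted vector field, uses Clark--Ocone to isolate ${}^tV_i\,{}^t\widecheck{A}=\E\left[(\Grad F)_i(t)\,|\,\F_t\right]-\widetilde{\delta}_{\widecheck{A}}({}^tV_i)$, and invokes \thmref{thm2_9} --- whose proof contains the genuine $L^p$-bootstrap (Doob's inequality, the $L^p\to L^{2p-0}$ iteration) --- to conclude that the conditional-expectation term is a $\dinf$-vector field; general non-adapted $V$ are then handled by the shift $V\mapsto\widecheck{V}$ and its left inverse $\mu$. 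Without the adaptedness device and this Clark--Ocone/\thmref{thm2_9} bridge, your sketch does not close.
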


\begin{proof}
let $\theta$ be the morphism defined on $\mathcal{C}_1(\Omega)$ by:

\begin{align*}
\theta \left[ W \left( t \rightarrow \int_0^t \mathds{1}_{[0, a]} (s) ds \right) \right] = \sqrt{2} W \left[ \left( t \rightarrow \int_0^t \mathds{1}_{[0, a/2]} (s) ds \right)\right] 
\end{align*}

$\theta$ can be extended as a morphism from $\sko_{\infty}^2$ in $\sko_{\infty}^2$ because it leaves invariant the $L^2$ scalar product, each chaos $\mathcal{C}_n$, and it commutes with the O.U. operator. 

Then $\theta$ is bijective and isometric from $( \mathcal{C}_1, \F_1 )$ into $( \mathcal{C}_1, \F_{ \frac{1}{2} } )$, so it induces a bijection from $L^2(\Omega, \F_1, \P)$ into $L^2(\Omega, \F_{\frac{1}{2}}, \P)$, and from $\sko_r^2(\Omega, \F_1, \P)$ into $\sko_r^2(\Omega, \F_{ \frac{1}{2} }, \P)$. 

We define: $\widecheck{A}(s, \omega) = 0$ if $s \leq \frac{1}{2}$

$\widecheck{A}(s, \omega) = \theta A(2s - 1, \omega)$ if $\frac{1}{2} < s \leq 1$.

$\widecheck{A}$ is an adapted process $( \theta: \F_1 \rightarrow \F_{\frac{1}{2}})$. 
Direct computation shows that $\widecheck{A}$ is a $\sko_{\infty}^2 \cap \F_{\frac{1}{2}}$ multiplicative operator: 

$\forall \alpha \in ( \sko_{\infty}^2 \cap \F_{\frac{1}{2}})( \Omega )$: 
$\forall r \geq 1 ~ \exists r' \geq 1 ~ \exists C(r, r')$:

$ \| \widecheck{A}(s, .) \alpha(.) \|_{\sko_r^2(\Omega)} \leq C(r, r') \| \alpha(.) \|_{\sko_{r'}^2(\Omega)}$

Then $\widecheck{A}$ is a $\sko_{\infty}^2(\Omega, H) \cap \F_{\frac{1}{2}}$ multiplicator.

\begin{align*}
\forall X \in \sko_{\infty}^2(\Omega, H) \cap \F_{\frac{1}{2}}: \\
\| \widecheck{A}.X \|_{ \sko_{r}^2(\Omega, H) } &= \int \P(d\omega) \int_0^1 ds \| (1-L)^{r/2} \{ \widecheck{A}(s, \omega) X(s, \omega) \} \|_{\R^n}^2 \\
&= \int \P(d\omega) \int_{\frac{1}{2}}^1 ds \| (1-L)^{r/2} \{ \theta [ A(2s-1, \omega) ]X(s, \omega) \} \|_{\R^n}^2 \\
&= \int \P(d\omega) \int_{\frac{1}{2}}^1 ds \| (1-L)^{r/2} \{ \theta [ A(2s-1, \omega) . \theta^{-1} X(s, \omega) ] \} \|_{\R^n}^2 \\
&= \int_0^1 du \int \P(d\omega) \| (1-L)^{r/2} \{ \theta [ A(u, \omega) . \theta^{-1} X(\frac{u+1}{2}, \omega) ] \} \|_{\R^n}^2 \\
&= \int_0^1 du \left\| A(u, .) . \theta^{-1} X(\frac{u+1}{2}, .) \right\|_{\sko_{r}^2}^2 \text{ ($\theta$ commutes with O.U.) }\\ 
&\leq \int_0^1 du C(r, r')  \left\| \theta^{-1} X(\frac{u+1}{2}, .) \right\|_{\sko_{r'}^2}^2 \text{ ($C(r,r')$ is a constant) }\\ 
&\leq 2 C(r) \| X \|_{\sko_{r'}^2(\Omega, H)}^2
\end{align*}

Then $\delta_{\widecheck{A}} = \Div A \Grad$ is a $\sko_{\infty}^2$-continuous operator on $\sko_{\infty}^2 \cap \F_{\frac{1}{2}}$. 

With Theorem 2, 2, we get an extension $\widetilde{\delta}_{\widecheck{A}}$ on 

$\sko_{\infty}^2 \left[ (\Omega, \F_{ \frac{1}{2} }, \P) \times (\Omega, \F_{ \frac{1}{2} }^{\perp}, \P) \right] = \sko_{\infty}^2(\Omega)$. 

This extension is $\sko_{\infty}^2$-continuous. With Lemma \ref{lem4_6} ii) it is $\dinf$-continuous from $\dinf(\Omega)$ in $\dinf(\Omega)$; using this continuity and direct computation, $\widetilde{\delta}_{\widecheck{A}}$ is a derivation, which is adapted because $\widecheck{A}$ is adapted, so $\delta_{\widecheck{A}}$ is an adapted derivation.

Now let $\left( V_i(s, \omega) \right)_{i = 1, \dots, n}$ be a $\dinf$-adapted vector field. Then $\int_0^1 {}^t V_i dB^i $ is an It\={o} integral, and:

\begin{align*}
\dat \left[ \int_0^1 {}^t V_i dB^i \right] &= \int_0^t ( \dat {}^t V_i ) dB^i + \int_0^1 {}^t V_i(\dat dB^i) \\
&= \int_0^1 (\dat {}^t V_i) dB^i + \int_0^1 ({}^t V_i) {}^t \widecheck{A} dB^i
\end{align*}

Let $F = \dat ( \int_0^1 {}^t V_i dB^i )$; with Clark-Ocone we have:

\begin{align*}
F = \int_0^1 \E \left[ (\Grad F)_i(t) | \F_t \right] dB^i = \int_0^1 (\dat {}^t V_i) dB^i + \int_0^1 ({}^t V_i) {}^t \widecheck{A} dB^i
\end{align*}

In this last equation, all integrals are It\={o} integrals so:

\begin{align*}
{}^t V_i {}^t \widecheck{A} = \E \left[ (\Grad F)_i(t) | \F_t \right] - \dat( {}^t V_i )
\end{align*}

With Theorem 2, 8, $\E \left[ (\Grad F)_i(t) | \F_t \right]$ is a $\dinf$-vector field;
which proves that ${}^t V_i {}^t \widecheck{A}$ is a $\dinf$-vector field.

The map: $X \overset{\mu}{\rightarrow} \hat{X}(s, \omega) = X(\frac{s+1}{2}, \omega)$ is the left inverse of: $X \overset{\lambda}{\rightarrow} \widecheck{X}$ and $\mu \circ \lambda (X (s, \omega) ) = X(s, \omega)$.

Let $V(s, \omega)$ be a possibly non-adapted $\dinf$-vector field; then ${}^t \widecheck{V}(s, \omega)$ is an adapted $\dinf$-vector field, and so is ${}^t A . {}^t \widecheck{V}$; so using the left inverse map $\mu$ above, $A.V$ is a $\dinf$-vector field.

So $A$ is $\dinf(\Omega, H)$ multiplicator.
\end{proof}

\begin{cor}\label{cor4_1}

If $A(s, \omega)$ is a process with values in $n \times n$-A.M., such that $\exists m \in \N_{\star}: \forall s \in [0,1]: A(s, \omega) \in \mathcal{C}_m$ and $\|A(s, .)\|_{L^2}$ is uniformly bounded (in $s$), then $A$ is a multiplicator.
\end{cor}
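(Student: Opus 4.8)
The plan is to show that $A$ is a $\sko_\infty^2$-multiplicative operator in the sense of the definition immediately preceding Lemma~\ref{lem4_7}, and then to invoke Lemma~\ref{lem4_7}, which turns such an operator into a multiplicator from $\dinf(\Omega, H)$ to $\dinf(\Omega, H)$. Since the hypothesis already provides that $A$ takes values in the $n\times n$-A.M., the only point to establish is the $t$-uniform $\sko_\infty^2$-continuity of the multiplication map $f \mapsto A f$, and this is exactly where the uniform bound $\sup_{s}\|A(s,\cdot)\|_{L^2}\le C_0$ will be used.

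First I would fix $r>1$ and expand an arbitrary $f\in\sko_\infty^2(\Omega,\R^n)$ into its chaos decomposition $f=\sum_\ell f_\ell$ with $f_\ell\in\mathcal C_\ell(\Omega,\R^n)$. Writing $A=(a_j^i)$ with each $a_j^i(s,\cdot)\in\mathcal C_m$, the product $a_j^i(s,\cdot)\,f_\ell$ is a polynomial whose chaos components range only over the orders $|m-\ell|,\dots,m+\ell$, exactly as noted in the proof of part ii) of Lemma~\ref{lem4_5}. Consequently, for each fixed $k$ the chaos-$k$ component $(Af)_k$ receives contributions only from those $f_\ell$ with $|k-m|\le\ell\le k+m$, that is from at most $2m+1$ indices $\ell$; dually, for fixed $\ell$ the surviving $k$ all satisfy $|k-\ell|\le m$. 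The product estimate of part i) of Lemma~\ref{lem4_5} gives $\|a_j^i(s,\cdot)f_\ell\|_{L^2}\le k(m)(\ell+1)^m\|a_j^i(s,\cdot)\|_{L^2}\|f_\ell\|_{L^2}$, and the uniform $L^2$-bound lets me replace $\|a_j^i(s,\cdot)\|_{L^2}$ by the $s$-independent constant $C_0$.

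Next I would assemble the $\sko_r^2$-norm, using that $(1-L)^{r/2}$ acts on $\mathcal C_k$ as multiplication by $(1+k)^{r/2}$, so that $\|Af\|_{\sko_r^2}^2=\sum_k(1+k)^r\|(Af)_k\|_{L^2}^2$. Applying Cauchy--Schwarz to the finite inner sum (at most $2m+1$ terms) and then interchanging the summations over $k$ and $\ell$, and using that $(1+k)^r$ and $(1+\ell)^r$ are comparable up to a factor depending only on $m$ and $r$ on the window $|k-\ell|\le m$, the polynomial weights collapse to
\begin{equation*}
\|Af\|_{\sko_r^2}^2\le C(m,r,n)\,C_0^2\sum_\ell(1+\ell)^{r+2m}\|f_\ell\|_{L^2}^2=C(m,r,n)\,C_0^2\,\|f\|_{\sko_{r+2m}^2}^2 .
\end{equation*}
Taking $r'=r+2m$ gives precisely the defining inequality $\sup_{t}\|Af\|_{\sko_r^2}\le C(r,r')\|f\|_{\sko_{r'}^2}$ of a $\sko_\infty^2$-multiplicative operator, with a constant independent of $s$.

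The routine part is the chaos bookkeeping together with the Cauchy--Schwarz step and the index interchange; the only genuine subtlety is to confirm that the polynomial growth factors $(\ell+1)^m$ produced by part i) of Lemma~\ref{lem4_5} are absorbed uniformly. This forces the loss of $2m$ derivatives (the passage from $r$ to $r'=r+2m$) but costs nothing in $L^2$-integrability, thanks to the fast decrease of the chaos norms of any $\sko_\infty^2$ element. Once $A$ is recognized as a $\sko_\infty^2$-multiplicative operator, Lemma~\ref{lem4_7} applies directly and yields that $A$ is a multiplicator, which completes the proof.
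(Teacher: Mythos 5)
Your proof is correct and takes essentially the same route as the paper, whose entire proof reads: use Lemma~\ref{lem4_5}, ii), and Lemma~\ref{lem4_7}. The only difference is that you re-derive the content of Lemma~\ref{lem4_5}, ii) quantitatively from the product estimate of Lemma~\ref{lem4_5}, i), which has the merit of making explicit the $s$-uniform constant $C(r,r')$ (with $r' = r + 2m$) demanded by the definition of a $\sko_{\infty}^{2}$-multiplicative operator before Lemma~\ref{lem4_7} is invoked.
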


\begin{proof}
Use Lemma \ref{lem4_5}, ii), and Lemma \ref{lem4_7}
\end{proof}

Now we go back to the end of the proof of Theorem \ref{thm4_1}:

We already know that $A$ being the $n \times n$ matrix with ${}^t a_j^i(t, \omega) = \E \left[ (\delta B_1^i)_j(t) | \F_t \right]$ is asymetrical, adapted. We now show that $A$ is a $\dinf(\Omega, H)$ multiplicator process.

The family $ t \rightarrow \frac{ B^i(t+\epsilon) - B_t^i}{\sqrt{\epsilon}}$ is in $\mathcal{C}_1(\Omega)$ and is $(t, \epsilon)$-uniformly $L^2$-bounded, and so is a family of multiplicators (Corollary 3, 1), uniformly in $(t, \epsilon)$.

And if $h(t, \omega) = \int_0^t \dot{h}(s, \omega) ds $ is a $\dinf$-vector field then $\forall i=1, \dots, n$: $t \rightarrow \int_0^t \frac{ B_{t+\epsilon}^i - B_t^i}{\sqrt{\epsilon}} \dot{h}(s, \omega) ds$ is a $\dinf$-vector field and:
\begin{align*}
\delta \left[ \int_0^t \left( \frac{ B_{s+\epsilon}^i - B_s^i}{\sqrt{\epsilon}} \right) \dot{h}(s, \omega) ds \right] = \int_0^t \delta \left( \frac{ B_{s+\epsilon}^i - B_s^i}{\sqrt{\epsilon}} \right) \dot{h}(s, \omega) ds + \int_0^t \left( \frac{ B_{s+\epsilon}^i - B_s^i}{\sqrt{\epsilon}} \right) \delta \dot{h}(s, \omega) ds
\end{align*}

In the above equation, the first and last items are vector fields (in the first integral, $\delta$ is the extension of Corollary 2, 4); so $t \rightarrow \int_0^t \delta \left( \frac{ B_{t+\epsilon}^i - B_t^i}{\sqrt{\epsilon}} \right) \dot{h}(s, \omega) ds$ is a $\dinf$-vector field, which proves that: $s \rightarrow \delta \frac{ \left[ B_{s+\epsilon}^i - B_s^i \right]}{\sqrt{\epsilon}} $ are multiplicators, $\epsilon$-uniformly $(0 < \epsilon < 1)$.

Then, with the It\={o} formula, and Lemma \ref{lem4_3} i):

\begin{align*}
F^{i,j}(\epsilon) &= \frac{ B^i(s+\epsilon) - B^i(s)}{\sqrt{\epsilon}} \cdot \delta \left[ \frac{ B^j(s+\epsilon) - B^j(s)}{\sqrt{\epsilon}} \right] \\
&= \frac{1}{\epsilon} \int_s^{s+\epsilon} {}^t a_i^j du + \frac{1}{\epsilon} \int_s^{s+\epsilon} \left( \int_s^u {}^t a_k^j dB^k \right)dB_u^i + \frac{1}{\epsilon} \int_s^{s+\epsilon} \left( \int_s^u dB^i \right) {}^t a_k^j dB_u^k 
\end{align*}

so:

\begin{align}
\left\| F^{i,j}(\epsilon) - \frac{1}{\epsilon} \int_s^{s+\epsilon} {}^t a_i^j du \right\|_{L^2(\Omega)} 
&\leq \frac{1}{\epsilon} \left[ \int_s^{s+\epsilon} du \left( \int_s^u d\alpha \sum_{k=1}^n \E \left[ {}^t a_k^i(\alpha, .)^2 \right] \right) \right]^{\frac{1}{2}} \notag\\
&+ \frac{1}{\epsilon} \left[ \int_s^{s+\epsilon} du \E \left[ \sum_{k=1}^n \left( \int_s^u dB^j \right)^{2} . \left( {}^t a_k^i \right)^2 \right] \right]^{\frac{1}{2}} \label{eq4_8}
\end{align}
as ${}^t a_k^i$ is $\dinf$-bounded (Lemma \ref{lem4_4}), we see that the r.h.s. of (\ref{eq4_8}) is $L^2$-bounded.

But $F^{i,j}(\epsilon) \in \F_s^{\perp} \cap \F_{s+\epsilon}$; the filtration being right-continuous:

$\lim_{\epsilon \downarrow 0} \F_s^{\perp} \cap \F_{s+\epsilon} = \{ 0 \}$. 

So $(F^{i,j}(\epsilon))$ admits an adherence value, which is $0$, to which $F^{i,j}(\epsilon)$ converges $L^2$-weakly.

And as from (\ref{eq4_8}) we see that $F^{i,j}(\epsilon) - \frac{1}{\epsilon} \int_s^{s+\epsilon} {}^t a_i^j du$ admits a $L^2$-weak limit, we see that:

\begin{align*}
\frac{ B^i(s+\epsilon) - B^i(s)}{\sqrt{\epsilon}} \cdot \delta  \frac{ \left[ B^j(s+\epsilon) - B^j(s) \right]}{\sqrt{\epsilon}}  \overset{L^2}{\rightharpoonup} {}^t a_i^j
\end{align*}

All that is left to prove is that the $L^2$-weak limit of multiplicators 

$\frac{ B^i(s+\epsilon) - B^i(s)}{\sqrt{\epsilon}} \cdot \delta  \frac{ \left[ B^j(s+\epsilon) - B^j(s) \right]}{\sqrt{\epsilon}}$ is again a multiplicator.

Then a net of barycenters $b_{ij}(\epsilon)$ constructed using
the items of the sequence $\frac{ B^i(s+\epsilon) - B^i(s)}{\sqrt{\epsilon}} \cdot \delta  \frac{ \left[ B^j(s+\epsilon) - B^j(s) \right]}{\sqrt{\epsilon}}$ is $L^2$-strongly convergent towards ${}^t a_i^j$.

But the $\frac{ B^i(s+\epsilon) - B^i(s)}{\sqrt{\epsilon}} \cdot \delta  \frac{ \left[ B^j(s+\epsilon) - B^j(s) \right]}{\sqrt{\epsilon}}$ are $\epsilon$-uniformly multiplicators; so are the $b_{ij}(\epsilon)$, as they are barycenters built on multiplicators. 

Then, for $X \in \dinf(\Omega, H)$, $b_{ij}(\epsilon).X$ converges in $L^2$ towards ${}^t a_i^j X$ and the $b_{ij}(\epsilon).X$ are uniformly $\dinf$-bounded; so by interpolation, the convergence of the $b_{ij}(\epsilon).X$ is $\dinf(\Omega, H)$, which proves that ${}^t a_j^i$ is a $\dinf$-multiplicator.

\begin{thm}\label{thm4_2}
Let $U_n$ be a sequence of adapted $\dinf$-vector fields such that the associated derivations converge pointwise towards a derivation $\delta$, with zero divergence. Then $\delta \equiv 0$.
\end{thm}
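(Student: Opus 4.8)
The plan is to reproduce, for the adapted vector fields $U_n$, the exact chain of reasoning already carried out in \thmref{thm2_10}, of which this statement is merely the rephrasing: an adapted $\dinf$-vector field generates an adapted derivation, so the hypotheses coincide. Writing $\delta_n$ for the derivation associated to $U_n$, so that $\delta_n\varphi=\langle U_n,\Grad\varphi\rangle_H$, I would first integrate by parts to get, for every $\varphi\in\dinf(\Omega)$,
\begin{equation*}
\int\delta_n\varphi\,\P(d\omega)=\int\langle U_n,\Grad\varphi\rangle_H\,\P(d\omega)=-\int\varphi\,\Div U_n\,\P(d\omega).
\end{equation*}
Pointwise convergence $\delta_n\to\delta$ together with $\Div\delta=0$ then forces $\int\varphi\,\Div U_n\,\P(d\omega)\to 0$ for every test function $\varphi$. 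Taking $\varphi=\Div V$ for an arbitrary $V\in\dinf(\Omega,H)$ and using the adjunction $\int\langle\Grad\Div U_n,V\rangle_H\,\P(d\omega)=-\int\Div U_n\cdot\Div V\,\P(d\omega)$, this rephrases as $\int\langle\Grad\Div U_n,V\rangle_H\,\P(d\omega)\to 0$ for all such $V$.

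The crux is then to recover $U_n$ from $\Div U_n$. By \thmref{thm2_9} the process $t\mapsto\int_0^t\E[\Grad\Div U_n\,|\,\F_s]\,ds$ is again a $\dinf$-vector field and is adapted, and by self-adjointness of the conditional expectation in the Cameron--Martin inner product,
\begin{equation*}
\int\langle\E[\Grad\Div U_n\,|\,\F_t],V\rangle_H\,\P(d\omega)=\int\langle\Grad\Div U_n,\E[V\,|\,\F_t]\rangle_H\,\P(d\omega)\longrightarrow 0.
\end{equation*}
Next I would apply the Clark--Ocone formula to the centered element $F=\Div U_n$ to obtain $\Div U_n=\int_0^1\E[\Grad\Div U_n\,|\,\F_t]\,dB$, while adaptedness of $U_n$ turns its Skorokhod integral into an It\^{o} integral, $\Div U_n=\int_0^1 U_n\cdot dB$. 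Uniqueness of the integrand in the martingale representation then identifies the two integrands, $\E[\Grad\Div U_n\,|\,\F_t]=U_n$, so the displayed limit reads $\int\langle U_n,V\rangle_H\,\P(d\omega)\to 0$; that is, $U_n\to 0$ weakly as a family in $\sko^{-\infty}$.

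To conclude, I would pair $U_n$ with the vector field $\varphi\,\Grad\psi\in\dinf(\Omega,H)$, $\varphi,\psi\in\dinf(\Omega)$: on one hand $(U_n,\varphi\,\Grad\psi)=\int\varphi\,\langle U_n,\Grad\psi\rangle_H\,\P(d\omega)=\int\varphi\,(\delta_n\psi)\,\P(d\omega)$ converges to $\int\varphi\,(\delta\psi)\,\P(d\omega)$ by pointwise convergence of $\delta_n$, while on the other hand it tends to $0$ by the weak vanishing of $U_n$. Hence $\int\varphi\,(\delta\psi)\,\P(d\omega)=0$ for every $\varphi$, giving $\delta\psi=0$ for all $\psi$ and therefore $\delta\equiv 0$. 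The main obstacle is the inversion of the divergence through Clark--Ocone and the matching of integrands: one must check that $\Grad\Div U_n$ is regular enough for \thmref{thm2_9} to apply and that the identification $\E[\Grad\Div U_n\,|\,\F_t]=U_n$ survives the passage to $\sko^{-\infty}$, which is precisely where the adaptedness of the $U_n$ is indispensable.
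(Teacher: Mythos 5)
Your proposal is correct and follows essentially the same route as the paper's own proof (which itself repeats the argument of \thmref{thm2_10}): integration by parts to show $\Div U_n \to 0$ as distributions, recovery of $U_n$ via Clark--Ocone together with the It\={o}/Skorokhod identification $\E\left[\Grad \Div U_n \,|\, \F_t\right] = U_n$, and the final pairing with $\varphi\,\Grad\psi$ to conclude $\delta \equiv 0$. The regularity caveats you flag at the end are precisely the points the paper also invokes (\thmref{thm2_9} and the fundamental isometry), so there is no substantive divergence between the two arguments.
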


\begin{proof} 
We remind that for $\delta \in Der$, $\Div \delta$ is an operator such that: $\forall \varphi \in \dinf(\Omega): (\Div \delta).\varphi = \int \delta \varphi$.

Denote by $\delta_n$ the derivation associated with $U_n$: 

$\forall \varphi \in \dinf(\Omega): \delta_n \varphi = U_n . \varphi = \langle U_n, \Grad \varphi \rangle_H$

Then $\Div \delta_n \rightarrow \Div \delta$ in the "distribution" meaning, that is:

$ \int \langle U_n, \Grad \varphi \rangle_H \rightarrow \int \delta \varphi, \forall \varphi \in \dinf(\Omega)$

By hypothesis: $\int \langle U_n, \Grad \varphi \rangle_H d\P \rightarrow 0$,

Then: $\int (\Div U_n) . \varphi \P(d \omega) \rightarrow 0$, so $\Div U_n \rightarrow 0$ as distributions $\in (\dinf)^{\star}$. 

Then $\Grad ( \Div U_n) \rightarrow 0$ as distributions $\in (\dinf(\Omega, H))^{\star}$.

Then $\E \left[ \overset{\rightarrow}{\Grad } \Div U_n | \F_t \right]$ is also a sequence of adapted vector fields, which as distributions of $(\dinf(\Omega, H))^{\star}$ converges towards $0$ as the operator projection on $\F_t$ is continuous. 

But:
$ \int \E \left[ \overset{\rightarrow}{\Grad } \Div U_n | \F_t \right] dB = \Div U_n$ (Clark-Ocone) and as $U_n$ is adapted:

$\int U_n dB = \Div U_n$ (Skorokhod integral)

So $\E \left[ \overset{\rightarrow}{\Grad } \Div U_n | \F_t \right] = U_n$ (Fundamental isometry).

Then $U_n \rightarrow 0$ as distributions $\in (\dinf(\Omega, H))^{\star}$.

With: $\varphi \in \dinf(\Omega), \psi \in \dinf(\Omega)$, we have: $\varphi \Grad \psi \in \dinf(\Omega, H)$ which implies: 

\begin{align*}
\int \langle U_n, \varphi \Grad \psi \rangle_H ~ \P(d\omega) = \int \varphi . U_n(\psi) \P(d\omega) \rightarrow \int \varphi (\delta \psi) \P(d \omega) = 0 
\end{align*}
\end{proof}

\begin{cor}\label{cor4_2}
An adapted derivation is not generally a limit of a sequence of adapted vector fields.
\end{cor}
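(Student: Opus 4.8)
The plan is to argue by contradiction, using Theorem \ref{thm4_2} as the engine: if some adapted derivation $\delta$ were the pointwise limit of the derivations associated with a sequence of adapted $\dinf$-vector fields, and if moreover $\Div\delta=0$, then Theorem \ref{thm4_2} would force $\delta\equiv 0$. Hence it suffices to exhibit a \emph{single} adapted $\dinf$-derivation $\delta$ which is nonzero and has null divergence; such a $\delta$ cannot be a limit of adapted vector fields, which establishes the corollary (and makes precise the statement already announced in Remark \ref{rem2_3}).

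The concrete witness is built from a constant antisymmetric matrix. Fix $n\geq 2$ and let $A$ be a nonzero deterministic $n\times n$-A.M. matrix, viewed as the constant matricial process $A(t,\omega)=A$. Since $A$ is a bounded constant operator, $A\Grad\varphi\in\dinf(\Omega,H)$ for every $\varphi\in\dinf(\Omega)$, so by Remark \ref{rem2_2} ii) the operator
\begin{equation*}
  \delta = \Div A\Grad
\end{equation*}
is a $\dinf$-derivation satisfying $\Div(\Div A\Grad)=0$, that is $\Div\delta=0$. As $A$ is deterministic it is in particular adapted, so $\delta=D_A$ is an adapted derivation (as in the construction of $D_A$ preceding Theorem \ref{thm4_1}). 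Finally $\delta\not\equiv 0$: writing $B$ for the coordinate Brownian motion, $\delta B_1^i=\int_0^1{}^tA\,dB$ is a nonzero Gaussian variable for some $i$ as soon as $A\neq 0$, so $\delta$ does not vanish on $\dinf(\Omega)$.

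It remains to draw the contradiction. Suppose, contrary to the claim, that $\delta$ were a limit of a sequence of adapted $\dinf$-vector fields, meaning that there exist adapted $\dinf$-vector fields $U_n$ whose associated derivations $\delta_n=\langle U_n,\Grad\,\cdot\,\rangle_H$ converge pointwise in $\dinf(\Omega)$ towards $\delta$. Since $\Div\delta=0$, Theorem \ref{thm4_2} applies and yields $\delta\equiv 0$, contradicting the previous paragraph. Therefore no such sequence exists, and the adapted derivation $\delta$ is not a limit of adapted vector fields.

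The argument is essentially immediate once Theorem \ref{thm4_2} is available; the only point requiring care is the verification that the candidate $\delta=\Div A\Grad$ is simultaneously a genuine (nonzero) $\dinf$-derivation, adapted, and of null divergence. All three properties are guaranteed by Remark \ref{rem2_2} ii) together with the determinism (hence adaptedness) of $A$, so there is no substantial obstacle beyond exhibiting the witness.
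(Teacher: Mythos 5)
Your proof is correct and follows exactly the route the paper intends: Corollary \ref{cor4_2} is stated as an immediate consequence of Theorem \ref{thm4_2}, whose hypothesis (zero divergence) you discharge by exhibiting a nonzero adapted divergence-free derivation. Your explicit witness $\delta = \Div A \Grad$ with $A$ a constant nonzero antisymmetric matrix is precisely the kind of operator the paper already has in hand (Remark \ref{rem2_2} ii) and the $D_A$ construction before Theorem \ref{thm4_1}), and your verification that it is a derivation, adapted, divergence-free and nonzero is sound.
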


For the Corollary \ref{cor4_3} that follows, the notion of stochastic parallel transport of a vector $X$ in a Riemannian manifold is needed. It will be defined later in Section 5, and denoted $X_{//}(t, \omega)$.

\begin{cor}\label{cor4_3}
Given a compact Riemannian manifold $(V_n, g)$ and $\P_{m_0}(V_n, g)$ being the set of continuous paths: $[0,1] \rightarrow V_n$, starting from 

$m_0 \in V_n$, there cannot be a global chart from $\P_{m_0}(V_n, g)$ into the Wiener space such that:

\begin{enumerate}
\renewcommand{\labelenumi}{\roman{enumi})}
\item it leaves the measure invariant.
\item it has a continuous linear tangent map on the associated spaced of $\dinf$-continuous derivations.
\item this tangent map sends a dense subset $\mathfrak{X}$ of the adapted vector fields:
\begin{align*}
\mathfrak{X} =  \left\{ \sum_{i=1}^n a_i(t, \omega) e_{i//}(t, \omega) \bigg| a_i(t, \omega) \text{ adapted} \right\}
\end{align*}
in a dense subset of the adapted vector fields on the Wiener space; $(e_i)_{i=1,\dots,n}$ being an orthonormal basis of $T_{m_0}V_n$.
\end{enumerate}

\end{cor}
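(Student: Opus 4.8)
The plan is to argue by contradiction, using Theorem 4.2 (equivalently Corollary 4.2) as the engine: on the Wiener space a \emph{nonzero} adapted derivation with null divergence can never be a pointwise $\dinf$-limit of adapted $\dinf$-vector fields. If a global chart $\Phi:\P_{m_0}(V_n,g)\to\W$ with the properties (i)--(iii) existed, I would transport along $\Phi$ a suitable nonzero, adapted, divergence-free derivation coming from the geometry of $(V_n,g)$, and show that its image on $\W$ satisfies the hypotheses of Corollary 4.2, hence vanishes, contradicting the injectivity of the tangent map.

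First I would record the two structural facts that make the transport legitimate. Let $T$ denote the continuous linear tangent map on derivations furnished by (ii). Because $\Phi$ is a global chart, the transitivity $T_{ij}\circ T_{jk}=T_{ik}$ established in Theorem 1.1 gives $T\circ T^{-1}=\mathrm{Id}$, so $T$ is injective. Because $\Phi$ leaves the measure invariant by (i), for every derivation $\delta$ on the path space and every $\varphi\in\dinf$ the identity $\int\delta\varphi\,\P(d\omega)=\int(T\delta)(\varphi\circ\Phi^{-1})\,\P(d\omega)$ holds, so $\Div\delta=0$ if and only if $\Div(T\delta)=0$; that is, $T$ intertwines the divergence operators. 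Finally, by (iii) together with the continuity of $T$, the image under $T$ of any pointwise $\dinf$-limit of vector fields of $\mathfrak{X}$ is again a pointwise $\dinf$-limit of adapted $\dinf$-vector fields on $\W$, so such a limit is in particular adapted on $\W$.

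Next I would produce, intrinsically on $\P_{m_0}(V_n,g)$, a derivation $\delta$ that is (a) adapted, (b) divergence-free, (c) nonzero, and (d) a pointwise $\dinf$-limit of vector fields of $\mathfrak{X}$. The candidate is the infinitesimal rotation of the moving frame generated by the curvature: starting from two fields $U=\sum a_i e_{i//}$ and $V=\sum b_j e_{j//}$ in $\mathfrak{X}$, the bracket $[U,V]$, computed through the stochastic parallel transport $X_{//}$ and the structural equations, splits into a term again of the form $\sum c_k e_{k//}$ and an antisymmetric, $\mathfrak{so}(n)$-valued rotation term built from the curvature $R(e_i,e_j)$. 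This rotation term is precisely of the type $\Div A\,\Grad$ with $A$ an adapted antisymmetric multiplicator, so by Remark 2.2.ii it has null divergence and is a genuine derivation; obtained as a limit of finite bracket-approximations of fields of $\mathfrak{X}$, it is a pointwise $\dinf$-limit of adapted vector fields, and its non-vanishing is exactly where $R\not\equiv 0$ enters. Granting this, the contradiction is immediate: putting $\delta_{\W}=T\delta$, the second paragraph shows $\delta_{\W}$ is adapted, is a pointwise $\dinf$-limit of adapted $\dinf$-vector fields on $\W$, and satisfies $\Div\delta_{\W}=0$; Corollary 4.2 then forces $\delta_{\W}\equiv 0$, and injectivity of $T$ yields $\delta\equiv 0$, contradicting (c).

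The hard part will be step three, namely making rigorous that the curvature-generated rotation derivation is simultaneously divergence-free, nonzero, and approximable in the pointwise $\dinf$ sense by honest adapted vector fields of $\mathfrak{X}$. The divergence-free and nonzero parts rest on the antisymmetric-multiplicator analysis of Section 4 and on the non-flatness of $(V_n,g)$, while the approximation claim is the genuinely delicate point, since Remark 2.3 warns that adapted derivations are not in general limits of adapted vector fields. The resolution I would pursue is that, \emph{before} transport, the rotation term does arise on the path space as a limit of brackets of fields of $\mathfrak{X}$ --- a construction internal to the intrinsic $\dinf$-structure of the stochastic manifold, where divergence and adaptedness are chart-dependent and no analogue of Theorem 4.2 is available --- whereas after transport a global chart would make Theorem 4.2 applicable on $\W$ and kill the object; it is this incompatibility, rather than any computation on $\W$ alone, that produces the impossibility asserted by the corollary.
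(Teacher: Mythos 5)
Your overall strategy is the paper's: argue by contradiction, transport a nonzero adapted divergence-free derivation through the tangent map, and let Theorem 4.2 kill its image on the Wiener space. The source of the derivation is also the same one the paper uses — the curvature rotation term in the bracket of two fields of $\mathfrak{X}$, i.e.\ the paper's $\delta = [u,v]-w$ with $u,v,w$ vector fields, an adapted derivation with null divergence that is not itself a vector field.

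But there is a genuine gap at exactly the point you flag as delicate, and your first attempt to close it is wrong. Your condition (d) — that $\delta$ be a pointwise $\dinf$-limit of \emph{vector fields} of $\mathfrak{X}$ — cannot hold: the rotation term $\Div A \Grad$ is precisely not a vector field, and what the geometry supplies is a limit of \emph{brackets} $[u_n,v_n]-w_n$ with $u_n,v_n,w_n\in\mathfrak{X}$, not a limit of fields; your transport fact in the second paragraph only covers images of limits of vector fields, so it does not apply to this object. What rescues the argument — and what you never state, though your closing paragraph gropes toward it — is the flatness of the Wiener space: on $\W$ the bracket of two adapted $\dinf$-vector fields is again an adapted $\dinf$-vector field. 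Granting that $T$ respects brackets, $T[u_n,v_n]=[Tu_n,Tv_n]$ (automatic, since $T$ is implemented by conjugation with the chart morphism), and that $[u_n,v_n]\to[u,v]$ in $\mathrm{Der}$ (the paper obtains this from the boundedness of $(u_n)$ and Banach--Steinhaus, a step you only gesture at), the sequence $[Tu_n,Tv_n]-Tw_n$ consists of adapted vector fields on $\W$ converging pointwise to $T\delta$, with $\Div(T\delta)=0$ by (i); only then do the hypotheses of Theorem 4.2 hold, giving $T\delta=0$, whence $\delta=0$ by injectivity of $T$, contradicting $\delta\neq 0$. Without the flat-bracket fact, your assertion that "after transport Theorem 4.2 is applicable" is unjustified, because Theorem 4.2 requires the approximating derivations to be associated with adapted vector fields. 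It is exactly this asymmetry — brackets of adapted fields stay vector fields on $\W$ but acquire a rotation term on $\P_{m_0}(V_n,g)$ — that carries the whole corollary, and the paper states it explicitly where your proposal leaves it implicit.
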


\begin{rem}
This result proves that even another chart map than the It\={o} map, satisfying reasonable conditions, does not have a linear tangent map.
\end{rem}

\begin{proof}
Suppose there exists such a global chart $\psi$ and that $T_{\psi}$ is the associated linear tangent map.
From Theorem 4.1, we know that there is a derivation $\delta$ adapted, with a null divergence, and three vector fields $u, v, w,$ such that: $\delta = [u, v] - w$. Then from iii) there exist three sequences of ${X}~$, $(u_n)_{n \in \N_{\star}}$, $(v_n)_{n \in \N_{\star}}$, $(w_n)_{n \in \N_{\star}}$, such that:
$u_n \overset{\text{Der}}{\rightarrow} u$, $v_n \overset{\text{Der}}{\rightarrow} v$, $w_n \overset{\text{Der}}{\rightarrow} w$; as $(u_n)_{n \in \N_{\star}}$ is bounded in Der, by Banach-Steinhaus we have:

$[u_n, v_n] \overset{\text{Der}}{\rightarrow} [u, v]$; so $[u_n, v_n] - w_n \overset{\text{Der}}{\rightarrow} \delta$

and $T_{\psi}( [u_n, v_n] - w_n ) = [T_{\psi} u_n, T_{\psi} v_n] - T_{\psi} w_n \overset{\text{Der}}{\rightarrow} T \delta$.

But by i): $\Div (T_{\psi} \delta) = 0)$, and $ [T_{\psi} u_n, T_{\psi} v_n] - T_{\psi} w_n $ is an adapted vector field.

Then Theorem \ref{thm4_2} proves that $T_{\psi} \delta = 0$, which implies $\delta = 0$.

\end{proof}

\begin{rem}
It is the property of adaptation that is at the root of this impossibility to find a regular enough global chart, which would admit a linear tangent map.
\end{rem}

\begin{rem}
Instead of $\mathfrak{X}$ being the set of the adapted vector fields, we could have chosen
any dense subset of adapted vector fields, because later we will see that the $\dinf$-module generated by such a subset is dense in the $\dinf$-continuous derivations.
\end{rem}

\begin{thm}\label{thm4_3}
Let $X: [0,1] \times \Omega \rightarrow \R$ be an $\alpha$-$\dinf$-Holderian process. Then X is a $\dinf$-multiplicator.
\end{thm}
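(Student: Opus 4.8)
The plan is to reduce the statement to the second of the two multiplicator criteria proved above in this section (the one whose hypothesis is that $\sup_{t}(1-L)^{r/2}a_{ij}$ be dominated by a function in $L^{\infty-0}(\Omega)$), rather than to the $\sko_\infty^2$-multiplicative criterion, since the latter would force us to confront the fact that $\sko_\infty^2$ is not an algebra. By \lemref{lem4_1}.i it suffices to treat a scalar process, a matrix process being a multiplicator exactly when each entry is; and the content of that second criterion is entry-wise (its proof bounds each $\|\Grad a_{ij}\|_H$ separately and never uses antisymmetry), so the scalar notion of multiplicator is precisely what it produces.

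First I would record that the $\dinf$-$\alpha$-Hölder hypothesis transfers verbatim to every Ornstein--Uhlenbeck lift. Writing $Z_r(t,\omega)=(1-L)^{r/2}X(t,\omega)$ and using $\|f\|_{\drp}=\|(1-L)^{r/2}f\|_{L^p}$, the defining inequality of Definition~\ref{def2_3} becomes
\begin{equation*}
\|Z_r(t',\cdot)-Z_r(t,\cdot)\|_{L^p(\Omega)}\le C(p,r)\,|t'-t|^{\alpha}\qquad\forall p>1,\ \forall r\in\N .
\end{equation*}
Since $X(t',\cdot)-X(t,\cdot)\in\dinf$ for all $t,t'$ and $X(t_0,\cdot)\in\dinf$ at one base time, each $X(t,\cdot)$, hence each $Z_r(t,\cdot)$, lies in $L^p(\Omega)$ for every $p$.

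The heart of the argument is then a Kolmogorov--Chentsov (equivalently Garsia--Rodemich--Rumsey) estimate in the variable $t$. For $p>1/\alpha$ the bound above gives $\E\,|Z_r(t',\cdot)-Z_r(t,\cdot)|^{p}\le C(p,r)^p\,|t'-t|^{1+(\alpha p-1)}$ with $\alpha p-1>0$, so $Z_r$ has a continuous modification satisfying $\big\|\sup_{t\in[0,1]}|Z_r(t,\cdot)|\big\|_{L^p(\Omega)}<+\infty$. Letting $p\uparrow\infty$ (the Hölder bound is available for every $p$, and on a probability space control for large $p$ subsumes the smaller ones) yields $\sup_{t\in[0,1]}|(1-L)^{r/2}X(t,\cdot)|\in L^{\infty-0}(\Omega)$ for every $r$. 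This is exactly the hypothesis of the second multiplicator criterion, which therefore applies entry-wise and shows that $X$ is a $\dinf$-multiplicator, completing the proof.

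I expect the main obstacle to be this Kolmogorov step: one must pass from the $L^p$-modulus of continuity in $t$, valid for each fixed $p$ and $r$, to the a.s. finiteness and $L^{\infty-0}$-integrability of the time-supremum, while checking that the continuous modification is the one on which the O.U.\ powers are read and that the resulting domination is uniform in $t$ as the criterion requires. An alternative, should one wish to stay inside the convolution formalism, is to invoke \thmref{thm2_8} to write $X=Y\star\beta_s$ with $Y$ completely $\dinf$, establish the $\sko_\infty^2$-multiplicative property of $Y$, and then transfer it through the integrable kernel $\beta_s$ before applying \lemref{lem4_7}; but that route must absorb the $\sko_\infty^2\to\sko_\infty^{2-0}$ loss of integrability by a bootstrap as in \lemref{lem4_6}, so the criterion-based route above is the cleaner one.
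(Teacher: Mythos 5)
Your argument is correct, and it lands on the same pivot as the paper --- the second multiplicator criterion of Section~4, whose hypothesis is that $\sup_{t\in[0,1]}\bigl|(1-L)^{r/2}X(t,\cdot)\bigr|$ be dominated by an $L^{\infty-0}(\Omega)$ function for every $r$ --- but verifies that hypothesis by a genuinely different route. The paper's proof is the convolution one: it invokes Theorem~\ref{thm2_8} to write $X=Y\star\beta_s$ with $Y$ completely $\dinf$, notes that $(1-L)^{r/2}$ acts only on $\omega$ and hence commutes with the time convolution, and then gets, for a $q$ with $qs<1$ so that $\beta_s\in L^q([0,1])$, the pointwise-in-$t$ bound $\bigl|(1-L)^{r/2}X(t,\omega)\bigr|\leq \bigl\|(1-L)^{r/2}Y(\cdot,\omega)\bigr\|_{L^p([0,1],dt)}\,\|\beta_s\|_{L^q}\in L^{\infty-0}(\Omega)$, after which the criterion finishes exactly as in your text. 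You instead extract the time-sup domination by Kolmogorov--Chentsov/Garsia--Rodemich--Rumsey applied to each O.U.\ lift $Z_r=(1-L)^{r/2}X$, using the $\alpha$-H\"older bound in $L^p$ for $p>1/\alpha$; this is sound, and the modification subtlety you flag is harmless, since the continuous modification agrees with $Z_r(t,\cdot)$ a.s.\ at each fixed $t$, so its supremum dominates $Z_r(t,\cdot)$ up to a $t$-dependent null set --- which is all the criterion asks --- and intersecting over $p$ gives membership in $L^{\infty-0}$. What each approach buys: yours is self-contained at this point (no appeal to Proposition~\ref{prop2_2} or Theorem~\ref{thm2_8}) at the cost of importing the classical GRR machinery, whereas the paper's route avoids modifications entirely (its bound holds for every $t$ directly from the representation $X=Y\star\beta_s$) and reuses the $\beta_s$-calculus it needs anyway in Section~5 (Theorems~\ref{thm:5.7} and~\ref{thm:5.11}). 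One correction to your closing remark: the paper's actual proof is \emph{not} the alternative you dismiss --- it never establishes a $\sko_\infty^2$-multiplicative property of $Y$, nor does it pass through Lemma~\ref{lem4_7}; it goes straight from the convolution bound to the same criterion you use, so the published argument is precisely the ``clean'' convolution analogue of your main route, with no bootstrap to absorb.
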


\begin{proof}
From Theorem 2, 10, we know that $X = Y \star \beta_s$ with 

$Y = \frac{d}{dt}( X \star \beta_{1-s} )$ and Y is completely $\dinf$.

Then fixing $\gamma > 1$, and $q$ such that $0 < qs < 1$, there exists $p > 1$ with $1 + \frac{1}{\gamma} = \frac{1}{p} + \frac{1}{q}$. 

Y being completely $\dinf$, $\| Y(\omega, .) \|_{L^p([0,1], dt)} \in L^{\infty - 0}(\Omega)$. 
So:

\begin{align*}
\forall t: |X(t, \omega)| &= |Y(., \omega) \star \beta_s(.)(t)| \leq \| Y(\omega, .) \|_{L^p([0,1], dt)} \| \beta_s \|_{L^q([0,1], dt)}
\end{align*}

And $\| Y(\omega, .) \|_{L^p([0,1], dt)} \in L^{\infty - 0}(\Omega)$.

So with the Criterion 2, 2, we have that $X$ is a multiplicator.
\end{proof}

\begin{cor}
If $X: t \rightarrow \int_0^t \dot{h}(s, \omega) ds$ is a $\dinf$-vector field, then the process $X(t, \omega)$ is a $\dinf$-multiplicator.
\end{cor}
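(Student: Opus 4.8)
The plan is to reduce the statement to Theorem \ref{thm4_3}: I will show that, read as a process, each component of $X(t,\omega)$ is $\dinf$-$\tfrac12$-H\"olderian in the sense of Definition \ref{def2_3}, after which Theorem \ref{thm4_3} makes each component a $\dinf$-multiplicator and Lemma \ref{lem4_1}.i assembles them into a multiplicator of the full $\R^n$-valued process.

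First I would write down the increment of the process. With $\dot h = {}^t(\dot h^1,\dots,\dot h^n)$, for $0\le t<t'\le 1$ the $i$-th component is
\begin{equation*}
  X^i(t',\omega)-X^i(t,\omega)=\int_t^{t'}\dot h^i(s,\omega)\,ds .
\end{equation*}
Since $X\in\dinf(\Omega,H)$, the iterated Malliavin derivative commutes with the $ds$-integration, so for each $j\le r$
\begin{equation*}
  \Grad^j\!\bigl[X^i(t',\omega)-X^i(t,\omega)\bigr]=\int_t^{t'}\Grad^j\dot h^i(s,\omega)\,ds .
\end{equation*}

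The key step is a Minkowski-then-Cauchy--Schwarz estimate in the time variable, which pointwise in $\omega$ gives
\begin{equation*}
  \Bigl\|\Grad^j\!\bigl[X^i(t',\omega)-X^i(t,\omega)\bigr]\Bigr\|_{\overset{j}{\otimes}H}
  \le\sqrt{|t'-t|}\left(\int_0^1\bigl\|\Grad^j\dot h^i(s,\omega)\bigr\|_{\overset{j}{\otimes}H}^2\,ds\right)^{\frac12}.
\end{equation*}
Taking the $L^p(\Omega)$-norm and recognising the right-hand factor as $\bigl\|\Grad^j X^i\bigr\|_{L^p(\Omega,\overset{j}{\otimes}H\otimes H)}$ --- finite because $X$ is a $\dinf$-vector field --- I obtain, for every $(p,r)$ with $p>1$, $r\in\N_\star$, a constant $C(p,r)$ with
\begin{equation*}
  \sup_{t,t'}\bigl\|X^i(t',\omega)-X^i(t,\omega)\bigr\|_{\drp}\le C(p,r)\,|t'-t|^{\frac12},
\end{equation*}
which is exactly the $\dinf$-$\tfrac12$-H\"olderian property.

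Finally, applying Theorem \ref{thm4_3} with $\alpha=\tfrac12$ to each scalar process $X^i(t,\omega)$ shows each is a $\dinf$-multiplicator, and Lemma \ref{lem4_1}.i then yields that $X(t,\omega)$ is a $\dinf$-multiplicator. I expect the only delicate point to be the bookkeeping between the time parameter and the Malliavin structure, namely the commutation of $\Grad^j$ with the time integral and the identification of the resulting $L^p(\Omega,\overset{j}{\otimes}H\otimes H)$-norm with the finite $\drp$-norm of $X$ viewed in $\dinf(\Omega,H)$; once this is set up, the exponent $\tfrac12$ and the appeal to Theorem \ref{thm4_3} are immediate.
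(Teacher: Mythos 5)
Your proposal is correct and takes essentially the same route as the paper: the paper's proof likewise establishes that $X(t,\omega)$ is $\tfrac12$-$\dinf$-H\"olderian by writing $(1-L)^{r/2}\bigl(X(t+\epsilon,\omega)-X(t,\omega)\bigr)=\int_t^{t+\epsilon}(1-L)^{r/2}\dot h(s,\omega)\,ds$ and applying Cauchy--Schwarz in time to get $\|X(t+\epsilon,\cdot)-X(t,\cdot)\|_{\sko_r^p}\le\epsilon^{1/2}\|X\|_{\sko_r^p(\Omega,H)}$, then concludes via Theorem \ref{thm4_3}. Your only deviations are cosmetic --- working with the equivalent $\Grad^j$-form of the $\drp$-norm instead of $(1-L)^{r/2}$, and making the componentwise reduction through Lemma \ref{lem4_1}.i explicit --- both of which are sound.
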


\begin{proof}
\begin{align*}
(1-L)^{r/2} (X(t+\epsilon, \omega) - X(t, \omega)) = \int_t^{t+\epsilon} (1-L)^{r/2} \dot{h}(s, \omega) ds
\end{align*}

Then:

\begin{align*}
\| X(t+\epsilon, .) - X(t, .) \|_{\sko_r^p(\omega)} &\leq \epsilon^{\frac{1}{2}} \left\| \left[ \int_0^1 | (1-L)^{r/2} \dot{h}(s, \omega)|^2 ds \right]^{\frac{1}{2}} \right\|_{L^p(\Omega)}
\end{align*}

so:

\begin{align*}
\| X(t+\epsilon, .) - X(t, .) \|_{\sko_r^p(\omega)} &\leq \epsilon^{\frac{1}{2}} \| X \|_{\sko_r^p(\Omega, H)}
\end{align*}

\end{proof}

\begin{thm}\label{thm4_4}
If $X_n(t, \omega)$ is a sequence of processes, converging $\dinf$-towards $X(t, \omega)$, $t$-uniformly, then the vector fields $Y_n : t \rightarrow \int_0^t X_n(s, \omega) ds$ converges $\dinf(\Omega, H)$ towards the vector field $Y: t \rightarrow \int_0^t X(s, \omega) ds$.
\end{thm}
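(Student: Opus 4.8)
The plan is to reduce to the increment $Z_n = X_n - X$, which by hypothesis converges to $0$ in $\dinf$ uniformly in $t$, and to show directly that the vector field $W_n : t \mapsto \int_0^t Z_n(s,\omega)\,ds$ tends to $0$ in $\dinf(\Omega,H)$; since $Y_n - Y = W_n$ as elements of $\dinf(\Omega,H)$, this suffices. The essential structural point is that the time integration is a deterministic operation and therefore commutes with the Malliavin derivative: for each $j$ one should have $\Grad^j W_n(t,\omega) = \int_0^t \Grad^j Z_n(s,\omega)\,ds$. I would first establish this interchange on the cylindrical (or simple) processes, where it is a plain Fubini argument, and then pass to the general case by continuity of $\Grad^j$ together with the $t$-uniform $\dinf$-convergence of the cylindrical approximants, exactly in the spirit of the closability and approximation arguments already used in Section 2.

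Next I would read off the Cameron--Martin norms. Since $W_n(\omega)\in H$ has velocity $\dot W_n(s,\omega)=Z_n(s,\omega)$, and likewise $\Grad^j W_n(\omega)$ has velocity $\Grad^j Z_n(s,\omega)$ in its target factor $H$, one obtains
\begin{equation*}
\left\| \Grad^j W_n(\omega) \right\|_{\overset{j}{\otimes}H \otimes H}^2 = \int_0^1 \left| \Grad^j Z_n(s,\omega) \right|_{\overset{j}{\otimes}H}^2 \diff s .
\end{equation*}
This is the reverse of the estimate proved in the Corollary immediately preceding the statement (which bounded increments of the process by the vector-field norm), and it is the computation on which everything rests.

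The third step is the norm estimate, using the equivalence of the two Malliavin norms recorded at the start of Section 2. For $p=2$, Fubini gives at once
\begin{equation*}
\int \left\| \Grad^j W_n \right\|^2 \P(\diff\omega) = \int_0^1 \left( \int \left| \Grad^j Z_n(s,\omega) \right|^2 \P(\diff\omega) \right) \diff s \leq \sup_{s\in[0,1]} \left\| Z_n(s,\cdot) \right\|_{\sko_r^2}^2 .
\end{equation*}
For $p>2$ the same bound holds after the Minkowski integral inequality (equivalently Jensen, $x\mapsto x^{p/2}$ being convex) is used to pull the $s$-integral outside, giving $\int \|\Grad^j W_n\|^p \P(\diff\omega) \leq \sup_s \|Z_n(s,\cdot)\|_{\drp}^p$; for $1<p<2$ one simply invokes $\|\cdot\|_{L^p}\leq\|\cdot\|_{L^2}$ on the probability space to reduce to the case $p=2$. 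Summing over $j=0,\dots,r$ then yields, for every $(p,r)$, a constant $C(p,r)$ with
\begin{equation*}
\left\| W_n \right\|_{\drp(\Omega,H)} \leq C(p,r) \sup_{s\in[0,1]} \left\| Z_n(s,\cdot) \right\|_{\sko_r^{\max(p,2)}} .
\end{equation*}
By hypothesis the right-hand side tends to $0$ for each $(p,r)$, which is precisely convergence in $\dinf(\Omega,H)$.

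The main obstacle I anticipate is the rigorous justification of the interchange $\Grad^j\!\int_0^t = \int_0^t\Grad^j$ and, relatedly, the correct identification of the target Cameron--Martin fiber norm of $\Grad^j W_n$ with the $L^2$-in-time norm of the velocity $\Grad^j Z_n$; once this structural fact is secured, the remaining steps are routine convexity and Fubini inequalities.
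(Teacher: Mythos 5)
Your proof is correct and takes essentially the same route as the paper's: reduce to the case $X = 0$, identify the $H$-fiber norm of the vector field (and of its Malliavin derivatives) with the $L^2(ds)$ norm of the corresponding derivative of the process, then use convexity (Jensen, for $p \geq 2$) and Fubini to bound $\| Y_n - Y \|_{\drp(\Omega, H)}$ by $\sup_{s \in [0,1]} \| X_n(s, \cdot) - X(s, \cdot) \|_{\drp}$, which tends to $0$ by hypothesis. The only difference is cosmetic: the paper writes the $\drp$-norm via $(1-L)^{r/2}$, which acts only on $\omega$ and so visibly commutes with the deterministic time integral, whereas you use the equivalent iterated-gradient norm and justify the interchange $\Grad^j \int_0^t = \int_0^t \Grad^j$ by cylindrical approximation; your explicit handling of $1 < p < 2$ fills in a small step the paper leaves implicit.
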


\begin{proof}
Suppose $X(t, \omega) = 0$, $\P-a.s.$. Then:

\begin{align*}
\| Y_n \|_{\sko_r^p(\Omega, H)}^p &= \int \P(d\omega) \left( \int_0^1 ds |(1-L)^{r/2} X_n(s, \omega)|^2 \right)^{p/2} \\
&\leq \int \P(d\omega) \int_0^1 ds |(1-L)^{r/2} X_n(s, \omega)|^p \\
&= \int_0^1 ds \| X_n \|_{\sko_r^p(\Omega)}^p
\end{align*}

\end{proof}

\begin{thm}\label{thm4_5}
If $X$ is a $\dinf$-vector field, the associated process is 

$\dinf$-bounded.
\end{thm}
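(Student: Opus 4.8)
The plan is to reduce the statement directly to the hypothesis $X \in \dinf(\Omega, H)$, by the same elementary estimate used in the Corollary following Theorem~\ref{thm4_3}, exploiting the fact that the O.U.\ operator $(1-L)^{r/2}$ acts only on the $\omega$-variable while the Cameron--Martin structure lives on the time variable. Reading the vector field as a path, I write $X(\omega): t \mapsto \int_0^t \dot X(s,\omega)\,ds$, so that the associated $\R^n$-valued process is $X(t,\omega) = \int_0^t \dot X(s,\omega)\,ds$. The goal is then to produce, for each $(p,r)$ with $p>1$ and $r\in\N_\star$, a constant $C(p,r)$ such that $\sup_{t\in[0,1]}\norm{X(t,\cdot)}_{\drp(\Omega)} \leq C(p,r)$.

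First I would commute $(1-L)^{r/2}$ with the Lebesgue integral in $s$, writing
\begin{equation*}
(1-L)^{r/2} X(t,\omega) = \int_0^t (1-L)^{r/2}\dot X(s,\omega)\,ds ,
\end{equation*}
and then estimate pointwise in $\omega$ by the Cauchy--Schwarz inequality in the $s$-variable, exactly as in the cited corollary:
\begin{equation*}
\abs{(1-L)^{r/2} X(t,\omega)}_{\R^n} \leq \sqrt{t}\left(\int_0^t \abs{(1-L)^{r/2}\dot X(s,\omega)}_{\R^n}^2\,ds\right)^{1/2} \leq \normb[H]{(1-L)^{r/2}X(\omega)} .
\end{equation*}
The decisive feature is that this final bound no longer depends on $t$, since $\sqrt{t}\leq 1$ and the integral over $[0,t]$ is dominated by the integral over $[0,1]$, which is precisely the $H$-norm of $(1-L)^{r/2}X(\omega)$.

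Taking $L^p(\Omega)$-norms of this uniform pointwise estimate then yields $\sup_{t\in[0,1]}\norm{X(t,\cdot)}_{\drp(\Omega)} \leq \norm{X}_{\drp(\Omega,H)}$, and since $X\in\dinf(\Omega,H)$ the right-hand side is finite for every $(p,r)$; setting $C(p,r)=\norm{X}_{\drp(\Omega,H)}$ finishes the argument and shows the process $X(t,\omega)$ is $\dinf$-bounded. The only delicate point — and the one I expect to be the main (indeed essentially the sole) obstacle — is justifying the interchange of $(1-L)^{r/2}$ with the time integral; I would handle this by invoking the closedness of the O.U.\ operator acting on the $\omega$-coordinate, together with the chaos-series definition of $(1-L)^{r/2}$, just as in the proof of the Corollary to Theorem~\ref{thm4_3}.
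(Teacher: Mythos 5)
Your proof is correct and is essentially the paper's own argument: the paper writes $X(t,\omega)=\langle h(\cdot,\omega),\,u\mapsto\int_0^u \mathds{1}_{[0,t]}(s)\,ds\rangle_H$, and since this deterministic Cameron--Martin vector has $H$-norm $\sqrt{t}\leq 1$, the bound $\sup_{t}\norm{X(t,\cdot)}_{\drp(\Omega)}\leq\norm{X}_{\drp(\Omega,H)}$ follows exactly as in your Cauchy--Schwarz computation. Your commutation of $(1-L)^{r/2}$ with the time integral is the same step, justified in the paper implicitly by the fact that pairing with a fixed $H$-vector is a bounded linear operation.
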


\begin{proof}
$X(t, \omega) = \int_0^t \dot{h}(s, \omega) ds$. Then: $X(t, \omega) = \langle h(s, \omega), u \rightarrow \int_0^u \mathds{1}_{[0, t]}(s) ds \rangle_H$.
\end{proof}

\begin{thm}\label{thm4_6}
If $A_n(t, \omega)$ is a $n$-uniform sequence of multiplicators, and if $A_n(t, \omega)$ converges $\P(d\omega) \otimes ds [0,1]$-a.s. towards $A$, then $A_n$ converges, in the multiplicator sense, towards $A$.
\end{thm}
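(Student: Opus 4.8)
The plan is to read ``$A_n$ converges in the multiplicator sense to $A$'' as the assertion that $A$ is again a multiplicator and that, for every $\dinf$-vector field $X$, one has $A_n\cdot X\to A\cdot X$ in $\dinf(\Omega,H)$. The overall strategy is to first obtain strong convergence at regularity $0$, i.e. in every $L^p(\Omega,H)=\mathbb D_0^p(\Omega,H)$, and then to bootstrap to full $\dinf$-convergence by interpolation, the bootstrap being powered by the $n$-uniformity, which furnishes a bound in every $\drp$-norm that is uniform in $n$.

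First I would record the consequences of $n$-uniformity. By Lemma 4.1 each $A_n$ is $\dinf$-bounded, and the $n$-uniform operator bounds give $\sup_n\sup_t\|A_n(t,\cdot)\|_{\drp}\leq C(p,r)$; in particular $\sup_n\sup_t\|A_n(t,\cdot)\|_{L^q(\Omega)}\leq C(q)$ for every $q>1$, and, reading the $\P(d\omega)\otimes ds$-a.s. convergence through Fubini for a.e. $s$ and applying Fatou, the limit $A$ satisfies the same bounds. The core step is then strong convergence against a \emph{deterministic} profile: for a simple field $X=f\,u$ with $f\in\dinf(\Omega)$ and $u\in H$ deterministic, $(A_n-A)\cdot X$ has $H$-derivative $f(\omega)\,(A_n-A)(s,\omega)\,\dot u(s)$, and Minkowski's integral inequality followed by Hölder reduces $\|(A_n-A)\cdot X\|_{L^p(\Omega,H)}\to0$ to showing $\int_0^1|\dot u(s)|^2\,\|(A_n-A)(s,\cdot)\|_{L^{2p}(\Omega)}^2\,ds\to0$. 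Here the integrand is dominated by $4\,C(2p)^2\,|\dot u(s)|^2\in L^1([0,1])$, while for a.e. $s$ the uniform higher moment bounds make $\{|A_n(s,\cdot)-A(s,\cdot)|^{2p}\}_n$ uniformly integrable, so Vitali gives $\|(A_n-A)(s,\cdot)\|_{L^{2p}(\Omega)}\to0$, and dominated convergence in $s$ finishes.

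The main obstacle is precisely what this deterministic step sidesteps: for a general $\dinf$-vector field the profile $\dot X(s,\omega)$ need only lie in $L^2(ds)$ fibrewise and may be arbitrarily spiky in $s$, so no direct uniform-integrability argument in the joint variable $(s,\omega)$ is available; the domination above works only because $\dot u(s)$ is deterministic and lies in $L^1([0,1])$ after squaring. This is why I route the general case through density rather than a one-shot estimate.

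To pass to a general $X$, I would use that the finite sums $\sum_j f_ju_j$ ($f_j\in\dinf(\Omega)$, $u_j\in H$ deterministic) are $\dinf$-dense in $\dinf(\Omega,H)$. For simple $Y$ the previous step together with the uniform bound yields $\|A\cdot Y\|_{\mathbb D_0^p}\leq C\|Y\|_{\dqs}$, so along an approximating sequence $X^{(k)}\to X$ the fields $A\cdot X^{(k)}$ are $\mathbb D_0^p$-Cauchy, and I define $A\cdot X$ to be their limit. Splitting $A_n\cdot X-A\cdot X$ as $A_n\cdot(X-X^{(k)})+(A_n\cdot X^{(k)}-A\cdot X^{(k)})+(A\cdot X^{(k)}-A\cdot X)$, the first term is $\leq C\|X-X^{(k)}\|_{\dqs}$ uniformly in $n$ (genuine multiplicator bound for $A_n$), the third is small by the very definition of $A\cdot X$, and the middle tends to $0$ by the simple-field step; hence $A_n\cdot X\to A\cdot X$ in every $\mathbb D_0^p(\Omega,H)$. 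Finally $\{A_n\cdot X\}_n$ is bounded in every $\drp(\Omega,H)$ by $n$-uniformity, so the interpolation Theorem 2.12 upgrades the $\mathbb D_0^p$-convergence to $\drp$-convergence for all $(p,r)$: taking convergence endpoint $\mathbb D_0^{p_0}$, bounded endpoint $\mathbb D_{r_1}^{p_1}$ with $r_1$ large, and weight $\theta=r/r_1<1$, every target $(p,r)$ is reached and $\|A_n\cdot X-A\cdot X\|_{\drp}\leq\|A_n\cdot X-A\cdot X\|_{\mathbb D_0^{p_0}}^{\,1-\theta}\,C^{\theta}\to0$. In particular $A\cdot X\in\dinf(\Omega,H)$, so $A$ is a multiplicator and $A_n\to A$ in the multiplicator sense.
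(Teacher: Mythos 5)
Your proposal is correct and follows essentially the same route as the paper: the paper's one-line proof invokes precisely the Vitali-type theorem (a.s.\ convergence together with uniformly bounded $L^p$-norms on the finite measure space $[0,1]\times\Omega$ yields $L^p$-convergence), which is the engine of your core step, and your final upgrade from $\mathbb D_0^p$- to $\mathbb D_r^p$-convergence by interpolating against the $n$-uniform bounds is the paper's standard implicit move (used, e.g., at the end of the proof of Theorem 4.1 and in Lemma 6.1). You simply make explicit what the paper leaves unsaid: the Fubini slicing in $s$ with dominated convergence, and the density detour through simple fields $\sum_j f_j u_j$, needed because a general $\mathbb D^\infty$-vector field's profile is only fibrewise $L^2$ in $s$.
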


\begin{rem}
Convergence in the multiplicator sense means that $\forall (p, r)$ $A_n$ converges strongly as operators towards $A$: 
\begin{align*}
\forall X \in H, \| A_n X - AX \|_{\sko_r^p(\Omega, H)} \rightarrow 0
\end{align*}
\end{rem}

\begin{proof}
Using the theorem on conditions of equivalence of $L^p$-convergence and a.s.-convergence, when the $L^p$ norms of the sequence are uniformly bounded and the measure is finite.
\end{proof}

\subsection{Any adapted multiplicator is a limit of a sequence of "step-multiplicators"}

\begin{thm}\label{thm4_7}
Any adapted multiplicator $A$ is a limit in the multiplicator sense of a sequence of adapted "step multiplicators", that is that have the form:
\begin{align*}
\sum_{i=0}^k \mathds{1}_{[t_i, t_{i+1}[}(t) A(t_i, \omega), 
\end{align*}
where $A(t_i, \omega) \in \F_{t_i}$ and $A(t_i, \omega)$ being a multiplicator. $(t_0 = 0, t_{k-1} = 1)$.
\end{thm}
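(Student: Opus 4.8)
The plan is to approximate $A$ by explicit adapted step processes and then appeal to \thmref{thm4_6}, which guarantees that a $k$-uniformly $\dinf$-bounded sequence of multiplicators converging $\P\otimes ds$-almost surely to $A$ already converges to $A$ in the multiplicator sense. Thus the whole problem reduces to producing such a sequence of \emph{adapted} step multiplicators. Fix the dyadic partition $t_i^{(k)}=i2^{-k}$, $0\le i\le 2^k$, and define the $\F_{t_i^{(k)}}$-measurable antisymmetric matrix
\begin{equation*}
  A_k(t_i^{(k)},\omega)=\E\Big[\,2^k\!\int_{t_i^{(k)}}^{t_{i+1}^{(k)}}A(s,\omega)\,ds\ \Big|\ \F_{t_i^{(k)}}\Big],
\end{equation*}
and set $A_k(t,\omega)=\sum_i\mathds 1_{[t_i^{(k)},t_{i+1}^{(k)})}(t)\,A_k(t_i^{(k)},\omega)$. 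Conditional expectation preserves antisymmetry, so each coefficient is $n\times n$-A.M.; it is $\F_{t_i^{(k)}}$-measurable by construction, so $A_k$ is adapted; and each coefficient lies in $\dinf(\Omega)$, because time-averaging and conditional expectation both send $\dinf$-bounded families into $\dinf$-bounded families (for the latter use that the O.U. operator commutes with conditional expectation, Proposition~\ref{pr2_1}, together with the $L^p$-contractivity of $\E[\,\cdot\,|\,\F_{t_i^{(k)}}]$).

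Next I would check that each $A_k$ is a multiplicator and that the family is uniformly bounded. Since multiplicators are closed under finite sums (immediate from the definition), it suffices to treat one block $\mathds 1_{[t_i,t_{i+1})}(t)\,M$ with $M\in\dinf(\Omega)$ antisymmetric and $\F_{t_i}$-measurable. Given a $\dinf$-vector field $V(t,\omega)=\int_0^t\dot V(s,\omega)\,ds$, the truncated field $t\mapsto\int_0^t\mathds 1_{[t_i,t_{i+1})}(s)\dot V(s,\omega)\,ds$ is again a $\dinf$-vector field (multiplying $\dot V$ by the deterministic indicator merely zeroes part of the Cameron–Martin derivative and leaves all $\sko_r^p$-norms controlled), and multiplying it by the fixed matrix $M$ keeps it in $\dinf(\Omega,H)$ because $\dinf(\Omega)\cdot\dinf(\Omega,H)\subset\dinf(\Omega,H)$ ($\dinf$ is an algebra, via the Leibniz expansion of $\Grad^{\,j}(MW)$ and Hölder). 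Hence the image of $V$ is a $\dinf$-vector field, so $A_k$ is a multiplicator. For uniformity, the contraction estimates above give $\sup_k\sup_i\|A_k(t_i^{(k)},\cdot)\|_{\sko_r^p}\le\sup_{s\in[0,1]}\|A(s,\cdot)\|_{\sko_r^p}$, finite because every multiplicator is $\dinf$-bounded (\lemref{lem4_1}, iii); thus $(A_k)_k$ is $k$-uniformly $\dinf$-bounded.

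The remaining, and expected \emph{main obstacle}, is the a.s.\ convergence $A_k\to A$ for $\P\otimes dt$, because the coefficient mixes two limiting procedures — averaging in $t$ and conditioning in $\omega$ over a time $t_i^{(k)}$ that itself recedes with $k$. I would split the error, for a.e.\ fixed $t$, as
\begin{equation*}
  \|A_k(t,\cdot)-A(t,\cdot)\|_{L^2(\Omega)}\le\big\|\bar A_k(t,\cdot)-A(t,\cdot)\big\|_{L^2(\Omega)}+\big\|\E[A(t,\cdot)\,|\,\F_{t_i^{(k)}}]-A(t,\cdot)\big\|_{L^2(\Omega)},
\end{equation*}
where $\bar A_k(t,\cdot)$ denotes the dyadic time-average and $t_i^{(k)}\uparrow t$. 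The first term tends to $0$ for a.e.\ $t$ by the martingale convergence theorem for the dyadic filtration on $[0,1]$ (applied $\omega$ by $\omega$, legitimate since $A(\cdot,\omega)\in L^2([0,1])$ for a.e.\ $\omega$ by $\dinf$-boundedness; Fubini and the uniform $L^2$ bound then upgrade this to $L^2(\Omega)$). The second term tends to $0$ because the Brownian filtration is left-continuous, so $\E[\,\cdot\,|\,\F_{t_i^{(k)}}]\to\E[\,\cdot\,|\,\F_{t^-}]=\E[\,\cdot\,|\,\F_t]$, while $A(t,\cdot)$ is $\F_t$-measurable. Integrating in $t$ against the uniform bound yields $A_k\to A$ in $L^2(\Omega\times[0,1])$, whence a subsequence converges $\P\otimes dt$-almost surely.

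Finally, that subsequence is still $k$-uniformly $\dinf$-bounded and consists of adapted step multiplicators, so \thmref{thm4_6} gives its convergence to $A$ in the multiplicator sense, which is the assertion. The delicate point throughout is the third paragraph: one must decouple the two approximations (time averaging and conditioning on the receding past $\F_{t_i^{(k)}}$), and this is precisely where the left-continuity of the filtration and the uniform $\dinf$-bound are needed; had $A$ been $\dinf$-Hölderian in $t$, genuine left-endpoint sampling would suffice, but for a general adapted multiplicator the averaging-plus-conditioning construction is what makes the argument robust.
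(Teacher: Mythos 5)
There is a genuine gap, and it sits exactly where you flagged the ``delicate point'' in the wrong place. Theorem~\ref{thm4_6} requires the approximating sequence to be a \emph{$k$-uniform sequence of multiplicators}, i.e.\ the operator constants $C(p,r,p',r')$ in $\|A_kX\|_{\sko_r^p(\Omega,H)}\le C\|X\|_{\sko_{r'}^{p'}(\Omega,H)}$ must be uniform in $k$. What you actually establish is only the $k$-uniform $\dinf$-boundedness of the coefficients, $\sup_k\sup_i\|A_k(t_i^{(k)},\cdot)\|_{\sko_r^p}\le\sup_s\|A(s,\cdot)\|_{\sko_r^p}$, and in the last paragraph you silently substitute this for the hypothesis of Theorem~\ref{thm4_6}. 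These are not the same thing: by Proposition 4.1 of the paper, $\dinf$-boundedness does not even imply the multiplicator property, let alone control of the multiplicator norm. Your own verification that each $A_k$ is a multiplicator makes the problem visible: for a step process with $2^k$ blocks, the natural Leibniz--H\"older estimate of the operator norm passes through $\bigl\| \max_{i\le 2^k}\|\Grad^a A_k(t_i^{(k)},\cdot)\|_{\overset{a}{\otimes}H}\bigr\|_{L^{2p}}$, and with only individual bounds on each block this grows like $2^{k/2p}$; nothing in the conditional-expectation-of-time-average construction $A_k(t_i^{(k)},\omega)=\E\bigl[2^k\int_{t_i^{(k)}}^{t_{i+1}^{(k)}}A\,ds\,\big|\,\F_{t_i^{(k)}}\bigr]$ lets you transfer the multiplicator norm of $A$ to $A_k$, because the conditioning $\E[\,\cdot\,|\,\F_{t_i^{(k)}}]$ does not commute with multiplication by $\dot V(s,\omega)$, which is not $\F_{t_i^{(k)}}$-measurable. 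So the operator $V\mapsto A_kV$ cannot be written as a contraction of $V\mapsto AV'$, and the uniform bound is unproven.

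This is precisely the obstruction the paper's proof is engineered to avoid, and it does so without any conditional expectation. It first sets $A_\lambda(t,\omega)=A(\lambda t,\omega)$, $0<\lambda<1$, and conjugates by the Cameron--Martin isometry $v_\lambda(t,\omega)=\int_0^t\mathds 1_{[0,\lambda]}(s)\,\dot u(s/\lambda,\omega)\,\lambda^{-1/2}\,ds$, which gives the identity $\|A_\lambda u\|_{\sko_r^p(\Omega,H)}=\|A v_\lambda\|_{\sko_r^p(\Omega,H)}$ and hence $\lambda$-\emph{uniform} multiplicator bounds for free; the Ces\`aro averages $\widetilde A_n=n\int_{1-1/n}^1A_\lambda\,d\lambda$ then inherit the same operator bound by Minkowski (averaging in the parameter $\lambda$, not in time), are continuous and adapted (dilation only looks backward, since $\lambda t\le t$), and converge a.s., so Theorem~\ref{thm4_6} applies. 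The step-function stage is handled the same way: sampling at times $\psi_l(t)\le t$ is controlled via the isometry $u\mapsto u_\varphi$, giving $\varphi$-uniform bounds. To repair your argument you would need an analogue of these conjugation identities for your averaged-and-conditioned coefficients, and none is available; the adaptedness must be obtained by sampling the past (as the paper does), not by projecting a forward average onto $\F_{t_i^{(k)}}$. Your third paragraph (a.s.\ convergence via dyadic differentiation plus left-continuity of the filtration) is essentially sound, but it is not where the difficulty of the theorem lies.
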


\begin{proof}
a) We first prove than any adapted multiplicator is a limit of a sequence $A_n$ of continuous multiplicators, in a "multiplicator sense", that is $A_n$ converges towards $A$, strongly as operators; 

for each $\sko_r^p(\Omega, H)$: $\forall X \in H: A_n X \overset{\sko_r^p(\Omega, H)}{\rightarrow} AX$.

b) Let $u(t, \omega)$ be a $\dinf$-vector field and for $\lambda \in ]0, 1[$, we denote by $v_{\lambda}$ the vector field:

$v_{\lambda}(t, \omega) = \int_0^t \mathds{1}_{[0, \lambda]}(s) \dot{u}(\frac{s}{\lambda}, \omega) \frac{1}{\sqrt{\lambda}} ds$

Then straightforward computation shows that: $\| v_{\lambda} \|_{\sko_r^p(\Omega, H)} = \| u \|_{\sko_r^p(\Omega, H)}$. Now we denote by $A_{\lambda}(t, \omega) = A(\lambda t, \omega)$.

Straight computation gives:
\begin{align*}
\| A_{\lambda}(t, \omega) u(t, \omega) \|_{\sko_r^p(\Omega, H)} &= \| A(t, \omega) v_\lambda(t, \omega) \|_{\sko_r^p(\Omega, H)} \\
&\leq C(p, r, p', r') \| v_{\lambda}(r, \omega) \|_{\sko_{r'}^{p'}(\Omega, H)} \\
&\leq C(p, r, p', r') \| u(t, \omega) \|_{\sko_{r'}^{p'}(\Omega, H)}
\end{align*}

$C(p, r, p', r')$ being a constant.

So the family $A_{\lambda}$ is a $\lambda$-uniform family of multiplicators.

Then the $\widetilde{A}_n(t, \omega) = n \int_{1 - \frac{1}{n}}^{1} A_{\lambda} (t, \omega) d\lambda$ are $n$-uniformly multiplicators.
\begin{align*}
\| \widetilde{A}_n X \|_{\sko_r^p(\Omega, H)} \leq n \int_{1 - \frac{1}{n}}^{1} \| A_{\lambda} (t, \omega) X \|_{\sko_r^p} d\lambda \leq C(p, r, p', r') \| X \|_{\sko_{r'}^{p'}(\Omega, H)}
\end{align*}

As $\widetilde{A}_n \rightarrow A$, $L^2([0,1] \times \Omega)$-a.s., we have with Theorem \ref{thm4_6}, that $t^{\frac{1}{n}} \widetilde{A}_n \rightarrow A$ in the multiplicator sense, and each $\widetilde{A}_n$ is continuous and adapted.

b) Now we prove that any continuous, adapted multiplicator is a limit, in the multiplicator sense, of adapted step-multiplicators.

If $u(t, \omega) = \int_0^t \dot{u}(s, \omega) ds$, let $\varphi$ be an increasing bijection of $[0,1]$ on itself, such that $\varphi \in \mathcal{C}^1([0,1])$, and $\varphi'(t) > C_0$, $C_0$ constant $> 0$.

And we denote by: $u_{\varphi}(t, \omega) = \int_0^t \dot{u} ( \varphi^{-1}(s), \omega) \sqrt{ (\varphi^{-1})'(s) } ds$, $\varphi^{-1}$ being the inverse function of $\varphi$.

Straight computation shows that: $\| u_{\varphi} \|_{\sko_r^p(\Omega, H)} = \| u \|_{\sko_r^p(\Omega, H)}$

Then we define $(A_{\varphi} u)(t, \omega) = \int_0^t A \left[ \varphi(s), \omega \right] \dot{u}(\beta, \omega) ds$.

Straight computation shows: $\| A_{\varphi} u \|_{\sko_r^p(\Omega, H)} = \| A u_{\varphi} \|_{\sko_r^p(\Omega, H)}$

So the family $A_{\varphi}$ is $\varphi$-uniformly a family of adapted multiplicators.

c) If $\psi$ is a step function on $[0,1]$, there exists a sequence $\varphi_k$ of functions like in b), which converges towards $\psi$. 

Then $A_n \left( \varphi_k(.), \omega \right)$ converges towards $A_n \left( \varphi(.), \omega \right)$ in the multiplicator sense thanks to the continuity of $A_n$, and this convergence is uniform relatively to the step functions $\psi$. 

d) Now there exists a sequence of step functions converging towards $t$ on $[0,1]$ from below, denoted $\psi_l, l \in \N_{\star}$. Then $A_n \left( \psi_l(.), \omega \right)$ will converge in the multiplicator 
sense towards $A_n(t, \omega)$, which converges in the multiplicator sense towards $A(t, \omega)$.

And $A \left( \sum_{i=0}^k a_i \mathds{1}_{[t_i, t_{i+1}[} (t) , \omega \right) = \sum_{i=0}^k \mathds{1}_{[t_i, t_{i+1}[} A( a_i, \omega)$.

\end{proof}	

\section{\huge $\D^\infty$-morphisms, charts maps,\\ and inversibility}

\subsection{Theorems showing under which conditions a $\mathbb{D}^\infty$-morphism is a $\mathbb{D}^\infty$-diffeomorphism}

Unless otherwise specified, the setting is a Gaussian space
$(\Omega,\mathcal F,\mathbb P,H)$. $\proc U$ is an adapted process
with values in $n\times n$ unitary matrices, and a multiplicator;
the map $\theta_{\proc U}$ on $\mathcal C_1(\Omega)$:
\begin{displaymath}
  \theta_{\proc U}(W(h)) = \int_0^1{}^t\dot h\proc U^{-1}\itodiff B
\end{displaymath}
where B is a $n$-dimensional Brownian, and $h \in H$, can be extended
in an injective morphism on $\L{\infty-0}(\Omega)$, because it
preserves laws. In this chapter we will study some conditions under
which $\theta$ can be a $\D^\infty$-morphism, or a
$\D^\infty$-isomorphism.

\medskip

Let $(\Omega_i,\mathcal F_i,\mathbb P_i,H_i), i=1,2$, two Gaussian
spaces and denote by $\mathcal M$:
\begin{align*}
  \mathcal M = \{ &m \mid m \text{ map of $\Omega_1$ in unitary operators on $H_2$ such that $m$}\\
  & \text{ is a $\D^\infty(\Omega_1,H_2)$-multiplicator and there exists $m^{-1} \in \mathcal M$ }\\
  & \text { such that : $m^{-1}(\omega_1) = (m(\omega_1))^{-1}$}\}.
\end{align*}                    

\begin{rem}
  $m$, $\D^\infty(\Omega_1,H_2)$-multiplicator, means that:
  \begin{displaymath}
    \forall \alpha \in \D^\infty(\Omega_1,H_2), m(\omega_1)\alpha(\omega_1) \text{ is a $\D^\infty(\Omega_1,H_2)$ vector field}.
  \end{displaymath}
\end{rem}

\begin{rem}
  The existence's condition of $m^{-1}$ is useless if $m$ has the form
  $\proc U(t,\omega)$ where $\proc U$ is a unitary operator on a
  finite dimensional space ($\proc U^{-1} = \proc U^\ast$).
\end{rem}

\begin{notation}
  We denote by $W_i(h_i)$, $\Grad_i,\div_i$, Gaussian variables,
  Malliavin derivatives, and divergences built respectively on
  $(\Omega_i,\mathcal F_i,\mathbb P_i,H_i), i=1,2$ ; otherwise $W$,
  $\Grad$, and $\div$ relate to $(\Omega_1 \times \Omega_2, \mathcal
  F_1 \otimes \mathcal F_2,\mathbb P_1 \otimes \mathbb P_2, H_1 \oplus
  H_2)$.
\end{notation}

Let $m \in \mathcal M$, and denote $\theta_m \colon \mathcal
C_1(\Omega_1 \times \Omega_2) \to \L{\infty-0}(\Omega_1 \times
\Omega_2)$ an $\mathbb R$-linear map defined by:
\begin{align*}
  \theta_m[W_1(h_1)] &= W_1(h_1), \quad h_1 \in H_1, \\
  \theta_m[W_2(h_2)] &= (\omega_1 \to W_2(m(\omega_1)h_2)), \quad h_2
  \in H_2.
\end{align*}
Then we extend $\theta_m$ on Gaussian polynomials as an algebraic
morphism and denote it again by $\theta_m$.

\begin{thm} ~
  \label{thm:5.1}
  \begin{enumerate}[label=\roman*)]
  \item if $m \in \mathcal M$, $\range \theta_m \in
    \D^\infty(\Omega_1 \times \Omega_2)$.
  \item $\theta_m$ can be extended in a bicontinuous bijection of
    $\L{\infty-0}$ on itself.
  \end{enumerate}
\end{thm}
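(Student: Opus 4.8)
The plan is to rest everything on one structural fact---that $\theta_m$ preserves all finite-dimensional Gaussian laws---and then to feed the multiplicator hypothesis into the extension machinery of Section~2 to upgrade $\L{\infty-0}$-regularity to $\dinf$-regularity.

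First I would record the law-preservation lemma. Since $\theta_m$ is an algebra morphism fixing every $W_1(h_1)$, any Gaussian polynomial $P$ satisfies $\theta_m P = P\bigl(W_1(h_1^{(i)}),\,W_2(m(\omega_1)h_2^{(j)})\bigr)$. Fixing $\omega_1$, unitarity of $m(\omega_1)$ gives the covariances $\langle m(\omega_1)h_2^{(j)}, m(\omega_1)h_2^{(k)}\rangle_{H_2} = \langle h_2^{(j)}, h_2^{(k)}\rangle_{H_2}$, so the Gaussian vector $(W_2(m(\omega_1)h_2^{(j)}))_j$ has, for every $\omega_1$, exactly the law of $(W_2(h_2^{(j)}))_j$, while the $W_1$-variables depend only on $\omega_1$. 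Integrating first in $\omega_2$ and then in $\omega_1$ yields $\|\theta_m P\|_{\L p} = \|P\|_{\L p}$ for all $p$. Hence $\theta_m$ is an $\L p$-isometry on the dense subspace of Gaussian polynomials for every $p>1$ and extends uniquely to a continuous map $\L{\infty-0}\to\L{\infty-0}$.

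For (i) I would first place the generators in $\dinf$. Writing $m(\cdot)h_2 = \sum_k c_k e_k$ in an orthonormal basis $(e_k)$ of $H_2$, the multiplicator hypothesis (applied to the constant vector field $h_2$) gives $m(\cdot)h_2\in\dinf(\Omega_1,H_2)$, so $c_k\in\dinf(\Omega_1)$ and $\theta_m[W_2(h_2)] = \sum_k c_k\,W_2(e_k)$ with the series $\dinf$-convergent. This is precisely the object governed by the norm equivalence established in the proof of \thmref{thm2_4} (the correspondence $\sum_k X_k\otimes e_k \leftrightarrow \sum_k W(e_k)X_k$): its iterated Malliavin derivatives split into $\Grad_1$ acting on the coefficients (again of the same shape) and $\Grad_2 W_2(e_k)=e_k$ reproducing $m(\cdot)h_2$, so every $\drp$-norm is dominated by $\dinf(\Omega_1,H_2)$-norms of $m(\cdot)h_2$ and its $\Omega_1$-derivatives. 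Corollary~\ref{cor2_4} and Theorem~\ref{thm2_5} then put $\theta_m[W_2(h_2)]$ in $\dinf(\Omega_1\times\Omega_2)$. As $\theta_m$ is an algebra morphism and $\dinf$ is an algebra, every image of a Gaussian polynomial is a polynomial in the $\theta_m[W_i(h_i)]\in\dinf$, so $\range\theta_m\subset\dinf(\Omega_1\times\Omega_2)$.

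For (ii) I would use $m^{-1}\in\M$ to build $\theta_{m^{-1}}$ identically and verify $\theta_{m^{-1}}\circ\theta_m=\mathrm{Id}$ on generators, which suffices since both are algebra morphisms. On $W_1(h_1)$ this is immediate; on $W_2(h_2)$, with $\theta_m[W_2(h_2)]=\sum_k c_k W_2(e_k)$ and $c_k\in\dinf(\Omega_1)$ fixed by $\theta_{m^{-1}}$, one gets $\theta_{m^{-1}}\theta_m[W_2(h_2)] = \sum_k c_k W_2(m(\omega_1)^{-1}e_k) = W_2\!\bigl(m(\omega_1)^{-1}m(\omega_1)h_2\bigr)=W_2(h_2)$, using $\sum_k c_k e_k=m(\cdot)h_2$. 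Symmetrically $\theta_m\circ\theta_{m^{-1}}=\mathrm{Id}$, so the two $\L{\infty-0}$-extensions are mutually inverse and each is an isometry in every $\L p$, whence $\theta_m$ is a bicontinuous bijection of $\L{\infty-0}$ onto itself. The main obstacle is the $\dinf$-claim of (i): one must check that coupling the $\Omega_1$-regular operator-valued factor $m$ with the first-chaos $\Omega_2$-variables $W_2(e_k)$ yields genuinely $\dinf$ elements on the product, uniformly over the basis and for the full series---exactly the point where the Section~2 extension theorems are indispensable, the law-preservation step being by comparison purely formal.
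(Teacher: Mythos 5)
Your proposal is correct, and its architecture coincides with the paper's: the same law-preservation observation (unitarity of $m(\omega_1)$ makes $\theta_m[a_1W_1(h_1)+a_2W_2(h_2)]$ equidistributed with $W(a_1h_1+a_2h_2)$, whence an $\L{p}$-isometry on polynomials extending to $\L{\infty-0}$), and the same verification of bijectivity by checking $\theta_{m^{-1}}\circ\theta_m=\mathrm{Id}$ on generators, pulling $\theta_{m^{-1}}$ through the $\Omega_1$-dependent coefficients exactly as the paper does. The one genuine difference is in part (i). The paper notes that, writing $m(\omega_1)h_2=\sum_j f^j(\omega_1)\varepsilon_j$ with $\varepsilon_j\in H_2$ and $f^j$ depending only on $\omega_1$, the cross terms $\langle \Grad_1 f^j,\varepsilon_j\rangle_{H_1\oplus H_2}$ vanish by orthogonality of $H_1$ and $H_2$, so that $\theta_m[W_2(h_2)]=\div\,[m(\omega_1)h_2]$ \emph{identically}; since $mh_2\in\dinf(\Omega_1,H_2)$ by the multiplicator hypothesis and $\div$ is $\dinf$-continuous, membership of $\theta_m[W_2(h_2)]$ in $\dinf(\Omega_1\times\Omega_2)$ is immediate. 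You instead estimate the series $\sum_k c_kW_2(e_k)$ directly, through the norm equivalence between $\sum_k X_k\otimes e_k$ and $\sum_k W(e_k)X_k$ from the proof of Theorem 2.4, combined with $\sko_\infty^2\cap\L{\infty-0}=\dinf$ (Theorem 2.5); this works, and your derivative computation (with $\Grad_2$ reproducing $mh_2$ and $\Grad_1$ acting on the coefficients) is in substance a re-derivation of the $\dinf$-continuity of $\div$ in this split situation, the cross terms vanishing for the same orthogonality reason. What the paper's route buys is economy: the single identity $\theta_m[W_2(h_2)]=\div(mh_2)$ removes the induction over derivation orders, and is reused verbatim in part (ii) and in Lemma 5.2, so recognizing it lightens the subsequent arguments; what your route buys is independence from the divergence operator's continuity as a black box, at the cost of carrying the chaos norm-equivalence uniformly over the basis and the full series, which you correctly flag as the delicate point.
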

\begin{proof}
  i). Let $(\varepsilon_j)_{j \in \mathbb N_\ast}$ be an Hilbertian
  basis of $H_2$ and $h_2 \in H_2$:
  \begin{align*}
    \div [m(\omega_1)h_2] &= \sum_{j=1}^\infty \div[f^j(\omega_1)\varepsilon_j] \\
    &= \sum_{j=1}^\infty f^j(\omega_1)W_2(\varepsilon_j) + \langle \Grad_1 f^j(\omega_1), \varepsilon_j \rangle_{H_1 \oplus H_2} \\
    &= W_2[\sum_{j=1}^\infty f^j(\omega_1)\varepsilon_j] \\
    &= \theta_m[W_2(h_2)] \implies \theta_m[W_2(h_2)] \in \D^\infty(\Omega_1 \times \Omega_2).
  \end{align*}
  
  ii). $a_1$ and $a_2$ being numerical constants, direct calculation
  shows: \\ ${\theta_m[a_1W_1(h_1)+a_2 W_2(h_2)]}$ is a Gaussian
  variable which has the same law than $W[a_1h_1+a_2h_2]$. So
  $\theta_m$ can be extended by continuity to $\L{\infty-0}(\Omega_1
  \times \Omega_2)$ and is a map of $\L{\infty-0}(\Omega_1 \times
  \Omega_2)$ in itself, again denoted $\theta_m$. Then:
  \begin{displaymath}
    \theta_{m^{-1}} \circ \theta_m [W_1(h_1)] = W(h_1)
  \end{displaymath}
  and
  \begin{align*}
    \theta_{m^{-1}} \circ \theta_m [W_2(h_2)] &= \theta_{m^{-1}}[\div(m(\omega_1)h_2)] \\
    &= \theta_{m^{-1}}\left[\sum_{j=1}^\infty f^j(\omega_1)W_2(\varepsilon_j)\right] \\
    &= \sum_{j=1}^\infty f^j(\omega_1)\theta_{m^{-1}}[W_2(\varepsilon_j)] \\
    &= \sum_{j=1}^\infty f^j(\omega_1)\div[m^{-1}(\omega_1)\varepsilon_j] \\
    &= \sum_{j=1}^\infty \div[f^j(\omega_1)m^{-1}(\omega_1)\varepsilon_j] \\
    &= \div \left[ m^{-1}(\omega_1) \left( \sum_{j=1}^\infty f^j(\omega_1)\varepsilon_j \right) \right] \\
    &= \div h_2 = W_2(h_2).
  \end{align*}
\end{proof}

We extend $m$ to $\D^\infty(\Omega_1, H_1 \oplus H_2)$, again
denoted by $m$, by:
\begin{displaymath}
  \forall V \in \D^\infty(\Omega_1,H_1), m(\omega_1)V = V .
\end{displaymath}
Then, we extend $m$ to $\D^\infty(\Omega_1 \times \Omega_2, H_1 \oplus
H_2)$ in intself denoted again by $m$, with Theorem 2.2.i. This last
extension is $\D^\infty(\Omega_1 \times \Omega_2)$-linear because it
is $\D^\infty(\Omega_i)$-linear ($i=1,2$), so is linear for finite sum
of products like $\alpha(\omega_1)\beta(\omega_2), {\alpha(\omega_1)
  \in \D^\infty(\Omega_1)} ,\beta(\omega_2) \in \D^\infty(\Omega_2)$
and, by $\D^\infty$-density of these linear combinations in
$\D^\infty(\Omega_1 \times \Omega_2)$, is $\D^\infty(\Omega_1 \times
\Omega_2)$-linear.

Denote $H = H_1 \oplus H_2$. $(e_j)_{j\in \mathbb N_\ast}$ being an
Hilbertian basis of $H$, we define $m_R$, a linear operation from a
subset of $\D^\infty(\Omega_1 \times \Omega_2, H \otimes H)$ by
\\ $m_R(\sum_{j=1}^k e_j \otimes Y_j) = e_j \otimes m(Y_j)$, the $Y_j$
being $\D^\infty$-vector fields in $\D^\infty(\Omega, H)$. Then it is
easy to check that: if $X_j, j=1,\dots,k$ are constant vectors of $H$,
\begin{displaymath}
  m_R \left( \sum_{j=1}^k X_j \otimes Y_j \right) = \sum_{j=1}^k X_j \otimes m(Y_j) ;
\end{displaymath}
so the definition of $m_R$ does not depend on the choosen Hilbertian
basis. With Theorem 2.4 and Corollary 2.4, we extend $m_R$ in an
$\D^\infty$-continuous operator on $\D^\infty(\Omega, H_1 \oplus
H_2)$. We can also define an operator, $\div_R$, on $\sum_{j=1}^k X_j
\otimes Y_j$, the $X_j$ being constant vectors of $H$, by
\begin{displaymath}
  \div_R \left( \sum_{j=1}^k X_j \otimes Y_j \right) = \sum_{j=1}^k (\div Y_j)X_j.
\end{displaymath}
Again, thanks to the extension Theorem 2.4 and Corollary 2.4, we can
extend $\div_R$ in an $\D^\infty$-continuous operator from
$\D^\infty(\Omega, H \otimes H)$ in $\D^\infty(\Omega, H)$, denoted
again $\div_R$. Then:

\begin{lem} 
  \label{lemma:5.1}
  If $X \in \D^\infty(\Omega, H)$, $(e_i)_{i \in \mathbb N_\ast}$
  being an Hibertian basis of $H$, then
  \begin{displaymath}
    \div_R(\Grad X) + X = \Grad(\div X).
  \end{displaymath}
\end{lem}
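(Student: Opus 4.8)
The plan is to reduce the statement, by continuity, to a $\D^\infty$-dense class of fields and there to recognise it as the classical commutation relation between the Malliavin derivative and the divergence. Since $\Grad$, $\div$, and (by the extension just constructed from Theorem 2.4 and Corollary 2.4) $\div_R$ are all $\D^\infty$-continuous, it suffices to verify the identity on the $\D^\infty$-dense subset of finite sums of fields $X=f\,h$ with $f\in\D^\infty(\Omega)$ a smooth cylindrical function and $h\in H$ deterministic, and then to pass to the limit. By linearity I would treat a single such term.

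First I would fix the slot conventions. Writing $D_jX$ for the Malliavin derivative of $X$ in the direction $e_j$, the convention $\Grad(f\Grad g)=\Grad f\otimes\Grad g+f\Grad\Grad g$ puts the differentiation factor in the first slot, so that $\Grad X=\sum_j e_j\otimes D_jX$, the first factors being the constant basis vectors $e_j$. This is exactly of the form $\sum_j X_j\otimes Y_j$ with $X_j=e_j$ constant, so the defining formula for $\div_R$ applies verbatim and yields
\[
  \div_R(\Grad X)=\sum_j\div(D_jX)\,e_j,\qquad\text{while}\qquad \Grad(\div X)=\sum_j D_j(\div X)\,e_j.
\]
Thus the asserted identity is equivalent to the scalar commutation relation $D_j(\div X)=\div(D_jX)+\langle X,e_j\rangle$, to be checked for every $j$.

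For the single term $X=f\,h$ I would use the product rule $\div(g\,h)=g\,W(h)+\langle\Grad g,h\rangle$ (the sign convention of this section, as in the computation of Theorem 5.1) together with $\Grad W(h)=h$ for deterministic $h$. Expanding $\Grad(\div X)=W(h)\,\Grad f+f\,h+\sum_i h^i\,\Grad(D_if)$ on one side, and, from $\Grad X=\sum_j e_j\otimes (D_jf)\,h$, obtaining $\div_R(\Grad X)=\sum_j\bigl[(D_jf)W(h)+\langle\Grad D_jf,h\rangle\bigr]e_j=W(h)\,\Grad f+\sum_i h^i\,\Grad(D_if)$ on the other, the two expressions are seen to differ precisely by $f\,h=X$, which is the claimed correction term. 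The one point needing care is $\sum_j\langle\Grad D_jf,h\rangle e_j=\sum_i h^i\,\Grad(D_if)$, which holds because iterated Malliavin derivatives commute, $D_iD_jf=D_jD_if$.

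The genuine difficulty is organisational rather than analytic: one must pin the tensor-slot bookkeeping so that the formula defining $\div_R$ (given only for a constant first factor) can be read off $\Grad X$, and one must use the sign convention for $\div$ that is internally consistent with this section. The analytic core is just the standard identity $D_h\div(u)=\langle h,u\rangle_H+\div(D_hu)$. Having checked it on the dense cylindrical fields, I would finish by invoking the $\D^\infty$-continuity of $\Grad$, of $\div$, and of the extension $\div_R$ from Theorem 2.4 and Corollary 2.4 to propagate the equality to every $X\in\D^\infty(\Omega,H)$.
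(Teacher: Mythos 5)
Your proposal is correct and takes essentially the same route as the paper's proof: both expand along a Hilbertian basis of $H$ so that $\div_R(\Grad X)=\sum_i \div(D_iX)\,e_i$, apply the product rule $\div(g\,h)=g\,W(h)+\langle\Grad g,h\rangle_H$ in the section's sign convention, reduce the whole identity to the symmetry of the second Malliavin derivative $D_iD_jf=D_jD_if$, and conclude by the $\D^\infty$-continuity of $\Grad$, $\div$, and the extension $\div_R$ from Theorem 2.4 and Corollary 2.4. The only cosmetic difference is that you test the identity on elementary tensors $X=f\,h$ with $f$ cylindrical, whereas the paper works directly with $X=\sum_{j=1}^N X^j e_j$ having finitely many nonzero $\D^\infty$-components; both are the same density argument.
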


\begin{proof}
  Let $X \in \D^\infty(\Omega, H)$ such that only $N$ components of
  $X$ are not $0$, and write $X = \sum_{j=1}^N X^j e_j$. Then,
  \begin{align*}
    \Grad (\div X) - X &= \Grad \left( \sum_{j=1}^N \langle \Grad X^j, e_j \rangle_H + \sum_{j=1}^N X^j W(e_j) \right) - X \\
    &= \sum_{i=1}^\infty \left( \sum_{j=1}^N \langle \Grad (\langle \Grad X^j, e_j \rangle_H), e_i \rangle_H e_i \right) + \sum_{j=1}^N (\Grad X^j) W(e_j) \\
    &= \lim_{\substack{k \up \infty \\ k > N}} \left[ \sum_{i=1}^k \sum_{j=1}^N \langle \Grad (\langle \Grad X^j, e_j \rangle_H), e_i \rangle_H e_i \right] + \sum_{j=1}^N (\Grad X^j) W(e_j) \\
    &= \div_R( \Grad X ),
  \end{align*}
  because 
  \begin{displaymath}
    \Grad X = \sum_{j=1}^N \Grad X^j \otimes e_j = \lim_{k \up\infty} \sum_{i=1}^k [e_i \otimes (\sum_{j=1}^N \langle \Grad X^j,e_i \rangle_H e_j)],
  \end{displaymath}
  and
  \begin{align*}
    \div_R (\Grad X) &= \lim_{k \up \infty} \sum_{i=1}^k \left[ e_i \div \left( \sum_{j=1}^N \langle \Grad X^j, e_i \rangle_H e_j \right) \right] \\
    & \!\!\!\!\!\!\!\!\!\!\!\! = \lim_{k \up \infty} \sum_{i=1}^k \sum_{j=1}^N e_i \left[ \langle \Grad X^j, e_i \rangle_H W(e_j) + \langle e_j, \Grad(\langle \Grad X^j, e_i \rangle_H) \rangle_H \right] \\
    & \!\!\!\!\!\!\!\!\!\!\!\! = \sum_{j=1}^N \Grad X^j W(e_j) + \lim_{k \up \infty} \sum_{i=1}^k \sum_{j=1}^N \langle e_j, \Grad(\langle \Grad X^j, e_i \rangle_H) \rangle_H e_i ,
  \end{align*}
  and as $X \in \D^\infty(\Omega, H)$, we have
  \begin{displaymath}
    \langle e_j, \Grad( \langle \Grad X^j, e_i \rangle_H ) \rangle_H = 
    \langle e_i, \Grad( \langle \Grad X^j, e_j \rangle_H ) \rangle_H .
  \end{displaymath}
\end{proof}

Let $V \in \D^\infty(\Omega_1 \times \Omega_2, H)$. We define an
$\mathbb R$-linear operator $\delta$ on $\D^\infty(\Omega_1 \times
\Omega_2, H)$ by:
\setcounter{equation}{0}
\begin{equation} \label{eqn:1}
  \delta V = \div_R [(m^{-1})_R (\Grad(mV) - m_R \Grad V)] + mV .
\end{equation}
$\delta$ is well defined and goes from $\D^\infty(\Omega_1 \times
\Omega_2, H)$ in itself.

\begin{lem}
  \label{lemma:5.2}
  If $V_1(\omega_1) \in H_1, V_2 (\omega_2) \in H_2$, then
  \begin{displaymath}
    \div_R[m_R(V_1 \otimes V_2)] = \tilde \theta_m(\div_R(V_1 \otimes V_2)),
  \end{displaymath}
  $\tilde \theta_m$ being the extension of $\theta_m$, as in
  Corollary 2.3.
\end{lem}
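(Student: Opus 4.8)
The plan is to unwind the definitions of $m_R$, $\div_R$ and $\tilde\theta_m$ on the rank-one tensor $V_1\otimes V_2$ and to reduce the asserted identity to the scalar intertwining relation $\div[m V_2]=\theta_m[\div V_2]$, which is essentially the content of Theorem 5.1.i. Since $m_R$, $\div_R$ and $\tilde\theta_m$ are all $\D^\infty$-continuous (by Theorem 2.4 together with Corollaries 2.3 and 2.4), and since the simple tensors are $\D^\infty$-dense (Notation $K^\infty$), it suffices to verify the equality on generators, taking $V_2$ a deterministic element of $H_2$ and $V_1=\sum_i V_1^i(\omega_1)e_i$ expanded in a Hilbert basis $(e_i)$ of $H_1$ with $\Omega_1$-measurable coefficients $V_1^i$.

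For the left-hand side I would first use that $m_R$ acts on the second tensor factor and is $\D^\infty(\Omega_1\times\Omega_2)$-linear, so $m_R(V_1\otimes V_2)=V_1\otimes m V_2$; then $\div_R(V_1\otimes m V_2)=\sum_i \div(V_1^i\, m V_2)\,e_i$. The crucial point is that $\div$ is not $\D^\infty$-linear but satisfies Leibniz with a gradient correction $\div(V_1^i\, m V_2)=V_1^i\,\div(m V_2)-\langle\Grad V_1^i, m V_2\rangle_H$; here $\Grad V_1^i=\Grad_1 V_1^i\in H_1$ is orthogonal to $m V_2\in H_2$, so the correction vanishes and the left-hand side collapses to $\div(m V_2)\,V_1$.

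For the right-hand side the same orthogonality gives $\div_R(V_1\otimes V_2)=\div(V_2)\,V_1$, and applying $\tilde\theta_m$ componentwise, together with the fact that $\theta_m$ is an algebra morphism fixing every $\Omega_1$-measurable function, yields $\tilde\theta_m(\div(V_2)\,V_1)=\theta_m(\div V_2)\,V_1$. Thus the lemma reduces to $\div(m V_2)=\theta_m(\div V_2)$. For deterministic $V_2\in H_2$ one has $\div V_2=W_2(V_2)$, and Theorem 5.1.i computes precisely $\div(m V_2)=W_2(m V_2)=\theta_m(W_2(V_2))$, which closes the argument.

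The main obstacle is exactly the failure of $\div$ (hence of $\div_R$) to be $\D^\infty$-linear: the reduction to the clean intertwining of Theorem 5.1 works only after the Leibniz cross terms are eliminated using $H_1\perp H_2$, and the passage from the deterministic generators back to all of $\D^\infty(\Omega_1\times\Omega_2,H\otimes H)$ must be justified by the joint $\D^\infty$-continuity of the three operators, ultimately resting on the extension theorems of Section 2 and on the auxiliary intertwining $\Grad_2\,\theta_m=m\,\tilde\theta_m\,\Grad_2$.
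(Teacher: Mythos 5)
The paper itself dismisses this lemma with ``straightforward computation,'' and your core calculation is exactly the computation it has in mind: $m_R(V_1\otimes V_2)=V_1\otimes mV_2$ by the $\D^\infty(\Omega_1\times\Omega_2)$-linearity of the extension of $m$; the Leibniz corrections of $\div$ pair $\Grad_1 V_1^i\in H_1$ against an $H_2$-valued field and vanish; $\theta_m$ is a morphism fixing $\F_1$-measurable functions; and the residual identity $\div(mV_2)=\theta_m(\div V_2)$ is the computation of Theorem 5.1.i. (Your sign for the Leibniz correction is opposite to the paper's convention, which per the proof of Theorem 5.1.i is $\div(fY)=f\,\div Y+\langle\Grad f,Y\rangle_H$, but the term vanishes either way, so this is harmless.) For $V_2$ a deterministic vector of $H_2$ your argument is complete.

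The genuine gap is your final density step, ``from the deterministic generators back'' to the stated class. Both sides are continuous linear operators on $\D^\infty(\Omega_1\times\Omega_2,H\otimes H)$, so equality on a dense subspace would force equality everywhere; but the identity fails off the special class, and in particular it is \emph{false} for $V_2$ genuinely depending on $\omega_2$, which is what the printed hypothesis $V_2(\omega_2)\in H_2$ suggests and what your density argument would need to reach. Take $m$ a fixed deterministic unitary of $H_2$, $V_1=e\in H_1$ constant and $V_2=W_2(\varepsilon')\,\varepsilon$ with $\varepsilon,\varepsilon'\in H_2$. Then, with the paper's sign convention,
\begin{equation*}
  \div_R\bigl[m_R(V_1\otimes V_2)\bigr]-\tilde\theta_m\bigl(\div_R(V_1\otimes V_2)\bigr)
  = \Bigl( W_2(\varepsilon'-m\varepsilon')\,W_2(m\varepsilon)
  + \langle\varepsilon',m\varepsilon\rangle_{H_2}-\langle\varepsilon',\varepsilon\rangle_{H_2}\Bigr)\,e ,
\end{equation*}
which is a nonconstant random variable (times $e$) whenever $m\varepsilon'\neq\varepsilon'$: here $\theta_m$ does not fix the $\omega_2$-measurable coefficient $W_2(\varepsilon')$, and the gradient cross term $\langle\Grad_2 W_2(\varepsilon'),m\varepsilon\rangle_{H_2}$ that your orthogonality argument relies on killing survives. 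The scope in which the lemma is true --- and the one actually used in Theorem 5.3, where it is applied to $(m_R)^{-1}\Grad(mh_2)\in\D^\infty(\Omega_1,H_1\otimes H_2)$ --- is that \emph{both} factors are $\omega_1$-measurable, i.e.\ one should read $V_2(\omega_1)\in H_2$. In that case no density is needed: write $V_2=\sum_j f^j(\omega_1)\varepsilon_j$, note $\div V_2=\sum_j f^jW_2(\varepsilon_j)$ and $\div(mV_2)=\sum_j f^j\div(m\varepsilon_j)$ since every gradient term pairs $H_1$ against $H_2$, and $\theta_m\bigl(\sum_j f^jW_2(\varepsilon_j)\bigr)=\sum_j f^j\div(m\varepsilon_j)$ exactly as in the chain displayed in Theorem 5.1.ii. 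So you should delete the density paragraph and instead run your computation verbatim with $\omega_1$-measurable coefficients in the second factor.
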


\begin{proof}
  Straightforward computation.
\end{proof}

We now return to the operator $\delta$ defined in (\ref{eqn:1}).
\begin{thm} 
  \label{thm:5.2}
  \begin{enumerate}[label=\roman*)]
  \item $\delta$ sends $\D^\infty(\Omega, H)$ in $\D^\infty(\Omega,
    H)$, continuously.
  \item $\delta [\Grad (fg)] = f \delta (\Grad g) + g \delta (\Grad
    f)$, for all $f,g \in \D^\infty(\Omega_1 \times \Omega_2)$.
  \end{enumerate}
\end{thm}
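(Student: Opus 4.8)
For part i) there is nothing beyond organizing the continuity already established for the constituent operators. Recall that $m$ was extended to a $\D^\infty(\Omega_1\times\Omega_2)$-linear, $\D^\infty$-continuous operator on $\D^\infty(\Omega,H)$ (via Theorem 2.2.i and Corollary 2.4), that $m_R$ and $(m^{-1})_R$ are $\D^\infty$-continuous on $\D^\infty(\Omega,H\otimes H)$ (Theorem 2.4 and Corollary 2.4), that $\Grad\colon\D^\infty(\Omega,H)\to\D^\infty(\Omega,H\otimes H)$ is continuous, and that $\div_R$ is $\D^\infty$-continuous from $\D^\infty(\Omega,H\otimes H)$ into $\D^\infty(\Omega,H)$. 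Since, by the defining formula (\ref{eqn:1}), $\delta$ is a finite sum of compositions of these maps, it sends $\D^\infty(\Omega,H)$ into itself continuously; this is all I would say for i).

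The content is in ii). I will write $C(X)=\Grad(mX)-m_R\Grad X\in\D^\infty(\Omega,H\otimes H)$ for the defect measuring the failure of $m$ to commute with $\Grad$, so that $\delta X=\div_R[(m^{-1})_R C(X)]+mX$. The plan is to show that, although $\delta$ is only $\mathbb R$-linear, both ingredients interact with scalar multiplication in a controlled way, and that the resulting correction terms cancel by unitarity of $m$. First I would prove that $C$ is a $\D^\infty$-multiplier, i.e. $C(fX)=fC(X)$ for $f\in\D^\infty(\Omega_1\times\Omega_2)$: using $m(fX)=f\,mX$, the Leibniz rule $\Grad(fX)=\Grad f\otimes X+f\Grad X$, the $\D^\infty$-linearity of $m_R$, and the fact that $m_R$ acts on the second tensor factor so that $m_R(\Grad f\otimes X)=\Grad f\otimes mX$, the two first-order terms $\Grad f\otimes mX$ cancel and one is left with $C(fX)=f[\Grad(mX)-m_R\Grad X]=fC(X)$. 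In components, writing $m_{ij}=\langle me_j,e_i\rangle_H$ and $\partial_l=\langle\Grad(\cdot),e_l\rangle_H$, this is the statement that $C(\Grad g)_{li}=\sum_j(\partial_l m_{ij})\,\langle\Grad g,e_j\rangle_H$, the second derivatives of $g$ having dropped out.

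Next I would record how $\div_R$ breaks $\D^\infty$-linearity: denoting by $\iota_v T\in H$ the contraction of the second factor of $T\in\D^\infty(\Omega,H\otimes H)$ against $v\in H$ (components $\sum_p T_{lp}\langle v,e_p\rangle_H$), the product rule for $\div$ gives $\div_R(fT)=f\,\div_R T \pm \iota_{\Grad f}T$, the sign being immaterial for what follows. Combined with the $\D^\infty$-linearity of $(m^{-1})_R$ and with $C(fX)=fC(X)$, this yields the scalar rule
\[
\delta(fX)=f\,\delta X \mp \iota_{\Grad f}\bigl[(m^{-1})_R C(X)\bigr].
\]
Applying it to each summand of $\Grad(fg)=f\Grad g+g\Grad f$ and using $\mathbb R$-linearity of $\delta$, everything reduces to showing that the sum of correction terms
\[
\iota_{\Grad f}\bigl[(m^{-1})_R C(\Grad g)\bigr]+\iota_{\Grad g}\bigl[(m^{-1})_R C(\Grad f)\bigr]
\]
vanishes; this cancellation is the main obstacle and the only place unitarity enters.

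To finish I would pass to components in a Hilbert basis $(e_i)$ of $H=H_1\oplus H_2$. Since $(m^{-1})_{pi}=m_{ip}$ for a (real) unitary $m$, the first correction has $l$-component $\sum_{i,j,p}m_{ip}(\partial_l m_{ij})\langle\Grad g,e_j\rangle_H\langle\Grad f,e_p\rangle_H$, and the second is obtained by exchanging $f$ and $g$. Relabelling $j\leftrightarrow p$ in the second term, the sum becomes $\sum_{i,j,p}\langle\Grad g,e_j\rangle_H\langle\Grad f,e_p\rangle_H\bigl[(\partial_l m_{ij})m_{ip}+(\partial_l m_{ip})m_{ij}\bigr]$, and summing over $i$ turns the bracket into $\partial_l\bigl(\sum_i m_{ij}m_{ip}\bigr)=\partial_l\,({}^tm\,m)_{jp}=\partial_l\,\delta_{jp}=0$, because $m$ is unitary, ${}^tm\,m=\mathrm{Id}$. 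Hence the correction terms cancel and the Leibniz identity of ii) holds. The delicate points to check carefully are the tensor/index bookkeeping in the two contractions and the legitimacy of differentiating $\sum_i m_{ij}m_{ip}=\delta_{jp}$ termwise inside $\D^\infty$, both of which are justified since $m$ is a $\D^\infty$-multiplier with $\D^\infty$-entries.
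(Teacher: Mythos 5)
Your part i) coincides with the paper's proof, which likewise just observes that every operator occurring in the defining formula for $\delta$ is continuous between the relevant spaces $\D^\infty(\Omega_1\times\Omega_2,H)$ and $\D^\infty(\Omega_1\times\Omega_2,H\otimes H)$. For part ii), however, you take a genuinely different route: the paper's entire proof is ``direct calculus on the cases $f_1(\omega_1)g_1(\omega_1)$, $f_1(\omega_1)g_2(\omega_2)$, $f_2(\omega_2)g_2(\omega_2)$'', i.e.\ it verifies the Leibniz identity on pairs of single-variable functions and then (implicitly) extends by bilinearity of the Leibniz defect, the $\D^\infty$-density of $K^\infty(\Omega_1\times\Omega_2)$, and the continuity established in i). You instead prove a structural statement valid for arbitrary $f,g$ at once: the commutation defect $C=\Grad\circ m-m_R\circ\Grad$ is $\D^\infty(\Omega_1\times\Omega_2)$-linear (the first-order terms $\Grad f\otimes mX$ cancel, precisely because $m_R$ acts on the second tensor factor), the divergence product rule yields the scalar defect formula $\delta(fX)=f\,\delta X\pm\iota_{\Grad f}\bigl[(m^{-1})_RC(X)\bigr]$, and the two symmetric corrections cancel since summing over $i$ produces $\partial_l\bigl(\sum_i m_{ij}m_{ip}\bigr)=\partial_l({}^tm\,m)_{jp}=\partial_l\,\delta_{jp}=0$ by unitarity. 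Both arguments are correct; yours makes explicit exactly where the unitarity of $m$ enters (the paper leaves this buried in the unwritten ``direct calculus''), dispenses with the case analysis and the density step, and would apply verbatim to any real unitary $\D^\infty$-multiplicator without using the product structure $\Omega_1\times\Omega_2$, while the paper's route is shorter given that the $K^\infty$ density machinery is already in place. Two points you rightly flag should be kept in a final write-up: the rule $\div_R(fT)=f\,\div_R T\pm\iota_{\Grad f}T$ is first checked on finite sums $\sum_j X_j\otimes Y_j$ with constant $X_j$ and then extended by continuity of both sides (Theorem 2.4 and Corollary 2.4), and the termwise differentiation of $\sum_i m_{ij}m_{ip}=\delta_{jp}$ requires the multiplicator bounds on $\Grad m$ to justify summing the series in $\D^\infty$ --- which is the same level of rigor at which the paper itself operates.
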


\begin{proof}
  i). All operators in $\delta$ are continuous from
  $\D^\infty(\Omega_1 \times \Omega_2, H)$ in $\D^\infty(\Omega_1
  \times \Omega_2, H \otimes H)$, or $\D^\infty(\Omega_1 \times
  \Omega_2, H \otimes H)$ in $\D^\infty(\Omega_1 \times \Omega_2, H)$.

  ii). Direct calculus on the following cases:
  $f_1(\omega_1)g_1(\omega_1)$, $f_1(\omega_1)g_2(\omega_2)$,
  $f_2(\omega_2)g_2(\omega_2)$, $f_i,g_i, i=1,2$ being functions of
  $\D^\infty(\Omega_i)$.
\end{proof}

\begin{thm} 
  \label{thm:5.3}
  $\theta_m$ can be extended in a bicontinuous bijection of
  $\D^\infty(\Omega_1 \times \Omega_2)$ in itself.
\end{thm}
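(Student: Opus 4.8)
The plan is to deduce Theorem~5.3 from the two preceding theorems together with the characterisation $\sko_\infty^2\cap\L{\infty-0}=\D^\infty$ (Theorem~2.5). Since Theorem~5.1 already provides a bicontinuous bijection of $\L{\infty-0}(\Omega_1\times\Omega_2)$ onto itself whose inverse is $\theta_{m^{-1}}$, and since $m^{-1}\in\mathcal M$ by the very definition of $\mathcal M$, it suffices to prove the single assertion that $\theta_m$ maps $\D^\infty(\Omega_1\times\Omega_2)$ continuously into itself. Applying that assertion to $m^{-1}$ then shows $\theta_{m^{-1}}$ is continuous $\D^\infty\to\D^\infty$ as well, and the identities $\theta_{m^{-1}}\circ\theta_m=\mathrm{Id}$ and $\theta_m\circ\theta_{m^{-1}}=\mathrm{Id}$, already valid on $\L{\infty-0}\supset\D^\infty$, upgrade $\theta_m$ to a bicontinuous bijection of $\D^\infty$.

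The heart of the argument is the intertwining identity
\begin{equation*}
  \Grad(\theta_m f) = \tilde\theta_m\bigl(\delta(\Grad f)\bigr),
\end{equation*}
where $\delta$ is the operator of equation~(1) and $\tilde\theta_m$ is the $H$-valued extension of $\theta_m$. First I would check it on the generators of $\mathcal C_1(\Omega_1\times\Omega_2)$: for $f=W_1(h_1)$ both sides reduce to $h_1$, while for $f=W_2(h_2)$ the left side equals $\Grad\div\bigl(m(\omega_1)h_2\bigr)=\div_R(\Grad(m h_2))+m h_2$ by Lemma~5.1, and the right side equals the same quantity after using Lemma~5.2 (which gives $\tilde\theta_m\circ\div_R=\div_R\circ m_R$) together with $m_R(m^{-1})_R=\mathrm{Id}$ and the fact that $\theta_m$ fixes functions of $\omega_1$. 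Because $\theta_m$ is an algebra morphism and $f\mapsto\delta(\Grad f)$ is a derivation (the twisted Leibniz rule of Theorem~5.2~ii), both sides obey the same product rule, so the identity propagates from generators to all Gaussian polynomials, a $\D^\infty$-dense subset.

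With the identity in hand I would prove continuity of $\theta_m\colon\D^\infty\to\sko_r^p$ by induction on $r$. The case $r=0$ is Theorem~5.1~ii. For the inductive step, the $\sko_{r+1}^p$-norm of $\theta_m f$ is controlled by the $\sko_r^p$-norm of $\theta_m f$ (inductive hypothesis) and the $\sko_r^p(\Omega,H)$-norm of $\Grad(\theta_m f)=\tilde\theta_m(\delta\,\Grad f)$. The latter I would bound in two steps: Theorem~5.2~i gives $\delta\,\Grad f\in\D^\infty(\Omega,H)$ with a $\sko_s^q$-estimate in terms of $\|f\|_{\sko_{s+1}^{q}}$, and the inductive continuity of $\theta_m$ at level $r$, transported to the Hilbert-valued setting by Corollary~2.2 and Theorem~2.3, gives $\tilde\theta_m\colon\sko_s^q(\Omega,H)\to\sko_r^p(\Omega,H)$. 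Combining these bounds closes the induction, so $\theta_m f\in\sko_\infty^2$; since $\theta_m f\in\L{\infty-0}$ by Theorem~5.1, Theorem~2.5 yields $\theta_m f\in\D^\infty$ with the required continuity.

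I expect the main obstacle to be the careful bookkeeping in the inductive step: one must track not only the pairs of indices $(p,r)$ and $(q,s)$ but also the tensor level, since $\Grad(\theta_m f)$ is $H$-valued and its higher gradients live in $\sko_r^p(\Omega,\overset{k}{\otimes}H)$, so the extensions of $\theta_m$ and of $\delta$ to all tensor powers (via Corollary~2.2 and Corollary~2.4) must be carried through the induction simultaneously. The verification of the intertwining identity on products, and the check that the $H$-valued extension appearing there is genuinely the one to which the extension theorems of Section~2 apply, is the other delicate point; everything else is a routine assembly of the already-proved continuity statements.
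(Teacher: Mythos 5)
Your proposal is correct and follows essentially the same route as the paper: the same intertwining identity $\tilde\theta_m[\delta(\Grad f)]=\Grad\theta_m(f)$, verified on $W_1(h_1)$ and $W_2(h_2)$ via Lemmas 5.1 and 5.2, propagated to Gaussian polynomials because both sides are $\theta$-derivations, and then an induction on $r$ using the continuity of $\delta$ (Theorem 5.2) and the Hilbert-valued extension theorems of Section 2 to climb from $\D_r^\infty$ to $\D_{r+1}^\infty$. The only cosmetic differences are that the paper obtains the final $\D^\infty$-continuity by the closed graph theorem rather than by tracking constants through the induction, and it does not need your closing appeal to Theorem 2.5 since the induction already yields all orders directly; your explicit symmetry argument for $m^{-1}$ is implicit in the paper via Theorem 5.1.
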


\begin{proof}
  The only points left to prove, after Theorem \ref{thm:5.1}, are
  $\range \theta_m \subset \D^\infty(\Omega_1 \times \Omega_2)$ and
  the $\D^\infty$-continuity of $\theta_m$. If we denote $\tilde
  \theta_m$ the extension of $\theta_m$ to $\D^\infty(\Omega_1 \times
  \Omega_2, H)$ in $\L{\infty-0}(\Omega, H)$ as in Corollary 2.3, we
  have:
  \begin{displaymath}
    \tilde \theta [\delta (\Grad (fg))] = \theta_m(f) \delta(\Grad g) + \theta_m(g) \delta (\Grad f).
  \end{displaymath}
  As we also have:
  \begin{displaymath}
    \Grad \theta_m(fg) = \theta_m(f) \Grad \theta_m(g) + \theta_m(g) \Grad \theta_m(f).
  \end{displaymath}
  $\tilde \theta_m \circ \delta(\Grad )$ and $\Grad \theta_m$ are
  two $\theta$-derivations (cf.\ Definition 2.3) which coincide on
  $W(h_1)$, $h_1$ constant vector of $H_1$ and $W(h_2)$, $h_2$
  constant vector of $H_2$: with lemmas \ref{lemma:5.1} and
  \ref{lemma:5.2}, we have
  \begin{align*}
    \tilde \theta_m[\delta (\Grad W_2 (h_2))] &= \tilde \theta_m[\div_R((m_R)^{-1} \Grad m h_2)] + \tilde \theta_m(m h_2) \\
    &= \div_R(m_Rm_R^{-1}\Grad m h_2) + m h_2 \\
    &= \div_R(\Grad m h_2) + m h_2 \\
    &= \Grad (\div m h_2) - m h_2 + m h_2 \\
    &= \Grad \div (m h_2) \\
    &= \Grad \theta_m[W_2(h_2)] .
  \end{align*}
  Then $\tilde \theta_m \circ \delta(\Grad )$ and $\Grad \theta_m$
  coincide on all polynomials on Gaussian variables, and as they both
  are $\D^\infty$-continuous $\theta$-derivations, then on
  $\D^\infty$-functions.

  Now we proceed by induction: suppose $\theta_m$ sends continuously
  $\D^\infty(\Omega_1 \times \Omega_2)$ in $\D^\infty_r(\Omega_1
  \times \Omega_2)$ ; then $\tilde \theta_m$ sends continuously
  $\D^\infty(\Omega_1 \times \Omega_2, H)$ in $\D^\infty_r(\Omega_1
  \times \Omega_2, H)$. And for all $f \in \D^\infty(\Omega_1 \times
  \Omega_2)$, we will have
  \begin{displaymath}
    \tilde \theta_m[\delta \Grad f] = \Grad \theta_m f,
  \end{displaymath}
  which implies $\theta_m f \in \D^\infty_{r+1}(\Omega_1 \times
  \Omega_2)$. The $\D^\infty$-continuity of $\theta_m$ is obtained
  with the closed graph theorem.
\end{proof}

\begin{thm}[Reciprocal of Theorem \ref{thm:5.3}] 
  \label{thm:5.4}
  In the same setting than previously, let $m$ be an operator from
  $\Omega_1$ into the unitary operators on $H_2$ ; we can define a
  map $\theta_m$:
  \begin{align*}
    \theta_m[W_1(H_1)] &= W_1(h_1), h_1 \in H_1, \\
    \theta_m[W_2(h_2)] &= \div (mh_2), h_2 \in H_2.
  \end{align*}
  And suppose that $\theta_m$ can be extended in a diffeomorphism
  of $\D^\infty(\Omega_1 \times \Omega_2)$ in itself. Then $m$, and
  $m^{-1}$, are $\D^\infty(\Omega_1,H_2)$ multiplicators.
\end{thm}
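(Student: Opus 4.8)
The plan is to run the construction of Theorems \ref{thm:5.1}--\ref{thm:5.3} in reverse, extracting the multiplicator property of $m$ from the single hypothesis that $\theta_m$ is a $\D^\infty$-diffeomorphism of $\D^\infty(\Omega_1\times\Omega_2)$. The identity I will reuse throughout is
\[
  \theta_m[W_2(h_2)] = \div(m h_2) = W_2[m(\omega_1)h_2], \qquad h_2 \in H_2,
\]
which holds already at the $\L{\infty-0}$ level and was obtained in Theorem \ref{thm:5.1}.i; here, for a possibly random vector $V(\omega_1)\in H_2$, I write $W_2[V]=\sum_j\langle V,\varepsilon_j\rangle_{H_2}W_2(\varepsilon_j)$, with $(\varepsilon_j)_{j\in\mathbb N_\ast}$ a Hilbertian basis of $H_2$. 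Two elementary consequences of the hypothesis come first. Since $\theta_m$ fixes every $W_1(h_1)$, is an algebra morphism on Gaussian polynomials, and is $\D^\infty$-continuous, it acts as the identity on $\D^\infty(\Omega_1)\subset\D^\infty(\Omega_1\times\Omega_2)$. And since $\theta_m$ maps $\D^\infty(\Omega_1\times\Omega_2)$ into itself, the identity above gives $W_2[m h_2]\in\D^\infty(\Omega_1\times\Omega_2)$ for every constant $h_2\in H_2$.

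The key tool is the norm equivalence proved in Theorem \ref{thm2_4} (with $H_1=\mathbb R$): for $V\in L^p(\Omega_1,H_2)$ one has $\|V\|_{L^p(\Omega_1,H_2)}\simeq\|W_2[V]\|_{L^p(\Omega_1\times\Omega_2)}$. I would upgrade this to the $\mathbb D_r^p$-scale by the O.U.\ intertwining on the product space: writing $L=\tilde L_1+\tilde L_2$ and using that $W_2(\varepsilon_j)$ lies in the first chaos of $\Omega_2$, one finds $(1-L)\,W_2[V]=W_2[(2-L_1)V]$, so that $(1-L)^{r/2}$ corresponds to a power of $(2-L_1)=(1-L_1)+1$ acting on the $\Omega_1$-coefficients, an operator comparable to $(1-L_1)^{r/2}$ on $\D^\infty$. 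Combined with Corollary \ref{cor2_4}, this yields the equivalence $V\in\D^\infty(\Omega_1,H_2)\iff W_2[V]\in\D^\infty(\Omega_1\times\Omega_2)$ with comparable $\mathbb D_r^p$-norms. Applied to $V=mh_2$ this already gives $m(\omega_1)h_2\in\D^\infty(\Omega_1,H_2)$ for each constant $h_2$.

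The main point is to pass from constant vectors to arbitrary $\alpha\in\D^\infty(\Omega_1,H_2)$, i.e.\ to show $m\alpha\in\D^\infty(\Omega_1,H_2)$, which is precisely the statement that $m$ is a $\D^\infty(\Omega_1,H_2)$-multiplicator. Write $\alpha=\sum_k g^k\varepsilon_k$ with $g^k\in\D^\infty(\Omega_1)$ and set $\alpha_N=\sum_{k\le N}g^k\varepsilon_k$. The equivalence of the previous paragraph makes $W_2[\alpha]\in\D^\infty(\Omega_1\times\Omega_2)$ and $W_2[\alpha_N]\to W_2[\alpha]$ in $\D^\infty$. Since $\theta_m$ fixes $\D^\infty(\Omega_1)$ and is a $\D^\infty$-continuous algebra morphism,
\[
  \theta_m\bigl[W_2[\alpha_N]\bigr]=\sum_{k\le N}g^k\,\theta_m[W_2(\varepsilon_k)]=\sum_{k\le N}g^k\,W_2[m\varepsilon_k]=W_2[m\alpha_N],
\]
and letting $N\to\infty$ with $\theta_m$ continuous gives $\theta_m[W_2[\alpha]]=\lim_N W_2[m\alpha_N]\in\D^\infty(\Omega_1\times\Omega_2)$. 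By the norm equivalence, $m\alpha_N$ converges in $\D^\infty(\Omega_1,H_2)$ to some $\beta$ with $W_2[\beta]=\theta_m[W_2[\alpha]]$; because $m(\omega_1)$ is unitary one has $\|m\alpha_N-m\alpha\|_{L^p(\Omega_1,H_2)}=\|\alpha_N-\alpha\|_{L^p(\Omega_1,H_2)}\to0$, so $\beta=m\alpha$ and hence $m\alpha\in\D^\infty(\Omega_1,H_2)$. Thus $m$ is a multiplicator. I expect the delicate step to be the $\mathbb D_r^p$ version of the norm equivalence, since Theorem \ref{thm2_4} is stated only for $L^p$; justifying the O.U.\ intertwining and the comparability of $(2-L_1)$ and $(1-L_1)$ uniformly in $r$ is where the real work lies, everything else being morphism and continuity bookkeeping.

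Finally, for $m^{-1}$ I would argue by symmetry. The computation of Theorem \ref{thm:5.1}.ii showing $\theta_{m^{-1}}\circ\theta_m=\mathrm{Id}$ and $\theta_m\circ\theta_{m^{-1}}=\mathrm{Id}$ is purely algebraic on Gaussian polynomials together with law-preservation, valid on $\L{\infty-0}$ and requiring no multiplicator assumption (the only cross-term $\langle\Grad_1 g,\,m^{-1}\varepsilon_j\rangle$ vanishes because $\Grad_1 g\in H_1\perp H_2$). Hence $\theta_m^{-1}=\theta_{m^{-1}}$, and since $\theta_m$ is a $\D^\infty$-diffeomorphism so is $\theta_{m^{-1}}$. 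The operator $m^{-1}(\omega_1)=(m(\omega_1))^{-1}$ again takes values in the unitaries of $H_2$, so rerunning the three preceding paragraphs with $m^{-1}$ in place of $m$ shows $m^{-1}$ is a $\D^\infty(\Omega_1,H_2)$-multiplicator as well, completing the proof.
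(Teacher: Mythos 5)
Your proposal is correct, but it reaches the multiplicator property by a genuinely different route than the paper. The paper's proof rests on the one-line identity $mX = \Grad_2\,\theta_m[\div X]$ for constant $X\in H_2$, together with $\Grad_2\,\theta_m[f\,\div X] = m(f(\omega_1)X)$ for $f\in\D^\infty(\Omega_1)$: because $m$ depends only on $\omega_1$, the partial Malliavin gradient in the $\Omega_2$-directions applied to $W_2[mX]$ returns $mX$ itself, so the $\D^\infty$-regularity of $m\alpha$ on finite sums $\sum_i f^i\varepsilon_i$ falls out immediately from the diffeomorphism hypothesis and the $\D^\infty$-continuity of $\Grad_2$ --- no norm equivalence at the $\sko_r^p$ level is ever invoked. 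You instead invert the map $V\mapsto W_2[V]$ through a two-sided $\sko_r^p$-equivalence, upgrading the $L^p$ statement inside Theorem 2.4 via the intertwining $(1-L)W_2[V]=W_2[(2-L_1)V]$; that intertwining is correct under the paper's convention $(1-L)f_n=(1+n)f_n$ (the $\tilde L_2$-term contributes $-W_2[V]$ since $W_2[V]$ is first-chaos in $\omega_2$), and the comparison of $(2-L_1)^{r/2}$ with $(1-L_1)^{r/2}$ is a Meyer-multiplier fact for $\phi(n)=\bigl(1+\tfrac{1}{n+1}\bigr)^{r/2}$; note that only a constant $C(p,r)$ at each fixed $(p,r)$ is needed, so the uniformity in $r$ you flag as the hard point is actually a non-issue. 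From there the two proofs share the same endgame: agreement with a $\D^\infty$-continuous map on the dense set of finite sums, plus the $L^p$-isometry of $m$ coming from unitarity, to identify the limit with $m\alpha$ (the paper phrases this as closedness of $m$ in the $\D^\infty$-topology, which is finer than the $L^{\infty-0}$-topology). Your treatment of $m^{-1}$ --- establishing $\theta_m^{-1}=\theta_{m^{-1}}$ by the algebraic computation of Theorem 5.1.ii, with the cross-term $\langle\Grad_1 f^j,\,m^{-1}\varepsilon_j\rangle$ vanishing because $H_1\perp H_2$, then rerunning the argument --- is exactly the content behind the paper's terse ``same demonstration for $m^{-1}$''. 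In short, your route costs an extra analytic lemma (the multiplier upgrade of the norm equivalence) that the paper's $\Grad_2$-trick sidesteps, but it buys quantitative two-sided $\sko_r^p$ bounds for $m$; had you applied $\Grad_2$ directly to $\theta_m\bigl[W_2[\alpha]\bigr]$ you would have recovered the paper's shorter argument.
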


\begin{proof}
  Straightforward computation shows that if if $X \in \D^\infty(\Omega_2,
  H_2)$, $X$ constant, then $mX = \Grad _2\theta_m[\div X]$ because
  $m$ depends only of varaibles in $\Omega_1$, and so relatively to
  the $\Omega_2$-variables, is constant.

  If $f \in \D^\infty(\Omega_1)$, we also have
  \begin{displaymath}
    \theta_m(f \div X) = f \theta_m(\div X),
  \end{displaymath}
  and again
  \begin{displaymath}
    \Grad _2\theta_m [f \div X] = f mX = m(f(\omega_1) X).
  \end{displaymath}
  Now $m(\omega_1)$ is $\L{\infty-0}(\Omega_1,H_2)$-continuous became
  $m$ is unitary ; so $m(\cdot)$ is closed in the $\D^\infty$-topology
  which is finer than the $\L{\infty-0}$-topology. So the finite
  linear sums like:
  \begin{displaymath}
    \sum_{i=1}^k f^i(\omega_1) \varepsilon_i,
  \end{displaymath}
  $(\varepsilon_i)_{i \in \mathbb N_\ast}$ an Hilbertian basis of $H_2$ and $fî(\omega_1) \in
  \D^\infty(\Omega_1)$, being a dense set in
  $\D^\infty(\Omega_1,H_2)$, $m$ is a multiplicator from
  $\D^\infty(\Omega_1,H_2)$ in itself.

  Same demonstration for $m^{-1}$.
\end{proof}

\begin{rem}
  \label{remark:5.3}
  A particular case of \ref{thm:5.2} is the following: let $t_0 \in
  {]0,1[}$, $\mathcal W_1$ the Wiener space built on $[0,t_0]$ and
  $\mathcal W_2$ the Wiener space built on $[t_0,1]$, and $\mathcal
  U_1(t,\omega_1)$ an unitary operator of $\mathbb R^n$ to $\mathbb
  R^n$, which is a multiplicator and such that $\forall t \in [t_0,
  1], \mathcal U_1(t, \omega_1) \in \mathcal F_{t_0}$. Let
  \begin{displaymath}
    \mathcal U(t, \omega_1) = \mathds 1_{[0,t_0[}(t) \mathrm{Id}_{\mathbb R^n} + \mathcal U_1(t,\omega_1) \mathds 1_{[t_0,1]}(t),
  \end{displaymath}
  and let
  \begin{displaymath}
    m(\omega_1)h_1(t) = \int_0^{t \wedge t_0} \dot h_1(s) \diff s, h_1 \in H_1 \quad (H_1 \text{ the Cameron-Martin space of $\mathcal W_1$})
  \end{displaymath}
  and
  \begin{displaymath}
    m(\omega_1)h_2(t) = \int_{t_0}^t \mathcal U^{-1}(s,\omega_1) \dot h_2(s) \diff s, h_2 \in H_2 \quad (H_2 \text{ the Cameron-Martin space of $\mathcal W_2$}).
  \end{displaymath}
  It is easy to see that such an operator $m$ is a multiplicator of
  $\D^\infty(\Omega_1,H_2)$ in $\D^\infty(\Omega_1,H_2)$ and that
  $m(\omega_1)$is an unitary operator on $H_2$. So as in Theorem
  \ref{thm:5.3}, if we denote
  \begin{align*}
    \theta_m[W(h_1)] &= W(h_1), \\
    \text{and } \theta_m[W(h_2)] &= \int_0^1 {}^t\dot h_2 \mathcal U^{-1} \itodiff B,
  \end{align*}
  $\theta_m$ can be extende in a $\D^\infty$-diffeomorphism of
  $\D^\infty(\mathcal W_1 \times \mathcal W_2)$ in itself.

  Conversely, if $\mathcal U_1$ is such as in this remark
  \ref{remark:5.3} and if the $\theta_m$ associated to $\mathcal U_1$
  is a $\D^\infty$-diffeomorphism, then $m$ is a multiplicator.

  \begin{proof}
    Use Theorem \ref{thm:5.4}.
  \end{proof}
\end{rem}

\begin{example}[Process $\mathcal U$, adapted, multiplicator, with
  values in $n \times n$-unitary matrices such that the associated
  $\theta_{\mathcal U}$ $(\theta_{\mathcal U}(W(h)) = \int_0^1
  {}^t\dot h \mathcal U^{-1} \itodiff B)$ is not a
  $\D^\infty$-diffeomorphism]
  Let $(X_t,Y_t)$ a standard brownian on $\mathbb R^2$ and let
  \begin{displaymath}
    \mathcal U(t,\omega) = 
    \begin{pmatrix}
      \cos \frac{X_t}{\sqrt t} & \sin \frac{X_t}{\sqrt t} \\
      - \sin \frac{X_t}{\sqrt t} & \cos \frac{X_t}{\sqrt t}
    \end{pmatrix}.
  \end{displaymath}
  $\mathcal U(t,\omega)$ is an unitary operator on $H_1 \oplus H_2$.

  To show that $\mathcal U(t,\omega)$ is a multiuplicator, we use the
  criterium 4.1: \\ $\forall r>1, \exists s > 1, \exists C(r,s),
  \forall f \in \D^2_\infty(\Omega, \mathbb R^2),$ the map $f \mapsto
  \mathcal U(t,w)f \ (\D^2_r \to \D^2_s)$ is bounded with a norm
  less or equal than $C(r,s)$ ; we note that $X_t/\sqrt t$ is in
  $\mathcal C^1$ (for a fixed $t$) so $\exists h_t \in H_1, X_t/\sqrt
  t = W(h_t)$. Then
  \begin{displaymath}
    \norm{h_t}_{H_1} = 1 \ \left( = \norm{ \frac{X_t}{\sqrt t} }_{L^2(\Omega_1)} \right) ; 
  \end{displaymath}
  then
  \begin{displaymath}
    \norm{ \Grad \left(\cos \frac{X_t}{\sqrt t} \right) }_{H_1} = \norm{ \left( \sin \frac{X_t}{\sqrt t} \right) h_t }_{H_1} \leq 1.
  \end{displaymath}
  In the same way:
  \begin{displaymath}
   \norm{ \Grad ^k\left( \cos \frac{X_t}{\sqrt t} \right) }_{\otimes^k H_1} \leq 1.
  \end{displaymath}
  Then straightforward computation shows that the map $f \mapsto \mathcal
  U(t,\omega)f$ verifies the above mentionned criterium 4.1. (To
  simplify the calculus, use lemma 4.1.i).

  Now let $A$ the process 
  \begin{displaymath}
    A = 
    \begin{pmatrix}
      0 & - a(t,\omega) \\
      a(t,\omega) & 0
    \end{pmatrix}
  \end{displaymath}
  where $a(\omega,t)$ is an adapted multiplicator. Then $\div A \Grad$
  exists as a derivation, and
  \begin{align*}
    D_aX_t &= - \int_0^t a(s,\omega) \itodiff Y_s, \\
    D_aY_t &= \int_0^t a(s, \omega) \itodiff X_s.
  \end{align*}
  Then 
  \begin{displaymath}
    \theta_{\proc U}
    \begin{pmatrix}
      X_t \\ Y_t
    \end{pmatrix} = \int_0^t
    \begin{pmatrix}
      \cos \frac{X_s}{\sqrt s} & -\sin \frac{X_s}{\sqrt s} \\
      \sin \frac{X_s}{\sqrt s} & \cos \frac{X_s}{\sqrt s}
    \end{pmatrix} .
    \begin{pmatrix}
      \diff X_s \\
      \diff Y_s
    \end{pmatrix} =
    \begin{pmatrix}
      \int_0^t \cos \frac{X_s}{\sqrt s}\itodiff X_s - \int_0^t \sin \frac{X_s}{\sqrt s}\itodiff Y_s \\
      \int_0^t \sin \frac{X_s}{\sqrt s}\itodiff X_s + \int_0^t \cos \frac{X_s}{\sqrt s}\itodiff Y_s
    \end{pmatrix}
  \end{displaymath}
  From that we deduce 
  \begin{align*}
    D_a(\theta_{\proc U}(X_t)) &= \int_0^t \frac 1 {\sqrt s} \sin
    \frac{X_s}{\sqrt s} \left( \int_0^s a(u, \omega) \itodiff Y_u \right)
    \itodiff X_s - \int_0^t \cos \frac{X_s}{\sqrt s} a(s, \omega) \itodiff Y_s \\
    &+ \int_0^t \frac 1 {\sqrt s} \cos \frac{X_s}{\sqrt s} \left(
      \int_0^s a(u, \omega) \itodiff Y_u \right) \itodiff Y_s - \int_0^t
    \sin \frac{X_s}{\sqrt s} a(s, \omega) \itodiff X_s ,
  \end{align*}
  \begin{align*}
    D_a(\theta_{\proc U}(Y_t)) &= - \int_0^t \frac 1 {\sqrt s} \cos
    \frac{X_s}{\sqrt s} \left( \int_0^s a(u, \omega) \itodiff Y_u \right)
    \itodiff X_s - \int_0^t \sin \frac{X_s}{\sqrt s} a(s, \omega)
    \itodiff Y_s \\
    &+ \int_0^t \frac 1 {\sqrt s} \sin \frac{X_s}{\sqrt s} \left(
      \int_0^s a(u, \omega) \itodiff Y_u \right) \itodiff Y_s + \int_0^t
    \cos \frac{X_s}{\sqrt s} a(s, \omega) \itodiff X_s.
  \end{align*}

  Now let suppose that $\theta_{\proc U}$ admits an inverse which
  is a $\D^\infty$-morphism ; we will show that
  \begin{lem} 
    \label{lemma:5.3}
    $\theta_{\proc U}^{-1}(D_a(\theta_{\proc U}))$ is a
    $\D^\infty$-continuous derivation of $\D^\infty(\Omega_1 \times
    \Omega_2)$, which has the form: $\div\tilde A\Grad$.
  \end{lem}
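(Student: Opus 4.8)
The plan is to recognise $\theta_{\proc U}^{-1}\circ D_a\circ\theta_{\proc U}$ as the conjugate of a derivation by an algebra isomorphism, and then to verify that this conjugate inherits the three further properties --- $\D^\infty$-continuity, null divergence, and adaptedness --- which together let \thmref{thm4_1} produce the representation $\Div\tilde A\Grad$. First I would record that $\theta_{\proc U}$ is an algebra isomorphism of $\D^\infty(\Omega_1\times\Omega_2)$ onto itself: this is exactly \thmref{thm:5.3} combined with the standing hypothesis that $\theta_{\proc U}^{-1}$ is a $\D^\infty$-morphism. Since $D_a$ is a derivation and both $\theta_{\proc U}$ and $\theta_{\proc U}^{-1}$ are multiplicative, a one-line computation giving $\tilde\delta(fg)=f\,\tilde\delta g+g\,\tilde\delta f$ shows that $\tilde\delta:=\theta_{\proc U}^{-1}D_a\theta_{\proc U}$ is again a derivation; its $\D^\infty$-continuity is then immediate as a composition of three $\D^\infty$-continuous maps ($\theta_{\proc U}$ by \thmref{thm:5.3}; $D_a=\Div A\Grad$, which is a $\D^\infty$-derivation because $a$, hence $A$, is an adapted multiplicator, by Remark~\ref{rem2_2}; and $\theta_{\proc U}^{-1}$ by hypothesis).

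Next I would handle the null divergence. Here I would use that $\theta_{\proc U}$ preserves laws --- as established in the proof of \thmref{thm:5.1} --- so that $\int\theta_{\proc U}^{-1}\psi\,\P(d\omega)=\int\psi\,\P(d\omega)$. Since $A$ is antisymmetric, $D_a$ has null divergence (Remark~\ref{rem2_2}), and therefore $\int\tilde\delta\varphi\,\P(d\omega)=\int D_a(\theta_{\proc U}\varphi)\,\P(d\omega)=0$ for every test function $\varphi\in\D^\infty(\Omega_1\times\Omega_2)$. Thus $\Div\tilde\delta=0$.

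The crux is adaptedness, and my plan is to derive it from the fact that $\theta_{\proc U}$ preserves the Brownian filtration. Because $\proc U$ is adapted and unitary, $\tilde B=\int_0^\cdot\proc U^{-1}\itodiff B$ is a Brownian motion by the Lévy characterization, and since $\diff B=\proc U\,\itodiff\tilde B$ with $\proc U$ adapted, $B$ is recovered from $\tilde B$ adaptedly, so $B$ and $\tilde B$ generate the same filtration $(\F_t)$. Granting this, $\theta_{\proc U}$ and $\theta_{\proc U}^{-1}$ carry $\F_t$-measurable variables to $\F_t$-measurable variables for each $t$; combined with the adaptedness of $D_a$ (which follows from $A$ being adapted), this yields that $\tilde\delta\Phi$ is adapted whenever $\Phi$ is. With the derivation property, $\D^\infty$-continuity, null divergence, and adaptedness all in hand, \thmref{thm4_1} then delivers an adapted multiplicator $\tilde A$ with values in the $n\times n$-A.M.\ such that $\tilde\delta=\Div\tilde A\Grad$.

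The hard part will be this filtration-preservation step. The subtlety is twofold: one must pin down that the filtration generated by $\tilde B=\int\proc U^{-1}\itodiff B$ coincides with that of $B$ --- which is precisely where the adaptedness of $\proc U$ is indispensable and where the argument would fail for a non-adapted unitary $\proc U$ --- and one must check that $\theta_{\proc U}$ acts on processes pointwise in the time variable, so that $\F_t$-measurability is transported at each individual $t$ rather than only globally.
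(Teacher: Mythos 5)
Your overall architecture coincides with the paper's: verify that $\tilde\delta=\theta_{\proc U}^{-1}D_a\theta_{\proc U}$ is a $\D^\infty$-continuous derivation, adapted, with null divergence, and then invoke \thmref{thm4_1}. The derivation/continuity step and the null-divergence step are sound; your law-preservation argument $\int\theta_{\proc U}^{-1}\psi\,\P(\diff\omega)=\int\psi\,\P(\diff\omega)$ is, in substance, the paper's computation $\int\theta_{\proc U}^{-1}D_a(\theta_{\proc U}f)\,\P(\diff\omega)=\int\theta_{\proc U}(1)\,D_a(\theta_{\proc U}f)\,\P(\diff\omega)=0$ performed via $\theta_{\proc U}^{-1}=\theta_{\proc U}^\ast$. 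One citation slip: the $\theta_{\proc U}$ of this example is built from a time-dependent adapted process, not from an $m\in\mathcal M$ depending only on $\omega_1$, so \thmref{thm:5.3} does not apply as stated; the $\D^\infty$-continuity of $\theta_{\proc U}$ here rests on the bounds $\norm{\Grad^k\bigl(\cos\frac{X_t}{\sqrt t}\bigr)}_{\otimes^kH}\leq 1$ checked in the example (in the spirit of \thmref{thm:5.5}), together with the standing hypothesis on $\theta_{\proc U}^{-1}$.

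The genuine gap is in your adaptedness step. You claim $\F_t^{\tilde B}=\F_t^B$ on the grounds that $\diff B=\proc U\itodiff\tilde B$ with $\proc U$ adapted, so that ``$B$ is recovered from $\tilde B$ adaptedly.'' This is circular: $\proc U$ is adapted to the filtration of $B$, and is not known to be adapted to the a priori smaller filtration of $\tilde B$, so the reconstruction is not $\F^{\tilde B}$-measurable. Worse, the claimed equality is false in general for adapted unitary $\proc U$: the paper's own Counter Example \ref{cex:5.2} (Tanaka, $\proc U_s=\Sgn B_s$, a $1\times1$ adapted unitary process) yields $\sigma(X_t\mid t\in[0,1])\subsetneq\mathcal F_1$; adaptedness of $\proc U$ gives only the inclusion $\F_t^{\tilde B}\subseteq\F_t^B$, never the reverse. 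The paper avoids filtration equality altogether: (i) $\theta_{\proc U}(\F_t)\subset\F_t$ directly, since for $\dot h$ vanishing after $t$, $\theta_{\proc U}[W(h)]=\int_0^t{}^t\dot h\,\proc U^{-1}\itodiff B\in\F_t$; (ii) $\theta_{\proc U}(\F_t^\perp)\subset\F_t^\perp$, since $f=\int_t^1 b\itodiff B$ is sent to $\int_t^1\theta(b)\,\proc U^{-1}\itodiff B$; then, $\theta_{\proc U}$ being an $\L{2}$-isometry with $\theta_{\proc U}^{-1}=\theta_{\proc U}^\ast$, for $f\in\F_t$ and $g\in\F_t^\perp$ one gets $\langle\theta_{\proc U}^{-1}f,g\rangle_{\L{2}}=\langle f,\theta_{\proc U}g\rangle_{\L{2}}=0$, hence $\theta_{\proc U}^{-1}(\F_t)\subset\F_t$. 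Replacing your filtration-equality claim by this orthogonality argument closes the gap; the remainder of your proof then goes through as written.
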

  \begin{proof}
    i). $\theta_{\proc U}(\mathcal F_t) \subset \mathcal F_t$: if $W(h) \in \mathcal F_t$, then $\dot h(s) = 0$ for $s> t$ and 
    \begin{displaymath}
      \theta [W(h)] = \int_0^1 {}^t\dot h \proc U^{-1} \itodiff B = \int _0^t {}^t\dot h \proc U^{-1} \itodiff B \in \mathcal F_t.
    \end{displaymath}

    ii). If $f \in \mathcal F_t^\perp$, then $\exists b (s, \omega) \in
    \D^\infty$ such that $f = \int_t^1 b(s, \omega) \itodiff B_s$ implies
    $\theta_{\proc U}(f) = \int_t^1 \theta b(s, \omega) {\proc
      U}^{-1} \itodiff B_s \in \mathcal F_t^\perp$. So $\theta_{\proc
      U}(f) \in \mathcal F_t^\perp$ implies $\theta_{\proc U}^\ast
    f \in \mathcal F_t$ if $f \in \mathcal F_t$ ($\theta_{\proc
      U}^{-1} = \theta_{\proc U}^\ast$ sends $\mathcal F_t$ in
    $\mathcal F_t$, $\theta_{\proc U}$ is an $\L{2}$-isometry). So
    $\theta_{\proc U}^{-1}D_a\theta_{\proc U}$ is adapted.

    iii). $\forall f \in \D^\infty$, we have
    \begin{displaymath}
      \int D_a(\theta_{\proc U} f) \mathbb P(\diff \omega) = 0, 
    \end{displaymath}
    so 
    \begin{displaymath}
      \int \theta_{\proc U}^{-1} D_a(\theta_{\proc U} f) \mathbb P(\diff \omega) = \int \theta_{\proc U}(1) D_a(\theta_{\proc U} f) \mathbb P(\diff \omega) = 0.
    \end{displaymath}

    iv). $\theta_{\proc U}^{-1}D_a\theta_{\proc U}$ is an
    operator, $\D^\infty$-continuous adapted, and with a null
    divergence: so, with Theorem 4.1, $\theta_{\proc
      U}^{-1}D_a\theta_{\proc U}$ can be written as $D_{\tilde a} =
    \div \tilde A \Grad$, with
    \begin{displaymath}
      \tilde A = 
      \begin{pmatrix}
        0 & \tilde a(t, \omega) \\
        -\tilde a(t, \omega) & 0
      \end{pmatrix}.
    \end{displaymath}
  \end{proof}

  Now we take an $\tilde a$ determinist, and try to find the
  corresponding $a$ such that $\theta_{\proc U}^{-1} D_a
  \theta_{\proc U} = D_{\tilde a}$. As we have suppose the
  existence of $\theta^{-1}_{\proc U}$, we have
  \begin{displaymath}
    \theta_{\proc U}^{-1}D_a\theta_{\proc U} = D_{\tilde a} \implies 
    D_a\theta_{\proc U} = \theta_{\proc U}D_{\tilde a} \implies
    D_a \theta_{\proc U} 
    \begin{pmatrix}
      X_t \\ Y_t
    \end{pmatrix} = 
    \theta_{\proc U} D_{\tilde a} 
    \begin{pmatrix}
      X_t \\ Y_t
    \end{pmatrix}.
  \end{displaymath}
  \begin{displaymath}
    D_{\tilde a} 
    \begin{pmatrix}
      X_t \\ Y_t
    \end{pmatrix} = 
    \begin{pmatrix}
      -\int_0^t \tilde a (s) \itodiff Y_s \\
      + \int_0^t \tilde a (s) \itodiff X_s
    \end{pmatrix},
  \end{displaymath}
  so
  \begin{align*}
    \theta_{\proc U} D_{\tilde a} 
    \begin{pmatrix}
      X_t \\ Y_t
    \end{pmatrix} &=
    \begin{pmatrix}
      \theta_{\proc U}\left( - \int_0^t \tilde a (s) \itodiff Y_s \right) \\
      \theta_{\proc U}\left( + \int_0^t \tilde a (s) \itodiff X_s \right)
    \end{pmatrix} \\
    &=
    \begin{pmatrix}
      - \int_0^t \tilde a (s) \theta_{\proc U} . \left( \diff Y_s \right) \\
      + \int_0^t \tilde a (s) \theta_{\proc U} . \left( \diff X_s \right)
    \end{pmatrix} \\
    &=
    \begin{pmatrix}
      - \int_0^t \tilde a ( \sin(X_s/\sqrt s) \itodiff X_s + \cos(X_s/\sqrt s)\itodiff Y_s ) \\
      + \int_0^t \tilde a ( \cos(X_s/\sqrt s) \itodiff X_s - \sin(X_s/\sqrt s)\itodiff Y_s ) \\
    \end{pmatrix} \\
    &=
    \begin{pmatrix}
      - \int_0^t \tilde a \sin(X_s/\sqrt s) \itodiff X_s - \int_0^t \tilde a \cos(X_s/\sqrt s)\itodiff Y_s \\
      + \int_0^t \tilde a \cos(X_s/\sqrt s) \itodiff X_s - \int_0^t \tilde a \sin(X_s/\sqrt s)\itodiff Y_s \\
    \end{pmatrix}.
  \end{align*}
  
  From $\theta_{\proc U} D_{\tilde a} (X_t, Y_t) = D_a
  \theta_{\proc U} (X_t, Y_t)$, we get, with
  \begin{displaymath}
    F(s,\omega) = -a(s, \omega) + \frac 1 s \int_0^s a(u, \omega) \itodiff Y_u,
  \end{displaymath}
  \begin{align*}
    \int_0^t \sin \frac{X_s}{\sqrt s} F(s,\omega) \itodiff X_s + \int_0^t \cos \frac{X_s}{\sqrt s} &F(s,\omega) \itodiff Y_s \\
    &= - \int_0^t \tilde a \sin \frac{X_s}{\sqrt s} \itodiff X_s - \int_0^t \tilde a \cos \frac{X_s}{\sqrt s} \itodiff Y_s
  \end{align*}
  and
  \begin{align*}
    - \int_0^t \cos \frac{X_s}{\sqrt s} F(s,\omega) \itodiff X_s + \int_0^t \sin \frac{X_s}{\sqrt s} &F(s,\omega) \itodiff Y_s \\
    &= + \int_0^t \tilde a \cos \frac{X_s}{\sqrt s} \itodiff X_s - \int_0^t \tilde a \sin \frac{X_s}{\sqrt s} \itodiff Y_s .
  \end{align*}
  From these two last equations:
  \begin{displaymath}
    \tilde a (t) = -a(t, \omega) + \frac 1 {\sqrt t} \int_0^ta(s, \omega) \itodiff Y_s.
  \end{displaymath}
  Let $a(\omega, t) = \sum_0^\infty a_n(\omega, t), a_n \in \mathcal
  C_n(\Omega)$ then 
  \begin{displaymath}
    -\sum_0^\infty a_n(t, \omega) + \sum_0^\infty \frac 1 {\sqrt t} \int_0^t a_n(s, \omega) \itodiff Y_s = \tilde a (t).
  \end{displaymath}
  Then $a_0(t, \omega) = - \tilde a (t)$, and if $b_n = \norm{a_n}_{\L
    2 (\Omega)}^2$, we get
  \begin{displaymath}
    b_n^2(t) = \frac 1 t \int_0^t b_{n-1}^2 \diff s.
  \end{displaymath}
  If for instance $\tilde a (t) = 1$, then $a(t, \omega) \notin \D^\infty$.
\end{example}

\medskip

\subsection{Some other conditions to obtain a $\mathbb{D}^\infty$-morphism}

Now we show that if $\sup_{t \in [0,1]} ||\Grad ^j \proc U||_{\otimes^j
  H} < +\infty$, $\theta_{\proc U}$ is a $\D^\infty$-morphism.
\begin{thm} 
  \label{thm:5.5} 
  If $\proc U(t,\omega)$ is an adapted process from $[0,1] \times
  \Omega$ with values in the $n \times n$-unitary matrices (on
  $\mathbb R^n$) and such that each $||\Grad ^j \proc U||_{\otimes^j
    H}, j \in \mathbb N_\ast$ is also uniformly (for $t\in [0,1]$)
  bounded, then $\theta_{\proc U}$ being the $\L{\infty-0}$-morphism
  associated to $\proc U$, is a $\D^\infty$-morphism.
\end{thm}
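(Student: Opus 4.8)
The plan is to follow closely the inductive scheme of Theorem \ref{thm:5.3}, replacing the abstract multiplicator there by the adapted process $\proc U$ and exploiting the uniform bounds on its Malliavin derivatives. First I would note that $\theta_{\proc U}$ preserves laws, hence is an $\L{p}$-isometry for every $p$ and maps $\L{\infty-0}(\Omega)$ into itself; by Theorem \ref{thm2_5} the whole problem reduces to showing that $\theta_{\proc U}$ maps $\dinf(\Omega)$ into $\sko_\infty^2(\Omega)$, with the appropriate continuity. Next, since each $\norm{\Grad^j\proc U}_{\otimes^j H}$ is uniformly bounded in $t$, the Leibniz rule makes multiplication by $\proc U$ and by $\proc U^{-1}=\proc U^\ast$ uniformly $\sko_\infty^2$-continuous; by Criterion 4.1 (established in Lemma \ref{lem4_7}) both $\proc U$ and $\proc U^{-1}$ are therefore $\dinf$-multiplicators. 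This is what lets me treat multiplication by $\proc U^{\pm 1}$, and its tensorial extension via Theorem \ref{thm2_4} and Corollary \ref{cor2_4}, as a $\dinf$-continuous operation.

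The central computation is the Malliavin-derivative formula for $\theta_{\proc U}$ on a Gaussian variable. Since $\proc U$ is adapted, $\theta_{\proc U}(W(h))=\int_0^1{}^t\dot h\,\proc U^{-1}\itodiff B$ is an It\^o integral, whence
\begin{equation*}
  \Grad_s\bigl[\theta_{\proc U}(W(h))\bigr] = {}^t\dot h(s)\,\proc U^{-1}(s) + \int_s^1 {}^t\dot h\,\bigl(\Grad_s\proc U^{-1}\bigr)\itodiff B .
\end{equation*}
The first term is a \emph{transport} term: it is exactly the multiplicator $\proc U^{-1}$ applied to $\Grad h$, read through $\theta_{\proc U}$. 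The second is a \emph{correction}, and it is precisely here that the hypothesis on $\Grad^j\proc U$ is used: the correction is a Skorokhod integral whose integrand is built from $\Grad\proc U^{-1}=(\Grad\proc U)^\ast$, which is $\dinf$-bounded, so by the $\dinf$-continuity of $\Div$ it is itself a $\dinf$-vector field. Extending this identity from $W(h)$ to Gaussian polynomials by the Leibniz rule, both sides being $\theta$-derivations in the sense of Definition \ref{def2_5}, and then to all of $\dinf$ by $\dinf$-density and continuity, yields a relation of the form $\Grad\theta_{\proc U}f = \tilde\theta_{\proc U}\bigl(\proc U^{-1}\,\Grad f\bigr) + R(f)$, where $\tilde\theta_{\proc U}$ is the extension of Corollary \ref{cor2_3} and $R$ is a correction operator assembled from $\Div$, the fields $(\Grad^j\proc U)$, and $\tilde\theta_{\proc U}$.

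With this identity in hand I would run the induction exactly as in Theorem \ref{thm:5.3}: the base case $\theta_{\proc U}\colon\dinf\to\L{\infty-0}$ is already secured by law preservation, and the inductive step assumes $\theta_{\proc U}$ sends $\dinf(\Omega)$ into $\dinf_r(\Omega)$, so that $\tilde\theta_{\proc U}$ sends $\dinf(\Omega,H)$ into $\dinf_r(\Omega,H)$. Feeding this into the derivative formula, the transport term lies in $\dinf_r(\Omega,H)$ because $\proc U^{-1}$ is a multiplicator and $\Grad$ is $\dinf$-continuous, while the correction $R(f)$ lies in $\dinf_r(\Omega,H)$ because the uniform bounds on $\Grad^j\proc U$ make each operator occurring in $R$ $\dinf$-continuous; hence $\Grad\theta_{\proc U}f\in\dinf_r$, that is $\theta_{\proc U}f\in\dinf_{r+1}(\Omega)$. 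I expect the main obstacle to be the bookkeeping of the correction term across the induction: each differentiation spawns fresh Skorokhod integrals involving one further derivative of $\proc U$, and one must verify that the uniform-boundedness hypothesis controls all of them simultaneously and that the induction on $r$ closes without loss of integrability. The $\dinf$-continuity of $\theta_{\proc U}$ then follows from the closed graph theorem, as in Theorem \ref{thm:5.3}, which completes the proof that $\theta_{\proc U}$ is a $\D^\infty$-morphism.
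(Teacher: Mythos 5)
Your strategy --- reduce via Theorem 2.5, derive a derivative--transport identity for $\theta_{\proc U}$, and induct on the Malliavin regularity $r$ as in Theorem 5.3 --- is \emph{not} the route the paper takes for this theorem, and as written it has a genuine gap at the correction term. For $f=W(h)$ your formula is fine: the correction $\int_\cdot^1 {}^t\dot h\,(\Grad\proc U^{-1})\itodiff B$ is an It\^o integral with an adapted integrand, so no Malliavin derivative of the integrand is consumed. But for general $f\in\dinf$ you never exhibit $R(f)$: the Leibniz rule only determines $R$ on Gaussian polynomials, as $\sum_i\theta_{\proc U}(\partial_iP)\cdot\int_0^1{}^t\dot h_i(\Grad\proc U^{-1})\itodiff B$, and the passage ``by $\dinf$-density and continuity'' to all of $\dinf$ is precisely the uniform estimate you do not supply. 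Re-summing over $i$ pairs the \emph{non-adapted} field $\tilde\theta_{\proc U}(\Grad f)$ against $\Grad\proc U^{-1}$ inside the stochastic integral, which makes it an anticipating (Skorokhod) integral; and if you control it through the $\dinf$-continuity of $\Div$, as you propose, you lose one derivative: from $\tilde\theta_{\proc U}(\Grad f)\in\dinf_r(\Omega,H)$ (induction hypothesis) the integrand lies in $\dinf_r(\Omega,H\otimes H)$, hence $R(f)\in\dinf_{r-1}(\Omega,H)$ only, giving $\theta_{\proc U}f\in\dinf_r$ --- no gain, so the induction does not close. This derivative loss is exactly why the paper, in the neighbouring Theorems 5.7 and 5.11, must replace the Skorokhod reading of the correction by Russo--Vallois integrals with Besov/H\"olderian estimates in the time variable; your hypothesis gives uniform bounds on $\norm{\Grad^j\proc U}_{\otimes^jH}$ but no time-regularity, so that machinery is not available either. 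A smaller but related point: the transport term should be $\proc U^{-1}$ acting as a multiplicator on $\tilde\theta_{\proc U}(\Grad f)$, i.e.\ $t\mapsto\int_0^t{}^t\theta_{\proc U}(\Grad f)\,\proc U^{-1}\diff s$ as in equation (9) of the proof of Theorem 5.7, not $\tilde\theta_{\proc U}(\proc U^{-1}\Grad f)$; transporting $\proc U^{-1}$ through $\theta_{\proc U}$ changes the operator and matters for the inductive estimate.

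The paper's actual proof avoids derivative formulas for $\theta_{\proc U}$ altogether, precisely so that every stochastic integral stays adapted (It\^o) and no derivative is lost. It works chaos by chaos: (i) on $\mathcal C_k$ all $\sko_r^2$-norms equal $P_r(k)$ times the $L^2$-norm, with $P_r$ a polynomial; (ii) Clark--Ocone writes $f\in\mathcal C_k$ as $\int_0^1 g\itodiff B$ with $g$ adapted in $\mathcal C_{k-1}$, and $\theta_{\proc U}f=\int_0^1\theta_{\proc U}(g)\,\proc U^{-1}\itodiff B$, the hypothesis entering only through the Leibniz bound $\norm{\proc U^{-1}f}_{\D_r^2}\le\norm{f}_{\D_r^2}+K(r)\norm{f}_{\D_{r-1}^2}$; (iii) this yields the recursion $C(k,r)\le C(k-1,r)+K_4(r)\,C(k-1,r-1)$ for the chaos-wise operator norms $C(k,r)=\norm{\theta_{\proc U}f}_{\D_r^2}/\norm{f}_{\D_r^2}$, hence polynomial growth of $C(k,r)$ in $k$; (iv) the fast decrease of the chaos components of $f\in\dinf$ absorbs this polynomial growth, so $\theta_{\proc U}f\in\sko_r^2$ for every $r$, and Theorem 2.5 (together with the law-preservation giving $\theta_{\proc U}:\dinf\to L^{\infty-0}$) concludes. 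To rescue your scheme you would have to prove the missing continuity estimates for $R$ with no derivative loss --- which is essentially what the adapted Clark--Ocone route achieves for free --- or add the extra structure (H\"olderianity, or inversibility of the pseudo-tangent map $T_{\proc U}$) used in Theorems 5.7 through 5.11.
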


\begin{proof}
  For $h \in H$, we have $\theta_{\proc U}(W(h)) = \int_0^1
  {}^t\dot h \proc U^{-1} \itodiff B$. First, we show that if $f \in
  \mathcal C_k$, then there exists a polynomial $P_r(k)$ such that:
  $\norm{f}_{\D_r^2}^2 = P_r(k) \norm{f}_{\L 2}^2$. $L$ denoting as
  usual the O.U. operator, as $\D_r^p$-norm, we use
  \begin{displaymath}
    \norm{f}_{\D_r^p} = \left( \sum_{j=0}^r \norm{\Grad ^j f}_{\L p (\Omega, \otimes^j H)}^p \right)^{1/p}.
  \end{displaymath}
  From $L \Grad f - \Grad Lf = \Grad f$, we have $L(\Grad ^r f) -
  \Grad ^r(Lf) = r \Grad ^r f$ and
  \begin{align*}
    \langle \Grad ^r f, \Grad ^r f \rangle_{\otimes^r H} &=
    - \langle L (\Grad ^{r-1} f), \Grad ^{r-1} f \rangle_{\otimes^{r-1} H} \\
    &= (k-r+1) \langle \Grad ^r f, \Grad ^r f \rangle_{\otimes^r H},
  \end{align*}
  so
  \begin{displaymath}
    \norm{\Grad ^r f}_{\L 2}^2 = \frac{k!}{(n-k)!} \norm{f}_{\L 2}^2 .
  \end{displaymath}
  But $\norm{(I - L)^{r/2} f}_{\L 2} = (1+k)^r \norm{f}_{\L 2}^2$, so
  with $\norm{f}_{\D_r^2}^2 = \sum_{j=0}^r \norm{D^j f}_{\L 2}^2$, we
  get: $\norm{f}_{\D_r^2}^2 \simeq k^r$ when $k \to +\infty$, and
  \begin{equation}
    \label{eqn:2}
    \norm{f}_{\D_r^2}^2 = P_r(k) \norm{f}_{\L 2}^2, \quad P_r(k) \text{ polynomial}
  \end{equation}
  Now with $f = (f_1, \dots, f_n)$ and the hypothesis on $\proc
  U(t,\omega)$, Leibnitz formula implies, by induction, with $f^i \in
  \D^\infty$:
  \begin{equation}
    \label{eqn:3}
    \norm{ (\proc U^{-1})f }_{\D_r^2} \leq \norm{f}_{\D_r^2(\otimes^r H)} + K(r) \norm{f}_{\D_{r-1}^2(\otimes^{r-1} H)},
  \end{equation}
  $K(r)$ being a constant, $r$-depending.

  Now if $f = (f_1, \dots, f_n)$ and $f_i \in \mathcal C_k,
  i=1,\dots,n$, we have (Clark-Ocone): $f^i = \int_0^1 g_j^i \itodiff
  B^j$, with $g_j^i \in \mathcal C_{k-1}$. And 
  \begin{equation}
    \theta_{\proc U}
    \begin{pmatrix}
      f_1 \\ \vdots \\ f_n
    \end{pmatrix} = \int_0^1 \theta_{\proc U}(g_j^i)(\mathcal U^{-1})_\ell^j \itodiff B^\ell, \quad i=1,\dots,n.
  \end{equation}
  If $\vec g = (g_1, \dots, g_n)$, vector of $n$ functions in
  $\D^\infty$, we write
  \begin{displaymath}
    \norm{ D^r \vec g }_{\otimes^r H} \leq \sum_{\ell=1}^n \norm{ D^r g_\ell }_{\otimes^r H}.
  \end{displaymath}
  Then, if $f \in \D^\infty(\Omega)$, we can write:
  \begin{displaymath}
    f = E(f) + \int_0^1 g_\ell \itodiff B^\ell,
  \end{displaymath}
  $B^1, \dots, B^n$ being $n$ independants Brownians, and $g_\ell \in
  \D^\infty(\Omega)$. If $E(f) = 0$, we have
  \begin{equation} 
    \label{eqn:5}
    \norm{ f }_{\D_r^2(\Omega)} \leq \left( \int_0^1 \diff s \norm{\vec g}_{\D_r^2}^2 \right)^{1/2} + K_2(r) \left(\int_0^1 \diff s \norm{\vec g}_{\D_{r-1}^2}^2 \right)^{1/2},
  \end{equation}
  $K_2(r)$ being an $r$-depending constant.

  But $\theta_{\proc U}(f) = \int_0^1 \theta_{\proc
    U}(g_\ell)(\proc U^{-1})_j^\ell \itodiff B^j, j=1,\dots,n$. With
  (\ref{eqn:5}), we have:
  \begin{equation} 
    \label{eqn:6}
    \begin{split}
      \norm{ \theta_{\proc U} f }_{\D_r^2} \leq &\ \left( \int_0^1 \diff s \norm{\overrightarrow{\theta_{\proc U}(g_\ell)(\proc U^{-1})}_j^\ell}_{\D_r^2}\right)^{1/2} \\
      & + K_2(r) \left( \int_0^1 \diff s \norm{\overrightarrow{\theta_{\proc U}(g_\ell)(\proc U^{-1})}_j^\ell}_{\D_{r-1}^2} \right)^{1/2}.
    \end{split}
  \end{equation}
  Suppose $f \in \mathcal C_k$, then $g \in \mathcal C_{k-1}$ and
  denoting $C(m,r) = \norm{\theta_{\proc U} f}_{\D_r^2} / \norm
  f_{\D_r^2}$, with (\ref{eqn:3}), we have:
  \begin{displaymath}
    \norm{\theta_{\proc U} f}_{\D_r^2} \leq \left( \int_0^1 \diff s \norm{\overrightarrow{\theta_{\proc U}(g)}}_{\D_r^2}\right)^{1/2} + K_3(r) \left( \int_0^1 \diff s \norm{\overrightarrow{\theta_{\proc U}(g)}}_{\D_{r-1}^2} \right)^{1/2},
  \end{displaymath}
  so
  \begin{equation}
    \label{eqn:7}
    \begin{split}
      \norm{\theta_{\proc U} f}_{\D_r^2} \leq &\ C(k-1,r) \left(
        \int_0^1 \diff s \norm{\overrightarrow{\theta_{\proc
              U}(g)}}_{\D_r^2}\right)^{1/2} \\
      &+ K_3(r) C(k-1,r-1) \left(
        \int_0^1 \diff s \norm{\overrightarrow{\theta_{\proc
              U}(g)}}_{\D_{r-1}^2} \right)^{1/2}.
    \end{split}
  \end{equation}

  Using (\ref{eqn:2}) in (\ref{eqn:7}),
  \begin{displaymath}
    \begin{split}
      \norm{\theta_{\proc U} f}_{\D_r^2} \leq &\ C(k-1,r)
      \sqrt{\frac{P_r(k-1)}{P_r(k)}} \norm{f}_{\D_r^2} \\
      &+ K_3(r) C(k-1,r-1) \sqrt{\frac{P_{r-1}(k-1)}{P_{r-1}(k)}}
      \norm{f}_{\D_{r-1}^2}
    \end{split}
  \end{displaymath}
  which implies, as soon as $k$ is big enough ($k \geq k_0$):
  \begin{equation}
    \label{eqn:8}
    \norm{\theta_{\proc U} f}_{\D_r^2} \leq \left[ C(k-1,r) + K_4(r) C(n-1,r-1) \right] \norm{f}_{\D_r^2}.
  \end{equation}
  As $\norm{\theta_{\proc U}f}_{\D_r^2} / \norm f_{\D_r^2} =
  C(k,r)$, we deduce for $k \geq k_0$:
  \begin{displaymath}
    C(k,r) \leq C(k-1,r) + K_4(r) C(k-1,r-1).
  \end{displaymath}
  So, by induction, we see that for $k \geq k_0$, $C(k,r)$ has a
  polynomial growth ; then (\ref{eqn:8}) implies that
  $\theta_{\proc U} f \in \D_r^2$. Then with $f \in \D_\infty^2$
  now, we write $f = \sum_{k=1}^\infty f_k$; then
  \begin{displaymath}
    \norm{\theta_U f}_{\D_r^2} \leq \sum_{k = 1}^\infty \norm{\theta_{\proc U} f_k}_{\D_r^2}.
  \end{displaymath}
  Each $\norm{\theta_{\proc U} f_k}_{\D_r^2}$ has polynomial growth
  when $k \geq k_0$ ; but the sequence $(\norm{\theta_{\proc U}
    f_k})_{\D_r^2}$ is fast decreasing ; so $\norm{\theta_{\proc
      U}f}_{\D_r^2} < +\infty$, and $\theta_{\proc U} f \in
  \D_r^2$. Now by interpolation $\theta_{\proc U} \colon \D^\infty \to
  \L{\infty-0}$ and $\theta_{\proc U} \colon \D^\infty \to \D_r^2$ ;
  so $\theta_{\proc U} \colon \D^\infty \to \D^\infty$.
\end{proof}

\begin{defn} 
  \label{defn:5.1}
  Let $\circled H$ be the set
  \begin{displaymath}
    \begin{split}
      \{ \proc U \mid \proc U \text{ process } &\text{with values in } n \times n
      \text{ unitary matrices,} \\
      &\text{adapted, } [0,1]\times \Omega \text{-mesurable, in
      } \L{\infty-0}(\Omega) \}.
    \end{split}
  \end{displaymath}
  Then we denote by $\norm{\proc U(s,\omega)}_{\mathrm{op}}$ the
  operator norm of $\proc U(s,\omega)$ on $\mathbb R^n$, and if $\proc
  U_1, \proc U_2 \in \circled H$, we denote by
  \begin{displaymath}
    d(\proc U_1,\proc U_2) = \sup_{t \in [0,1]} \norm{\norm{\proc U_1(t,\cdot) - \proc U_2(t,\cdot)}_{\mathrm{op}}}_{\L{2}(\Omega)}.
  \end{displaymath}
  Then $d$ is a distance on $\circled H$, for wich $\circled H$ is
  complete.
\end{defn}

\begin{defn} 
  \label{defn:5.2}
  We denote by $\theta_{\proc U}$ the $\L{\infty-0}$-morphism
  generated by \\ $\theta_{\proc U}(W(h)) = \int_0^1 {}^t\dot h \proc
  U^{-1} \itodiff B$. Then a $n \times n$-matrix $\mathcal V$ will be
  said to be $k$-Lipschitzian if and only if
  \begin{displaymath}
    \forall \proc U_1, \proc U_2 \in \circled H, \ 
    \norm{\norm{\theta_{\proc U_1}(\mathcal V) - \theta_{\proc U_2}(\mathcal V)}_{\mathrm{op}(\mathbb R^m)}}_{\L 2 (\Omega)} \leq k d(\proc U_1, \proc U_2).
  \end{displaymath}
\end{defn}

\begin{thm} 
  \label{thm:5.6} 
  Let $\proc U \in \circled H$ such that there exists $k, 0<k<1$ so
  that for all $s \in [0,1]$, $\mathcal U(s, \cdot)$ is
  $k$-Lipschitzian. Then $\theta_{\proc U}$ is a bijection on
  $\L{\infty-0}(\Omega)$.
\end{thm}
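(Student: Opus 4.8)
The plan is to produce a left inverse of $\theta_{\proc U}$ of the very same type, i.e.\ a process $\mathcal V\in\circled H$ with $\theta_{\mathcal V}\circ\theta_{\proc U}=\mathrm{Id}$, and then to upgrade this to a genuine two-sided inverse using that every such morphism preserves laws. Recall that $\theta_{\proc U}$, being built from a law-preserving transformation of the Gaussian variables, extends to an $\L 2$-isometry and is therefore injective on $\L{\infty-0}$; so the only real content is surjectivity, which the left inverse will supply.

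First I would establish the composition rule governing these morphisms. For $\mathcal V\in\circled H$ the morphism property of $\theta_{\mathcal V}$ together with $\theta_{\mathcal V}(\mathcal F_t)\subset\mathcal F_t$ (the argument of Lemma \ref{lemma:5.3}.i), valid because $\mathcal V$ is adapted) should give
\begin{equation*}
  \theta_{\mathcal V}\left( \int_0^1 {}^t\dot h\,\proc U^{-1}\itodiff B \right) = \int_0^1 {}^t\dot h\,\theta_{\mathcal V}(\proc U^{-1})\,\mathcal V^{-1}\itodiff B .
\end{equation*}
Feeding $\theta_{\proc U}(W(h)) = \int_0^1{}^t\dot h\,\proc U^{-1}\itodiff B$ into $\theta_{\mathcal V}$ and demanding the result equal $W(h)=\int_0^1{}^t\dot h\itodiff B$ for every $h$ forces the pointwise matrix identity $\theta_{\mathcal V}(\proc U^{-1})\,\mathcal V^{-1}=\mathrm{Id}$, that is, the fixed-point equation $\mathcal V=\theta_{\mathcal V}(\proc U^{-1})$. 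Accordingly I define $\Phi\colon\circled H\to\circled H$ by $\Phi(\mathcal V)=\theta_{\mathcal V}(\proc U^{-1})$.

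Next I would verify that $\Phi$ is a well-defined contraction of the complete space $(\circled H,d)$. Since $\theta_{\mathcal V}$ acts entrywise as an algebra morphism and commutes with transposition, $\theta_{\mathcal V}(\proc U^{-1})$ is again unitary-valued; it is adapted because $\proc U^{-1}(s,\cdot)\in\mathcal F_s$ and $\theta_{\mathcal V}(\mathcal F_s)\subset\mathcal F_s$; and its entries are bounded by $1$, so it lies in $\L{\infty-0}$. Hence $\Phi(\mathcal V)\in\circled H$. For the contraction estimate, note $\proc U^{-1}(s,\cdot)={}^t\proc U(s,\cdot)$ is $k$-Lipschitzian for each $s$ with the same constant as $\proc U(s,\cdot)$, since transposition is an $\mathrm{op}$-norm isometry commuting with $\theta$; Definition \ref{defn:5.2} then yields, for all $\mathcal V_1,\mathcal V_2\in\circled H$,
\begin{equation*}
  d(\Phi(\mathcal V_1),\Phi(\mathcal V_2)) = \sup_{s}\normb[\L 2]{\norm{\theta_{\mathcal V_1}(\proc U^{-1}(s,\cdot))-\theta_{\mathcal V_2}(\proc U^{-1}(s,\cdot))}_{\mathrm{op}}} \leq k\,d(\mathcal V_1,\mathcal V_2),
\end{equation*}
with $0<k<1$. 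By the Banach fixed-point theorem there is a unique $\mathcal V^\ast\in\circled H$ with $\Phi(\mathcal V^\ast)=\mathcal V^\ast$, which by the computation above means $\theta_{\mathcal V^\ast}\circ\theta_{\proc U}=\mathrm{Id}$ on $\mathcal C_1$, hence on all Gaussian polynomials (both sides being morphisms), and finally on $\L{\infty-0}$ by $\L{\infty-0}$-density and continuity.

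To finish, $\theta_{\mathcal V^\ast}$ preserves laws, hence is injective, and the relation $\theta_{\mathcal V^\ast}\circ\theta_{\proc U}=\mathrm{Id}$ exhibits $\theta_{\proc U}$ as a right inverse of it, hence $\theta_{\mathcal V^\ast}$ is surjective; an injective surjective map is bijective, so $\theta_{\proc U}=(\theta_{\mathcal V^\ast})^{-1}$ is itself a bijection of $\L{\infty-0}(\Omega)$. The main obstacle is the composition rule used at the start: making rigorous how $\theta_{\mathcal V}$ passes through an It\^o integral, pushing the integrand to $\theta_{\mathcal V}(\proc U^{-1})$ while converting $\itodiff B$ into $\mathcal V^{-1}\itodiff B$. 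I would handle this by first checking it on elementary adapted step integrands, where it reduces to the morphism identity $\theta_{\mathcal V}\bigl(c\,(B_{s_{i+1}}-B_{s_i})\bigr)=\theta_{\mathcal V}(c)\int_{s_i}^{s_{i+1}}\mathcal V^{-1}\itodiff B$ with $c\in\mathcal F_{s_i}$ and $\theta_{\mathcal V}(c)\in\mathcal F_{s_i}$, and then passing to the general case by density and the $\L{\infty-0}$-continuity of $\theta_{\mathcal V}$.
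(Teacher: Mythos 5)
Your proposal is correct and follows essentially the same route as the paper: the paper likewise applies the Picard fixed-point theorem to the map $\proc V \mapsto \theta_{\proc V}(\proc U^{-1})$ on the complete space $(\circled H, d)$, obtains $\proc V_0$ with $\theta_{\proc V_0}(\proc U^{-1}) = \proc V_0$, and deduces $\theta_{\proc V_0} \circ \theta_{\proc U} = \mathrm{Id}$ before concluding via injectivity of the law-preserving morphism. Your additional verifications (well-definedness of the contraction on $\circled H$, and the composition rule checked on adapted step integrands) merely flesh out steps the paper leaves implicit.
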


\begin{proof}
  For every $\proc U \in \circled H$, $\proc V \mapsto
  \theta_{\proc V}(\proc U^{-1})$ is $k$-Lipschitzian because
  $\proc U \mapsto \proc U^{-1}$ is $1$-Lipschitzian. Then, the Picard
  theorem asserts that: there exists $\proc V_0 \in \circled H$ such
  that $\theta_{\proc V_0}(\proc U^{-1}) = \proc V_0$. And
  \begin{displaymath}
    \forall h \in H,\ 
    \theta_{\proc V_0} \left( \int_0^1 {}^t\dot h \proc U^{-1} \itodiff B \right) =
    \int_0^1 {}^t\dot h \theta_{\proc V_0}(\proc U^{-1}) \proc V_0^{-1} \itodiff B =
    \int_0^1 {}^t\dot h \proc V_0 \proc V_0^{-1} \itodiff B.
  \end{displaymath}
  So $\theta_{\proc V_0} \circ \theta_{\proc U} = \mathrm{Id}$
  which proves that $\theta_{\proc V_0}$ is surjective ; and as we
  know already that $\theta_{\proc V_0}$ is injective on
  $\L{\infty-0}$, $\theta_{\proc U}$ is a bijection on
  $\L{\infty-0}(\Omega)$.
\end{proof}

\begin{rem} 
  \label{rem:5.4} 
  The set of $k$-Lipschitzian processes is not limited to the
  determinist functions: any $W(h)$, with $\norm h_{\L 2} = k < 1$, is
  a $k$-Lipschitzian process (straightforward computation).
\end{rem}

Now we define the notion of $\D^\infty$-$\alpha$-Holderian processes,
which will allow us to study cases when the morphism $\theta$,
defined on $\mathcal C_1(\Omega)$ by $\theta[W(h)] = \int_0^1 {}^t
\dot h \proc U^{-1} \itodiff B$, can be extended in a continuous morphism
on $\D^\infty(\Omega)$.

\begin{defn}[Same as definition 2.3]
  \label{defn:5.3}
  A process $X \colon [0,1] \times \Omega \to \mathbb R^n$ is said to
  be $\D^\infty$-$\alpha$-Holderian if and only if $\forall t_1,t_2
  \in [0,1], \forall (p,r) \in {[1,+\infty[} \times \mathbb N_\ast,
  \\ \exists C(p,r)$ constant such that,
  \begin{displaymath}
    \norm{X(t_2,\omega) - X(t_1,\omega)}_{\D_r^p} \leq C(p,r) |t_2 - t_1|^\alpha .
  \end{displaymath}
\end{defn}

\begin{thm} 
  \label{thm:5.7}
  Let $\proc U$ be a $\D^\infty$-adapted process, with values in $n
  \times n$ unitary matrices, $\D^\infty$-$\alpha$-Holderian, with
  $\alpha > 1/2$. Then the operator $\theta$ defined on $\mathcal C_1$
  by:
  \begin{displaymath}
    \theta_{\proc U}[W(h)] = \int_0^1 {}^t\dot h \proc U^{-1} \itodiff B,
    \quad (h \in H, B \text{ an } n\text{-Brownian}),
  \end{displaymath}
  can be extended in a continuous morphism of $\D^\infty(\Omega)$ in
  itself.
\end{thm}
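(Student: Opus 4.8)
The plan is to reduce the statement to Theorem 5.5, whose hypothesis (pointwise boundedness of all Malliavin derivatives of $\proc U$) is strictly stronger than ours, by approximating $\proc U$ by piecewise-constant adapted unitary multiplicators and passing to the limit. First I would record that the $\D^\infty$-$\alpha$-Hölder condition gives more than time-regularity: since $\norm{\proc U(t,\omega)-\proc U(0,\omega)}_{\D_r^p}\le C(p,r)\,t^\alpha\le C(p,r)$ and $\proc U(0,\cdot)\in\D^\infty$, we obtain $\sup_{t}\norm{\proc U(t,\cdot)}_{\D_r^p}<+\infty$ for every $(p,r)$, so $\proc U$ (and $\proc U^{-1}={}^t\proc U$, also $\alpha$-Hölder) is $\D^\infty$-bounded and, by Theorem 4.3, an adapted $\D^\infty$-multiplicator.

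Next I would discretize: on the dyadic partition $t_i^N=i/2^N$ set $\proc U_N=\sum_i\mathds{1}_{[t_i^N,t_{i+1}^N[}(t)\,\proc U(t_i^N,\omega)$. Each $\proc U_N$ is adapted, unitary-valued, piecewise constant with $\proc U(t_i^N,\cdot)\in\mathcal F_{t_i^N}$, and a multiplicator; moreover $\proc U_N\to\proc U$ a.s.\ with uniform $\D^\infty$-bounds, hence in the multiplicator sense by Theorem 4.6 (this is exactly the mechanism of Theorem 4.7). For fixed $N$, $\theta_{\proc U_N}$ is a $\D^\infty$-diffeomorphism: writing the $i$-th block as the rotation of the increment on $[t_i^N,t_{i+1}^N[$ by the $\mathcal F_{t_i^N}$-measurable unitary $\proc U(t_i^N,\cdot)$ and the identity elsewhere, each block is a $\D^\infty$-diffeomorphism by Remark 5.3, and $\theta_{\proc U_N}$ is their composition, taken from the last interval to the first so that every block reads its coefficient off the original past; the composition of $\D^\infty$-morphisms is again a $\D^\infty$-morphism (Proposition 3.2).

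The heart of the argument, and the \emph{main obstacle}, is the uniform estimate $\sup_N\norm{\theta_{\proc U_N}f}_{\D_r^p}\le C(p,r)\norm{f}_{\D_s^q}$ for suitable $(q,s)$. I would run the chaos recursion of Theorem 5.5: for $f\in\mathcal C_k$ use the Clark--Ocone representation $f=\int_0^1 g_\ell\itodiff B^\ell$ with $g_\ell\in\mathcal C_{k-1}$, write $\theta_{\proc U_N}f=\int_0^1\theta_{\proc U_N}(g_\ell)(\proc U_N^{-1})_j^\ell\itodiff B^j$, and control $\D_r^2$-norms through the It\^o isometry. Because $\proc U_N$ is only $L^p$- and not $L^\infty$-bounded in its derivatives, the Leibniz step (3) of Theorem 5.5 fails verbatim; iterated Malliavin differentiation of the stochastic integral produces diagonal correction terms carrying sums of squared increments of the type $\sum_i|\proc U(t_{i+1}^N)-\proc U(t_i^N)|^2\le\sum_i|t_{i+1}^N-t_i^N|^{2\alpha}=2^{N(1-2\alpha)}$. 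It is precisely here that $\alpha>1/2$ is decisive: $2\alpha>1$ forces these quadratic-variation terms to stay bounded and in fact to vanish as $N\uparrow\infty$, so the recursion closes with constants independent of $N$, the polynomial-in-$k$ growth of the ratios $C(k,r)$ is preserved, and the fast decay of the chaos components of $f$ then sums exactly as in Theorem 5.5.

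Finally I would pass to the limit. The uniform bound above together with $\proc U_N\to\proc U$ in the multiplicator sense yields $\theta_{\proc U_N}f\to\theta_{\proc U}f$ in $\L{\infty-0}$ (the $\theta$'s are $\L{\infty-0}$-morphisms whose coefficients converge) while staying uniformly $\D^\infty$-bounded in $N$; by the interpolation Theorem 2.12 this upgrades to $\D^\infty$-convergence, so $\theta_{\proc U}f\in\D^\infty$ and $\theta_{\proc U}$ maps $\D^\infty(\Omega)$ into itself. The $\D^\infty$-continuity then follows from the closed graph theorem, exactly as at the end of Theorem 5.5, which completes the extension of $\theta_{\proc U}$ to a continuous morphism of $\D^\infty(\Omega)$.
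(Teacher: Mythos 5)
Your scaffolding (dyadic piecewise-constant approximation via Remark 5.3 and Proposition 3.2, then a limit by Theorem 4.6 and interpolation) is coherent in outline, but the step you yourself identify as the heart of the argument contains a genuine gap, and I do not believe it can be repaired along the lines you sketch. First, a point of principle: the hypothesis is that $\mathcal U$ is $\mathbb D^\infty$-$\alpha$-H\"olderian, i.e.\ $\|\mathcal U(t_2,\cdot)-\mathcal U(t_1,\cdot)\|_{\mathbb D_r^p}\leq C(p,r)|t_2-t_1|^\alpha$; this is a \emph{norm} bound, not a pointwise one, so your inequality $\sum_i|\mathcal U(t_{i+1}^N)-\mathcal U(t_i^N)|^2\leq\sum_i|t_{i+1}^N-t_i^N|^{2\alpha}$ is not available as written (it can be salvaged in $L^{p/2}$-norm, but the slip is symptomatic). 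More seriously, the obstruction to running the chaos recursion of Theorem 5.5 for $\mathcal U_N$ is \emph{not} a quadratic-variation term over the partition: the Leibniz inequality (3) of Theorem 5.5 breaks because it needs the quantities $\|\Grad^j\,\mathcal U(t_i^N)\|_{\otimes^jH}$ to be bounded in $L^\infty(\Omega)$ \emph{at each fixed time}, whereas the H\"older hypothesis controls only increments, in $\mathbb D_r^p$-norm. Refining the partition does nothing to shrink $\Grad^j\,\mathcal U(t_i^N)$ at a fixed $t_i^N$; the correction terms produced by iterated Malliavin differentiation carry these fixed-time derivatives (merely $L^p$-bounded, identically for every $N$), not sums of squared increments, so they do not vanish like $2^{N(1-2\alpha)}$ and the recursion does not close with $N$-independent constants. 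Your asserted mechanism for why $\alpha>1/2$ is decisive is therefore a hope, not a proof, and it misdiagnoses where the difficulty lies.

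The paper's proof confirms that a different idea is needed and shows where $\alpha>1/2$ actually enters. Writing $(\Grad\,\mathcal U^{-1})\,\mathcal U=\sum_{k,\ell}f_k^\ell E_\ell^k$, the gradient of $\theta_{\mathcal U}(f)$ is represented (generalizing formula (9)) as a Bochner--Russo--Vallois integral
\begin{displaymath}
  \Grad\,\theta_{\mathcal U}(f)
  = \int_{\mathbb R} f_k^\ell\,\diffoperator\bigl(\theta_{\mathcal U}\,D_{E_{\ell,t}^k}f\bigr)
  + \Bigl(t\mapsto\int_0^t {}^t\theta_{\mathcal U}(\Grad f)\,\mathcal U^{-1}\,\diff s\Bigr),
\end{displaymath}
and the integral is made legitimate by a Besov duality: Lemma 5.6 uses $\alpha>1/2$ to place $f_k^\ell$ in $B_{2,2}^{1-\varepsilon/2}(H)$ with $0<\varepsilon<\alpha-\tfrac12$, while Lemmas 5.4--5.5 (uniform bounds on $S_\Delta$ over all subdivisions, via the symmetrization over signs $\varepsilon_i=\pm1$) place $\theta_{\mathcal U}(D_{E_{\ell,t}^k}f)$ in the conjugate space $B_{2,2}^{\varepsilon/2}$, with all Besov norms in $L^{\infty-0}(\Omega)$. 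The resulting operator $\widecheck D$ is a $\theta$-derivation, continuous from $\mathbb D^\infty$ into $L^{\infty-0}(\Omega,H)$, agreeing with $\Grad\,\theta_{\mathcal U}$ on Gaussian polynomials; since $\Grad\,\theta_{\mathcal U}$ is a closed operator, an induction on the order $r$ (differentiating the two Russo--Vallois integrals, which commute with $\Grad$) yields $\theta_{\mathcal U}(f)\in\mathbb D_{r+1}^\infty$, and the closed graph theorem gives continuity. So the threshold $\alpha>1/2$ is exactly what makes the two Besov exponents $\varepsilon/2$ and $1-\varepsilon/2$ conjugate for the Russo--Vallois pairing --- a role your discretization scheme cannot reproduce, because no uniform-in-$N$ $\mathbb D^\infty$-estimate for $\theta_{\mathcal U_N}$ is established or, as far as I can see, true by the route you propose.
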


\begin{proof} 
  We know already that $\theta$ can be extended in a morphism of
  $\L{\infty-0}(\Omega)$ in $\L{\infty-0}(\Omega)$, because
  $\theta$ preserves laws. We will need the three following lemmas.
\end{proof}

\begin{lem} 
  \label{lemma:5.4}
  Let $E$ be a $n \times n$ antisymmetrical constant matrix, and $t
  \in [0,1]$. For $f \in \D^\infty(\Omega)$, we define $D_{E,t}f$ by:
  \begin{displaymath}
    D_{E,t} f = \div(\mathds 1_{[0,t[}(\cdot) E) \Grad  f .
  \end{displaymath}
  If $\Delta$ is a finite subdivision of $[0,1]$, $\Delta = \{0 = t_0
  < t_1 < \dots < t_n = 1\}$, denote $S_\Delta(f) = \sum_{i=0}^n
  |D_{E,t_{i+1}}(f) - D_{E,t_i}(f)|^2$.

  Then $ \forall (p,r) \in {[1, \infty[} \times \mathbb N_\ast,
  \exists C(p,r,f) \text{ constant}$, such that
  \begin{displaymath}
    \sup_\Delta \norm{S_\Delta(f)}_{\D_r^p} \leq C(p,r,f).
  \end{displaymath}
\end{lem}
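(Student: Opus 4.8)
The plan is to recognize each increment as a Skorokhod integral over a subinterval and then treat $S_\Delta(f)$ as a discrete quadratic–variation sum. Write $u = E\Grad f$, which belongs to $\dinf(\Omega,H)$ since $E$ is a bounded constant operator and $\Grad f\in\dinf(\Omega,H)$, and denote by $u_s$ its density. Because $\mathds 1_{[0,t_{i+1}[}-\mathds 1_{[0,t_i[}=\mathds 1_{[t_i,t_{i+1}[}$ and $E$ is deterministic,
\begin{equation*}
  D_{E,t_{i+1}}(f)-D_{E,t_i}(f)=\Div\bigl(\mathds 1_{[t_i,t_{i+1}[}\,E\Grad f\bigr)=:\Div g_i,\qquad g_i:=\mathds 1_{[t_i,t_{i+1}[}\,u.
\end{equation*}
Thus $S_\Delta(f)=\sum_i(\Div g_i)^2$, and the claim is that this sum is $\drp$-bounded uniformly in the subdivision $\Delta$.

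First I would reduce the $\drp$-estimate to $L^q$-estimates of square sums. Using the description of the $\drp$-norm through the $L^p$-norms of the iterated gradients (the equivalence stated at the beginning of Section 2), it suffices to bound $\norm{\Grad^k S_\Delta(f)}_{L^p(\Omega,\otimes^kH)}$ for $0\le k\le r$, uniformly in $\Delta$. Expanding $\Grad^k(\Div g_i)^2$ by the Leibniz rule and applying Cauchy–Schwarz in the index $i$ followed by H\"older, one is left with quantities $\norm{\sum_i\abs{\Grad^a\Div g_i}^2}_{L^q}$ with $a\le r$ and $q$ large. To rewrite $\Grad^a\Div g_i$ I would iterate the commutation relation of Lemma \ref{lemma:5.1}, $\Grad\Div X=\Div_R\Grad X+X$, together with its analogues on the extra tensor slots: since $\Grad g_i=\mathds 1_{[t_i,t_{i+1}[}\Grad u$, this expresses $\Grad^a\Div g_i$ as a finite sum of a Skorokhod-type term $\Div_R(\mathds 1_{[t_i,t_{i+1}[}\Grad^a u)$ and pointwise terms $\mathds 1_{[t_i,t_{i+1}[}\Grad^b u$ with $b<a$. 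The pointwise terms are harmless: the intervals being disjoint, $\sum_i\norm{\mathds 1_{[t_i,t_{i+1}[}\Grad^b u}^2=\norm{\Grad^b u}^2$ pointwise in $\omega$, which lies in every $L^q$ because $u\in\dinf$. Everything therefore reduces to a single base estimate.

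The base estimate to establish is that, for every $w\in\dinf(\Omega,H\otimes G)$ ($G$ an auxiliary Hilbert space carrying the surviving tensor slots),
\begin{equation*}
  \sup_\Delta\Bigl\|\sum_i\bigl|\Div\bigl(\mathds 1_{[t_i,t_{i+1}[}\,w\bigr)\bigr|^2\Bigr\|_{L^q(\Omega)}<+\infty.
\end{equation*}
I would split the integrand as $w_s=\E[w_s|\F_s]+(w_s-\E[w_s|\F_s])$. On the adapted part the Skorokhod integral equals the It\^o integral $\int_{t_i}^{t_{i+1}}\E[w_s|\F_s]\itodiff B$, so $\sum_i\abs{\,\cdot\,}^2$ is the discrete quadratic variation of the continuous martingale $M_t=\int_0^t\E[w_s|\F_s]\itodiff B$; applying It\^o's formula on each interval to $(M_t-M_{t_i})^2$ gives $\sum_i(M_{t_{i+1}}-M_{t_i})^2=\langle M\rangle_1+2N_1^\Delta$, where $N^\Delta$ is a martingale whose bracket is dominated by $4\sup_t\abs{M_t}^2\,\langle M\rangle_1$, so Burkholder–Davis–Gundy bounds its $L^q$-norm uniformly in $\Delta$ by a constant depending on $w$ only through $\norm{\langle M\rangle_1}_{L^q}$ and $\norm{\sup_t\abs{M_t}}_{L^q}$. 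The complementary anticipating part contributes only the Skorokhod trace corrections $\int_{t_i}^{t_{i+1}}\!\int_{t_i}^{t_{i+1}}\cdots$, which are of order $\abs{t_{i+1}-t_i}$ on each interval; summing their squares yields a factor equal to the mesh of $\Delta$ times an $L^q$-bounded quantity, which tends to $0$ and is in particular bounded.

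The main obstacle is precisely the uniform-in-$\Delta$ control of the \emph{higher} moments in the base estimate. The $L^2$ version is immediate from the Skorokhod isometry, where the leading term converges to $\E\norm{w}^2$ and the diagonal trace term vanishes; but obtaining the bound in every $L^q$ forces the It\^o-formula/BDG argument on the adapted part and a careful verification that the anticipating correction is genuinely lower order in all moments, all while keeping track of the tensor bookkeeping produced by the Leibniz expansion and by the iterated commutation of $\Grad$ with $\Div$.
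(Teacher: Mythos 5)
Your reduction of the lemma to the ``base estimate'' is sound, and the martingale half of that estimate (Skorokhod $=$ It\^o on the adapted projection, discrete quadratic variation via It\^o's formula and Burkholder--Davis--Gundy) would work. The genuine gap is your treatment of the anticipating part: you assert that $\Div(\mathds 1_{[t_i,t_{i+1}[}\,v)$ with $v_s=w_s-\E[w_s\,|\,\F_s]$ is of order $\abs{t_{i+1}-t_i}$ per interval, so that the squares sum to the mesh of $\Delta$ times a bounded quantity. This is false: the anticipating increment is generically of order $\sqrt{t_{i+1}-t_i}$ in $L^2$, i.e.\ of the \emph{same} order as the It\^o part. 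A concrete test on the one-dimensional Wiener space: take $w_s\equiv B_1$, so $v_s=B_1-B_s$; a direct computation gives
\begin{displaymath}
  \Div\left(\mathds 1_{[a,b[}\,v\right)=\tfrac12\left[(B_b-B_a)^2-(b-a)\right]+(B_1-B_b)(B_b-B_a),
\end{displaymath}
whose second moment equals $\tfrac12(b-a)^2+(1-b)(b-a)$, of order $(b-a)$ and not $(b-a)^2$. Summing the squares over $\Delta$ then produces a quantity converging to $\int_0^1(B_1-B_t)^2\,\diff t$, an order-one random variable rather than something of mesh size. So the anticipating contribution does not vanish and is not lower order; what is true is merely that it stays \emph{bounded}, and proving this uniformly in $\Delta$ in every $L^q$ requires its own two-parameter quadratic-variation analysis (for instance via Clark--Ocone, $v_s=\int_s^1\E[D_rw_s\,|\,\F_r]\,\itodiff B_r$, and uniform control of the resulting double integrals) --- which is precisely the step your outline defers and which your stated argument cannot deliver.

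For comparison, the paper's proof bypasses the whole adapted/anticipating decomposition by a sign-randomization (Khintchine-type) trick: for signs $\varepsilon_i=\pm1$ set $\tilde S_\Delta(\{\varepsilon_i\})(f)=\sum_i\varepsilon_i\left[D_{E,t_{i+1}}(f)-D_{E,t_i}(f)\right]=\div\,B_E(\{\varepsilon_i\})\Grad f$ with $B_E(\{\varepsilon_i\})=\sum_i\varepsilon_i\mathds 1_{[t_i,t_{i+1}[}(\cdot)E$, a \emph{deterministic} matrix process whose operator norm is bounded by that of $E$ uniformly in $\Delta$ and in the signs; hence the $\tilde S_\Delta(\{\varepsilon_i\})$ are $(\Delta,\{\varepsilon_i\})$-uniformly bounded operators on $\D^\infty$. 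Averaging $\abs{\tilde S_\Delta(\{\varepsilon_i\})(f)}^2$ over all $2^{\sharp\{\varepsilon_i\}}$ sign choices kills the cross terms and yields exactly $S_\Delta(f)$, which therefore lies in the convex hull of elements with $\D_r^p$-norms bounded independently of $\Delta$. If you wish to salvage your route, you must either prove the uniform $L^q$ bound for the anticipating quadratic sum directly, or adopt this averaging argument, which reduces the whole lemma to the uniform multiplicator property of bounded deterministic matrices.
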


\begin{proof}
  Let $\Delta = \{0=t_0<t_1< \dots <t_n=1\}$ a fixed finite
  subdivision and $\varepsilon_i = \pm 1, i = 1,\dots,n$. Denote 
  \begin{displaymath}
    \tilde S_\Delta (\{ \varepsilon_i \})(f) = \sum_{i=0}^n \varepsilon_i [D_{E,t_{i+1}}(f) - D_{E,t_i}(f)].
  \end{displaymath}
  Then
  \begin{displaymath}
    \tilde S_\Delta (\{ \varepsilon_i \})(f) = \div B_E(\{ \varepsilon_i \}) \Grad  f
  \end{displaymath}
  with $B_E(\{\varepsilon\}) = \sum_{i=0}^n \varepsilon_i \mathds
  1_{[t_i,t_{i+1}[}(\cdot) E$. The operators $B_E(\{\varepsilon_i\})$
  are operators on $\mathbb R^n$, and as such, are uniformly bounded,
  relatively to the set $\{\varepsilon_i\}$, when the subdivision is
  fixed, and relatively to the subdivisions $\Delta$, and are
  $\Delta,\{\varepsilon_i\}$-uniformly determinists.

  So the $\tilde S_\Delta (\{\varepsilon_i\})$ are linear operators on
  $\D^\infty(\Omega)$, $\D^\infty$-uniformly bounded relatively to the
  sets $\{\varepsilon_i\}$ and the subdivisions $\Delta$. But
  \begin{displaymath}
    \frac 1 {2^{\sharp (\{\varepsilon_i\})}} \sum_{\{\varepsilon_i\}} \abs{\tilde S_\Delta( \{\varepsilon_i\})(f)}^2 = S_\Delta(f),
  \end{displaymath}
  the sum being taken on all sets $\{\varepsilon_i\}$, once the
  subdivision $\Delta$ is fixed. $S_\Delta(f)$ belonging to the convex
  enveloppe of elements whose $\D_r^p$-norms do not depend either of
  the finite subdivision $\Delta$ or of the set $\{\varepsilon_i\}$,
  there exists a constant $C(p,r,f)$ such that $\sup_\Delta
  \norm{S_\Delta(f)}_{\D_r^p} \leq C(p,r,f)$.
\end{proof}

\begin{rem} \label{remark:5.4} Later we will suppose
  $\theta_{\proc U} \colon \D^\infty(\Omega) \to
  \D_r^\infty(\Omega)$ (Theorem \ref{thm:5.11}). Then the same
  demonstration as in Lemma \ref{lemma:5.4}, applied to $\forall
  \alpha, 1 \leq \alpha \leq r,$
  \begin{displaymath}
    S_\Delta [\Grad ^\alpha \theta_{\proc U}(D_{E,t}f)] = 
    \sum_{i=0}^n \norm{\Grad ^\alpha \theta_{\proc U}(D_{E,t_{i+1}}f) - \Grad ^\alpha \theta_{\proc U}(D_{E,t_i}f)}_{\otimes^\alpha H},
  \end{displaymath}
  proves that there exists, for every $p$, a constant $C(\alpha, p, f)$ such that 
  \begin{displaymath}
    \sup_\Delta \norm{S_\Delta(\Grad ^\alpha \theta_{\proc U}(D_{E,t}f))}_{\L p (\Omega, \otimes^\alpha H)} \leq C(\alpha, p, f).
  \end{displaymath}
  Now we denote: for a subdivision $\Delta$ of $\mathbb R$, $r \in
  \mathbb N_\ast$ and $f$ a process $\mathbb R \times \Omega \to
  \mathbb R$,
  \begin{displaymath}
    V_{\Delta,\mathbb R}(\Grad ^r f) = \sum_i \norm{\Grad ^r f(x_{i+1}, \cdot) - \Grad ^r f(x_i,\cdot)}_{\otimes^r H}^2
  \end{displaymath}
  and by $V_\Delta$, when the subdivision is on $[0,1]$.
\end{rem}

\begin{lem} \label{lemma:5.5} Let $f \colon [0,1] \times \Omega \to
  \mathbb R$ a process such that
  \begin{enumerate}[label=\roman*)]
  \item $\forall t \in [0,1], f(t, \cdot) \in \D^\infty(\Omega)$ and
    $f(0,\omega) = 0$ $\mathbb P$-almost surely.
  \item $\forall r \in \mathbb N_\ast, \forall p > 1, \exists
    C(p,r,f), \sup_\Delta \norm{V_\Delta \Grad ^r f}_{\L p \Omega} <
    C(p,r,f)$.
  \item $\forall r \in \mathbb N_\ast, \int_0^1 \norm{\Grad ^r
      f}_{\otimes^r H}^2 \diff t \in \L{\infty-0}(\Omega)$.
  \end{enumerate}
  Then, the extension of $f$, denoted $\tilde f$, which equals $0$ on
  ${]-\infty, 0] \cup [2,\infty[}$, and is an affine process $g$ on
  $[1,2]$ with $g(1,\omega) = f(1,\omega)$ and $g(t,\omega) = 0$
  $\mathbb P$-almost surely on ${[2,\infty[}$, we have:
  \begin{enumerate}[label=\Roman*)]
  \item
    \begin{displaymath}
      \int_{\mathbb R} \norm{\frac{\Grad ^r \tilde f(x+h,\cdot) - \Grad ^r \tilde f(x,\cdot)}{\sqrt h}}_{\otimes^r H}^2 \diff x
    \end{displaymath}
    is $\L{\infty-0}$-bounded, $h$-uniformly.
  \item $\norm{\Grad ^r \tilde f}_{B_{2,2}^{\varepsilon/2}} \in
    \L{\infty-0}(\Omega), \forall r \in \mathbb N_\ast$ and $0 <
    \varepsilon < 1$.
  \end{enumerate}
\end{lem}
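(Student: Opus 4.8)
The plan is to prove II) as a routine consequence of I), so the substance lies in the uniform modulus estimate I). Fix $r\in\mathbb N_\ast$ and, suppressing the $\omega$-slot, write $g(x)=\Grad^r\tilde f(x,\cdot)$, an $\otimes^r H$-valued process supported in $[0,2]$. The cornerstone is the elementary identity, valid for every $h>0$ by Tonelli and the substitution $x=a+kh$,
\begin{align*}
\int_{\mathbb R}\norm{g(x+h)-g(x)}_{\otimes^r H}^2\diff x
&= \int_0^h V_{\Delta_h^a}(g)\,\diff a, \\
V_{\Delta_h^a}(g)&=\sum_{k\in\mathbb Z}\norm{g(a+(k+1)h)-g(a+kh)}_{\otimes^r H}^2 ,
\end{align*}
where for fixed $a,h$ the inner sum is finite (only finitely many nodes $a+kh$ meet the support $[0,2]$) and is exactly the quadratic variation $V_\Delta(g)$ along the uniform subdivision $\Delta_h^a=\{a+kh\}$.

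Before exploiting this identity I would check that hypothesis ii) survives the extension, i.e.\ that $\sup_\Delta\norm{V_\Delta(g)}_{\L q(\Omega)}<\infty$ for every $q$ when $\Delta$ now ranges over finite subdivisions of $\mathbb R$. On $[0,1]$ this is hypothesis ii) itself; on $]-\infty,0]\cup[2,\infty[$ one has $g\equiv0$. The node at $x=0$ contributes nothing because $f(0,\cdot)=0$ forces $\Grad^r f(0,\cdot)=0$, so inserting $0$ keeps $g$ continuous there; on $[1,2]$ the extension is affine, $g(x)=(2-x)\Grad^r f(1,\cdot)$, hence Lipschitz in $x$ with random constant $\norm{\Grad^r f(1,\cdot)}_{\otimes^r H}$, which lies in $\L{\infty-0}(\Omega)$ by i), so the quadratic variation over $[1,2]$ is bounded by $\norm{\Grad^r f(1,\cdot)}_{\otimes^r H}^2$; and $g$ is continuous at $x=1$ and $x=2$. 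Adding these pieces gives $\sup_\Delta\norm{V_\Delta(g)}_{\L q(\Omega)}\le C'(q,r,f)<\infty$ for all $q$.

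Taking $\L q(\Omega)$-norms in the identity, dividing by $h$, and applying Minkowski's integral inequality then yields, uniformly in $h>0$ and for every $q$,
\begin{align*}
\norm{\tfrac1h\int_{\mathbb R}\norm{g(x+h)-g(x)}_{\otimes^r H}^2\diff x}_{\L q(\Omega)}
&\le \frac1h\int_0^h\norm{V_{\Delta_h^a}(g)}_{\L q(\Omega)}\diff a \\
&\le \sup_\Delta\norm{V_\Delta(g)}_{\L q(\Omega)}\le C'(q,r,f),
\end{align*}
which is precisely assertion I): the displayed random variable is $\L{\infty-0}$-bounded, $h$-uniformly.

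For II) I would invoke the difference characterization of the Hilbert-valued Besov norm,
\begin{equation*}
\norm{g}_{B_{2,2}^{\varepsilon/2}}^2 \simeq \norm{g}_{\L2(\mathbb R,\otimes^r H)}^2 + \int_0^\infty h^{-\varepsilon}\,\norm{g(\cdot+h)-g(\cdot)}_{\L2(\mathbb R,\otimes^r H)}^2\,\frac{\diff h}{h},
\end{equation*}
and split the integral at $h=1$. For $h\le1$, I) gives $\norm{g(\cdot+h)-g(\cdot)}_{\L2}^2\le K\,h$ with $K\in\L{\infty-0}(\Omega)$, so $\int_0^1 h^{-\varepsilon}\,Kh\,\frac{\diff h}{h}=K/(1-\varepsilon)$ since $\varepsilon<1$; for $h>1$ the crude bound $\norm{g(\cdot+h)-g(\cdot)}_{\L2}^2\le4\norm g_{\L2}^2$ gives $4\norm g_{\L2}^2/\varepsilon$. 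The remaining term $\norm g_{\L2(\mathbb R,\otimes^r H)}^2=\int_{\mathbb R}\norm{\Grad^r\tilde f}_{\otimes^r H}^2\diff x$ is in $\L{\infty-0}(\Omega)$ by hypothesis iii) on $[0,1]$ together with the contribution $\tfrac13\norm{\Grad^r f(1,\cdot)}_{\otimes^r H}^2$ of the affine piece on $[1,2]$. All terms thus belong to $\L{\infty-0}(\Omega)$, giving $\norm{\Grad^r\tilde f}_{B_{2,2}^{\varepsilon/2}}\in\L{\infty-0}(\Omega)$, which is II). The step I expect to be most delicate is the second paragraph—verifying that ii) is stable under the extension and that the boundary nodes at $0,1,2$ inject no uncontrolled quadratic variation, all uniformly over subdivisions of $\mathbb R$; once that is in place the Tonelli identity and the Besov bookkeeping are routine.
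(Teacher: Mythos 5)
Your proof is correct and follows essentially the same route as the paper: assertion I) is obtained there by exactly your Tonelli identity, writing $\int_{\mathbb R}\norm{\Grad^r\tilde f(x+h)-\Grad^r\tilde f(x)}^2_{\otimes^r H}\diff x=\int_0^h\sum_{n\in\mathbb Z}\norm{\Grad^r\tilde f(x+(n+1)h)-\Grad^r\tilde f(x+nh)}^2_{\otimes^r H}\diff x$ and invoking the uniform quadratic-variation bound ii), and assertion II) by the same split of the Besov integral at $h=1$, using I) against the bounded measure $\diff h/h^\varepsilon$ on $[0,1]$ and the crude bound $4\norm{\Grad^r\tilde f}^2_{\L 2(\diff x,\otimes^r H)}$ with hypothesis iii) on $[1,\infty[$. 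Your second paragraph, checking that ii) survives the extension (vanishing of $\Grad^r f(0,\cdot)$, the Lipschitz affine piece on $[1,2]$ controlled by $\norm{\Grad^r f(1,\cdot)}_{\otimes^r H}\in\L{\infty-0}$, and the factor-of-two cost of inserting the nodes $0,1,2$), makes explicit a step the paper leaves implicit, but it is a verification, not a different method.
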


\begin{proof}
  $I)$. 
  \begin{align*}
    \norm {\int_{-\infty}^{+\infty} \right. & \left. \norm{\frac{\Grad ^r \tilde f(x+h) - \Grad ^r \tilde f(x)}{\sqrt h}}_{\otimes^r H}^2 \diff x}_{\L p (\Omega)} \\
    &= \norm {\int_0^h \sum_{n \in \mathbb Z} \norm{\frac{\Grad ^r \tilde f(x+(n+1)h) - \Grad ^r \tilde f(x+nh)}{\sqrt h}}_{\otimes^r H}^2 \diff x}_{\L p (\Omega)} \\
    &\leq \frac 1 h \norm{\int_0^h C(p,r) \diff x}_{\L p (\Omega)} = C(p,r,f).
  \end{align*}
  $II)$. We have to show that:
  \begin{displaymath}
    \norm {\int_0^\infty \diff h \frac{\norm{\Grad ^r \tilde f(x+h) - \Grad ^r \tilde f(x)}_{\L 2 (\diff x, \otimes^r H)}^2}{h^{1+2\varepsilon/2}}}_{\L p (\Omega)} < C(p,r,f) .
  \end{displaymath}
  The left-hand side of the above inequality is bounded by:
  \begin{displaymath}
    \begin{split}
      \norm {\int_0^1 \frac{\diff h}{h^\varepsilon} \right. & \left. \norm{\frac{\Grad ^r \tilde f(x+h) - \Grad ^r \tilde f(x)}{\sqrt h}}_{\L 2 (\diff x, \otimes^r H)}^2}_{\L p (\Omega)} \\
      &+ \norm {\int_1^\infty \diff h \frac{\norm{\Grad ^r \tilde f(x+h) - \Grad ^r \tilde f(x)}_{\L 2 (\diff x, \otimes^r H)}^2}{h^{1+\varepsilon}}}_{\L p (\Omega)}.
    \end{split}
  \end{displaymath}
  The first integral above is bounded because $\diff h/h^\varepsilon$
  is a bounded measure on $[0,1]$, and with $I)$. The second integral
  is bounded by:
  \begin{displaymath}
    \norm {\int_1^\infty \diff h \, 4 \frac{\norm{\Grad ^r \tilde f(x,\omega)}_{\L 2 (\diff x, \otimes^r H)}^2}{h^{1+\varepsilon}}}_{\L p (\Omega)}
  \end{displaymath}
  and with $iii)$ we get the result.
\end{proof}

\begin{lem} \label{lemma:5.6} Let $f \colon [0,1] \times \Omega \to
  \mathbb R$ a $\D^\infty$-$\alpha$-Holderian process such that:
  \begin{enumerate}[label=\roman*)]
  \item $\alpha > 1/2$,
  \item $f(0, \cdot) = 0$, $\mathbb P$-almost surely.
  \end{enumerate}
  Then, $\tilde f$ being as in Lemma \ref{lemma:5.3}, we have:
  \begin{displaymath}
    \norm{\Grad ^r \tilde f}_{B_{2,2}^{1 - \varepsilon/2}} \in \L{\infty-0}, 
      \quad 0 < \varepsilon < \alpha - \frac 1 2 .
  \end{displaymath}
\end{lem}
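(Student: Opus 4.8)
The plan is to reduce the claim to a single modulus-of-continuity estimate in the $x$-variable, uniform over $\Omega$ in the $\L{\infty-0}$-sense, and then to exploit the hypothesis $\alpha>1/2$ to make the resulting Besov integral converge. For a first-order index $s\in(0,1)$ one has
\begin{displaymath}
  \norm{\Grad^r\tilde f}_{B_{2,2}^{s}}^2 \simeq \norm{\Grad^r\tilde f}_{\L 2(\diff x,\otimes^r H)}^2 + \int_0^\infty \frac{\norm{\Grad^r\tilde f(\cdot+h)-\Grad^r\tilde f(\cdot)}_{\L 2(\diff x,\otimes^r H)}^2}{h^{1+2s}}\,\diff h ,
\end{displaymath}
so it suffices to control this quantity in $\L p(\Omega)$ for every $p$.

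First I would check that the extension $\tilde f$ of Lemma \ref{lemma:5.5} is itself $\D^\infty$-$\alpha$-Hölderian on all of $\R$. On $[0,1]$ this is the hypothesis; across $0$ it follows from $f(0,\cdot)=0$, since $\norm{\tilde f(x,\cdot)}_{\drp}=\norm{f(x,\cdot)-f(0,\cdot)}_{\drp}\leq C(p,r)x^\alpha$ for $x\in[0,1]$ while $\tilde f\equiv0$ for $x\le 0$; on $[1,2]$ the process is affine with random slope $f(1,\cdot)$, whose $\drp$-norm is bounded by $C(p,r)$ (again because $f(0,\cdot)=0$), hence Lipschitz and a fortiori $\alpha$-Hölderian in the $\drp$-norm, uniformly; and the matching $\tilde f(2,\cdot)=0$ makes the junction with the zero extension $\alpha$-Hölderian as well. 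Consequently
\begin{displaymath}
  \sup_{x\in\R}\ \norm{\Grad^r\tilde f(x+h,\cdot)-\Grad^r\tilde f(x,\cdot)}_{\L p(\Omega,\otimes^r H)} \leq C(p,r)\,h^\alpha , \qquad 0<h\leq1 .
\end{displaymath}

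Next, for $p\ge2$ I would pass the $\L 2(\diff x,\otimes^r H)$-norm inside the $\L p(\Omega)$-norm by Minkowski's integral inequality, obtaining
\begin{displaymath}
  \Big\lVert \norm{\Grad^r\tilde f(\cdot+h)-\Grad^r\tilde f(\cdot)}_{\L 2(\diff x,\otimes^r H)}\Big\rVert_{\L p(\Omega)}^2 \leq \int_{\R}\norm{\Grad^r\tilde f(x+h)-\Grad^r\tilde f(x)}_{\L p(\Omega,\otimes^r H)}^2\,\diff x .
\end{displaymath}
Since $\tilde f$ is supported in $[0,2]$, the integrand vanishes off $[-h,2]$, so the previous bound gives a right-hand side $\leq 3\,C(p,r)^2h^{2\alpha}$ for $0<h\le1$, while for $h\ge1$ the same quantity is bounded by $4\sup_x\norm{\Grad^r\tilde f(x)}_{\L p(\Omega,\otimes^r H)}^2$, a finite constant. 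Feeding these two bounds into the Besov integral and splitting $\int_0^\infty=\int_0^1+\int_1^\infty$, the tail $\int_1^\infty h^{-1-2s}\,\diff h$ converges because $s>0$, while the singular part contributes $\int_0^1 h^{2\alpha-1-2s}\,\diff h$, which is finite exactly when $s<\alpha$. The hypothesis $0<\varepsilon<\alpha-\tfrac12$ is what keeps the index strictly between $1/2$ and $\alpha$, and a final Minkowski interchange of $\int_0^\infty\diff h$ with $\norm{\cdot}_{\L{p/2}(\Omega)}$ yields $\norm{\Grad^r\tilde f}_{B_{2,2}^{s}}\in\L{\infty-0}(\Omega)$.

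The main obstacle is the uniform modulus estimate: everything hinges on transporting the pointwise-in-$x$ $\D^\infty$-$\alpha$-Hölder control to the $\L 2(\diff x)$-modulus while retaining $\L{\infty-0}(\Omega)$-uniformity, together with the two Minkowski interchanges (which force $p\ge2$, harmless since $\L{\infty-0}$ only tests large $p$). The role of $\alpha>\tfrac12$ is precisely to guarantee $\int_0^1 h^{2\alpha-1-2s}\,\diff h<\infty$ for an index $s>1/2$, i.e.\ to push the Besov regularity of $\Grad^r\tilde f$ above the Young threshold $1/2$ required for the stochastic-integral estimates of Theorem \ref{thm:5.7}; controlling the junctions of the affine extension at $x=1,2$ inside the $\drp$-norms is the only other point demanding care.
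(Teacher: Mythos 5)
Your architecture is exactly the paper's: check that the extension $\tilde f$ stays $\D^\infty$-$\alpha$-H\"olderian on all of $\R$ (a point the paper asserts without proof, so your junction analysis at $0$, $1$, $2$ using $f(0,\cdot)=0$ and the $\drp$-boundedness of the slope $f(1,\cdot)$ is a welcome addition), then split the Besov integral at $h=1$, dispose of the tail by compact support, and control the singular part by the H\"older modulus after a Minkowski interchange. Up to and including the tail estimate, everything is sound.

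The gap is the final exponent count, and it is genuine. Your criterion is correct: $\int_0^1 h^{2\alpha-1-2s}\,\diff h<\infty$ exactly when $s<\alpha$. But here $s=1-\varepsilon/2$, so convergence requires $1-\varepsilon/2<\alpha$, i.e.\ $\varepsilon>2(1-\alpha)$ --- and the hypothesis $0<\varepsilon<\alpha-\tfrac12$ does not supply this. Indeed, on the whole printed range one has $s=1-\varepsilon/2>\tfrac{5-2\alpha}{4}$, and $\tfrac{5-2\alpha}{4}\geq\alpha$ whenever $\alpha\leq\tfrac56$; so for every $\alpha\in(\tfrac12,\tfrac56]$ the index exceeds $\alpha$ for all admissible $\varepsilon$ and your singular integral diverges. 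Your closing sentence, that the hypothesis ``keeps the index strictly between $1/2$ and $\alpha$'', silently replaces the index $1-\varepsilon/2$ by $\tfrac12+\varepsilon$, for which the stated range would be exactly right. To be fair, the same slip sits in the paper's own proof: it declares the first integral bounded under ``$2\alpha-2+1>0$'', i.e.\ it integrates $h^{2\alpha-2}$ against $\diff h$ while dropping the weight $h^{\varepsilon-1}$ carried by the measure $\diff h/h^{1-\varepsilon}$ (a move that was legitimate in Lemma \ref{lemma:5.5}, where the quotient $\norm{\Grad^r\tilde f(x+h)-\Grad^r\tilde f(x)}^2/h$ is genuinely $h$-uniformly bounded, but not here, where $h^{2\alpha-2}$ is unbounded).

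Moreover the printed form of the lemma cannot be rescued by a better proof: take $f(t,\omega)=w(t)\,W(h_0)(\omega)$ with $w$ a Weierstrass-type function of exact H\"older regularity $\alpha\in(\tfrac12,\tfrac56]$ and $w(0)=0$; this is $\D^\infty$-$\alpha$-H\"olderian, yet $\norm{\Grad^r\tilde f}_{B_{2,2}^{s}(\otimes^r H)}=\infty$ for every $s>\alpha$, in particular for $s=1-\varepsilon/2$. The consistent repair --- and the one the applications actually use --- is to require $2(1-\alpha)<\varepsilon<1$, a window that is nonempty precisely because $\alpha>\tfrac12$; equivalently, state the conclusion with index $\tfrac12+\varepsilon$ and $0<\varepsilon<\alpha-\tfrac12$, which is exactly the formulation the paper uses correctly in the proof of Theorem \ref{thm:5.11}. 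The Russo--Vallois pairing in Theorem \ref{thm:5.7} only needs one common $\varepsilon\in(0,1)$ with $f_k^\ell\in B_{2,2}^{1-\varepsilon/2}$ and $\theta_{\proc U}(D_{E_{\ell,t}^k}f)\in B_{2,2}^{\varepsilon/2}$, and Lemma \ref{lemma:5.5} supplies the latter for every such $\varepsilon$, so nothing downstream is lost. As written, though, your last step (like the lemma's printed range) fails, so you must either enlarge $\varepsilon$ past $2(1-\alpha)$ or lower the Besov index to $\tfrac12+\varepsilon$.
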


\begin{proof}
  We must prove
  \begin{displaymath}
    \norm{ \int_0^\infty \diff h \frac{\norm{ \Grad ^r \tilde f(x+h) - \Grad ^r \tilde f(x) }_{\L 2 (\diff x, \otimes^r H)}^2}{h^{1+2(1-\varepsilon/2)}} }_{\L p (\Omega)} < C(p,r,f).
  \end{displaymath}
  This is bounded by
  \begin{align*}
    \norm{ \int_0^1 \frac{\diff h}{h^{1-\varepsilon}} \right. & \left. \frac{\norm{ \Grad ^r \tilde f(x+h) - \Grad ^r \tilde f(x) }_{\L 2 (\diff x, \otimes^r H)}^2}{h^2} }_{\L p (\Omega)} \\
    &+ \norm{ \int_1^\infty \diff h \frac{\norm{ \Grad ^r \tilde f(x+h) - \Grad ^r \tilde f(x) }_{\L 2 (\diff x, \otimes^r H)}^2}{h^{1+2(1-\varepsilon/2)}} }_{\L p (\Omega)}.
  \end{align*}

  $\Grad ^r \tilde f$ is also $\D^\infty(\Omega,
  H)$-$\alpha$-Holderian, so: the first integral is bounded if
  $2\alpha - 2 + 1 > 0$ so if $\alpha > 1/2$ ; the second integral is
  bounded by
  \begin{displaymath}
    \norm {\int_1^\infty \diff h \, 4 \frac{\norm{\Grad ^r \tilde f}_{\L 2 (\diff x, \otimes^r H)}^2}{h^{1+2(1-\varepsilon/2)}}}_{\L p (\Omega)} .
  \end{displaymath}
\end{proof}

Now we return to Theorem \ref{thm:5.7}. We have $\theta_{\proc
  U}[W(h)] = \int_0^1{}^t\dot h \proc U^{-1}\itodiff B$. So:
\begin{equation} \label{eqn:9}
  \Grad  \theta_{\proc U}[W(h)] = \int_0^1{}^t\dot h(\Grad \proc U^{-1} \, \proc U)(\proc U^{-1} \itodiff B) + (t \mapsto (\int_0^1{}^t\dot h \proc U^{-1} \diff s)),
\end{equation}
$(t \mapsto (\int_0^1{}^t\dot h \proc U^{-1} \diff s))$ being a
$\D^\infty(\Omega, H)$-vector field. We want to generalize
(\ref{eqn:9}) to a function $f \in \D^\infty(\Omega)$. The
generalization of $t \mapsto (\int_0^1{}^t\dot h \proc U^{-1} \diff
s)$ is straightforward: using Theorem 2.3, we generalize it by
\begin{displaymath}
  t \mapsto \int_0^1 {}^t\theta_{\proc U}(\Grad  f) \proc U^{-1} \diff s.
\end{displaymath}
For the generalization of the first integral in (\ref{eqn:9}), we
denote by $E_k^\ell$ the elementary antisymmetric matrix with all
items equal to zero, except the item $e_k^\ell = +1$ and $e_\ell^k =
-1$. We write:
\begin{displaymath}
  (\Grad  \proc U^{-1}) \proc U = \sum_{k,\ell=1}^n f_k^\ell E_\ell^k,
  \quad \text{with } f_k^\ell \in \D^\infty(\Omega, H).
\end{displaymath}
We extend $f_k^\ell$ by $0$ on ${]-\infty, 0]}$, and with the affine
function, $-f_k^\ell(1,\omega) t + 2 f_k^\ell(1,\omega)$ on
$t\in[1,2]$, and $0$ after $2$. This extension of $f_k^\ell$ is again
denoted $f_k^\ell$ and by using it, we have an extension of $(\Grad 
\proc U^{-1} \,\proc U)_\ell^k$, denoted again $(\Grad  \proc U^{-1}
\,\proc U)_\ell^k$. We denote again $\theta[D_{E_k^\ell, t}f]$, the
result of the same extension procedure applied to
$\theta_{\proc U}[D_{E_k^\ell, t}f]$.

And the generalisation of the first integral in (\ref{eqn:9}) is given
by:
\begin{equation}
  \label{eqn:10}
  \int_{\mathbb R} f_k^\ell \diffoperator (\theta_{\proc U}[D_{E_{\ell,t}^k}f]),
\end{equation}
this integral being a Bochner-Russo-Valois integral. More precisely,
we will prove that, if $\alpha > 1/2$,
\begin{displaymath}
  \norm{f_k^\ell}_{B_{2,2}^{1-\varepsilon/2}(H)} \in \L{\infty-0}(\Omega,H),
\end{displaymath}
and that
\begin{displaymath}
  \norm{\theta_{\proc U}(D_{E_k^\ell,t}f)}_{B_{2,2}^{\varepsilon/2}} \in \L{\infty-0}(\Omega),
\end{displaymath}
$B_{p,q}^\lambda$ being the Besov space with indexes $\lambda, p, q$ ;
$\norm{f_k^\ell}_{B_{2,2}^{1-\varepsilon/2}(H)} \in \L{\infty-0}$
because $f_k^\ell$ is $\D^\infty$-$\alpha$-Holderian ($\proc U$,
$\Grad  \proc U^{-1}$ are $\D^\infty$-$\alpha$-Holderians), so Lemma
\ref{lemma:5.6} applies to $f_k^\ell$. 

And $\norm{\theta_{\proc
    U}(D_{E_{\ell,t}^k}f)}_{B_{2,2}^{\varepsilon/2}} \in
\L{\infty-0}(\Omega)$ because Lemma \ref{lemma:5.5} applies to
$\theta_{\proc U}(D_{E_{\ell,t}^k}f)$:
\begin{displaymath}
  \norm{\theta_{\proc U}(D_{E_{\ell,t}^k}f)}_{\L p} = \norm{D_{E_{\ell,t}^k}f}_{\L p}.
\end{displaymath}
And $B_{2,2}^{\varepsilon/2}$ and $B_{2,2}^{1-\varepsilon/2}$ are
conjugate Besov spaces, so (\ref{eqn:10}) is legitimate.

Now $\Grad  \theta_{\proc U}(f)$ is in $\D^\infty$ when $f$ is a
polynomial of a finite number of Gaussian variables,
$P[W(h_1),\dots,W(h_r)]$. Then 
\begin{displaymath}
  \Grad  \theta_{\proc U}(P(W(h_1),\dots,W(h_r)) \in \D^\infty(\Omega, H). 
\end{displaymath}
If $f \in \D^\infty$, then $\Grad  \theta_{\proc U}(f)$ can be defined
as a distribution on $\D^\infty(\Omega,H)$ by: if $X \in
\D^\infty(\Omega,H)$,
\begin{displaymath}
  (\Grad  \theta_{\proc U}(f), X) = - \int \theta_{\proc U} {\div X} \, \mathbb P(\diff \omega).
\end{displaymath}
Then $f \mapsto \Grad  \theta_{\proc U}(f)$ is a weakly closed
operator, and with Hahn-Banach, its graph is strongly closed, so is a
closed operator.

Last, $\Grad  \theta_{\proc U}(f)$ is a $\theta$-derivation
(Definition 2.3).

\medskip 

Now we look for a $\theta$-derivation $\widecheck D$ such that:
\begin{enumerate}[label=\alph*)]
\item if $f = W(h)$, $\widecheck D[W(h)] = \Grad \theta_{\proc U}(W(h))$,
\item $\widecheck D$ is a $\theta_{\proc U}$-derivation,
\item $\widecheck D$ is continuous from $\D^\infty(\Omega)$ to
  $\D^\infty(\Omega,H)$.
\end{enumerate}
Using the generalisations of the two integrals in (\ref{eqn:9}), we
define: for $f \in \D^\infty(\Omega)$,
\begin{displaymath}
  \widecheck D f = \int_{\mathbb R} f_k^\ell \diffoperator (\theta_{\proc U}D_{E_{\ell,t}^k}f)
  + \left( t \mapsto \int_0^t {}^t\theta_{\proc U}(\Grad f) \proc U^{-1} \diff s \right) .
\end{displaymath}
Then: a) Straightforward computation, using $f_k^\ell E_\ell^k =
\Grad {\proc U^{-1}} \proc U$ proves a).

b) $\widecheck D$ is a $\theta_{\proc U}$-derivation thanks to the
presence of $\theta_{\proc U}$ in $\diffoperator (\theta_{\proc
  U}D_{E_{\ell,t}^k}f)$.

c) The Russo-Valois inequality: $C_0$ being a constant,
\begin{displaymath}
  \norm{ \int_{\mathbb R} f_k^\ell \diffoperator (\theta_{\proc U}D_{E_{\ell,t}^k}f) }_H \leq C_0 \times \norm{f_k^\ell}_{B_{2,2}^{1-\varepsilon/2}(H)} \times \norm{\theta_{\proc U}(D_{E_{\ell,t}^k}f)}_{B_{2,2}^{\varepsilon/2}} .
\end{displaymath}
As $\norm{f_k^\ell}_{B_{2,2}^{1-\varepsilon/2}(H)} \in
\L{\infty-0}(\Omega)$ and $\norm{\theta_{\proc
    U}(D_{E_{\ell,t}^k}f)}_{B_{2,2}^{\varepsilon/2}} \in
\L{\infty-0}(\Omega)$, we see that $\widecheck D$ is continuous from
$\D^\infty$ in $\L{\infty-0}(\Omega,H)$. Now we prove that: $\widecheck
D$ sends $\D^\infty$ in $\D^\infty(\Omega,H)$. We have:
\begin{equation} \label{eqn:11}
  \widecheck D f =  \int_{\mathbb R} f_k^\ell \diffoperator (\theta_{\proc U}D_{E_{\ell,t}^k}f)
  + \left( t \mapsto \int_0^t {}^t\theta_{\proc U}(\Grad f) \proc U^{-1} \diff s \right) .
\end{equation}
A $\Grad$ acts only on $\omega$, $\Grad$ and $\int_{\mathbb
  R}(\text{Russo-Valois})$ commute ; for the same reason: $\Grad$ and
$\diffoperator (\text{Russo-Valois})$ commute.

We know already that $\theta_{\proc U} \colon \D^\infty \to
\L{\infty-0}(\Omega)$. We suppose that $\theta_{\proc U} \colon
\D^\infty \to \D^\infty_r(\Omega)$ and proceed by induction. We apply
now $\Grad$ to the two sides of (\ref{eqn:11}): $\Grad$ on the vector
field
\begin{displaymath}
  t \mapsto \int_0^t {}^t\theta_{\proc U}(\Grad f) \proc U^{-1} \diff s
\end{displaymath}
is legitimate and in $\D^\infty_{r-1}(\Omega, H \otimes H)$. And $\Grad$
applied to the first integral of the right-hand side of (\ref{eqn:11})
gives two Russo-Valois integrals,
\begin{displaymath}
  \int_0^1(\Grad f_k^\ell) \diffoperator (\theta_{\proc U}D_{E_{\ell,t}^k}f)
  \quad \text{and} \quad
  \int_0^1 f_k^\ell \otimes \diffoperator [\Grad \theta_{\proc U}D_{E_{\ell,t}^k}f)].
\end{displaymath}
The first one is in $\D^\infty_r(\Omega,H \otimes H)$ and the second
one is legitimate thanks to the hypothesis $\theta_{\proc
  U}(D_{E_{\ell,t}^k}f) \in \D^\infty_r (\Omega)$. So we see that
$\Grad \widecheck D f \in \L{\infty-0}(\Omega, H \otimes H)$, so
$\widecheck D f \in \mathbb D^\infty_1(\Omega, H)$. We can repeat $r$
times this operation, and we get that: $\Grad^r \widecheck D f \in
\L{\infty-0}(\Omega,\bigotimes^{r+1}H)$ and is continuous from
$\D^\infty$ in $\L{\infty-0}(\Omega,\bigotimes^{r+1}H)$. Now
$\widecheck D$ and $\Grad \theta$ are two $\theta$-derivations which
coincide on polynomials built with Gaussian variables, $\widecheck D$
is continuous from $\D^\infty$ in $\D^\infty_r(\Omega,H)$ and $\Grad
\theta$ is strongly closed as an operator of $\D^\infty(\Omega)$ in
$\D^\infty(\Omega,H)$ ; then $\widecheck D$ and $\Grad \theta$
coincide, so $\Grad \theta(f) \in \D^\infty_r$ which implies
$\theta(f) \in \D^\infty_{r+1}(\Omega)$. The continuity of $\theta(f)$
from $\D^\infty(\Omega)$ in $\D^\infty(\Omega)$ is obvious.

\begin{rem} 
  \label{remark:5.6}
  If $f_k^\ell$ was an S.M., and not a $\D^\infty$-$\alpha$-Holderian
  process with $\alpha > 1/2$, the Russo-Valois inequality is not
  valid anymore. We will see later that such is the case for the
  $\D^\infty$-manifold $\mathbb P_{m_0}(V_n, g)$, which is the set of
  continuous paths in a compact Riemannian manifold, $(V_n,g)$,
  starting from $m_0$.
\end{rem}

\begin{thm} 
  \label{thm:5.8} 
  Let $\proc U$ be a $\D^\infty$-process, with values in $n \times n$
  unitary matrices, adapted and $\alpha$-Holderian, $0 < \alpha < 1$,
  and $\theta$ being the morphism from $\L{\infty-0}$ in
  $\L{\infty-0}$ generated by
  \begin{displaymath}
    \theta[W(h)] = \int_0^1 {}^t\dot h \proc U^{-1} \itodiff B.
  \end{displaymath}
  Suppose that $\theta^{-1}$ exists and is a $\D^\infty$-morphism
  of $\D^\infty(\Omega)$ in itself, then $\theta$ is a
  $\D^\infty$-isomorphism.
\end{thm}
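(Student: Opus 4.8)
The plan is to strip the statement down to one regularity claim and then re-use the architecture of Theorem \ref{thm:5.7}. Because $\theta_{\proc U}$ preserves laws it is an $\L 2$-isometry, and the assumed existence of $\theta^{-1}$ makes it $\L 2$-unitary, so $\theta_{\proc U}^{-1} = \theta_{\proc U}^\ast$ and $\theta_{\proc U}$ is already a bicontinuous bijection of $\L{\infty-0}(\Omega)$ onto itself. Hence being a $\D^\infty$-isomorphism only requires that $\theta_{\proc U}$ itself send $\D^\infty(\Omega)$ into $\D^\infty(\Omega)$ continuously: the inverse is a $\D^\infty$-morphism by hypothesis and bijectivity is free. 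So I would devote the whole proof to showing $\theta_{\proc U}(\D^\infty(\Omega)) \subset \D^\infty(\Omega)$.

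First I would set up the same objects as in Theorem \ref{thm:5.7}: the sought tangent operator $\widecheck D$, which is a $\theta_{\proc U}$-derivation in the sense of Definition 2.3 and ought to realise $\Grad \theta_{\proc U}$, together with the fact that $f \mapsto \Grad \theta_{\proc U}(f)$ is in any case a weakly, hence strongly, closed operator on $\D^\infty(\Omega)$ defined by $(\Grad \theta_{\proc U}(f), X) = - \int \theta_{\proc U}(f)\, \div X \, \P(\diff \omega)$ for $X \in \D^\infty(\Omega,H)$. The one place where the proof of Theorem \ref{thm:5.7} invoked $\alpha > 1/2$ is the Bochner--Russo--Vallois construction of the first integral in (\ref{eqn:9}): it needed $f_k^\ell \in B_{2,2}^{1-\varepsilon/2}(H)$ via Lemma \ref{lemma:5.6}, and that fails as soon as $\alpha \le 1/2$. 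The new ingredient that must replace this Besov estimate is the hypothesis on $\theta^{-1}$.

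The substitution I would make is to exploit $\theta_{\proc U}^{-1} = \theta_{\proc U}^\ast$ to rewrite the defining pairing as $(\Grad \theta_{\proc U}(f), X) = - \int f \, \theta^{-1}(\div X)\, \P(\diff \omega)$. Since $\theta^{-1}$ is a $\D^\infty$-morphism, $\theta^{-1}(\div X) \in \D^\infty(\Omega)$; and because $\theta^{-1}$ is itself an $\L{\infty-0}$-morphism generated by an adapted unitary process, its own tangent map $\Grad \theta^{-1}$ is a continuous $\theta^{-1}$-derivation from $\D^\infty(\Omega)$ into $\D^\infty(\Omega,H)$. Differentiating the identity $\theta_{\proc U}\circ\theta^{-1}=\mathrm{Id}$ and using the $\theta$-derivation calculus of Lemmas \ref{lemma:5.1} and \ref{lemma:5.2}, the two tangent maps are mutually inverse, so that $\Grad \theta_{\proc U}$ is obtained as the continuous inverse of $\Grad \theta^{-1}$ composed with $\theta_{\proc U}$ and therefore lands in $\D^\infty(\Omega,H)$ with no Besov pairing at all. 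Concretely I would show that the $\widecheck D$ so produced and the closed operator $\Grad \theta_{\proc U}$ are two $\theta_{\proc U}$-derivations that agree on every Gaussian polynomial, and conclude they coincide by $\D^\infty$-density together with the closedness of $\Grad \theta_{\proc U}$.

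The induction then proceeds exactly as in Theorem \ref{thm:5.7}: assuming $\theta_{\proc U}\colon \D^\infty \to \D_r^\infty$, the identity $\widecheck D f = \Grad \theta_{\proc U}(f)$ and the continuity of $\widecheck D$ into $\D^\infty(\Omega,H)$ force $\Grad \theta_{\proc U}(f) \in \D_r^\infty(\Omega,H)$, i.e. $\theta_{\proc U}(f) \in \D_{r+1}^\infty(\Omega)$; starting from $\theta_{\proc U}\colon \D^\infty \to \L{\infty-0}$ this yields $\theta_{\proc U}(\D^\infty)\subset \D^\infty$, and the closed graph theorem promotes it to $\D^\infty$-continuity. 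Together with the assumed regularity of $\theta^{-1}$ this makes $\theta_{\proc U}$ a $\D^\infty$-isomorphism. The main obstacle, and the step I would verify most carefully, is precisely this replacement of the Russo--Vallois estimate: one must check that inverting the tangent map of $\theta^{-1}$ genuinely delivers a bounded $\theta_{\proc U}$-derivation valued in $\D^\infty(\Omega,H)$, since, unlike in Theorem \ref{thm:5.7}, no direct Hölder or Besov a priori control on $\Grad \theta_{\proc U}$ is available when $\alpha \le 1/2$.
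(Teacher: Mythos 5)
Your reduction is correct -- $\theta$ preserves laws, $\theta^{-1}=\theta^\ast$, so everything comes down to showing $\theta(\D^\infty(\Omega))\subset\D^\infty(\Omega)$ -- and you have located the right pressure point: the only place Theorem \ref{thm:5.7} used $\alpha>1/2$ is the Russo--Vallois/Besov pairing. But the replacement you propose does not work as stated. To ``differentiate the identity $\theta\circ\theta^{-1}=\mathrm{Id}$'' and recover $\Grad\theta$ as the inverse of $\Grad\theta^{-1}$ presupposes that $\theta$ already possesses a function-valued tangent object obeying a chain rule -- which is precisely what is to be proven; at this stage $\Grad\theta(f)$ is only a distribution, so the identity you want to differentiate is not available and the argument is circular. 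Moreover $\Grad\circ\theta^{-1}$ is a map $\D^\infty(\Omega)\to\D^\infty(\Omega,H)$, not an invertible operator whose ``inverse composed with $\theta$'' typechecks to yield $\Grad\theta$; the genuine structural identity needs an operator acting on the gradient side, as in Theorem \ref{thm:5.3}. You flag this step yourself as the one to ``verify most carefully,'' but no substitute estimate is supplied, so the proposal stops exactly where the difficulty begins. There is also an internal inconsistency in your induction: if your $\widecheck D$ were already continuous from $\D^\infty$ into $\D^\infty(\Omega,H)$ and equal to $\Grad\theta$, no induction would be needed at all; in Theorem \ref{thm:5.7} the induction is what supplies $\widecheck D$'s target regularity, and you have removed that mechanism without replacing it.

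The paper closes the gap without ever inverting a tangent map: it writes the \emph{conjugated} object explicitly. Since $\theta^{-1}(\proc U^{-1}\itodiff B)=\diff B$, one computes $\theta^{-1}\Grad\bigl(\theta[W(h)]\bigr)=\int_0^1 {}^t\dot h\,\theta^{-1}[\Grad\proc U^{-1}\,\proc U]\itodiff B+\bigl(t\mapsto\int_0^t {}^t\dot h\,\theta^{-1}(\proc U^{-1})\,\diff s\bigr)$, and accordingly defines, for general $f$, $Z(f)=\div A\,\Grad f+\bigl(t\mapsto\int_0^t({}^t\Grad f)\,\theta^{-1}(\proc U^{-1})\,\diff s\bigr)$ with $A=\theta^{-1}(\Grad\proc U^{-1}\,\proc U)$, a matrix with entries in $H$. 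The decisive observation your sketch is missing: $A$ is still $\D^\infty$-$\alpha$-H\"olderian (because $\theta^{-1}$ acts only on $\omega$, not on $t$), and by Theorem \ref{thm4_3} \emph{every} $\D^\infty$-$\alpha$-H\"olderian process with $0<\alpha<1$ is a multiplicator -- no $\alpha>1/2$ threshold, no Besov duality -- so $Z(f)\in\D^\infty(\Omega,H)$ for all $f\in\D^\infty$. Then $Z$ and $\theta^{-1}[\Grad\theta(\cdot)]$ are two $\theta$-derivations coinciding on the $W(h)$, hence on Gaussian polynomials, which identifies them; and the bootstrap runs in the opposite direction from yours: one writes $\Grad\theta(f)=\theta(Z(f))$ and applies the inductive hypothesis $\theta\colon\D^\infty\to\D^\infty_r$ to the already-regular $Z(f)$, obtaining $\Grad\theta(f)\in\D^\infty_r(\Omega,H)$, i.e.\ $\theta(f)\in\D^\infty_{r+1}(\Omega)$. (The paper also first checks that $\theta^{-1}$ preserves the filtration, via $\theta(\mathcal F_t^\perp)\subset\mathcal F_t^\perp$ and $\theta^{-1}=\theta^\ast$, a fact your sketch uses implicitly.) If you wish to salvage your plan, replace ``invert the tangent map of $\theta^{-1}$'' by this explicit conjugation-plus-multiplicator argument.
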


\begin{rem} \label{remark:5.7} From Theorem \ref{thm:5.7}, we know
  that if $\alpha > 1/2$, Theorem \ref{thm:5.8} is automatically
  verified.
\end{rem}

\begin{proof}
  $\theta^{-1}$ sends $\mathcal F_t$ in $\mathcal F_t$:
  $\theta$ sends $\mathcal F_t^\perp$ in $\mathcal F_t^\perp$
  because
  \begin{displaymath}
    \theta\left[ \int_t^1 X \itodiff B \right] = \int_t^1 \theta(X) \proc U^{-1} \itodiff B \in \mathcal F_t^\perp,
  \end{displaymath}
  and $\theta^{-1} = \theta^\ast$ (adjoint of $\theta$). So
  $\forall f \in \D^\infty \cap \mathcal F_t, \forall g \in
  \L{\infty-0} \cap \mathcal F_t^\perp,$
  \begin{displaymath}
    \langle \theta^{-1}(f), g \rangle_{\L 2 (\Omega)}
    = \langle \theta^\ast(f), g \rangle_{\L 2 (\Omega)}
    = \langle f, \theta(g) \rangle_{\L 2 (\Omega)}
    = 0 .
  \end{displaymath}
  Now, $\Grad {\proc U}^{-1} \proc U$ is
  $\D^\infty$-$\alpha$-Holderian, as a product of two
  $\D^\infty$-$\alpha$-matrix processes and
  $\theta^{-1}(\Grad{\proc U}^{-1} \proc U)$ is also
  $\D^\infty$-$\alpha$-Holderian because $\theta^{-1}$ acts only on
  $\omega$. Then from $\theta[W(h)] = \int_0^1{}^t\dot h \proc
  U^{-1}\itodiff B$, and as $\theta^{-1}(\proc U^{-1} \itodiff B) = \diff
  B$, we have
  \begin{displaymath}
    \theta^{-1} \Grad(\theta[W(h)]) = \int_0^1 {}^t\dot h \theta^{-1}[\Grad\proc U^{-1} \proc U] \itodiff B + \left( t \mapsto \int_0^t {}^t\dot h\theta^{-1}(\proc U^{-1}) \diff s \right).
  \end{displaymath}

  We denote by $Z(f)$:
  \begin{displaymath}
    Z(f) = \div A \Grad f + t \mapsto \int_0^t ({}^t\Grad f)\theta^{-1}(\proc U^{-1}) \diff s
  \end{displaymath}
  where $A = \theta^{-1}(\Grad \proc U^{-1} \proc U)$ is a vector
  matrix, $(A)_i^j \in H$, and \\ $(A(\Grad f))_i = \sum_{j=1}^n
  \langle \Grad f, e_j \rangle(A)_i^j \in H$.  As $\theta^{-1}(\Grad
  \proc U^{-1} \proc U)$ is $\D^\infty$-$\alpha$-Holderian,
  $\theta^{-1}(\Grad \proc U^{-1} \proc U)$ is a $\D^\infty(\Omega,H)$
  multiplicator (Theorem 4.3), so $Z(f) \in
  \D^\infty(\Omega,H)$. Moreover, $Z(f)$ and $\theta^{-1}[\Grad
  \theta(f)]$ coincide when $f$ is a polynomial in Gaussian variables,
  because both are $\theta$-derivations, and direct computation show
  that
  \begin{displaymath}
    Z[W(h)] = \theta^{-1}[\Grad \theta(W(h))].
  \end{displaymath}
  So we extend $\theta^{-1}[\Grad \theta]$ as an operator on
  $f\in \D^\infty$, with $Z(f)$.

  Now we prove that $\theta$ sends $\D^\infty$ in $\D^\infty$ by
  induction ; we know already that $\theta \colon \D^\infty \to
  \L{\infty-0}$. Then assume that $\theta \colon \D^\infty \to
  \D^\infty_r$, and let $f\in \D^\infty$: as
  $\theta^{-1}[\Grad\theta(f)] = Z(f) \in \D^\infty(\Omega, H)$,
  \begin{displaymath}
    \theta \theta^{-1}[\Grad\theta(f)] = \theta(Z(f)) \in \D^\infty_r(\Omega, H)
  \end{displaymath}
  which implies $\Grad \theta(f) \in \D^\infty_r(\Omega,H)$ so
  $\theta(f) \in \D^\infty_{r+1}(\Omega)$.
\end{proof}

Let $\proc U$ be an adapted process, $\D^\infty$-$\alpha$-Holderian,
with values in the unitary $n \times n$ matrices on $\mathbb R^n$. The
map $\theta$ defined on $W(h), h\in H$, by
\begin{displaymath}
  \theta[W(h)] = \int_0^1{}^t\dot h \proc U^{-1}\itodiff B,
\end{displaymath}
can be extended in a morphism from $\L{\infty-0}(\Omega)$ in
$\L{\infty-0}(\Omega)$, because it preserve laws.

Let $A$ be an adapted process, valued in the space of $n \times n$-A.M.,
$A$ being moreover a multiplicator. Following Malliavin [\ \ ], we call
such a process an elementary tangent process. We define the operator
$T_{\proc U}(A)$ by:
\begin{displaymath}
  T_{\proc U}(A) = \proc U(D_A \proc U^{-1}) + A
\end{displaymath}
where $D_A = \div A \Grad$. Now we will prove:

\begin{thm} \label{thm:5.9} If $T_{\proc U}$ admits an inverse
  operator $(T_{\proc U})^{-1}$ from the space of elementary tangent
  processes in itself, and if $\theta \colon \L{\infty-0} \to
  \L{\infty-0}$ admits an inverse from $\L{\infty-0}$ in itself, then
  $\theta$ is a $\D^\infty$-diffeomorphism.
\end{thm}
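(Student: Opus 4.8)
The plan is to reduce the whole statement to \thmref{thm:5.8}. By hypothesis $\theta = \theta_{\proc U}$ exists on $\L{\infty-0}$ and is a bijection there, and $\proc U$ is adapted and $\alpha$-Holderian; since $\theta$ preserves laws, so does $\theta^{-1}$, whence $\theta^{-1}$ already maps $\D^\infty \subset \L{\infty-0}$ into $\L{\infty-0}$. Thus \thmref{thm:5.8} will give that $\theta$ is a $\D^\infty$-isomorphism the moment we know that $\theta^{-1}$ is a $\D^\infty$-\emph{morphism}. All the difficulty is therefore concentrated in proving $\theta^{-1}\colon \D^\infty \to \D^\infty$ continuously, and this is exactly where the invertibility of $T_{\proc U}$ must be used: in \thmref{thm:5.8} the corresponding statement for $\theta$ relied on the assumption that $\theta^{-1}$ is $\D^\infty$-regular, which we are not allowed to assume here.

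First I would establish the conjugation identity linking $T_{\proc U}$ to $\theta$: for every elementary tangent process $A$ (adapted, $n\times n$-A.M.-valued, multiplicator) one has $\theta \circ D_A \circ \theta^{-1} = D_{T_{\proc U}(A)}$ as operators on the Gaussian polynomials, and moreover $T_{\proc U}(A)$ is again an elementary tangent process. The verification is a direct It\^o/Clark--Ocone computation on the generators $W(h)$, using the rule $\theta[\int_t^1 X \itodiff B] = \int_t^1 \theta(X)\proc U^{-1}\itodiff B$, the definition $D_A = \div A\Grad$, and the antisymmetry of $A$; it is the same calculation as in the Example of this subsection, where $\theta_{\proc U}^{-1} D_a \theta_{\proc U} = D_{\tilde a}$ was obtained explicitly. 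By \thmref{thm4_1}, $D_{T_{\proc U}(A)}$ is then automatically an adapted, null-divergence, $\D^\infty$-continuous derivation, and inverting the identity gives $\theta^{-1}\circ D_B\circ\theta = D_{T_{\proc U}^{-1}(B)}$ for every elementary tangent process $B$.

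Next I would produce the tangent datum of $\theta^{-1}$. Writing $(\Grad \proc U^{-1})\proc U = \sum_{k,\ell} f_k^\ell E_\ell^k$ with $f_k^\ell \in \D^\infty(\Omega,H)$ as in the proof of \thmref{thm:5.7}, this $H$-valued A.M.-matrix is the tangent datum governing $\Grad\theta$; it is an elementary tangent process, being $\D^\infty$-$\alpha$-Holderian and hence a multiplicator by \thmref{thm4_3}. The datum $\widecheck A$ controlling $\Grad \theta^{-1}$ is the one obtained by applying $T_{\proc U}^{-1}$ to $(\Grad \proc U^{-1})\proc U$, since by the conjugation identity pulling tangent processes back through $\theta$ is implemented by $T_{\proc U}^{-1}$. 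Because $T_{\proc U}$ is invertible on the space of elementary tangent processes, $\widecheck A$ is itself a genuine elementary tangent process, in particular a multiplicator. It is precisely here that the invertibility hypothesis does the work: it furnishes $\widecheck A$ as a multiplicator \emph{without} presupposing that $\theta^{-1}$ is $\D^\infty$, thereby breaking the circularity that would otherwise block a naive imitation of \thmref{thm:5.8} for $\theta^{-1}$.

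With $\widecheck A$ in hand I would run the inductive scheme of \thmref{thm:5.8} for $\theta^{-1}$. The operator $\check Z := \tilde\theta\circ\Grad\circ\theta^{-1}$ is a $\theta^{-1}$-derivation that admits the intrinsic description $\check Z g = \div \widecheck A\,\Grad g + (\text{a } \D^\infty(\Omega,H)\text{-valued vector-field term in } g)$, a sum of a $\div(\text{multiplicator})\Grad$ term and a $\D^\infty$-vector field, so it sends $\D^\infty$ into $\D^\infty(\Omega,H)$. Comparing $\check Z$ with $\Grad\theta^{-1}$ (two $\theta^{-1}$-derivations agreeing on Gaussian polynomials, the latter being a closed operator into $\D^\infty(\Omega,H)$) and using the closed graph theorem raises the regularity index of $\theta^{-1}$ by one at each step, which yields $\theta^{-1}\colon \D^\infty \to \D^\infty$ continuously. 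Then \thmref{thm:5.8} applies and gives that $\theta$ is a $\D^\infty$-isomorphism; combined with the continuity of $\theta^{-1}$ just obtained, $\theta$ is a $\D^\infty$-diffeomorphism. The main obstacle is the third step: making rigorous that $T_{\proc U}^{-1}$ really produces the tangent datum of $\theta^{-1}$, i.e.\ matching the abstract conjugation identity with the concrete $\theta$-derivation structure $\Grad\theta = \tilde\theta\circ\widecheck D$ and transporting it through $\theta^{-1}$, all the while avoiding any a priori $\D^\infty$-regularity of $\theta^{-1}$.
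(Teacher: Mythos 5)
Your overall reduction---concentrate everything in proving $\theta^{-1}\colon\D^\infty\to\D^\infty$ continuously, then invoke \thmref{thm:5.8}---is the right target and matches the paper's plan, but the mechanism you propose for the core step has a genuine gap, the very one you flag as ``the main obstacle'' without resolving it. First, your conjugation identity is wrong in detail: the paper's computation (\thmref{thm:5.10}, Remark \ref{remark:5.9}) gives $\theta^{-1}\circ D_A\circ\theta = D_{\theta^{-1}(\proc U^{-1}T_{\proc U}(A)\proc U)}$, so transporting a tangent process through $\theta$ involves conjugation by $\proc U$ \emph{and} composition with $\theta^{-1}$ itself, not the bare $T_{\proc U}^{\pm 1}$; consequently your identification of the tangent datum of $\theta^{-1}$ as $\widecheck A = T_{\proc U}^{-1}\left((\Grad\proc U^{-1})\proc U\right)$ is unjustified, and any corrected version requires knowing that $\theta^{-1}$ of a process is again a multiplicator---precisely what cannot be assumed before the regularity of $\theta^{-1}$ is established, so your step is circular. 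Second, $\check Z=\tilde\theta\circ\Grad\circ\theta^{-1}$ is not well defined a priori: $\Grad\theta^{-1}(g)$ is only a distribution for $g\in\D^\infty$, and the extension $\tilde\theta$ is not defined on distributions, so the ``intrinsic description'' of $\check Z$ is not a definition you can legitimately compare to $\Grad\theta^{-1}$---proving it holds is essentially equivalent to the theorem. Note also that $(\Grad\proc U^{-1})\proc U$ has entries in $H$, so even applying $(T_{\proc U})^{-1}$ to it needs the extension machinery of Lemma \ref{lemma:5.9}, which you never invoke.

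The paper's proof avoids any tangent datum for $\theta^{-1}$ by a duality device absent from your proposal. Since $\theta$ preserves laws, $\theta^{-1}=\theta^\ast$. One sets $Z = T_{\proc U}^{-1}\left(\proc U\,(\Grad\proc U^{-1}\,\proc U)\,\proc U^{-1}\right)$---note the conjugation---which Lemmas \ref{lemma:5.8} and \ref{lemma:5.9} make an $\alpha$-Holderian A.M.\ process with entries in $H$, hence a multiplicator; this yields the distributional identity (\ref{eqn:17}), $\theta[\Grad f] = \proc U^{-1}\Grad\theta(f) - \proc U^{-1}D_Z[\theta(f)]$. Testing it against $g e_i$, using Lemma \ref{lemma:5.10} for $D_Z^\ast$, the identity $\theta^\ast[\div\,\proc U e_i] = W(e_i)$, and the normalization $g=1$ to obtain $\theta^\ast(D_Z^\ast\,\proc U e_i)=0$, one lands on
\begin{displaymath}
  \Grad\theta^\ast(g) = \theta^\ast\left(\proc U^{-1}\Grad g\right) - \theta^\ast\left(\proc U^{-1}D_Z g\right),
\end{displaymath}
in which $\theta^\ast$ is applied only to data already known to be $\D^\infty$ in $g$. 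That is exactly what makes the induction ($\theta^\ast\colon\D^\infty\to\D_r^\infty$ implies $\theta^\ast\colon\D^\infty\to\D_{r+1}^\infty$) non-circular. Your proposal, lacking this adjoint step, has no way to start its induction, even though its skeleton and its use of \thmref{thm:5.8} at the end are sound.
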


Before proving this theorem, we need several lemmas.

\begin{lem} \label{lemma:5.7} The operator $T_{\proc U}$ takes its
  values in the $n \times n$-A.M., $T_{\proc U}(A)$ is an adapted
  process, and a multiplicator: $T_{\proc U}(A)$ is an elementary
  tangent process.
\end{lem}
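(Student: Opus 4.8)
The plan is to verify in turn the three asserted properties of $T_{\proc U}(A) = \proc U(D_A\proc U^{-1}) + A$; the antisymmetry and the adaptedness are essentially formal, while the multiplicator property carries the content. Throughout I write $B = D_A\proc U^{-1}$ and use that $D_A = \div A\,\Grad$ is a $\D^\infty$-continuous derivation, so it annihilates constant matrices and obeys the Leibniz rule entrywise.

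For the antisymmetry I would differentiate the unitarity relation. Applying $D_A$ to $\proc U\proc U^{-1} = \mathrm{Id}$ gives $(D_A\proc U)\proc U^{-1} + \proc U B = 0$. Since $\proc U$ is unitary on $\mathbb R^n$ we have $\proc U^{-1} = {}^t\proc U$, hence $B = D_A({}^t\proc U) = {}^t(D_A\proc U)$, that is ${}^tB = D_A\proc U$. Transposing $T_{\proc U}(A)$ and using ${}^tA = -A$ yields ${}^t[T_{\proc U}(A)] = {}^tB\,\proc U^{-1} - A = (D_A\proc U)\proc U^{-1} - A$, and the differentiated identity rewrites $(D_A\proc U)\proc U^{-1} = -\proc U B$, so ${}^t[T_{\proc U}(A)] = -(\proc U B + A) = -T_{\proc U}(A)$. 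Thus $T_{\proc U}(A)$ takes values in the $n\times n$-A.M. For adaptedness, $A$ and $\proc U$ are adapted by hypothesis; because $A$ is an adapted multiplicator, $D_A$ is an adapted derivation (Definition \ref{def2_4}), so it sends the adapted entries of $\proc U^{-1}$ to adapted processes, $B$ is adapted, and the matrix products and sum defining $T_{\proc U}(A)$ preserve adaptedness.

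The substantive step is the multiplicator property, and here I would show that each of the two matrix factors $\proc U$ and $B$ is $\D^\infty$-$\alpha$-H\"olderian, hence a multiplicator, and then invoke that a product of multiplicators is again a multiplicator (if $M_1,M_2$ send $\D^\infty$-vector fields to $\D^\infty$-vector fields, then so does $M_1M_2$, directly from the definition of a multiplicator). That $\proc U$ is a multiplicator follows from the hypothesis combined with \thmref{thm4_3} applied entrywise via \lemref{lem4_1}. For $B$ the key is the $\D^\infty$-continuity of $D_A$ (\lemref{lem4_6}, \lemref{lem4_7}): for every $(p,r)$ there are $(q,s)$ and a constant $C$ with $\norm{D_A\psi}_{\D_r^p} \le C\norm{\psi}_{\D_s^q}$. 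Applying this to the difference $\psi = (\proc U^{-1})_j^i(t',\cdot) - (\proc U^{-1})_j^i(t,\cdot)$ and using the $\alpha$-H\"older bound on $\proc U^{-1}$ gives $\norm{B_j^i(t',\cdot) - B_j^i(t,\cdot)}_{\D_r^p} \le C'\,|t'-t|^\alpha$, so each entry of $B$ is $\D^\infty$-$\alpha$-H\"olderian and, by \thmref{thm4_3}, a multiplicator. Therefore $\proc U B$ is a multiplicator, and adding the multiplicator $A$ shows $T_{\proc U}(A)$ is a multiplicator, i.e.\ an elementary tangent process.

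The step I expect to be the main obstacle is precisely this transfer of the $\D^\infty$-$\alpha$-H\"older estimate through $D_A$: one must ensure that the $\D^\infty$-continuity constants of $D_A$ are independent of $t$, so that the resulting H\"older constant $C'$ is genuinely uniform in $t$, and that $\proc U^{-1}$ (not merely $\proc U$) is itself $\D^\infty$-$\alpha$-H\"olderian. The latter is harmless, since $\proc U^{-1} = {}^t\proc U$ forces the two processes to share the same H\"older modulus, but it is the point that must be checked for the argument to close.
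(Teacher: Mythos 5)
Your proof is correct and takes essentially the same route as the paper's: adaptedness comes from $D_A$ being an adapted derivation, and the multiplicator property comes from transferring the $\D^\infty$-$\alpha$-H\"older modulus of $\proc U^{-1}$ through the fixed, $t$-independent operator $D_A$ (the paper's ``$D_A$ acts only on $\omega$'') and then invoking Theorem 4.3. The only difference is that you make explicit the antisymmetry, via $D_A(\proc U\,{}^t\proc U)=0$ and $\proc U^{-1}={}^t\proc U$, which the paper's two-line proof leaves implicit; your computation there is correct.
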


\begin{proof}
  As $D_A$ is an adapted derivation, $T_{\proc U}$ is adapted. Then
  $\proc U^{-1}$ being $\D^\infty$-$\alpha$-Holderian, $D_A \proc
  U^{-1}$ is also $\D^\infty$-$\alpha$-Holderian ($D_A$ acts only on
  $\omega$) ; and a process which is $\D^\infty$-$\alpha$-Holderian is
  a multiplicator (Theorem 4.3).
\end{proof}

\begin{rem}
  As $\theta \colon \L{\infty-0} \to \L{\infty-0}$ preserves laws,
  $\theta \colon \L +_1(\Omega) \to \L +_1(\Omega)$, so
  $\theta^\ast \colon \L{\infty-0}(\Omega) \to
  \L{\infty-0}(\Omega)$. Then $\theta^\ast$ is a morphism
  ($\theta^\ast = \theta^{-1}$).
\end{rem}

\begin{lem} \label{lemma:5.8} Suppose $(T_{\proc U})^{-1}$ exists as
  an operator from the space of adapted multiplicators, $n \times
  n$-A.M., in itself. Then if $Y$ is $\D^\infty$-$\alpha$-Holderian,
  $(T_{\proc U})^{-1}Y$ is also $\D^\infty$-$\alpha$-Holderian.
\end{lem}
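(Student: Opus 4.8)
The plan is to exploit the defining relation of the inverse. Writing $X = (T_{\proc U})^{-1}Y$, the hypothesis that $(T_{\proc U})^{-1}$ takes values in the adapted $n\times n$-A.M. multiplicators guarantees that $X$ is again such a process, and by the definition of $T_{\proc U}$ it satisfies
\begin{displaymath}
  \proc U\,(D_X\proc U^{-1}) + X = Y, \qquad D_X = \div X\,\Grad,
\end{displaymath}
so that
\begin{displaymath}
  X = Y - \proc U\,(D_X\proc U^{-1}).
\end{displaymath}
Since $Y$ is $\D^\infty$-$\alpha$-Holderian by assumption, and the unitary process $\proc U$ (hence $\proc U^{-1} = {}^t\proc U$) is $\D^\infty$-$\alpha$-Holderian by the standing hypotheses, it suffices to show that $\proc U\,(D_X\proc U^{-1})$ is $\D^\infty$-$\alpha$-Holderian; the conclusion then follows because the difference of two $\D^\infty$-$\alpha$-Holderian processes is again one.

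The decisive observation is that $D_X$ acts only on the $\omega$-variable and is $\R$-linear, whereas the time parameter $t$ lives entirely inside $\proc U^{-1}(t,\cdot)$. Consequently, for $t_1,t_2 \in [0,1]$,
\begin{displaymath}
  (D_X\proc U^{-1})(t_2,\cdot) - (D_X\proc U^{-1})(t_1,\cdot) = D_X\bigl[\proc U^{-1}(t_2,\cdot) - \proc U^{-1}(t_1,\cdot)\bigr].
\end{displaymath}
Because $X$ is a multiplicator, $D_X = \div X\,\Grad$ is a $\D^\infty$-continuous derivation, so for every $(p,r)$ there exist $(q,s)$ and a constant with $\norm{D_X f}_{\D_r^p} \leq C\,\norm{f}_{\D_s^q}$. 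Applying this with $f = \proc U^{-1}(t_2,\cdot) - \proc U^{-1}(t_1,\cdot)$ and invoking the $\alpha$-Holder bound on $\proc U^{-1}$ yields $\norm{(D_X\proc U^{-1})(t_2) - (D_X\proc U^{-1})(t_1)}_{\D_r^p} \leq C'|t_2-t_1|^\alpha$, so $D_X\proc U^{-1}$ is $\D^\infty$-$\alpha$-Holderian. The same continuity of $D_X$ applied to a fixed $\proc U^{-1}(t,\cdot)$, together with the $\D^\infty$-boundedness of $\proc U^{-1}$, shows that $D_X\proc U^{-1}$ is moreover $\D^\infty$-bounded.

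It remains to transfer this to the matrix product $\proc U\,(D_X\proc U^{-1})$. Using the telescoping identity
\begin{align*}
  \proc U(t_2)(D_X\proc U^{-1})(t_2) - \proc U(t_1)(D_X\proc U^{-1})(t_1)
  &= [\proc U(t_2)-\proc U(t_1)]\,(D_X\proc U^{-1})(t_2) \\
  &\quad + \proc U(t_1)\,[(D_X\proc U^{-1})(t_2) - (D_X\proc U^{-1})(t_1)],
\end{align*}
each summand is a product (entrywise, a finite sum of products of scalar $\D^\infty$-functions) of a factor whose $\D_r^p$-increment is $O(|t_2-t_1|^\alpha)$ with a $\D^\infty$-bounded factor. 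The Leibniz rule in the algebra $\D^\infty$ then gives a $\D_r^p$-bound of order $|t_2-t_1|^\alpha$ for every $(p,r)$, so $\proc U\,(D_X\proc U^{-1})$, and therefore $X$, is $\D^\infty$-$\alpha$-Holderian.

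The main obstacle is the middle step: one must be confident that $D_X$ genuinely leaves the external time variable untouched and is $\D^\infty$-continuous, so that the Holder modulus of $\proc U^{-1}$ passes unchanged through it. This is precisely where the hypothesis that $(T_{\proc U})^{-1}$ lands in the adapted multiplicators is essential, since it is what permits treating $D_X = \div X\,\Grad$ as a bona fide $\D^\infty$-continuous derivation (as studied around Lemma \ref{lem4_6} and Theorem \ref{thm4_3}) rather than a merely formal operator.
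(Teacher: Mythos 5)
Your proof is correct and follows essentially the same route as the paper: the paper's own (much terser) argument also rearranges $T_{\proc U}A = Y$ into $A = Y - \proc U D_A \proc U^{-1}$ and concludes because $D_A$ acts only on $\omega$, so that $D_A\proc U^{-1}$ inherits the $\D^\infty$-$\alpha$-H\"olderianity of $\proc U^{-1}$ (as already observed in Lemma \ref{lemma:5.7}). You merely make explicit the two points the paper leaves implicit — the quantitative $\D^\infty$-continuity of $D_X$ coming from the multiplicator hypothesis, and the Leibniz/telescoping step for the product with $\proc U$ — which is a faithful filling-in rather than a different argument.
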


\begin{proof}
  Denote $A = (T_{\proc U})^{-1}Y$ ; then $T_{\proc U}A$ is
  $\D^\infty$-$\alpha$-Holderian by hypothesis ; so $\proc U D_A \proc
  U^{-1} + A$ is $\D^\infty$-$\alpha$-Holderian and so is $\proc
  U^{-1} D_A \proc U$, which implies $A = Y - \proc U D_A \proc
  U^{-1}$ is $\D^\infty$-$\alpha$-Holderian.
\end{proof}

\begin{lem} 
  \label{lemma:5.9} 
  In the same setting than in Lemma 5.8, $H'$ being an Hilbert space,
  the extension of $(T_{\proc U})^{-1}$ (Corollary 2.2) will send the
  space of the \\ $\D^\infty(\Omega,H')$-$\alpha$-Holderian elementary
  tangent process (with items in $H'$) in \\
  $\D^\infty$-$\alpha'$-Holderian processes, $\alpha' < \alpha$.
\end{lem}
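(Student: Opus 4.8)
The plan is to reduce the $H'$-valued statement to the scalar one of Lemma~\ref{lemma:5.8} by routing the operator $(T_{\proc U})^{-1}$ through its canonical extension of Corollary~\ref{cor2_2}, and then to recover regularity simultaneously in all the $\drp$-seminorms by interpolation, which is what forces the strict loss $\alpha'<\alpha$. First I would fix a Hilbertian basis $(e_i)_{i\in\N_\ast}$ of $H'$ and write an elementary tangent process $Y$ with items in $H'$ as $Y(t,\omega)=\sum_i Y_i(t,\omega)\,e_i$, each $Y_i$ being an $n\times n$-A.M.\ valued elementary tangent process. By Corollary~\ref{cor2_2} (and Corollary~\ref{cor2_4}) the extension acts componentwise, $\widetilde{(T_{\proc U})^{-1}}Y=\sum_i\big((T_{\proc U})^{-1}Y_i\big)e_i$, and inherits the continuity of $(T_{\proc U})^{-1}$ between the scales $\drp(\Omega,H')\to\dqs(\Omega,H')$. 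Since $(T_{\proc U})^{-1}$ acts only on $\omega$ and is independent of $t$, it commutes with the time-increment $Y(t+h,\cdot)-Y(t,\cdot)$; hence the increment of the image process is exactly the image of the increment, and the whole problem reduces to bounding $\normb[\drp(\Omega,H')]{\widetilde{(T_{\proc U})^{-1}}[Y(t+h,\cdot)-Y(t,\cdot)]}$ by a power of $|h|$, uniformly in $t$.

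Next, combining the $\D^\infty$-$\alpha$-H\"older hypothesis on $Y$ with the continuity of the extension gives, for every $(q,s)$, indices $(p,r)$ and a constant $C$ with
\[
  \normb[\dqs(\Omega,H')]{\widetilde{(T_{\proc U})^{-1}}[Y(t+h,\cdot)-Y(t,\cdot)]}
  \le C\,\normb[\drp(\Omega,H')]{Y(t+h,\cdot)-Y(t,\cdot)}
  \le C'\,|h|^{\alpha}.
\]
This already yields an $\alpha$-H\"older bound, but only after a shift of indices $(q,s)\preceq(p,r)$, so not yet in every seminorm with one and the same exponent. In parallel, because $(T_{\proc U})^{-1}Y$ is again an elementary tangent process (adapted multiplicator, by Lemma~\ref{lemma:5.8} together with Theorem~\ref{thm4_3} and Lemma~\ref{lem4_1}), it is $\D^\infty$-bounded uniformly in $t$, which is a $0$-H\"older estimate available in the finer scale.

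Finally I would apply the interpolation Theorem~\ref{thm2_12} to the two estimates above: interpolating the $\alpha$-H\"older bound in the shifted scale against the uniform $\D^\infty$-boundedness in the finer scale produces, for each $\theta\in(0,1)$, a $\theta\alpha$-H\"older bound in the interpolated scale, and by letting the interpolated scale sweep out every target $(q,s)$ one obtains the $\D^\infty$-$\alpha'$-H\"older property with $\alpha'=\theta\alpha$. Since $\theta<1$ is needed to absorb the index shift uniformly, the exponent is strictly below $\alpha$, while $\theta$ may be chosen arbitrarily close to $1$; this is precisely the origin of the constraint $\alpha'<\alpha$.

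The step I expect to be the main obstacle is the passage across the infinite $H'$-direction: the per-component H\"older constants furnished by Lemma~\ref{lemma:5.8} need not be summable over $i$, so a naive componentwise summation fails, and the role of Corollary~\ref{cor2_2} (equivalently, of the auxiliary product Gaussian space of Theorem~\ref{thm2_4}) is exactly to replace that summation by a single operator bound that is uniform in the $H'$-direction. Checking that the extension genuinely preserves the adapted, multiplicator (elementary tangent) structure, so that the $\D^\infty$-boundedness required for the interpolation is indeed available, and that Theorem~\ref{thm2_12} can be applied uniformly to the family indexed by $h$, is the delicate part of the argument.
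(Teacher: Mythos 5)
There is a genuine gap, and it is the very one the paper flags in the unnumbered remark preceding its proof: \emph{``We cannot apply directly Corollary 2.2, because the space of $\D^\infty$-$\alpha$-Holderian processes is not $\D^\infty$-closed.''} Your second paragraph treats $(T_{\proc U})^{-1}$ as a continuous linear operator between scales $\drp(\Omega,H')\to\dqs(\Omega,H')$, so that Corollary~2.2 yields the estimate $\normb[\dqs]{\widetilde{(T_{\proc U})^{-1}}[\Delta Y]}\leq C\normb[\drp]{\Delta Y}\leq C'|h|^\alpha$. But the hypothesis of Lemma~5.8 grants only that $(T_{\proc U})^{-1}$ \emph{exists} as a map of the space of adapted $n\times n$-A.M.\ multiplicator processes into itself, together with the purely qualitative fact (Lemma~5.8) that it preserves $\D^\infty$-$\alpha$-H\"olderianity; no $\drp\to\dqs$ operator bound is part of the data, and since the H\"olderian class is not $\D^\infty$-closed, neither closed-graph nor Corollary~2.2 can manufacture one. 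With that inequality gone, your interpolation step collapses too: Theorem~2.12 also requires a linear operator continuous between two pairs of such spaces. A second, independent error is the claim that $(T_{\proc U})^{-1}$ ``acts only on $\omega$'' and hence commutes with time increments. It does not: $T_{\proc U}(A)=\proc U\,(D_A\proc U^{-1})+A$ with $D_A=\div A\Grad$, so the value of $T_{\proc U}(A)$ (hence of its inverse) at time $t$ depends on the whole process $A$ through the derivation $D_A$; the operator is non-local in $t$, and the increment of the image is \emph{not} the image of the increment, which invalidates your reduction of the H\"older property of the output to that of the input.

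For comparison, the paper's proof is built precisely to circumvent both obstacles. It conjugates $(T_{\proc U})^{-1}$ by fractional convolution and differentiation: for $X$ completely $\D^\infty$ it forms $\hat X=\left\{(T_{\proc U})^{-1}\left[E[X\,|\,\mathcal F_t]\ast\beta_{1-s}\right]\ast\beta_{s'}\right\}'$ with $s'>s$, using Lemma~5.8 only qualitatively (the bracketed process is $\D^\infty$-H\"olderian, so its image is too). The resulting transformation $\circled{H}:X\mapsto\hat X$ lives on the Fr\'echet space $\mathscr S$ of completely $\D^\infty$ processes (Definition~2.2), which \emph{is} closed, and it is there --- via Lemma~2.1.ii, not Corollary~2.2 --- that the $H'$-valued extension $\widetilde{\circled{H}}$ is obtained. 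One then undoes the conjugation: given $Y$ $\D^\infty$-$\alpha$-H\"olderian with items in $H'$, Theorem~2.8 gives $X=(Y\ast\beta_{s_0})'$ completely $\D^\infty$ for $s_0<\alpha$, one sets $\widetilde{(T_{\proc U})^{-1}}(Y)=\widetilde{\circled{H}}\left((Y\ast\beta_{s_0})'\right)\ast\beta_{1-s_0}$, and Proposition~2.2 (parts iii and iv) restores H\"olderianity and adaptedness; the strict loss $\alpha'<\alpha$ comes from this fractional bookkeeping ($s_0<\alpha$, $s'>s$), not from interpolation. Your closing intuition --- that the infinite $H'$-direction requires a single operator bound rather than componentwise summation --- is correct, but that bound must be manufactured on the completely-$\D^\infty$ Fr\'echet space, where continuity is available, rather than on the non-closed H\"olderian scale; the paper's step c), rewriting each item as $Y_i^j=a_i^jh$ with $h$ constant in $H'$, then verifies that this extension is consistent with the naive componentwise action.
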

\begin{unnumbered remark}
  We cannot apply directly Corollary 2.2, because the space
  of $\D^\infty$-$\alpha$-Holderian processes is not $\D^\infty$-closed.
\end{unnumbered remark}
\begin{proof}[Proof of Lemma \ref{lemma:5.9}]
  a) Let X be a completely $\D^\infty$-process with $\mathbb R$-valued
  matrix items (see Definition 2.2 for a completely
  $\D^\infty$-process). Then $E[X | \mathcal F_t]$ is an adapted
  process ; $E[X | \mathcal F_t] \ast \beta_{1-s}$ is a
  $\D^\infty$-Holderian process which implies $(T_{\proc U})^{-1}[E[X
  | \mathcal F_t] \ast \beta_{1-s}]$ is again $\D^\infty$-Holderian
  (Lemma 5.8) ; \\$\{ (T_{\proc U})^{-1}[E[X | \mathcal F_t] \ast
  \beta_{1-s}] \ast \beta_{s'} \}', s' > s$, is a completely
  $\D^\infty$-process, denoted $\Hat X$.

  Then $X \mapsto \Hat X$ is a transformation denoted $\circled H$
  which sends a complete $\D^\infty$-elementary tangent process in a
  completely $\D^\infty$-elementary tangent process. With Lemma
  2.1.ii, we have an extension map denoted $\widetilde{\circled H}$
  which sends the space of completely $\D^\infty$-elementary tangent
  process with matrix items in $H'$, in itself.

  b) Now let $Y$ an $\D^\infty$-$\alpha$-Holderian elementary tangent
  process with matrix items in $H'$. With Theorem 2.8, we know that $X
  = (Y \ast \beta_{s_0})', s_0 < \alpha$, is completely
  $\D^\infty$. So we denote by:
  \begin{displaymath}
    \widetilde{(T_{\proc U})^{-1}}(Y) = \widetilde{\circled H}\left((Y \ast \beta_{s_0})'\right)\ast\beta_{1-s_0}.
  \end{displaymath}
  Remind that according to Proposition 2.2.iv, the convolution by
  $\beta_{s_0}$ or $\beta_{1-s_0}$ leaves the adaptation property
  invariant. Then $\widetilde{(T_{\proc U})^{-1}}Y$ is an
  $\D^\infty$-$\alpha$-H\"olderian elementary process.

  c) Each matrix item of $Y$ can be written $Y_i^j = a_i^j h$ where
  $h$ is a constant vector of $H'$. Then 
  \begin{align*}
    \widetilde{(T_{\proc U})^{-1}} Y_i^j &= \left(\circled H (a_i^j \ast \beta_{s_0})'\right) \ast \beta_{1-s_0} h \\
    &= \{ [ (T_{\proc U})^{-1} (\mathbb E[(a_i^j \ast \beta_{s_0})' | \mathcal F_t] \ast \beta_{1-s_0} ] \ast \beta_{s_0} \}' \ast \beta_{1-s_0} h \\
    &= (T_{\proc U})^{-1} (\mathbb E[(a_i^j \ast \beta_{s_0})' | \mathcal F_t] \ast \beta_{1-s_0}) h \\
    &= (T_{\proc U})^{-1} (\mathbb E[(a_i^j \ast \beta_{s_0})' \ast \beta_{1-s_0} | \mathcal F_t] ) h \\
    &= (T_{\proc U})^{-1} (\mathbb E[a_i^j | \mathcal F_t])h \\
    &= ((T_{\proc U})^{-1} a_i^j) h \\
    &= (T_{\proc U})^{-1} Y_i^j .
  \end{align*}
\end{proof}

\begin{defn} 
  \label{defn:5.4} 
  Let $X \in \D^\infty(\Omega,H)$ and $Z$, an $n \times
  n$-A.M. process with its items belonging to $H$ (Cameron-Martin
  space). We define $D_Zf$, for $f \in \D^\infty(\Omega)$ by
  \begin{displaymath}
    D_Zf = \div_R(Z \otimes \Grad f),
  \end{displaymath}
  then $D_Zf \in \D^\infty(\Omega, H)$.
\end{defn}

(Remind $\div_R$ already defined in Theorem 5.1.) Then $(e_i)_{i \in
  \mathbb N_\ast}$ being an Hibertian basis of $H$, straightforward
computation shows:
\begin{displaymath}
  D_Zf = \sum_{i=1}^\infty (\div (\langle Z, e_i \rangle_H) \Grad f) e_i.
\end{displaymath}

\begin{lem} 
  \label{lemma:5.10} 
  $D_Z^\ast$ being the adjoint of $D_Z$, for all $V \in
  \D^\infty(\Omega,H)$, we have, with $f\in \D^\infty(\Omega)$:
  \begin{displaymath}
    D_Z^\ast(fV) = f D_Z^\ast V - \langle D_Zf,V \rangle_H.
  \end{displaymath}
\end{lem}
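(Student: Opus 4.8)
The plan is to establish the identity $D_Z^\ast(fV) = f\,D_Z^\ast V - \langle D_Zf, V\rangle_H$ by testing against an arbitrary test function and unwinding the definitions through the duality pairing. The natural strategy is to pair $D_Z^\ast(fV)$ with a scalar $g \in \D^\infty(\Omega)$ (or equivalently, to verify the identity as an equality of elements of $\D^\infty(\Omega)$ against all such $g$) and to transfer everything onto $D_Z$ via its adjoint relation $\langle D_Z^\ast(fV), g\rangle = \langle fV, D_Z g\rangle$, where the latter bracket is the $\L2(\Omega)$-pairing combined with the $H$-inner product.

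First I would write, using the explicit series expression
\begin{displaymath}
  D_Z g = \sum_{i=1}^\infty \bigl(\div(\langle Z,e_i\rangle_H \Grad g)\bigr) e_i,
\end{displaymath}
and compute $\langle fV, D_Z g\rangle$ by expanding the outer $H$-inner product over the basis $(e_i)_{i\in\mathbb N_\ast}$. This reduces the problem to handling terms of the form $\int f\,\langle V,e_i\rangle_H\,\div(\langle Z,e_i\rangle_H \Grad g)\,\P(d\omega)$. The key move is integration by parts: since $\div$ is the adjoint of $\Grad$, one rewrites $\int f\langle V,e_i\rangle_H \div(\ldots) = -\int \langle \Grad(f\langle V,e_i\rangle_H), \langle Z,e_i\rangle_H\Grad g\rangle_H$. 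Then I apply the Leibniz rule $\Grad(f\langle V,e_i\rangle_H) = (\Grad f)\langle V,e_i\rangle_H + f\,\Grad\langle V,e_i\rangle_H$ to split the expression into two groups.

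The group carrying $\Grad f$ will, after resumming over $i$ and comparing with the definition of $D_Z$ applied to $f$, reconstruct precisely the cross-term $-\langle D_Z f, V\rangle_H$; the group carrying $f$ will, upon reinstating the $\div$ via a second integration by parts and resumming, reconstruct $f\,D_Z^\ast V$ tested against $g$. The bookkeeping over the basis and the careful tracking of which factor the derivation lands on is the essential mechanism, and it is entirely parallel to the classical computation that the adjoint of a first-order operator satisfies a twisted Leibniz rule. The main obstacle I anticipate is purely a matter of justifying the interchange of the infinite sum over $i$ with the integration by parts and the $\L2$-pairing: one must ensure the series defining $D_Z f$ and $D_Z^\ast V$ converge in the appropriate $\D_r^p$-norms so that term-by-term manipulation is legitimate. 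This convergence is guaranteed because $Z$ has its items in $H$ and $f,V \in \D^\infty$, so all the relevant objects lie in $\D^\infty(\Omega,H)$ and the pairings are finite; once that is secured, the identity follows by collecting the two groups and noting that $g$ was arbitrary.
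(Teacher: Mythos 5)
Your strategy is sound and is, in substance, the paper's own proof: the paper also pairs $D_Z^\ast(fV)$ against an arbitrary $g\in\D^\infty(\Omega)$, expands $\langle fV, D_Zg\rangle_{L^2(\Omega,H)}=\sum_{i=1}^\infty \big(f\,D_{Z_i}g,\ \langle V,e_i\rangle_H\big)_{L^2(\Omega)}$ with $Z_i=\langle Z,e_i\rangle_H$, and then, instead of integrating by parts, simply invokes that each $D_{Z_i}=\div\,Z_i\,\Grad$ is a \emph{derivation}, so $f\,D_{Z_i}g = D_{Z_i}(fg)-g\,D_{Z_i}f$, and resums the two pieces into $(g, f D_Z^\ast V)$ and $-(g,\langle V, D_Zf\rangle_H)$. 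Your integration-by-parts-plus-Leibniz-on-$\Grad$ is exactly this derivation property unpacked one level, so the route is the same; the paper's packaging just avoids the bookkeeping you rightly identify as the delicate point. Two cautions before you execute it. First, the antisymmetry of $Z$ is indispensable and you never invoke it: it is what lets you flip $Z_i$ across the $H$-inner product, $\langle \Grad f, Z_i\Grad g\rangle_H=-\langle Z_i\Grad f,\Grad g\rangle_H$ (equivalently, what makes $D_{Z_i}$ a derivation at all); without it an extra symmetric term $\langle\Grad f,(Z_i+{}^tZ_i)\Grad g\rangle_H$ survives and the stated identity is false. Second, your term-matching is not exact as written: the group carrying $\Grad f$, after flipping $Z_i$ and integrating by parts back, yields the cross-term \emph{plus} a leftover $-\sum_i\int g\,\langle \Grad\langle V,e_i\rangle_H,\ Z_i\Grad f\rangle_H\,\P(d\omega)$, and the group carrying $f$ alone does not give $(g,fD_Z^\ast V)$; only after merging the leftover with the $f$-group, via $g\Grad f+f\Grad g=\Grad(fg)$, can you reinstate $\div$ and recognize $\sum_i\int\langle V,e_i\rangle_H\,D_{Z_i}(fg)\,\P(d\omega)=(fg,D_Z^\ast V)=(g,fD_Z^\ast V)$. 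With those two repairs your computation closes and coincides with the paper's.
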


\begin{proof}
  $\forall g \in \D^\infty(\Omega),$
  \begin{align*}
    (D_Z^\ast(fV),g)
    &= \langle fV, D_Zg \rangle_{\L 2 (\Omega, H)} \\
    &= \sum_{i=1}^\infty \langle fV, (D_{Z_i} g) e_i \rangle_{\L 2 (\Omega, H)} \\
    &= \sum_{i=1}^\infty ((f D_{Z_i} g), \langle V, e_i \rangle_H)_{\L 2 (\Omega)} \\
    &= \sum_{i=1}^\infty \left[ (D_{Z_i} (fg), \langle V, e_i
      \rangle_H)_{\L 2 (\Omega)} - (g D_{Z_i} f, \langle V, e_i
      \rangle_H)_{\L 2 (\Omega)} \right] \\
    &= \sum_{i=1}^\infty \left[ (V, D_{Z_i} (fg) e_i)_{\L 2 (\Omega)}
      - (g , \langle V, D_{Z_i} f e_i \rangle_H)_{\L 2 (\Omega)}
    \right] \\
    &= (g,f D_Z^\ast V) - (g, \langle V, D_Zf \rangle_H).
  \end{align*}
\end{proof}

Now we go back to the proof of Theorem \ref{thm:5.9}.
\begin{proof}[Proof of Theorem \ref{thm:5.9}] 
  From $\theta [W(h)] = \int_0^1{}^t\dot h\proc U^{-1}\itodiff B$, we
  get: 
  \begin{equation}
    \label{eqn:12}
    \Grad \theta [W(h)] = \int_0^1{}^t\dot h(\Grad U^{-1})\proc U \proc U^{-1} \itodiff B + t \mapsto \int_0^1{}^t\dot h\proc U^{-1} \diff s .
  \end{equation}
  We write $Y = \Grad \proc U^{-1} \proc U$.

  Let $Z$ such that $T_{\proc U}(Z) = \proc U Y \proc U^{-1}$ so $Z =
  T_{\proc U}^{-1}(\proc U Y \proc U^{-1})$. With Lemma 5.9, $Z$ is a
  $\D^\infty$-$\alpha$-Holderian $n \times n$-A.M. matrix process with
  items in $H$ ; and we have:
  \begin{equation} 
    \label{eqn:13}
    Y \proc U^{-1} =  D_Z \proc U^{-1} + \proc U^{-1} Z
  \end{equation}
  and
  \begin{equation}
    \label{eqn:14}
    D_Z(\diff B) = \diffoperator (D_Z B) = Z \diff B .
  \end{equation}

  Using (\ref{eqn:13}) and (\ref{eqn:14}) in (\ref{eqn:12}), we get,
  with $\theta(h) = h$:
  \begin{equation}
    \label{eqn:15}
    \Grad \theta [W(h)] = D_Z[\theta(W(h))] + t \mapsto \int_0^1 {}^t(\theta(\Grad(W(h))))\proc U^{-1} \diff s, 
  \end{equation}
  but $\Grad \theta[W(h)] = D_Z[\theta(W(h))] + \proc
  U[\theta(\Grad W(h))]$. We would like to write, for $f\in
  \D^\infty(\Omega)$:
  \begin{equation}
    \label{eqn:16}
    \Grad \theta[f] = D_Z[\theta(f)] + \proc U[\theta(\Grad f)]
  \end{equation}
  but
  \begin{equation}
    \label{eqn:17}
    \theta[\Grad f] = \proc U^{-1}\Grad\theta(f) - \proc U^{-1}D_Z[\theta(f)].
  \end{equation}
  Due to the right hand side of (\ref{eqn:17}), we interpret
  (\ref{eqn:17}) as an equation between distributions.

  $(e_i)_{i\in\mathbb N_\ast}$ being Hilbertian basis of $H$, and $g
  \in \D^\infty(\Omega)$, we apply each member of (\ref{eqn:17}) to $g
  e_i$:
  \begin{displaymath}
    \begin{split}
      -( f e_i, \Grad \theta^\ast(g)) &- \langle f \div e_i,
      \theta^\ast(g) \rangle_{\L 2(\Omega)} \\
      &= (\proc U(g e_i), \Grad \theta(f)) - \langle \proc U^{-1} D_Z
      \theta(f), g e_i \rangle_{\L 2 (\Omega,H)}
    \end{split}
  \end{displaymath}
  which implies:
  \begin{displaymath}
    \begin{split}
      -( f e_i, \Grad \theta^\ast(g)) &- \langle f \div e_i,
      \theta^\ast(g) \rangle_{\L 2(\Omega)} \\
      &= -\langle f, \theta^\ast \div(\proc U(g e_i)) \rangle_{\L 2 (\Omega)} - (f,
      \theta^\ast D_Z^\ast \proc U (g e_i))
    \end{split}
  \end{displaymath}
  Using Lemma \ref{lemma:5.10}, we get:
  \begin{equation} 
    \label{eqn:18}
    \begin{split}
      -( f e_i, \Grad \theta^\ast(g)) &- \langle f \div e_i,
      \theta^\ast(g) \rangle_{\L 2(\Omega)} \\
      &= -\langle f, \theta^\ast \div(\proc U(g e_i)) \rangle_{\L 2
        (\Omega)} - (f, \theta^\ast [g D_Z^\ast (\proc U e_i)]) \\
      &\quad + \langle f, \theta^\ast(\langle D_Z g, \proc U e_i
      \rangle_H) \rangle_{\L 2 (\Omega)}
    \end{split}
  \end{equation}
  But:
  \begin{displaymath}
    \theta^\ast(\div(\proc U (g e_i))) = \theta^\ast(g) \theta^\ast(\div \proc U e_i) + \theta^\ast(\langle \Grad g, \proc U e_i \rangle_H)
  \end{displaymath}
  And as $\proc U$ is adapted and unitary:
  \begin{align*}
    \theta^\ast[\div \proc U e_i] &= \theta^\ast \left[ \int_0^1{}^t(\proc U e_i) \itodiff B \right] \\ 
    &= \theta^\ast \left[ \int_0^1({}^te_i)\proc U^{-1}\itodiff B \right] \\
    &= \theta^\ast \theta[W(e_i)] \\
    &= W(e_i).
  \end{align*}
  Using this in (\ref{eqn:18}), we get:
  \begin{equation} 
    \label{eqn:19}
    \begin{split}
      - ( f e_i, \Grad \theta^\ast(g) ) &=  - \langle \theta^\ast ( \langle \Grad g, \proc U e_i \rangle_H ), f \rangle_{\L 2 (\Omega)} - (f, \theta^\ast(g D_Z^\ast(\proc U e_i))) \\
      &\quad + \langle f, \theta^\ast ( \langle D_Zg, \proc U e_i
      \rangle_H ) \rangle_{\L 2 (\Omega)}.
    \end{split}
  \end{equation}
  From the formula $D_Z f = \sum_{i=1}^\infty (\div Z_i \Grad f) e_i =
  \sum_{i=1}^\infty(D_{Z_i} f) e_i$, in Definition \ref{defn:5.4}, we
  deduce by duality that if $X \in \D^\infty(\Omega, H)$, then
  $D_Z^\ast X \in \D^\infty(\Omega)$. Then $D_Z^\ast(\proc U e_i) \in
  \D^\infty(\Omega)$, so $\theta^\ast(g D_Z^\ast \proc U e_i)$ is
  legitimate, and is a function. Then (\ref{eqn:19}) becomes:
  \begin{displaymath}
      - ( e_i, \Grad \theta^\ast(g) ) = - \theta^\ast ( \langle
      \Grad g, \proc U e_i \rangle_H ) - \theta^\ast (g D_Z^\ast
      \proc U e_i) + \theta^\ast ( \langle D_Zg, \proc U e_i
      \rangle_H ) .
  \end{displaymath}
  But $\theta^\ast$ acts on contants vectors fields as the indentity, so :
  \begin{displaymath}
    \theta^\ast(\langle \Grad g, \proc U e_i \rangle_H) = \langle \theta^\ast(\proc U^{-1} \Grad g) , e_i \rangle_H,
  \end{displaymath}
  and
  \begin{displaymath}
    \theta^\ast(\langle D_Z g, \proc U e_i \rangle_H) = \langle \theta^\ast(\proc U^{-1} D_Z g) , e_i \rangle_H.
  \end{displaymath}
  So, we have :
  \begin{displaymath}
    -(e_i, \Grad \theta^\ast (g)) = - \langle e_i, \theta^\ast(\proc U^{-1} \Grad g) \rangle_H - \theta^\ast (g D_Z^\ast \proc U e_i) + \langle e_i, \theta^\ast(\proc U^{-1} D_Z g) \rangle_H.
  \end{displaymath}

  In this last equation, we choose $g = 1$. We get:
  $\theta^\ast(D_Z^\ast \proc U e_i) = 0$. We deduce
  \begin{displaymath}
    (e_i, \Grad \theta^\ast(g)) = \langle e_i, \theta^\ast(\proc U^{-1} \Grad g) \rangle_H - \langle e_i, \theta^\ast(\proc U^{-1} D_Z g) \rangle_H.
  \end{displaymath}
  In this last equation, $\theta^\ast(\proc U^{-1} \Grad g)$ and
  $\theta^\ast(\proc U^{-1} D_Z g)$ are $\L{\infty-0}$-functions
  and $\Grad(\theta^\ast g)$ a distribution ; so as distributions,
  we have:
  \begin{displaymath}
    \Grad \theta^\ast(g) = \theta^\ast (\proc U^{-1} \Grad g) - \theta^\ast(\proc U^{-1} D_Z g).
  \end{displaymath}
  Suppose that $\theta^\ast \colon \D^\infty(\Omega) \to
  \D_r^\infty(\Omega)$ ; then $\Grad \theta^\ast(g) \in
  \D_r^\infty$ which implies $\theta^\ast(g) \in
  \D_{r+1}^\infty$. As $\theta$ is the adjoint of $\theta^\ast$,
  $\theta \colon \D^\infty \to \D^\infty(\Omega)$.
\end{proof}

Now we prove the converse of Theorem \ref{thm:5.9}.

\begin{thm} 
  \label{thm:5.10} 
  If $\theta$, the $\L{\infty-0}$-morphism generated by $\theta[W(h)]
  = \int_0^1{}^t\dot h \proc U^{-1} \itodiff B$, is a
  $\D^\infty$-diffeomorphism of $\D^\infty(\Omega)$ in itself, the
  linear pseudo-tangent map admits an inverse, in the space of the
  elementary tangent processes.
\end{thm}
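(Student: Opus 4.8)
The plan is to realize the linear pseudo-tangent map $T_{\proc U}$ as conjugation of derivations by $\theta$, and then to invert it by conjugating with $\theta^{-1}$, which is available precisely because $\theta$ is assumed to be a $\D^\infty$-diffeomorphism. The whole argument is thus reduced to checking that this conjugation keeps us inside the class of elementary tangent processes, so that Theorem \ref{thm4_1} can be applied.

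First I would start from an arbitrary elementary tangent process $A$, that is, an adapted, $n\times n$-A.M.-valued multiplicator. Since $A$ is a multiplicator the operator $D_A = \div A\Grad$ is a $\D^\infty$-continuous derivation (Lemma \ref{lem4_6}, Lemma \ref{lem4_7}); it is adapted because $A$ is adapted, and it has null divergence by Remark \ref{rem2_2}, $A$ being antisymmetrical. As $\theta$ and $\theta^{-1}$ are both $\D^\infty$-isomorphisms, the conjugate $\theta\circ D_A\circ\theta^{-1}$ is again a $\D^\infty$-continuous derivation of $\D^\infty(\Omega)$ into itself.

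The next step is to show that this conjugate is itself adapted and divergence-free. Null divergence follows from law-preservation of $\theta$: for every $\varphi\in\D^\infty(\Omega)$ one has $\int\theta\bigl(D_A(\theta^{-1}\varphi)\bigr)\,\P(\diff\omega)=\int D_A(\theta^{-1}\varphi)\,\P(\diff\omega)=0$. Adaptation follows from the fact, already exploited in the proof of Theorem \ref{thm:5.8}, that $\theta$ (hence $\theta^{-1}$) sends $\mathcal F_t$ into $\mathcal F_t$ and $\mathcal F_t^\perp$ into $\mathcal F_t^\perp$, so conjugation by $\theta$ preserves $\mathcal F_t$-measurability at each time $t$. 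Theorem \ref{thm4_1} then yields a unique elementary tangent process $\widehat A$ with $\theta\circ D_A\circ\theta^{-1}=D_{\widehat A}$, and setting $S_\theta(A)=\widehat A$ defines an $\mathbb R$-linear map on the space of elementary tangent processes whose range, again by Theorem \ref{thm4_1}, consists of elementary tangent processes. I would then identify $S_\theta$ with $T_{\proc U}$ on generators: from $D_A W(h)=\div(Ah)=-\int_0^1{}^t\dot h\,A\,\itodiff B$ (using ${}^tA=-A$ and the adaptedness of $A$), and applying $\theta$ to an Itô integral as in Theorem \ref{thm:5.8} to turn $\itodiff B$ into $\proc U^{-1}\itodiff B$, one compares the outcome with the algebraic formula $T_{\proc U}(A)=\proc U(D_A\proc U^{-1})+A$ and concludes that the two coincide on every $W(h)$, hence as operators, so $S_\theta=T_{\proc U}$.

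Finally, the identical construction carried out with $\theta^{-1}$ in place of $\theta$ produces a linear map $S_{\theta^{-1}}$ on elementary tangent processes, again valued in elementary tangent processes by Theorem \ref{thm4_1}. Functoriality of conjugation, $\theta^{-1}(\theta D_A\theta^{-1})\theta=D_A$, gives $S_{\theta^{-1}}\circ S_\theta=S_\theta\circ S_{\theta^{-1}}=\mathrm{Id}$ on that space, so $T_{\proc U}=S_\theta$ admits the inverse $S_{\theta^{-1}}$, which takes its values in the elementary tangent processes, as asserted. The main obstacle is the generator computation identifying $S_\theta$ with $T_{\proc U}$ while keeping careful track of the $\proc U$ versus $\proc U^{-1}$ and $\theta$ versus $\theta^{-1}$ conventions (which fix the precise direction of the conjugation), together with the verification that $\theta$ genuinely preserves adaptation in the Hölderian setting so that Theorem \ref{thm4_1} is applicable; once these two points are secured, the invertibility itself is formal.
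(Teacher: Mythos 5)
Your skeleton is the paper's: conjugate $D_A$ by the diffeomorphism $\theta$, check that the conjugate is an adapted, $\D^\infty$-continuous derivation with null divergence (your verifications of these two points are fine), represent it via Theorem 4.1 by an elementary tangent process, and invert by conjugating with $\theta^{-1}$. But the crux of your argument --- the identification $S_\theta = T_{\proc U}$ --- is false, and the generator computation you defer is exactly where it breaks. Computing as the paper does, with $D_A(\itodiff B) = A\itodiff B$ and $D_A\proc U^{-1} = \proc U^{-1}T_{\proc U}(A) - \proc U^{-1}A$,
\begin{align*}
  D_A\theta[W(h)] &= \int_0^1 {}^t\dot h\,(D_A\proc U^{-1})\itodiff B + \int_0^1 {}^t\dot h\,\proc U^{-1}A\itodiff B \\
  &= \int_0^1 {}^t\dot h\,\bigl(\proc U^{-1}T_{\proc U}(A)\proc U\bigr)\proc U^{-1}\itodiff B
   = \theta\left[\int_0^1 {}^t\dot h\,\theta^{-1}\bigl(\proc U^{-1}T_{\proc U}(A)\proc U\bigr)\itodiff B\right],
\end{align*}
so $\theta^{-1}\circ D_A\circ\theta = D_{\theta^{-1}(\proc U^{-1}T_{\proc U}(A)\proc U)}$: the representing process of the conjugate is not $T_{\proc U}(A)$ but $T_{\proc U}(A)$ twisted by conjugation with $\proc U$ and by the entrywise action of $\theta^{-1}$. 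In your direction of conjugation the same computation (run for $\theta^{-1}$, whose associated unitary is $\proc V = \theta^{-1}(\proc U)$) gives $\theta\circ D_A\circ\theta^{-1} = D_{\theta(\proc V^{-1}T_{\proc V}(A)\proc V)}$; evaluated on $W(h)$ this produces the terms $\theta(A)$ and $\theta(D_A\proc V^{-1})$, not the $A$ and $\proc U D_A\proc U^{-1}$ of the algebraic formula, so your two maps do not coincide on generators, and a fortiori not as operators.

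The gap is repairable, and the repaired argument is precisely the paper's (terse) proof: invertibility of the conjugation map $C(A) = \theta^{-1}(\proc U^{-1}T_{\proc U}(A)\proc U)$ --- which your conjugation-by-$\theta^{-1}$ argument does give --- yields invertibility of $T_{\proc U}$ only after you peel off the two auxiliary factors, i.e.\ after checking that $M\mapsto\theta^{\pm 1}(M)$ and $M\mapsto\proc U^{\mp 1}M\proc U^{\pm 1}$ are bijections of the space of elementary tangent processes onto itself. Antisymmetry and adaptedness are immediate; the multiplicator property passes through $\theta^{\pm 1}$ via the identity $\theta^{\pm 1}(M)\cdot V = \theta^{\pm 1}\bigl(M\cdot\theta^{\mp 1}(V)\bigr)$, the same trick the paper uses in Section 6 to show $\theta^{-1}(\mathcal U)$ is a multiplicator when $\theta$ is an auto-diffeomorphism. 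As written, however, your proof asserts an identity ($S_\theta = T_{\proc U}$) that fails whenever $\proc U$ is genuinely random, so it does not establish the theorem.
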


\begin{proof}
  $\theta[W(h)] = \int_0^1{}^t\dot h \proc U^{-1} \itodiff B$, so:
  \begin{align*}
    D_A\theta[W(h)] &= \int_0^1{}^t\dot h D_A \proc U^{-1} \itodiff B + \int_0^1{}^t\dot h \proc U^{-1} A \itodiff B \\
    &= \int_0^1{}^t\dot h(\proc U^{-1} T_{\proc U} A \proc U)\proc U^{-1} \itodiff B \\
    &= \theta \left[ \int_0^1{}^t\dot h \theta^{-1}(\proc
      U^{-1}T_{\proc U}A \proc U) \itodiff B \right].
  \end{align*}
  So 
  \begin{displaymath}
    \theta^{-1} D_A \theta(W(h)) = D_{\theta^{-1}(\proc U^{-1}T_{\proc U}A\proc U)}(W(h)).
  \end{displaymath}
  So the map $A \mapsto \theta^{-1}(\proc U^{-1}T_{\proc U}A\proc
  U)$ is inversible. Then the map $A \mapsto \proc U^{-1}T_{\proc
    U}A\proc U$ is inversible, so $A \mapsto T_{\proc U}A$ is
  inversible.
\end{proof}

Before the next theorem, we first remark that: if $f$ is a polynomial
in Gaussian variables, $f[W(h_1), \dots , W(h_n)]$, then
$\theta_{\proc U}(f) \in \D^\infty(\Omega)$ ; then if $z = it+s$,
and $s>0$, $(1-L)^{-(it+s)}$ is legitimate as
\begin{displaymath}
  \frac 1 {\Gamma(s)} \int_0^\infty \alpha^{s+it-1} \mathrm e^{-\alpha} P_\alpha(\theta_{\proc U} f)\diff \alpha \quad \text{(Mehler's formula)}.
\end{displaymath}
And if $r > s$, we write: $(1-L)^{r-s-it} = (1-L)^{-(s+it)} \circ
(1-L)^r(\theta_{\proc U} f)$.

\begin{lem} 
  \label{lemma:5.11} 
  If $\rho$ is an $\D^\infty$-$\alpha$-Holderian process, there exists
  $s$, $0 < s < 1$ and $\alpha+s > 1$, such that $\rho \ast
  \beta_{1-s} \in \mathcal C^1$ and $g = (\rho \ast \beta_{1-s})'$
  will be in $\L{\infty-0}([0,1] \times \Omega)$
\end{lem}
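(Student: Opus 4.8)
The plan is to obtain the statement essentially as a reformulation of Theorem~\ref{thm2_8}, specialised to the single regularity index $r=0$ and with the admissible range of $s$ made explicit. First I would fix $s$ in the open interval $(1-\alpha,1)$; this interval is nonempty since $0<\alpha<1$, and any such choice simultaneously yields $0<s<1$, the required inequality $\alpha+s>1$, and the companion inequality $0<1-s<\alpha$ that is needed to run the fractional-calculus lemmas. The process $\rho$ is first extended to $\widetilde\rho$ as prescribed in Proposition~\ref{prop2_2} (equal to $0$ on $]-\infty,0]\cup[2,+\infty[$, affine on $[1,2]$), and one checks that $\widetilde\rho$ is still $\dinf$-$\alpha$-Hölderian: on the affine piece the increments are Lipschitz in $t$ with $\dinf$-bounded slope $\rho(1,\cdot)$, hence $\alpha$-Hölderian on the bounded time interval, and the gluing at $t=1,2$ introduces no loss of the uniform $\dinf$-estimates. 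The convolution $\rho\ast\beta_{1-s}$ is then understood as $\widetilde\rho\ast\beta_{1-s}$ restricted to $[0,1]$.

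Next I would establish the $\mathcal C^1$ regularity. Since $0<1-s<\alpha<1$, Proposition~\ref{prop2_2}~i) applies with convolution index $1-s$ and gives $\widetilde\rho\ast\beta_{1-s}\in\mathcal C^1(\R)$; in particular $\rho\ast\beta_{1-s}\in\mathcal C^1$ on $[0,1]$, so $g=(\rho\ast\beta_{1-s})'$ is a well-defined process. The inversion identity of Proposition~\ref{prop2_2}~ii), read with $s$ replaced by $1-s$, returns $g\ast\beta_s=\rho$, which is exactly the relation underlying Theorem~\ref{thm2_8} and confirms that the $s$ just chosen is of the type produced there.

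Finally I would identify $g$ with the process $Y=\frac{d}{dt}[\rho\ast\beta_{1-s}]$ of Theorem~\ref{thm2_8} and invoke that theorem: for this $s$, $g$ is a completely $\dinf$-process. By Definition~\ref{def2_2}, completely $\dinf$ means $(1-L)^{r/2}g\in\L{\infty-0}([0,1]\times\Omega)$ for every $r\in\R$; taking $r=0$ gives $g\in\L{\infty-0}([0,1]\times\Omega)$, which is the asserted conclusion. The genuine work is hidden in Theorem~\ref{thm2_8}, namely that the $t$-derivative of the convolution is not merely measurable but completely $\dinf$; since $(1-L)^{r/2}$ acts only on $\omega$ and hence commutes with the convolution and the $t$-differentiation, this reduces to applying Proposition~\ref{prop2_2}~iii) to each $\dinf$-$\alpha$-Hölderian process $(1-L)^{r/2}\rho$. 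That commutation, together with verifying that the extension $\widetilde\rho$ preserves the Hölder estimates uniformly in $r$, is the step I expect to require the most care. The extra hypothesis $\alpha+s>1$ is not an additional condition to be engineered but merely the restatement of $1-s<\alpha$, already forced by the use of Proposition~\ref{prop2_2}~i).
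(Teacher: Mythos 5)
Your proposal is correct and follows the paper's own route: the paper proves this lemma by simply citing Proposition~2.2.i, and your argument is exactly that citation made explicit --- choosing $s\in(1-\alpha,1)$ so that $0<1-s<\alpha$, applying Proposition~2.2.i (with index $1-s$) for the $\mathcal C^1$ regularity of $\widetilde\rho\ast\beta_{1-s}$, and packaging the $\L{\infty-0}$ membership of $g$ via Theorem~2.8, which is the paper's own formulation of the same fractional-calculus facts. Your added care about the extension $\widetilde\rho$ preserving the H\"olderian estimates and about $(1-L)^{r/2}$ commuting with the time operations fills in detail the paper leaves implicit, but introduces no genuinely different idea.
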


\begin{proof}
  Proposition 2.2.i.
\end{proof}

The next theorem of ``local inversibility'' is:
\begin{thm} \label{thm:5.11} If $T_{\proc U}$ is inversible from
  the space of adapted $\D^\infty$-multiplicators in itself, and if
  ${\proc U}$ is $\D^\infty$-$\alpha$-Holderian with $\alpha > 1/4$,
  then the morphism generated by $\theta$, $\theta[W(h)] =
  \int_0^1{}^t\dot h \proc U^{-1} \itodiff B$, is a $\D^\infty$-morphism
  of $\D^\infty$ in itself.
\end{thm}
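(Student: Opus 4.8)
The plan is to follow the inductive skeleton of Theorems 5.7 and 5.9, using the invertibility of $T_{\mathcal U}$ to trade the strong hypothesis $\alpha>1/2$ for the weaker $\alpha>1/4$. Since $\theta$ preserves laws we already have $\theta\colon\mathbb D^\infty\to L^{\infty-0}$, so it suffices to prove, by induction on $r$, that $\theta\colon\mathbb D^\infty\to\mathbb D^\infty_r$ implies $\theta\colon\mathbb D^\infty\to\mathbb D^\infty_{r+1}$. As before, I would do this by exhibiting a $\theta$-derivation $\widecheck D\colon\mathbb D^\infty\to\mathbb D^\infty_r(\Omega,H)$ that agrees with $\Grad\theta$ on Gaussian polynomials; the $\mathbb D^\infty$-continuity of $\theta$ then follows from the closed graph theorem, exactly as at the end of Theorem 5.8.

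First I would set $Y=\Grad\mathcal U^{-1}\,\mathcal U$ and, using that $T_{\mathcal U}$ is invertible on adapted multiplicators, define $Z=T_{\mathcal U}^{-1}(\mathcal U Y\mathcal U^{-1})$. By Lemma 5.9, $Z$ is again $\mathbb D^\infty$-$\alpha'$-H\"olderian for every $\alpha'<\alpha$, is $n\times n$-A.M.-valued with items in $H$, lies in the range of $T_{\mathcal U}^{-1}$ on elementary tangent processes (hence is adapted), and is a multiplicator (Theorem 4.3). The computation of the proof of Theorem 5.9 — using $Y\mathcal U^{-1}=D_Z\mathcal U^{-1}+\mathcal U^{-1}Z$ together with $D_Z(\diff B)=Z\,\diff B$ — then gives, on $\mathcal C_1$,
\[
  \Grad\theta[W(h)] = D_Z[\theta(W(h))] + \Bigl(t\mapsto\int_0^t{}^t\bigl(\theta\,\Grad W(h)\bigr)\mathcal U^{-1}\diff s\Bigr).
\]

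The decisive structural point — and the reason adaptedness of $Z$ matters — is that, $Z$ being adapted, $D_Z$ applied to the It\^o integral $\theta(W(h))$ yields genuine It\^o integrals rather than the Russo--Vallois integrals forced in Theorem 5.7. I would therefore define $\widecheck D f$ by the same formula with $W(h)$ replaced by $f\in\mathbb D^\infty$: the vector-field term extends by Theorem 2.3, and $D_Z[\theta f]$ is interpreted through these It\^o integrals. One then checks that $\widecheck D$ is a $\theta$-derivation (Definition 2.3) coinciding with $\Grad\theta$ on polynomials. The base continuity $\widecheck D\colon\mathbb D^\infty\to L^{\infty-0}(\Omega,H)$ uses only the It\^o isometry and the multiplicator bound on $Z$, and so imposes no lower bound on $\alpha$.

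The main obstacle is the induction step, where $\Grad$ must be applied to this stochastic-integral representation. Because $\Grad$ destroys the adaptedness that $T_{\mathcal U}^{-1}$ secured, differentiating the It\^o integrals produces Skorokhod corrections whose worst terms are diagonal traces $\int_0^1\langle\Grad_s Z_s,\dots\rangle\diff s$ of a two-parameter field that is only $\alpha'$-H\"older in each time variable. Controlling such a trace is precisely a conjugate Besov (Russo--Vallois) pairing: decomposing $Z=g\star\beta_{1-s}$ by Lemma 5.11 and estimating the two $\alpha'$-H\"older factors against the quadratic-variation ($1/2$-regularity) background of the It\^o differential via Lemmas 5.5 and 5.6, the pairing is legitimate once the two Besov exponents are conjugate, which is the condition $2\alpha>\tfrac12$, i.e. $\alpha>\tfrac14$. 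Carrying out this trace estimate uniformly in $r$ — so that the induction hypothesis $\theta\colon\mathbb D^\infty\to\mathbb D^\infty_r$ yields $\Grad^{r}\widecheck D f\in L^{\infty-0}(\Omega,\bigotimes^{r+1}H)$ — is where the real work lies; once it is in hand, $\Grad\widecheck D f\in\mathbb D^\infty_r(\Omega,H)$ forces $\theta f\in\mathbb D^\infty_{r+1}$ and closes the induction.
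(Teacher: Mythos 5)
Your setup is the right one --- $Z=T_{\proc U}^{-1}(\proc U\,Y\,\proc U^{-1})$ with $Y=\Grad \proc U^{-1}\,\proc U$, the identity $\Grad\theta[W(h)]=D_Z[\theta(W(h))]+\bigl(t\mapsto\int_0^t{}^t(\theta\Grad W(h))\proc U^{-1}\diff s\bigr)$, and closure via $\theta$-derivations --- but the decisive induction step rests on an unjustified regularity count, and this is a genuine gap. A Russo--Vallois/Besov pairing is \emph{bilinear}: one integrand against one integrator, with the two regularity exponents summing to more than $1$ (this is exactly how Lemmas 5.5 and 5.6 are used in Theorem 5.7, pairing $B_{2,2}^{1-\varepsilon/2}$ against its conjugate $B_{2,2}^{\varepsilon/2}$, which requires $\alpha>1/2$). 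Your ``two $\alpha'$-H\"older factors against the $1/2$-regularity background,'' i.e. $2\alpha+\tfrac12>1$, is a \emph{trilinear} count for which no pairing theorem is invoked, and none is available in the paper's toolkit; the honest bilinear condition with an $\alpha$-H\"older integrand and a $1/2$-regular It\^o integrator is $\alpha+\tfrac12>1$, i.e. $\alpha>1/2$ again. So as written your argument does not get below the threshold of Theorem 5.7; the numerical coincidence with $1/4$ conceals the missing mechanism. Two further points fail: defining $\widecheck Df=D_Z[\theta(f)]+\proc U[\theta(\Grad f)]$ directly is lossy, since $D_Z=\div_R(Z\otimes\Grad\,\cdot)$ consumes two derivatives of $\theta(f)$, so the induction hypothesis $\theta\colon\D^\infty\to\D^\infty_s$ only yields $\Grad\theta(f)\in\D^\infty_{s-2}$ and the induction runs \emph{backwards}; and the claim that adaptedness of $Z$ turns everything into genuine It\^o integrals holds on $\mathcal C_1$ (the paper's equation $D_{S(C)}\theta[W(h)]=\int_0^t{}^t\dot h\,\rho E_\ell^k\proc U^{-1}\itodiff B$) but is unsupported for general $f\in\D^\infty$, where the paper must retain the Russo--Vallois pairing $\int\rho\,\diffoperator(\theta D_{E_{\ell,t}^k}f)=D_{S(\rho E_{\ell,t}^k)}\theta(f)$, with $S(C)=T_{\proc U}^{-1}(\proc U C\proc U^{-1})$.

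What the paper actually does, and what your proposal is missing, is a \emph{fractional} induction driven by complex interpolation in the convolution parameter. The two endpoint regimes are: zero derivative loss when $\rho$ is H\"olderian with exponent $>1/2$ (step b, itself requiring a Phragmen--Lindel\"of interpolation in $s$, with weights $a=-1/2$, $b=(s-r)/2$, to handle non-integer $s$), versus a two-derivative loss $D_{S(\rho E_{\ell,t}^k)}\theta(f)\in\D^\infty_{s-2}$ when $\rho$ is merely H\"olderian with exponent $>0$ (step c). A second Phragmen--Lindel\"of argument is then run along the family $\tilde\rho\ast\beta_{1-z}$, $0\leq\Re(z)\leq 1/2$, via $\Tilde F(z)=\mathrm e^{z^2}\langle(1-L)^{az+b}D_{S(\tilde\rho\ast\beta_{1-z}E_{\ell,t}^k)}\theta(f),\varphi\rangle_{\L 2(\Omega)}$ with $a=2$, $b=s/2-1$; demanding $D_{S(\cdot)}\theta(f)\in\D^\infty_{s-1+\delta}$ forces $-2(2\Re(z)+s/2-1)+s-1+\delta=0$, i.e. $\Re(z)>1/4$, and representing $\rho=f_k^\ell$ as $\tilde\rho\ast\beta_{1-z}$ with $\Re(z)=1/4$ is possible precisely because $\proc U$ is $\D^\infty$-$\alpha$-H\"olderian with $\alpha>1/4$ (Proposition 2.2.i and Lemma 5.11). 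Consequently each round of the induction upgrades $\theta\colon\D^\infty\to\D^\infty_s$ only to $\D^\infty_{s+\delta}$ with $0<\delta<1$ --- not by whole derivatives, as your scheme $\Grad^r\widecheck Df\in\L{\infty-0}(\Omega,\bigotimes^{r+1}H)$ presumes. Without this interpolation between the $\alpha>1/2$ and $\alpha>0$ regimes and the attendant fractional bookkeeping, the threshold $\alpha>1/4$ cannot be reached.
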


As the proof of this theorem is more difficult and involves a
fractionnal induction, we give the followed plan of the proof:
\begin{enumerate}[label=\alph*)]
\item We first recall some notations in Lemma \ref{lemma:5.4}, and
  establish that: if $\rho$ is a $\D^\infty$-$\alpha$-Holderian
  function with $\alpha > 1/2$ then there exist a quantity $S(\rho
  E_{\ell,t}^k)$ such that
  \begin{displaymath}
    \int_0^1 \rho \diffoperator [\theta(D_{E_{\ell,t}^k} f)] = D_{S(\rho E_{\ell,t}^k)} \theta (f),
  \end{displaymath}
  the integral being a Russo-Valois integral.
\item We then suppose that: $\theta \colon \D^\infty(\Omega) \to
  \D^\infty_s(\Omega)$. We will prove, using the above formula in a),
  that if $f \in \D^\infty(\Omega)$, $D_{S(\rho E_\ell^k),t} \theta
  (f) \in \D^\infty_s(\Omega)$.

  For this, first we prove it for $s\in \mathbb N_\ast$ ; and then if
  $s \notin \mathbb N_\ast$, $s>0$, we will use the
  Phragmen-Lindel\"of method with an interpolation in the domain
  delimited by $E[s]$ and $E[s+1]$ to get this result.
\item There we will prove that $\rho$ being
  $\D^\infty$-$\alpha$-Holderian with $\alpha > 0$, then $D_{S(\rho
    E_\ell^k)}\theta (f) \in \D^\infty_{s-2}$.
\item Then another interpolation, using the Phragmen-Lindel\"of
  method, interpolation on $t$ this time, will proves Theorem
  \ref{thm:5.10}.
\end{enumerate}

\begin{proof}
  a) $E_\ell^k$ is the $n \times n$-elementary antisymmetrical matrix,
  \\ $D_{E_{\ell,t}^k}f = \div E_\ell^k \mathds 1_{[0,t]}(\cdot) \Grad f$,
  with $f\in \D^\infty(\Omega)$ and if $\rho(t,\omega)$ is an
  $\D^\infty$-$\alpha$-Holderian process, $H$-valued, the integral
  $\int_0^1 \rho \diffoperator [\theta(D_{E_{\ell,t}^k} f)]$ is to be
  understood as a Russo-Valois integral, with $E_{\ell,t}^k = \mathds
  1_{[0,t]}(\cdot) E_\ell^k$. $\rho$ is
  $\D^\infty$-$\alpha$-Holderian, $H$ valued ; so $\rho$ is
  $\D^\infty$-bounded. Using the decomposition: $\rho = g \ast
  \beta_s$ with $g \in \L p(\Omega \times [0,1])$ and $\beta_s \in \L
  1$, uneasy computation shows that there exists $\varepsilon, 0 <
  \varepsilon < \alpha - 1/2$ such that
  \begin{displaymath}
    \norm{\rho}_{B_{2,+\infty}^{1/2+\varepsilon}}(H) \in \L{\infty-0}(\Omega) .
  \end{displaymath}
  And
  \begin{displaymath}
    \forall s \in \mathbb N_\ast, \norm{\Grad^s \rho}_{B_{2,+\infty}^{1/2+\varepsilon}(\bigotimes^{s+1} H)} \in \L{\infty-0}(\Omega)
  \end{displaymath}
  because the operator $\Grad$ applies only on $\omega$, while the
  Besov affiliation of $\Grad^s \rho$ is due only to the $t$ variable.

  Now as we consider $f \in \D^\infty(\Omega)$, then $\theta
  (D_{E_{\ell,t}^k}f) \in \L{\infty-0}(\Omega)$ ; suppose $\theta
  \colon \D^\infty \to \D^\infty_s, s \in \mathbb N_\ast$. But the
  lemma \ref{lemma:5.5}, in which hypothesis are only i) and ii)
  brings as a result I), then $\Grad^s \theta(D_{E_{\ell, t}^k} f)$
  is an $1/2$-Holderian process ($s \in \mathbb N_\ast$) and so
  \begin{displaymath}
    \Grad^s(\theta D_{E_{\ell,t}^k} f) \in {B_{2,1}^{1/2-\varepsilon}} (\bigotimes^s H)
  \end{displaymath}
  and
  \begin{displaymath}
    \norm{ \Grad^s(\theta D_{E_{\ell,t}^k} f) }_{B_{2,1}^{1/2-\varepsilon} (\bigotimes^s H)} \in \L{\infty-0}(\Omega).
  \end{displaymath}
  So $\int_0^1 \rho \diffoperator (\theta D_{E_{\ell,t}^k} f)$ exists as a
  Russo-Valois integral ; same for $\Grad^s( \int_0^1 \rho \diff
  (\theta D_{E_{\ell,t}^k} f))$, with $s \in \mathbb N_\ast$, if $\rho
  \in \D^\infty(\Omega, H)$ and $\theta \colon \D^\infty \to
  \D_s^\infty(\Omega)$.

  Now $T_{\proc U}$ being inversible, if $C = \rho E_{\ell,t}^k = \rho
  E_\ell^k \mathds 1_{[0,t[}(\cdot)$, $T_{\proc U}^{-1}(\proc U C
  \proc U^{-1})$ is denoted $S(C)$. We have:
  \begin{equation}
    \label{eqn:20}
    \begin{split}
      D_{S(C)}[\theta (W(h))] &= D_{S(C)} \left[ \int_0^1{}^t\dot h \proc U^{-1} \itodiff B \right] \\
      &= \int_0^1{}^t\dot h D_{S(C)}\proc U^{-1} \itodiff B +
      \int_0^1{}^t\dot h \proc U^{-1} S(C) \itodiff B.
    \end{split}
  \end{equation}
  And from $T_{\proc U}(S(C)) = \proc U D_{S(C)} \proc U^{-1} + S(C) =
  \proc U C \proc U^{-1} = U \rho E_{\ell,t}^k \proc U^{-1}$, we get
  $D_{S(C)} \proc U^{-1} = C \proc U^{-1} - \proc U^{-1} S(C)$ ; so
  (\ref{eqn:20}) becomes:
  \begin{equation}
    \label{eqn:21}
    D_{S(C)}\theta [W(h)] = \int_0^1{}^t\dot h \rho E_{\ell,t}^k \proc U^{-1} \itodiff B
    = \int_0^t{}^t\dot h \rho E_\ell^k \proc U^{-1} \itodiff B.
  \end{equation}

  Now:
  \begin{displaymath}
    \theta(D_{E_{\ell,t}^k}(W(h)) = \theta \left[ \int_0^1{}^t\dot h E_{\ell,t}^k \itodiff B \right] = \theta \left[ \int_0^t {}^t\dot h E_\ell^k \itodiff B \right] = \int_0^t{}^t \dot h E_\ell^k \proc U^{-1} \itodiff B.
  \end{displaymath}
  And
  \begin{equation} \label{eqn:22} \int_{\mathbb R} \rho
    \diffoperator (\theta D_{E_{\ell,t}^k}(W(h))) = \int_0^t \rho {}^t\dot h
    E_\ell^k \proc U^{-1} \itodiff B .
  \end{equation}
  From (\ref{eqn:21}) and (\ref{eqn:22}), we see that
  \begin{equation}
    \label{eqn:23}
    \int_{\mathbb R} \rho \diff \left[ \theta D_{E_{\ell,t}^k} (W(h)) \right] = D_{S(\rho E_{\ell,t}^k)} \theta [W(h)].
  \end{equation}
  Each member of the above equation is a $\theta$-derivation, so
  (\ref{eqn:23}) becomes valid when a polynomial in Gaussian variables
  is substituted to $W(h)$. As the Russo-Valois integral is continuous
  if $f_n \mapsto f$ in $\L{\infty-0}(\Omega)$, and $D_{S(\rho
    E_{\ell,t}^k)} \theta(f_n)$ converges towards $D_{S(\rho
    E_{\ell,t}^k)} \theta(f)$ (as distributions), we get that
  (\ref{eqn:23}) is still valid for $f \in \D^\infty(\Omega)$, and
  that $D_{S(\rho E_{\ell,t}^k)} \theta(f)$ is a function.

  \medskip 
  
  b) We have supposed that $\theta \colon \D^\infty(\Omega) \to
  \D^\infty_s(\Omega)$, $s$ being an integer. To the left-hand
  side of (\ref{eqn:23}), we can apply $s$ times the operator $\Grad$
  and each
  \begin{displaymath}
    \Grad^a \left( \int_0^1 \rho \diffoperator [\theta D_{E_{\ell,t}^k} f] \right) \quad \text{with } a \in \{1, \dots, s\}
  \end{displaymath}
  is a Russo-Valois integral, in $\L{\infty-0}(\Omega)$. So if
  $\theta \colon \D^\infty(\Omega) \to \D_s^\infty(\Omega)$,
  \begin{displaymath}
    D_{S(\rho E_{\ell,t}^k)}\theta(f) \in \D^\infty_s(\Omega),
  \end{displaymath}
  $s$ being an integer. If $\theta$ sends $\D^\infty(\Omega)$ in
  $\D^\infty_s(\Omega)$, but with $s$ non-integer, we will prove that
  $D_{S(\rho E_{\ell,t}^k)} \theta(f) \in \D_s^\infty(\Omega)$
  using the Phragmen-Lindel\"of method on the strip of $\mathbb R^2$
  delimited by $0 \leq s - E[s] \leq 1$, denoted $\Delta$. 
  \begin{center}
    \begin{tikzpicture}
      \draw[<->] (0,2.2) -- (0,0) node[below] {\scriptsize $0$} -- (2.2,0) ;
      \draw[thick, dashed] (2,0) node[below] {\scriptsize $1$} -- (2,2.2) ;
    \end{tikzpicture}
  \end{center}

  Let $f$ be a polynomial on Gaussian variables and consider the
  function of $z, 0 \leq \Re (z) \leq 1$, $\varphi$ being in
  $\D^\infty$, and $r = E[s]$ ; denote:
  \begin{displaymath}
    F(z) = \left\langle \mathrm e^{z^2} (1-L)^{\frac{r+z}{2}} \left[ \int_{\mathbb R} \rho (1-L)^{az + b} \diffoperator (\theta D_{E_{\ell,t}^k} f) \right], \varphi \right\rangle_{\L 2(\Omega)}
  \end{displaymath}
  We want that if $\Re (z) = 0$, $(1-L)^{az+b}
  \theta(D_{E_{\ell,t}^k} f) \in \D^\infty_r$ and if $\Re (z) = 1$,
  $(1-L)^{az+b} \theta(D_{E_{\ell,t}^k} f) \in
  \D^\infty_{r+1}$. These requirements imply: $a = -1/2$, $b =
  (s-r)/2$. $F(z)$ is holomorphic on $\Delta$ and is continuous on
  $\bar \Delta$. $\varphi$ being in $\D^\infty(\Omega)$,
  $(1-L)^{\frac{r+z}{2}} \varphi$ exists and is in
  $\D^\infty(\Omega)$. So $\abs{F(z)}$ is bounded on $\Delta$. Now
  $\forall p > 1, \forall q$ such that $1/p + 1/q = 1$,
  \begin{displaymath}
    \abs{F(i\lambda)} \leq \norm{\varphi}_{\L q (\Omega)} \norm{\mathrm e^{-\lambda^2}(1-L)^{\frac{r+i\lambda}{2}} \int_{\mathbb R} \rho \diffoperator ((1-L)^{ai\lambda+b} \theta D_{E_{\ell,t}^k} f) }_{\L p (\Omega)}.
  \end{displaymath}
  With the Sobolev logarithmic inequality, as
  $\theta(D_{E_{\ell,t}^k} f) \in \D^\infty_s$,
  \begin{displaymath}
    (1-L)^{\frac{-i\lambda}{2} + \frac{s-r}{2}} \theta(D_{E_{\ell,t}^k} f) \in \D^\infty_r,
  \end{displaymath}
  so
  \begin{displaymath}
    \int_{\mathbb R} \rho \diffoperator ((1-L)^{\frac{-i\lambda}{2} + \frac{s-r}{2}} \theta(D_{E_{\ell,t}^k} f)) \in \D^\infty_r(\Omega)
  \end{displaymath}
  and
  \begin{displaymath}
    (1-L)^{\frac{r + i \lambda}{2}} \int_{\mathbb R} \rho \diffoperator ((1-L)^{\frac{-i\lambda}{2} + \frac{s-r}{2}} \theta(D_{E_{\ell,t}^k} f)) \in \L{\infty-0}(\Omega).
  \end{displaymath}
  So $\abs{F(i\lambda)} \leq \norm{\varphi}_{\L q (\omega)} \times
  C_1(f)$, $C_1(f)$ constant, $f$-dependant. A similar computation
  shows that: $\abs{F(1 + i\lambda)} \leq \norm{\varphi}_{\L q
    (\omega)} \times C_2(f)$, $C_2(f)$ constant, $f$-dependant. So
  thanks to the Phragmen-Lindel\"of method, we have: \\ $\abs{F(s-r)}
  \leq \norm{\varphi}_{\L q (\Omega)} \times C_3(f)$, $C_3(f) \in
  \L{\infty-0}(\Omega)$. And
  \begin{displaymath}
    F(s-r) = (1-L)^{\frac 1 2} \int_{\mathbb R} \rho \diffoperator (\theta D_{E_{\ell,t}^k} f) \in \L{\infty-0}.
  \end{displaymath}
  So $\int_{\mathbb R} \rho \diffoperator (\theta D_{E_{\ell,t}^k} f) \in
  \D^\infty_s(\Omega)$. As $\int_{\mathbb R} \rho \diffoperator (\theta
  D_{E_{\ell,t}^k} f) = D_{S(\rho E_{\ell,t}^k)} \theta(f)$,
  $\rho$ being \\$\D^\infty$-$\alpha$-Holderian with $\alpha > 1/2$,
  if $\theta \colon \D^\infty \to \D^\infty_s$, then $D_{S(\rho
    E_{\ell,t}^k)} \theta(f)$ sends $\D^\infty$ to $\D^\infty_s$.
  
  \medskip

  c) Now we suppose again $s \in \mathbb N_\ast$ and $\theta \colon
  \D^\infty \to \D^\infty_s(\Omega)$. Then \\ $D_{S(\rho E_{\ell,t}^k)}
  \theta(f) = \div S(\rho E_{\ell,t}^k) \Grad \theta(f)$ which
  shows that $D_{S(\rho E_{\ell,t}^k)} \theta(f) \in
  \D^\infty_{s-2}(\Omega)$. If $\theta \colon \D^\infty(\Omega) \to
  \D^\infty_{s+1}(\Omega)$ then $D_{S(\rho E_{\ell,t}^k)} \theta(f)
  \in \D^\infty_{s-1}(\Omega)$. So by interpolation, we have: for all
  $s \in \mathbb R$, if $\theta \colon \D^\infty(\Omega) \to
  \D^\infty_s(\Omega)$, then $D_{S(\rho E_{\ell,t}^k)} \theta(f)
  \in \D^\infty_{s-2}(\Omega)$.

  \medskip

  d) Now $\tilde \rho$ being an $\D^\infty$-$\alpha$-Holderian
  function, with $1 > \alpha > 0$, we consider $\tilde \rho \ast
  \beta_{1-z}$, $0 \leq \Re (z) \leq 1$ ; then $\tilde \rho \ast
  \beta_{1-z}$ is a $\gamma$-$\D^\infty$-Holderian function with
  $\gamma > \alpha + \Re (z)$ and
  \begin{displaymath}
    \abs{ \tilde \rho \ast \beta_{1-z}(t+h) - \tilde \rho \ast \beta_{1-z}(t) } \leq C h^{\alpha + \Re(z)}(1 + \abs z).
  \end{displaymath}

  Now consider the function
  \begin{displaymath}
    \Tilde F(z) = \mathrm e^{z^2} \langle (1-L)^{az+b} D_{S(\tilde \rho \ast \beta_{1-z} E_{\ell,t}^k)} \theta(f) , \varphi \rangle_{\L 2 (\Omega)}
  \end{displaymath}
  with $f$ a polynomial in Gaussian variables and $\varphi \in
  \D^\infty(\Omega)$, $z$ such that $0 \leq \Re (z) \leq 1/2$. We will
  apply the Phragmen-Lindel\"of method to $\Tilde F(z)$, on the strip $0
  \leq \Re (z) \leq 1/2$. $\Tilde F(z)$ is holomorphic on $0< \Re(z) <
  1/2$, continuous on $0 \leq \Re (z) \leq 1/2$ and bounded on this
  domain thanks to the Sobolev logarithmic inequality.

  If $\Re (z) = 0$,
  \begin{displaymath}
    \Tilde F(i \lambda) = \mathrm e^{-\lambda^2} \langle (1-L)^{ia\lambda + b} D_{S(\tilde \rho \ast \beta_{1-i\lambda} E_{\ell,t}^k)} \theta(f), \varphi \rangle_{\L 2 (\Omega)},
  \end{displaymath}
  then $\tilde \rho \ast \beta_{1-i\lambda}$ is Holderian with yield
  strictly greather than $0$ and \\ $D_{S(\tilde \rho \ast
    \beta_{1-i\lambda} E_{\ell,t}^k)} \theta(f) \in \D^\infty_{s-2}$
  (remind that we supposed $\theta \colon \D^\infty \to
  \D^\infty_s$). As we want $\Tilde F(i\lambda) \in \L{\infty-0}$,
  this implies: $-2b + s - 2 = 0$ so $b = s/2 - 1$.

  If $\Re(z) = 1/2$, 
  \begin{displaymath}
    \Tilde F(\frac 1 2 + i \lambda) = \mathrm e^{\frac 1 4 - \lambda^2 + i \lambda} \langle (1-L)^{(\frac 1 2 + i \lambda)a + b} D_{S(\tilde \rho \ast \beta_{\frac 1 2 - i \lambda} E_{\ell,t}^k)} \theta(f), \varphi \rangle_{\L 2 (\Omega)},
  \end{displaymath}
  $\tilde \rho \ast \beta_{1/2 - i \lambda}$ is
  $\D^\infty$-$\alpha$-Holderian with yield strictly greather than
  $1/2$, and 
  \begin{displaymath}
    D_{S(\tilde \rho \ast \beta_{1/2 - i\lambda} E_{\ell,t}^k)} \theta(f) \in \D^\infty_s.
  \end{displaymath}
  As we want: $\Tilde F(1/2 + i \lambda) \in \L{\infty-0}$, we must
  have: $-(a+2b)+s = 0$ so $a=2$. Then $\abs{\Tilde F(z)}$ is bounded
  on the whole band $0 \leq \Re (z) \leq 1/2$.

  Now we remind equation (\ref{eqn:16}) in Theorem \ref{thm:5.9},
  which is still valid in our setting:
  \begin{displaymath}
    \Grad \theta[f] = D_{S(\tilde \rho \ast \beta_{1-z} E_{\ell,t}^k)} \theta(f) + \proc U (\theta (\Grad f)).
  \end{displaymath}
  Then for an induction to begin, we need $\Grad \theta(f) \in
  \D^\infty_{s-1+\delta}, \delta > 0$. For this, it is enough that
  $D_{S(\tilde \rho \ast \beta_{1-z} E_{\ell,t}^k)} \theta(f) \in
  \D^\infty_{s-1+\delta}, \delta > 0$. Now we know that: for all $z$
  such that $0 \leq \Re (z) \leq 1/2$, $\abs{\Tilde F(z)} \in
  \L{\infty-0}(\Omega)$. So we look for a $z \in [0,1/2]$ such that:
  \begin{displaymath}
    (1-L)^{2 \Re(z) + \frac s 2 - 1}D_{S(\tilde \rho \ast \beta_{1-z})} \theta(f) \in \L{\infty-0}
  \end{displaymath}
  with $D_{S(\tilde \rho \ast \beta_{1-z})} \theta(f) \in
  \D^\infty_{s-1+\delta}$. This implies $-2(2 \Re (z) + s/2 - 1) + s-1
  + \delta = 0$ wich implies $\Re (z) > 1/4$. Then $\Grad \theta(f)
  \in \D^\infty_{s-1+\delta}$ and the induction can begin, so
  $\theta(f) \in \D^\infty$. Now we show that as $\proc U$ is
  $\gamma$-Holderian with $\gamma > 1/4$, then the condition $\Re (z)
  > 1/4$ is fulfilled.

  Now if we choose $\rho = f_k^\ell$, with
  \begin{displaymath}
    \left( \Grad \proc U^{-1} \right) \proc U = \sum_{k,\ell = 1}^n f_k^\ell E_\ell^k,
  \end{displaymath}
  ($f_k^\ell$ being extended as $0$ on $\mathbb R_- \cup
  {[2,+\infty[}$, and affine on $[1,2]$). We have that each $f_k^\ell$
  is $\alpha$-Holderian, $\gamma > 1/4$. Then for all $(\ell,k)$,
  $f_k^\ell \ast \beta_z$, with $\Re(z) = 1/4$, is in $\mathcal
  C^1$. So $\tilde \varphi_k^\ell = (f_k^\ell \ast \beta_z)'$ exists
  (Proposition 2.2.i) and $\tilde \rho_k^\ell \ast \beta_{1-z} =
  f_k^\ell$, which is $\gamma$-Holderian with $\gamma > 1/4$
  (Proposition 2.2.iii).
\end{proof}

Now we will prove another theorem of inversibility and
$\D^\infty$-morphism, with another hypothesis on $(T_{\proc U})^{-1}$.

\begin{defn} 
  \label{defn:5.5}
  A family $(A_i)_{i \in \mathbb N_\ast}$ of elementary tangent
  processes will be said to be a multiplicative family if and only if
  it verifies the following two conditions: $(e_i)_{i \in \mathbb
    N_\ast}$ being an Hilbertian basis of $H$,
  \begin{enumerate}[label=\alph*)]
  \item if $X \in \D^\infty(\Omega, H)$, then $\sum_{i=1}^\infty e_i
    \otimes A_i X \in \D^\infty(\Omega, H \otimes H)$,
  \item $(X_i)_{i \in \mathbb N_\ast}$ being such that
    $\sum_{i=1}^\infty e_i \otimes X_i \in \D^\infty(\Omega, H \otimes
    H)$, then \\ $\sum_{i=1}^\infty A_iX_i \in \D^\infty(\Omega,H)$.
  \end{enumerate}
\end{defn}

\begin{lem} 
  \label{lemma:5.12} 
  The family $A_i = \proc U(t,\omega) e_i . \proc U^{-1}(t,w), i \in
  \mathbb N_\ast$ is a mutiplicative family.
\end{lem}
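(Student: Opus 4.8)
The plan is to verify directly the two defining conditions a) and b) of Definition 5.5 for the family $A_i = \proc U e_i \proc U^{-1}$, exploiting throughout that $\proc U$ is an adapted multiplicator valued in the $n\times n$ unitary matrices, so that $\proc U^{-1} = {}^t\proc U$ shares the same properties. First I would record the two structural facts that drive everything. (1) Pointwise in $(t,\omega)$ the matrix $\proc U(t,\omega)$ is a linear isometry of $\mathbb R^n$, so the conjugation $E \mapsto \proc U E \proc U^{-1}$ preserves both antisymmetry and the Hilbert--Schmidt norm; combined with Lemma 4.1.ii (a multiplicator is a continuous operator, and a composition of multiplicators is again one) this shows that each $A_i$ is an $n\times n$-A.M. multiplicator, i.e.\ an elementary tangent process. (2) Under the regularity carried by $\proc U$ in this chapter (as in Theorem 5.5), every iterated derivative $\Grad^j \proc U^{\pm 1}$ is $\otimes^j H$-bounded uniformly in $t$, so by Theorem 2.4 and Corollary 2.4 both $\proc U$ and $\proc U^{-1}$ extend to $\D^\infty$-continuous multiplicators on $\D^\infty(\Omega,H)$ and on every tensor power $\D^\infty(\Omega, \otimes^k H)$.

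For condition a), fix $X \in \D^\infty(\Omega,H)$ and set $S_N = \sum_{i=1}^N e_i \otimes A_i X = \sum_{i=1}^N e_i \otimes \proc U e_i \proc U^{-1} X$. I would first obtain $\L 2$-convergence: since $\proc U(t,\omega)$ is an isometry, Bessel--Parseval gives, pointwise in $(t,\omega)$, a bound $\sum_i \| A_i X\|_{\mathbb R^n}^2 \leq C\,\|X\|_{\mathbb R^n}^2$ with $C$ independent of $\proc U$, so $(S_N)$ is Cauchy in $\L 2(\Omega, H\otimes H)$. To upgrade this to $\D^\infty$, apply the Leibniz rule to $\Grad^r(e_i \otimes \proc U e_i \proc U^{-1} X)$; each resulting term places at most finitely many derivatives on the $\otimes^j H$-bounded factors $\proc U^{\pm1}$ and the remainder on $X$, while the undifferentiated copy of $e_i$ is absorbed by the isometry estimate. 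Summing in $i$ and using Theorem 2.4 and Corollary 2.4 to pass between $H$ and $H\otimes H$, the partial sums are $\D^\infty$-bounded and $\L 2$-strongly convergent; interpolation (Theorem 2.12, exactly as at the end of the proof of Theorem 3.2) then yields $S := \sum_i e_i\otimes A_i X \in \D^\infty(\Omega, H\otimes H)$.

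For condition b), assume $\sum_i e_i \otimes X_i \in \D^\infty(\Omega, H\otimes H)$ and consider $\sum_i A_i X_i = \sum_i \proc U e_i \proc U^{-1} X_i$. This is the contraction dual to a): I would realize it by applying $\div_R$ (defined before Lemma 5.1) to a $\proc U$-conjugate of the tensor $\sum_i e_i \otimes X_i$, so that the $\D^\infty$-continuity of $\div_R$ and the identity of Lemma 5.1 reduce the claim to the $\D^\infty$-regularity of $\sum_i e_i \otimes \proc U^{-1} X_i$ twisted by $\proc U$. Once more the pointwise isometry of $\proc U$ bounds the $\L 2(\Omega,H)$-norm of the partial sums by the $H\otimes H$-norm of $\sum_i e_i\otimes X_i$, the Leibniz expansion together with the uniform bounds on $\Grad^j \proc U^{\pm1}$ gives $\D^\infty$-boundedness of the partial sums, and interpolation yields $\sum_i A_i X_i \in \D^\infty(\Omega,H)$.

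The hard part will be not the elementary $\L 2$ isometry estimate but its promotion to genuine $\D^\infty$-convergence of the \emph{infinite} series, uniformly in the tensor index $i$: after a high-order Leibniz expansion the number of terms grows, and the differentiated copy of $e_i$ must be controlled simultaneously with the summation over $i$. This is precisely where the uniform-in-$t$ bounds on all $\|\Grad^j \proc U^{\pm1}\|_{\otimes^j H}$, the extension Theorems 2.4 and Corollary 2.4 (which make conjugation by $\proc U$ a $\D^\infty$-continuous operation on every tensor power), and the interpolation Theorem 2.12 are indispensable. With a) and b) established, $(A_i)_{i\in\mathbb N_\ast}$ is a multiplicative family.
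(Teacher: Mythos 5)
Your proposal rests on a misreading of the object $A_i$, and this undermines both halves of the argument. In Lemma 5.12 (and in Definition 5.5) the $e_i$ are the vectors of a Hilbertian basis of the Cameron--Martin space $H$, not $n\times n$ matrices, and the dot in $A_i = \proc U\, e_i.\proc U^{-1}$ denotes the action of $e_i$ as a derivation: $A_i = \proc U\,\langle \Grad \proc U^{-1}, e_i\rangle_H = -\left(e_i\cdot\proc U\right)\proc U^{-1}$ (its antisymmetry comes from differentiating ${}^t\proc U\,\proc U = \mathrm{Id}$, not from ``conjugation preserves antisymmetry''). Consequently your pointwise Bessel--Parseval bound $\sum_i \|A_i X\|_{\mathbb R^n}^2 \leq C\|X\|_{\mathbb R^n}^2$ is unfounded: the series $\sum_i e_i\otimes A_i X$ aggregates the \emph{full Malliavin gradient} of $\proc U^{-1}$, and unitarity of $\proc U(t,\omega)$ gives no pointwise control of it. A second gap is that you import the hypothesis of Theorem 5.5 --- uniform $t$-bounds on all $\|\Grad^j\proc U^{\pm 1}\|_{\otimes^j H}$ --- which is simply not assumed here: in the setting of Theorem 5.12, $\proc U$ is only an adapted multiplicator with unitary values, so your Leibniz-expansion-plus-interpolation scheme has nothing to stand on.

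The paper's proof needs neither estimate. For condition a) it uses the single identity $(\Grad\proc U)\cdot X = \Grad(\proc U\cdot X) - \proc U_R(\Grad X)$: since $\proc U$ is a multiplicator, $\proc U X\in\D^\infty(\Omega,H)$, so the right-hand side lies in $\D^\infty(\Omega,H\otimes H)$, and the whole series is identified in closed form, $\sum_{i=1}^\infty e_i\otimes A_iX = \proc U_R^{-1}\left((\Grad\proc U)\cdot X\right)$, which is in $\D^\infty(\Omega,H\otimes H)$ by the $\D^\infty$-continuity of the $R$-extension (Theorem 2.4, Corollary 2.4) --- no term-by-term convergence argument, no interpolation. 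For condition b) the paper dualizes: defining $B\cdot X$ and $\Grad(B\cdot X)=(\Grad B)\cdot X + B_R(\Grad X)$ distributionally, the same identity yields a map $\Phi(X)=\sum_i e_i\otimes A_iX$ from $\D^{-\infty}(\Omega,H)$ to $\D^{-\infty}(\Omega,H\otimes H)$, whose adjoint $\Phi^*$ sends $\D^\infty(\Omega,H\otimes H)$ to $\D^\infty(\Omega,H)$ and is exactly the contraction $Y^{k\ell}e_k\otimes e_\ell \mapsto -\sum_i A_i\left(Y^{i\ell}e_\ell\right)$ required by b). Your alternative route via $\div_R$ and Lemma 5.1 does not produce this contraction: Lemma 5.1 computes $\div_R(\Grad X)+X=\Grad(\div X)$, an identity unrelated to $\sum_i A_iX_i$. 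To repair your proof you would have to replace the isometry/Parseval estimates by the gradient identity for a), and the $\div_R$ step by the adjoint argument for b).
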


\begin{proof}
  Condition a) of Definition \ref{defn:5.5}. As $\proc U$ is a
  multiplicator, if $X \in \D^\infty(\Omega, H)$, $\proc U . X \in
  \D^\infty(\Omega, H)$. Then:
  \begin{displaymath}
    (\Grad \proc U) . X = \Grad (\proc U . X) - \proc U_R(\Grad X)
  \end{displaymath}
  and
  \begin{displaymath}
    \sum_{i=1}^\infty e_i \otimes A_i X = \proc U_R^{-1}(\Grad \proc U . X).
  \end{displaymath}

  Condition b) of Definition \ref{defn:5.5}. $B$ being an $n \times
  n$-matrix, let $X \in \D^{-\infty}(\Omega,H)$; we define $B \cdot X$
  by:
  \begin{displaymath}
    \forall Y \in \D^\infty(\Omega,H), \, (B \cdot X,Y) = (X,{}^tB Y).
  \end{displaymath}
  We will give meaning to $\Grad(B \cdot X)$ with $X \in
  \D^{-\infty}(\Omega, H)$ by:
  \begin{displaymath}
    \Grad(B \cdot X) = (\Grad B) \cdot X + B_R (\Grad X)
  \end{displaymath}
  with $(\Grad B) \cdot X$ defined on $Y \in \D^\infty(\Omega, H
  \otimes H)$ by
  \begin{displaymath}
    \begin{split}
      (\Grad B \cdot X, Y) &= \left( \Grad B \cdot X, \sum_{k,\ell} Y^{k\ell} e_k \otimes e_\ell \right) \\
      &= \left( X, \sum_{k=1}^\infty \left(e_k {}^tB \right) \left(
          \sum_{\ell=1}^\infty Y^{k\ell} e_\ell \right) \right)
    \end{split}
  \end{displaymath}
  and $(B \Grad X, Y) = (\Grad X, {}^tB Y)$.

  Following a similar proof as in condition a) in the same way we
  have: if $X \in \D^{-\infty}(\Omega, H)$, then $\sum_{i=1}^\infty
  e_i \otimes A_i X \in \D^{-\infty}(\Omega,H \otimes H)$. Then the
  map $\Phi \colon \D^{-\infty}(\Omega,H) \to \D^{-\infty}(\Omega,H
  \otimes H)$,
    \begin{displaymath}
      \Phi(X) = \sum_{i=1}^\infty e_i \otimes A_i X \in \D^{-\infty}(\Omega, H \otimes H),
    \end{displaymath}
    admits as dual map:
    \begin{align*}
      \Phi^\ast &\colon \D^\infty(\Omega,H \otimes H) \to
      \D^\infty(\Omega,H) \\
      \Phi^\ast &(Y^{k\ell} e_k \otimes e_\ell) = - \sum_{i=1}^\infty
      A_i (Y^{i\ell} e_\ell) \in \D^\infty(\Omega,H)
    \end{align*}
    Now let a family of $(X_i)_{i \in \mathbb N_\ast}$ such that:
    \begin{displaymath}
      \sum_{i=1}^\infty e_i \otimes X_i \in \D^\infty(\Omega, H \otimes H).
    \end{displaymath}
    Then $X_i = Y_i^\ell e_\ell$ is such that $Y = \sum_{i=1}^\infty
    Y_i^\ell e_i \otimes e_\ell \in \D^\infty(\Omega, H \otimes H)$
    and
    \begin{displaymath}
      \Phi^\ast (Y) = -\sum_{i=1}^\infty A_i (Y^{i \ell} e_\ell) = -\sum_{i=1}^\infty A_i X_i \in \D^\infty(\Omega,H).
    \end{displaymath}
\end{proof}

\begin{thm} 
  \label{thm:5.12}
  Let $\proc U$ an adapted process, multiplicator with values in $n
  \times n$-unitary matrices ; and let $\theta_{\proc U}$ the
  associated morphism from $\L{\infty-0}$ in itself. Let $A$ be an
  elementary tangent process, that is each $A$ is an $n \times
  n$-A.M., and is a multiplicator ; we define $T_{\proc U}$ by:
  \begin{displaymath}
    T_{\proc U}(A) = \proc U D_A \proc U^{-1} + A.
  \end{displaymath}
  $T$ is a ``pseudo linear tangent map'' of $\theta$.

  We suppose that $T_{\proc U}$ admits an inverse $(T_{\proc
    U})^{-1}$, which verifies the additional property: for each
  multiplicative family $(A_i)_{i \in \mathbb N_\ast}$, $(T_{\proc
    U})^{-1}(A_i)_{i \in \mathbb N_\ast}$ is a multiplicative
  family. Then if $(\theta_{\proc U})^{-1}$ exists from
  $\L{\infty-0}(\Omega)$ in itself, $(\theta_{\proc U})^{-1}$ is a
  $\D^\infty$-morphism.
\end{thm}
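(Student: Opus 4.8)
The plan is to adapt, almost verbatim, the strategy used in the proof of Theorem \ref{thm:5.9}, replacing the single $\D^\infty$-$\alpha$-Holderian process $Z$ together with its Russo-Valois machinery by a multiplicative family produced by $(T_{\proc U})^{-1}$. First I would invoke Lemma \ref{lemma:5.12} to get that $A_i = \proc U e_i \proc U^{-1}$ is a multiplicative family, and then use the standing hypothesis on $(T_{\proc U})^{-1}$ to conclude that $Z_i = (T_{\proc U})^{-1}(A_i)$ is again a multiplicative family. From $(Z_i)_{i\in\mathbb N_\ast}$ I would build, in the spirit of Definition \ref{defn:5.4} but summed against the basis $(e_i)$, an $\mathbb R$-linear operator $\mathcal D$ on $\D^\infty(\Omega)$ whose action mimics $D_Z$; conditions a) and b) of Definition \ref{defn:5.5} are precisely what guarantee that $\mathcal D$ maps $\D^\infty(\Omega)$ continuously into $\D^\infty(\Omega,H)$, with no recourse to Besov or Holder estimates.

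Next I would reproduce the differentiation computation of Theorem \ref{thm:5.9}. Starting from $\theta_{\proc U}[W(h)] = \int_0^1 {}^t\dot h \proc U^{-1}\itodiff B$ and setting $Y = \Grad \proc U^{-1}\proc U$, the relation $T_{\proc U}(Z_i) = A_i$ furnishes the family analogue of equation (13) of Theorem \ref{thm:5.9}, and hence the $\theta$-derivation identity
\begin{equation*}
  \Grad \theta_{\proc U}[f] = \mathcal D[\theta_{\proc U}(f)] + \proc U[\theta_{\proc U}(\Grad f)],
\end{equation*}
which holds on $W(h)$, $h\in H$, and therefore on all Gaussian polynomials, since both sides are $\theta$-derivations coinciding there. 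Passing to the adjoint $\theta_{\proc U}^\ast = (\theta_{\proc U})^{-1}$ and using the Leibniz rule of Lemma \ref{lemma:5.10} in its multiplicative-family form, I would dualise this to the distributional identity
\begin{equation*}
  \Grad \theta_{\proc U}^\ast(g) = \theta_{\proc U}^\ast(\proc U^{-1}\Grad g) - \theta_{\proc U}^\ast(\proc U^{-1}\mathcal D g),
\end{equation*}
valid for $g\in\D^\infty(\Omega)$, exactly as the last display in the proof of Theorem \ref{thm:5.9}.

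Finally I would close by induction on $r$. Since $\theta_{\proc U}^\ast$ preserves laws, it sends $\D^\infty$ into $\L{\infty-0}$. Assuming $\theta_{\proc U}^\ast\colon \D^\infty\to\D^\infty_r$, the right-hand side of the distributional identity lies in $\D^\infty_r(\Omega,H)$, because $\proc U^{-1}\Grad g$ and $\proc U^{-1}\mathcal D g$ are genuine $\D^\infty$-vector fields ($\proc U$ being a multiplicator and $\mathcal D$ mapping into $\D^\infty(\Omega,H)$); hence $\Grad \theta_{\proc U}^\ast(g)\in\D^\infty_r$ and therefore $\theta_{\proc U}^\ast(g)\in\D^\infty_{r+1}$. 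The $\D^\infty$-continuity then follows from the closed graph theorem, which gives that $(\theta_{\proc U})^{-1}$ is a $\D^\infty$-morphism.

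The main obstacle I anticipate is the second step: giving the operator $\mathcal D$ a rigorous meaning on distributions and establishing the multiplicative-family analogue of the adjoint Leibniz formula of Lemma \ref{lemma:5.10}, so that the dualisation producing the distributional identity for $\theta_{\proc U}^\ast$ is legitimate. Unlike Theorem \ref{thm:5.9}, where the $\alpha$-Holderian hypothesis let one treat $D_Z$ through an honest multiplicator and Russo-Valois integrals, here all regularity must be extracted solely from the two summability conditions defining a multiplicative family; the bulk of the work is checking that these conditions survive the $\div_R$ and $\Grad$ contractions and the passage to adjoints in $\D^{-\infty}(\Omega,H)$.
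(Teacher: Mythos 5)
Your proposal is correct and takes essentially the same route as the paper's own proof: the same family $C_i = (T_{\proc U})^{-1}(\proc U e_i \proc U^{-1})$ obtained from Lemma \ref{lemma:5.12} and the hypothesis on $(T_{\proc U})^{-1}$, the same operator $f \mapsto \sum_{i=1}^\infty D_{C_i}(f)e_i = \div_R\bigl(\sum_{i=1}^\infty e_i \otimes C_i \Grad f\bigr)$ made rigorous by conditions a) and b) of Definition \ref{defn:5.5}, the same identity $\Grad \theta(f) = \sum_i D_{C_i}(\theta(f))e_i + \proc U[\theta(\Grad f)]$ established first on $W(h)$ and extended to Gaussian polynomials because both sides are $\theta$-derivations, and the same induction $\theta^{-1}\colon \D^\infty \to \D_r^\infty \Rightarrow \D_{r+1}^\infty$ closed by regularity of the right-hand side. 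Even the obstacle you flag is resolved in the paper exactly as you anticipate: no Besov or H\"olderian estimates are used, and in place of a family analogue of Lemma \ref{lemma:5.10} the paper handles the distributional step by the direct pairing $\left(\sum_i D_{C_i}(\alpha)e_i, g\right) = \left(\alpha, \div\left(\sum_i C_i \Grad \langle e_i, g\rangle_H\right)\right)$ and by testing the identity against $\proc U\theta(g)$ for polynomial vector maps $g$, which is precisely the role your conditions a) and b) are meant to play.
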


\begin{proof}
  We write $(T_{\proc U})^{-1}(A_i) = C_i$, with $A_i = \proc U
  (t,\omega)(e_i . \proc U^{-1})(t,\omega)$. \\ $(e_i)_{i \in \mathbb
    N_\ast}$ being an Hilbertian basis of $H$, and $h \in H$, then
  straightforward computation shows that:
  \begin{equation}
    \label{eqn:24}
    e_i . \theta[W(h)] - D_{C_i} \theta[W(h)] = \int_0^1 {}^t\dot h \proc U^{-1} e_i \diff s = \langle \proc U \theta(\Grad W(h)), e_i \rangle_H .
  \end{equation}
  Both members of (\ref{eqn:24}) are $\theta$-derivations, so
  (\ref{eqn:24}) is still valid for polynomials in Gaussian variables.

  Let $f$ such that $\theta(f) \in \D^\infty$ ; then $f \in
  \L{\infty-0} (\Omega)$ and if $f_n$ is a sequence of polynomials in
  Gaussian variables which converges $\L{\infty-0}$ towards $f$ then
  (\ref{eqn:24}) is still valid with $f$ as above, but (\ref{eqn:24})
  has to be rewritten:
  \begin{equation}
    \label{eqn:25}
    \sum_{i=1}^{\infty} D_{C_i}[\theta(f)] e_i = \Grad \theta(f) - \proc U [\theta \Grad f]
  \end{equation}
  and is an equation using distributions. If $\alpha \in
  \L{\infty-0}(\Omega)$, $\sum_{i=1}^\infty D_{C_i}(\alpha) e_i$ is a
  distribution because
  \begin{displaymath}
    \begin{split}
      \forall g \in \D^\infty(\Omega, H), \, \left( \sum_{i=1}^\infty
        D_{C_i}(\alpha) e_i, g \right) &= \left( \alpha,
        \sum_{i=1}^\infty D_{C_i}(\langle e_i, g \rangle_H) \right) \\
      &= \left( \alpha, \div \left( \sum_{i=1}^\infty C_i \Grad \langle e_i,
        g \rangle_H \right) \right)
    \end{split} 
  \end{displaymath}
  and now using Definition \ref{defn:5.5}.b, as $\theta(f) \in
  \D^\infty(\Omega)$, we have:
  \begin{displaymath}
    \sum_{i=1}^\infty D_{C_i}(\theta(f)) e_i = \div_R \left( \sum_{i=1}^\infty e_i \otimes C_i \Grad \theta(f) \right)
  \end{displaymath}
  which is in $\D^\infty(\Omega, H)$ thanks to Definition
  \ref{defn:5.5}.a; $\Grad \theta(f) \in \D^\infty(\Omega,H)$
  because $\theta(f) \in D^\infty$. Now we apply each item of
  (\ref{eqn:25}) to $\proc U \theta(g)$ where $g$ is a polynomial
  vector map: so we see that there exist a $\D^\infty(\Omega,H)$
  vector field $W$ such that
  \begin{equation} \label{eqn:26}
    \int_\Omega \langle W, \proc U \theta(g) \rangle_H = (\Grad f, g)
  \end{equation}

  Now we take a sequence, $g_n$, of polynomials vector maps,
  converging in $\D^\infty(\Omega,H)$ towards $g \in
  \D^\infty(\Omega,H)$. From (\ref{eqn:26}), we see that the
  $\L{\infty-0}$-limit $(\Grad f, g)$ exists and this for $f$ such
  that $\theta(f) \in \D^\infty$.

  The set $\{f \mid \theta(f) \in \D^\infty\}$ is
  $\L{\infty-0}$-dense in $\L{\infty-0}(\Omega)$ which implies $\Grad
  f \in \D^\infty(\Omega,H)$, so $f \in \D^\infty_1(\Omega)$. So
  (\ref{eqn:24}) is then an equation in which, each item is a
  function.

  Now we proceed by induction: we know, by hypothesis, that
  $\theta^{-1} \colon \D^\infty \to \L{\infty-0}$. Suppose that for
  all $f$ such that $\theta(f) \in \D^\infty$, then $f \in
  \D^\infty_r(\Omega)$. Then (\ref{eqn:25}) can be rewritten: 
  \begin{displaymath}
    \Grad f = \theta^{-1} \proc U^{-1} \left[ \Grad \theta(f) - \sum_{i=1}^\infty D_{C_i} \theta(f) e_i \right]    
  \end{displaymath}
  which implies $\Grad f \in \D^\infty_r(\Omega, H)$, so $f \in
  \D^\infty_{r+1}(\Omega)$ and $\theta^{-1}$ is a
  $\D^\infty$-morphism of $\D^\infty(\Omega)$ in itself.
\end{proof}

\begin{rem} 
  \label{remark:5.9} 
  If $\theta$ is a $\D^\infty$-diffeomorphism, $(T_{\proc U})^{-1}$
  will transform a multiplicative family $(A_i)_{i \in \mathbb
    N_\ast}$ in a multiplicative family $((T_{\proc U})^{-1} A_i)_{i
    \in \mathbb N_\ast}$.
\end{rem}

\begin{proof}
  \begin{displaymath}
    T_{\proc U} A_i = \proc U D_A \proc U^{-1} + A_i = -(D_{A_i} \proc U) \proc U^{-1} + A_i.
  \end{displaymath}

  For the condition a): we have to show that if $X \in
  \D^\infty(\Omega, H)$, 
  \begin{displaymath}
    \sum_{i=1}^\infty e_i \otimes (T_{\proc U} A_i) X \in \D^\infty(\Omega, H \otimes H).
  \end{displaymath}
  It is enough to show that $\sum_{i=1}^\infty e_i \otimes (D_{A_i}
  \proc U) \proc U^{-1} X \in \D^\infty(\Omega, H \otimes H)$. As
  \begin{displaymath}
    \sum_{i=1}^\infty e_i \otimes D_{A_i} f = \div_R \left( \sum_{i=1}^\infty e_i \otimes A_i \Grad f \right),
  \end{displaymath}
  we see that: $f \mapsto \sum_{i=1}^\infty D_{A_i} f e_i$ is a
  derivation, that sends $\D^\infty(\Omega)$ in
  $\D^\infty(\Omega,H)$. So $\sum_{i=1}^\infty e_i \otimes (D_{A_i}
  \proc U) \proc U^{-1} X$ is in $\D^\infty(\Omega, H \otimes H)$.

  Condition b): let $\Phi$ the map $\colon \D^\infty(\Omega, H \otimes H)
  \to \D^\infty(\Omega,H)$ that sends $\Phi (\sum_{i=1}^\infty e_i
  \otimes X_i) = \sum_{i=1}^\infty A_i X_i$. As $(A_i)_{i \in \mathbb
    N_\ast}$ is a multiplicative family, $\Phi$ is legitimate. Then, if
  $Z \in \D^\infty(\Omega, H)$: 
  \begin{displaymath}
    \Phi^\ast \colon \D^{-\infty}(\Omega,H) \to \D^{-\infty}(\Omega,H \otimes H)
  \end{displaymath}
  and
  \begin{displaymath}
    \Phi^\ast(Z) = - \sum_{i=1}^\infty e_i \otimes A_iZ.
  \end{displaymath}
  So the same treatment as in condition a) above proves that the
  condition b) is fulfilled.

  Now we know that $(T_{\proc U} A_i)_{i \in \mathbb N_\ast}$ is a
  multiplicative family. We will compute $(T_{\proc U})^{-1}$ to prove
  that $((T_{\proc U})^{-1} A_i)_{i \in \mathbb N_\ast}$ is a
  multiplicative family. From $\theta[W(h)] = \int_0^1{}^t{\dot
    h}{\proc U}^{-1}{\itodiff B}$ and $\theta^{-1}[W(h)] =
  \int_0^1{}^t{\dot h}{\proc V}^{-1}{\itodiff B}$, we deduce:
  \begin{displaymath}
    \proc V = \theta^{-1}(\proc U).
  \end{displaymath}
  Then 
  \begin{displaymath}
    ( \theta^{-1} \circ D_A \circ \theta )(W(h))
    = \int_0^1 {}^t{\dot h} \theta^{-1}(D_A \proc U^{-1}) \proc V \itodiff B
    + \int_0^1 {}^t{\dot h} \theta^{-1}(\proc U^{-1} A) \proc V \itodiff B
  \end{displaymath}
  which implies:
  \begin{displaymath}
    ( \theta^{-1} \circ D_A \circ \theta )(W(h))
    = D_{\theta^{-1}[\proc U^{-1} T_{\proc U}(A) \proc U]}(W(h)).
  \end{displaymath}
  As we have supposed $\theta$ to be a $\D^\infty$-diffeomorphism:
  \begin{displaymath}
    \forall f \in \D^\infty(\Omega),\, \theta^{-1} \circ D_A \circ \theta
    = D_{\theta^{-1}[\proc U^{-1} T_{\proc U}(A) \proc U]}.
  \end{displaymath}
  We denote $S_{\proc U}(A) = \proc U^{-1} T_{\proc U}(A) \proc
  U$, then:
  \begin{displaymath}
    \theta^{-1} \circ D_A \circ \theta = D_{\theta^{-1}(S_{\proc U}(A))}.
  \end{displaymath}
  We also have: $\theta \circ D_{A'} \circ \theta^{-1} =
  D_{\theta[S_{\theta(\proc U)}]} A'$. So the maps $A
  \mapsto \theta^{-1}(S_{\proc U}(A))$ and $A' \mapsto
  \theta(S_{\theta(\proc U)}(A'))$ are inverses.

  So we have:
  \begin{displaymath}
    \theta^{-1} \circ S_{\proc U} \circ \theta \circ S_{\theta(\proc U)} = \mathrm{Id}.
  \end{displaymath}
  So
  \begin{displaymath}
    S_{\proc U} \circ \theta \circ S_{\theta(\proc U)} \circ \theta^{-1} = \mathrm{Id},
  \end{displaymath}
  so $\theta \circ S_{\theta(\proc U)} \circ
  \theta^{-1}$ is the inverse of $S_{\proc U}$. Then $T_{\proc
    U}$ is inversible, and $((T_{\proc U})^{-1} A_i)_{i \in \mathbb
    N_\ast}$ is a multiplicative family.
\end{proof}

\medskip

Two examples showing that the transformation $\theta$:
$\theta[W(h)] = \int_0^1 {}^t{\dot h} \proc U^{-1} \itodiff B$,
extended to $\L{\infty-0} \to \L{\infty-0}$, does not admit an
inverse, even if $\proc U$ admits one.

\begin{counterexample}
  \label{cex:5.1}
  Let $(B_t^{(1)}, B_t^{(2)})$ an $\mathbb R^2$-Brownian, with
  $B_0^{(1)} = B_0^{(2)} = 0$, and denote $\Delta =
  \sqrt{(B_t^{(1)})^2+(B_t^{(2)})^2}$. And let
  \begin{displaymath}
    \proc U (t, \omega) =
    \begin{pmatrix}
      \frac {B_t^{(1)}} \Delta & \frac {B_t^{(2)}} \Delta \\
      - \frac {B_t^{(2)}} \Delta & \frac {B_t^{(1)}} \Delta
    \end{pmatrix}
    \quad \text{and} \quad
    \proc U(0,\omega) = \mathrm{Id}_{\mathbb R^2} .
  \end{displaymath}
  Then $\theta[W(h)] = \int_0^1 {}^t{\dot h} \proc U^{-1} \itodiff B$
  can be extended in an $\L{\infty-0}$-morphism, denoted again
  $\theta$. Then, if $R$ is a rotation with angle $\alpha$, in
  $\mathbb R^2$, with origin $0$, 
  \begin{displaymath}
    R[\theta(W(h))] = \int_0^1 R({}^t{\dot h} \proc U^{-1} \itodiff B) = \int_0^1 {}^t{\dot h} \proc U^{-1} \itodiff B = \theta[W(h)].
  \end{displaymath}
  Then by extension: $R(\theta(f)) = \theta(f),\, \forall f \in
  \L{\infty-0}(\Omega)$. So $\theta$ cannot be surjective because
  there exists elements of $\L{\infty-0}(\Omega)$ which are not
  invariant for rotations.
\end{counterexample}

\begin{counterexample}
  \label{cex:5.2}
  Let $W[\mathds 1_{[0,t]}(\cdot)] = \int_0^t \proc U_s \itodiff B_s$,
  with: \\ $\proc U_s = +1 \iff B_s \geq 0$ and $\proc U_s = -1 \iff B_s
  < 0$. Then
  \begin{displaymath}
    X_t = W[\mathds 1_{[0,t]}(\cdot)] = \int_0^t \left( \mathds 1_{\{B_s \geq 0\}} - \mathds 1_{\{B_s < 0\}} \right) \itodiff B_s = \int_0^t \left( \mathds 1_{\{B_s > 0\}} - \mathds 1_{\{B_s < 0\}} \right) \itodiff B_s.
  \end{displaymath}
  And let $\theta$ the map $\mathcal C_0([0,1], \mathbb R) \to
  \mathcal C_0([0,1], \mathbb R)$ defined by: $\theta(x) =
  -x$. Then
  \begin{displaymath}
    \theta \circ X_t = \int_0^t \left( - \mathds 1_{\{B_s > 0\}} + \mathds 1_{\{B_s < 0\}} \right) \raisebox{0.3ex}{.} (- \diff B_s) = X_t.
  \end{displaymath}
  So the $\sigma$-algebra generated by the $X_t, \sigma(X_t) = \{ X_t
  \mid t \in [0,1] \}$, is left invariant by $\theta$. But $\theta$ is
  not surjective because $\sigma(X_t) \subsetneq \mathcal F_1$, where
  $\sigma(X_t)$ is the $\sigma$-algebra generated by $\{ X_t \mid t
  \in [0,1] \}$.
\end{counterexample}

\section{\huge $\P_{m_0}(V_n, g)$ is a $\mathbb{D}^\infty$-stochastic manifold}

Let $V_n$ be an $n$-dimensional compact Riemannian manifold with the metric $g$, and let $\nabla$ be a connection on $V_n$, compatible with $g^{(1)}$; and $\Gamma_{jk}^i$ the Christoffel symbols.

\subsection{Introduction}
We recall the definition of a Brownian motion $p_t$, $V_n$-valued, starting from $p(0) = m_0 \in V_n$, and some of the properties of the stochastic parallel transport (SPT in short).

a) $\forall f \in C^{\infty}(V_n, g): f \circ p_t - f \circ p_0 - \frac{1}{2} \int_0^t \Delta f \circ p_s \cdot ds = M_f(t)$, $M_f(t)$ being a martingale.

b) The SPT is an intrinsic notion.

c) The scalar product is invariant by SPT: if $X_1$ and $X_2$ are two vectors in $T_{m_0} V_n$ and $X_i(t, \omega) \in T_{\omega(t)} V_n ~ (i = 1,2)$ are the SPT of $X_1$ and $X_2$ "along $\omega(t)$", then:
\begin{align*}
g_{\omega(t)} \left( X_1( t, \omega ), X_2(t, \omega) \right) = g_{m_0} (X_1, X_2)
\end{align*}

d) In a local chart of $(V_n, g)$, $X^k(\omega, t)$ being the $k^{th}$ coordinate of the SPT vector $X$, we have:
\setcounter{equation}{0}
\begin{align}
X^k (t, \omega) = - \int_0^t \Gamma_{ij}^k \left( p(s) \right) X^j \circ dp_s^j \label{eq6_1} 
\end{align}
\let\thefootnote\relax\footnote{(\ref{eq6_1}) with possibly a non-vanishing torsion.}

We will also denote by $X_{//}(t, \omega)$ 
the SPT of the vector $X \in T_{m_0} V_n$ "along the curve $(\omega(t))$", at time $t$.

e) Let $(\U, \theta)$ be a chart centered on $m \in V_n$, $\U$ being the domain of the chart and $\theta$ the coordinate map.
$u$ being an isomorphism of $\R^n$ in $T_{m_0} V_n$, and $e_\alpha$, $\alpha = 1, \dots, n$ the canonical basis unit vectors of $\R^n$, we denote $u_\alpha(t, \omega)$ the SPT of $u e_\alpha$, "along the curve $\omega(t)$", and by $Z_\alpha^k(t, \omega) = \left( \theta_\star u_\alpha(t, \omega) \right)^k$, the $k^{th}$ component of the vector $u_\alpha(t, \omega)$, when read in the chart $(\U, \theta)$. Then the matrix $Z_\alpha^k$ is invertible and if we write $d\tilde{B}_t^k = (Z^{-1})_\mu^k dM_f^\mu (t)$, $\tilde{B}_t$ is an $n$-dimensional Brownian motion, and we have:
\begin{align}
dp_t^k = \frac{1}{2} \Delta p^k ds + Z_\mu^k \cdot d\tilde{B}^\mu \label{eq6_2}
\end{align}

And direct calculus shows that:
\begin{align}
dp_t^k = Z_\mu^k \circ d\tilde{B}^\mu \label{eq6_3}
\end{align}
\\
\begin{defn}

\end{defn}

$H$ being the canonical $C.M.$ space, $u$ an isomorphism of $\R^n$ in $T_{m_0} V_n$, and $u_\alpha(t, \omega) = (u e_\alpha)_{//} (t, \omega)$ as above, we denote:
\begin{align*}
\tilde{H} = \left\{ v(t, \omega) = \sum_{\mu = 1}^n f^\mu(t) u_\mu(t, \omega) \bigg| f^\mu \in H \right\}
\end{align*}

$\tilde{H}$ is called the new Cameron-Martin space, in short: N.C.M.. A scalar product on $\tilde{H}$ is defined by:
\begin{align*}
\langle v_1(t, \omega), v_2(t, \omega) \rangle_{\tilde{H}} = \sum_{\mu = 1}^n \int_0^1 \dot{f}_1(s) \dot{f}_2(s) ds 
\end{align*}

With $\langle ~, \rangle_{\tilde{H}}$, $\tilde{H}$ is an Hilbert space.

Each $v(t, \omega) \in \tilde{H}$ is a process, valued in the fiber-tangent $TV_n$.

We recall the theorem of moment inequalities for martingales [8, p.110].

\begin{thm} \label{thm6_1}
If $\M$ is the set of continuous locally square integrable martingales, there exist universal constants $c_p$ and $C_p~ (1 < p < + \infty)$ such that $\forall M \in \M$, and $t \geq 0$:
\begin{align*}
c_p \E \left[ \max_{0 \leq s\leq t} | M_s |^{2p} \right] \leq \E \left[ \langle M, M \rangle_t^p \right] \leq C_p \E \left[ \max_{0 \leq s \leq t} |M_s|^{2p}\right]
\end{align*}

\end{thm}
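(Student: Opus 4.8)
The plan is to prove the two inequalities separately, both by a combination of Itô's formula, Doob's $L^{2p}$-maximal inequality, and Hölder's inequality, after a standard reduction. First I would assume $M_0 = 0$ (otherwise replace $M$ by $M - M_0$; the stated inequalities require this, since a nonzero constant martingale already violates the left inequality) and localize: choosing stopping times $T_n \uparrow \infty$ with $M^{T_n}$ bounded, one proves the inequalities for each $M^{T_n}$ and passes to the limit by monotone convergence, as $s \mapsto \langle M,M\rangle_s$ and $s \mapsto \max_{r\le s}|M_r|$ are nondecreasing. Throughout write $M^*_t = \max_{0\le s\le t}|M_s|$ and $A_t = \langle M, M\rangle_t$, and note that $f(x) = |x|^{2p}$ is $C^2$ because $2p > 2$, with $f''(x) = 2p(2p-1)|x|^{2p-2}$.

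For the left inequality $c_p\,\E[(M^*_t)^{2p}] \le \E[A_t^p]$, I would apply Itô's formula to $|M_s|^{2p}$ and take expectations. The stochastic-integral term is a true martingale (after localization) and vanishes, leaving
$$\E\bigl[|M_t|^{2p}\bigr] = p(2p-1)\,\E\!\left[\int_0^t |M_s|^{2p-2}\,dA_s\right] \le p(2p-1)\,\E\bigl[(M^*_t)^{2p-2}A_t\bigr].$$
Hölder's inequality with exponents $\tfrac{p}{p-1}$ and $p$ bounds the right side by $p(2p-1)\,\E[(M^*_t)^{2p}]^{(p-1)/p}\,\E[A_t^p]^{1/p}$. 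Doob's maximal inequality gives $\E[(M^*_t)^{2p}] \le (\tfrac{2p}{2p-1})^{2p}\E[|M_t|^{2p}]$, so combining and dividing by the (finite) factor $\E[(M^*_t)^{2p}]^{(p-1)/p}$ yields $\E[(M^*_t)^{2p}] \le C\,\E[A_t^p]$, i.e. the claim with $c_p = 1/C$.

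For the right inequality $\E[A_t^p] \le C_p\,\E[(M^*_t)^{2p}]$, I would start from $A_t^p = p\int_0^t A_s^{p-1}\,dA_s$ and substitute $dA_s = d(M_s^2) - 2M_s\,dM_s$ (since $M^2 - A$ is a local martingale). An integration by parts on $\int_0^t A_s^{p-1}\,d(M_s^2)$ followed by taking expectations kills the $dM_s$-integral and produces
$$\E[A_t^p] = p\,\E\bigl[A_t^{p-1}M_t^2\bigr] - p(p-1)\,\E\!\left[\int_0^t M_s^2\,A_s^{p-2}\,dA_s\right] \le p\,\E\bigl[A_t^{p-1}(M^*_t)^2\bigr],$$
the last step discarding the nonnegative correction term. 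Hölder with the same exponents and dividing by $\E[A_t^p]^{(p-1)/p}$ gives $\E[A_t^p] \le p^p\,\E[(M^*_t)^{2p}]$.

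The main obstacle is rigor in the limiting and singular-exponent steps rather than the algebra. The localization and passage to the limit must be arranged so that all expectations are finite and the martingale terms genuinely vanish; one works first on $[0,T_n]$ with bounded martingales and invokes monotone convergence. The genuinely delicate point is the case $1 < p < 2$ in the second inequality, where $A_s^{p-2}$ is singular as $A_s \downarrow 0$: there I would regularize by working with $(A_s + \varepsilon)^{p}$, carry out the same computation, and let $\varepsilon \downarrow 0$, or equivalently invoke the good-$\lambda$ (Burkholder) argument; since the discarded term is nonnegative and only an upper bound is needed, these approximations are harmless. For the use made of the theorem in this section the relevant martingales are continuous with finite moments, so these technicalities do not affect the application.
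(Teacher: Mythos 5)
The paper itself offers no proof of this statement---it is the classical Burkholder--Davis--Gundy inequality, recalled with a bare citation to Ikeda--Watanabe [8, p.~110]---and your argument is exactly the standard It\^o--Doob--H\"older proof given in that reference, so the two coincide in approach. Your proof is correct, including the necessary reduction to $M_0=0$ (which the paper's statement silently assumes) and the treatment of the singular exponent $A_s^{p-2}$ for $1<p<2$; the only refinement worth making is to localize so that $A$ is bounded as well, e.g.\ $T_n=\inf\{t : |M_t|+\langle M,M\rangle_t \geq n\}$, so that $\E\left[A_t^p\right]^{(p-1)/p}$ is manifestly finite when you divide by it.
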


\begin{cor}\label{cor6_1} The solution of the equation of the SPT, (\ref{eq6_1}), 

is $\dinf$-bounded.
\end{cor}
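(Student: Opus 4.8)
The plan is to read (\ref{eq6_1}) as a linear Stratonovich system for the components of the transported vector and to exploit that $V_n$ is compact, so that the Christoffel symbols $\Gamma^k_{ij}$ and all their covariant derivatives are bounded on $V_n$ by a single constant $K$. Using (\ref{eq6_2})--(\ref{eq6_3}) to pass to the Itô form, the components $X^k(t,\omega)$ satisfy a closed linear Itô system $dX^k = a^k_\ell(p_s)X^\ell\,ds + b^k_{\ell\mu}(p_s)X^\ell\,d\tilde B^\mu$, whose coefficients $a,b$ are smooth bounded functions of the Brownian path $p_s$ (bounded together with all their derivatives, by compactness). Since the stochastic parallel transport preserves the metric (property (c) above), one already has $|X(t,\omega)|\equiv |X|_{m_0}$, which settles the case $r=0$: $\sup_{t\in[0,1]}\|X(t,\cdot)\|_{\sko_0^p(\Omega)}<+\infty$ for every $p>1$.

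First I would establish the uniform $L^p$ bounds. Applying the moment inequalities of \thmref{thm6_1} to the martingale part of the Itô system, together with Gronwall's lemma on the resulting integral inequality, gives $\sup_{t\in[0,1]}\E\!\left[|X_t|^{2p}\right]<+\infty$ for every $p$; the finite time horizon $[0,1]$ and the uniform bound $K$ are exactly what keep the Gronwall constant finite. Next comes the Malliavin regularity. Differentiating the linear system once, the first derivative $\Grad X_t$ solves the associated variation equation: a linear Itô SDE with the same homogeneous coefficients and an inhomogeneous forcing term built from $X_t$ and from $\Grad$ of the coefficients along $p_s$ (again bounded by compactness). Iterating, $\Grad^{r}X_t$ solves a linear equation whose inhomogeneity is a finite sum of products of lower-order derivatives $\Grad^{j}X$, $j<r$, with bounded tensor coefficients.

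The heart of the argument is then an induction on $r$. Assuming uniform-in-$t$ $L^p$ bounds on $\Grad^{j}X$ for all $j<r$ and all $p$, one feeds these into the variation equation, applies \thmref{thm6_1} and Gronwall once more, and obtains $\sup_{t\in[0,1]}\|\Grad^{r}X_t\|_{L^p(\Omega,\overset{r}{\otimes}H)}<+\infty$ for every $p>1$. Summing over $0\le j\le r$ yields $\sup_{t\in[0,1]}\|X_t\|_{\drp(\Omega)}\le C(p,r)$, which is precisely $\dinf$-boundedness in the sense of Definition~2.1.ii.

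I expect the main obstacle to be the bookkeeping of the inhomogeneous terms in the successive variation equations and the verification that every constant produced by \thmref{thm6_1} and by Gronwall stays uniform in $t$ and can be absorbed at the next step. Each Malliavin derivative of the Christoffel coefficients is bounded by compactness, but one must check that the forcing terms land in the correct $L^p(\Omega,\overset{r}{\otimes}H)$ spaces so that the induction closes, and that $\Grad^{r}X_t$, whose support lies in $\{(\theta_1,\dots,\theta_r):\theta_i\le t\}$, is controlled uniformly in $t$. Should the explicit constants prove awkward, an alternative is to interpolate (\thmref{thm2_12}) between the sharp $r=0$ bound and a crude higher-order estimate, but the inductive SDE argument via \thmref{thm6_1} is the cleanest route.
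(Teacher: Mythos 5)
Your overall strategy --- read (\ref{eq6_1}) as a linear system, use compactness of $V_n$ to bound the coefficients together with all their derivatives, and induct on the order $r$ of Malliavin differentiation with moment estimates from \thmref{thm6_1} plus Gronwall on $[0,1]$ --- is exactly the skeleton of the paper's proof, and your observation that metric preservation gives $|X(t,\omega)| \equiv |X|_{m_0}$ (hence the $r=0$ case for free) is a pleasant shortcut the paper does not use. But there is one genuine gap, and it is precisely the point the paper singles out as \emph{the} difficulty: you write ``differentiating the linear system once, the first derivative $\Grad X_t$ solves the associated variation equation,'' which presupposes that $X_t$ is Malliavin differentiable with a function-valued gradient. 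A priori, all that follows from (\ref{eq6_1}) is that $\Grad X^k$ exists as a distribution; one cannot apply $\Grad$ to the Stratonovich identity and assert the variation equation without first knowing that this distribution is a function. Consequently your induction ``assuming uniform $L^p$ bounds on $\Grad^j X$, $j<r$'' has no legitimate base case at $r=1$: it assumes the existence of the very object whose existence is in question.

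The paper's remedy is a coupled-system device: it writes down the equation (\ref{eq6_4}) that $\Grad X^k$ \emph{would} satisfy, then treats the pair ((\ref{eq6_1}), (\ref{eq6_4})) as a single linear SDE system in \emph{two unknown functions} with $C^\infty$-bounded coefficients. That system admits a unique solution, which is $L^{\infty-0}$-bounded; by uniqueness the second component is then identified with the distributional gradient of the first, so that a posteriori $\Grad X$ is a function with the desired bounds, and the process is iterated for higher derivatives. Your quantitative work (the BDG inequalities of \thmref{thm6_1}, Gronwall, bounded forcing terms built from lower-order derivatives) is exactly what makes that coupled system well-behaved, so the estimates in your proposal survive intact; what is missing is the identification step --- either via the paper's two-unknowns trick, or via an explicit approximation scheme (e.g.\ Picard iterates shown to converge in every $\sko_r^p$) that \emph{proves} Malliavin differentiability instead of assuming it.
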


\begin{proof}
$(V_n, g)$ is a compact Riemannian manifold, and $0 \leq t \leq 1$.

The equation (\ref{eq6_1}) shows that $X^k$ is $\in L^{\infty - 0}(\Omega, H)$, but we do not know if $X^k$ admits as gradient a function. We can deduce from (\ref{eq6_1}), that $X^k$ admits as gradient, a distribution; but we do not know if this distribution is a function.

To overcome this difficulty, we proceed as such:

If gradient $X^k$ exists, as a function, it will verify the equation:

\begin{align}
\Grad X^k = - \int_0^t \left(\Grad \Gamma_{ij}^k \right) X^j \circ dp^i - \int_0^t \Gamma_{ij}^k \left( \Grad X^j \right) \circ dp^i - \int_0^t \Gamma_{ij}^k X^j \circ \Grad dp^i \label{eq6_4}
\end{align}

and $\Grad(dp^i)$ can be computed with $(\ref{eq6_2})$.

So we can look for a system of two unknown functions verifying equations (\ref{eq6_1}) and (\ref{eq6_4}): $V_n$ being a $C^\infty$ compact manifold, the coefficients of this system of two equations are all $C^\infty$-bounded; so the system ((\ref{eq6_1}), (\ref{eq6_4})) admits a unique solution which is $L^{\infty-0}$-bounded. Iterating this process shows that the solution of (\ref{eq6_1}) is $\dinf$-bounded.

\end{proof}

\begin{cor}\label{cor6_2}
If $u_1, \dots, u_k$ are $k$ SPT vectors, and $\T$ is a $C^\infty$ $k$-invariant tensor on $(V_n, g)$, then $\T(u_1, \dots, u_n)$ is $\dinf$-bounded.
\end{cor}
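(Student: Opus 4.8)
The plan is to reduce the intrinsic scalar $\mathcal{T}(u_1,\dots,u_k)$ to a process that satisfies a Stratonovich equation with $C^\infty$-bounded coefficients along $p_t$, and then to reuse verbatim the augmented-system-plus-iteration device already used for Corollary \ref{cor6_1}. Fix $(t,\omega)$: the point $p_t=\omega(t)$ lies in the compact manifold $V_n$ and each SPT vector $u_j(t,\omega)=(ue_j)_{//}(t,\omega)$ lies in $T_{\omega(t)}V_n$, so the number
\begin{align*}
\mathcal{T}(u_1,\dots,u_k)(t,\omega)=\mathcal{T}_{\omega(t)}\big(u_1(t,\omega),\dots,u_k(t,\omega)\big)
\end{align*}
is intrinsically well defined (property b), independently of any chart. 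I would establish its $\dinf$-boundedness directly from this intrinsic description rather than chart by chart.

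The key observation is that each $u_j$ is parallel along $\omega$, so $\nabla_{\circ\, dp_t}u_j=0$, and hence the Stratonovich chain rule collapses to
\begin{align*}
d\big[\mathcal{T}(u_1,\dots,u_k)\big]=\big(\nabla_{\circ\, dp_t}\mathcal{T}\big)(u_1,\dots,u_k),
\end{align*}
which, after inserting $dp_t=Z^k_\mu\circ d\tilde B^\mu$ from \eqref{eq6_3}, is a Stratonovich SDE whose coefficients are $C^\infty$-bounded because $V_n$ is compact and the $Z$'s and the $u_j$ are $\dinf$-bounded (Corollary \ref{cor6_1}). One then writes down the companion equation obtained by applying $\Grad$, which brings in $\Grad p_t$ and $\Grad u_j$ — governed by differentiating \eqref{eq6_1} exactly as in \eqref{eq6_4} — so that the pair forms a closed system with $C^\infty$-bounded coefficients; its unique solution is $L^{\infty-0}$-bounded, and iterating on the successive gradients yields $\dinf$-boundedness. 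As a sanity check, when $\mathcal{T}$ is parallel ($\nabla\mathcal{T}=0$) the right-hand side vanishes and $\mathcal{T}(u_1,\dots,u_k)$ is the deterministic constant $\mathcal{T}_{m_0}(u_1,\dots,u_k)$, which recovers property (c) in the case $k=2$, $\mathcal{T}=g$.

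Equivalently, and more concretely, one may read the quantity in a local chart $(\mathcal U,\theta)$ as the finite sum $\sum \mathcal{T}_{i_1\dots i_k}(p_t)\,u_1^{i_1}(t,\omega)\cdots u_k^{i_k}(t,\omega)$: each $u_j^{i}$ is $\dinf$-bounded by Corollary \ref{cor6_1}, each smooth component $\mathcal{T}_{i_1\dots i_k}(p_t)$ is $\dinf$-bounded by the same iteration applied to $f(p_t)$ with $f\in C^\infty(V_n)$, and $\dinf$-boundedness is stable under finite products via the Leibniz rule for $\Grad$ together with the Hölder inequality. The main obstacle is purely the technical closure already encountered in Corollary \ref{cor6_1}: one must check that the successive gradient equations keep forming a closed system whose coefficients stay $C^\infty$-bounded, so that each iteration remains in $L^{\infty-0}$ uniformly in $t\in[0,1]$. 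Compactness of $V_n$ is exactly what furnishes these uniform bounds, and the passage between charts is harmless since the estimated quantity is chart-independent.
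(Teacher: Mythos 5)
Your second, chart-based argument is essentially the paper's own proof: the paper disposes of the corollary in two lines by noting that all derivatives of $\T$ are bounded on the compact manifold $(V_n,g)$, that $\sup_{t\in[0,1]}\|u_\mu\|_{\drp(\Omega,H)}$ is finite by Corollary 6.1, and then concluding via the Leibniz rule that $\sup_{t\in[0,1]}\big\|\Grad^r\big(\T(u_1,\dots,u_k)\big)\big\|_{\overset{r}{\otimes}H}$ is $L^p$-bounded for every $(p,r)$ — exactly your ``finite sum of products of $\dinf$-bounded factors, closed under $\Grad$ by Leibniz and H\"older'' step, including the implicit use of the Corollary 6.1 iteration to control $f(p_t)$ for $f\in C^\infty(V_n)$. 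Your first, intrinsic route via the collapsed Stratonovich equation $d\big[\T(u_1,\dots,u_k)\big]=\big(\nabla_{\circ\,dp_t}\T\big)(u_1,\dots,u_k)$ is also correct, but it is a genuinely different and heavier mechanism: it re-runs the augmented-system-plus-gradient-iteration device of Corollary 6.1 on a new SDE, whereas no new SDE analysis is actually needed — the scalar is a pointwise algebraic expression in quantities already controlled by Corollary 6.1. What the intrinsic route buys is conceptual: it makes the parallelism $\nabla_{\circ\,dp_t}u_j=0$ and the role of $\nabla\T$ explicit, it is chart-free, and it recovers the invariance of the scalar product under SPT (property c) as the special case $\nabla\T=0$; what the paper's (and your second) route buys is brevity and the fact that it needs nothing beyond compactness, Corollary 6.1, and the algebra structure of $\dinf$-bounded processes.
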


\begin{proof}
all derivatives of $\T$ are bounded on $(V_n, g)$ and 

$\sup_{t \in [0, 1]} \| u_\mu \|_{\drp(\Omega, H)}$, $\mu = 1, \dots, k$ are all bounded. Then for the Malliavin derivative of order $r$: 

$\sup_{0 \leq 0 \leq 1} \left\| \Grad^r \left( \T( u_1, \dots, u_n) \right) \right\|_{\overset{r}{\otimes} H}$ is $L^p$-bounded.

\end{proof}

\subsection{Construction of a $\mathbb{D}^\infty$-atlas on $\P_{m_0}(V_n, g)$}

Now we will construct on $\P_{m_0} (V_n , g)$ a $\dinf$-atlas:

Let two connections $\conx{1}$ and $\conx{2}$ compatible with the metric $g$; the $\dinf$-atlas $\mathcal{A}$ on $\P_{m_0}(V_n, g)$ consists of two Wiener spaces $\mathcal{W}_1$ and $\mathcal{W}_2$, and the corresponding It\={o} maps $I_1$ and $I_2$. The chart change maps are then:
\begin{align*}
J_1 = I_2^{-1} \circ I_1 ~, ~ J_2 = I_1^{-1} \circ I_2
\end{align*}

Now we limit ourselves to the case for which the trace of the tensor $\conx{1} - \conx{2}$ is zero, so that the Laplacian is invariant.

Otherwise $\mathcal{A}$ is still a $\dinf$-atlas on $\P_{m_0}(V_n, g)$, but the calculus is more complex because the Brownians associated to the two connections differ by a drift (a vector field on $V_n$); and the image of the probability on the first chart, by any of the $J$'s, differs from the probability on the second chart by a density.

Let $\theta$ be the morphism associated to the chart change map $J$; $\theta$ leaves invariant laws and filtrations, so it leaves invariant the quadratic variations and the martingale property. So there exists an $n \times n$ matrix $\V$ such that $\theta(dB_2) = \V dB_1$, which implies if $h \in H_2$: $\theta[W(h)] = \int {}^t \dot{h} \V dB_1$, ${}^t \dot{h}$ being the $n$-linear vector obtained by transposition of the $n$-column vector $\dot{h}$.

As $\theta$ keeps invariant the quadratic variation, we have ${}^t \V \V = Id$; and as $\theta$ leaves invariant the filtrations, $\V$ is also an adapted process.

\begin{lem}
$\V$ is a $\dinf$-process and a multiplicator.
\end{lem}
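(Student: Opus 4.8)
The plan is to realise $\V$ concretely as a change-of-frame matrix between two stochastic parallel transports and then to show it solves a linear Stratonovich S.D.E.\ of exactly the type already treated in Corollary 6.1. Both connections $\conx{1}$ and $\conx{2}$ are compatible with $g$, so the parallel-transported frames $u_\mu^{(1)}(t,\omega)=(ue_\mu)_{//}^{(1)}(t,\omega)$ and $u_\mu^{(2)}(t,\omega)=(ue_\mu)_{//}^{(2)}(t,\omega)$ are orthonormal bases of $T_{p_t}V_n$ along the common path $p_t$. I would define $\V$ by $u_\mu^{(1)}=\V_\mu^\nu\,u_\nu^{(2)}$; this is coordinate-free, adapted (it depends only on the path up to time $t$), and orthogonal, which is consistent with the identity ${}^t\V\V=\mathrm{Id}$ derived from the invariance of the quadratic variation. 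Writing the (zero-trace, $C^\infty$, hence bounded on the compact $V_n$) difference tensor $S=\conx{1}-\conx{2}$, the fact that $u_\mu^{(1)}$ is $\conx{1}$-parallel while $u_\nu^{(2)}$ is $\conx{2}$-parallel yields a linear Stratonovich equation
\begin{align*}
  \diff \V(t,\omega) = \V(t,\omega)\,\Omega\big(p_t\big)\stratdiff p_t,
\end{align*}
where $\Omega$ is built from $S$ and is $C^\infty$-bounded. Using $\diff p_t^k = Z_\mu^k\stratdiff \tilde B^\mu$ from (\ref{eq6_3}), this has precisely the structure of the S.P.T.\ equation (6.1), with all coefficients $C^\infty$-bounded because $V_n$ is compact.

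For the $\dinf$-process claim I would invoke the argument of Corollary 6.1 verbatim. A priori $\Grad \V$ exists only as a distribution; one differentiates the S.D.E.\ formally and sets up the coupled system for the pair $(\V,\Grad\V)$, whose coefficients are again $C^\infty$-bounded functions of the path (their values read in the parallel frame are of the form $\T(u_{\mu_1},\dots,u_{\mu_k})$, hence $\dinf$-bounded by Corollary 6.2). This coupled system has a unique $L^{\infty-0}$-bounded solution, and iterating to the systems for $\Grad^r\V$ shows that $\sup_{t\in[0,1]}\|\V_{ij}(t,\cdot)\|_{\drp}<+\infty$ for every $(p,r)$. Thus each entry $\V_{ij}\in\dinf(\Omega)$ and $\V$ is $\dinf$-bounded, so $\V$ is a $\dinf$-process.

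For the multiplicator claim I would not use Criterion 4.2 (which demands pointwise $L^{\infty-0}$-domination, stronger than the $\drp$-norm bounds just obtained) but rather the Hölder route. Since $\V(t+h,\omega)-\V(t,\omega)=\int_t^{t+h}\V\,\Omega(p_s)\stratdiff p_s$ is a stochastic integral over $[t,t+h]$ of a $\dinf$-bounded integrand (the Stratonovich-to-It\^o correction contributing only an $O(h)$ drift term), Lemma 2.2 gives $\|\V(t+h,\cdot)-\V(t,\cdot)\|_{\drp}\le C(p,r)\sqrt h$, i.e.\ $\V$ is $\tfrac12$-$\dinf$-H\"olderian. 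Applying Theorem~\ref{thm4_3} entrywise shows each $\V_{ij}$ is a $\dinf$-multiplicator, and Lemma~\ref{lem4_1}.i) then yields that $\V$ itself is a multiplicator. The main obstacle is the second paragraph: justifying rigorously that the formally-differentiated coupled S.D.E.\ for $(\V,\Grad^r\V)$ admits a unique $L^{\infty-0}$-bounded solution coinciding with the genuine Malliavin derivatives, exactly the delicate point already encountered in Corollary 6.1; compactness of $V_n$, which bounds every derivative of $S$ and of the Christoffel symbols, is what makes this iteration close.
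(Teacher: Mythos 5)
Your proof is correct and follows essentially the same route as the paper: both identify $\V$ as the change-of-frame matrix between the two stochastic parallel transports ($\V E = F$ in the paper's notation), obtain the $\dinf$-property from the Corollary 6.1 machinery (coupled S.D.E.\ systems for $(\V,\Grad^r\V)$ with $C^\infty$-bounded coefficients on the compact $V_n$), and deduce the multiplicator property from the semimartingale structure of $\V$ with $\dinf$-H\"olderian martingale part, i.e.\ via Theorem 4.3 --- an appeal the paper leaves implicit and you make explicit through Lemma 2.2. Your closed Stratonovich equation $\diff\V = \V\,\Omega(p_t)\stratdiff p_t$, with $\Omega$ built from the difference tensor $\conx{1}-\conx{2}$, is just an equivalent and slightly more intrinsic form of the paper's equation (6.5), $\diff\V = \diff F\cdot E^{-1}+F\cdot \diff E^{-1}$.
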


\begin{proof}
We denote by $E(t, \omega)$ and $F(t, \omega)$ the frames on $V_n$, obtained by SPT when using the two connections $\conx{1}$ and $\conx{2}$

From (\ref{eq6_1}), we know that the SPT vectors which form the basis $E(t, \omega)$, $F(t, \omega)$ are $\dinf$-semi-martingales with martingales parts, $\alpha$-H\"{o}lderian $(\alpha < \frac{1}{2})$, and bounded variations parts of class $C^1$.

Then from $\V E = F$, we get using Corollary \ref{cor6_1} that $\V$ is $\dinf$. As:

\begin{align}
d\V = dF . E^{-1} + F . dE^{-1} \label{eq6_5}
\end{align}

$\V$ is a semi-martingale, with the martingale part being $\alpha$-H\"{o}lderian and the bounded variation part being of class $C^1$. The iterated gradients of $\V$ will verify similar equations, and so will be also semi-martingales with martingales parts being $\alpha$-H\"{o}lderian, and bounded variations parts being of class $C^1$. So $\V$ is a $\dinf$-multiplicator.

Now we write $\U^{-1} = \V$.

\end{proof}

\begin{lem}
With the previous notations, if $\theta^{-1}(\U)$ is a multiplicator from $\dinf(\Omega, L^2([0,1], \R^n))$ in itself, and if $\theta^{-1}(\Grad \U^{-1} \cdot \U)$ is a multiplicator from $\dinf(\Omega, L^2([0,1], \R^n))$ to $\dinf(\Omega, H \otimes L^2([0,1], \R^n))$, then $\theta(\dinf) \subset \dinf(\Omega)$.
\end{lem}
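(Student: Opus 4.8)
The plan is to follow exactly the scheme of the proof of Theorem~\ref{thm:5.8}, the only difference being that here $\U^{-1}=\V$ is merely a $\dinf$-semimartingale, so the Russo--Valois machinery of Theorem~\ref{thm:5.7} is unavailable (cf.\ Remark~\ref{remark:5.6}); this is precisely why the two multiplicator properties are taken as hypotheses rather than deduced from an $\alpha$-H\"olderian bound with $\alpha>1/2$. First I would record, for $h\in H$, the formula obtained by differentiating $\theta[W(h)]=\int_0^1{}^t\dot h\,\U^{-1}\itodiff B$, namely
\begin{equation*}
  \Grad\theta[W(h)] = \int_0^1{}^t\dot h\,(\Grad\U^{-1}\cdot\U)\,\U^{-1}\itodiff B + \left(t\mapsto\int_0^t{}^t\dot h\,\U^{-1}\diff s\right),
\end{equation*}
and then apply $\theta^{-1}$, using $\theta^{-1}(\U^{-1}\itodiff B)=\itodiff B$, to obtain $\theta^{-1}\Grad\theta[W(h)]=Z[W(h)]$, where
\begin{equation*}
  Z(f) = \div A\,\Grad f + \left(t\mapsto\int_0^t{}^t(\Grad f)\,\theta^{-1}(\U^{-1})\diff s\right), \qquad A = \theta^{-1}(\Grad\U^{-1}\cdot\U).
\end{equation*}

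The next step is to check that $Z$ maps $\dinf(\Omega)$ continuously into $\dinf(\Omega,H)$, and this is exactly what the two hypotheses buy. By assumption $A=\theta^{-1}(\Grad\U^{-1}\cdot\U)$ is a multiplicator from $\dinf(\Omega,L^2([0,1],\R^n))$ to $\dinf(\Omega,H\otimes L^2([0,1],\R^n))$, so $A\,\Grad f$ is a $\dinf$-vector field and $\div A\,\Grad f\in\dinf(\Omega)$ with the appropriate continuity. For the second term I would observe that, $\U$ being unitary, $\U^{-1}={}^t\U$ entrywise, whence $\theta^{-1}(\U^{-1})={}^t\bigl(\theta^{-1}(\U)\bigr)$; since transposition only reindexes the entries, the first hypothesis that $\theta^{-1}(\U)$ is a multiplicator gives that $\theta^{-1}(\U^{-1})$ is one as well, so the drift vector field lies in $\dinf(\Omega,H)$ (using the Hilbert-valued extension provided by Theorem~\ref{thm2_3} and Corollaries~\ref{cor2_2},~\ref{cor2_4}). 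Hence $Z\colon\dinf(\Omega)\to\dinf(\Omega,H)$ is well defined and $\dinf$-continuous.

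The heart of the argument is then to extend the identity $Z=\theta^{-1}\circ\Grad\circ\theta$ from $\mathcal C_1$ to all polynomials in Gaussian variables, and thence to all of $\dinf(\Omega)$. Both $Z$ and $\theta^{-1}\circ\Grad\circ\theta$ are $\theta$-derivations in the sense of Definition~\ref{def2_5}, and they agree on $W(h)$ for constant $h$; since a $\theta$-derivation is determined by its values on the generators through the Leibniz rule, they coincide on every Gaussian polynomial. Because $Z$ is $\dinf$-continuous and $\Grad\theta$ is a closed operator (by the same weak-closure and Hahn--Banach argument used in Theorem~\ref{thm:5.7}), the identity $\theta^{-1}\Grad\theta(f)=Z(f)$ persists for every $f\in\dinf(\Omega)$. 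I would then close the proof by induction on $r$: $\theta$ preserves laws, so $\theta\colon\dinf(\Omega)\to\L{\infty-0}(\Omega)$; assuming $\theta\colon\dinf(\Omega)\to\dinf_r(\Omega)$, for $f\in\dinf(\Omega)$ one has $\Grad\theta(f)=\theta(Z(f))$ with $Z(f)\in\dinf(\Omega,H)$, and the Hilbert-valued induction hypothesis yields $\theta(Z(f))\in\dinf_r(\Omega,H)$, so $\Grad\theta(f)\in\dinf_r(\Omega,H)$ and therefore $\theta(f)\in\dinf_{r+1}(\Omega)$. Letting $r\uparrow\infty$ gives $\theta(\dinf(\Omega))\subset\dinf(\Omega)$.

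The main obstacle is the passage from Gaussian polynomials to arbitrary $f\in\dinf(\Omega)$: unlike in Theorem~\ref{thm:5.7}, the operator $Z$ cannot be realised through a norm-controlled Russo--Valois integral, since $\U^{-1}$ is only a semimartingale, so the closedness of $\Grad\theta$ together with the $\dinf$-continuity of $Z$ must carry the whole weight of the extension. Verifying these two facts --- and checking that both candidate operators are genuine $\theta$-derivations with matching action on $W(h_1)$ and $W(h_2)$ --- is where the care is needed; everything else reduces to the extension theorems of Section~2 applied to the two multiplicators furnished by hypothesis.
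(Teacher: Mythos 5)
Your proposal is correct and follows essentially the same route as the paper: the paper likewise derives formula (6.6) for $\theta^{-1}[\Grad\theta(W(h))]$, defines the operator (its Eq.\ (6.7), written with $\Div_R$, which is your $Z$), verifies it is a $\dinf$-continuous derivation agreeing with (6.6) on the $W(h)$, and concludes by the same induction $\theta\circ\theta^{-1}[\Grad\theta(f)]\in\dinf_r \Rightarrow \theta(f)\in\dinf_{r+1}$. Your two refinements --- deducing the multiplicator property of $\theta^{-1}(\U^{-1})$ from that of $\theta^{-1}(\U)$ via $\U^{-1}={}^t\U$ and Lemma~\ref{lem4_1}.i, and justifying the passage from Gaussian polynomials to all of $\dinf(\Omega)$ through the closedness of $\Grad\theta$ as in Theorem~\ref{thm:5.7} --- merely make explicit steps the paper leaves implicit.
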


Note: $\theta^{-1}(\Grad \U^{-1} \cdot \U)$ acts on $\dinf(\Omega, L^2([0,1], \R^n))$ by left-tensor 

matrix multiplication; if $\vec{X}_{ij}$ is the $(i, j)$ vector entry of the 

$n \times n$-matrix $\theta^{-1}\left[ \Grad \U^{-1} \cdot \U\right]$, and if $\alpha_k(t, \omega), k= 1, \dots, n$ is an item of $\dinf(\Omega, L^2([0,1], \R^n))$, we have:
\begin{align*}
\left[ \left( \theta^{-1}\left( \Grad \U^{-1} \cdot \U\right) \right) ( \alpha_k ) \right]_i = \sum_{l=1}^m \alpha_l \otimes \vec{X}_{il}
\end{align*}

\begin{proof}
first we remind the definition of the operator denoted $\Div_R$:

If $X_1, \dots, X_k$ are constant vectors of $H$, and if $Y_1, \dots, Y_k$ are $\dinf$-vector fields, by definition:
\begin{align*}
\Div_R \left( \sum_{i=1}^k X_i \otimes Y_i \right) = \sum_{i=1}^k \left( \Div Y_i \right) \cdot X_i \text{ (see Chap. 5, before Lemma 5, 1)}
\end{align*}

With Theorem 2, 4, and Corollary 2, 4, $\Div_R$ can be extended in a continuous linear operator from $\dinf(\Omega, H \otimes H)$ to $\dinf(\Omega, H)$.
\end{proof}

$\theta^{-1}$ being a continuous $\dinf$-morphism, for $h \in H$, we have:
\begin{align*}
\theta \left[ W(h) \right] = \int_0^1 {}^t \dot{h} ~ \U^{-1} . dB
\end{align*}

so:
\begin{align*}
\Grad \left[ \theta (W(h) )\right] &= \int_0^1 {}^t \dot{h} \Grad \U^{-1} . dB + \left( t \rightarrow \int_0^t \U^{-1} \dot{h} ~ds \right) \\
&= \int_0^1 {}^t \dot{h} ( \Grad \U^{-1} \U ) ~ \U^{-1} . dB + \left( t \rightarrow \int_0^t \U^{-1} \dot{h} ~ ds \right)
\end{align*}

And:
\begin{align}
\theta^{-1} \left[ \Grad \theta \left( W(h) \right) \right]
&= \int_0^1 {}^t \dot{h} \theta^{-1} \left[ \Grad \U^{-1} \cdot \U \right] \theta^{-1} ( \U^{-1} . dB) 
+ \left( t \rightarrow \int_0^t \theta^{-1}(\U^{-1}) \dot{h} ~ds \right) \notag\\
&= \int_0^1 {}^t \dot{h} \theta^{-1} \left[ \Grad \U^{-1} \cdot \U \right] . dB  
+ \left( t \rightarrow  \int_0^t \theta^{-1}(\U^{-1}) \dot{h} ~ds \right) \label{eq6_6}
\end{align}

$(e_i)_{i \in \N_\star}$ being an Hilbertian basis of $H$, we define:

\begin{align}
\theta^{-1} \left[ \Grad \theta(f) \right] 
&= \Div_R \left[ \sum_{l=1}^{\infty} e_l \otimes \langle e_l, \theta^{-1}( \Grad \U^{-1} \cdot \U ) \rangle_H \Grad f \right] + \left( t \rightarrow \int_0^t \theta^{-1}(\U^{-1}) \Grad f ~ds \right) \label{eq6_7}
\end{align}

From the r.h.s. of (\ref{eq6_7}), one can verify that the definition of $\theta^{-1} \left[ \Grad \theta(f) \right]$ is legitimate, and that it is a derivation on $\dinf(\Omega)$, by using $\U^\star . \U = Id$; and that if $f = W(h)$ $(h \in H)$, then (\ref{eq6_7}) is identical to (\ref{eq6_6}).

Moreover, $\theta^{-1} \left[ \Grad \theta(f) \right]$ is a $\dinf$-continuous derivation.

Now we proceed by induction: 

we know that $\theta : \dinf \rightarrow L^{\infty - 0}$. Suppose $\theta: \dinf \rightarrow \dinf_r, r \in \N_\star$.

The r.h.s. of (\ref{eq6_7}) implies that $\theta^{-1} \left[ \Grad \theta(f) \right] \in \dinf(\Omega, H)$, so:

 $\theta \circ \theta^{-1} \left[ \Grad \theta(f) \right] \in \dinf_r$, which implies: $\Grad \theta(f) \in \dinf_r$, so $\theta(f) \in \dinf_{r+1}$.
 
\begin{lem} \label{lem6_1}
If $\theta$ is an auto-diffeomorphism of $\dinf$:

i) the associated $\U$ to $\theta$ is a $\dinf$-multiplicator.

ii) $\theta^{-1}(\U)$ and $\theta ( \Grad \U^{-1} . \U )$ are multiplicators.
\end{lem}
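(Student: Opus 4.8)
The plan is to reduce both parts to the regularity already extracted in the previous Lemma, where $\V=\U^{-1}$ was shown to be a $\dinf$-semimartingale whose martingale part is $\alpha$-H\"olderian ($\alpha<\tfrac12$) and whose bounded-variation part is of class $C^1$. By the same argument applied to equation (\ref{eq6_5}) and to its iterated gradients (which satisfy stochastic differential equations of the same type), each of $\U={}^t\V$, of $\Grad\U^{-1}$, and of their matrix products is a $\dinf$-$\alpha$-H\"olderian process in the sense of Definition~\ref{def2_3}. Once this is in place, Theorem~\ref{thm4_3} turns every such process into a $\dinf$-multiplicator, and the whole lemma follows by feeding the right processes through $\theta$ and $\theta^{-1}$.

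For i), I would note that $\U={}^t\V$ inherits from $\V$ the semimartingale decomposition (transposition only permutes entries), hence is $\dinf$-$\alpha$-H\"olderian; Theorem~\ref{thm4_3} then applies verbatim and shows $\U$ is a $\dinf$-multiplicator. Equivalently one may observe that $\V$ is a multiplicator by the previous Lemma and that, since ${}^t\V\V=\mathrm{Id}$, we have $\U=\V^{-1}={}^t\V$, so $\U$ is one as well.

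For ii), the key observation is that $\theta$ and $\theta^{-1}$ act only on $\omega$ and, $\theta$ being an auto-diffeomorphism, send $\dinf$ continuously into $\dinf$ while preserving addition (they are algebra morphisms). Thus for any $\dinf$-$\alpha$-H\"olderian process $X(t,\omega)$ and every $(p,r)$ there are $(q,s)$ and constants with
\[
\norm{\theta^{\pm1}(X(t',\cdot))-\theta^{\pm1}(X(t,\cdot))}_{\drp}
=\norm{\theta^{\pm1}\bigl(X(t',\cdot)-X(t,\cdot)\bigr)}_{\drp}
\leq C\,\norm{X(t',\cdot)-X(t,\cdot)}_{\dqs}
\leq C'\,\abs{t'-t}^{\alpha},
\]
so $\theta^{\pm1}(X)$ is again $\dinf$-$\alpha$-H\"olderian. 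Taking $X=\U$ shows $\theta^{-1}(\U)$ is $\dinf$-$\alpha$-H\"olderian, and taking $X=\Grad\U^{-1}\cdot\U$ (itself $\dinf$-$\alpha$-H\"olderian as a product of two such processes) shows $\theta(\Grad\U^{-1}\cdot\U)$ is $\dinf$-$\alpha$-H\"olderian. In both cases Theorem~\ref{thm4_3} gives that the process is a multiplicator, which is exactly assertion ii).

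The main obstacle I anticipate is purely the bookkeeping of this regularity: one must verify that transposition, the gradient $\Grad\U^{-1}$, and the product $\Grad\U^{-1}\cdot\U$ genuinely preserve the $\dinf$-$\alpha$-H\"olderian character, i.e. that differentiating and multiplying the semimartingale $\V$ destroys neither the H\"older exponent of the martingale part nor the $C^1$ regularity of the finite-variation part. This is precisely what the estimates behind the previous Lemma supply, the iterated gradients obeying equations of the type (\ref{eq6_5}); and one must also confirm that the constant produced by the $\dinf$-continuity of $\theta$ and $\theta^{-1}$ is uniform in $t$, so that the H\"older bound survives the passage through the morphisms.
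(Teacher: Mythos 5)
Your proof has a genuine gap at its very first step: you import the semimartingale structure of $\U$ (martingale part $\alpha$-H\"olderian, bounded-variation part $C^1$) from the preceding lemma, but that lemma concerns the \emph{specific} process $\V$ arising from a chart change between two It\={o} maps on $\P_{m_0}(V_n,g)$, where the decomposition $d\V = dF\,.E^{-1} + F\,.dE^{-1}$ comes from the stochastic parallel transport frames $E,F$ of the two connections. Lemma 6.1 is stated for an \emph{arbitrary} auto-diffeomorphism $\theta$ of $\dinf$: the only data are that $\theta^{\pm 1}$ are $\dinf$-continuous, preserve laws and filtrations, and that $\theta[W(h)] = \int_0^1 {}^t\dot h\,\U^{-1}dB$ with $\U$ adapted and unitary-valued. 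In this generality there is no SDE for $\U$, no semimartingale decomposition, and no $\dinf$-$\alpha$-H\"olderianity to feed into Theorem 4.3; your ``equivalent'' observation for i) ($\U = {}^t\V$ with $\V$ a multiplicator by the previous lemma) has the same defect, and in the geometric setting where it does apply it makes i) a restatement of the previous lemma rather than a proof of the present one. Your reduction in ii) — that $\theta^{\pm1}$, being continuous linear maps acting only on $\omega$, preserve $\dinf$-$\alpha$-H\"olderianity uniformly in $t$ — is correct in itself (the paper uses exactly this in Theorem 5.8), but it inherits the unproven hypothesis that $\U$ and $\Grad\U^{-1}\cdot\U$ are H\"olderian to begin with.

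The paper's proof is built precisely to avoid any regularity in $t$. For i) it starts from the $\dinf$-boundedness of $\U$, uses that the normalized increments $\frac{B_i(t+h)-B_i(t)}{\sqrt h}$ are $h$-uniform multiplicators (Corollary 4.1), and transports them through $\theta$ via the identity
\begin{equation*}
\theta\Bigl\{\tfrac{B_i(t+h)-B_i(t)}{\sqrt h}\cdot\theta^{-1}(V)\Bigr\}
=\Bigl(\tfrac{1}{\sqrt h}\int_t^{t+h}\U^{-1}_{ij}\,dB^j\Bigr)\cdot V ,
\end{equation*}
so that these stochastic integrals are $h$-uniform multiplicators; It\={o}'s formula then isolates $\frac1h\int_t^{t+h}\U^{-1}_{ik}\,ds$ up to martingale terms that are $h$-uniformly $L^2$-bounded, the right-continuity of the filtration ($\F_t^\perp\cap\F_{t+h}\downarrow\{0\}$) identifies the weak $L^2$-limit, a barycentric (Mazur) net turns weak into strong $L^2$-convergence of uniform multiplicators, and interpolation against the uniform $\drp$-bounds upgrades this to convergence in the multiplicator sense, yielding that $\U^{-1}_{ik}$ is a multiplicator. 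For ii) no H\"older argument is needed at all: since $\theta$ extends to vector fields, $\theta^{-1}(\U)\cdot V = \theta^{-1}\bigl(\U\cdot\theta(V)\bigr)$ is a $\dinf$-vector field for every $\dinf$-vector field $V$, and the same one-line computation handles $\Grad\U^{-1}\cdot\U$ with values in $H\otimes H$. You should rebuild your proof along these lines, or else restrict your argument explicitly to the geometric setting where the H\"older structure of $\U$ has been established — but then it no longer proves the lemma as stated.
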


\begin{proof}
$\U$ being associated to the diffeomorphism $\theta$, is $\dinf$-bounded. From Corollary 4, 1, we see that $ \frac{ B_i(t+h) - B_i(t) }{ \sqrt{h} }$ are $h$-uniformly multiplicators which implies tha the same is true for the processes; $\frac{1}{h} \int_t^{t+h} \U_{ij} dB^j$, because:

let $V$ be a $\dinf$-vector field; $\theta^{-1}(V)$ is also a $\dinf$-vector field; then $\left( t \rightarrow \frac{B_i(t+h) - B_i(t)}{\sqrt{h}} \cdot \theta^{-1} (V) \right)$ is a $\dinf$-vector field and so is: $\left( t \rightarrow \theta \left\{ \frac{B_i(t+h) - B_i(t)}{\sqrt{h}} \cdot \theta^{-1} (V) \right\} \right)$ 

which equals: $\left( t \rightarrow \theta \left[ \frac{ B_i(t+h) - B_i(t) }{ \sqrt{h} } \right] \cdot V \right)$

or equals: $\left( t \rightarrow \frac{1}{\sqrt{h}} \int_t^{t+h} \U_{ij}^{-1} . dB^j \right) . V $.

Then $\frac{1}{h} \left( \int_t^{t+h} \U_{ij}^{-1} . dB^j \right) \cdot \left( B_k(t+h) - B_k(t) \right)$
are $h$-uniformly 

multiplicators, and with It\={o}'s formula, denoting $M_i^{(1)} = \frac{1}{h} \int_{t}^{t+h} \U_{ij}^{-1} . dB^j$ and $M_k^{(2)} = B_k(t+h) - B_k(t)$, we get:

\begin{align*}
\frac{1}{h} \left( \int_t^{t+h} \U_{ij}^{-1} . dB^j \right) \cdot \left( B_k(t+h) - B_k(t) \right)
&= \int_t^{t+h} M_1 . dM_2 + \int_t^{t+h} M_2 . dM1 + \frac{1}{h} \int_t^{t+h} \U_{ik}^{-1} ds
\end{align*}

Direct calculus shows that: $\int_t^{t+h} M_i^{(1)} . dM_k^{(2)} + \int_t^{t+h} M_k^{(2)} . dM_i^{(1)}$ is $L^2$-bounded, $h$-uniformly. 

So an extracted sequence of
\begin{align*}
\left\{ \Phi_h^{ik} =  \left( \frac{1}{h} \int_t^{t+h} \U_{ij}^{-1} . dB^j \right) \cdot \left( B_k(t+h) - B_k(t) \right) - \frac{1}{h} \int_t^{t+h} \U_{ij}^{-1} ds ~\bigg/ ~ h \downarrow 0 \right\}
\end{align*}

converges $L^2$-weakly towards a limit.

But: $\frac{1}{h} \left( \int_t^{t+h} \U_{ij}^{-1} . dB^j \right) \cdot \left( B_k(t+h) - B_k(t) \right) \in \F_t^\perp \cap \F_{t+h}$ .

As the filtration is right-continuous, we have: $\lim_{h \downarrow 0} \F_t^\perp \cap \F_{t+h} = \{ 0 \}$

Then an extracted sequence of $\frac{1}{h} \left( \int_t^{t+h} \U_{ij}^{-1} . dB^j \right) \cdot \left( B_k(t+h) - B_k(t) \right)$ converges $L^2$-weakly towards $0$.

Combining these two extractions, we get a new sequence denoted again $\Phi_h^{ik}$ such that $\Phi_h^{ik}$ converges $L^2$-weakly towards $\U_{ik}^{-1}$ and such that:

$\frac{1}{h} \left( \int_t^{t+h} \U_{ij}^{-1} . dB^j \right) \cdot \left( B_k(t+h) - B_k(t) \right)$ converges $L^2$-weakly towards $0$.

Then a barycentric net $B_h^{ik}$ built with the $\Phi_h^{ik}$, will converge $L^2$ strongly towards $\U_{ik}^{-1}$.

With the same barycentric combination that was used to get $B_h^{ik}$ from the sequence $\left( \Phi_h^{ik} ~\big/~ h \downarrow 0 \right)$, but this time applied to the sequence 

$\left\{ \left( \frac{1}{h} \int_t^{t+h} \U_{ij}^{-1} . dB^j \right) \cdot \left( B^k_{t+h} - B^k_{t} \right) \big/ ~h \downarrow 0 \right\}$, we get a net of $h$-uniform multiplicators, denoted $M_h^{ik}$.

Then: $\forall X \in \dinf(\Omega, H)$, we have: 

$\forall (p, r)$ and $\forall (i, k) \in \N_\star \times \N_\star$: $\sup_{h} \| M_h^{ik} . X \|_{\drp(\Omega, H)}$ bounded and $M_h^{ik} X$ converges $L^2$-strongly towards $\U_{ik}^{-1} X$. 

Then, by interpolation, we have that $\U_{ik}^{-1}$ is a $\dinf(\Omega, H)$ multiplicator.

ii) $\U$ is a $\dinf$ multiplicator: so if $V$ is a $\dinf$-vector field, $\U . \theta(V)$ is also a $\dinf$-vector field; and then $\theta^{-1}(\U) . \theta^{-1}\theta(V)$ is a $\dinf$-vector field which implies that: $\theta^{-1}(\U)$ is a $\dinf$-multiplicator.

Similar proof for $\Grad \U^{-1} . \U$, with a vector field $V \in \dinf(\mathcal{W}, H)$, then $(\Grad \U^{-1} . \U ) \theta(V) \in \dinf(\mathcal{W}, H \otimes H)$.
\end{proof}

Now we have:

\begin{thm} \label{thm6_2}
The set\hspace{1mm} $\P_{m_0}(V_n, g)$ can be endowed with a $\dinf$-stochastic manifold structure.
\end{thm}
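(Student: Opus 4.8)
The plan is to exhibit the explicit $\dinf$-atlas $\mathcal A$ constructed above and to reduce the theorem to a single compatibility statement. First I would fix two metric-compatible connections $\conx{1},\conx{2}$ with $\mathrm{tr}(\conx{1}-\conx{2})=0$ and take the two charts $(\P_{m_0}(V_n,g),\,I_1^{-1},\,\mathcal W_1)$ and $(\P_{m_0}(V_n,g),\,I_2^{-1},\,\mathcal W_2)$, where $I_j$ is the It\^o map of the $j$-th connection. The trace condition keeps the Laplacian invariant, so each $I_j$ pushes the Wiener measure forward to the \emph{same} Brownian-motion law on $V_n$; hence each $I_j^{-1}$ is a measure-preserving bijection of $\P_{m_0}(V_n,g)$ onto its Wiener space, these are legitimate stochastic charts, and their common domain covers $\P_{m_0}(V_n,g)$. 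The theorem therefore reduces to showing that the chart change $J=I_2^{-1}\circ I_1$ and its inverse are $\dinf$-compatible, that is, that the associated morphism $\theta$ is a $\dinf$-diffeomorphism.

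Second, I would collect what is already in hand. Since $J$ preserves laws and filtrations, $\theta$ extends to a bijection of $\L{\infty-0}(\Omega)$ onto itself, and $\theta(dB_2)=\V\,dB_1$ with $\V$ adapted and unitary (${}^t\V\,\V=\mathrm{Id}$); writing $\U=\V^{-1}$, the preceding Lemma shows that $\V$, hence $\U$, is a $\dinf$-process and a multiplicator, the proof resting on the SPT equation (\ref{eq6_1}), Corollary \ref{cor6_1}, and the frame relation $\V E=F$. To upgrade $\theta$ from an $\L{\infty-0}$-isomorphism to a $\dinf$-diffeomorphism I would invoke the sufficient condition established just before the theorem: it suffices that $\theta^{-1}(\U)$ and $\theta^{-1}(\Grad\U^{-1}\cdot\U)$ be multiplicators, for then $\theta(\dinf)\subset\dinf(\Omega)$ via the induction built on (\ref{eq6_7}).

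Third — and this is the crux — I would verify those two multiplicator hypotheses. The obstacle is that $\V$ is a genuine semimartingale whose martingale part is only $\alpha$-H\"olderian with $\alpha<\tfrac12$, so the clean H\"olderian criterion (Theorem \ref{thm:5.7}) is unavailable; this is precisely the difficulty flagged in Remark \ref{remark:5.6}. The way around it is to exploit the symmetry of the construction in the two connections. By law- and filtration-preservation, $\theta^{-1}(\U)$ carries the same semimartingale structure as the frame-change matrix of the reversed chart change, so running the preceding Lemma's argument with the roles of $\conx{1}$ and $\conx{2}$ exchanged shows it is again a $\dinf$-semimartingale with $\alpha$-H\"olderian martingale part and $C^1$ bounded-variation part, hence a $\dinf$-multiplicator. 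For $\theta^{-1}(\Grad\U^{-1}\cdot\U)$ the same scheme applies after differentiating the frame equation (\ref{eq6_5}): the iterated gradients satisfy SPT-type equations with $C^\infty$-bounded coefficients, and Corollary \ref{cor6_2} guarantees that every tensor formed from the SPT frames is $\dinf$-bounded, so this $H$-valued matrix process is a $\dinf$-multiplicator as well. As a check one may alternatively route this through Theorem \ref{thm:5.9} or \ref{thm:5.11}, whose H\"older threshold $\alpha>\tfrac14$ is comfortably met by the martingale part, but I would keep the self-contained SPT argument as primary.

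Finally, with both hypotheses verified the sufficient condition yields $\theta(\dinf)\subset\dinf(\Omega)$, and exchanging the two connections yields $\theta^{-1}(\dinf)\subset\dinf(\Omega)$; thus $\theta$ is a $\dinf$-diffeomorphism, the two charts are $\dinf$-compatible, and $\mathcal A$ is a $\dinf$-atlas covering $\P_{m_0}(V_n,g)$. This endows $\P_{m_0}(V_n,g)$ with the asserted $\dinf$-stochastic manifold structure, with the bonus that the structure depends only on the metric $g$ and not on the chosen connection or on the time filtration.
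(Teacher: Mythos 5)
Your proposal is correct and follows essentially the same route as the paper: the same two-chart atlas built from the It\={o} maps of two metric-compatible connections, the same reduction (via the lemma immediately preceding the theorem and the induction on (\ref{eq6_7})) to the multiplicator hypotheses on $\theta^{-1}(\U)$ and $\theta^{-1}(\Grad \U^{-1}\cdot\U)$, and the same conclusion by exchanging the roles of $\conx{1}$ and $\conx{2}$. If anything your verification is slightly more careful than the paper's, which checks the multiplicator property for $\theta(\U)$ and $\Grad\U^{-1}\cdot\U$ directly from $d\tilde{B}_2 = \U^{-1}\,d\tilde{B}_1$ and its gradient without explicitly applying $\theta^{-1}$; your symmetric SPT/semimartingale argument supplies that missing twist.
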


\begin{proof}
Let $\conx{1}$ and $\conx{2}$ be two connections on $(V_n, g)$, both compatible with $g$, $I_1$ and $I_2$ the respectively associated It\={o} maps, $J = I_2^{-1} \circ I_1$ the chart change map, and $\theta$ the morphism associated with $J$.

We suppose that $\conx{1}$ and $\conx{2}$ both verify the Driver condition, so the associated Laplacians $\overset{(1)}{\Delta}$ and $\overset{(2)}{\Delta}$ are equal (see following Lemma 6, 4).

Then if $\tilde{B}_1$ and $\tilde{B}_2$ are the associated Brownians, we have: 
\begin{align}
d\tilde{B}_2 = \U^{-1} d\tilde{B}_1 = \theta ( d\tilde{B}_1 ) \label{eq6_8}
\end{align}

$\tilde{B}_1$ and $\tilde{B}_2$ being as in (\ref{eq6_2}), and $\U$ being associated to $\theta$, such that: $\theta \left[ W(h) \right] = \int_0^1 {}^t \dot{h} \U^{-1} . d\tilde{B}_1$.

From (\ref{eq6_8}), we get: $\U(\omega_1) d\tilde{B}_2 = d\tilde{B}_1$, so: $\theta(\U) d\tilde{B}_2 = d\tilde{B}_1$, which implies that $\theta(\U)$ is a $\dinf$-multiplicator.

From $\U d\tilde{B}_2 = d\tilde{B}_1$, we deduce: 
\begin{align*}
\Grad \U^{-1} . d\tilde{B}_1 + \U^{-1} . \Grad (d\tilde{B}_1) = \Grad ( d\tilde{B}_2 ).
\end{align*}

This last SDE shows that $\Grad \U^{-1}$ is a $\dinf$-multiplicator, and then $\Grad \U^{-1} . \U$ is a $\dinf$-multiplicator.

Then from Lemma 4, 2, we get that $\theta$ is a $\dinf$-diffeomorphism and that $\P_{m_0}(V_n, g)$ with this chosen atlas is a $\dinf$-stochastic manifold.
\end{proof}

\begin{lem} \label{lem6_4}
If two connections $\conx{1}$ and $\conx{2}$ on the $n$-dimensional compact manifold $V_n$ are compatible with the metric $g$, and if both connections verify the Driver condition, then the Laplacians $\overset{(1)}{\Delta}$ and $\overset{(2)}{\Delta}$ are identical.
\end{lem}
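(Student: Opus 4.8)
The plan is to reduce the equality $\overset{(1)}{\Delta} = \overset{(2)}{\Delta}$ to the fact that each connection $\conx{i}$, under the Driver condition, produces the Laplace--Beltrami operator of $(V_n,g)$ as the drift appearing in \eqref{eq6_2}; since that operator depends only on $g$ and not on the choice of metric connection, the two Laplacians must coincide. First I would put each connection in normal form relative to the Levi-Civita connection $\nabla^{LC}$: setting $A^{(i)} = \conx{i} - \nabla^{LC}$, each $A^{(i)}$ is a $(1,2)$-tensor, and compatibility of $\conx{i}$ with $g$ is equivalent to the pointwise skew-symmetry $g(A^{(i)}_X Y, Z) + g(A^{(i)}_X Z, Y) = 0$. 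In particular each $A^{(i)}_X$ is a $g$-skew endomorphism, but this alone does \emph{not} force the contracted vector field $\mathrm{tr}_g A^{(i)} = \sum_k A^{(i)}_{e_k} e_k$ (summed over an orthonormal frame $(e_k)$) to vanish, which is why an extra hypothesis is needed.

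Next I would identify $\overset{(i)}{\Delta}$ acting on a function $f \in C^\infty(V_n)$ with the trace of the connection Hessian. Property a) of the SPT characterizes $\overset{(i)}{\Delta}$ as the (doubled) generator of the $\conx{i}$-Brownian motion, and the horizontal-lift construction underlying \eqref{eq6_2} gives $\overset{(i)}{\Delta} f = \mathrm{tr}_g \mathrm{Hess}^{(i)} f = \sum_k \left[ e_k(e_k f) - (\conx{i}_{e_k} e_k) f \right]$. Subtracting the identical expression written for $\nabla^{LC}$, the second-order terms cancel and one is left with the first-order difference $\overset{(i)}{\Delta} f - \Delta^{LC} f = - df\!\left( \mathrm{tr}_g A^{(i)} \right)$, where $\Delta^{LC}$ is the Laplace--Beltrami operator. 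A short contorsion computation shows $\mathrm{tr}_g A^{(i)} = -\tau_i$, where $\tau_i(Z) = \sum_k g\big(T^{(i)}(e_k, Z), e_k\big)$ is the contracted torsion of $\conx{i}$, so the only obstruction to $\overset{(i)}{\Delta} = \Delta^{LC}$ is this torsion-trace vector.

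The Driver condition enters exactly here: it is precisely the requirement that the contracted torsion $\tau_i$ (equivalently $\mathrm{tr}_g A^{(i)}$) vanish, which is what makes the drift in \eqref{eq6_2} equal to the Laplace--Beltrami drift. Granting it for both $i = 1, 2$ gives $\overset{(1)}{\Delta} = \Delta^{LC} = \overset{(2)}{\Delta}$; equivalently, $\mathrm{tr}_g(\conx{1} - \conx{2}) = \mathrm{tr}_g A^{(1)} - \mathrm{tr}_g A^{(2)} = 0$, recovering the trace-free hypothesis isolated earlier in this section and forcing the difference of the two connection Laplacians to vanish outright. The main obstacle is careful bookkeeping rather than any deep idea: one must verify, in a local chart with Christoffel symbols $\overset{(i)}{\Gamma}{}^k_{ij}$, that the principal symbols $g^{ij}\partial_i\partial_j$ agree automatically (so that only the first-order term $g^{ij}\big(\overset{(1)}{\Gamma}{}^k_{ij} - \overset{(2)}{\Gamma}{}^k_{ij}\big)\partial_k$ can survive), and that this contraction is indeed the coordinate expression of $\mathrm{tr}_g(\conx{1}-\conx{2})$, so that the Driver condition is the correct hypothesis to annihilate it.
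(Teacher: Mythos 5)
Your proof is correct, but it takes a genuinely different route from the paper's. You normalize both connections against the Levi-Civita connection, writing $A^{(i)} = \conx{i} - \nabla^{LC}$, identify each $\overset{(i)}{\Delta}$ with the trace of the connection Hessian, and use the contorsion formula $g(A_XY,Z) = \frac12\left[g(T(X,Y),Z) - g(T(Y,Z),X) + g(T(Z,X),Y)\right]$ to get $\mathrm{tr}_g A^{(i)} = -\tau_i$ (the contracted torsion), which the Driver condition kills; hence each Laplacian separately equals the Laplace--Beltrami operator $\Delta^{LC}$. The paper never introduces the Levi-Civita connection: it works directly with the difference tensor $M(u,v) = \conx{1}_u v - \conx{2}_u v$ and derives three polarization identities --- $g(M(u,v),v)=0$ from metric compatibility applied to $u\cdot g(v,v)$, then $g(M(v,u),v) + g(u,M(v,v)) = 0$ from compatibility applied to $v\cdot g(u,v)$, then $g(M(u,v),v) = g(M(v,u),v)$ from the Driver condition applied to $M(u,v)-M(v,u) = \overset{(1)}{T}(u,v)-\overset{(2)}{T}(u,v)$ --- which combine to give $g(u,M(v,v))=0$ for all $u$, hence $M(v,v)=0$ pointwise, and the difference of Laplacians $\sum_i M(e_i,e_i)$ vanishes. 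Each argument proves more than the lemma asks, in a different direction: yours shows both Laplacians coincide with the intrinsic $\Delta^{LC}$ (recovering the trace-free condition $\mathrm{tr}_g(\conx{1}-\conx{2})=0$ isolated earlier in the section), while the paper's shows the full symmetric part of $M$ vanishes, which is strictly stronger than trace-freeness; the paper's route is also more elementary, needing no auxiliary connection and no contorsion formula. One wording correction: the Driver condition is \emph{not} ``precisely'' the vanishing of the contracted torsion --- it is the strictly stronger pointwise requirement $g(T(u,v),u)=0$ for all $u,v$, which implies $\tau_i = 0$ upon setting $u=e_k$ and summing over an orthonormal frame, and since only this implication is used, your argument stands as written.
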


\begin{proof} Denote $M(u, v) = \conx{1}_u v - \conx{2}_u v$.

Then $\conx{1}_u v - \conx{2}_u v = M(u,v) = \overset{(1)}{T}(u,v) - \overset{(2)}{T}(u,v)$, $\overset{(1)}{T}$ and $\overset{(2)}{T}$ being the torsions of the connections $\conx{1}$ and $\conx{2}$.

Then from $u . g(v, v) - u.g(v, v) = 0$, we have:
\begin{align}
g(M(u,v), v) = 0 \label{eq6_9}
\end{align}

From $v . g(u,v) - v.g(u,v) = 0$, we have:
\begin{align}
g(M(v, u), v) + g(u, M(v,v)) = 0 \label{eq6_10}
\end{align}

And because $\conx{1}$ and $\conx{2}$ verify the Driver condition, we have:
\begin{align}
g(M(u,v), v) - g(M(v, u), v) = 0 \label{eq6_11}
\end{align}

From (\ref{eq6_9}),  (\ref{eq6_10}),  (\ref{eq6_11}) we get: $g(u, M(v, v)) = 0$, so: $M(v,v)=0, ~\forall v$.

As $\overset{(1)}{\Delta} - \overset{(2)}{\Delta} = \sum_{i=1}^n \conx{1}_{e_i} e_i - \sum_{i=1}^n \conx{2}_{e_i} e_i$, we get: $\overset{(1)}{\Delta} = \overset{(2)}{\Delta}$.

\end{proof}

\section{\huge Derivations on $\P_{m_0}(V_n)$}

Let $(V_n, g)$ be a Riemannian $n$-dimensional compact manifold with connection $\nabla$, compatible with $g$, and $\P_{m_0}(V_n, g)$ be as usual the set of all continuous paths: $[0, 1] \rightarrow V_n$, starting from $m_0 \in V_n$.

We now want to prove that under the Driver condition, the $\dinf$-module generated by a specific type of derivations (being built using $C^\infty$ vector fields on $V_n$), is "dense" in the set of all $\dinf$-continuous derivations.

\subsection{If any $D_v$-type of derivation has an unique $\mathbb{D}^\infty$-derivation extension, the Driver condition is fullfilled}

We denote by $I$ the It\={o} application of the Wiener space $\mathcal{W}$ into $\P_{m_0}(V_n, g)$, and if $f \in C^\infty(V_n)$, by
\setcounter{equation}{0} 
\begin{align}
F_{f,t} (\omega) = \left( f \circ I \right)(\omega)(t) \label{eq7_1}
\end{align}
$\tilde{H}$ being the NCM as in Definition (5, 1), we define, with $v \in \tilde{H}$, an operator $\Dop_v$ by:
\begin{align}
\Dop_v \left( F_{f,t} \right)(\omega) = (v . f ) |_{I(\omega)(t)} \label{eq7_2}
\end{align}

We will show that if the Driver condition is satisfied, $\Dop_v$ can be extended in a $\dinf$-continuous derivation on $\dinf(\Omega)$, and conversely.

  The Driver condition being: if $T$ is the torsion of the manifold,
  \begin{equation*}
    \forall u, v \in \Gamma(V_n), \quad g(T(u,v), v) = 0.
  \end{equation*}
  We first show that $D_v$ is an adapted derivation, assuming it has a unique extension on 
  $\mathbb D^\infty$, denoted again $D_v$. For this we need:  
  
  \begin{lem}\label{lem1}
    \begin{equation*}
      \sigma\left[F_{f,s}\,/\, s \leq t, f \in C^\infty(V_n)\right] = \mathcal F_t
    \end{equation*}
  \end{lem}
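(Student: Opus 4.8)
The plan is to prove the equality by double inclusion, the easy direction being adaptedness of the $F_{f,s}$ and the substantial direction being the stochastic anti-development of the path. For the easy inclusion, note that for fixed $s\leq t$ and $f\in C^\infty(V_n)$ the variable $F_{f,s}(\omega)=f\bigl(I(\omega)(s)\bigr)$ is a function of the position $p_s=I(\omega)(s)$, which is $\F_s$-adapted as the solution of the It\^o SDE up to time $s$; hence each $F_{f,s}$ is $\F_t$-measurable and the left-hand $\sigma$-algebra is contained in $\F_t$. I would then reduce the problem to a statement about the path alone: since $V_n$ is a compact manifold, finitely many smooth functions $f_1,\dots,f_N$ (for instance the components of a Whitney embedding $V_n\hookrightarrow\R^N$) realize $V_n$ as a compact, hence Borel, subset of $\R^N$ and separate points. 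Thus $\sigma[F_{f,s}\,/\,f\in C^\infty(V_n)]=\sigma(p_s)$ for every $s$, and consequently $\sigma[F_{f,s}\,/\,s\leq t,\ f\in C^\infty(V_n)]=\sigma[p_s\,/\,s\leq t]$. It therefore suffices to show $\sigma[p_s\,/\,s\leq t]=\F_t$.

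For the reverse inclusion I would recover the driving Brownian motion from the path. Because $\mathcal W$ is canonical, $\F_t=\sigma[\tilde B_s\,/\,s\leq t]$, so it is enough to exhibit each $\tilde B_s$ as a functional of $\{p_r\,/\,r\leq s\}$. Here the key input is that stochastic parallel transport is an intrinsic notion (property (b) of the introduction to Section~6): the frame $u_\mu(\cdot,\omega)$ solving the SPT equation $X^k=-\int_0^{\cdot}\Gamma^k_{ij}(p(s))\,X^j\circ dp^i$ is a functional of the path $p_\cdot$ only, hence $\sigma[p_r\,/\,r\leq s]$-adapted, and so are the invertible component matrix $Z^k_\mu(s,\omega)$ and its inverse $Z^{-1}$. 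Inverting the development equation $dp^k=Z^k_\mu\circ d\tilde B^\mu$ gives $\tilde B^\mu_t=\int_0^t (Z^{-1})^\mu_k\circ dp^k$, a Stratonovich integral whose integrand is path-adapted and which is driven by the path itself; since all $\sigma$-fields are complete, this integral is $\sigma[p_s\,/\,s\leq t]$-measurable, yielding $\F_t\subset\sigma[p_s\,/\,s\leq t]$ and hence equality.

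I expect the main obstacle to be making the anti-development rigorous: one must justify that the frame $Z$ and its inverse are genuinely measurable functionals of the path, and that the Stratonovich integral $\int_0^t Z^{-1}\circ dp$ can be reconstructed from the path alone, without appealing to the driving noise. My approach would be to approximate $p_\cdot$ by its dyadic polygonal interpolations, solve the parallel-transport and anti-development equations pathwise (they become ordinary equations) for each approximation, and pass to the limit along a fixed subsequence. The convergence in every $L^p$ would be controlled by the $\dinf$-boundedness of the SPT solution (Corollary~6.1) together with the martingale moment inequalities (Theorem~6.1); compactness of $V_n$, which bounds all Christoffel symbols and their derivatives uniformly, is what makes these approximations converge and delivers a version of $\tilde B$ adapted to the path filtration.
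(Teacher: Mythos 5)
Your proof is correct and follows essentially the same route as the paper's: the paper likewise reduces the reverse inclusion to showing $\tilde B_t\in\sigma\left[F_{f,s}\,/\,s\leq t,\ f\in C^\infty(V_n)\right]$ by noting that the SPT frame $Z^k_\mu$ solves the SDE $\mathrm dZ^k_\mu=-\Gamma^k_{i\ell}Z^\ell_\mu\circ\mathrm dp^i$ with coefficients measurable with respect to that $\sigma$-algebra, and then inverting the development equation to get $\mathrm d\tilde B^\ell=(Z^{-1})^\ell_k\circ\mathrm dp^k$. Your extra steps --- the point-separation (embedding) argument identifying $\sigma\left[F_{f,s}\,/\,f\right]=\sigma(p_s)$, and the polygonal-approximation scheme making the path-measurability of the Stratonovich integrals rigorous --- merely spell out details the paper leaves implicit.
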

  
  \begin{proof}
    The inclusion
    \begin{equation*}
      \sigma\left[F_{f,s}\,/\, s \leq t, f \in C^\infty(V_n)\right] \subset \mathcal F_t
    \end{equation*}
    is trivial. To prove the reverse inclusion, it is enough to prove that 
    \begin{equation*}
      \tilde B_t \in \sigma\left[F_{f,s}\,/\, s \leq t, f \in C^\infty(V_n)\right]
    \end{equation*}
    $\tilde B_t$ being defined as in Section 6, introduction a).
    \begin{equation*}
      \mathrm d\tilde B_t^k = \left(Z^{-1}\right)_\mu^k\,\mathrm dM_t^\mu 
    \end{equation*}
    We have with Section 6 notations:  
    \begin{align*}
      Z_\mu^k          & = \left( \theta_* u_\mu (\omega, t) \right)^k, \\
      \mathrm dZ_\mu^k & = -\Gamma_{i\ell}^k Z_\mu^\ell \circ \mathrm dp^i
    \end{align*}
    so $Z_\mu^k$ is the solution of a SDE with coefficients 
    in $\sigma\left[F_{f,s}  \,/\, s \leq t, f \in C^\infty(V_n)\right]$, so 
    $Z_\mu^k \in \sigma\left[F_{f,s}\,/\, s \leq t, f \in C^\infty(V_n)\right]$.

    Now from (6, 3) we have
    \begin{equation*}
      \mathrm d\tilde B_t^\ell = \left( Z^{-1}\right)_k^\ell \circ \mathrm dp_t^k
    \end{equation*}
    Then, as $\left(Z^{-1}\right)_\ell^k \in \sigma\left[F_{f,s}\,/\, s \leq t, f \in C^\infty(V_n)\right]$, 

    \begin{equation*}
      \tilde B_t \in \sigma\left[F_{f,s}\,/\, s \leq t, f \in C^\infty(V_n)\right].
    \end{equation*}
  \end{proof}
  
  \begin{thm}\label{thm1}
    Assuming there exists a unique extension of $D_v$, defined on its domain by (2), this extension is an
    adapted derivation.
  \end{thm}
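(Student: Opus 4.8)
The plan is to reduce the claim that the unique, $\dinf$-continuous extension $D_v$ is an \emph{adapted derivation} to a single measurability statement. The derivation (Leibniz) property is inherited automatically: it holds on the algebra generated by the $F_{f,s}$ by the very definition (2), and this algebra is $\dinf$-dense, so by uniqueness and $\dinf$-continuity the extension satisfies Leibniz on all of $\dinf(\Omega)$. Thus the real content is \emph{adaptedness} in the sense of Definition \ref{def2_4}, namely that $D_v$ sends adapted processes to adapted processes. I would reduce this to: for every $t\in[0,1]$, the operator $D_v$ maps $\mathcal F_t$-measurable $\dinf$-functions to $\mathcal F_t$-measurable functions. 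Indeed, if $\Phi(t,\omega)$ is an adapted process, then for each fixed $t$ the function $\Phi(t,\cdot)\in\dinf(\Omega)$ is $\F_t$-measurable, and $(D_v\Phi)(t,\cdot)$ is just $D_v$ applied to it; once $\F_t$-measurability is preserved, $(D_v\Phi)(t,\cdot)$ is $\F_t$-measurable for all $t$, i.e.\ $D_v\Phi$ is adapted. The engine of the whole argument is Lemma \ref{lem1}, which identifies $\F_t$ with the $\sigma$-algebra generated by $\{F_{f,s}:s\le t,\ f\in C^\infty(V_n)\}$.

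First I would compute $D_v$ on the generators. From (2), $D_v(F_{f,s})(\omega)=(v\cdot f)\big|_{I(\omega)(s)}$, and writing the N.C.M.\ vector as $v(s,\omega)=\sum_\mu f^\mu(s)\,u_\mu(s,\omega)$ with $u_\mu$ the stochastic parallel transport of $u e_\mu$, this becomes $D_v(F_{f,s})(\omega)=\sum_\mu f^\mu(s)\,(u_\mu(s,\omega)\cdot f)\big|_{I(\omega)(s)}$. Each factor is $\F_s$-measurable: the $f^\mu(s)$ are deterministic, the point $I(\omega)(s)=p(s)$ is $\F_s$-measurable, and $u_\mu(s,\cdot)$ is $\F_s$-measurable because, by (\ref{eq6_1}), its components $Z^k_\mu$ solve an adapted Stratonovich equation $\mathrm dZ^k_\mu=-\Gamma^k_{i\ell}Z^\ell_\mu\circ\mathrm dp^i$ driven by the path. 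Hence $D_v(F_{f,s})$ is $\F_s$-measurable, and in particular $\F_t$-measurable whenever $s\le t$.

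Next I would propagate this from generators to all of $\dinf\cap L^0(\F_t)$. Since $D_v$ is a derivation, the Leibniz rule shows that $D_v$ sends every polynomial in finitely many generators $F_{f_1,s_1},\dots,F_{f_N,s_N}$ with all $s_i\le t$ to an $\F_t$-measurable function: each $D_v F_{f_i,s_i}$ is $\F_t$-measurable by the previous step, and each $F_{f_i,s_i}$ is $\F_t$-measurable by Lemma \ref{lem1}. By Lemma \ref{lem1} these cylindrical polynomials generate $\F_t$, so they are $L^2$-dense in $\dinf\cap L^0(\F_t)$; using the hypothesised $\dinf$-continuity of $D_v$ together with the fact that $\dinf$-convergence forces $L^2$-convergence (so $\F_t$-measurability survives in the limit, up to almost-sure extraction), one concludes that $D_v$ maps every $\F_t$-measurable $\dinf$-function to an $\F_t$-measurable function. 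Combined with the reduction of the first paragraph, this yields that $D_v$ is an adapted derivation.

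The main obstacle is the passage to the limit in the Fréchet topology of $\dinf$: one must ensure that approximating a general $\F_t$-measurable $\dinf$-function by cylindrical polynomials in $\{F_{f,s}:s\le t\}$ can be done with control of \emph{all} the Malliavin seminorms, not merely in $L^2$. This is where the smoothness and compactness of $(V_n,g)$, and the $\dinf$-boundedness of the parallel-transport frame established in Corollary \ref{cor6_1}, enter decisively. I would keep the argument clean by requiring only continuity of $D_v$ into $L^0$ and the stability of $\F_t$-measurability under almost-sure limits, which suffices for the measurability conclusion and sidesteps the need for full $\dinf$-density of the polynomial algebra.
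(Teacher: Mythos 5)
Your proposal is correct and is essentially the paper's own argument, which compresses it to two lines: definition (2) together with \lemref{lem1} gives $D_v F_{f,s}\in\F_s$, and the assumed $\dinf$-continuity of the unique extension then propagates $\F_t$-measurability, hence adaptedness. Your more detailed expansion (measurability of the SPT frame via its SDE, Leibniz on cylindrical polynomials in the $F_{f,s}$, density and passage to the limit) fills in exactly what the paper leaves implicit, and you rightly flag that the limiting step must be controlled in the Fréchet topology of $\dinf$ — since the continuity of $D_v$ refers to that topology on its domain — rather than merely in $L^2$.
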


  \begin{proof}
    The definition (2) of $D_v$ and \lemref{lem1} show that: $D_v F_{f, t} \in \mathcal F_t$. So the
    extension of $D_v$ to $\mathbb D^\infty$ being supposed $\mathbb D^\infty$-continuous, $D_v$ is
    an adapted derivation.
  \end{proof}

  \begin{thm}\label{thm2}
    The NSC for $D_v$ ($v \in \tilde H$) to have a unique $\mathbb D^\infty$-continuous, adapted
    extension on $\mathbb D^\infty(\mathcal W)$ with zero divergence is the Driver condition:
    if T is the torsion of $\nabla$, 
    \begin{equation}
      \forall u, v \in \Gamma(V_n), \quad g(T(u,v), u) = 0\tag{3}\label{l3}
    \end{equation}
  \end{thm}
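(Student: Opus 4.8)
The strategy is to read the operator $D_v$ in the Wiener chart through the It\^o map $I$ and to identify it, on the cylindrical functions $F_{f,t}$, with a first--order operator on $\mathbb D^\infty(\mathcal W)$ whose symbol splits into an infinitesimal rotation of the driving Brownian motion and a torsion--induced drift. The principal tool is Theorem 4.1: an adapted, $\mathbb D^\infty$--continuous derivation has zero divergence if and only if it has the form $\Div A \Grad$ with $A$ an adapted, $n\times n$ antisymmetric multiplicator. Recall also (Remark 2.2.ii) that for antisymmetric $A$ one has $\Div(\Div A \Grad)=0$ automatically, so the rotation part is always divergence free; the whole question reduces to whether the drift vanishes.

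First I would make the reduction concrete. Applying $D_v$ to $F_{f,t}$ and using the Stratonovich SDE for the stochastic parallel transport frame $u_\mu(t,\omega)$ together with the definition of $D_v$, one computes $\Grad(D_v F_{f,t})$ in the Wiener chart. Differentiating the frame produces two contributions: a Cameron--Martin shift in the direction $f$, which through the moving frame becomes an infinitesimal rotation $\Div A \Grad$ of the Brownian motion, where $A=A(t,\omega)$ is the antisymmetric matrix process built from $f$ and the Christoffel symbols along the path; and a bounded--variation remainder coming from the It\^o correction in the frame equation. The rotation part is an adapted multiplicator and is $\mathbb D^\infty$--continuous by the multiplicator results of Sections 4--6, since the frame and its iterated gradients are $\mathbb D^\infty$--semimartingales with $\alpha$--H\"olderian martingale parts, hence multiplicators.

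Next I would compute the divergence through the duality $-\int D_v\varphi\,d\mathbb P=(\Div D_v,\varphi)$. The rotation contribution drops out (Remark 2.2.ii), so $\Div D_v$ equals the divergence of the drift remainder, and a direct calculation identifies this remainder with a torsion term built from $g(T(u_\mu,v),u_\mu)$ along the path. Requiring $\Div D_v=0$ for every $v\in\tilde H$ --- that is, for the frame coefficients $f^\mu$ ranging over all Cameron--Martin functions --- then forces, after localisation in the time variable and polarisation, the pointwise identity $g(T(u,v),u)=0$ for all $u,v\in\Gamma(V_n)$, which is the Driver condition. This yields both implications: under the Driver condition the drift vanishes and $D_v=\Div A \Grad$ extends to an adapted (\thmref{thm1}), $\mathbb D^\infty$--continuous, zero--divergence derivation; conversely, the existence of such an extension forces the drift to vanish and hence the Driver condition. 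Uniqueness of the extension follows from \lemref{lem1}, since the $F_{f,s}$ with $s\le t$ generate $\mathcal F_t$, so that the cylindrical functions $F_{f,t}$ and their products form a $\mathbb D^\infty$--dense subalgebra; a $\mathbb D^\infty$--continuous derivation is determined by the Leibniz rule together with its values on such a dense subalgebra, and Corollary 2.5 guarantees the required locality.

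The main obstacle will be the divergence computation that isolates the torsion term. It requires converting the Stratonovich SDE for the frame into It\^o form, controlling the stochastic integrals that appear in $\Grad(D_v F_{f,t})$, and verifying that after integration against a test function every term except the symmetric torsion contribution either cancels (the antisymmetric rotation part) or integrates to zero. The bookkeeping that separates the genuinely antisymmetric multiplicator part from the scalar drift governed by $g(T(u,v),u)$ is delicate, precisely because it is there that the geometry of $\nabla$ enters, and it is the step on which the whole equivalence hinges.
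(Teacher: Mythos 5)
Your proposal pivots on the claim that, in the Wiener chart, $D_v$ splits into a rotation $\Div A\,\Grad$ plus a scalar drift, that ``the whole question reduces to whether the drift vanishes,'' and that the Driver condition is precisely the vanishing of that drift. This misidentifies where the geometry enters, and the resulting equivalence argument fails. Already in the flat, torsion-free case $D_v$ is the Cameron--Martin shift along $h$, a pure vector field: the drift certainly does not vanish under the Driver condition. The correct normal form, which the paper derives, is $D_v=\Div A_v\,\Grad + h_1$ with $\dot h_1^k=\dot f^k+\frac12\sum_{\mu=1}^n b^k_{\mu,\mu}$, so it is $D_v-h_1$, not $D_v$, that is divergence free, and $\Div D_v=\Div h_1$ has as density the curvature/$\nabla T$ contraction $\frac12\sum_\mu b_{\mu,\mu}$ --- not the Driver contraction $g(T(u,v),u)$. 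Hence your proposed computation of $\Div D_v$ cannot isolate the Driver term, and ``zero divergence for all $v$ forces Driver by polarisation'' does not go through. The actual obstruction lives one level up: writing $\mathrm d(D_v\tilde B^k)=\dot h_1^k\,\mathrm dt + A^k_\mu\,\mathrm d\tilde B^\mu$ and differentiating the structural identity $-\Gamma^k_{ij}v^jZ^i_\mu = D_v Z^k_\mu + Z^k_\lambda A^\lambda_\mu$, the $\mathrm dt$-coefficient of $\mathrm dA$ is intrinsically $a^\nu_{2,\mu}=g\bigl(T(\dot f^\alpha u_\alpha,u_\mu),u_\nu\bigr)$, and the Driver condition is exactly the \emph{antisymmetry} of this matrix, not its vanishing. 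In the necessity direction the paper obtains that antisymmetry by applying Theorem 4.1 to $D_v-h_1$ (adapted, null divergence via Clark--Ocone) and then invoking the fundamental isometry to identify the abstract antisymmetric multiplicator $\tilde A$ with the concrete $A$ of the structure equation; no divergence computation on $D_v$ itself plays this role.

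The sufficiency direction is also thinner than the problem requires. Granting Driver, one must (i) build $A_v$ explicitly from $R$ and $\nabla T$ (equations (24)--(26) of Section 7), (ii) check its antisymmetry, which needs the auxiliary identity $g\bigl((\nabla_Z T)(X,Y),X\bigr)=0$ deduced from Driver, (iii) verify $A_v$ is a multiplicator (Itô integrals of $\mathbb D^\infty$-bounded processes are $\frac12$-$\mathbb D^\infty$-Hölderian, hence multiplicators), and above all (iv) prove that $\Div A_v\,\Grad + h_1$ actually \emph{extends} $D_v$. Step (iv) is not a formality: the paper establishes it by showing that $\widehat R^\ell_\rho$ and $(D_v-v)\cdot p^\ell$ satisfy a closed linear system of SDEs with null initial data, whence $(D_v-v)\cdot p^\ell=0$ by uniqueness. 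Your sketch contains no substitute for this identification step, and with your decomposition (drift forced to zero) the candidate extension would be wrong even before that step. Your uniqueness argument --- $\sigma[F_{f,s}\,/\,s\le t]=\mathcal F_t$, density of the cylindrical algebra, continuity, and the locality from Corollary 2.5 --- is sound and matches the paper, but it is the only part of the proposal that survives intact.
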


  Before proving \thmref{thm2}, we need some lemmas.
  \begin{lem}\label{lem2}
    $Z_\mu^k = \left(Z(t,\omega)\right)_\mu^k$ is a $\mathbb D^\infty$-bounded process.
  \end{lem}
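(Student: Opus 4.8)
The plan is to recognize $Z_\mu^k$ as the solution of a linear Stratonovich SDE of exactly the same shape as the stochastic parallel transport equation (6.1), and then to re-run the argument already used for Corollary 6.1. Indeed, from the introduction to Section 6 the frame components satisfy
\[
  dZ_\mu^k = -\Gamma_{i\ell}^k\!\left(p(s)\right) Z_\mu^\ell \circ dp^i,
\]
which is precisely (6.1) with the unknown vector $X^k$ replaced by $Z_\mu^k$ and with the driving semimartingale being the $V_n$-valued Brownian $p$. Because $V_n$ is compact, a finite atlas covers it and on it the Christoffel symbols $\Gamma_{i\ell}^k$ together with all their derivatives are uniformly bounded; equivalently, one may lift the equation to the orthonormal frame bundle, where the horizontal vector fields are globally $C^\infty$ with bounded derivatives of all orders. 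This is exactly the hypothesis under which Corollary 6.1 and the moment inequalities of Theorem 6.1 were established.

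First I would show $Z_\mu^k \in L^{\infty-0}(\Omega,H)$: applying Theorem 6.1 to the martingale part of the displayed SDE together with a Gronwall estimate yields, for every $p>1$, a bound on $\E\!\left[\sup_{t\le 1}|Z_\mu^k(t,\omega)|^{2p}\right]$ depending only on the (bounded) coefficients. Thus the process is $L^p$-bounded uniformly in $t$, for every $p>1$.

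Next, exactly as in Corollary 6.1, I would differentiate the SDE in the Malliavin sense. Since we do not know a priori that $\Grad Z_\mu^k$ is a genuine vector (rather than only a distribution), I would instead set up the coupled system consisting of the original equation for $Z_\mu^k$ together with the formal equation for $\Grad Z_\mu^k$ — the analogue of (6.4), in which $\Grad(dp^i)$ is substituted from (6.2). The coefficients of this enlarged system are again $C^\infty$ functions of $p(s)$, hence $C^\infty$-bounded on the compact $V_n$; therefore the system admits a unique solution that is $L^{\infty-0}$-bounded uniformly in $t$, and this solution exhibits $\Grad Z_\mu^k$ as a function with the required bound. Iterating the coupling for the higher Malliavin derivatives shows that for every $r\in\N_\star$ the quantity $\sup_{t\in[0,1]}\|\Grad^r Z_\mu^k\|_{\overset{r}{\otimes}H}$ is $L^p$-bounded for all $p>1$, which is precisely the assertion that $Z$ is a $\mathbb D^\infty$-bounded process.

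The main obstacle is not the probabilistic estimate but the globalisation: $Z_\mu^k$ is defined as a component read in a local chart, while the path may leave any single chart domain. I would resolve this by appealing to the intrinsic character of the SPT (whose $g$-norm is constant by property (c) of Section 6, so the vectors $u_\mu$ are uniformly bounded) and by exploiting compactness of $V_n$ to pass to a finite atlas with uniformly bounded transition data — or, more cleanly, by carrying out the entire computation on the compact orthonormal frame bundle, where a single globally defined system of horizontal SDEs governs all the $Z_\mu^k$ simultaneously and Corollary 6.1 applies without any chart patching.
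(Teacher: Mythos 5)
Your proposal is correct and matches the paper's argument: the paper's entire proof of this lemma is the citation ``This is Corollary 6.1,'' since $Z_\mu^k$ satisfies exactly the SPT equation (6.1), and your reconstruction (the $L^{\infty-0}$ bound via the martingale moment inequalities, then the coupled system with the formal equation (6.4) for $\Grad Z_\mu^k$ using (6.2), iterated for higher Malliavin derivatives on the compact $V_n$) is precisely the content of that corollary. Your additional remarks on globalisation via the frame bundle only make explicit what the paper leaves implicit.
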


  \begin{proof}
    This is corollary 6, 1.
  \end{proof}

  To prove that \eqref{l3} is a necessary condition, we suppose now that $D_v$ can be extended in
  a $\mathbb D^\infty$-continuous unique adapted derivation on $\mathbb D^\infty(\mathcal W)$, again
  denoted $D_v$.

  \begin{lem}\label{lem3}
    If $v \in \tilde H$, $D_v(Z_\mu^k)$ and $\Gamma_{ij}^k Z_\mu^iv^j$ are $\mathbb D^\infty$-bounded
    semi-martingales.
  \end{lem}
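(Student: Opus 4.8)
The plan is to treat the two processes separately: the purely algebraic product $\Gamma_{ij}^k Z_\mu^i v^j$ by the multiplicative stability of $\dinf$-bounded semi-martingales, and $D_v(Z_\mu^k)$ by differentiating the parallel-transport equation to produce a linear stochastic differential equation which it must satisfy.

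First I would check that $\Gamma_{ij}^k Z_\mu^i v^j$ is a $\dinf$-bounded semi-martingale factor by factor. By \lemref{lem2} the components $Z_\mu^i$ are $\dinf$-bounded, and they are semi-martingales since they solve $\mathrm dZ_\mu^k=-\Gamma_{i\ell}^k(p)Z_\mu^\ell\circ\mathrm dp^i$. Writing $v\in\tilde H$ as $v(t,\omega)=\sum_\nu f^\nu(t)u_\nu(t,\omega)$ with $f^\nu\in H$, its chart components are $v^j=\sum_\nu f^\nu(t)Z_\nu^j(t,\omega)$; since the $f^\nu(t)$ are deterministic and uniformly bounded on $[0,1]$, each $v^j$ is again a $\dinf$-bounded semi-martingale. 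Finally $\Gamma_{ij}^k(p)$ is $\dinf$-bounded because $\Gamma_{ij}^k$ is $C^\infty$ with all derivatives bounded on the compact $V_n$, so its composition with the $\dinf$-bounded Brownian $p$ is handled exactly by the iterated-gradient argument of Corollary~\ref{cor6_1}, and it is a semi-martingale by the It\^o formula. Since $\dinf$ is an algebra on which the $\drp$-norms satisfy the Leibniz--H\"older estimates uniformly in $t$, and products of semi-martingales are semi-martingales, $\Gamma_{ij}^k Z_\mu^i v^j$ is a $\dinf$-bounded semi-martingale (compare Corollary~\ref{cor6_2}).

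Next, for $D_v(Z_\mu^k)$ I would apply the derivation $D_v$ to the transport equation. The key input is that, for a coordinate function $x^m$, one has $F_{x^m,t}=p^m(t)$, whence by the defining formula for $D_v$, $D_v(p^m)=(v\cdot x^m)\big|_{p(t)}=v^m$; thus $D_v$ sends the driving process $p^i$ into the $\dinf$-bounded semi-martingale $v^i$. Using the Leibniz rule together with the chain rule $D_v\bigl(\Gamma_{i\ell}^k(p)\bigr)=(\partial_m\Gamma_{i\ell}^k)(p)\,v^m$, I obtain the linear equation
\begin{align*}
  \mathrm d\bigl(D_v Z_\mu^k\bigr)
  &= -(\partial_m\Gamma_{i\ell}^k)(p)\,v^m\,Z_\mu^\ell\circ\mathrm dp^i \\
  &\quad -\Gamma_{i\ell}^k(p)\,\bigl(D_v Z_\mu^\ell\bigr)\circ\mathrm dp^i
  -\Gamma_{i\ell}^k(p)\,Z_\mu^\ell\circ\mathrm dv^i,
\end{align*}
a linear Stratonovich equation in the unknown $D_v Z_\mu^k$ whose coefficients $\Gamma_{i\ell}^k(p)$ are $\dinf$-bounded and whose inhomogeneous terms --- assembled from $Z_\mu^\ell$, $v^m$, $v^i$ and the bounded derivatives of $\Gamma$ --- are $\dinf$-bounded semi-martingales of the type just established.

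From this equation I would conclude. Since $D_v\colon\dinf\to\dinf$ is the given continuous derivation, $D_v Z_\mu^k(t,\cdot)\in\dinf(\mathcal W)$ for each $t$; the content is uniformity in $t$. The linearity and $\dinf$-boundedness of the coefficients, combined with the martingale moment inequalities (\thmref{thm6_1}) and a Gronwall estimate, give uniform $L^p$-bounds on $\sup_t|D_v Z_\mu^k|$; iterating the same scheme on the equations satisfied by $\Grad^r\bigl(D_v Z_\mu^k\bigr)$ --- as in the proof of Corollary~\ref{cor6_1}, where one solves the coupled system for a process and its successive gradients --- yields the uniform $\drp$-bounds for all $(p,r)$, so $D_v Z_\mu^k$ is $\dinf$-bounded, and being the solution of an SDE it is a semi-martingale. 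The main obstacle I anticipate is justifying the differentiation step of the previous paragraph, namely that the adapted derivation $D_v$ commutes with the Stratonovich integral, $D_v\!\int a\circ\mathrm dp=\int D_v(a)\circ\mathrm dp+\int a\circ\mathrm d(D_v p)$. I would resolve this by approximating the integral by Stratonovich Riemann sums, which are polynomials in the generating variables $F_{f,s}$ and hence lie in the domain where $D_v$ acts genuinely as a derivation, applying $D_v$ termwise, and passing to the limit using the $\dinf$-continuity of $D_v$ and its adaptedness (\thmref{thm1}) together with the uniform $\dinf$-bounds already in hand.
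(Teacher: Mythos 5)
Your proposal is correct and follows essentially the same route as the paper: part ii) of the paper's proof disposes of $\Gamma_{ij}^k Z_\mu^i v^j$ exactly as you do, as a product of three $\mathbb D^\infty$-bounded semi-martingales, and part i) applies $D_v$ to the transport equation (after an It\^o rewriting of the Stratonovich bracket), using $D_v(\mathrm dp^i)=\mathrm d(D_vp^i)=\mathrm dv^i$ to arrive at the same linear SDE for $D_vZ_\mu^k$. The only difference is that you spell out what the paper leaves implicit --- the Gronwall/moment-inequality iteration on the gradient system (the mechanism of Corollary~6.1) and the Riemann-sum justification that the adapted, $\mathbb D^\infty$-continuous derivation $D_v$ commutes with the stochastic integral --- which strengthens rather than changes the argument.
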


  \begin{proof} ~

    \begin{enumerate}
      \item[i)]
        \begin{align*}
          \mathrm d Z_\mu^k & = -\Gamma_{ij}^kZ_\mu^j \circ \mathrm dp^i \\
                            & = -\Gamma_{ij}^kZ_\mu^j \cdot \mathrm dp^i
                                -\frac12 \left[\mathrm d(\Gamma_{ij}^kZ_\mu^j),
                                 \mathrm dp^i \right]
        \end{align*}
        The bracket gives a $\mathbb D^\infty$-bounded process $\times \mathrm dt$ denoted: 
        $\widehat Z_\mu^k\cdot\mathrm dt$ and $D_v\widehat Z_\mu^k$ has meaning because
        we have supposed
        that $D_v$ is a derivation on $\mathbb D^\infty(\mathcal W)$. Then:  
        \begin{equation*}
          D_v(\Gamma_{ij}^kZ_\mu^j.\mathrm dp^i) = \left( D_v \Gamma_{ij}^k\right)Z_\mu^j.\mathrm dp^i
                                                   + \Gamma_{ij}^k(D_vZ_\mu^j).\mathrm dp^i
                                                   + \Gamma_{ij}^kZ_\mu^jD_v(\mathrm dp^i)
        \end{equation*}
        But 
        \begin{equation*}
          D_v(\mathrm dp^i) = \mathrm d(D_vp^i) = dv^i\tag{4}\label{l4}
        \end{equation*}
        and $v^i$ is a $\mathbb D^\infty$-S.M.
           
      \item[ii)] $\Gamma_{ij}^k(p_t)$ is a S.M, and $\Gamma_{ij}^kZ_\mu^iv^j$ is the product of
        three S.M.
    \end{enumerate}
  \end{proof}

  \begin{proof}[Proof of the necessary condition]
      Now, from
        \begin{equation*}
          \mathrm dp^k = Z_\mu^k \circ \mathrm d\tilde B^\mu\tag{6, 3}\label{lVI-3}
        \end{equation*}
        we get:  
        \begin{equation*}
          D_v(\mathrm dp^k) = D_v(Z_\mu^k) \circ \mathrm d\tilde B^\mu
                              + Z_\mu^k\circ D_v(\mathrm d\tilde B^\mu)
                              \tag{5}\label{l5}
        \end{equation*}
        We suppose that $D_v(\mathrm d\tilde B^k)$ has the form
        \begin{equation*}
          \mathrm d\left(D_v\tilde B^k\right) = \dot h_1^k \mathrm dt 
                                                + A_\mu^k.\mathrm d\tilde B^\mu
                                                \tag{6}\label{l6}
        \end{equation*}
        where $t\mapsto\int_0^t\dot h_1\,\mathrm ds$ is a $\mathbb D^\infty$-vector field.
        Reporting \eqref{l6} and \eqref{l4} in \eqref{l5}:
        \begin{equation*}
          \mathrm dv^k(t, \omega) = D_v(Z_\mu^k) \circ \mathrm d\tilde B^\mu
                                    + Z_\mu^k\dot h_1^\mu\mathrm dt 
                                    + Z_\mu^k\circ A_\nu^\mu\cdot\mathrm d\tilde B^\nu
        \end{equation*}
        As $v^k$ is a SPT vector,
        \begin{equation*}
          -\Gamma_{ij}^kZ_\mu^iv^j\circ\mathrm d\tilde B^\mu = D_v(Z_\mu^k)\circ\mathrm d\tilde B^\mu
                                                               + Z_\mu^k\dot h_1^\mu\mathrm dt 
                                                               + Z_\lambda^k\circ A_\mu^\lambda\cdot\mathrm d\tilde B^\mu
                                                               \tag{7}\label{l7}
        \end{equation*}
        Identifying the It\^o integals in \eqref{l7}, we get:
        \begin{equation*}
          -\Gamma_{ij}^kv^jZ_\mu^i = D_v\left(Z_\mu^k\right) + Z_\lambda^kA_\mu^\lambda\tag{8}\label{l8}
        \end{equation*}
        From \eqref{l8} and \lemref{lem2}, then $A_\mu^\lambda$ is a $\mathbb D^\infty$-S.M.
        So we can rewrite $D_v(\mathrm d\tilde B^k)$ as:  
        \begin{equation*}
          \mathrm d\left( D_v\tilde B^k\right) = \dot h_2^k \mathrm dt 
                                                 + A_\mu^k\circ\mathrm d\tilde B^\mu
                                                 \tag{9}\label{l9}
        \end{equation*}
        where $t \mapsto \int_0^t \dot h_2\,\mathrm ds$ is a $\mathbb D^\infty$-vector
        field. Then as in definition 6.1 for $\tilde H$, we write 
        $v=\sum_{\mu=1}^nf^\mu(t)u_\mu(t,\omega)$:
        \begin{align*}
          D_v(\mathrm dp^k) &= \mathrm dv^k \\
                            &= \mathrm d\left(\sum_{\mu=1}^n f^\mu(t)Z_\mu^k\right) \\
                            &= \sum_{\mu=1}^n\dot f^\mu(t)Z_\mu^k\mathrm dt
                               + \sum_{\mu=1}^n f^\mu(t)\circ\mathrm dZ_\mu^k(t, \omega) \\
                            &= \sum_{\mu=1}^n\dot f^\mu(t)Z_\mu^k\mathrm dt
                               -f^\mu\Gamma_{ij}^kZ_\mu^j\circ\mathrm dp^i\tag{10}\label{l10}
        \end{align*}
        But
        \begin{align*}
          f^\mu Z_\mu^j & = f^\mu(\theta_*u_\mu(t,\omega))^j \\
                        & =\left(\theta_*\left(f^\mu u_\mu(t,\omega)\right)\right)^j  \\
                        & = v^j(t,\omega)
        \end{align*}
        so
        \begin{align*}
          D_v(\mathrm dp^k) & = \dot f^\mu(t)Z_\mu^k\mathrm dt 
                              -\Gamma_{ij}^kv^jZ_\mu^i\circ\mathrm d\tilde B^\mu\\
                            & =D_v(Z_\mu^k\circ\mathrm d\tilde B^\mu)
        \end{align*}
        with \eqref{l10}, we get then:
        \begin{align*}
          \dot f^\mu Z_\mu^k\mathrm dt
          - \Gamma_{ij}^kv^jZ_\mu^i\circ\mathrm d\tilde B^\mu
          & = D_v(Z_\mu^k)\circ\mathrm d\tilde B^\mu+Z_\mu^k\circ D_v(\mathrm d\tilde B^\mu) \\
          & = D_v(Z_\mu^k)\circ\mathrm d\tilde B^\mu + Z_\mu^k\dot h_2^\mu\mathrm dt+Z_\mu^kA_\lambda^\mu\circ\mathrm d\tilde B^\lambda
        \end{align*}
        Using \eqref{l8} for the identification of the terms with $\mathrm dt$, in the above
        equation, brings
        \begin{equation*}
          \boxed{h_2(t) = \int_0^t\dot f(s)\,\mathrm ds}\tag{11}\label{l11}
        \end{equation*}
        To determine $A_\mu^k$ we write
        \begin{equation*}
          \mathrm dA_\mu^k(t,\omega) = a_{1,\mu}^k\mathrm dt 
                                       + b_{\mu, \rho}^k.\mathrm d\tilde B^\rho
                                       \tag{12}\label{l12}
        \end{equation*}
        where $a_{1,\mu}^k$ and $b_{\mu,\rho}^k$ are the components of
        $(n+1)$ $n \times n$-matrices. 
  
        So we differentiate \eqref{l8}: 
        \begin{equation*}
          Z_\lambda^kA_\mu^k=-D_v(Z_\mu^k)-\Gamma_{ij}^kv^jZ_\mu^i
        \end{equation*}
        and report \eqref{l12} in \eqref{l8}. The only time that the expression 
        $b_{\mu,\rho}^k.\mathrm d\tilde B^\rho$ will appear, after differentiation
        of both members of \eqref{l8} will come on the right side; all other terms
        of \eqref{l8} after differentiation, will bring either terms in
        $\mathrm d\tilde B$ or $\mathrm dt$, for which the coefficients are
        $\mathbb D^\infty$-S.M. After identification of the terms in
        $\mathrm d\tilde B$, we see that $b_{\mu,\rho}^k$ is a
        $\mathbb D^\infty$-S.M.
  
        So we can rewrite \eqref{l12} as
        \begin{equation*}
          \mathrm dA_\mu^k(t,\omega) = a_{2,\mu}^k\mathrm dt
                                       + b_{\mu,\rho}^k\circ\mathrm d\tilde B^\rho
                                       \tag{13}\label{l13}
        \end{equation*}
        To make $a_{2,\mu}^k$ and $b_{\mu,\rho}^k$ explicit, we differentiate
        \eqref{l8} and report \eqref{l13} in $\mathrm d\left(Z_\lambda^kA_\mu^\lambda\right)$:  
        \begin{align*}
          \mathrm d\left(Z_\lambda^kA_\mu^\lambda\right) 
            & = A_\mu^\lambda\circ\mathrm dZ_\lambda^k 
                + Z_\lambda^k\circ\mathrm dA_\mu^\lambda \\
            & = -A_\mu^\lambda\Gamma_{ij}^kZ_\lambda^j\circ\mathrm dp^i
                + Z_\lambda^ka_{2,\mu}^\lambda\mathrm dt
                + Z_\lambda^kb_{\mu,\rho}^\lambda\circ\mathrm d\tilde B^\rho
                \tag{14}\label{l14}
        \end{align*}
        \begin{align*}
          \mathrm d\left(\Gamma_{ij}^kv^jZ_\mu^i\right) 
            & = v^jZ_\mu^i\circ\mathrm d\Gamma_{ij}^k
                + \Gamma_{ij}^kZ_\mu^i\circ\mathrm dv^j
                + \Gamma_{ij}^kv^j\circ\mathrm dZ_\mu^i \\
            & = v^jZ_\mu^i\left(\sum_{\rho=1}^n\frac{\partial\Gamma_{ij}^k}{\partial x^\rho}\circ\mathrm dp^\rho\right)
                + \Gamma_{ij}Z_\mu^i\dot f^\alpha Z_\alpha^j \mathrm dt \\
            & \qquad - \Gamma_{ij}^kZ_\mu^i\Gamma_{\rho\alpha}^jv^\alpha\circ\mathrm dp^\rho 
                - \Gamma_{ij}^kv^j\Gamma_{\rho\ell}^iZ_\mu^\ell\circ\mathrm dp^\rho\tag{15}\label{l15}
        \end{align*}
        \begin{align*}
          \mathrm d\left(D_v\left(Z_\mu^k\right)\right) 
            & = D_v(\mathrm dZ_\mu^k) 
              = -D_v\left[\Gamma_{ij}^kZ_\mu^j\circ\mathrm dp^i\right] \\
            & = -(D_v\Gamma_{ij}^k)Z_\mu^j\circ\mathrm dp^i-\Gamma_{ij}^k(D_vZ_\mu^j)\circ\mathrm dp^i\\
            & \qquad -\Gamma_{ij}^kZ_\mu^j\circ D_v(\mathrm dp^i)\tag{16}\label{l16}
        \end{align*}
        But
        \begin{equation*}
          D_v(\Gamma_{ij}^k) = \sum_{\rho=1}^n\frac{\partial\Gamma_{ij}^k}{\partial x^\rho}v^\rho
        \end{equation*}
        and \eqref{l8}:
        \begin{equation*}
          D_v(Z_\mu^j)=-\Gamma_{sr}^jv^rZ_\mu^s - Z_\lambda^jA_\mu^\lambda
        \end{equation*}
        so
        \begin{align*}
          \mathrm d\left(D_v(Z_\mu^k)\right) 
            &= - (D_v\Gamma_{\rho i}^k)Z_\mu^i\circ\mathrm dp^\rho 
               - \Gamma_{\rho i}^k(D_vZ_\mu^i)\circ\mathrm dp^\rho 
               - \Gamma_{\rho i}^kZ_\mu^i\circ D_v(\mathrm dp^\rho)\\
            &= - \sum_{r=1}^n\frac{\partial \Gamma_{\rho i}^k}{\partial x^r}v^rZ_\mu^i\circ\mathrm dp^\rho 
               + \Gamma_{\rho i}^k\left( \Gamma_{sr}^i v^rZ_\mu^s 
               + Z_\lambda^iA_\mu^\lambda\right)\circ\mathrm dp^\rho \\
            & \qquad-\Gamma_{ij}^kZ_\mu^j\left(\dot{f}^rZ_r^i\mathrm dt-\Gamma_{rs}^iv^s\circ\mathrm dp^r\right)
              \tag{17}\label{l17}
        \end{align*}
  
        Reporting \eqref{l14}, \eqref{l15}, \eqref{l17} in both differentiated sides of \eqref{l8} and identifying the
        terms in $\mathrm dt$, we get:
        \begin{equation*}
          \boxed{a_{2,\mu}^\nu = \dot f^\alpha T_{sr}^kZ_\mu^rZ_\alpha^s\left(Z^{-1}\right)_k^\nu}
          \tag{18}\label{l18}
        \end{equation*}
        So $a_{2,\mu}^\nu$ is intrinsically defined:
        \begin{equation*}
          a_{2,\mu}^\nu = g\left(T\left(\dot f^\alpha u_\alpha,u_\mu\right), u_\nu\right)
          \tag{18'}\label{l18p}
        \end{equation*}
        To evaluate $b_{\mu,\rho}^\lambda$ we report \eqref{l14}, \eqref{l15}, \eqref{l17}, in both differentiated sides of \eqref{l8}
        and we identify the terms in $\mathrm d\tilde B^\rho$ we get
        \begin{align*}
              - A_\mu^\lambda\Gamma_{ij}^kZ_\lambda^jZ_\rho^i\circ\mathrm d\tilde B^\rho 
               + & Z_\lambda^k b_{\mu,\rho}^\lambda\circ\mathrm d\tilde B^\rho \\
          = & - v^jZ_\mu^i\frac{\partial \Gamma_{ij}^k}{\partial x^r}Z_\rho^r\circ\mathrm d\tilde B^\rho 
              + \Gamma_{ij}^kZ_\mu^i\Gamma_{r\alpha}^jv^\alpha Z_\rho^r\circ\mathrm d\tilde B^\rho \\
            & + \Gamma_{ij}^kv^j\Gamma_{rs}^iZ_\mu^sZ_\rho^r\circ\mathrm d\tilde B^\rho 
              + \frac{\partial \Gamma_{ri}^k}{\partial x^s}v^sZ_\mu^iZ_\rho^r\circ\mathrm d\tilde B^\rho \\
            & - \Gamma_{rj}^k\left(\Gamma_{sn}^jv^nZ_\mu^s+Z_\lambda^jA_\mu^\lambda\right)Z_\rho^r\circ\mathrm d\tilde B^\rho\\
            & - \Gamma_{ij}^kZ_\mu^j\Gamma_{rs}^iv^sZ_\rho^r\circ\mathrm d\tilde B^\rho
        \end{align*}
        which after simplification and rewriting some indices:
        \begin{align*}
          v^sZ_\mu^iZ_\rho^r\left[\frac{\partial\Gamma_{ri}^k}{\partial x^s} 
            - \frac{\partial\Gamma_{is}^k}{\partial x^r}\right.
            & + \Gamma_{in}^k\Gamma_{rs}^n
            + \Gamma_{ns}^k\Gamma_{ri}^n\\
            & \left.- \Gamma_{rn}^k\Gamma_{is}^n  
            - \Gamma_{ni}^k\Gamma_{rs}^n\right] 
          = Z_\lambda^kb_{\mu,\rho}^\lambda
            \tag{19}\label{l19}
        \end{align*}
        We can rewrite \eqref{l19}:
        \begin{align*}
          v^sZ_\mu^iZ_\rho^r\left[
            \frac{\partial\Gamma_{ri}^k}{\partial x^s}
            - \frac{\partial\Gamma_{si}^k}{\partial x^r} \right.
            & + \Gamma_{ri}^n\Gamma_{sn}^k 
            + \Gamma_{ri}^nT_{ns}^k
            - \Gamma_{rn}^kT_{is}^n 
            - \Gamma_{rn}^k\Gamma_{si}^n \\
            & \left.+ \Gamma_{rs}^nT_{in}^k
            + \frac{\partial\Gamma_{si}^k}{\partial x^r} 
            - \frac{\partial\Gamma_{is}^k}{\partial x^r} 
          \right] 
          = Z_\lambda^kb_{\mu,\rho}^\lambda
        \end{align*}
        From 
        \begin{equation*}
          R_{sri}^k = \frac{\partial\Gamma_{ri}^k}{\partial x^s}
                      - \frac{\partial\Gamma_{si}^k}{\partial x^r} 
                      + \Gamma_{ri}^n\Gamma_{sn}^k 
                      - \Gamma_{rn}^k\Gamma_{si}^n
                      \tag{20}\label{l20}
        \end{equation*}
        and
        \begin{equation*}
          \nabla_rT_{si}^k = \partial_rT_{si}^k 
                             - \Gamma_{ri}^nT_{sn}^k 
                             - \Gamma_{rs}^nT_{ni}^k
                             + \Gamma_{rn}^k\Gamma_{si}^n
                               \tag{21}\label{l21}
        \end{equation*}
        we get:
        \begin{equation*}
          \boxed{Z_\lambda^kb_{\mu,\rho}^\lambda = \left(R_{sri}^k
                                                   + \nabla_rT_{si}^k\right)Z_\mu^iZ_\rho^rv^s}
                                                   \tag{22}\label{l22}
        \end{equation*}
        which can be written
        \begin{equation*}
          b_{\mu,\rho}^\alpha = g\left[ R(v, u_\rho,u_\mu), u_\alpha \right] 
                                + g\left( \left(\nabla_{u_\rho}T\right)(v, u_\mu), u_\alpha \right)
                                \tag{22'}\label{l22p}
        \end{equation*}
        Now we want to prove that $t\mapsto\int_0^t\dot h_1(s)\,\mathrm ds$
        is a $\mathbb D^\infty(\Omega, H)$-vector field. From \eqref{l6}, \eqref{l9} and
        \eqref{l12}, we have
        \begin{equation*}
          \boxed{\dot h_1^k = \dot h_2^k 
                              + \frac12 \sum_{\mu=1}^n b_{\mu,\mu}^k}
                              \tag{22''}\label{l22pp}
        \end{equation*}
        From \eqref{l22}, we have:
        \begin{equation*}
          b_{\mu,\mu}^k = \left(R_{jri}^\alpha
                          + \nabla_rT_{ji}^\alpha\right)Z_\mu^iZ_\mu^r(Z^{-1})_\alpha^kv^j
        \end{equation*}
        and $v=\sum_{\nu=1}^nf^\nu(t)u_\nu(t,\omega)$, so:  
        \begin{equation*}
          b_{\mu,\mu}^k = \left(R_{jri}^\alpha
                          + \nabla_rT_{ji}^\alpha\right)Z_\mu^iZ_\mu^r\left(Z^{-1}\right)_\alpha^kf^\nu(t)Z_\nu^j
        \end{equation*}
        As $f^\nu \in H$, $\sup_{t \in [0,1]} \left| f^\nu(t) \right|$ is 
        bounded, and each other element in the $b_{\mu,\mu}^k$'s formula
        is either a component of a SPT vector, or of a $C^\infty$ function
        of such components, on a compact manifold, so all these elements are
        $\mathbb D^\infty$-bounded, and $t\mapsto \int_0^tb_{\mu,\mu}^k\,\mathrm ds$
        is a $\mathbb D^\infty$-vector field.
        Now $h_1: t \mapsto \int_0^t\dot h_1(s)\,\mathrm ds$ is a 
        $\mathbb D^\infty$-vector field and $D_v - h_1$ is a $\mathbb D^\infty$-continuous
        derivation such that
        \begin{equation*}
          \mathrm d\left(D_v\tilde B^k - h_1^k\right) = A_\mu^k.\mathrm d\tilde B^\mu\tag{6}
        \end{equation*}
        But by Clark-Ocone, if $\alpha\in\mathbb D^\infty(\mathcal W)$ there exists
        $\tilde \alpha_\mu$ such that 
        \begin{equation*}
          \alpha = \text{constant} + \int_0^1 \tilde \alpha_\mu\cdot\mathrm d\tilde B^\mu
        \end{equation*}
        and 
        \begin{align*}
          (D_v-h_1)(\alpha) 
          & = \int_0^1\left((D_v-h1)\tilde \alpha_\mu\right)\cdot\mathrm d\tilde B^\mu 
              + \int_0^1\tilde \alpha_\mu(D_v-h_1)\cdot\mathrm d\tilde B^\mu \\
          & = \int_0^1(D_v-h_1)\tilde\alpha_\mu\cdot\mathrm d\tilde B^\mu 
              + \int_0^1\tilde\alpha_\mu A_\nu^\mu\cdot\mathrm d\tilde B^\nu
        \end{align*}
        so
        \begin{equation*}
          E\left[(D_v-h_1)\alpha\right] = 0 \Rightarrow \mathrm{div}\,(D_v-h_1) = 0
        \end{equation*}

        Now $D_v-h_1$ is a derivation, adapted and with a null divergence;  
        from Theorem IV, 1, we deduced that there exists a $n\times n$ antisymmetric
        matrix $\tilde A_\mu^k$ which as a process, is an adapted
        multiplicator such that: 
        \begin{equation*}
          D_v-h_1 = \mathrm{div}\,\tilde A\,\mathrm{grad}.
        \end{equation*}

        The fundamental isometry show that $\tilde A_\mu^k = A_\mu^k$ p.a.s.
        So $A_\mu^k$ is an antisymmetrical $n \times n$ matrix. From \eqref{l13}, we
        deduce that $a_{2,\mu}^k$ is antisymmetrical.
        Then $a_{2,\mu}^kg\left(u_k(t,\omega),u_\mu(t,\omega)\right) = 0$,
        $u_k(t,\omega)$ and $u_k(t,\omega)$ being the SPT of the vectors
        $ue_k$, $ue_\mu$ ($e_k$ and $e_\mu$ being canonical basis vectors
        of $\mathbb R^n$).

        So $g(a_{2,\mu}^ku_k, u_\mu) = 0$, which implies $g_{ij}(a_{2,\mu}^kZ_k^i,Z_\mu^j) = 0$.
        With \eqref{l18}, we have:  
        \begin{equation*}
          g_{ij}\left(T\left(u_\mu(\omega,t),\dot f^\alpha u_\alpha(\omega, t)\right)^i, Z_\mu^j\right) = 0
        \end{equation*}
        so
        \begin{equation*}
          g\left(T\left(u_\mu(\omega,t),\dot f^\alpha u_\alpha(\omega, t)\right), u_\mu(\omega, t)\right) = 0
        \end{equation*}
        and $g(T(X_1,X_2),X_1) = 0$, which is the Driver condition \eqref{l3}.
    \end{proof}

\subsection{Reapracally, if the Driver condition is fullfilled, then any $D_v$-type of derivation has an unique $\mathbb{D}^\infty$-derivation extension}	
	
  Now we want to show that the Driver condition \eqref{l3} is a sufficient condition.

  So given $v\in\tilde H$, $v = f^\nu(t)u_\nu(t,\omega)$ we have
  an operator $D_v$ acting on functions $F_{f,t}(\omega)$ such that
  \begin{equation*}
    \left(D_vF_{f,t}\right)(\omega)=(v(\omega(t))\cdot f)_{I(\omega)(t)}\tag{2}
  \end{equation*}
  and we define
  \begin{equation*}
    \tilde D_v = \mathrm{div}\,A_v\,\mathrm{grad} 
                 + \left(t \mapsto \int_0^t \dot h_1(s)\,\mathrm ds\right)
                 \tag{23}\label{l23}
  \end{equation*}
  where 
  \begin{equation*}
    (A_v)_\mu^k(\omega,t) = \int_0^ta_{1,\mu}^k(v)\,\mathrm ds 
                            + \int_0^tb_{\mu,\rho}^k\cdot\mathrm d\tilde B^\rho
                            \tag{24}\label{l24}
  \end{equation*}
  with
  \begin{equation*}
    b_{\mu,\rho}^k(v) = \left(R_{jri}^\alpha 
                        + \nabla_rT_{ji}^\alpha\right)Z_\mu^iZ_\rho^r(Z^{-1})_\alpha^kv^j
                        \tag{25}\label{l25}
  \end{equation*}
  and 
  \begin{equation*}
    a_{2,\mu}^k(v) = \dot f^\alpha T_{sr}^\nu Z_\mu^rZ_\alpha^s(Z^{-1})_\nu^k = g(T(\dot f_{u_\alpha}^\alpha,u_\mu),u_\nu)
                     \tag{26}\label{l26}
  \end{equation*}
  and
  \begin{equation*}
    \dot h_1^k = \dot f^k + \frac12\sum_{\mu=1}^nb_{\mu,\mu}^k 
               = \dot h_2 + \dot h_3 
               \tag{27}\label{l27}
  \end{equation*}
  with
  \begin{equation*}
    \dot h_3 = \frac12 \sum_{\mu=1}^nb_{\mu,\mu}^k
    \tag{27'}\label{l27p}
  \end{equation*}

  We have to prove that:
  \begin{itemize}
    \item $A_v$ is an antisymmetrical, adapted, matrix;  
    \item $A_v$ is a multiplicator;  
    \item the operator $D_v$ and $\mathrm{div}\,A\,\mathrm{grad} + \left( t \mapsto \int_0^t \dot h_1(s)\,\mathrm ds\right)$ coincide on $p^k$.
  \end{itemize}
  \eqref{l24} can be rewritten:  
  \begin{equation*}
    A(v)(t) = \int_0^ta_1(v)\,\mathrm ds + \int_0^tb(v)_\rho\cdot\mathrm d\tilde B^\rho
    \tag{24'}\label{l24p}
  \end{equation*}
  From the intrinsic formulations of $a_{1,\mu}^k$, $a_{2,\mu}^k$ and $b_\mu^k$,
  and with the Driver condiction \eqref{l3}, we get that $A_v$ in \eqref{l23} and \eqref{l24} is 
  indeed antisymmetric.

  We are going to show, for example, that $g\left((\nabla_{u_\rho}T)(v,u_\mu), u_\alpha\right)$ 
  is antisymmetric in $(\mu, \alpha)$. For this, it is enough to prove that if $X,Y,Z$ are vector
  fields on $(V_n,g)$, $g\left((\nabla_ZT)(X,Y),X\right)=0$. 

  From $g(T(X,Y),X) = 0$, we deduce $g(T(U,Y),V) = -g(T(V,Y),U)$. Then
  \begin{align*}
    g\left((\nabla_ZT)(X,Y),X\right) 
      & = g\left(\nabla_Z(T(X,Y)),X\right) 
        - g\left(T(\nabla_ZX,Y),X\right)
        - g\left(T(X,\nabla_ZY),X\right) \\
      & \text{The last term is zero by \eqref{l3} (Driver condition), and: } \\
      & = g\left(\nabla_Z(T(X,Y)),X\right)
        + g\left(T(X,Y),\nabla_ZX\right) \\
      & = Z\cdot g(T(X,Y),X) 
        = 0
  \end{align*}
  Now we have to show that:
  \begin{equation*}
    (A_v)_\mu^k(t,\omega)=\int_0^t a_{1,\mu}^k(v)\,\mathrm ds+\int_0^tb_{\mu,\rho}^k\cdot\mathrm d\tilde B^\rho
    \tag{24}
  \end{equation*}
  is a multiplicator.
  \begin{equation*}
    b_{\mu,\rho}^k = g\left(R(v,u_\rho,u_\mu),u_k\right)
                     + g\left((\nabla_{u\rho}T)(v,u_\mu),u_k\right)
                     \tag{25}
  \end{equation*}
  and $t\mapsto \int_0^t  b_{\mu,\rho}^k\cdot\mathrm d\tilde B^\rho$ is an It\^o stochastic
  integral of a $\mathbb D^\infty$-bounded process so is a $\frac12$-H\"olderian
  $\mathbb D^\infty$ process, so is a multiplicator. Then
  \begin{equation*}
    a_{1,\mu}^\nu = \dot f^\alpha g(T(u_\alpha,u_\mu),u_\nu) + \frac12 b_{\mu,\mu}^\nu
    \tag{26}
  \end{equation*}
  $\frac12b_{\mu,\mu}^\nu$ is $\mathbb D^\infty$-bounded so 
  $t\mapsto\frac12\int_0^t b_{\mu,\mu}^\nu\,\mathrm ds$ is a $\mathbb D^\infty(\omega, H)$ vector field, so the
 process $t\mapsto\frac12\int_0^t b_{\mu,\mu}^\nu\,\mathrm{ds}$ is a $\mathbb D^\infty$-multiplicator.

  Then
  \begin{align*}
    \mathrm{grad}^j\,& \left\{\int_0^t\dot f^\alpha(s)g(T(u_\alpha,u_\mu),u_\nu)\,\mathrm ds\right\} \\
      & = \int_0^t \dot f^\alpha(s)\mathrm{grad}^j \left\{g(T(u_\alpha,u_\mu),u_\nu)\right\}\,\mathrm ds \\
      & \leq\left(\int_0^1\left|\dot f^\alpha(s)\right|^2\,\mathrm ds\right)^{\frac12}
            \left(\int_0^1\left\|\mathrm{grad}^j\left\{g(T(u_\alpha,u_\mu),u_\nu)\right\}\right\|_{\otimes^j H}\,\mathrm ds\right)^{\frac12}
  \end{align*}
  so with criterion IV, 2, we see that
  \begin{equation*}
    t\mapsto\int_0^t\dot f^\alpha(s)g(T(u_\alpha,u_\mu),u_\nu)\,\mathrm ds
  \end{equation*}
  is a multiplicator. 

  Last: straightforward computation shows that 
  $\left(t\mapsto\int_0^t\frac12\sum_{\mu=1}^nb_{\mu,\mu}^k\,\mathrm ds\right)$ is a 
  $\mathbb D^\infty$-vector field.
  Now we will show that $(D_v-v)p^\ell = 0$, with $v\in \tilde H$ and 
  $v = \sum_{\mu=1}^n f^\mu(t)u_\mu(t,\omega)$. With the same notations as
  before, and with a Stratonovitch integration by parts, we have:
  \begin{align*}
    v(t,\omega)\cdot p^\rho 
      & = v^\rho(t,\omega) 
        = \int_0^t\dot f^\mu(s)u_\mu^\rho(s,\omega)\,\mathrm ds
          + \int_0^tf^\mu(s)\circ\mathrm dZ_\mu^\rho\\
    (D_v-v)p^\ell 
      & = D_v\left(\int_0^tZ_\mu^\ell\circ\mathrm d\tilde B^\mu\right)
        + \int_0^t f^\mu(s)\Gamma_{jk}^\ell Z_\mu^kZ_\rho^j\circ\mathrm d\tilde B^\rho 
        - \int_0^t\dot f(s)^\mu Z_\mu^\ell\,\mathrm ds
  \end{align*}
  With \eqref{l9}:
  \begin{align*}
    & = \int_0^t D_vZ_\rho^\ell\circ\mathrm d\tilde B^\rho 
      + \int_0^t\dot h_2^\mu Z_\mu^\ell\,\mathrm ds 
      + \int_0^t Z_\mu^\ell A_\rho^\mu\circ\mathrm d\tilde B^\rho \\
    & \qquad + \int_0^tf^\mu\Gamma_{jk}^\ell Z_\mu^kZ_\rho^j\circ\mathrm d\tilde B^\rho 
      - \int_0^t \dot f^\mu(s)Z_\mu^\ell\,\mathrm ds\qquad\text{but $\dot h_2^\mu = \dot f^\mu$} \\
    & = \int_0^t \widehat R_\rho^\ell\circ\mathrm d\tilde B^\rho
    \tag{27}
  \end{align*}
  with
  \begin{equation*}
    \widehat R_\rho^\ell = D_vZ_\rho^\ell+Z_\mu^\ell A_\rho^\mu+f^\mu\Gamma_{jk}^\ell Z_\mu^kZ_\rho^j.
  \end{equation*}
  So
  \begin{equation*}
    \widehat R_\rho^\ell = D_vZ_\rho^\ell 
                           + Z_\mu^\ell A_\rho^\mu 
                           + \Gamma_{jk}^\ell v^kZ_\rho^j
                           \tag{28}\label{l28}
  \end{equation*}
  Using
  \begin{equation*}
    Z_\rho^\ell = -\int_0^t \Gamma_{\mu\nu}^\ell Z_\rho^\nu Z_\lambda^\mu\circ\mathrm d\tilde B^\lambda
  \end{equation*}
  and \eqref{l13}: 
  \begin{equation*}
    \mathrm dA_\mu^k = a_{2,\mu}^k\mathrm dt + b_{\mu,\rho}^k\circ\mathrm d\tilde B^\rho
  \end{equation*} 
  we have
  \begin{align*}
    \widehat R_\rho^\ell = 
      & -D_v\left(\int_0^t \Gamma_{\mu\nu}^\ell Z_\rho^\nu Z_\alpha^\mu\circ\mathrm d\tilde B^\alpha\right) \\
      & - \int_0^t A_\rho^\mu\Gamma_{ij}^\ell Z_\mu^jZ_\alpha^i\circ\mathrm d\tilde B^\alpha 
        + \int_0^tZ_\mu^\ell a_{2, \rho}^{\mu}\,\mathrm ds 
        + \int_0^tZ_\mu^\ell b_{\rho,\alpha}^\mu\circ\mathrm d\tilde B^\alpha \\
      & + \int_0^t \frac{\partial \Gamma_{jk}^\ell}{\partial x^\beta}v^{k}Z_\rho^jZ_\alpha^\beta\circ\mathrm d\tilde B^\alpha 
        - \int_0^t \Gamma_{jk}^\ell v^k\Gamma_{\mu\nu}^jZ_\rho^\nu Z_\alpha^\mu\circ\mathrm d\tilde B^\alpha \\
      & + \int_0^t \Gamma_{jk}^\ell Z_\rho^j\dot f^\mu Z_\mu^k\,\mathrm ds
        - \int_0^t\Gamma_{jk}^\ell Z_\rho^j\Gamma_{\lambda\beta}^kv^\beta Z_\alpha^\lambda\circ\mathrm d\tilde B^\alpha
        \tag{29}\label{l29}
  \end{align*}
  using \eqref{l9}:
  \begin{equation*}
    \mathrm d(D_v(\tilde B^\alpha)) = \dot h_2^\alpha \mathrm dt 
                                      + A_\mu^\alpha \circ \mathrm d\tilde B^\mu
  \end{equation*}
  we can rewrite:  
  \begin{align*}
    D_v\left(\int_0^t\Gamma_{\mu\nu}^\ell Z_\rho^\nu Z_\alpha^\mu\circ\mathrm d\tilde B^\alpha\right) = 
      & \int_0^t \left((D_v -v)\cdot\Gamma_{\mu\nu}^\ell\right)Z_\rho^\nu Z_\alpha^\mu\circ\mathrm d\tilde B^\alpha \\
      & + \int_0^t(v\cdot\Gamma_{\mu\nu}^\ell)Z_\rho^\nu Z_\alpha^\mu\circ\mathrm d\tilde B^\alpha \\
      & + \int_0^t\Gamma_{\mu\nu}^\ell D_v\left(Z_\rho^\nu Z_\alpha^\mu\circ\mathrm d\tilde B^\alpha\right) \\
    = & \int_0^t \frac{\partial \Gamma_{\mu\nu}^\ell}{\partial x^\beta}\left((D_v-v)\cdot p^\beta\right)\cdot Z_\rho^\nu Z_\alpha^\mu \circ\mathrm d\tilde B^\alpha \\
      & +\int_0^t\frac{\partial\Gamma_{\mu\nu}^\ell}{\partial x^\beta}Z_\rho^\nu Z_\alpha^\mu v^\beta\circ\mathrm d\tilde B^\alpha \\
      & + \int_0^t \Gamma_{\mu\nu}^\ell (D_vZ_\rho^\nu)Z_\alpha^\mu\circ\mathrm d\tilde B^\alpha \\
      & + \int_0^t \Gamma_{\mu\nu}^\ell Z_\rho^\nu(D_vZ_\alpha^\mu)\circ\mathrm d\tilde B^\alpha \\
      & + \int_0^t \Gamma_{\mu\nu}^\ell Z_\rho^\nu Z_\alpha^\mu \dot f^\alpha\mathrm ds \\
      & + \int_0^t \Gamma_{\mu\nu}^\ell Z_\rho^\nu Z_\lambda^\mu A_\alpha^\lambda\circ\mathrm d\tilde B^\alpha
      \tag{30}\label{l30}
  \end{align*}
  Reporting \eqref{l30} in \eqref{l29}
  \begin{align*}
    \widehat R_\rho^\ell = 
      & - \int_0^t\frac{\partial\Gamma_{\mu\nu}^\ell}{\partial x^\beta}\left((D_v-v)\cdot p^\beta\right)Z_\rho^\nu Z_\alpha^\mu\circ\mathrm d\tilde B^\alpha
        - \int_0^t\frac{\partial\Gamma_{\mu\nu}^\ell}{\partial x^\beta}Z_\rho^\nu Z_\alpha^\mu v^\beta\circ\mathrm d\tilde B^\alpha \\
      & - \int_0^t \Gamma_{\mu\nu}^\ell(D_vZ_\rho^\nu)Z_\alpha^\mu\circ\mathrm d\tilde B^\alpha
        - \int_0^t\Gamma_{\mu\nu}^\ell Z_\rho^\nu(D_vZ_\alpha^\mu)\circ\mathrm d\tilde B^\alpha \\
      & - \int_0^t \Gamma_{\mu\nu}^\ell Z_\rho^\nu Z_\alpha^\mu\dot f^\alpha\,\mathrm ds 
        - \int_0^t\Gamma_{\mu\nu}^\ell Z_\rho^\nu Z_\lambda^\mu A_\alpha^\lambda\circ\mathrm d\tilde B^\alpha \\
      & - \int_0^t A_\rho^\mu\Gamma_{ij}^\ell Z_\mu^j Z_\alpha^i\circ\mathrm d\tilde B^\alpha 
        + \int_0^tZ_\mu^\ell a_{2,\rho}^\mu\,\mathrm ds 
        + \int_0^tZ_\mu^\ell b_{\rho,\alpha}^\mu\circ\mathrm d\tilde B^\alpha \\
      & + \int_0^t \frac{\partial\Gamma_{jk}^\ell}{\partial x^\beta}v^kZ_\rho^jZ_\alpha^\beta\circ\mathrm d\tilde B^\alpha 
        - \int_0^t\Gamma_{jk}^\ell v^k\Gamma_{\mu\nu}^jZ_\rho^\nu Z_\alpha^\mu\circ\mathrm d\tilde B^\alpha \\
      & + \int_0^t \Gamma_{jk}^\ell Z_\rho^j\dot f^\mu Z_\mu^k\,\mathrm ds 
        - \int_0^t\Gamma_{jk}^\ell Z_\rho^j\Gamma_{\lambda\beta}^kv^\beta Z_\alpha^\lambda\circ\mathrm d\tilde B^\alpha
        \tag{31}\label{l31}
  \end{align*}
  In this latest equation: the terms Nr.\ 5 + Nr.\ 8 + Nr.\ 12 = 0 (see \eqref{l18}).
  \begin{itemize}
    \item The terms
      \begin{align*}
        \text{Nr.\ 3 + Nr.\ 7} & =-\int_0^t\Gamma_{ij}^\ell Z_\alpha^i (D_vZ_\rho^j+A_\rho^\mu Z_\mu^j)\circ\mathrm d\tilde B^\alpha \\
          & = -\int_0^t\Gamma_{ij}^\ell Z_\alpha^i(\widehat R_\rho^j-\Gamma_{rk}^jv^kZ_\rho^r)\circ\mathrm d\tilde B^\alpha\qquad\text{(with \eqref{l28})}
      \end{align*}
    \item Same for Nr.\ 4 + Nr.\ 6:
      \begin{align*}
        - \int_0^t\Gamma_{\mu\nu}^\ell Z_\rho^\nu(D_vZ_\alpha^\mu+Z_\lambda^\mu A_\alpha^\lambda)\circ\mathrm d\tilde B^\alpha 
        = -\int_0^t\Gamma_{\mu\nu}^\ell Z_\rho^\nu(\widehat R_\alpha^\mu-\Gamma_{jk}^\mu v^kZ_\alpha^j)\circ\mathrm d\tilde B^\alpha
      \end{align*}
    \item Nr.\ 9:
      \begin{align*}
        + \int_0^tZ_\mu^\ell b_{\rho,\alpha}^\mu\circ\mathrm d\tilde B^\alpha 
        = \int_0^t\left(R_{sri}^\ell+\nabla_rT_{si}^\ell\right)Z_\rho^iZ_\alpha^rv^s\circ\mathrm d\tilde B^\alpha
      \end{align*}
    \item Nr.\ 2 + Nr.\ 10:
    \begin{align*}
      - \int_0^t\frac{\partial\Gamma_{ri}^\ell}{\partial x^s}Z_\rho^iZ_\alpha^rv^s\circ\mathrm d\tilde B^\alpha 
      + \int_0^t\frac{\partial\Gamma_{is}^\ell}{\partial x^r}v^sZ_\rho^iZ_\alpha^r\circ\mathrm d\tilde B^\alpha
    \end{align*}
  \end{itemize}
  The sum of all derivatives of the Christoffel symbols of the terms Nr.\ 9 + Nr.\ 2 + Nr.\ 10 is zero.

  Now we collect all the terms with double products of Christoffel symbols: they come 
  from terms Nr.\ 11, 13, from Nr.\ 3 + Nr.\ 7, Nr.\ 4 + Nr.\ 6, and from the unused
  parts of $R_{sri}^\ell + \nabla_rT_{si}^\ell$ in Nr.\ 9: we get ($\lambda$: summation index)
  \begin{align*}
    \int_0^t+Z_\rho^\nu Z_\alpha^\mu v^k (
      & - \Gamma_{jk}^\ell\Gamma_{\mu\nu}^j
        - \Gamma_{\nu\lambda}^\ell\Gamma_{\mu k}^\lambda 
        + \Gamma_{\mu\nu}^\lambda\Gamma_{k\lambda}^\ell 
        - \Gamma_{\mu\lambda}^\ell\Gamma_{k\nu}^\lambda \\
      & -\Gamma_{\mu\nu}^\lambda T_{k\lambda}^\ell
        - \Gamma_{\mu k}^\lambda T_{\lambda\nu}^\ell 
        + \Gamma_{\mu\lambda}^\ell T_{k\nu}^\lambda 
        + \Gamma_{\mu j}^\ell\Gamma_{\nu k}^j 
        + \Gamma_{\lambda\nu}^\ell\Gamma_{\mu k}^\lambda)\circ\mathrm d\tilde B^\alpha
  \end{align*}
  which after reduction, equals 0.

  So \eqref{l31} becomes:
  \begin{equation*}
    \widehat R_\rho^\ell = 
      - \int_0^t\left\{\frac{\partial\Gamma_{\mu\nu}^\ell}{\partial x^\beta}\left((D_v-v)\cdot p^\beta\right)Z_\rho^\nu Z_\alpha^\mu 
      + \Gamma_{\mu\nu}^\ell\left( Z_\alpha^\mu\widehat R_\rho^\nu+Z_\rho^\nu\widehat R_\alpha^\mu \right) \right\}\circ\mathrm d\tilde B^\alpha
      \tag{32}\label{l32}
  \end{equation*}
  We also have: 
  \begin{equation*}
    (D_v-v)\cdot p^\ell = \int_0^t\widehat R_\alpha^\ell\circ\mathrm d\tilde B^\alpha
    \tag{27}
  \end{equation*}
  \eqref{l32} and \eqref{l27} constitute a linear system of SDE, for which the unknown variables are 
  $\widehat R_\rho^\ell$, $(D_v-v)\cdot p^\ell$ and with null initial conditions of 
  $\widehat R_\rho^\ell$ and $(D_v-v)\cdot p^\ell$. So $(D_v-v)\cdot p^\ell = 0$.

\subsection{Calculus of the $D_v$-derivation of a $k$-covariant tensor, on $(V_n, g)$}
  Let $\mathscr C$ be a $k$-covariant tensor of $(V_n,g)$ and
  $v\in\tilde H$: $v = f^\mu (t)u_\mu(t,\omega)$ where $u_\mu(t,\omega)$ is the SPT of
  $ue_\mu$, at instant $t$ ``along $\omega(t)$''.

  Let $x_i(t,\omega)$, $i = 1, \dotsc, k$, SPT of vectors $x_i\in T_{m_0}V_n$, we want
  to compute $D_v\mathscr C\left(x_1(t,\omega),\dotsc,x_k(t,\omega)\right)$,
  \begin{equation*}
    D_v\left[\mathscr C\left( x_1(t,\omega),\dotsc,x_k(t,\omega)\right)\right] 
    = D_v\left(\mathscr C_{i_1\dots i_k} x_1^{i_1}\dots x_k^{i_k} \right),
    \quad i_j = 1, \dotsc, n
  \end{equation*}
  To simplify the notations, we keep only one index $i_j$ and make the calculus only with this $i_j$:  
  \begin{align*}
    D_v\left[\mathscr C(x_{i_j})\right] 
      & = (v\cdot\mathscr C_{i_j})(x_j^{i_j})
        + \mathscr C_{i_j}\left[D_vx_j^{i_j}\right]\\
      & = (v\cdot\mathscr C_{i_j})(x_j^{i_j}) 
        + \mathscr C_{i_j}\left[ -\Gamma_{\alpha_j,\beta_j}^{i_j}v^{\beta_j}x_j^{\alpha_j}-x_{\lambda_j}^{i_j}A_j^{\lambda_j}\right]\qquad\text{with \eqref{l8}}
  \end{align*}
  $\alpha_j$, $\beta_j$ running from $1$ to $n$ and $\lambda_j$ from $1$ to $k$.
  \begin{align*}
    & = (v\cdot \mathscr C_{i_j})(x_j^{i_j})
      - \mathscr C_{i_j}\left(\Gamma_{\alpha_j,\beta_j}^{i_j}x_j^{\beta_j}v^{\alpha_j}\right) 
      - \mathscr C_{i_j}\left(T_{\alpha_j,\beta_j}^{i_j}v^{\beta_j}x_j^{\alpha_j}\right) 
      - \mathscr C_{i_j}\left(x_{\lambda_j}^{i_j}A_j^{\lambda_j}\right) \\
    & = \left(\nabla_v\mathscr C\right)(x_j) 
      - \mathscr C_{i_j}\left(T^{i_j}(x_j,v)
      + x_{\lambda_j}^{i_j}A_j^{\lambda_j}\right)
  \end{align*}
  So
  \begin{equation*}
    D_v\left[\mathscr C\left[x_1,\dotsc,x_k\right]\right]
      = \left(\nabla_v\mathscr C\right)(x_1,\dotsc,x_k) 
      - \sum_{j=1}^k\mathscr C\left(x_1,\dotsc,x_{j-1},T(x_j,v)+A_j^{\lambda_j}x_{\lambda_j}, \dotsc, x_k\right)
  \end{equation*}
  We now apply this result to a bilinear symmetrical form on $(V_n,g)$, denoted $q$,
  and compatible with the  operator $\nabla$ of the connection on $(V_n,g)$:  
  \begin{align*}
    0 & = D_vq(x_1(t,\omega),x_2(t,\omega)) \\
      & = (\nabla_vq)(x_1,x_2)
        - q(x_1,Ax_2+T(x_2,v))
        - q(Ax_1+T(x_1,v),x_2)\\
      & = (\nabla_vq)(x_1,x_2)
        - q(x_1,Ax_2)
        - q(x_1,T(x_2,v))
        - q(Ax_1,x_2)
        - q(T(x_1,v),x_2)\\
      & = q(T(x_1,v),x_2)
        - q(x_1,T(x_2,v))
  \end{align*}
  because $\nabla_vq=0$ and the antisymmetry of $A$. Which implies $q(T(x,y),x)=0$.

  We recall that $\tilde H$, the NCM (New Cameron-Martin space) is the set
  \begin{equation*}
    \left\{\sum_{\mu=1}^nf^\mu(t)u_\mu(t,\omega)\,/\,f^\mu\in H\text{ and } u_\mu(t,\omega)=ue_\mu\right\}
  \end{equation*}
  $u_\mu(t,\omega)$ is the SPT of $ue_\mu$, evaluated at instant $t\in[0,1]$ 
  ``along $\omega$''; $(e_\mu)_{\mu=1,\dotsc,n}$ is the canonical basis
  of $\mathbb R^n$ and $u$ is the isomorphism between $\mathbb R^n$ and $T_{m_0}V_n$. 

  The scalar product on $\tilde H$ is
  \begin{equation*}
    \left\langle \sum_{\mu=1}^nf^\mu(\cdot)u_\mu,\sum_{\nu=1}^ng^\mu(\cdot)u_\nu\right\rangle_{\tilde H}
    = \sum_{\mu=1}^n \int_0^t\dot f^\mu(t)\dot g^\mu(t)\,\mathrm dt
  \end{equation*}
  with this scalar product, $\tilde H$ is complete.

  There is a correspondence between $\tilde H$ and $H$: $\forall v\in\tilde H$, 
  $v = \sum f^\mu u_\mu(t,\omega)$, we associate the element of $H$:
  $(f^\mu(t))_{\mu=1,\dotsc,n}$. This correspondence is an isometry and the image
  of v is denoted $h(v)$: $h(v) = (f^\mu(t))_{\mu=1,\dotsc,n}$. As
  a basis $\widecheck B$ of $\tilde H$, we choose the following elements of $\tilde H$:  
  \begin{equation*}
    \widecheck B = \begin{cases}
      \varepsilon_{\ell,j} = t\mapsto\left(\sqrt{2}\int_0^t\cos 2\pi\ell s\cdot\mathrm ds\right)u_j(t,\omega)\\
      \varepsilon_{k,j}'   = t\mapsto\left(\sqrt{2}\int_0^t\sin 2\pi ks\cdot\mathrm ds\right)u_j(t,\omega)\\
      \varepsilon_j''      = t\mapsto\left(\int_0^t 1\cdot\mathrm ds\right)u_j(t,\omega)
    \end{cases}\tag{33}\label{l33}
  \end{equation*}
  $\widecheck B$ is a basis of $\tilde H$. We denote by $\varepsilon_\rho$ 
  (or $\varepsilon_i$) the generic basis vector of $\widecheck B$. Now we
  will define an operator denoted again $\mathrm{grad}$, but associated to the NCM.
  $\varepsilon$ being a vector of $\widecheck B$, we recall that
  \begin{equation*}
    D_\varepsilon = \mathrm{div}\,A(\varepsilon)\mathrm{grad} 
                    + \left(t\mapsto\int_0^t\dot h_1(\varepsilon)(s)\,\mathrm ds\right)
                    \tag{23}
  \end{equation*}
  with 
  \begin{equation*}
    (A(\varepsilon))_\mu^k(t,\omega) 
      = \int_0^t a_{1,\mu}^k(\varepsilon)\,\mathrm ds 
      + \int_0^tb_{\mu,\rho}(\varepsilon)\cdot\mathrm d\tilde B^\rho
      \tag{ 24}
  \end{equation*}
  and
  \begin{align*}
    b_{\mu,\rho}^k(\varepsilon) 
      & = \left(R_{jri}^\alpha 
        + \nabla_rT_{ji}^\alpha\right)Z_\mu^iZ_\rho^r(Z^{-1})_\alpha^k\varepsilon^j
        \tag{25} \\
    a_{2,\mu}^k(\varepsilon) 
      &= \dot h^\alpha(\varepsilon)(t)T_{sr}^\nu Z_\mu^rZ_\alpha^s(Z^{-1})_\nu^k
      \tag{26}
  \end{align*}
  
\subsection{Definition of the new-gradient on $\mathbb{D}^\infty$.}	  
  
  \begin{defn}
    $(v_i)_{i\in\mathbb N_*}$ being a basis of $\tilde H$, we define, for $f\in\mathbb D^\infty$
    \begin{equation*}
      {\mathrm{grad}}\,f = \sum_{i = 1}^\infty\left(D_{v_i}f\right)v_i
      \tag{34}\label{l34}
    \end{equation*}
  \end{defn}
  We have to show that this definition is legitimate and that $\mathrm{grad}\,f$ 
  is a derivation (trivial).   We will show that first,
  with the basis $\widecheck B$, the defining series in \eqref{l33} is 
  $\mathbb D^\infty$-convergent. For this it is enough to prove:
  \begin{lem}\label{lem4}
    If $f,g\in\mathbb D^\infty$, then $\sum_{\rho=1}^\infty D_{\varepsilon_\rho}f\cdot D_{\varepsilon_\rho}g$ 
    is $\mathbb D^\infty$-convergent. Nota: $\varepsilon_\rho$ is either $\varepsilon_{\ell,j}$, $\varepsilon_{k,j}'$ or 
    $\varepsilon_j''$, basis vectors of $\widecheck B$.
  \end{lem}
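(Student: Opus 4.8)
The plan is to exploit the explicit decomposition (23) of each derivation, $D_{\varepsilon_\rho}=\mathrm{div}\,A(\varepsilon_\rho)\,\mathrm{grad}+h_1(\varepsilon_\rho)$, where by (24)--(27) both the antisymmetric multiplicator $A(\varepsilon_\rho)$ and the vector field $h_1(\varepsilon_\rho)$ depend linearly on $\varepsilon_\rho$ and are built from $Z$, $Z^{-1}$, the curvature $R$, the torsion $T$ and $\nabla T$, all of which are $\mathbb D^\infty$-bounded on the compact $(V_n,g)$ by Corollaries \ref{cor6_1} and \ref{cor6_2}. Writing $G_f=\mathrm{grad}\,f$, $G_g=\mathrm{grad}\,g\in\mathbb D^\infty(\Omega,H)$ and setting $P_\rho f=\mathrm{div}(A(\varepsilon_\rho)G_f)$, $Q_\rho f=\langle G_f,h_1(\varepsilon_\rho)\rangle_H$, I would expand $D_{\varepsilon_\rho}f\cdot D_{\varepsilon_\rho}g$ into the four bilinear pieces $P_\rho f\,P_\rho g$, $P_\rho f\,Q_\rho g$, $Q_\rho f\,P_\rho g$, $Q_\rho f\,Q_\rho g$ and sum each series separately. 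To pass from $L^2$-type summability to $\mathbb D^\infty$-convergence I would systematically use Theorem \ref{thm2_5} (so that it suffices to prove convergence in every $\mathbb D_\infty^2$-norm together with $L^{\infty-0}$) and the interpolation Theorem \ref{thm2_12} to redistribute regularity.

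The vector-field part is the cleanest. Since $h_1$ is linear in $\varepsilon$ and, being $\mathbb D^\infty$-bounded and (in its Itô-integral part) $\tfrac12$-H\"olderian, is a multiplicator by Theorem \ref{thm4_3}, the map $h_1\colon\tilde H\to H$ and its adjoint $h_1^\ast\colon H\to\tilde H$ send $\mathbb D^\infty$-fields to $\mathbb D^\infty$-fields; hence $Q_\rho f=\langle h_1^\ast G_f,\varepsilon_\rho\rangle_{\tilde H}$. As $\widecheck B=(\varepsilon_\rho)$ is an orthonormal basis of $\tilde H$, Parseval gives $\sum_\rho Q_\rho f\,Q_\rho g=\langle h_1^\ast G_f,h_1^\ast G_g\rangle_{\tilde H}$, and the right-hand side lies in $\mathbb D^\infty(\Omega)$ because $h_1^\ast G_f,h_1^\ast G_g\in\mathbb D^\infty(\Omega,\tilde H)$ and the fibrewise inner product of two such fields is $\mathbb D^\infty$, exactly as $\langle\mathrm{grad}\,f,\mathrm{grad}\,g\rangle_H\in\mathbb D^\infty$ in the fundamental-metric computation of Theorem 3.1.

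For the multiplicator part I would separate the two ``directions'' of the index $\rho$ in the basis (33): the genuinely infinite index is the Fourier index, whose functions $\sqrt2\cos2\pi\ell s$, $\sqrt2\sin2\pi ks$, $1$ form an orthonormal basis of $L^2([0,1])$, while the frame index $j$ ranges over the finite set $\{1,\dots,n\}$. Using the linearity of $A(\varepsilon_\rho)$, the continuity of $\mathrm{div}$ from $\mathbb D^\infty(\Omega,H)$ to $\mathbb D^\infty(\Omega)$, and an integration by parts turning one divergence into a gradient, $\sum_\rho P_\rho f\,P_\rho g$ should reduce to a finite sum over $j$ of Bessel--Parseval sums in $L^2([0,1],\otimes^r H)$, dominated by convergent series after applying $\mathrm{grad}^r$; Theorem \ref{thm2_5} and interpolation then upgrade this to $\mathbb D^\infty$-convergence. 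The two cross sums are then controlled by Cauchy--Schwarz from the diagonal bounds for $\sum_\rho(P_\rho h)^2$ and $\sum_\rho(Q_\rho h)^2$.

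The hard part is precisely this multiplicator diagonal sum: unlike $Q_\rho$, the term $P_\rho f=\mathrm{div}(A(\varepsilon_\rho)G_f)$ is itself a Skorokhod integral of a semimartingale, since $A(\varepsilon_\rho)(t,\omega)=\int_0^t a_1\,\mathrm ds+\int_0^t b_\sigma\!\cdot\mathrm d\tilde B^\sigma$ by (24), so the iterated derivatives $\mathrm{grad}^r$ of the products generate cross terms between the two factors and between $A(\varepsilon_\rho)$ and $G_f$. Showing that the resulting $\otimes^rH$-valued series stays dominated by a Fourier-summable series, uniformly in the $L^{\infty-0}$ sense and in every $\mathbb D_r^p$-norm, is the crux of the argument; I expect to handle it by commuting $\mathrm{grad}$ past $\mathrm{div}$ via Lemma \ref{lemma:5.1}, by repeatedly invoking the multiplicator bounds for $A(\varepsilon_\rho)$ and its iterated gradients (Corollary \ref{cor6_2}), and by the interpolation Theorem \ref{thm2_12} to convert the $L^2$-Bessel estimate into the full $\mathbb D^\infty$ statement.
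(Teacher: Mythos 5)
Your decomposition is the same as the paper's: you split $D_{\varepsilon_\rho}=\mathrm{div}\,A(\varepsilon_\rho)\,\mathrm{grad}+h_1(\varepsilon_\rho)$ via (7.23)--(7.27), and your treatment of the vector-field part is sound — since $h_2(\varepsilon_\rho)=h(\varepsilon_\rho)$ is an orthonormal family in $H$, the $h_2$-contribution to $Q_\rho f$ is a Fourier coefficient of $\mathrm{grad}\,f$ and Bessel--Parseval gives square-summability, while the $h_3$-contribution carries the factor $|h^\alpha(\varepsilon_\rho)|\leq C_0/\rho$. (Your appeal to Theorem 4.3 to make $h_1$ a ``multiplicator'' is misplaced — that theorem concerns pointwise multiplication by a process, not the operator $\tilde H\to H$ — but the conclusion you draw from Parseval is correct, and it is in substance what the paper does.) Also note the paper's reduction is lighter than your four-term expansion: it suffices to show that $(D_{\varepsilon_\rho}f)_{\rho\in\mathbb N_*}$ is an $\ell^2$-valued $\mathbb D^\infty$ family, after which the sum for general $f,g$ is a scalar product of two such families.

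The genuine gap is in the multiplicator diagonal sum, exactly where you locate the difficulty but do not resolve it. By (7.24)--(7.26), $A(\varepsilon_\rho)$ contains the Lebesgue term $\int_0^t a_{2,\mu}^k(\varepsilon_\rho)\,\mathrm ds$ with $a_{2,\mu}^k(\varepsilon_\rho)=\dot h(\varepsilon_\rho)^\alpha\,T_{sr}^\nu Z_\mu^rZ_\alpha^s(Z^{-1})_\nu^k$, and $\dot h(\varepsilon_\rho)$ (i.e.\ $\sqrt2\cos 2\pi\rho t$ or $\sqrt2\sin 2\pi\rho t$) does \emph{not} decay in $\rho$. The tools you list for this step — commuting $\mathrm{grad}$ past $\mathrm{div}$ via Lemma 5.1, the $\rho$-uniform multiplicator bounds of Corollaries 6.1--6.2, and interpolation (Theorem 2.12) — all produce bounds that are only \emph{uniform} in $\rho$, and uniform bounds cannot make $\sum_\rho(P_\rho f)^2$ converge. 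The paper closes precisely this hole with a Stratonovich integration by parts (Eq.\ 7, 35): it trades $\dot h(\varepsilon_\rho)$ for $h(\varepsilon_\rho)$, winning the factor $1/\rho$, at the price of Stratonovich integrals against the $\mathbb D^\infty$-semimartingale $T_{sr}^\nu Z_\mu^rZ_\alpha^s(Z^{-1})_\nu^k$, which are $\rho$-uniformly $\alpha$-H\"olderian, $\alpha<\tfrac12$. Your passing mention of Bessel--Parseval could in principle substitute for this — one can read $\int_0^t\dot h(\varepsilon_\rho)^\alpha\Theta\,\mathrm ds$ as a Fourier coefficient of $\Theta\mathds 1_{[0,t]}$ and get $\sum_\rho\bigl|\int_0^t\dot h(\varepsilon_\rho)^\alpha\Theta\,\mathrm ds\bigr|^2\leq\|\Theta\|_{L^2([0,1])}^2$ pointwise in $t$, then repeat on iterated gradients — but you never make this identification: your Bessel step is aimed at the products $P_\rho f\,P_\rho g$ ``after applying $\mathrm{grad}^r$'', and $P_\rho f$ is a Skorokhod integral, not a Fourier coefficient, so the claimed reduction is not established as written. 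As it stands, the proposal acknowledges the crux but supplies no mechanism that yields summability in $\rho$ for the $a_2$-term.
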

  To prove \lemref{lem4}, it is enough to prove that $(D_{\varepsilon_\rho}f)_{\rho\in\mathbb N_*}$ 
  is a $\mathbb D^\infty$ vector field. From \eqref{l23}, we have:
  \begin{equation*}
    D_{\varepsilon_\rho}f = \mathrm {div}\, A(\varepsilon_\rho)\mathrm{grad}\,f 
                          + \left( t\mapsto \int_0^t\dot h_1(\varepsilon_\rho)(s)\,\mathrm ds \right)
  \end{equation*}
  \begin{align*}
    \dot h_1(\varepsilon_\rho) = 
      & ~ \sqrt{2}\cos(2\pi \rho t) 
        + \frac12 \sum_{\mu=1}^nb_{\mu,\mu}\qquad \text{(33)}\\
          \text{or } 
      & ~ \sqrt{2}\sin(2\pi\rho t)
        + \frac12\sum_{\mu=1}^nb_{\mu,\mu}\qquad\text{(27)}
  \end{align*}
  with
  \begin{equation*}
    b_{\mu,\mu}(\varepsilon) = \left(R_{jri}^\alpha + \nabla_rT_{ji}^\alpha\right)Z_\mu^iZ_\mu^r(Z^{-1})_\alpha^j h(\varepsilon_\rho)
  \end{equation*}
  ($\dot h(\varepsilon_\rho) \neq \dot h_1(\varepsilon_\rho)$). 
  Now, using $\left|\int_0^t\cos 2\pi\ell s\,\mathrm ds\right| \leq \frac{C_0}{\ell}$ 
  and $\left|\int_0^t\sin 2\pi k s\,\mathrm ds\right| \leq \frac{C_0}{k}$, $C_0$
  being a constant, we see that 
  \begin{equation*}
    \left(t\mapsto\int_0^t\dot h_1(\varepsilon_\rho)(s)\,\mathrm ds\right)_{\rho \in \mathbb N_*}
  \end{equation*}
  is a $\mathbb D^\infty$-vector field.

  To be able to do the same trick with $\mathrm{div}\,A(\varepsilon_\rho)\mathrm{grad}\,f$, 
  we know that:
  \begin{equation*}
    A(\varepsilon_\rho)_\mu^k = \int_0^ta_{1,\mu}^k(\varepsilon_\rho)\,\mathrm ds
                              + \int_0^tb_{\mu,\alpha}^k(\varepsilon_\rho)\cdot\mathrm d\tilde B^\alpha
                              \tag{24}
  \end{equation*}
  with
  \begin{equation*}
    b_{\mu,\alpha}^k(\varepsilon_\rho) = \left(R_{jri} +\nabla_rT_{ji}^m\right)Z_\mu^iZ_\alpha^r(Z^{-1})_m^k\varepsilon_\rho 
  \end{equation*}
  and
  \begin{equation*}
    a_{1,\mu}^k(\varepsilon_\rho) = \dot h(\varepsilon_\rho)^\alpha T_{sr}^\nu Z_\mu^rZ_\alpha^s(Z^{-1})_\nu^k + \frac12 b_{\mu,\mu}^k
  \end{equation*}
  (Eq. 25 and 26)
  
  The only item for which the trick is not directly possible is $a_{2,\mu}^k(\varepsilon_\rho)$. 
  So we make a Stratonovitch integration by parts and we get:  
  \begin{align*}
    \int_0^ta_{2,\mu}^k(\varepsilon_\rho)\,\mathrm ds 
      & = \int_0^t\dot h(\varepsilon_\rho)^\alpha T_{sr}^\nu Z_\mu^rZ_\alpha^s(Z^{-1})_\nu^k\,\mathrm ds \\
      & = \left(\int_0^t h(\varepsilon_\rho)^\alpha\,\mathrm ds\right)
        \times \int_0^t T_{sr}^\nu Z_\mu^rZ_\alpha^s(Z^{-1})_\nu^k\circ\mathrm d\tilde B \\
      & \qquad -\int_0^t h(\varepsilon_\rho)^\alpha\circ\mathrm d\left(T_{sr}^\nu Z_\mu^rZ_\alpha^s(Z^{-1})_\nu^k\right)
      \tag{35}\label{l35}
  \end{align*}
  As $a_{1,\mu}^k$, $T_{sr}^\nu$, $Z_\mu^r$, $Z_\alpha^s$, and $(Z^{-1})_\nu^k$ are 
  $\mathbb D^\infty$-semi-martingales, we can make this integration by parts.
  Then each Stratonovitch integral in \eqref{l35} is $\alpha$-H\"olderian, 
  $\alpha < \frac12$, $\rho$-uniformly, and is multiplied by $\frac{1}{\rho}$,
  which is due to the presence of $h(\varepsilon_\rho)^\alpha$.

  The Lebesgue integral in the r.h.s. of \eqref{l35} is also a Stratonovitch 
  integral multiplied by $\frac{1}{\rho}$ which appears because of
  $\int_0^t h(\varepsilon_\rho)^\alpha(s)\,\mathrm ds$. 

  So $\left(D_{\varepsilon_\rho}f\right)_{\rho\in\mathbb N_*}$ is a $\mathbb D^\infty$-vector field.
  We also have to prove that the definition of the new $\mathrm{grad}$ does not
  depend on the basis $(v_j)_{j\in\mathbb N_*}$. For this, we
  prove first that the map $\tilde H \ni v \mapsto D_vf \in \mathbb D^\infty$ is continuous;  
  \begin{equation*}
    D_vf = \mathrm{div}\,A_v\mathrm{grad}\,f 
         + t\mapsto \int_0^t\dot h_1(v)(s)\,\mathrm ds
         \tag{23}
  \end{equation*}
  From \eqref{l27}, we see that $v \mapsto \left(t\mapsto \int_0^t \dot h_1(v)(s)\,\mathrm ds\right)$
  is continuous, when seen as a vector field. The map $v\mapsto\mathrm{div}\,A_v\mathrm{grad}\,f$
  is continuous because with \eqref{l24}, \eqref{l25}, \eqref{l26}, we see from the equations defining
  $b(v)$ and $a(v)$, so $A(v)$, that $A(v)$ is uniformly multiplicator. 

  Then if $(v_i)_{i\in\mathbb N_*}$ is a basis of $\tilde H$, and which $w\in\tilde H$, we have
  \begin{align*}
    \left\langle\sum_{i=1}^LD_{v_i}f\cdot v_i, w\right\rangle_{\tilde H} 
      & =\sum_{i=1}^L \left(D_{v_i}f\right)\langle v_i, w\rangle_{\tilde H} \\
      & = D_{\sum_{i=1}^L\langle v_i,w\rangle_{\tilde H}v_i}f
  \end{align*}
  so, when $L\uparrow\infty$, we get
  \begin{equation*}
    \left\langle{\mathrm{grad}}\,f,w\right\rangle_{\tilde H} = D_w f
  \end{equation*}
  \begin{rem}
    If $\overrightarrow{\mathrm{grad}}\,f$ denotes the gradient of a function $f\in C^\infty(V_n)$
    \begin{equation*}
      \overrightarrow{\mathrm{grad}}\,f 
      = \sum_{i=1}^n \frac{\partial f}{\partial x^i}\frac{\partial}{\partial x^i}
    \end{equation*}
    then $F_{f,t}$ being $F_{f,t}(\omega) = f(I(\omega)(t)$, $I$ being the It\^o map,
    we have
    \begin{equation*}
      \left\langle v,\mathrm{grad}\,F_{f,t}\right\rangle_{\tilde H}(\omega) 
        = \left\langle v,\overrightarrow{\mathrm{grad}}\,f\right\rangle_{V_n}(I(\omega)(t))
    \end{equation*}
  \end{rem}
  
\section{\LARGE Standard Quadratic form on $\P_{m_0}(V_n, g)$}
  
This section is dedicated to the study of the standard quadratic form on the $\dinf$-stochastic manifold $\P_{m_0}(V_n, g)$.

We recall the theorem of Dini-Lipshitz:

If $f$ is a real function which is piecewise of class $C^1$ on a bounded interval, its Fourier series converges uniformly where $f$ is $C^1$, and on the finite number of discontinuity points, $f$ converges towards the half sum of $f$'s right and left limits. 

This theorem is still valid if $f$ is piecewise $\alpha$-H\"{o}lderian $(0 < \alpha < 1)$, and the proof is similar.

We suppose from now on, that the metric $g$ satisfies the Driver condition: $V_1$ and $V_2$ being $C^\infty$ vector fields on $V_n$, $T$ the torsion, $g \left( T(V_1, V_2), V_1 \right) = 0$.

\subsection{Definition of the New Cameron-Martin space and of the standard bilinear form}	

We recall some properties of the NCM (new Cameron-Martin)

$\left\{ e_\mu \big/ \mu = 1, \dots, n \right\}$ being the canonical basis of $\R^n$, NCM denoted $\tilde{H}$ is:
\begin{align*}
\tilde{H} = \left\{ \sum_{\mu=1}^{n} f^\mu(t) u_\mu (t, \omega) \bigg/ u_\mu \text{ being the SPT of } ue_\mu \text{ and } f^\mu(t) \in H \right\}
\end{align*}
 
Scalar product on $\tilde{H}$:\\
If $v_i=\sum^n_{\mu=1} f_i^\mu(t) u_\mu (t,w) \in \tilde{H}, i=1,2:$\\
\[<v_1,v_2>_{\tilde{H}}=\int_0^1 \dot{f}_1^\mu(t)\dot{f}_2^\mu(t)dt\]
$-$ $\tilde{H}$ is complete.\\
$-$ A basis of $\tilde{H}$ is:\\
$\{ v_i=\sum^n_{\mu=1} f_i^\mu(t) u_\mu (t,w)/ \quad^{t}(f_i^1,\ldots,f_i^n), i\in \mathbb{N}_*$ being a basis of $H$, \\ $(u_\mu(t,w)/\mu=1,\ldots,n)$ being the SPT of a basis in $T_{m_0}V_n\}$
\begin{defn}\label{def:VIII,1} \hfill \\
i) If $\delta\in \textup{Der}(\Omega), f\in\mathbb{D}^\infty(\Omega): df(\delta)=\delta(f),$ then $df\in \textup{Der}^*(\Omega)$.\\
ii) If $(v_i)_{i\in\mathbb{N}_*}$ is a basis of $\tilde{H}$, we define $\delta_f \in \textup{Der}$ by: 
\[\delta_f(g)=\sum_{i=1}^\infty D_{v_i}f \cdot D_{v_i}g.\]
$\delta_f \in \textup{Der}(\Omega)$ because $\delta_f(g)=<\textup{grad }f,\textup{grad }g>_{\tilde{H}}$.\\
iii) If $\alpha\in \textup{Der}^*(\Omega)$: $\delta_\alpha(f)=\alpha(\delta_f)$, then $\delta_\alpha \in\textup{Der}(\Omega)$.
\end{defn}
\begin{rem} 
If the set $(f_k)_{k\in K}$ is bounded in $\mathbb{D}^\infty(\Omega)$, the set $(\delta_{f_k})_{k\in K}$ is bounded in $\textup{Der}(\Omega)$.
\end{rem}
\begin{defn} 
If $\alpha,\beta\in \textup{Der}^*(\Omega)$, we define the standard bilinear form on $\textup{Der}^*$ by:
\[ q(\alpha,\beta)=\sum^\infty_{i=1}\alpha(D_{v_i})\beta(D_{v_i}) \]
\end{defn}
We have to show that this definition does not depend of the chosen basis, that this series is $\mathbb{D}^\infty$-convergent, and that $q$ is $\mathbb{D}^\infty$-continuous relatively to each of its argument.

We first prove that the series defining $q(\alpha,\beta)$ is $\mathbb{D}^\infty$-convergent. \\
a) If $\alpha=df, \beta=dg, f$ and $g\in\mathbb{D}^\infty(\Omega)$,
\[ q(\alpha,\beta)=\sum^\infty_{i=1}df(D_{v_i})\cdot dg(D_{v_i})= <\textup{grad }f,\textup{grad }g>_{\tilde{H}} \]
b) If $\beta=dg$ and $\alpha\in \textup{Der}^*$
\[ q(\alpha,\beta)=\alpha (\sum^\infty_{i=1}D_{v_i}g\cdot D_{v_i})\]
As $\sum^\infty_{i=1}D_{v_i}g\cdot D_{v_i}=\delta_g\in \textup{Der}(\Omega),$ and as $\alpha$ is continuous, $q(\alpha, \beta)$ is legitimate.\\ 
c) Let $\alpha \in \textup{Der}^*$, with theorem 3,2, there exists a bounded net $(\alpha_k)_{k\in K}$ in $\textup{Der}^*$ which converges towards $\alpha$, and:
\[ \alpha_k=\sum_{i_k \in I_k} h_{i_k} dg_{i_k}, I_k \textup{ finite}, h_{i_k}\in \mathbb{D}^\infty(\Omega) \textup{ and } g_{i_k}\in \mathbb{D}^\infty(\Omega)\]
Then direct computation shows that:
\[ \delta_{\alpha_k}=\sum_{i_k\in I_k} h_{i_k} \sum_{j=1}^\infty D_{v_j}g_{i_k}\cdot D_{v_j}\] 
Moreover the set $(\delta_{\alpha_k})_{k\in K}$ is a bounded set in Der$(\Omega)$, $(v_j)_{j\in \mathbb{N}_*}$ being a base of $\tilde{H}$, then:
\[ q(\alpha_k,\alpha_k)=\sum_{j \in \mathbb{N}_*}\vert\alpha_k (D_{v_j})\vert^2=\alpha_k(\delta_{\alpha_k}) \]
because we have:
\begin{equation*}
\begin{split}
q(\alpha_k,\alpha_k) &= q(\alpha_k, \sum_{i_k\in I_k} h_{i_k}dg_{i_k})=\sum_{i_k\in I_k} h_{i_k} q(\alpha_k, dg_{i_k})\\
&=\sum_{i_k\in I_k} h_{i_k} \sum_{j=1}^\infty \alpha_k(D_{v_j})dg_{i_k}(D_{v_j})\\
&=\sum_{i_k\in I_k} h_{i_k} \sum_{j=1}^\infty \alpha_k(D_{v_j})D_{v_j}(g_{i_k})\\
&=\alpha_k(\delta_{\alpha_k})
\end{split}
\end{equation*}

The sets $(\alpha_k)_{k\in K}$ and $(\delta_{\alpha_k})_{k\in K}$ are bounded in $\textup{Der}^*$ and Der, so the finite sums $\sum^L_{i=1 }\vert \alpha_k(D_{v_i}) \vert^2$ are $\mathbb{D}^\infty$-bounded uniformly in $L$ and in $k$, so $q(\alpha,\alpha)$ exists and is $\in L^{\infty-0}(\Omega)$. But the set $\lbrace \alpha_k (\delta_{\alpha_k})/ k\in \mathbb{N}_* \rbrace$ is $\mathbb{D}^\infty$-bounded, then by interpolation, $q(\alpha,\alpha)\in \mathbb{D}^\infty$.

The net $q(\alpha_k,\alpha_k)-q(\alpha,\alpha)$ is $L^{p^\prime}$-convergent for each $p^\prime>1$ and is $\mathbb{D}^p_r$-bounded, $p>1$, $r\in \mathbb{N}_*$. By interpolation this net converges towards 0 in $\mathbb{D}^\infty(\Omega)$, so $q(\alpha,\alpha)\in \mathbb{D}^\infty(\Omega)$.

Now we want to prove the continuity of $q(\alpha,\beta)$ relatively to each of its arguments. If $\alpha\in \textup{Der}^*$ and $\beta \in \textup{Der}^*$, we know that there exists a net $(\alpha_k)_{k\in K}$ as in theorem 3,2 such that: $\alpha_k \xrightarrow{\textup{Der}^*} \alpha$, then we have seen that
\setcounter{equation}{0}
\begin{eqnarray}\label{eq:VIII,1}
\lim_k q(\alpha_k,\beta) = q(\alpha,\beta) = \lim_k \alpha_k (\delta_{\beta}) = \beta (\delta_{\alpha})
\end{eqnarray}
Then the continuity of $q$ relatively to each of its arguments is trivial.

The independence of the the definition of $q$ relatively to the chosen basis is given by:
\[ q(\alpha,\beta)=\beta(\delta_\alpha) \qquad \textup{(definition \ref{def:VIII,1}) and }\eqref{eq:VIII,1}\]

\begin{rem}
$\theta$ being the bijection between $\textup{Der}^*(\Omega)$ and $D_0(\Omega)$, obtained through the standard bilinear form, we have: $\forall \alpha \in \textup{Der}^*(\Omega)$, $(v_i)_{i\in \mathbb{N}_*}$ being an Hilbertian basis of $\tilde{H}$:
\[ \theta(\alpha)=\sum^\infty_{i=1} \alpha(v_i) v_i \] 
\end{rem}

\subsection{Non degenerescence of the standard quadratic form}	

We will now prove that the standard bilinear form is non degenerate. It is enough to prove that if $\alpha \in \textup{Der}^*\setminus\lbrace 0\rbrace$, there exists $v\in\tilde{H}$ such that $\alpha(D_v)\neq 0$, $D_v$ being the derivation associated to $v$ as (7.23).

We first, will prove that the "derivation adherence" of a particular $\mathbb{D}^\infty$-module $\mathfrak{D}$, contains all $\mathbb{D}^\infty$-vector fields. The derivative adherence means that it includes all limits of bounded nets of $\mathfrak{D}$, which converges as  derivations:
\[ (\delta_i)_{i\in I}\rightarrow \delta \Leftrightarrow \forall f\in \mathbb{D}^\infty (\Omega), \quad \delta_i f \xrightarrow{\mathbb{D}\infty}\delta f. \]

We consider the following operator: $B$ is a constant antisymmetrical $n\times n$ matrix, if $h\in H$, we denote $(\hat{A}h)(t)=\int_0^t B\dot{h}(s)ds$.
$\hat{A}$ is a bounded operator on $H$. As basis of $\tilde{H}$, we take (7.33) and denote the generic item of this basis $\check{B}$ by $\varepsilon_j$. 

We recall that to $v\in \tilde{H}$, we can associate a derivation 
\[D_v=\textup{div } A(v)\textup{ grad} + t\rightarrow \int_0^t\dot{h}(v)ds\quad (7.23)\]

Now, as basis of $\tilde{H}$, we take $\check{B}$ (7.33). $P_N$ being the projection on the subspace of $H$ generated by the: $h(\varepsilon_{l,j})$, $h(\varepsilon^\prime_{l,j})$ and $h(\varepsilon^{\prime\prime}_{j})$ with $l=1,\ldots,N$ and $j=1,\ldots,n$, denote $\hat{A}_N=P_N\hat{A}P_N$.

We recall that if $v(t,\omega)=\sum_{\mu=1}^n f^\mu(t) u_\mu(t,\omega)$ with $u_\mu(t,\omega)$ being the SPT of $ue_\mu$ and $(f^\mu)_{\mu=1,\ldots,n}\in H$, then $v\in\tilde{H}$ and $h(v)^\mu=f^\mu(t)$.

Taking in account the special form of the basis vectors of $\check{B}$, we deduce $[\hat{A},P_N]=0$. We denote by $h(\varepsilon_i)$ the generic item obtained from $\varepsilon_i \in \check{B}$, with the bijection $\tilde{H}\rightarrow H$: 
\[ \tilde{H}\ni v=h(t)u(t,\omega)\rightarrow h(v)(t)=h(t)\in H \]

We now will study the limit, as a derivation, of:
\begin{equation}\label{eq:2}
\sum_{i,j \leqslant N} a_{ij} \lbrace W (h(\varepsilon_i) ) D_{\varepsilon_j}-W (h(\varepsilon_j) ) D_{\varepsilon_i} \rbrace
\end{equation}
where $a_{ij}=<h(\varepsilon_i),\hat{A}h(\varepsilon_j)>_H$.

With (7.23) and (7.27), and denoting $Z(\varepsilon_i)=\frac{1}{2}\sum^n_{\mu=1} b_{\mu\mu}(\varepsilon_i)$, we get from \eqref{eq:2}:
\begin{equation*}
\begin{split}
(\ref{eq:2}) 
&= \sum_{i,j \leqslant N} a_{ij}\lbrace W ( h(\varepsilon_i) )h(\varepsilon_j)-W (h(\varepsilon_j) )h(\varepsilon_i) \rbrace  \hspace{44mm} \textup{(T1)}\\
&+ \sum_{i,j \leqslant N}a_{ij}\lbrace W ( h(\varepsilon_i) )\int_0^t Z(\varepsilon_j)ds-W (h(\varepsilon_j) )\int_0^t Z(\varepsilon_i)ds \rbrace \hspace{23.3mm} \textup{(T2)}\\
&- \sum_{i,j \leqslant N}a_{ij}\lbrace <h(\varepsilon_i),A(\varepsilon_j)\textup{ grad}(\cdot)>_H-<h(\varepsilon_j),A(\varepsilon_i)\textup{ grad}(\cdot)>_H\rbrace \hspace{5.8mm} \textup{(T3)}\\
&+ \sum_{i,j \leqslant N} a_{ij}\lbrace \textup{div } \left( W ( h(\varepsilon_i) )A(\varepsilon_j)-W (h(\varepsilon_j) )A(\varepsilon_i)\right)  \rbrace  \textup{ grad}(\cdot)\hspace{20mm} \textup{(T4)}
\end{split}
\end{equation*}

We want to prove that each of the $Ti, i=1,2,3,4,$ converges when $N\rightarrow \infty$, as derivations, towards a $\mathbb{D}^\infty$-continuous derivation.\\

For (T1):
\[(T1)=2 \textup{ div } \hat{A}_N \textup{ grad}(\cdot)\]

As bounded operators, we have: $\hat{A}_N \rightarrow \hat{A}$, so: 
\[ \forall X\in\mathbb{D}^\infty (\Omega,H): \quad \| (\hat{A}_N-\hat{A})X \|_{D_r^p }\rightarrow 0\]
div being a $\mathbb{D}^\infty$-continuous operator, $\forall f\in \mathbb{D}^\infty (\Omega)$:
\[ \lim_{N\rightarrow\infty} \textup{div } \hat{A}_N \textup{ grad }f = \textup{div } \hat{A} \textup{ grad }f \quad (\textup{in } \mathbb{D}^\infty(\Omega))\]

For (T2):

We can rewrite (T2) as the vector field:
\[ t\rightarrow 2\int_0^t Z\left[ W\left( \sum^N_{j=1} \hat{A}_N h(\varepsilon_j)\right) \cdot \varepsilon_j\right]  ds\]

Now $\varepsilon_j(s,\omega)=\sum^n_{\alpha=1} h^\alpha(\varepsilon_j)(s)u_\alpha(s,\omega)$, where as usual: $u_\alpha(s,\omega)=$ SPT of $ue_\alpha$, at time $s$ "along $\omega$", $(e_\alpha)_{\alpha=1,\ldots,n}$ canonical unit vectors of $\mathbb{R}^n$.

We denote by $(k^\alpha_s)_{\alpha=1,\ldots,n}$ this element of $H: \rho\in[0,1]$
\[ k^\alpha_s(\rho)=(\rho\wedge s) e_\alpha,\quad \alpha=1,\ldots,n\] 
Then 
\begin{equation}\label{eq:VIII,3}
h^\alpha(\varepsilon_j)(s)=\int^s_0 \dot{h}^\alpha(\varepsilon_j)(r)dr=<k^\alpha_s, h(\varepsilon_j)>_H
\end{equation}
Then (T2) becomes, with Einstein summation convention:
\begin{equation*}
\begin{split}
(\textup{T2})&=2\int^t_0 Z\left[ W \left( \sum_{j=1}^N\hat{A}_N h(\varepsilon_j) <k^\alpha_s, h(\varepsilon_j)>_H \right) u_\alpha(s,\omega)\right]ds\\
&= 2\int^t_0 Z\left[ W \left( \hat{A}\left(  \sum^N_{j=1}<k^\alpha_s, h(\varepsilon_j)>_H h(\varepsilon_j)\right) \right) u_\alpha(s,\omega)\right]ds
\end{split}
\end{equation*}
But $Z$ is $\mathbb{R}$-linear, so:
\[ (\textup{T2})=2\int^t_0 W \left( \hat{A} \left( \sum^N_{j=1}<k^\alpha_s, h(\varepsilon_j)>_H h(\varepsilon_j)\right) \right) Z(u_\alpha(s,\omega))ds\]
With corollary 6.1, we know that $\sup_{\alpha\in\lbrace 1,\ldots,n\rbrace} Z(u_\alpha(s,\omega))$ is $\mathbb{D}^\infty$-bounded.
Now 
\begin{align}\label{eq:4}
&\left\Arrowvert W\left( \hat{A} \sum^N_{j=1}<k^\alpha_s, h(\varepsilon_j)>_H h(\varepsilon_j)\right) \right\Arrowvert_{L^2(\Omega)} \nonumber \\
&=\left\Arrowvert\hat{A} \left( \sum^N_{j=1}<k^\alpha_s, h(\varepsilon_j)>_H h(\varepsilon_j)\right) \right\Arrowvert_H
\leqslant\lVert \hat{A}\|\cdot\| k^\alpha_s\|_H
\end{align}
and $\| k^\alpha_s \|^2_H=\int_0^1 \mathds{1}_{[0,s]}(u)du.$\\

As on $\mathcal{C}_1$ all $\mathbb{D}_r^p$-norms are equivalent, the l.h.s. of 
(\ref{eq:4}) is $\mathbb{D}^\infty$-bounded, uniformly relatively to $s$ and $N$. So (T2) converges, as multiplicators, towards: $2\int_0^t W[\hat{A} k^\alpha_s]Z(u_\alpha)ds$.\\

For (T3): (T3) can be rewritten as:
\[ (\textup{T3})=-2<\sum_{1\leqslant i,j \leqslant N} a_{ij} A(\varepsilon_j)\cdot h(\varepsilon_i), \textup{grad}(\cdot)>_H\]
To prove that this sequence of derivations is $\mathbb{D}^\infty$-converging towards a  $\mathbb{D}^\infty$-continuous derivation, the convergence being a derivation convergence, it is enough to prove that: $2\sum_{1\leqslant i,j \leqslant N} a_{ij} A(\varepsilon_j)h(\varepsilon_i)$ converges, as vector fields, towards a $\mathbb{D}^\infty$-vector field.\\

With (7.24) we can write, with shorter notations:
\[ A(\varepsilon_j)(t)=\int^t_0 \dot{h}^\alpha(\varepsilon_j)(s)\cdot \gamma_{1,\alpha}(s)ds+\int^t_0 h^\alpha(\varepsilon_j)(s) Z_{1,\alpha}(s)\circ d\tilde{B} \]
where $\gamma_{1,\alpha}(s)$ and $Z_{1,\alpha}(s)$ are:
\[(\gamma_{1,\alpha}(s))_\mu^\nu =g(T(u_\alpha,u_\mu),u_\nu) \qquad (7.18^\prime)\]
and 
\[(Z_{1,\alpha}(s))^\nu_{\mu,\rho}=g(R(u_\alpha,u_\rho,u_\mu),u_\nu)+g((\bigtriangledown_{u_\rho}T)(u_\alpha,u_\mu),u_\nu) \qquad (7.22^\prime).\]
$Z_1$ and $\gamma_1$ are $\mathbb{D}^\infty$-semi-martingales, $\frac{1}{2}$-$\mathbb{D}^\infty$-Holderian processes and $\mathbb{R}$-multilinear for the variables $u_\alpha, u_\rho, u_\mu, u_\nu$.

We use the Stratonovich integration by parts on 
\[\int_0^t h_\alpha (\varepsilon_j)\times Z_1(u_\alpha,u_\rho,u_\mu,u_\nu)\circ d\tilde{B}^\rho\]
and we get: 
\begin{equation}\label{eq:VIII,5}
A(\varepsilon_j)(t)=\int_0^t \dot{h}^\alpha (\varepsilon_j)(s)\gamma_{2,\alpha}(s)ds+h^\alpha(\varepsilon_j)(t)Z_{2,\alpha}(t)
\end{equation}
where again $\gamma_{2,\alpha}$ and $Z_{2,\alpha}$ are $\mathbb{D}^\infty$-semi-martingales, $\frac{1}{2}$-$\mathbb{D}^\infty$-Holderian processes and $\mathbb{R}$-multilinear for their variables $u_\alpha, u_\rho, u_\mu, u_\nu$,\\
\begin{equation*}\label{eq:VIII,5'}
\hspace{36mm} Z_{2,\alpha}=\int_0^t Z_1(u_\alpha,u_\rho,u_\mu,u_l)\circ d\tilde{B}^l \hspace{36mm}  (5') 
\end{equation*}
and
\begin{equation*}\label{eq:VIII,5''}
\hspace{31mm}\gamma_{2,\alpha}=\gamma_{1,\alpha}-\int_0^s Z_1(u_\alpha,u_\rho,u_\mu,u_l)\circ d\tilde{B}^\rho \hspace{31mm}(5'') 
\end{equation*}
So $\gamma_{2,\alpha}(s)$ can be written $\gamma_2(u_\alpha,u_\mu,u_l)$.

Then $A(\varepsilon_j)$ acting on the vector $h(\varepsilon_i)$ is given by:
\begin{align}\label{eq:VIII,6}
(A(\varepsilon_j)\cdot h(\varepsilon_i))(t)&=\int_0^t Z_{2,\alpha}(s)\cdot h^\alpha(\varepsilon_j)(s)\cdot \dot{h}(\varepsilon_i)(s)ds \nonumber\\
&+\int_0^t \left( \int_0^s \gamma_{2,\alpha}(r) \dot{h}^\alpha (\varepsilon_j)(r)dr\right) \cdot \dot{h}(\varepsilon_i)ds
\end{align}

We define a vector field $w_s(u_\mu,u_l)$ by:
\begin{equation*}\label{eq:VIII,6'}
\hspace{11.2mm} (w_s(u_\mu,u_l))_\alpha=t \rightarrow \int_0^{s\wedge t} \gamma_2 (u_\alpha,u_\mu, u_l)(r)dr=\int_0^{s\wedge t} \gamma_{2,\alpha}(r)dr \hspace{11.2mm} (6') 
\end{equation*}
Then
\begin{equation}\label{eq:VIII,7}
<w_s(u_\mu,u_l),h(\varepsilon_j)>_H=\int_0^s \gamma_{2,\alpha}(r)\dot{h}^\alpha(\varepsilon_j)(r)dr
\end{equation}
And (\ref{eq:VIII,6}) becomes:
\[ (A(\varepsilon_j)\cdot h(\varepsilon_i))(t)=\int_0^t Z_{2,\alpha}(s) h^\alpha(\varepsilon_j)\cdot \dot{h}(\varepsilon_i)(s)ds +\int_0^t <w_s,h(\varepsilon_j)>_H \dot{h}(\varepsilon_i)ds \]
Now:
\begin{align*}
\sum_{i,j\leqslant N} a_{ij}(A(\varepsilon_j)h(\varepsilon_i))(t)&= \int_0^t Z_{2,\alpha}(s) \left( \sum_{i,j\leqslant N}a_{ij}h^\alpha(\varepsilon_j)\right) \cdot \dot{h}(\varepsilon_i)ds \\ &+\int_0^t<w_s,\sum_{i,j\leqslant N}a_{ij}h(\varepsilon_j)>_H \dot{h}(\varepsilon_i)ds
\end{align*}
Using: $\sum_{i,j\leqslant N} a_{ij}h(\varepsilon_j)=-\hat{A}_N h(\varepsilon_i)$ and (\ref{eq:VIII,3}): $<k^\alpha_s, h(\varepsilon_j)>_H=h^\alpha(\varepsilon_j)$, we get:
\begin{align*}\label{eq:VIII,7'}
\sum_{i,j\leqslant N} a_{ij}(A(\varepsilon_j)\cdot h(\varepsilon_i))(t)
&= -\int_0^t Z_{2,\alpha}(s) \sum_{i\leqslant N}<k_s^\alpha,\hat{A}_N(h(\varepsilon_i))>_H\dot{h}(\varepsilon_i)ds \nonumber \\ 
&-\int_0^t \sum_{i\leqslant N}<w_s,\hat{A}_N (h(\varepsilon_i))>_H \dot{h}(\varepsilon_i)ds \nonumber \\
&=\int_0^t\sum_{i\leqslant N}Z_{2,\alpha}(s)<\hat{A}_N k_s^\alpha,h(\varepsilon_i)>_H\dot{h}(\varepsilon_i)ds \nonumber \\
&+\int_0^t\sum_{i\leqslant N}<\hat{A}_N (w_s),h(\varepsilon_i)>_H\dot{h}(\varepsilon_i)ds \nonumber \\
&=\int_0^t Z_{2,\alpha}(s)\sum_{i\leqslant N}<\hat{A}_N k_s^\alpha,h(\varepsilon_i)>_H\dot{h}(\varepsilon_i)ds \nonumber \\
&+\int_0^t \sum_{i\leqslant N}<\hat{A} w_s,h(\varepsilon_i)>_H \dot{h}(\varepsilon_i)ds\hspace{25mm} (7')
\end{align*}

In the first integral of ($7'$), the series, $\sum_{i\leqslant N}<\hat{A} k_s^\alpha,h(\varepsilon_i)>_H\dot{h}(\varepsilon_i)(r)$ is independent of $\omega$, and depends only of $s$. The theorem of Dini-Lipschitz (Theorem 2.11) applied to this series shows that it converges uniformly on each compact of $[0,1]$ which does not include discontinuity points, relatively to $r$, and on the discontinuity points, the series converges towards the half sum of the left and right limits. Here the only discontinuity point is $s=r$, so on this point the series converges towards

\[\frac{1}{2}(\dot{\wideparen{\hat{A} }}k^\alpha_{s-} + \dot{\wideparen{\hat{A}}} k_{s+}^\alpha)=\frac{1}{2}Be_\alpha\]
As $s\in [0,1]$, we see that the integrand in the first integral in ($7'$) is $\mathbb{D}^\infty$-bounded, uniformly in $s$ ($Z_{2,\alpha}$ is 1/2-$\mathbb{D}^\infty$-Holderian). So the first integral in ($7'$), converges $\mathbb{D}^\infty$ towards $\sum_{\alpha=1}^n\int_0^t Z_{2,\alpha}(s)Be_\alpha ds$; if we apply the O.U. operator to this first integral, we get:
\[ \sum_{\alpha=1}^n\int_0^t ((1-L)^{r/2}Z_{2,\alpha})\sum_{i\leqslant N}<\hat{A}k_s^\alpha, h(\varepsilon_i)>_H\dot{h}(\varepsilon_i)(s)ds\]

The same reasoning shows that this new sequence will $\mathbb{D}^\infty$-converge towards $\sum_{\alpha=1}^n\int_0^t ((1-L)^{r/2}Z_{2,\alpha})Be_\alpha ds$. So the first integral of ($7'$) $\mathbb{D}^\infty$-converges as vectors fields towards $\sum_{\alpha=1}^n\int_0^t Z_{2,\alpha}Be_\alpha ds$.

For the second integral in ($7'$), the same reasoning on the series 
\[\sum_{i\leqslant N}<\hat{A} w_s,h(\varepsilon_i)>_H \dot{h}(\varepsilon_i)(r)\]
but considering the Dini-Lipschitz convergence in the Frechet space $\mathbb{D}^\infty (\Omega)$, shows that this series converges on the unique discontinuity point ($r=s$), and that this $\mathbb{D}^\infty$-convergence towards the half value of the jump in $r=s$, is uniformly , in $s$, bounded.

Then the second integral of ($7'$) will $\mathbb{D}^\infty(\Omega,H)$-converges towards the vectors field:
\[ t\rightarrow \sum_{\alpha=1}^n \int_0^t \frac{1}{2}B\gamma_{2,\alpha} e_\alpha ds\]

Now, for (T4), we will need the following lemma.
\begin{lem}
The vector field $s\rightarrow w_s$ is $\frac{1}{2}$-$\mathbb{D}^\infty$-Holderian.
\end{lem}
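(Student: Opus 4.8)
The plan is to compute the increment $w_{s'}-w_s$ explicitly and reduce the claim to the $\mathbb D^\infty$-boundedness of the integrand $\gamma_2$, the exponent $\tfrac12$ arising from the $L^2$-in-$t$ structure of the Cameron–Martin norm, exactly as the $\sqrt h$ arose in Lemma \ref{lem2_2}. First I would fix $s<s'$ in $[0,1]$ and use the defining formula $(6')$, $(w_s)_\alpha(t)=\int_0^{s\wedge t}\gamma_{2,\alpha}(r)\,\mathrm dr$, to obtain
\[
(w_{s'}-w_s)_\alpha(t)=\int_{s\wedge t}^{s'\wedge t}\gamma_{2,\alpha}(r)\,\mathrm dr .
\]
A case analysis on the position of $t$ relative to $s$ and $s'$ shows that, as a vector field, $w_{s'}-w_s$ has Cameron–Martin derivative $\frac{\mathrm d}{\mathrm dt}(w_{s'}-w_s)_\alpha(t)=\gamma_{2,\alpha}(t)\mathds 1_{[s,s']}(t)$, so that the increment is supported, in the $t$ variable, on the interval $[s,s']$.

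Next, since the Malliavin operator $(1-L)^{r/2}$ acts only on $\omega$ and commutes with the integration in $t$, the vector-field norm of the increment is literally the $L^2([s,s'])$-in-$t$ norm of $(1-L)^{r/2}\gamma_2$:
\[
\|w_{s'}-w_s\|_{\mathbb D^p_r(\Omega,H)}^p=\int\P(\mathrm d\omega)\left(\int_s^{s'}\sum_\alpha\bigl|(1-L)^{r/2}\gamma_{2,\alpha}(t,\omega)\bigr|^2\,\mathrm dt\right)^{p/2}.
\]
For $p\ge 2$ I would apply Hölder's (Jensen's) inequality to the inner integral over the interval $[s,s']$ of length $|s'-s|$, producing a factor $|s'-s|^{p/2-1}$, and then Fubini to exchange the $\omega$- and $t$-integrations, which yields
\[
\|w_{s'}-w_s\|_{\mathbb D^p_r(\Omega,H)}^p\le |s'-s|^{p/2-1}\int_s^{s'}\|\gamma_2(t,\cdot)\|_{\mathbb D^p_r}^p\,\mathrm dt .
\]

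Finally, $\gamma_2$ is $\mathbb D^\infty$-bounded: it is a $\tfrac12$-$\mathbb D^\infty$-Hölderian process built from SPT vectors on the compact manifold $(V_n,g)$, as recorded after $(5'')$ and following Corollary \ref{cor6_1}, so $\sup_t\|\gamma_2(t,\cdot)\|_{\mathbb D^p_r}\le C(p,r)$. The last display is then bounded by $C(p,r)^p|s'-s|^{p/2}$, and taking $p$-th roots gives $\|w_{s'}-w_s\|_{\mathbb D^p_r(\Omega,H)}\le C(p,r)|s'-s|^{1/2}$ for $p\ge 2$; the range $1<p<2$ follows from the monotonicity of the $\mathbb D^p_r$-norms in $p$ on the probability space, so the estimate holds for all $(p,r)$, which is precisely the $\tfrac12$-$\mathbb D^\infty$-Hölder property of Definition \ref{def2_3}. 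There is no genuine obstacle; the only points needing care are the justification that $(1-L)^{r/2}$ passes through the $t$-integral, so that the increment's norm reduces to the norm of $\gamma_2$ restricted to $[s,s']$, and the bookkeeping that turns the interval length into the square-root rate—it is the $L^2$ nature of the Cameron–Martin norm that forces the exponent $\tfrac12$ rather than $1$.
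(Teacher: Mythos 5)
Your proposal is correct and follows essentially the same route as the paper: the paper likewise writes $(1-L)^{r/2}(w_{t+\varepsilon}-w_t)=\int_t^{t+\varepsilon}(1-L)^{r/2}\gamma_2(s,\omega)\,\mathrm ds$, notes that the $H$-norm of the increment is the $L^2$-in-$t$ norm of $(1-L)^{r/2}\gamma_2$ restricted to the interval, and applies Jensen's inequality together with the $\mathbb D^\infty$-boundedness of $\gamma_2$ to extract the $\sqrt{\varepsilon}$ rate. Your version merely makes the Hölder exponent bookkeeping and the small-$p$ case explicit, which the paper leaves implicit.
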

\pf. Recall ($6'$): 
\[\frac{1}{\sqrt{\varepsilon}}(1-L)^{r/2}(w_{t+\varepsilon}-w_t)=\frac{1}{\sqrt{\varepsilon}}\int^{t+\varepsilon}_t (1-L)^{r/2} \gamma_2(s,\omega)ds\]
and 
\begin{align*}
\|\frac{1}{\sqrt{\varepsilon}}(1-L)^{r/2}(w_{t+\varepsilon}-w_t) \|^p_{L^p(\Omega,H)}&=\int \mathbb{P}(d\omega)\left[ \int^1_0 \vert \frac{1}{\sqrt{\varepsilon}}(1-L)^{r/2}\gamma_2 \vert^2 \mathds{1}^{(s)}_{[t,t+\varepsilon]}ds \right]^{p/2}\\
&\leqslant\int^{t+\varepsilon}_t ds \int \mathbb{P}(d\omega) \vert \frac{1}{\sqrt{\varepsilon}}(1-L)^{r/2}\gamma_2 \vert^p
\end{align*}

Last, we study (T4):
\[ (T4)= \sum_{i,j\leqslant N} a_{ij} \textup{div }\left[ W(h(\varepsilon_i))A(\varepsilon_j)-W(h(\varepsilon_j))A(\varepsilon_i) \right]\textup{ grad} \]

We want to prove that when $N\rightarrow \infty$, (T4) converges, as a sequence of derivation, towards a $\mathbb{D}^\infty$-continuous derivation, div $Q$ grad, when $Q$ is a $\mathbb{D}^\infty$-multiplicator. For this, it is enough to prove that the sequence:
\begin{align}\label{eq:VIII,8}
S_N&=\sum_{i,j\leqslant N} a_{ij} \left[ W(h(\varepsilon_i))A(\varepsilon_j)-W(h(\varepsilon_j))A(\varepsilon_i) \right] \nonumber \\
&=2\sum_{j\leqslant N}W\left[ \hat{A}_N(h(\varepsilon_j)) \right]A(\varepsilon_j)
\end{align}
converges as multiplicators, towards a $\mathbb{D}^\infty$-multiplicator $Q$. 

Using the expression of $A(\varepsilon_j)$ in \eqref{eq:VIII,5}, with \eqref{eq:VIII,8}:
\begin{align}\label{eq:VIII,9}
S_N&=2\sum_{j\leqslant N}W\left[ \hat{A}_N(h(\varepsilon_j)) \right]\cdot h^\alpha(\varepsilon_j)\cdot Z_{2,\alpha}(t) \nonumber \\
&+2\sum_{j\leqslant N}W\left[ \hat{A}_N(h(\varepsilon_j)) \right]\cdot \int^t_0 \dot{h}^\alpha(\varepsilon_j)(s)\gamma_{2,\alpha}(s)ds
\end{align}

We denote the two items in \eqref{eq:VIII,9} by (T4,1) and (T4,2). So $S_N=$(T4,1)+(T4,2). The case of (T4,1):
\begin{align*}
(T4,1)&=2Z_{2,\alpha}(t)\cdot \sum_{j\leqslant N}W\left[ \hat{A}_N(h(\varepsilon_j)) \right] h^\alpha(\varepsilon_j)(t), \quad \alpha=1,\ldots,n \nonumber \\
&=2Z_{2,\alpha}(t) W \left[\hat{A}_N (\sum_{j\leqslant N} h^\alpha(\varepsilon_j)(t) \cdot h(\varepsilon_j))\right] \nonumber \\
&=2Z_{2,\alpha}(t) W \left[\hat{A}(\sum_{j\leqslant N} h^\alpha(\varepsilon_j)(t) \cdot h(\varepsilon_j))\right]
\end{align*}

The series $\sum_{j=1}^\infty h^\alpha(\varepsilon_j)(t) \cdot h(\varepsilon_j)$ converges in $H$ because the basis $\check{B}$ is such that: $\forall j\in \mathbb{N}_*, \forall\alpha\in \lbrace 1,\ldots,n \rbrace, \vert h^\alpha(\varepsilon_j) \vert \leqslant \frac{Const}{j}$, uniformly relatively to $t\in [0,1]$.

This implies that $W \left[\hat{A}(\sum_{j=1}^\infty h^\alpha(\varepsilon_j)(t) \cdot h(\varepsilon_j))\right]$ is $L^2$-convergent, $t$-uniformly, towards $W[\hat{A}(k^\alpha_t)]$
\[  h^\alpha(\varepsilon_j)(t)=< h(\varepsilon_j),k_t^\alpha>_H \qquad (\textup{see} (3))\]

So as $\forall N: W \left[\hat{A}(\sum_{j\leqslant N} h^\alpha(\varepsilon_j)(t) \cdot h(\varepsilon_j))\right]\in \mathcal{C}^1$, and with corollary 4.1, we see that the sequence (T4,1) converges, as multiplicators, towards $W[\hat{A}(k^\alpha_t)]$, $t$-uniformly. And $Z_{2,\alpha}(t)$ is also a multiplicator because it is an $\frac{1}{2}$-Holderian process, so the limit of (T4,1), limit as multiplicators, is the $\mathbb{D}^\infty$-multiplicator:
\[ 2Z_{2,\alpha}(t)\cdot W\left[ \hat{A}(k^\alpha_t) \right] \]

Now for (T4,2), we rewrite it:
\begin{align}\label{eq:VIII,10}
(T4,2)&=2\sum_{j\leqslant N} \lbrace W(\hat{A}_N (h(\varepsilon_j))-\E\left[ W(\hat{A}_N h(\varepsilon_j))|\mathcal{F}_t\right]  \rbrace \int^t_0 \dot{h}^\alpha(\varepsilon_j)(s)\gamma_{2,\alpha}(s)ds \nonumber \\
&+ 2\sum_{j\leqslant N}\E\left[ W(\hat{A}_N h(\varepsilon_j))|\mathcal{F}_t\right]\int^t_0 \dot{h}^\alpha(\varepsilon_j)(s)\gamma_{2,\alpha}(s)ds
\end{align}
These two items in \eqref{eq:VIII,10} are labeled as (T4,2,a) and (T4,2,b).

We first study the limit of (T4,2,a):
\begin{align*}
(T4,2,a)&=2\sum_{j\leqslant N} \lbrace W(\hat{A}_N (h(\varepsilon_j)))-\E\left[ W(\hat{A}_N h(\varepsilon_j))|\mathcal{F}_t\right]  \rbrace \int^t_0 \dot{h}^\alpha(\varepsilon_j)\gamma_{2,\alpha}ds \\
&= 2\sum_{j\leqslant N} \lbrace \textup{div }\left[ \hat{A}_N (h(\varepsilon_j))-P_t(\hat{A}_N (h(\varepsilon_j))) \right]  \rbrace \cdot \int^t_0 \dot{h}^\alpha(\varepsilon_j)(s)\gamma_{2,\alpha}(s)ds
\end{align*}
where $P_t$ is the projection operator on $H$, defined by:
\[ (P_t Y)(u)=\int^{u\wedge t}_0 X(u)du \textup{ with } Y(s)=\int^s_0 X(u)du, Y\in H\]
($P_t$ is the projector of vectors of $H$, on vectors which are zero after $t$).

With (7): $<w_s,h(\varepsilon_j)>_H= \int_0^s \gamma_{2,\alpha}(r)\dot{h}^\alpha(\varepsilon_j)(r)dr$, so we have:
\begin{align}\label{eq:VIII,11}
(T4,2,a)&= 2\textup{div } \left[ \sum_{j\leqslant N}\left( \int^t_0 \dot{h}^\alpha(\varepsilon_j)\gamma_{2,\alpha}ds\right)   \left( \hat{A}_N (h(\varepsilon_j))-P_t\hat{A}_N (h(\varepsilon_j))\right) \right] \nonumber \\
&= 2\textup{div } \left[\hat{A}_N \left( \sum_{j\leqslant N}<w_t,h(\varepsilon_j)>_H h(\varepsilon_j) \right) \right] \nonumber \\
&- 2\textup{div }\left[P_t \hat{A}_N \left( \sum_{j\leqslant N}<w_t,h(\varepsilon_j)>_H h(\varepsilon_j) \right) \right]
\end{align}

We first study the limit of: \[2\textup{div } \left[\hat{A}_N \left( \sum_{j\leqslant N}<w_t,h(\varepsilon_j)>_H h(\varepsilon_j) \right) \right]=2\textup{div }(P_N\hat{A}w_t)\] (Recall $[\hat{A},P_N]=0$).

Now , with lemma 8.1, $P_N\hat{A}w_t$ is $\frac{1}{2}$-$\mathbb{D}^\infty$-Holderian, in $t$, $N$-uniformly, so, as div is $\mathbb{D}^\infty$-continuous, div($P_N\hat{A}w_t$) is $\frac{1}{2}$-$\mathbb{D}^\infty$-Holderian, so is a multiplicator, $N$-uniformly. Using the closed graph theorem, the sequence  2div ($P_N\hat{A}w_t$) converges as multiplicators, towards the multiplicator 2div $\hat{A}w_t$.

Now we study the limit of the second item in \eqref{eq:VIII,11}, 
\[ 2\textup{div } \left[P_t \hat{A}_N \left( \sum_{j\leqslant N}<w_t,h(\varepsilon_j)>_H h(\varepsilon_j) \right) \right] =2\textup{div } \left[P_t P_N\hat{A}w_t\right];\]
to study this limit, we consider with $\lambda\in[0,1]$: $P_\lambda P_N\hat{A}w_t=I_{\lambda,N,t}$. $I_{\lambda,N,t}$ is $\frac{1}{2}$-$\mathbb{D}^\infty$-Holderian, in $t$, $(\lambda, N)$-uniformly, but is also $\frac{1}{2}$-$\mathbb{D}^\infty$-Holderian, in $\lambda$, $(N,t)$-uniformly, because:
\begin{align*}
\|(1-L)^{r/2}(P_{\lambda+\varepsilon}-P_\lambda)(P_N\hat{A}w_t) \|_H
&\leqslant\int_0^1 \mathds{1}_{[\lambda,\lambda+\varepsilon]} (s)\| (1-L)^{r/2} P_N\hat{A}w_t (s,w)\|_H ds\\
&\leqslant \varepsilon^{1/2} \left( \int_0^1 \| P_N\hat{A} (1-L)^{r/2}w_t \|^2_H ds\right)^{1/2} 
\end{align*}
Then 
\begin{align*}
\|(P_{\lambda+\varepsilon}-P_\lambda)(P_N\hat{A}w_t) \|_{\mathbb{D}^p_r(\Omega,H)}
&\leqslant \varepsilon^{1/2}\|P_N\hat{A}w_t \|_{\mathbb{D}^p_r(\Omega,H)}\\
&\leqslant \varepsilon^{1/2}\|\hat{A}w_t \|_{\mathbb{D}^p_r(\Omega,H)}
\end{align*}
and $\hat{A}w_t$ is $\mathbb{D}^\infty$-bounded because $\gamma_2$ is $\mathbb{D}^\infty$-bounded. We have then the $\frac{1}{2}$-$\mathbb{D}^\infty$-Holderianity of $I_{\lambda,N,t}$ when $\lambda=t$. Then as for the first item of (T4,2,a), $\textup{div } P_t P_N\hat{A}w_t$ converges, as a sequence of multiplicators towards the multiplicator $\textup{div } P_t \hat{A}w_t$. And $\textup{div } (P_t \hat{A}w_t)=\textup{div } \hat{A}w_t$.

The last limit to study is (T4,2,b), in \eqref{eq:VIII,10}:
\[ (T4,2,b)=2\sum_{j\leqslant N}\E\left[ W(\hat{A}_N h(\varepsilon_j))|\mathcal{F}_t\right]\int^t_0 \dot{h}^\alpha(\varepsilon_j)(s)\gamma_{2,\alpha}(s)ds \]

With the Ito formula, we have:
\begin{align}\label{eq:VIII,12}
(T4,2,b)&=2\sum_{j\leqslant N} \int^t_0 \left( \int_0^s \dot{h}^\alpha(\varepsilon_j)\gamma_{2,\alpha}dr \right) d\left(  \E\left[ W(\hat{A}_N h(\varepsilon_j))|\mathcal{F}_s\right]\right) \nonumber \\
&+ 2\sum_{j\leqslant N} \int^t_0 \E\left[ W(\hat{A}_N h(\varepsilon_j))|\mathcal{F}_s  \right]\cdot  \dot{h}^\alpha(\varepsilon_j)\gamma_{2,\alpha}(s)ds
\end{align}

We compute $d\left(  \E\left[ W(\hat{A}_N h(\varepsilon_j))|\mathcal{F}_s\right]\right)$:
We remind: $\tilde{B}$ is the Brownian defined in (6.2). We have: 
\[ \hat{A}_N h(\varepsilon_j)(t)=\int_0^t B(\dot{\wideparen{P_N h(\varepsilon_j)}})ds \]
\[ W \left[ \hat{A}_N h(\varepsilon_j))\right]=\int_0^1 {}^t(B(\dot{\wideparen{P_N h(\varepsilon_j)}}))\cdot d\tilde{B} \]
and
\[\E\left[ W(\hat{A}_N h(\varepsilon_j))|\mathcal{F}_s\right]=\int_0^s  {}^t(B(\dot{\wideparen{P_N h(\varepsilon_j)}}))\cdot d\tilde{B}\]
so
\[d\left( \E\left[ W(\hat{A}_N h(\varepsilon_j))|\mathcal{F}_s\right]\right) ={}^t(B(\dot{\wideparen{P_N h(\varepsilon_j)}}))d\tilde{B}_s\]

Then \eqref{eq:VIII,12} becomes, with $<w_s,h(\varepsilon_j)>_H=\int_0^s \dot{h}_\alpha(\varepsilon_j)\gamma_{2,\alpha}dr$:
\begin{align}\label{eq:VIII,13}
(T4,2,b)
&=2\sum_{j\leqslant N} \int^t_0 <w_s,h(\varepsilon_j)>_H {}^t(B(\dot{\wideparen{P_N h(\varepsilon_j)}}))\cdot d\tilde{B}_s \nonumber \\
&+ 2\sum_{j\leqslant N} \int^t_0 \E\left[ W(\hat{A}_N h(\varepsilon_j))|\mathcal{F}_s  \right]\cdot  \dot{h}^\alpha(\varepsilon_j)\gamma_{2,\alpha}(s)ds
\end{align}
We denote these two integrals in \eqref{eq:VIII,13} by $I^{(1)}_N$ and $I^{(2)}_N$.
Then:
\begin{align*}\label{eq:VIII,13'}
I^{(1)}_N
&=\int^t_0 {}^t \left\lbrace  B \left( \sum_{j\leqslant N} <w_s,h(\varepsilon_j)>_H \dot{\wideparen{P_N h(\varepsilon_j)}}(s)\right) \right\rbrace  d\tilde{B}_s \nonumber \\
\hspace{22mm}&= \int_0^t {}^t \left\lbrace  B \left( \sum_{j\leqslant N} <w_s,h(\varepsilon_j)>_H \dot{h}(\varepsilon_j)\right) \right\rbrace  d\tilde{B}_s \hspace{22mm} (13') 
\end{align*}

With the Dini-Lipschitz theorem (Theorem 2.11), we know that the series ${}^t B \left( \sum_{j\leqslant N} <w_s,h(\varepsilon_j)>_H \dot{h}(\varepsilon_j)(s)\right)$ converges in $\mathbb{D}^\infty(\Omega)$, towards $\frac{1}{2}{}^tB\gamma_2$ and all items $ \sum_{j\leqslant N} <w_s,h(\varepsilon_j)>_H \dot{h}(\varepsilon_j)(s)$ are adapted and $s$-uniformly $\mathbb{D}^\infty$-bounded. So all integrals as in ($13'$) are $N$-uniformly, $\frac{1}{2}$-$\mathbb{D}^\infty$-Holderian, and the $L^2$-convergence of this series (obtained with the fundamental isometry) proves that the $I^{(1)}_N$ converges, as miltiplicators, towards $\frac{1}{2} \int_0^t {}^tB\gamma_2 d\tilde{B}=\frac{1}{2}\textup{div }\hat{A}w_t$.

At last, we study $I^{(2)}_N$:
\[ I^{(2)}_N=2 \sum_{j\leqslant N} \int^t_0 \E\left[ W(\hat{A}_N h(\varepsilon_j))|\mathcal{F}_s  \right]\cdot\dot{h}^\alpha(\varepsilon_j)(s)\gamma_{2,\alpha}(s)ds \]
We know that $\gamma_{2,\alpha}(s)$ is a  $\mathbb{D}^\infty$-S.M, so we can write:
\[ \gamma_{2,\alpha}(t)=\gamma_{2,\alpha}(0)+\int_0^t \gamma_{3,\alpha}ds+\int_0^t \gamma_{4,\alpha}\cdot d\tilde{B}_s \] 
So $\frac{1}{2} I^{(2)}_N$ becomes, with a Stratonovitch integration by parts:
\begin{align*}
\frac{1}{2} I^{(2)}_N
&= \sum_{j\leqslant N} \E\left[ W(\hat{A}_N h(\varepsilon_j))|\mathcal{F}_t  \right]\times 
h^\alpha(\varepsilon_j)(t)\gamma_{2,\alpha}(t)  \\
&- \sum_{j\leqslant N} \int^t_0 \E\left[ W(\hat{A}_N h(\varepsilon_j))|\mathcal{F}_s  \right]h^\alpha(\varepsilon_j)(s)\gamma_{3,\alpha}(s)ds \\
&- \sum_{j\leqslant N} \int^t_0 \E\left[ W(\hat{A}_N h(\varepsilon_j))|\mathcal{F}_s  \right]\cdot h^\alpha(\varepsilon_j)(s)\circ\gamma_{4,\alpha}d\tilde{B}_s\\
&- \sum_{j\leqslant N} \int^t_0 h^\alpha(\varepsilon_j)(s)\gamma_{2,\alpha}(s)\circ  d\left( \E\left[ W(\hat{A}_N h(\varepsilon_j))|\mathcal{F}_s \right] \right)
\end{align*}
With \eqref{eq:VIII,13}, we have: 
\begin{align}\label{eq:VIII,14}
\frac{1}{2} I^{(2)}_N
&= \sum_{j\leqslant N} \E\left[ W(\hat{A}_N h(\varepsilon_j))|\mathcal{F}_t  \right]\cdot h^\alpha(\varepsilon_j)(t)\gamma_{2,\alpha}(t)  \nonumber \\
&- \sum_{j\leqslant N} \int^t_0 \E\left[ W(\hat{A}_N h(\varepsilon_j))|\mathcal{F}_s  \right] h^\alpha(\varepsilon_j)(s)\gamma_{3,\alpha}(s)ds \nonumber \\
&- \sum_{j\leqslant N} \int^t_0 \E\left[ W(\hat{A}_N h(\varepsilon_j))|\mathcal{F}_s  \right] h^\alpha(\varepsilon_j)(s)\circ\gamma_{4,\alpha}d\tilde{B}_s \nonumber \\
&- \sum_{j\leqslant N} \int^t_0 h^\alpha(\varepsilon_j)(s)\gamma_{2,\alpha}(s)\circ {}^t(B(\dot{\wideparen{P_N h(\varepsilon_j)}}))d\tilde{B}_s 
\end{align}
The first item of the l.h.s. of \eqref{eq:VIII,14} is: 
\begin{align*}
&\sum_{j\leqslant N} \E\left[ W(\hat{A}_N h(\varepsilon_j))|\mathcal{F}_t  \right]\cdot h^\alpha(\varepsilon_j)(t)\gamma_{2,\alpha}(t)\\
&=\E\left[ W\left( \hat{A}_N \left( \sum_{j\leqslant N} h^\alpha(\varepsilon_j)(t)h(\varepsilon_j)\right) \right)|\mathcal{F}_t  \right]\cdot\gamma_{2,\alpha}(t)\\
&=\gamma_{2,\alpha}(t)\cdot \textup{div }\left[ P_t\left( \hat{A}\left(  \sum_{j\leqslant N} h^\alpha(\varepsilon_j)(t)h(\varepsilon_j) \right) \right)  \right]
\end{align*}
As $\vert h^\alpha(\varepsilon_j) \vert\leqslant \frac{C_0}{j}$, $C_0$ being a constant,   $P_t\left( \hat{A}\left(  \sum_{j\leqslant N} h^\alpha(\varepsilon_j)(t)h(\varepsilon_j) \right) \right)$ is $\frac{1}{2}$-$\mathbb{D}^\infty$-Holderian, $N$-uniformly, and converges as multiplicator towards div($P_t \hat{A} k_t^\alpha$), so the first item of the l.h.s of \eqref{eq:VIII,14} converges towards 
\[ \textup{div}(P_t \hat{A} k_t^\alpha)\cdot \gamma_{2,\alpha}(t)= \textup{div}(\hat{A} k_t^\alpha)\cdot \gamma_{2,\alpha}(t)\]

The second item of the l.h.s. of \eqref{eq:VIII,14} is:
\begin{align*}
&\sum_{j\leqslant N} \int^t_0 \E\left[ W(\hat{A}_N h(\varepsilon_j))|\mathcal{F}_s  \right] h^\alpha(\varepsilon_j)(s)\gamma_{3,\alpha}ds \\
&=\int^t_0 \E\left[ W\left( \hat{A} \left( \sum_{j\leqslant N}h^\alpha(\varepsilon_j)(s)h(\varepsilon_j)\right) \right)|\mathcal{F}_s  \right] \gamma_{3,\alpha}(s)ds
\end{align*}
As $W\left[ \hat{A} \left( \sum_{j\leqslant N}h^\alpha(\varepsilon_j)(s)h(\varepsilon_j)\right) \right] \in \mathcal{C}^1$, and converges towards $W(\hat{A}k^\alpha_s)$ uniformly relatively to $s$, the same is true for $\E\left[ W\left( \hat{A} \left( \sum_{j\leqslant N}h^\alpha(\varepsilon_j)(s)h(\varepsilon_j)\right) \right)|\mathcal{F}_s  \right]$ and so the sequence $\E\left[ W\left( \hat{A} \left( \sum_{j\leqslant N}h^\alpha(\varepsilon_j)(s)h(\varepsilon_j)\right) \right)|\mathcal{F}_s  \right]$ converges towards $\E\left[ W(\hat{A}k^\alpha_s)|\mathcal{F}_s  \right]$ as multiplicators. Then the sequence 
\[\E\left[ W\left( \hat{A} \left( \sum_{j\leqslant N}h^\alpha(\varepsilon_j)(s)h(\varepsilon_j)\right) \right)|\mathcal{F}_s  \right] \gamma_{3,\alpha}(s)\] 
converges as vector fields, towards the vector field, 
\[ \E[ W(\hat{A}k^\alpha_s)|\mathcal{F}_s] \gamma_{3,\alpha}(s)\]
And then 
\[ \int^t_0 \E\left[ W\left( \hat{A} \left( \sum_{j\leqslant N}h^\alpha(\varepsilon_j)(s)h(\varepsilon_j)\right) \right)|\mathcal{F}_s  \right] \gamma_{3,\alpha}(s)ds \]
converges as multiplicators towards $\int_0^t \E\left[ W(\hat{A}k^\alpha_s)|\mathcal{F}_s  \right] \gamma_{3,\alpha}ds$.

The third item of the l.h.s. of \eqref{eq:VIII,14} is:
\begin{align}\label{eq:VIII,15}
&\sum_{j\leqslant N} \int^t_0 \E\left[ W(\hat{A}_N h(\varepsilon_j))|\mathcal{F}_s  \right]\cdot h^\alpha(\varepsilon_j)(s)\circ\gamma_{4,\alpha}d\tilde{B}_s \nonumber \\
&= \int^t_0 \E\left[ W\left( \hat{A} \left( \sum_{j\leqslant N}h^\alpha(\varepsilon_j)(s)h(\varepsilon_j)\right) \right)|\mathcal{F}_s  \right] \cdot\gamma_{4,\alpha}d\tilde{B}_s \nonumber \\
&+\frac{1}{2} \int^t_0 \left[ d\left( \E\left[ W\left( \hat{A} \left( \sum_{j\leqslant N}h^\alpha(\varepsilon_j)(s)h(\varepsilon_j)\right) \right)|\mathcal{F}_s  \right]\right),  \gamma_{4,\alpha}\tilde{B}_s  \right]
\end{align}
Direct computation shows that:
\begin{align*}
&d\left( \E\left[ W\left( \hat{A} \left( \sum_{j\leqslant N}h^\alpha(\varepsilon_j)(s)h(\varepsilon_j)\right) \right)|\mathcal{F}_s  \right]\right) \\
&= {}^t\!\left\lbrace B\left( \sum_{j\leqslant N}\dot{h}^\alpha(\varepsilon_j)(s)h(\varepsilon_j) \right)  \right\rbrace d\tilde{B}_s
\end{align*}
So \eqref{eq:VIII,15} becomes:
\begin{align}\label{eq:VIII,16}
\eqref{eq:VIII,15} 
&=\int^t_0 \E\left[ W\left( \hat{A} \left( \sum_{j\leqslant N}h^\alpha(\varepsilon_j)(s)h(\varepsilon_j)\right) \right)|\mathcal{F}_s  \right] \cdot \gamma_{4,\alpha}(s)d\tilde{B}_s \nonumber \\
&+\frac{1}{2} \int^t_0 {}^t\!\left\lbrace \hat{A}\left( \sum_{j\leqslant N}<k_s^\alpha,h(\varepsilon_j)>_H h(\varepsilon_j) \right)  \right\rbrace \gamma_{4,\alpha}(s)ds 
\end{align}
In \eqref{eq:VIII,16}, the quantity $ \E\left[ W\left( \hat{A} \left( \sum_{j\leqslant N}h^\alpha(\varepsilon_j)(s)h(\varepsilon_j)\right) \right)|\mathcal{F}_s  \right]$
converges $\mathbb{D}^\infty(\Omega)$ towards $\E\left[ W(\hat{A}k^\alpha_s)|\mathcal{F}_s  \right]$ and as $\gamma_{4,\alpha}$ is $\mathbb{D}^\infty$-bounded, the first integral in \eqref{eq:VIII,16}, an Ito integral, is $\frac{1}{2}$-$\mathbb{D}^\infty$-Holderian so this sequence of Ito integrals is uniformly relatively to $N$, $\frac{1}{2}$-$\mathbb{D}^\infty$-Holderian, and converges as multiplicators towards the multiplicator: 
\[ \int_0^t \E\left[ W(\hat{A}k^\alpha_s)|\mathcal{F}_s  \right] \gamma_{4,\alpha}(s)d\tilde{B}_s \]
For the second integral in \eqref{eq:VIII,16}, a Lebesgue integral, the sequence 
\[{}^t\!\left\lbrace \hat{A}\left( \sum_{j\leqslant N}<k_s^\alpha,h(\varepsilon_j)>_H h(\varepsilon_j) \right)  \right\rbrace\] 
is a sequence of determinist vectors of $H$ and so converges towards 
\[{}^t\!\left\lbrace \hat{A}\left( \sum_{j=1}^\infty<k_s^\alpha,h(\varepsilon_j)>_H h(\varepsilon_j) \right)  \right\rbrace\]

As $\gamma_{4,\alpha}$ is $\mathbb{D}^\infty$-bounded and $\frac{1}{2}$-$\mathbb{D}^\infty$-Holderian the sequence of vector fields ${}^t\!\left\lbrace \hat{A}\left( \sum_{j\leqslant N}<k_s^\alpha,h(\varepsilon_j)>_H h(\varepsilon_j) \right)  \right\rbrace \gamma_{4,\alpha}$ is convergent, as vector fields, towards ${}^t\hat{A}k_s^\alpha \cdot\gamma_{4,\alpha}$, then the sequence of integrals 
\[\int^t_0 {}^t\!\left\lbrace \hat{A}\left( \sum_{j\leqslant N}<k_s^\alpha,h(\varepsilon_j)>_H h(\varepsilon_j) \right)  \right\rbrace \cdot\gamma_{4,\alpha}(s)ds\] 
converges as multiplicators towards $\int^t_0{}^t\hat{A}k_s^\alpha\cdot\gamma_{4,\alpha}(s)ds$. So the third item of the l.h.s. of \eqref{eq:VIII,14} converges as multiplicators towards $ \int_0^t \E\left[ W(\hat{A}k^\alpha_s)|\mathcal{F}_s  \right] \circ\gamma_{4,\alpha}d\tilde{B}_s .$

For the last item of the l.h.s. of \eqref{eq:VIII,14}:
\begin{align}\label{eq:VIII,17}
&\sum_{j\leqslant N} \int^t_0 h^\alpha(\varepsilon_j)(s)\gamma_{2,\alpha}(s)\circ {}^t\lbrace B(\dot{\wideparen{P_N h(\varepsilon_j)}})\rbrace d\tilde{B}_s \nonumber \\
&=\int_0^t \gamma_{2,\alpha} {}^t\!\left\lbrace B\left( \sum_{j\leqslant N}h^\alpha(\varepsilon_j)(s)\cdot\dot{h}(\varepsilon_j)(s) \right)  \right\rbrace \cdot d\tilde{B}_s \nonumber \\
&+ \frac{1}{2} \sum_{j\leqslant N} \int^t_0 \left[ d\left(   h^\alpha(\varepsilon_j)\gamma_{2,\alpha}(s) \right),B(\dot{\wideparen{P_N h(\varepsilon_j)}})d\tilde{B}_s   \right] \nonumber \\
&=  \int^t_0 \gamma_{2,\alpha}(s) {}^t\!\left\lbrace B\left( \sum_{j\leqslant N}<k_s^\alpha,h(\varepsilon_j)>_H \dot{h}(\varepsilon_j)(s) \right)  \right\rbrace d\tilde{B}_s \nonumber \\
&+\frac{1}{2}\int^t_0 \gamma_{4,\alpha}(s) B\left( \sum_{j\leqslant N}<k_s^\alpha,h(\varepsilon_j)>_H \dot{h}(\varepsilon_j)(s) \right)  ds
\end{align}
The first integral in \eqref{eq:VIII,17}: As $\sum_{j\leqslant N}<k_s^\alpha,h(\varepsilon_j)>_H \dot{h}(\varepsilon_j)(s)$ converges towards $\frac{1}{2}\dot{k}_s^\alpha$, and this sequence is $N$-uniformly bounded, the convergence of 
\[ \gamma_{2,\alpha}(s) {}^t\!\left\lbrace B\left( \sum_{j\leqslant N}<k_s^\alpha,h(\varepsilon_j)>_H \dot{h}(\varepsilon_j)(s) \right)  \right\rbrace \]
towards $\frac{1}{2}\gamma_{2,\alpha}(s,w){}^t(B\dot{k}_s^\alpha)$ is a $\mathbb{D}^\infty$-bounded convergence. Then the sequence of Ito integrals in \eqref{eq:VIII,17}, all $N$-uniformly $\frac{1}{2}$-$\mathbb{D}^\infty$-Holderian, converges as multiplicators towards the $\mathbb{D}^\infty$-multiplicator: 
\[\frac{1}{2}\int_0^t\gamma_{2,\alpha}(s){}^t(B\dot{k}_s^\alpha)d\tilde{B}_s\]
For the sequence of the Lebesque integrals in \eqref{eq:VIII,17}, we see that 
\[\gamma_{4,\alpha} B\left( \sum_{j\leqslant N}<k_s^\alpha,h(\varepsilon_j)>_H \dot{h}(\varepsilon_j)(s) \right)\] 
converges $\mathbb{D}^\infty$ towards 
\[\frac{1}{2}\gamma_{4,\alpha}(B\dot{k}_s^\alpha)=\frac{1}{2}\gamma_{4,\alpha}Be_\alpha\]
So the convergence of the Lebesque integrals
\[ \int_0^t \gamma_{4,\alpha}(s) B\left( \sum_{j\leqslant N}<k_s^\alpha,h(\varepsilon_j)>_H \dot{h}(\varepsilon_j)(s) \right) ds \]
is a multiplicator convergence towards $\frac{1}{2}\int_0^t \gamma_{4,\alpha}(B\dot{k}_s^\alpha)ds$. So the limit of the fourth term in \eqref{eq:VIII,14} can be written as:
\begin{align*}
&\int_0^t\gamma_{2,\alpha}(s){}^t(B\dot{k}_s^\alpha)d\tilde{B}_s+\frac{1}{2}\int_0^t \gamma_{4,\alpha}(B\dot{k}_s^\alpha)ds \\
&=\sum_{\alpha=1}^n \int_0^t \gamma_{2,\alpha}(s) {}^t(Be_\alpha)\circ d\tilde{B}_s
\end{align*}

Now we recapitulate all limits obtained:\\
\begin{align*}
&\lim_N(T1)=2\textup{div }\hat{A}  \textup{ grad} \hspace{89mm}\\
&\lim_N(T2)=2\int_0^t W(\hat{A} k^\alpha_s)Z(u_\alpha)(s)ds\\
&\lim_N(T3)=-2\sum_{\alpha=1}^n\int_0^t Z_{2,\alpha}(s)\cdot\frac{1}{2} Be_\alpha ds-2 \sum_{\alpha=1}^n \int_0^t \frac{1}{2}B\gamma_{2,\alpha}(s,w) e_\alpha ds\\
&\lim_N(T4,1)= 2\textup{div } \left[ Z_{2,\alpha}(t) W\left[ \hat{A}k^\alpha_t \right]\right] \textup{ grad}\\
&\lim_N(T4,2,a)= 2\textup{div }  \hat{A}w_t-2\textup{div } \hat{A}w_t=0\\
&\lim_N(T4,2,b)=  2\lim_N I^{(1)}_N+2\lim_N I^{(2)}_N\\
&\lim_N I^{(1)}_N=\textup{div }[(\textup{div }\hat{A}w_t)\textup{ grad}]\\
\end{align*}
\vspace{-13mm}
\begin{align*}
\lim_N I^{(2)}_N&=\textup{div }(\textup{div }(\hat{A}k_t^\alpha)\cdot \gamma_{2,\alpha}(t))\textup{ grad} \hspace{61mm}\\
&+\textup{div }\left[ \int_0^t \E\left[ W(\hat{A}k^\alpha_s)|\mathcal{F}_s  \right] \gamma_{3,\alpha}(s)ds\right] \textup{ grad}\\
&+\textup{div }\left[ \int_0^t \E\left[ W(\hat{A}k^\alpha_s)|\mathcal{F}_s  \right] \circ\gamma_{4,\alpha}d\tilde{B}_s \right] \textup{ grad}\\
&+\textup{div }\sum_{\alpha=1}^n \left[ \int_0^t \gamma_{2,\alpha}(s)\cdot {}^t(Be_\alpha)\circ d\tilde{B}_s\right] \textup{ grad}
\end{align*}
Instead of the Fourier basis $\check{B}$ (7.33) on $[0,1]$, we could have chosen the same type of Fourier basis but on $[t_0,1]$ $(0<t_0<1)$. Then if the matrix $B$ is multiplied by a function $f(w)\in \mathcal{F}_{t_0}$, as all coefficients in (T1),(T2),(T3),(T4) are adapted, the similar limits obtained in the case of this Fourier basis on $[t_0,1]$ and with $fB\cdot \mathds{1}_{[t_0,t]}$ will have the same form.\\
And this remains true if 
\begin{equation}\label{eq:VIII,18}
B=\sum_{i=0}^n \mathds{1}_{[t_i,t_{i+1}[} f_i(w)B_i
\end{equation}
with $f_i(w)\in\mathcal{F}_{t_i}$, $t_0=0$ and $t_n=1$.

From Theorem 4.7, each antisymmetrical matrix, adapted, and multiplicator, is a limit (in the multiplicator way) of step-functions as \eqref{eq:VIII,18}, so we see that for such antisymmetrical $B$, adapted and multiplicator, the limits (T1),(T2),(T3),(T4) have the same form than previously computed, but with $ W(\hat{A}k^\alpha_s)$ being $\int_0^s {}^t(Be_\alpha)\cdot d\tilde{B}$ and $(\textup{div }\hat{A}w_t)$ being $\int_0^t {}^t(B\gamma_{2,\alpha} e_\alpha)\cdot d\tilde{B}$.

Now, given a vector field $u\in H\cap C^2([0,1],\mathbb{R}^n)$, we want to find an $\mathbb{D}^\infty$-antisymmetrical matrix $B$, adapted, and $v\in \tilde{H}$ such that:\\
$D_v=(T_1+T_2+T_3+T_4)(B)+u$ which leads to two equations, using (7.23).

As vector fields, we must have:
\begin{equation}\label{eq:VIII,19}
(T_2+T_3)(B)+\int^t_0 \dot{u}(s)ds=\int_0^t \dot{h}_s(v)(s,w)ds
\end{equation}
As derivations, we must have:
\begin{equation}\label{eq:VIII,20}
(T_1+T_4)(B)=\textup{div }A(v)\textup{ grad}
\end{equation}

Using the formulas for $\lim_N T_1$, $\lim_N T_2$, $\lim_N T_3$, $\lim_N T_4$, we get with \eqref{eq:VIII,19}:
\begin{equation}\label{eq:VIII,21}
-\left( \sum_{\alpha=1}^n Z_{2,\alpha}(t)Be_\alpha+ \sum_{\alpha=1}^n B\gamma_{2,\alpha}e_\alpha\right)+\dot{u}(t)=\dot{h}_s(v)(t). 
\end{equation}
And for \eqref{eq:VIII,20}, we get:
\begin{align}\label{eq:VIII,22}
&B+\sum_{\alpha=1}^n Z_{2,\alpha}(t)\int_0^t {}^t(Be_\alpha\cdot d\tilde{B}+ \int_0^t {}^t(B\gamma_2)\cdot d\tilde{B} \nonumber \\
&+\sum_{\alpha=1}^n \left( \int_0^t {}^t(Be_\alpha)\cdot d\tilde{B} \right) \gamma_{2,\alpha}(t)+\sum_{\alpha=1}^n \left( \int_0^t \gamma_{3,\alpha}(s) \left( \int_0^s {}^t(Be_\alpha)\cdot d\tilde{B} \right)\right)ds \nonumber \\
&+\int_0^t \sum_{\alpha=1}^n \left( \int_0^s {}^t(Be_\alpha)\cdot d\tilde{B} \right) \gamma_{4,\alpha}(s)\cdot d\tilde{B}_s+\frac{1}{2}\sum_{\alpha=1}^n  \int_0^t \gamma_{4,\alpha} {}^t(Be_\alpha)ds  \nonumber \\
&+ \sum_{\alpha=1}^n  \int_0^t \gamma_{2,\alpha}(s) \cdot{}^t(Be_\alpha)\cdot d\tilde{B}+\frac{1}{2}\sum_{\alpha=1}^n  \int_0^t \gamma_{4,\alpha}(s) ({}^t Be_\alpha)ds \nonumber \\
&=\frac{1}{2}A(v)
\end{align}
Now we use the Stratonovich integration by parts and we have, with ($7.5'$):
\begin{align}\label{eq:VIII,23}
&Z_{2,\alpha}(t)\int_0^t {}^t(Be_\alpha)\cdot d\tilde{B}=\int_0^t Z_{2,\alpha}(s)\circ{}^t(Be_\alpha) d\tilde{B}_s \nonumber \\
&+\int_0^t \left( \int_0^s {}^t(Be_\alpha)d\tilde{B} \right)\circ dZ_{s,\alpha} \nonumber \\
&=\int_0^t  Z_{2,\alpha}(s){}^t(Be_\alpha)\cdot d\tilde{B}+\int_0^t \left( \int_0^s ({}^t Be_\alpha)\cdot d\tilde{B} \right) Z_{1,\alpha} \cdot d\tilde{B} \nonumber \\
&+ \int_0^t <Z_{1,\alpha},Be_\alpha>_{\mathbb{R}^n}ds
\end{align} 
We treat $\sum_{\alpha=1}^n \left( \int_0^t ({}^t Be_\alpha)\cdot d\tilde{B} \right) \cdot \gamma_{2,\alpha}(t)$ the same way and we get:
\begin{align}\label{eq:VIII,24}
\sum_{\alpha=1}^n \left( \int_0^t {}^t(Be_\alpha)\cdot d\tilde{B} \right) \gamma_{2,\alpha}(t)&=\int_0^t \left( \int_0^s {}^t(Be_\alpha)\cdot d\tilde{B} \right)\circ d\gamma_{2,\alpha}+\int_0^t \gamma_{2,\alpha}(s)\circ {}^t(Be_\alpha)d\tilde{B} \nonumber \\
&=\int_0^t \left( \int_0^s {}^t(Be_\alpha)\cdot d\tilde{B} \right) \cdot d\gamma_{2,\alpha}+ \frac{1}{2}\int_0^t \left[ d\gamma_{2,\alpha}, {}^t(Be_\alpha)d\tilde{B}\right] \nonumber \\
&+\int_0^t \gamma_{2,\alpha}(s)\cdot ({}^tBe_\alpha)d\tilde{B}+\frac{1}{2}  \int_0^t \left[ d\gamma_{2,\alpha}, {}^t(Be_\alpha)d\tilde{B}\right]
\end{align}
With ($5'$): 
\[ \gamma_{2,\alpha}(t)=\gamma_{1,\alpha}(t)-\int_0^t Z_{1,\alpha}\cdot d\tilde{B}-\frac{1}{2}\int_0^t\left[ dZ_{1,\alpha},d\tilde{B} \right]  \]
so
\[\left[ d\gamma_{2,\alpha}, {}^t(Be_\alpha)d\tilde{B}\right]= d\gamma_{1,\alpha}-<Z_{1,\alpha},{}^t(Be_\alpha)>_{\mathbb{R}^n}ds\]
\eqref{eq:VIII,24} becomes:
\begin{align}\label{eq:VIII,25}
\eqref{eq:VIII,24}&=\int_0^t \left( \int_0^s {}^t(Be_\alpha)d\tilde{B} \right) d\gamma_{2,\alpha}+\int_0^t \gamma_{2,\alpha}(s) ({}^tBe_\alpha)\cdot d\tilde{B} \nonumber \\
&+\int_0^t \mathfrak{L}_\alpha ({}^t (Be_\alpha))ds-\int_0^t<Z_{1,\alpha},{}^t(Be_\alpha)>_{\mathbb{R}^n}ds
\end{align}
In \eqref{eq:VIII,25}, $\mathfrak{L}_\alpha ({}^t (Be_\alpha))$ is a linear equation on ${}^t (Be_\alpha)$ with coefficients which are $L^\infty (\Omega\times [0,1])$ bounded.

Now we transfer \eqref{eq:VIII,23} and \eqref{eq:VIII,25} in \eqref{eq:VIII,22} and if we denote 
\begin{equation}\label{eq:VIII,26}
X(t)=\int_0^t {}^t(Be_\alpha)\cdot d\tilde{B}
\end{equation}
and denote this new equation by ($22'$), we get a system of three equations, \eqref{eq:VIII,21}, ($22'$), \eqref{eq:VIII,26} with four unknown variables $B$, $\dot{h}_s(v)$, $A(v)$, $X$. Now we will take as unknown variable $h(v)$: using the canonical isometry between $H$ and $\tilde{H}$, and with (7.27), ($7.27'$), (7.25) and (7.26), we see that $A(v)$ is a linear equation on the unknown variable $h(v)$, and after transfer of (7.27) and ($7.27'$) in \eqref{eq:VIII,21}, we get an equation which as unknown variables, has only $h(v)$ and $B$; this equation is numbered (27).

Now we make the same transfer in ($22'$), using (7.24), (7.25) and (7.26) to eliminate $A$ in ($22'$), $A$ being linearly dependent of $v$, so of $h(v)$ with the canonical isometry. This last equation is denoted ($22''$). So finally we get three equations, ($21'$), ($22''$) and \eqref{eq:VIII,26} with three unknown variables $h(v)$, $B$, $X$. These three equations make a system of three linear equations, linear in the variables $h(v)$, $B$, $X$, as direct inspection of these equations shows.

In these three equations ($21'$), ($22''$) and (26), there are two coefficients which are not $L_\infty (\Omega\times[0,1])$ bounded: $\gamma_{2,\alpha}$ and $Z_{2,\alpha}$, because they are formed with $\int_0^t Z_{1,\alpha}\circ d\tilde{B}$ (see ($5'$) and ($5''$)).

To be able to use the classical theorem on solutions of a system of linear SDE, we will truncate this system with a sequence of stopping times, so we need the following lemma:
\begin{lem}
$\forall \lambda\in\mathbb{R}, \E\left[ e^{\lambda\sup_{t\in[0,1]}\vert Z_2(t) \vert} \right]<+\infty$.
\end{lem}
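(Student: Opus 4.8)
The plan is to split $Z_2$ into a martingale part whose quadratic variation is bounded by a \emph{deterministic} constant and a bounded-variation part with a deterministic bound, and then apply the classical exponential (sub-Gaussian) inequality for continuous martingales. The first task is to observe that the integrand $Z_1$ appearing in $(5')$ is bounded by a deterministic constant. Indeed, by the intrinsic formula $(7.22')$ each component of $Z_1$ has the form $g(R(u_\alpha,u_\rho,u_\mu),u_\nu)+g((\nabla_{u_\rho}T)(u_\alpha,u_\mu),u_\nu)$, where $R$ and $\nabla T$ are $C^\infty$ tensors on the \emph{compact} manifold $(V_n,g)$, hence bounded, and the SPT frame vectors $u_\mu(t,\omega)$ have constant norm (property c) of Section 6: SPT preserves the scalar product). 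Evaluating bounded multilinear forms on vectors of bounded norm gives $|Z_1(s,\omega)|\leq M$ with $M$ a deterministic constant depending only on the geometry. The same reasoning, applied through the SDE's $\mathrm dp^k=Z_\mu^k\circ\mathrm d\tilde B^\mu$ and $\mathrm dZ_\mu^k=-\Gamma_{ij}^kZ_\mu^j\circ\mathrm dp^i$ and the smooth chain rule, shows that the diffusion coefficient of the $\mathbb D^\infty$-semimartingale $Z_1$ (its intrinsic covariant differential along $\tilde B$) is again an intrinsic tensorial quantity evaluated on the bounded frames, hence deterministically bounded by some $M'$.

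Next I would convert the Stratonovich integral $(5')$ into Itô form:
\[
Z_{2,\alpha}(t) = \int_0^t Z_1\,\mathrm d\tilde B^l + \tfrac12\int_0^t \mathrm d\langle Z_1,\tilde B^l\rangle_s .
\]
Writing the (finitely many) scalar components of $Z_2$ as $M(t)+V(t)$, the correction term $V$ is an absolutely continuous process whose density is exactly the diffusion coefficient of $Z_1$, so $\sup_{t\in[0,1]}|V(t)|\leq \tfrac12 M'\leq C'$, a deterministic constant. The martingale part $M$ has $\langle M\rangle_t=\int_0^t|Z_1|^2\,\mathrm ds\leq M^2 t\leq M^2=:C$ for all $t\in[0,1]$, again a deterministic bound. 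For such a continuous martingale the process $\exp\!\big(\theta M_t-\tfrac{\theta^2}{2}\langle M\rangle_t\big)$ is a nonnegative supermartingale of initial value $1$; the maximal inequality for nonnegative supermartingales, together with $\langle M\rangle_t\leq C$, yields the sub-Gaussian tail bound $\P(\sup_{t\leq1}|M_t|\geq x)\leq 2\,e^{-x^2/(2C)}$. Since $\sup_t|Z_2(t)|\leq \max_{\text{components}}\sup_t|M(t)|+C'$ and there are only finitely many components (all indices range over $\{1,\dots,n\}$), the sub-Gaussian bound gives, for every $\lambda\in\R$,
\[
\E\big[e^{\lambda\sup_{t\in[0,1]}|Z_2(t)|}\big]\leq e^{\lambda C'}\,\E\big[e^{\lambda\max_j\sup_t|M_j(t)|}\big] < +\infty,
\]
the last integral being finite because a sub-Gaussian variable has finite exponential moments of every order.

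The main obstacle I anticipate is the first step, namely securing the \emph{deterministic} (pathwise) bounds on $Z_1$ and on its diffusion coefficient, rather than the weaker $\mathbb D^\infty$-boundedness of Corollary 6.1: this is precisely where compactness of $V_n$ and the norm-preserving property of stochastic parallel transport must be used carefully, and where one must check that the intrinsic formula $(7.22')$ genuinely avoids the unbounded factors $(Z^{-1})$ that appear in the chart-component expressions $a_{2,\mu}^\nu$ and $b_{\mu,\rho}^k$. Once those constants $C$ and $C'$ are in hand, the exponential martingale estimate is routine.
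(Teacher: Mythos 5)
Your proof is correct and follows essentially the same route as the paper: both arguments rest on the exponential martingale $\exp\left(\lambda M_t - \tfrac{\lambda^2}{2}\langle M\rangle_t\right)$ applied to the It\^o part of $Z_2$, using that $Z_{1,\alpha}\in L^\infty(\Omega\times[0,1])$ (which the paper asserts from the intrinsic formulas $(7.18')$, $(7.22')$ on the compact manifold with norm-preserving SPT frames, exactly as you argue) gives a deterministic bound on the bracket, and that the Stratonovich correction $\int_0^t\left[\mathrm dZ_1,\mathrm d\tilde B\right]$ is $L^\infty$-bounded. The only cosmetic difference is how the running supremum is handled: you extract a sub-Gaussian tail bound from the supermartingale maximal inequality, while the paper localizes, applies Fatou to bound $\E\left[e^{\lambda\tilde Z_{2,\alpha}(t)}\right]$ for each $t$, and then controls $\sup_t$ via $\sup_te^{\lambda\tilde Z_{2,\alpha}(t)}+\sup_te^{-\lambda\tilde Z_{2,\alpha}(t)}\geq e^{\lambda\sup_t\vert\tilde Z_{2,\alpha}(t)\vert}$ together with a Doob-type maximal estimate — the two variants are interchangeable.
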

\pf. We denote: $\tilde{Z}_{2,\alpha}(t)=\int_0^t Z_{1,\alpha}\cdot d\tilde{B}$ and $Z_{1,\alpha}\in L^\infty(\Omega\times[0,1])$. With the Ito formula:
\[ e^{\lambda \tilde{Z}_{2,\alpha}(t)-\frac{1}{2}\lambda^2\int_0^t Z_{1,\alpha}^2ds} =1+ \lambda \int_0^t e^{\lambda \tilde{Z}_{2,\alpha}-\frac{\lambda^2}{2}\int_0^s Z_{1,\alpha}^2ds}\cdot Z_1 d\tilde{B}\] 
So $e^{\lambda \tilde{Z}_{2,\alpha}(t)-\frac{1}{2}\lambda^2\int_0^t Z_{1,\alpha}(s)^2ds}$ is a local martingale. Let $(\tau_k)_{k\in N^*}$ a sequence of stopping times converging towards $+\infty$, then:
\[ e^{\lambda \tilde{Z}_{2,\alpha}(t\wedge \tau_k)-\frac{1}{2}\lambda^2\int_0^{t\wedge \tau_k} Z_{1,\alpha}^2ds} = \lambda \int_0^{t\wedge\tau_k} e^{\lambda \tilde{Z}_{2,\alpha}-\frac{\lambda^2}{2}\int_0^s Z_{1,\alpha}^2ds}\cdot Z_1 d\tilde{B}+1\]
And
\[ \E\left[  e^{\lambda \tilde{Z}_{2,\alpha}(t\wedge \tau_k)-\frac{\lambda^2}{2}\int_0^{t\wedge \tau_k} Z_{1}^2ds}\right]=1\]
So $\E\left[  e^{\lambda \tilde{Z}_{2,\alpha}(t\wedge \tau_k)}\right]\leqslant C_0(\lambda)$, $C_0$ constant is dependent of $k$. Then, with the Fatou Lemma: $\E\left[  e^{\lambda \tilde{Z}_{2,\alpha}(t)}\right]\leqslant C_0(\lambda)$. But: $\forall p>1, \E\left[  e^{p \lambda \tilde{Z}_{2,\alpha}(t)}\right]\leqslant C_0(p)$. So the local martingale is a martingale. Moreover,
\[\sup_te^{\lambda \tilde{Z}_{2,\alpha}(t)}+\sup_te^{-\lambda \tilde{Z}_{2,\alpha}(t)}\geqslant e^{\sup_t\lambda\vert \tilde{Z}_{2,\alpha}(t)\vert}\]
So as $\int_0^t [dZ_1,d\tilde{B}]$ is $L_\infty (\Omega\times[0,1])$, we have $\E\left[  e^{\lambda \sup_{t\in [0,1]}\vert Z_{2,\alpha}(t)\vert}\right]<+\infty$.

Now we denote by $\tau_k: \tau_k=\inf_t \vert Z_{2,\alpha}(t) \vert \geqslant k$, so $\vert Z_{2,\alpha}(t\wedge \tau_k) \vert \leqslant k$. Then the solution $S_k$ of the localized SDE is unique and verifies: \\
$\| S_k(t\wedge \tau_k) \|_{L^p}\leqslant C(p)e^{\beta k t}$, $\beta$ constant. So $\| S(t) \mathds{1}_{[\tau_k,\tau_{k+1}]}\|_{L^p}\leqslant C(p)e^{\beta k t}$. As in $[\tau_k,\tau_{k+1}], \sup_t Z_{2,\alpha}(t)$ is $\in[k,k+1]$, we have $\E\left[ \mathds{1}_{[\tau_k,\tau_{k+1}]} e^{\lambda \sup_t\vert Z_{2,\alpha}(t)\vert}\right]<C_1(\lambda)$ and $\E\left[ \mathds{1}_{[\tau_k,\tau_{k+1}]}\right]<C_1(\lambda)e^{-\lambda k}$. 

Then if $1<p'<p$, with Holder:
\[\| S(t)\mathds{1}_{[\tau_k,\tau_{k+1}]}\mathds{1}_{[\tau_k,\tau_{k+1}]} \|_{L^{p^\prime}}=\|S(t)\mathds{1}_{[\tau_k,\tau_{k+1}]} \|_{L^{p^\prime}}\leqslant \left( C_2e^{k\beta} \right)^{r_1}\left( C_1(\lambda)e^{-k\lambda} \right)^{r_2} \]
with $r_1,r_2>1$.

We can choose $\lambda$ so that the r.h.s. of the last equation is like $e^{-k}\times C_3(\lambda)$. Then the series defining $S(t)$ is $L^{p^\prime}(\Omega)$, $\forall p^\prime$, so $S(t)\in L^{\infty-0}$, uniformly in $t$. 

We can repeat this process for the gradient of $S$ and then $S(t)\in \mathbb{D}^\infty(\Omega)$, uniformly relatively to $t$, and so is $\frac{1}{2}$-$\mathbb{D}^\infty$-Holderian, so is a multiplicator.

Now let suppose that if $q$ is the quadratic form and that there exists $\alpha\in \textup{Der}^*$ such that $q(\alpha,\alpha)=0$. Then $\forall j\in \mathbb{N}_*$, if  $(v_j)_{j\in \mathbb{N}_*}$ is an Hilbertian basis of $\tilde{H}$: $\alpha(D_{V_j})=0$. Now $\mathcal{D}$ being the set of the combinations of $D_{V_j}$ which was used to obtain the limits: $\lim_N T_1$, $\lim_N T_2$, $\lim_N T_3$, $\lim_N T_4$, we have $\alpha(\lim_N T_1)=\alpha(\lim_N T_2)=\alpha(\lim_N T_3)=\alpha(\lim_N T_4)=0$. So when $B$ is built with step functions like Theorem 4.7, if we denote by $\delta_B$ the derivation associated to $B$, we have $\alpha(\delta_B)=0$. As $\alpha(D_v)=0$, we get from the system of linear SDE, $\alpha(u)=0$, $u$ being read as the derivation associated to the vector field $u\in C^2([0,1],\mathbf{R}^n)\cap H$.

Then with Theorem 2.7, we get $\alpha(\textup{Der})=0$, so $q$ is non-degenerate.

\section{\huge Some Tools on $\P_{m_0}(V_n, g)$}

Now we are going to study some properties of some mathematical tools on a $\P_{m_0}(V_n)$-stochastic manifold, and draw an incomplete list of opened questions.

\subsection{Some"renormalisation" theorem}

  \begin{thm}
    Let $T \in \left(\bigotimes^p \mathrm{Der}\right)^*$, 
    If $(\varepsilon_i)_{i\in\mathbb N_*}$ is a basis of $H$, then
    \begin{equation*}
      \sum_{\varepsilon_{i_1},\dotsc,\varepsilon_{i_p}}T(\varepsilon_{i_1},\dotsc,\varepsilon_{i_p})^2
      \in\mathbb D^\infty
    \end{equation*}
    the sum being on all $p$-uples that can be extracted from the basis $(\varepsilon_i)_{i\in\mathbb N_*}$.
  \end{thm}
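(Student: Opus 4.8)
The plan is to reproduce, at the level of the $p$-fold tensor, the argument used for the fundamental metric $q_0$ in Theorem 3.1, of which the present statement is exactly the case $p=1$. Write $T_{i_1\cdots i_p} = T(\varepsilon_{i_1},\dots,\varepsilon_{i_p}) \in \mathbb D^\infty(\Omega)$ for a $p$-tuple $(i_1,\dots,i_p)$ of basis indices; here each $\varepsilon_{i_j}$ is read as the constant $\mathbb D^\infty$-vector field, hence as an element of $\mathrm{Der}(\Omega)$, and since $\mathrm{grad}\,W(\varepsilon_{i_j}) = \varepsilon_{i_j}$ we have $T_{i_1\cdots i_p} = T(\mathrm{grad}\,W(\varepsilon_{i_1}),\dots,\mathrm{grad}\,W(\varepsilon_{i_p}))$. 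The quantity to be controlled is $S = \sum_{i_1,\dots,i_p} T_{i_1\cdots i_p}^2$.

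First I would form, in imitation of the sequence $X_N$ of Theorems 2.7 and 3.1, the truncated and conditioned tensor field
\[
  X_N = \sum_{i_1,\dots,i_p\le N}\mathrm E\!\left[T_{i_1\cdots i_p}\,\middle|\,\mathcal F_N\right]\,\varepsilon_{i_1}\otimes\cdots\otimes\varepsilon_{i_p},
\]
which lies in $\mathbb D^\infty(\Omega,\bigotimes^p H)$ and hence in $\bigotimes^p\mathrm{Der}(\Omega)$. Using the $\mathbb D^\infty$-multilinearity of $T$ and then conditioning, exactly as in the computation of Theorem 3.1, one obtains
\[
  \mathrm E\!\left[T(X_N)\,\middle|\,\mathcal F_N\right] = \sum_{i_1,\dots,i_p\le N}\mathrm E\!\left[T_{i_1\cdots i_p}\,\middle|\,\mathcal F_N\right]^2 =: \Sigma_N .
\]
The strategy is then to show that $\{X_N\}$ is a bounded net converging in $\bigotimes^p\mathrm{Der}$ to the representing tensor $\delta_T = \sum_{\vec i}T_{\vec i}\,\varepsilon_{\vec i}$; granting this, the continuity of $T$ on bounded convergent nets (the defining property of $\left(\bigotimes^p\mathrm{Der}\right)^*$, as for $\mathrm{Der}^*$) gives $T(X_N)\xrightarrow{\mathbb D^\infty}T(\delta_T)$, and the $\mathbb D^\infty$-continuity of conditional expectation (which follows from Proposition 2.1) together with the martingale convergence $\mathrm E[\,\cdot\,|\,\mathcal F_N]\to\mathrm{Id}$ in $\mathbb D^\infty$ yields $\Sigma_N\xrightarrow{\mathbb D^\infty}T(\delta_T)\in\mathbb D^\infty$.

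The remaining step is to pass from $\Sigma_N$, which still carries conditional expectations, to the clean partial sums $S_N = \sum_{\vec i\le N}T_{\vec i}^2$. Here I would transcribe the $\varepsilon$-argument of part ii) of Theorem 3.1: splitting the sum at an index $N_0$, estimating the tail by Cauchy–Schwarz in the multi-index $\vec i$, and using that both $\sum_{\vec i}\mathrm E[T_{\vec i}\,|\,\mathcal F_N]^2$ and $\sum_{\vec i}T_{\vec i}^2$ have uniformly small tails, one concludes that $S_N$ and $\Sigma_N$ share the same $\mathbb D^\infty$-limit, namely $T(\delta_T)$. Thus $S = T(\delta_T)\in\mathbb D^\infty$, and the value is independent of the chosen basis since $\delta_T$ and $T(\delta_T)$ are intrinsic.

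The main obstacle is the tensor step, that is, showing that $\delta_T$ is a genuine bounded element of $\bigotimes^p\mathrm{Der}$ and that $X_N\to\delta_T$ there. A direct boundedness estimate for $X_N$ is circular, because its norm controls $\Sigma_N$, which is precisely what we are trying to bound; so $\delta_T$ must instead be produced independently, as the $p$-derivation whose action $\delta_T(f_1,\dots,f_p) = T(\mathrm{grad}\,f_1,\dots,\mathrm{grad}\,f_p)$ has components $T_{\vec i}$, and then a tensor analogue of Theorem 2.7 — obtained through the extension Theorems 2.2 and 2.4 applied with ambient Hilbert space $\bigotimes^p H$, together with the approximation of Theorem 3.2 — must be invoked to realize it as a bounded $\bigotimes^p\mathrm{Der}$-limit of the $X_N$. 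Establishing this approximation rigorously is where the real work lies; once it is in place, the contraction and the removal of the conditioning are routine transcriptions of Theorem 3.1.
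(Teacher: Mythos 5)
Your outline reduces the theorem to exactly the step you leave open, and that step is the entire content of the theorem. Everything before the ``tensor step'' is harmless transcription of Theorem 3.1; but for $p\geq 2$ there is no analogue of Theorem 2.7 producing a representing object $\delta_T$, and none follows from the extension Theorems 2.2 and 2.4 --- those extend an \emph{already continuous} linear operator by tensoring with a fixed Hilbert space, whereas the existence of $\delta_T=\sum_{\vec\imath}T_{\vec\imath}\,\varepsilon_{i_1}\otimes\cdots\otimes\varepsilon_{i_p}$ as an element of $\mathbb D^\infty(\Omega,\bigotimes^p H)$ is \emph{equivalent} to $\sum_{\vec\imath}T_{\vec\imath}^2<\infty$, i.e.\ to the conclusion. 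Likewise Theorem 3.2 approximates elements of $\mathrm{Der}^*$ by finite sums $\sum f_j\lambda_{g_j}$; it says nothing about $p$-linear forms (the paper even lists this generalisation as open question 17 of Section 9.3). Since $T\in(\bigotimes^p\mathrm{Der})^*$ is only continuous along \emph{bounded} nets, you cannot invoke $T(X_N)\to T(\delta_T)$ without the uniform bound on $X_N$ in $\bigotimes^p\mathrm{Der}$, and, as you yourself observe, that bound controls $\Sigma_N$ and is therefore circular. So the proposal does not close: it is a reduction of the theorem to itself.

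The paper's proof avoids representing tensors altogether. It inducts on $p$: fixing the tail arguments and testing with $\psi\in L^{1+0}=(L^{\infty-0})^*$, $\psi\geq 0$, and $r\in\mathbb N$, the induction hypothesis makes
\begin{equation*}
  q(u,u)=\sum_{(\varepsilon_{i_1},\dotsc,\varepsilon_{i_{p-1}})}\int\mathbb P(\mathrm d\omega)\,\psi\left[(1-L)^{\frac r2}T(u,\varepsilon_{i_1},\dotsc,\varepsilon_{i_{p-1}})\right]^2
\end{equation*}
a well-defined positive $\mathbb R$-valued quadratic form on $\mathrm{Der}$, which is bounded on bounded parts of $\mathrm{Der}$ (Lemma 9.1, a Banach--Steinhaus argument on the inductive limit of the Fr\'echet spaces $B(s,s')$) and satisfies the symmetry $q(fu,v)=q(u,fv)$. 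Lemma 9.2 then forces $\sum_i q(e_i,e_i)<\infty$ by a purely combinatorial mechanism with no limit object: the sign-averaged antisymmetric derivations $D_\varepsilon=\sum_j\varepsilon(j)\left(W(h_j)k_j-W(k_j)h_j\right)=\mathrm{div}\,A_n\,\mathrm{grad}$ are $\varepsilon$- and $n$-uniformly bounded in $\mathrm{Der}$; averaging $q(D_\varepsilon,D_\varepsilon)$ over all sign choices kills the off-diagonal terms, and the shift $k_\ell=h_{n+\ell-1+j}$ followed by a Ces\`aro average over $\ell$ annihilates the cross-terms (using $\frac1r\sum_{\ell=1}^rW(h_{n+\ell-1+j})^2\to 1$ in $L^2$), leaving $\sum_{j\leq n}q(h_j,h_j)\leq C$ uniformly in $n$. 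Varying $\psi$ and $r$ then upgrades the scalar bound to $\mathbb D^\infty$-membership of the diagonal sum. An argument of this kind --- or an honest proof of your conjectured tensor analogue of Theorem 2.7, which nothing in the paper supplies --- is what your proposal is missing.
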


  To prove this theorem, we need two lemmas.
  \begin{lem}
    A continuous $\mathbb R$-bilinear form on $\mathrm{Der}(\Omega)$, $\mathbb R$-valued, 
    which is continuous for each of its arguments, is bounded on each part of $\mathrm{Der}(\Omega)$.
  \end{lem}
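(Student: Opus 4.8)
The plan is to read the statement as follows: for every bounded subset $A\subset\mathrm{Der}(\Omega)$ the separately continuous bilinear form $b$ satisfies $\sup_{\delta_1,\delta_2\in A}|b(\delta_1,\delta_2)|<+\infty$. (If ``continuous for each argument'' were strengthened to joint continuity the statement would be nearly immediate, so the substantive case is separate continuity, understood in the sense already used in \S3, i.e.\ continuity along bounded nets.) This is a uniform boundedness statement of Banach--Steinhaus type, and the whole difficulty is that $\mathrm{Der}(\Omega)$ is not metrisable; my strategy is to reduce everything to the \emph{classical} Banach--Steinhaus theorem on a genuine Banach space manufactured from $A$.

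First I would record the separate boundedness, reusing verbatim the scaling trick of the \S3 lemma asserting that $\{u(\delta):\delta\in A\}$ is $\mathbb D^\infty$-bounded for $u\in\mathrm{Der}(\Omega)^*$. For fixed $\delta_2$ the partial map $b(\cdot,\delta_2)$ is a continuous $\mathbb R$-linear functional; if it were unbounded on $A$ one could choose $\delta_1^{(n)}\in A$ with $|b(\delta_1^{(n)},\delta_2)|>n$ and put $\alpha_n=n^{-1/2}\to 0$. Since $A$ is bounded, $\alpha_n\delta_1^{(n)}\to 0$ in $\mathrm{Der}(\Omega)$ (each $\delta_1^{(n)}(f)$ is $\mathbb D^\infty$-bounded, so $\alpha_n\delta_1^{(n)}(f)\to 0$), whence $\alpha_n b(\delta_1^{(n)},\delta_2)\to 0$, contradicting $|\alpha_n b(\delta_1^{(n)},\delta_2)|>\sqrt n$. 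Thus $b(\cdot,\delta_2)$, and symmetrically $b(\delta_1,\cdot)$, is bounded on $A$ for each fixed argument.

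Next I would build the Banach disc. Let $B$ be the closed absolutely convex hull of $A$; it is again bounded, so it obeys uniform estimates $\sup_{\delta\in B}\|\delta f\|_{\mathbb D_r^p}\le C\,\|f\|_{\mathbb D_s^q}$ on each $\mathbb D^\infty$-bounded set of $f$'s. Let $E_B=\bigcup_n nB$ be normed by the Minkowski gauge $\|\cdot\|_B$. Combining these uniform operator bounds with the completeness of the Fréchet space $\mathbb D^\infty(\Omega)$, a $\|\cdot\|_B$-Cauchy sequence evaluates at each $f$ to a Cauchy sequence of $\mathbb D^\infty(\Omega)$, whose limit defines a continuous derivation still lying in $B$ (the Leibniz rule and the uniform bounds pass to the limit). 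Hence $(E_B,\|\cdot\|_B)$ is a Banach space whose closed unit ball is $B$, and every continuous functional bounded on $B$ restricts to a norm-continuous functional on $E_B$. Now suppose $b$ were unbounded on $A\times A$: pick $\delta_1^{(n)},\delta_2^{(n)}\in A$ with $|b(\delta_1^{(n)},\delta_2^{(n)})|\to\infty$ and set $g_n=b(\delta_1^{(n)},\cdot)$. By the first step each $g_n$ is bounded on $B$, so $g_n\in E_B'$; and for every fixed $\delta\in E_B$ the sequence $\{g_n(\delta)\}_n=\{b(\delta_1^{(n)},\delta)\}_n$ is bounded, because $b(\cdot,\delta)$ is bounded on $A$. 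Thus $\{g_n\}$ is a pointwise bounded family of continuous functionals on the Banach space $E_B$, and the classical Banach--Steinhaus theorem gives $\sup_n\sup_{\delta\in B}|g_n(\delta)|<+\infty$, contradicting $|g_n(\delta_2^{(n)})|\to\infty$ with $\delta_2^{(n)}\in A\subset B$.

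The one genuinely nontrivial point, and the step I would write out in full, is the completeness of the gauge-normed span $E_B$, i.e.\ that bounded subsets of $\mathrm{Der}(\Omega)$ are contained in Banach discs; this is precisely where the definition of a bounded subset of $\mathrm{Der}(\Omega)$ (uniform operator estimates) must be married to the completeness of $\mathbb D^\infty(\Omega)$. Everything else is the separate-boundedness scaling argument already available from \S3 together with the textbook Banach--Steinhaus theorem, so I expect the Banach-disc construction to be the only obstacle requiring care.
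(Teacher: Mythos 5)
Your proof is correct, and it shares the paper's overall architecture --- first separate boundedness on the bounded set via the scaling trick (this is exactly the argument of Lemma 3.1, as you note), then an appeal to Banach--Steinhaus --- but the metrization device is genuinely different. The paper realises $\mathrm{Der}(\Omega)$ as an inductive limit $\varinjlim_{(s,s')}B(s,s')$ of Fr\'echet spaces of operators (projective limits of spaces of continuous linear maps $\mathbb D^{p}_{r}(\Omega)\to\mathbb D^{p'}_{r'}(\Omega)$), asserts that every bounded part $D$ of $\mathrm{Der}$ lies in a single step $B(s_0,s_0')$ and is bounded there for its Fr\'echet topology, and then applies Banach--Steinhaus in that Fr\'echet (hence barrelled) space; this regularity property of the inductive limit is the delicate point, and the paper states it without proof. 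You instead manufacture a Banach disc from the bounded set itself: the gauge-normed span $E_B$ of the closed absolutely convex hull $B$ of $A$, with completeness of $(E_B,\|\cdot\|_B)$ extracted directly from the definition of $\mathrm{Der}$-boundedness (the uniform estimates $\sup_{\delta\in B}\|\delta f\|_{\mathbb D_r^p}\leq C\|f\|_{\mathbb D_s^q}$ on $\mathbb D^\infty$-bounded sets of $f$) together with the completeness of $\mathbb D^\infty(\Omega)$, after which the classical Banach-space Banach--Steinhaus theorem applied to $g_n=b(\delta_1^{(n)},\cdot)\in E_B'$ yields the contradiction. Your route is the more self-contained of the two: it trades the paper's unproved regularity claim for a local-completeness statement you can verify, and your identification of the Banach-disc completeness as the one step requiring a full write-up is exactly right --- in that write-up you should take $B$ closed for pointwise convergence, so that for a gauge-Cauchy sequence the pointwise limit $\delta$ satisfies $\delta-\delta_m\in\epsilon B$ and hence converges in gauge norm, while the Leibniz rule and the uniform estimates pass to pointwise limits, ensuring $\delta\in\mathrm{Der}(\Omega)$. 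What the paper's framing buys in exchange is a once-and-for-all functional-analytic description of $\mathrm{Der}(\Omega)$ reusable elsewhere, whereas your disc must be rebuilt for each bounded set.
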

  \begin{proof}
    Denote by $B(p,r,p',r')$ the set of all $\mathbb R$-linear continuous maps of 
    $\mathbb D_r^p(\Omega)$ in $\mathbb D_{r'}^{p'}(\Omega)$ and let $B(p_n,r_n,p_n',r_n')$, 
    $n \in \mathbb N_*$ a sequence of such sets: the projective limit of this sequence is 
    denoted $B(s,s')$ with $s = (p_n,r_n)_{n\in\mathbb N_*}$, $s' = (p_n',r_n')_{n\in\mathbb N_*}$;
    $B(s,s')$ is a Fr\'echet space.

    We denote by $\varinjlim_{s,s'}B(s,s')$ the inductive limit of the $B(s,s')$; we have 
    $\varinjlim_{(s,s')}B(s,s')=\mathrm{Der}$ and if $D$ is a bounded part of $\mathrm{Der}$,
    $\exists B(s_0,s_0')$ with $D\subset B(s_0,s_0')$ and $D$ is a bounded subset of
    $B(s_0,s_0')$ relatively to the Fr\'echet structure of $B(s_0,s_0')$.

    Let $D$ be a bounded part of $\mathrm{Der}$, and $q$ an $\mathbb R$-bilinear form on 
    $\mathrm{Der}$, and $v \in D$ fixed. We want to show that there exists $C_0(v)$ constant
    such that $\forall u\in\mathrm{Der},\quad |q(u,v)|\leq C_0(v)$.

    Suppose that $\exists(u_n)_{n\in\mathbb N_*}\in D$ such that $|q(u_n,v)|\to\infty$. Then 
    there exist $(\alpha_n)_{n\in\mathbb N_*}$, $\alpha_n\in\mathbb R$, $\alpha_n > 0$, such that
    $\alpha_n\to 0$ and $\alpha_nq(u_n,v)\nrightarrow 0$.
    But $\alpha_nu_n\to 0$ in $D$ ($D$ bounded), so $q$ being continuous, $\lim_{n\to\infty}q(\alpha_nu_n,v)=0$. 

    As $D\subset\mathrm{Der}\cap B(s_0,s_0')$ and as the topology on $D$ is the restriction of
    the Fr\'echet topology on $B(s_0,s_0')$ we can apply the Banach-Steinhaus theorem:
    \begin{equation*}
      \exists \text{ a constant }C, \text{ such that }\forall u\in D, \forall v \in D, \quad |q(u,v)|\leq C
    \end{equation*}
  \end{proof}

  \begin{lem}
    Let $q$ be an $\mathbb R$-bilinear form on $\mathrm{Der}(\Omega)$, positive, bounded
    on each bounded part of $\mathrm{Der}$ and symmetric: that is
    $\forall f \in \mathbb D^\infty, \forall u, \forall v\in\mathrm{Der}$, 
    $q(fu,v) = q(u,fv)$; $(e_i)_{i\in\mathbb N_*}$ being an OTHN basis of $H$, then 
    \begin{equation*}
      \sum_{i=1}^\infty q(e_i,e_i) < +\infty.
    \end{equation*}
  \end{lem}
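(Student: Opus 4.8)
The plan is to use positivity to reduce the claim to a uniform bound on the increasing partial sums $S_N=\sum_{i=1}^N q(e_i,e_i)\ge 0$, and then to feed $q$ with a family of test derivations that stays inside a single bounded part of $\mathrm{Der}(\Omega)$. First I would record that each $e_i$, viewed as the constant vector field $\mathrm{grad}\,W(e_i)$, is a $\mathbb{D}^\infty$-continuous derivation and that the family $(e_i)_{i\in\mathbb N_*}$ is bounded in $\mathrm{Der}(\Omega)$: for every $f\in\mathbb{D}^\infty$ one has $\sum_i(e_if)^2=\|\mathrm{grad}\,f\|_H^2$, so that $\sup_i\|e_if\|_{\mathbb{D}_r^p}\le\|\mathrm{grad}\,f\|_{\mathbb{D}_r^p(\Omega,H)}$ and, more importantly, $\sum_i\|e_if\|_{\mathbb{D}_r^p}^2\le C\|f\|_{\mathbb{D}_{r+1}^p}^2$. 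This square-summability of the testing family (a Hilbert--Schmidt feature absent from a single bounded derivation) is what must ultimately produce the convergence of the trace. By the preceding lemma, $q$ is bounded on bounded parts, so there is a constant $C$ with $|q(u,v)|\le C$ for $u,v$ ranging over any fixed bounded subset, and positivity gives the Cauchy--Schwarz inequality $|q(u,v)|^2\le q(u,u)\,q(v,v)$.

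Next I would extract a positive-measure representation of the diagonal. For fixed $i$ the module symmetry gives $q(fe_i,e_i)=q(e_i,fe_i)$, and positivity gives $q(fe_i,fe_i)=q(e_i,f^2e_i)\ge 0$; hence $f\mapsto q(fe_i,e_i)$ is a positive linear functional on $\mathbb{D}^\infty(\Omega)$, represented by a positive measure $\nu_i$ with total mass $\nu_i(\Omega)=q(e_i,e_i)$. Thus $S_N=\sum_{i\le N}\nu_i(\Omega)$, and the whole problem becomes the uniform control of the total mass of $\nu=\sum_i\nu_i$.

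The decisive step is to bound $S_N$ without the loss of a factor $N$ that is inherent to testing $q$ against unit vector fields (any unit constant field can only carry the $N$-average $S_N/N$ of the trace). To avoid this I would test $q$ against the divergence-type derivations $\delta_{A_N}=\mathrm{div}\,A_N\,\mathrm{grad}$, where $A_N$ is a deterministic orthogonal (indeed antisymmetric) operator acting on $\mathrm{span}(e_1,\dots)$ and equal to the identity elsewhere. Because $A_N$ is a constant bounded operator, the identities $\|A_N\,\mathrm{grad}\,f\|_{\mathbb{D}_r^p(\Omega,H)}=\|\mathrm{grad}\,f\|_{\mathbb{D}_r^p(\Omega,H)}$ together with the continuity of $\mathrm{div}$ make $(\delta_{A_N})_N$ a bounded family in $\mathrm{Der}(\Omega)$, uniformly in $N$; yet, exactly as in the earlier construction of a derivation that is not a vector field, its realization as a vector field has $H$-norm of order $\sqrt N$. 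Thus $\delta_{A_N}$ packs $N$ orthogonal directions into a single bounded derivation. Expanding $q(\delta_{A_N},\delta_{A_N})\le C$ by bilinearity and the module symmetry, and combining it with the positive-measure representation and Cauchy--Schwarz to discard the genuinely off-diagonal contributions, should yield $S_N\le C$ uniformly in $N$, whence $\sum_i q(e_i,e_i)=\lim_N S_N\le C<+\infty$.

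The main obstacle is precisely this last cancellation: one must show that, after averaging over a suitable finite family of such orthogonal (or sign-twisted) operators $A_N$, the bilinear expansion of $q(\delta_{A_N},\delta_{A_N})$ reproduces the diagonal $\sum_{i\le N}q(e_i,e_i)$ while all cross terms are absorbed by the uniform bound $C$ through Cauchy--Schwarz. Equivalently, one is asserting that a separately continuous positive form on the nuclear-type space $\mathrm{Der}(\Omega)$ is of trace class along the square-summable family $(e_i)$; making the off-diagonal estimates rigorous, and checking that the extension of $q$ needed to carry out the averaging (in the style of Theorems 2.1--2.4) preserves positivity and the module symmetry, is where the real work lies.
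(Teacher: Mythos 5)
Your first half coincides with the paper's opening move: the paper tests $q$ on the sign-twisted divergence derivations $D_\varepsilon=\sum_{j=1}^n\varepsilon(j)\left(W(h_j)k_j-W(k_j)h_j\right)=\mathrm{div}\,A_n\,\mathrm{grad}$, with $A_n$ block-diagonal antisymmetric and deterministic, so that $(D_\varepsilon)$ is bounded in $\mathrm{Der}$ uniformly in $\varepsilon$ and $n$, and the sign average $\frac{1}{2^n}\sum_\varepsilon q(D_\varepsilon,D_\varepsilon)\leq C_0$ kills all $j\neq\ell$ cross terms, exactly as you propose. But the surviving ``diagonal'' is \emph{not} $\sum_{j\leq n}q(e_j,e_j)$: it is $\sum_{j=1}^n\bigl[q(W(h_j)k_j,W(h_j)k_j)-2q(W(h_j)k_j,W(k_j)h_j)+q(W(k_j)h_j,W(k_j)h_j)\bigr]$, a Gaussian-weighted expression carrying an order-one \emph{negative} cross term for each index $j$. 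Your plan to absorb these by Cauchy--Schwarz is circular: $|q(W(h_j)k_j,W(k_j)h_j)|$ is bounded by the geometric mean of the two weighted diagonal terms you are trying to control, and summing $n$ order-one quantities reintroduces precisely the factor $n$ that your construction was designed to avoid. The positive-measure representation $\nu_i$ of $f\mapsto q(fe_i,e_i)$, while plausible, contributes nothing here: knowing the total masses $\nu_i(\Omega)=q(e_i,e_i)$ does not interact with these cross terms, and the paper never needs it.

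The missing idea --- the actual content of the paper's proof --- is a \emph{second} averaging, over basis shifts, combined with the law of large numbers. Fix $n$, choose the second basis as a shifted copy of the first, $k_j=h_{n+\ell-1+j}$, and Cesàro-average the inequality over $\ell=1,\dotsc,r$. The first (positive) diagonal term is simply dropped. By the module symmetry, $q(W(k_j)h_j,W(k_j)h_j)=q\bigl(W(k_j)^2h_j,h_j\bigr)$, and setting $a_{r,j}=\frac1r\sum_{\ell=1}^rW(h_{n+\ell-1+j})^2-1$ one has $a_{r,j}\to0$ in $L^2(\Omega)$; writing $q(a_{r,j}h_j,h_j)=\|a_{r,j}\|_{L^2}\,q\bigl(\|a_{r,j}\|_{L^2}^{-1}a_{r,j}h_j,\,h_j\bigr)$ and using that the renormalized derivations form a bounded part of $\mathrm{Der}$ (on which $q$ is bounded, by the preceding lemma), the weighted diagonal converges to $\sum_{j\leq n}q(h_j,h_j)$. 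The troublesome cross term vanishes in the same limit because $\frac1r\sum_{\ell=1}^rW(h_{n+\ell-1+j})h_{n+\ell-1+j}\to0$ in $L^2(\Omega,H)$. This yields $\sum_{j\leq n}q(h_j,h_j)\leq C_1$ uniformly in $n$, hence the claim. Without this shift-and-averaging step your expansion does not reduce to the bare trace, and the argument does not close; note also that no extension machinery in the style of Theorems 2.1--2.4 is needed, since all test derivations involved are finite sums of $\mathbb{D}^\infty$-vector fields.
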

  \begin{proof}
    Let $\varepsilon(j) = \pm 1$, $j\in \{1, \dotsc, n\}$. Denote
    \begin{equation*}
      D_\varepsilon = \sum_{j=1}^n \varepsilon(j)\left(W(h_j)k_j - W(k_j)h_j\right)
    \end{equation*}
    $(h_j)_{j\in\mathbb N_*}$, $(k_j)_{j\in\mathbb N_*}$ being two OTHN bases of H,
    we have $D_\varepsilon = \mathrm{div}\,A_n\mathrm{grad}$ where
    \begin{equation*}
      A_n = 
        \begin{pmatrix}
          \varepsilon(1)\begin{pmatrix}0&-1\\1&0\end{pmatrix} & ~ & ~ & \text{\Huge0}\\
          ~ & \varepsilon(2)\begin{pmatrix}0&-1\\1&0\end{pmatrix} & ~ & ~\\
          ~ & ~ & \ddots & ~ \\
          \text{\Huge0} & ~ & ~ & \varepsilon(n)\begin{pmatrix}0&-1\\1&0\end{pmatrix}
        \end{pmatrix}
    \end{equation*}
    So on a bounded part of $\mathbb D^\infty$, $D_\varepsilon$ is bounded, $\varepsilon$-uniformly.

    Then $\frac{1}{2^n}\sum_\varepsilon q(D_\varepsilon,D_\varepsilon)$ is also $n$-uniformly
    bounded on the subset of $\mathrm{Der}$ constitued by the $D_\varepsilon$. 

    This bound does not depend on the chosen basis of H because if we change this basis,
    the new basis is obtained from the initial basis by a unitary transformation.

    Then we have:
    \begin{align*}
      \frac{1}{2^n}\sum_\varepsilon q(D_\varepsilon,D_\varepsilon) =
        & \frac{1}{2^n}\sum_\varepsilon\sum_{j=1}^n\sum_{\ell=1}^n\varepsilon(j)\varepsilon(\ell)q\left(W(h_j)k_j,W(h_\ell)k_\ell\right) \\
        & -\frac{1}{2^n}2\sum_\varepsilon\sum_{j=1}^n\sum_{\ell=1}^n\varepsilon(j)\varepsilon(\ell)q\left(W(h_j)k_j,W(k_\ell)h_\ell\right) \\
        & + \frac{1}{2^n}\sum_\varepsilon\sum_{j=1}^n\sum_{\ell=1}^n\varepsilon(j)\varepsilon(\ell)q\left(W(k_j)h_j,W(k_\ell)h_\ell\right) \\
      = & \sum_{j=1}^nq\left(W(h_j)k_j,W(h_j)k_j\right)-2\sum_{j=1}^nq\left(W(h_j)k_j,W(k_j)h_j\right) \\
        & + \sum_{j=1}^nq\left(W(k_j)h_j, W(k_j)h_j\right)
        \tag{1}\label{l1}
    \end{align*}
    So the r.h.s. member of \eqref{l1} is $n$-uniformly bounded by a constant $C_0$.
    We fix n, and choose for $k$: $k_\ell = h_{n+\ell-1+j}$ and rewrite the r.h.s. of 
    \eqref{l1} with these new values, and average it on $\ell = 1, \dotsc, r$
    \begin{align*}
      & \frac1r\sum_{\ell=1}^r\sum_{j=1}^nq\left[W(h_j)h_{n+\ell-1+j}, W(h_j)h_{n+\ell-1+j}\right] \\
      & - \frac2r\sum_{\ell=1}^r\sum_{j=1}^nq\left[W(h_j)h_{n+\ell-1+j}, W(h_{n+\ell-1+j})h_j\right] \\
      & + \frac1r\sum_{\ell=1}^r\sum_{j=1}^nq\left[W(h_{n+\ell-1+j})h_j, W(h_{n+\ell-1+j})h_j\right] < C_0 \\
    \end{align*}
    So:
    \begin{align*}
      & \frac1r\sum_{\ell=1}^r\sum_{j=1}^nq\left[W(h_j)h_{n+\ell-1+j}, W(h_j)h_{n+\ell-1+j}\right] \\
      & - 2 \sum_{j=1}^nq\left[\frac1r\sum_{\ell=1}^rW(h_{n+\ell-1+j})h_{n+\ell-1+j}, W(h_j)h_j\right] \\
      & + \frac1r\sum_{j=1}^nq\left[\frac1r\sum_{\ell=1}^rW(h_{n+\ell-1+j})^2h_j,h_j\right] < C_0
    \end{align*}
    The first item of the above equation is positive so:
    \begin{align*}
      & - 2 \sum_{j=1}^nq\left[\frac1r\sum_{\ell=1}^rW(h_{n+\ell-1+j})h_{n+\ell-1+j}, W(h_j)h_j\right] \\
      & + \sum_{j=1}^nq\left[\frac1r\sum_{\ell=1}^rW(h_{n+\ell-1+j})^2h_j,h_j\right] < C_0\tag{2}\label{l2}
    \end{align*}
    The last item of the l.h.s. of \eqref{l2} can be rewritten as
    \begin{equation*}
      \sum_{j=1}^nq\left[\left(\frac1r\sum_{\ell=1}^rW(h_{n+\ell-1+j})^2 - 1\right)h_j,h_j\right] 
        + \sum_{j=1}^nq(h_j,h_j)
        \tag{3}\label{l3}
    \end{equation*}
    We denote by $a_{r,j} = \frac1r\left\{\sum_{\ell=1}^rW(h_{n+\ell-1+j})^2-1\right\}$. 
    Then $a_{r,j}\to 0$ in $L^2(\Omega$).
    Then \eqref{l3} can be rewritten as
    \begin{equation*}
      \sum_{j=1}^n\left\|a_{r,j}\right\|_{L^2(\Omega)}q\left[\frac{1}{\left\|a_{r,j}\right\|_{L^2(\Omega)}}
        \times\left(\frac1r\sum_{\ell=1}^rW(h_{n+\ell-1+j})^2-1\right)h_j,h_j\right] 
        + \sum_{j=1}^nq(h_j,h_j)
        \tag{4}\label{l4}
    \end{equation*}
    $\frac{1}{\left\|a_{r,j}\right\|_{L^2(\Omega)}}\times\frac1r\left(\sum_{\ell=1}^rW(h_{n+\ell-1+j})^2-1\right)h_j$
    is a set of derivations $(r\in\mathbb N_*)$ which is bounded in $\mathrm{Der}$ 
    ($n$ being previously fixed); so 
    \begin{equation*}
      q\left[\frac{1}{\left\|a_{r,j}\right\|_{L^2(\Omega)}}
        \times\left(\frac1r\sum_{\ell=1}^nW(h_{n+\ell-1+j})^2-1\right)h_j,h_j\right]
    \end{equation*}
    is bounded and as $\left\|a_{r,j}\right\|_{L^2(\Omega)} \to 0$ for each $j = 1, \dotsc, n$,
    \eqref{l4} is reduced to $\sum_{j=1}^nq(h_j,h_j)$.

    In \eqref{l2}, the only item left to compute is
    \begin{equation*}
      \sum_{j=1}^nq\left(\frac1r\sum_{\ell=1}^rW(h_{n+\ell-1+j})h_{n+\ell-1+j}, W(h_j)h_j\right)
    \end{equation*}
    The $L^2(\Omega, H)$-norm of $\frac1r\sum_{\ell=1}^rW(h_{n+\ell-1+j})h_{n+\ell-1+j}$ converges
    towards 0, so we can apply the same method as above and 
    \begin{equation*}
      \sum_{j=1}^nq\left(\frac1r\sum_{\ell=1}^rW(h_{n+\ell-1+j})h_{n+\ell-1+j}, W(h_j)h_j\right)
    \end{equation*}
    converges towards 0.

    So at the end we have $\sum_{j=1}^nq(h_j,h_j)<C_1$, $C_1$ constant, which implies 
    $\sum_{j=1}^\infty q(h_j,h_j)<+\infty$ 
  \end{proof}
  
  \begin{proof}[Proof of the theorem]
    $T$ is a $p$-linear form, $\mathbb D^\infty$-linear, continous for each of its
    arguments. We proceed with an induction: suppose the property
    is true for $T\in\left(\bigotimes^{p-1}\mathrm{Der}\right)^*$. 
    Fix $u_{i_1}, \dotsc, u_{i_{p-1}}$ vectors of a basis of $H$ and $u$ another
    vector of this basis; and $\psi\in L^{1+0} = \left(L^{\infty-0}\right)^*$, $\psi \geq 0$. Then
    \begin{equation*}
      \sum_{\left(u_{i_1},\dotsc,u_{i_{p-1}}\right)}
        \int\mathbb P(\mathrm d\omega)\psi\left[(1-L)^{\frac{r}{2}}T(u, u_{i_1},\dotsc,u_{i_{p-1}})\right]^2
    \end{equation*}
    is correctly defined (induction hypothesis) and $\geq 0$, and
    \begin{equation*}
      u \mapsto\sum_{\left(u_{i_1},\dotsc,u_{i_{p-1}}\right)}
                 \int\mathbb P(\mathrm d\omega)\psi\left[(1-L)^{\frac{r}{2}}T(u,u_{i_1},\dotsc,u_{i_{p-1}})\right]^2
    \end{equation*}
    defines an $\mathbb R$-valued quadratic form, positive, which satisfies the symmetry
    property, and bounded on any bounded part of $\mathrm{Der}$. We can apply to this
    quadratic form the result of Lemma 9,2 to get the result.
  \end{proof}
 
  Now we will study the link of the operator divergence, when defined on $\mathrm{Der}(\Omega)$
  (Definition 2,4, Remarks 2,1 and 2,2) with the NCM $\tilde H$, and the new gradient
  (Definition 7, 1). 

  Recall that $\mathrm{div}: \mathrm{Der}(\Omega)\to\mathbb D^{-\infty}(\Omega)$ is such that:
  if $\delta\in\mathrm{Der}(\Omega)$, $\forall \varphi\in\mathbb D^\infty(\Omega)$, 
  \begin{equation*}
    \left(\mathrm{div}\,\delta,\varphi\right) = -\int \delta\varphi\mathbb P(\mathrm d\omega)
  \end{equation*}
  $\mathrm{div}$ is a continuous operator.

  Now, given a Hilbertian basis of $\tilde H$, $(v_i)_{i\in\mathbb N_*}$ and $\alpha_i(\omega)$,
  $i \in\mathbb N_*$ being the components of a vector field in $H$, 
  $\sum_{i=1}^\infty\alpha_iv_i\in\mathbb D^\infty(\Omega,\tilde H)$; 
  we denote as usual $D_{v_i}$ the derivation associated to $v_i\in\tilde H$. Then:
  \begin{thm}
    \begin{equation*}
      \mathrm{div}\left(\sum_{i=1}^\infty\alpha_iD_{v_i}\right)
    \end{equation*}
    is well defined and $\in\mathbb D^\infty(\Omega)$
  \end{thm}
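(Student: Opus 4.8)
The plan is to recognise $\delta_W := \sum_{i=1}^\infty \alpha_i D_{v_i}$ as a genuine $\mathbb D^\infty$-continuous derivation attached to the $\tilde H$-valued field $W := \sum_{i=1}^\infty \alpha_i v_i \in \mathbb D^\infty(\Omega,\tilde H)$, and then to compute its divergence by the product rule, killing the antisymmetric part and keeping only ordinary Malliavin divergences of vector fields. First I would check that $\delta_W \in \mathrm{Der}(\Omega)$: by Definition 7.1 and the identity $\langle \mathrm{grad}\,f, w\rangle_{\tilde H} = D_w f$ one has $\delta_W f = \langle W, \mathrm{grad}\,f\rangle_{\tilde H}$, and since both $W$ and $\mathrm{grad}\,f$ lie in $\mathbb D^\infty(\Omega,\tilde H)$ the pairing is $\mathbb D^\infty$ and $\mathbb D^\infty$-continuous; moreover the partial sums $\sum_{i\le N}\alpha_i D_{v_i}$ are bounded in $\mathrm{Der}(\Omega)$ (because $\sum_{i\le N}\alpha_i v_i$ is bounded in $\mathbb D^\infty(\Omega,\tilde H)$ and $v\mapsto D_v$ is continuous) and converge to $\delta_W$ in $\mathrm{Der}(\Omega)$. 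As $\mathrm{div}\colon \mathrm{Der}(\Omega)\to\mathbb D^{-\infty}(\Omega)$ is continuous, $\mathrm{div}\,\delta_W$ exists in $\mathbb D^{-\infty}(\Omega)$ and equals $\lim_N \mathrm{div}\big(\sum_{i\le N}\alpha_i D_{v_i}\big)$.

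Next I would compute each finite divergence. Using the elementary product rule $\mathrm{div}(\alpha\delta) = \alpha\,\mathrm{div}\,\delta + \delta(\alpha)$ (immediate from the defining duality $(\mathrm{div}\,\delta,\varphi) = -\int \delta\varphi\,\P(d\omega)$ and $\delta(\alpha\varphi)=\alpha\delta\varphi+\varphi\delta\alpha$), together with the decomposition from Section 7, $D_{v_i} = \mathrm{div}\,A_{v_i}\,\mathrm{grad} + V_i$ with $V_i := \big(t\mapsto \int_0^t \dot h_1(v_i)(s)\,ds\big)$ a $\mathbb D^\infty$-vector field and $A_{v_i}$ an antisymmetric adapted multiplicator, I get $\mathrm{div}\,D_{v_i} = \mathrm{div}(\mathrm{div}\,A_{v_i}\,\mathrm{grad}) + \mathrm{div}\,V_i$. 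By Remark 2.2.ii the first term vanishes, and by Remark 2.1 the second is the ordinary Malliavin divergence $\mathrm{div}\,V_i \in \mathbb D^\infty(\Omega)$. Hence
\begin{equation*}
\mathrm{div}\Big(\sum_{i\le N}\alpha_i D_{v_i}\Big) = \sum_{i\le N}\big(\alpha_i\,\mathrm{div}\,V_i + D_{v_i}(\alpha_i)\big) =: S_N,
\end{equation*}
a finite sum of $\mathbb D^\infty$-functions.

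It then remains to pass to the limit in $\mathbb D^\infty$ rather than merely in $\mathbb D^{-\infty}$. For this I would prove that the sequence $(S_N)_N$ is uniformly $\mathbb D^\infty$-bounded, and then invoke interpolation (Theorem 2.12) against the already-known $\mathbb D^{-\infty}$-convergence (which yields, in particular, $L^{p'}$-control) to upgrade the convergence to $\mathbb D^\infty$; the limit $\mathrm{div}\,\delta_W$ is then in $\mathbb D^\infty(\Omega)$, and one may conclude by Theorem 2.5. To get the uniform bound I would work in the explicit basis $\check B$ of (7.33), exploiting the $\tfrac1\rho$-decay of the $h(\varepsilon_\rho)$ together with the closed forms (7.24)--(7.27) of $A_{v_i}$ and $\dot h_1(v_i)$ --- exactly the estimates that showed $(D_{\varepsilon_\rho}f)_\rho$ to be a $\mathbb D^\infty$-vector field and that controlled the limits $T_1,\dots,T_4$ of Section 8 --- to bound $\sum_{i\le N}\alpha_i\,\mathrm{div}\,V_i$, while the trace-type series $\sum_{i\le N}D_{v_i}(\alpha_i)$ is controlled by the renormalisation theorem proved above applied to the new gradient of $W$.

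The main obstacle is precisely this last uniform $\mathbb D^\infty$-bound: the term $\sum_i D_{v_i}(\alpha_i)$ is a trace over the basis of the new gradient of $W$, which does not converge by Hilbert--Schmidt considerations alone and genuinely requires the renormalisation theorem, while the vector-field term $\sum_i \alpha_i\,\mathrm{div}\,V_i$ converges only thanks to the $\tfrac1\rho$-decay built into $\check B$. Once both series are seen to be $\mathbb D^\infty$-bounded uniformly in $N$, the remainder is routine interpolation.
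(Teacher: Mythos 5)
Your finite-$N$ computation is correct, and your overall architecture (a uniform $\mathbb D^\infty$-bound on the partial sums $S_N$, then interpolation via Theorem 2.12 against the $\mathbb D^{-\infty}$-convergence) is in the spirit of the paper. The genuine gap is in how you propose to obtain the uniform bound: by estimating $\sum_{i\le N}\alpha_i\,\mathrm{div}\,V_i$ and $\sum_{i\le N}D_{v_i}(\alpha_i)$ \emph{separately}. Neither series is bounded on its own. Writing $h_i=h(v_i)$ (an orthonormal basis of $H$), $\mathrm{div}\,V_i$ contains $\mathrm{div}\,h_i=W(h_i)$, which has no decay in $i$, and $D_{v_i}(\alpha_i)$ contains the trace term $\langle\mathrm{grad}\,\alpha_i,h_i\rangle_H$. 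Take $\alpha_i=\frac1i W(h_i)$: then $\sum_i\alpha_i h_i\in\mathbb D^\infty(\Omega,H)$, yet $\sum_{i\le N}\langle\mathrm{grad}\,\alpha_i,h_i\rangle_H=\sum_{i\le N}\frac1i\to+\infty$ and $\sum_{i\le N}\alpha_i W(h_i)=\sum_{i\le N}\frac1i W(h_i)^2$ diverges in $L^1$, while the remaining constituents ($h_3(v_i)$ and $A(v_i)$ terms) carry the $1/i$ decay and are absolutely summable — so both of your series genuinely diverge for this admissible choice. Only their combination $\mathrm{div}\bigl(\sum_i\alpha_i h_i\bigr)$ converges, by continuity of the ordinary divergence on $\mathbb D^\infty(\Omega,H)$ (it is the adjoint of $\mathrm{grad}$), and this cancellation is exactly what your grouping destroys. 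Nor can Theorem 9.1 rescue the trace series: the renormalisation theorem is a Hilbert--Schmidt-type statement ($\sum_i T(\varepsilon_i)^2\in\mathbb D^\infty$), and square-summability along a basis never dominates a diagonal trace $\sum_i D_{v_i}(\alpha_i)$; in the example above every such square sum is finite while the trace diverges.

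The paper's proof is organised precisely to avoid this. It first peels off $\mathrm{div}\bigl(\sum_i\alpha_i h(v_i)\bigr)\in\mathbb D^\infty(\Omega)$ wholesale — this single continuous operation absorbs both the non-decaying $W(h_i)$ contributions and the dangerous trace — and only then treats the remainder $D_{v_i}-h(v_i)=h_3(v_i)+\mathrm{div}\,A(v_i)\,\mathrm{grad}$, whose coefficients inherit the $1/i$ decay of the Fourier basis $\check B$ of (7.33): $\|h_3(v_i)\|_{\mathbb D_r^p}\le C/i$ makes $\sum_i h_3(v_i)\otimes h(v_i)$ convergent in $\mathbb D^\infty(\Omega,H\otimes H)$, so $\sum_i h_3(v_i)\cdot\alpha_i$ is realised as the $H\otimes H$-pairing with $\sum_j\mathrm{grad}\,\alpha_j\otimes h(v_j)$, while $\sum_i\mathrm{div}\,A(v_i)(\mathrm{grad}\,\alpha_i)$ is handled by Lemma 9.3 (the duality extension of the operator $T$, resting on $i\,a(v_i)$ being $i$-uniformly $\mathbb D^\infty$-H\"olderian). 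To repair your argument you must recombine $S_N$ into $\mathrm{div}\bigl(\sum_{i\le N}\alpha_i h(v_i)\bigr)+\sum_{i\le N}\bigl(h_3(v_i)\cdot\alpha_i+\mathrm{div}\,A(v_i)\,\mathrm{grad}\,\alpha_i\bigr)$ before estimating — but that recombination \emph{is} the paper's proof, after which the first piece needs no interpolation at all.
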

  \begin{proof}
    \begin{equation*}
      \mathrm{div}\left[\sum_{i=1}^\infty\alpha_iD_{v_i}\right] 
        = \mathrm{div}\left[\sum_{i=1}^\infty\alpha_i(D_{v_i}-h(v_i))\right] 
        + \mathrm{div}\,\sum_{i=1}^\infty\alpha_ih(v_i)
    \end{equation*}
    where $D_{v_i} = h_1 + \mathrm{div}\,A(v_i)\mathrm{grad}$ (Eq 7, 23), $h_1$ being the vector
    field of $H$ such as $h_1 = h_2 + h_3$ (Eq 7, 27), $h_2(v_i)=h(v_i)$ (Eq 7, 11).

    Now $\sum_{i=1}^\infty\alpha_ih(v_i)\in\mathbb D^\infty(\Omega,H)$, so 
    $\mathrm{div}\left(\sum_{i=1}^\infty\alpha_ih(v_i)\right) \in\mathbb D^\infty(\Omega) $. 

    It remains to show that 
    $\lim_{N\uparrow\infty}\mathrm{div}\left(\sum_{i=1}^N\alpha_i\left(D_{v_i}-h(v_i)\right)\right)\in\mathbb D^\infty(\Omega)$. But
    \begin{equation*}
      \mathrm{div}\left(\sum_{i=1}^N\alpha_i\left(D_{v_i}-h(v_i)\right)\right) 
        = \sum_{i=1}^N\alpha_i\mathrm{div}(D_{v_i}-h(v_i))
        + \sum_{i=1}^N(D_{v_i}-h(v_i))\cdot\alpha_i
    \end{equation*}
    According to (Eq 7, 23), and Remark 2, 2, iii, 
    \begin{equation*}
      \mathrm{div}(D_{v_i}-h(v_i)) = 0
    \end{equation*}
    We therefore just need to prove that 
    $\lim_{N\uparrow\infty}\sum_{i=1}^N(D_{v_i}-h(v_i))\cdot\alpha_i\in\mathbb D^\infty(\Omega)$, 
    and with (Eq 26, 27),
    \begin{equation*}
      D_{v_i}-h(v_i) = h_3(v_i) + \mathrm{div}\,A(v_i)\mathrm{grad}
    \end{equation*}
    We need the following lemma:
    \begin{lem}
      The set $(b_{ij})_{i,j\in\mathbb N_*}$ being the components of a vector field in
      $\mathbb D^\infty(\Omega,H\otimes H)$, we denote by $T$ an operator defined only
      on the finite sums like $\sum_{i=1}^mb_{ij}h(v_i)\otimes h(v_j)$ by
      \begin{equation*}
        T\left(\sum_{i=1}^mb_{ij}h(v_i)\otimes h(v_j)\right) 
          = \sum_{i,j=1}^mb_{ij}(\omega)A(v_i)\cdot h(v_j)
      \end{equation*}
      Then there is a unique extension of $T$, which is a multiplicator from 
      \begin{equation*}
        \mathbb D^\infty(\Omega, H\otimes H)
      \end{equation*}
      to $\mathbb D^\infty(\Omega, H)$.
    \end{lem}
    
    \begin{proof}
      Let $d\in\mathbb D^{-\infty}(\Omega, H)$. Then straightforward computation shows
      that, if $T^*$ exists,
      \begin{equation*}
        \left(T^*d, \sum_{i,j=1}^mb_{ij}(\omega)h(v_i)\otimes h(v_j)\right) 
          = -\left(\sum_{i=1}^mA(v_i)(d)\otimes h(v_i), \sum_{j,k=1}^mb_{kj}h(v_k)\otimes h(v_j)\right)
      \end{equation*}
      So $T^*d = + \sum_{i=1}^m A^*(v_i)(d)\otimes h(v_i)$.

      Each $A^*(v_i)\otimes h(v_i)$ can be considered as a vector matrix $\vec{A}_i$
      with entries $\left(\vec{A}_i^*\right)_k^\ell = -(a(v_i)_\ell^kh(v_i)$.

      As shown previously in the proof of Lemma 7,4, $ia(v_i)_\ell^k$ are $i$-uniformly
      $\mathbb D^\infty$-H\"olderian, and as $|h(v_i)|$ is bounded by $\frac{\text{constant}}{i}$,
      the sum
      \begin{equation*}
        \sum_{i=1}^\infty A^*(v_i)\otimes h(v_i)
      \end{equation*}
      is $\mathbb D^\infty$-H\"olderian, so $\sum_{i=1}^\infty A^*(v_i)\otimes h(v_i)$ is a
      multiplicator from $\mathbb D^{-\infty}(\Omega,H)$ to 
      $\mathbb D^{-\infty}(\Omega,H\otimes H)$.
    \end{proof}
    Now we go back to $\sum_{i=1}^\infty\mathrm{div}\,A(v_i)\mathrm{grad}\,\alpha_i$, 
    $(\alpha_i)_{i\in\mathbb N_*}$ being by hypothesis the coordinates of a $D^\infty$-vector field
    in $H$, $\left(\mathrm{grad}\,\alpha_i\right)_{i\in\mathbb N_*}\in\mathbb D^\infty(\Omega,H\otimes H)$.

    Then $\sum_{i=1}^\infty\mathrm{div}\,A(v_i)(\mathrm{grad}\,\alpha_i)\in\mathbb D^\infty(\Omega)$ 
    because $\sum_{i=1}^\infty A(v_i)(\mathrm{grad}\,\alpha_i)$ can be written as 
    $\sum_{i,j=1}^\infty b_{ij}h(v_i)\otimes h(v_j)$.

    The last sum for which the convergence is to be proven is $\sum_{i=1}^\infty h_3(v_i)\cdot\alpha_i$.
    We can write
    \begin{equation*}
      \sum_{i=1}^\infty h_3(v_i)\cdot\alpha_i 
        = \left\langle \sum_{i=1}^\infty h_3(v_i)\otimes h(v_i),\sum_{j=1}^\infty\mathrm{grad}\,\alpha_j\otimes h(v_j)\right\rangle_{H\otimes H}
    \end{equation*}
    and as $\left\|h_3(v_i)\right\|_{\mathbb D_r^p(\Omega)}\leq \frac{C_2(p,r)}{i}$, $C_2(p,r)$ being a constant
    $\sum_{i=1}^\infty h_3(v_i)\otimes h(v_i)$ is $\mathbb D^\infty(\Omega, H\otimes H)$-convergent,
    and so is $\sum_{j=1}^\infty\mathrm{grad}\,\alpha_j\otimes h(v_j)$; and the scalar 
    product of two vector fields which are $\mathbb D^\infty(\Omega, H\otimes H)$ is in
    $\mathbb D^\infty(\Omega)$.
  \end{proof}

  Now we will study the new O.U. operator built with the new $\mathrm{grad}$ (Def 7, 1) 
  and the new $\mathrm{div}$ (Theorem 9, 2): $\mathrm{div}\,\mathrm{grad}$.

  \begin{lem}
    If $V_n$ is a compact Riemannian manifold, then there exist $C^\infty(V_n)$ functions
    $\varphi_i$, $\psi_i$, $\alpha_i$, $i = 1, \dotsc, M$, such that for any $C^\infty$-vector
    field $U$ on $V_n$, we have:
    \begin{equation*}
      U = \sum_{i=1}^M\psi_i\left\langle U,\mathrm{grad}\,\varphi_i\right\rangle_{V_n}\mathrm{grad}\,\alpha_i
      \tag{5}\label{l5}
    \end{equation*}
  \end{lem}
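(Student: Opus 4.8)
The plan is to realize $V_n$ as a Euclidean submanifold and to reconstruct any tangent vector from the gradients of the ambient coordinate functions, the failure of these gradients to be orthonormal being absorbed into the scalar weights $\psi_i$. First I would invoke Whitney's embedding theorem to fix a smooth embedding $\iota\colon V_n\hookrightarrow\mathbb R^N$. Writing $x^1,\dots,x^N$ for the standard coordinates on $\mathbb R^N$, I set $\xi^j=x^j\circ\iota\in C^\infty(V_n)$. Since $\iota$ is an immersion, the covectors $d\xi^1(m),\dots,d\xi^N(m)$ span $T_m^*V_n$ at every $m\in V_n$, and hence the gradients $\mathrm{grad}\,\xi^1(m),\dots,\mathrm{grad}\,\xi^N(m)$ span $T_mV_n$.

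Next I would introduce the symmetric endomorphism field $S$ defined by $S(Y)=\sum_{j=1}^N\langle Y,\mathrm{grad}\,\xi^j\rangle_{V_n}\,\mathrm{grad}\,\xi^j$. Its quadratic form is $\langle S(Y),Y\rangle_{V_n}=\sum_{j=1}^N\langle Y,\mathrm{grad}\,\xi^j\rangle_{V_n}^2\geq 0$, vanishing only when $Y$ is orthogonal to every $\mathrm{grad}\,\xi^j$, i.e.\ when $Y=0$ by the spanning property. Thus $S$ is positive definite at each point; by compactness of $V_n$ its least eigenvalue is bounded below by a positive constant, so $S^{-1}$ exists and is a smooth self-adjoint endomorphism field.

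Then, for an arbitrary $C^\infty$ vector field $U$, I would write
\[
U=S(S^{-1}U)=\sum_{j=1}^N\langle S^{-1}U,\mathrm{grad}\,\xi^j\rangle_{V_n}\,\mathrm{grad}\,\xi^j=\sum_{j=1}^N\langle U,S^{-1}\mathrm{grad}\,\xi^j\rangle_{V_n}\,\mathrm{grad}\,\xi^j,
\]
using the self-adjointness of $S^{-1}$. Each $S^{-1}\mathrm{grad}\,\xi^j$ is again a smooth vector field, so by the spanning property it decomposes as $S^{-1}\mathrm{grad}\,\xi^j=\sum_{k=1}^N b_{jk}\,\mathrm{grad}\,\xi^k$ with $b_{jk}=\langle S^{-1}\mathrm{grad}\,\xi^j,S^{-1}\mathrm{grad}\,\xi^k\rangle_{V_n}\in C^\infty(V_n)$. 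Substituting yields
\[
U=\sum_{j,k=1}^N b_{jk}\,\langle U,\mathrm{grad}\,\xi^k\rangle_{V_n}\,\mathrm{grad}\,\xi^j,
\]
which is exactly \eqref{l5} once the pair $(j,k)$ is relabelled as a single index $i$ running over $\{1,\dots,N^2\}$, with $\psi_i=b_{jk}$, $\varphi_i=\xi^k$ and $\alpha_i=\xi^j$.

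There is no deep obstacle here; the only points requiring care are the existence of finitely many functions whose gradients span $T_mV_n$ everywhere (delivered by the embedding together with compactness) and the uniform smooth invertibility of $S$. The mildly delicate bookkeeping is steering the decomposition into precisely the prescribed shape $\psi_i\langle U,\mathrm{grad}\,\varphi_i\rangle_{V_n}\,\mathrm{grad}\,\alpha_i$: it is the two-fold use of the spanning property, once for $U$ and once for each $S^{-1}\mathrm{grad}\,\xi^j$, that produces the scalar weights $\psi_i$ appearing in the statement. I note that if one prefers $\psi_i\equiv 1$, one may replace Whitney's theorem by Nash's isometric embedding: then $\mathrm{grad}\,\xi^j(m)$ is the orthogonal projection of the $j$-th ambient basis vector onto $T_mV_n$, and the identity $\sum_{j=1}^N\langle X,\mathrm{grad}\,\xi^j\rangle_{V_n}\,\mathrm{grad}\,\xi^j=X$ holds directly for every $X\in T_mV_n$, giving the representation with $M=N$, $\varphi_j=\alpha_j=\xi^j$ and $\psi_j=1$.
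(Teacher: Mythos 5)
Your proof is correct, but it follows a genuinely different route from the paper's. The paper argues locally: on each coordinate chart it takes the coordinate vector fields, orthonormalizes them by Gram--Schmidt into a frame $(v_j)_{j=1,\dotsc,n}$, writes $U=\sum_{j=1}^n\langle U,v_j\rangle_{V_n}v_j$, and then glues these local representations with a finite partition of unity, compactness supplying finitely many charts and hence a finite $M$. (As written, the paper's sketch leaves some bookkeeping implicit: the $v_j$ produced by Gram--Schmidt are not themselves gradients, so to land exactly on the prescribed shape $\psi_i\langle U,\mathrm{grad}\,\varphi_i\rangle_{V_n}\mathrm{grad}\,\alpha_i$ one must work with the gradients of the coordinate functions --- which span each $T_mV_n$ since the $dx_i$ span $T_m^*V_n$ --- and absorb both the Gram-matrix coefficients and the cutoff functions into the $\psi_i$.) You instead globalize at the outset: a Whitney embedding furnishes finitely many globally defined functions $\xi^j$ whose gradients span every $T_mV_n$, the Gram-type operator $S$ is smooth, self-adjoint and pointwise positive definite, hence smoothly invertible, and $U=S(S^{-1}U)$ together with one further expansion of $S^{-1}\mathrm{grad}\,\xi^j$ yields the identity with $M=N^2$. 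Your formula $b_{jk}=\langle S^{-1}\mathrm{grad}\,\xi^j,S^{-1}\mathrm{grad}\,\xi^k\rangle_{V_n}$ does check out: apply the reproducing identity $Y=\sum_k\langle S^{-1}Y,\mathrm{grad}\,\xi^k\rangle_{V_n}\mathrm{grad}\,\xi^k$ to $Y=S^{-1}\mathrm{grad}\,\xi^j$ and use self-adjointness of $S^{-1}$; and your Nash variant, where $S$ becomes the identity because $\mathrm{grad}\,\xi^j$ is the tangential projection of the $j$-th ambient basis vector, legitimately gives $\psi_j\equiv1$. In terms of trade-offs: the paper's partition-of-unity argument is elementary and invokes no embedding theorem, at the price of chart-by-chart patching; yours avoids all patching and produces globally defined $\varphi_i,\alpha_i$ with explicit smooth coefficients, at the price of citing Whitney (or Nash). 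One small remark: the uniform lower eigenvalue bound you extract from compactness is not actually needed for the smooth invertibility of $S$ --- pointwise positive definiteness plus Cramer's rule already suffices --- though the observation is harmless; in both arguments compactness is really what makes $M$ finite (finitely many charts for the paper, a single finite-dimensional ambient space for you).
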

  \begin{proof}
    Given a chart on $V_n$ with coordinates $x_i$, $\left(\frac{\partial}{\partial x_i}\right)$, $i=1, \dotsc, n$, are the
    canonical basis vectors on each point of this chart; with the Gram-Schmidt process, we
    get an orthonormal basis, denoted $(v_j)_{j = 1, \dotsc, n}$.

    Then $\sum_{j=1}^n\left\langle U,v_j\right\rangle_{V_n}v_j = U$.

    Then using a finite partition of unity on $V_n$, we have the desired result.
  \end{proof}

  We recall that if $f\in C^\infty(V_n)$, we denote by $F_{f,t}(\omega) = f\circ I(\omega)(t)$,
  $I$ being the It\^o map from $\mathcal W(\mathbb R^n)$ into $\mathbb P_{m_0}(V_n)$. We
  denote by $k_{j,t}$ the vector field of $\tilde H$ defined by
  \begin{equation*}
    k_{j,t} (s)= (s\wedge t)(ue_j)(s,\omega)
  \end{equation*}
  $ue_j(s,\omega)$ being the SPT of $ue_\mu$ at time $s$ ``along $\omega$'', and $j = 1, \dotsc, n$.

  \begin{defn}
    With $t_0\in [0,1]$, we call a $\mathcal F_{t_0}$-vector field a vector field which
    when written as a process $\varphi(t,\omega)$ is such that
    \begin{enumerate}
      \item[i)]$\dot\varphi(t,\omega) = 0\qquad\forall t\geq t_0$ a.s.;
      \item[ii)]$\varphi(t,\omega)\in\mathcal F_{t_0}\qquad\forall t\in [0,1]$
    \end{enumerate}
  \end{defn}

  \begin{rem}
    The scalar product in $\tilde H$ of two $\mathcal F_{t_0}$-vector fields is again 
    in $\mathcal F_{t_0}$.
  \end{rem}

  \begin{thm}
    The new O.U. operator, $\mathrm{div}\,{\mathrm{grad}}$, verifies: 
    if $f\in\mathcal F_t$, $\mathrm{div}\,{\mathrm{grad}}\,f\notin\mathcal F_t$
    in general.
  \end{thm}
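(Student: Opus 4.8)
The plan is to exhibit a concrete $f\in\mathcal F_t$ for which $\mathrm{div}\,\mathrm{grad}\,f$ acquires a genuine stochastic integral over $[t,1]$, and hence fails to be $\mathcal F_t$-measurable. First I would record the exact form of the operator. By Definition 7.1 the new gradient is $\mathrm{grad}\,f=\sum_i (D_{v_i}f)\,v_i$, where $(v_i)$ is a Hilbert basis of $\tilde H$; identifying each $v_i$ with its associated derivation $D_{v_i}$, the new O.U. operator is $\mathrm{div}\,\mathrm{grad}\,f=\mathrm{div}\big(\sum_i (D_{v_i}f)\,D_{v_i}\big)$, which is precisely the object treated in Theorem 9.2 with $\alpha_i=D_{v_i}f$. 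I would then invoke the decomposition obtained in the proof of that theorem, $\mathrm{div}(\sum_i\alpha_i D_{v_i})=\sum_i(D_{v_i}-h(v_i))\cdot\alpha_i+\mathrm{div}(\sum_i\alpha_i\,h(v_i))$, together with the identity $\mathrm{div}(D_{v_i}-h(v_i))=0$.

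The conceptual point, which I would isolate at the outset, is the contrast with Proposition 2.1: the classical O.U.\ operator preserves $\mathcal F_t$ because the Malliavin gradient of an $\mathcal F_t$-measurable function is supported on $[0,t]$, so its divergence is again an It\^o integral over $[0,t]$. The new gradient destroys this support property, since the basis vectors of $\widecheck B$ (see (7.33)) have full time support on $[0,1]$. Thus, even for $f\in\mathcal F_t$, the $\tilde H$-valued field $\mathrm{grad}\,f$, whose image in $H$ under the canonical isometry is $\sum_i (D_{v_i}f)\,h(v_i)$, has components in directions living on $[t,1]$. Applying $\mathrm{div}$, whose building blocks $D_{v_i}-h(v_i)=h_3(v_i)+\mathrm{div}\,A(v_i)\mathrm{grad}$ carry the curvature $R^k_{sri}$ and $\nabla T$ through the processes $Z_{2,\alpha}$, $\gamma_{2,\alpha}$, then produces terms of the form $\int_t^1(\cdots)\circ d\tilde B^\rho$ with curvature-valued integrands.

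To make this precise I would work on a manifold with $R\neq 0$ and choose $f=F_{\phi,t}$ for a suitable $\phi\in C^\infty(V_n)$, a cylinder function of the path up to time $t$, for which $D_{v_i}f=(v_i\cdot\phi)|_{I(\omega)(t)}$ lies in $\mathcal F_t$ by the adaptedness result of Section 7 and the identity $\sigma[F_{f,s}/s\le t]=\mathcal F_t$. I would first treat the flat case as a sanity check: when $R=0$ and $T=0$ the SPT is trivial, $\tilde H=H$, every $D_v$ reduces to the classical directional derivative, and $\mathrm{div}\,\mathrm{grad}$ collapses to the classical $\Div\Grad$, which by Proposition 2.1 \emph{does} preserve $\mathcal F_t$; this confirms that the phenomenon is genuinely curvature/torsion driven. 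Then, feeding the explicit expressions (7.23)--(7.27) into the decomposition above, I would isolate the surviving curvature contribution, a term proportional to $\int_t^1 R(\cdots)\,d\tilde B$, and evaluate it, for instance by pairing against $\mathrm E[\,\cdot\,|\,\mathcal F_t]$, to show it is not $\mathcal F_t$-measurable.

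The hard part will be the non-cancellation. As in the proof of Theorem 9.2, the various curvature and drift pieces ($h_3$, and the $Z_{2,\alpha}$- and $\gamma_{2,\alpha}$-terms inside $A(v_i)$) partially telescope, and several of the manifestly anticipating integrals over $[t,1]$ cancel against one another. The crux is therefore to pin down one scalar projection in which the cancellation is incomplete, i.e.\ to display a single test functional and a single geometry (a definite $R^k_{sri}\neq 0$) for which the $[t,1]$-integral has nonzero conditional variance given $\mathcal F_t$. Once such a surviving term is exhibited, the conclusion $\mathrm{div}\,\mathrm{grad}\,f\notin\mathcal F_t$ follows at once.
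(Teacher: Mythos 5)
Your setup is faithful to the paper's framework (Definition 7.1 for the new gradient, the decomposition of Theorem 9.2 with $\alpha_i=D_{v_i}f$, the observation that the basis $\widecheck B$ has full time support so that $\mathrm{grad}\,f$ anticipates even when $f\in\mathcal F_t$), and your flat-case sanity check is sound. But the proposal stops precisely at the step that constitutes the theorem: you write that ``the hard part will be the non-cancellation'' and that one must ``display a single test functional and a single geometry'' for which the $[t,1]$-integral has nonzero conditional variance --- and then offer no mechanism for doing this. That is a restatement of the goal, not an argument. Carrying it out for a concrete $F_{\phi,t}$ would require explicit control of the terms that, in the Section 8 analysis of $(T1)$--$(T4)$, were shown to partially telescope, with integrands built from the SPT solution $Z_\mu^k$, which has no closed form on a general curved $(V_n,g)$; nothing in your plan shows the surviving $\int_t^1 R(\cdots)\,d\tilde B$ piece cannot cancel against the remaining contributions of $h_3$ and $A(v_i)$ for your chosen $\phi$. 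So as written there is a genuine gap, and it sits exactly where you flagged it.

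The paper's proof avoids this computation entirely by an indirect, existence-style argument matching the ``in general'' phrasing of the statement. Using the partition-of-unity Lemma 9.4 and Remark 7.1, it establishes the identity $tk_t=\sum_{i=1}^M\psi_i\left\langle k_t,\mathrm{grad}\,F_{\varphi_i,t}\right\rangle_{\tilde H}\mathrm{grad}\,F_{\alpha_i,t}$, which in particular shows $k_t\in\tilde H$. It then argues $\mathrm{div}\,D_{k_t}\notin\mathcal F_t$ cheaply: since $D_{k_t}=h_1(k_t)+\mathrm{div}\,A(k_t)\,\mathrm{grad}$ and $\mathrm{div}\left(\mathrm{div}\,A(k_t)\,\mathrm{grad}\right)=0$ (Remark 2.2), only $\mathrm{div}\,h_2(k_t)+\mathrm{div}\,h_3(k_t)$ survives, and $h_3(k_t)(s)=\frac12\int_0^s\sum_\mu b_{\mu,\mu}(k_t)\,\mathrm dr$ integrates curvature/torsion-valued SPT data over all of $[0,1]$, hence is anticipating --- no cancellation analysis needed, because the zero-divergence property has already killed the hard part. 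Applying $\mathrm{div}$ to the displayed identity and noting that the Leibniz correction $\sum_i\left\langle\mathrm{grad}\,F_{\alpha_i,t},\mathrm{grad}\left(\psi_i\left\langle k_t,\mathrm{grad}\,F_{\varphi_i,t}\right\rangle_{\tilde H}\right)\right\rangle_{\tilde H}$ lies in $\mathcal F_t$ (Remark 9.1), while the coefficients $\psi_i\left\langle k_t,\mathrm{grad}\,F_{\varphi_i,t}\right\rangle_{\tilde H}$ are $\mathcal F_t$-measurable, it concludes that \emph{some} $\mathrm{div}\,\mathrm{grad}\,F_{\alpha_i,t}\notin\mathcal F_t$ although $F_{\alpha_i,t}\in\mathcal F_t$. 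If you want to salvage your route, the realistic fix is to adopt this transfer argument: prove non-measurability for the single distinguished element $k_t$, where the divergence structure does the work, rather than for a hand-picked cylinder function, where it does not.
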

  \begin{proof}
    We know that $k_{j,t}\in\mathcal F_t$, with $j=1,\dotsc,n$. With Lemma 9, 4, we can write
    \begin{equation*}
      \sum_{i=1}^M\psi_i\left[I(\omega)(t)\right]\left\langle\overrightarrow{\mathrm{grad}}\,\varphi_i,k_t\right\rangle_{V_n}\left(I(\omega)(t)\right)\left(\overrightarrow{\mathrm{grad}}\,\alpha_i\right)\left(I(\omega)(t)\right) = k_t
    \end{equation*}
    where $\overrightarrow{\mathrm{grad}}$ is the usual gradient on the manifold $V_n$.
    With Remark 7, 1, we get
    \begin{equation*}
      \sum_{i=1}^M\psi_i\left(I(\omega)(t)\right)\left\langle k_t,\mathrm{grad}\,F_{\varphi_i,t}\right\rangle_{\tilde H}(\omega)\cdot\left(\overrightarrow{\mathrm{grad}}\,\alpha_i\right)\left(I(\omega)(t)\right)
        = k_t\left(I(\omega)(t)\right)
    \end{equation*}
    We make the scalar product on $V_n$ of both members of this last equation, with a 
    determinist vector field $V$ and get:
    \begin{equation*}
      \sum_{i=1}^M\psi_i\left\langle k_t,\mathrm{grad}\,F_{\varphi_i,t}\right\rangle_{\tilde H} \left\langle V,\overrightarrow{\mathrm{grad}}\,\alpha_i\right\rangle_{V_n} 
        = \left\langle k_t, V\right\rangle_{V_n}
        \tag{6}\label{l6}
    \end{equation*}
    But
    \begin{equation*}
      \left\langle k_t, V\right\rangle_{\tilde H} = V(t) = \frac1t\left\langle k_t,V\right\rangle_{V_n}
    \end{equation*}
    so \eqref{l6} becomes
    \begin{equation*}
      \sum_{i=1}^M\psi_i\left\langle k_t, \mathrm{grad}\,F_{\varphi_i,t}\right\rangle_{\tilde H}\left\langle V,\overrightarrow{\mathrm{grad}}\,\alpha_i\right\rangle_{V_n} 
        = t\left\langle k_t, V\right\rangle_{\tilde H}
        \tag{7}\label{l7}
    \end{equation*}
    As $V \in \tilde H$, we have 
    \begin{equation*}
      \left\langle V,\overrightarrow{\mathrm{grad}}\,\alpha_i\right\rangle_{V_n} 
        = \left\langle V,{\mathrm{grad}}\,\alpha_i\right\rangle_{\tilde H}
    \end{equation*}
    and \eqref{l7} becomes:
    \begin{equation*}
      \sum_{i=1}^M\psi_i\left\langle k_t, \mathrm{grad}\,F_{\varphi_i,t}\right\rangle_{\tilde H}\left\langle \mathrm{grad}\,F_{\alpha_i,t},V\right\rangle_{\tilde H} 
        = t\left\langle k_t, V\right\rangle_{\tilde H}
    \end{equation*}

  From which we deduce
  \begin{equation*}
    tk_t = \sum_{i=1}^M\psi_i\left\langle k_t, \mathrm{grad}\,F_{\varphi_i,t}\right\rangle_{\tilde H}\mathrm{grad}\,F_{\alpha_i,t}
    \tag{8}\label{l8}
  \end{equation*}
  which proves that $k_t\in\tilde H$.

  Now we show that $\mathrm{div}\,k_t\notin\mathcal F_t$: $k_t\in\tilde H$ so $k_t$ 
  as an operator is $D_{k_t}$, with $D_{k_t} = h_1(k_t) + \mathrm{div}\,A(k_t)\mathrm{grad}$, $h(k_t)$
  being a vector field and $A(k_t)$ a multiplicator.

  Then $\mathrm{div}\,D_{k_t} = \mathrm{div}(h_1(k_t)) + \mathrm{div}(\mathrm{div}\,A(k_t)\mathrm{grad})$.
  With Remark 2, 2, iii, $\mathrm{div}(\mathrm{div}\,A(k_t)\mathrm{grad})=0$ so 
  \begin{equation*}
    \mathrm{div}\,D_{k_t} = \mathrm{div}[h_2(k_t)] + \mathrm{div}[h_3(k_t)]\tag{Eq 7, 27}
  \end{equation*}
  As
  \begin{equation*}
    h_3(k_t)(s)=\frac12\int_0^s\sum_{\mu=1}^nb_{\mu,\mu}(k_t)(r,\omega)\,\mathrm dr
    \tag{Eq 7, 25}
  \end{equation*}
  we see that generally, $\mathrm{div}\,h_3\notin\mathcal F_t$.

  Now we apply $\mathrm{div}$ to both members of \eqref{l8},
  \begin{align*}
    \mathrm{div}(tk_t) = 
      & ~\mathrm{div}\left(\sum_{i=1}^M\psi_i\left\langle k_t, \mathrm{grad}\,F_{\varphi_i,t}\right\rangle_{\tilde H}\mathrm{grad}\,F_{\alpha_i,t}\right)\\
    = & \sum_{i=1}^M\psi_i\left\langle k_t,\mathrm{grad}\,F_{\varphi_i,t}\right\rangle_{\tilde H}\cdot\mathrm{div}\,\mathrm{grad}\,F_{\alpha_i,t} \\
      & + \sum_{i=1}^M\left\langle\mathrm{grad}\,F_{\alpha_i,t}, \mathrm{grad}\left(\psi_i\left\langle k_t, \mathrm{grad}\,F_{\varphi_i,t}\right\rangle_{\tilde H}\right)\right\rangle_{\tilde H}
  \end{align*}
  but as $F_{\alpha_i,t}\in\mathcal F_t$, $\mathrm{grad}\,F_{\alpha_i,t}$ is a 
  $\mathcal F_t$-vector field, $\psi_i\in\mathcal F_t$ and $F_{\varphi_i,t}\in\mathcal F_t$
  so $\mathrm{grad}\left[\psi_i \left\langle k_t, \mathrm{grad}\,F_{\varphi_i,t}\right\rangle_{\tilde H} \right]$
  is a $\mathcal F_t$-vector field, and with Remark 9, 1,
  \begin{equation*}
    \sum_{i=1}^M\left\langle\mathrm{grad}\,F_{\alpha_i,t}, \mathrm{grad}\left(\psi_i\left\langle k_t, \mathrm{grad}\,F_{\varphi_i,t}\right\rangle_{\tilde H}\right)\right\rangle_{\tilde H}\in\mathcal F_t
  \end{equation*}
  So at least one term $\psi_i\left\langle k_t, \mathrm{grad}\,F_{\varphi_i,t}\right\rangle_{\tilde H}\cdot \mathrm{div}\,\mathrm{grad}\,F_{\alpha_i,t}$ is not in $\mathcal F_t$.
  But $\psi_i\left\langle k_t, \mathrm{grad}\,F_{\varphi_i,t}\right\rangle_{\tilde H}\in\mathcal F_t$;
  so there exists an $i$ such that 
  \begin{equation*}
    \mathrm{div}\,\mathrm{grad}\,F_{\alpha_i,t}\notin\mathcal F_t
  \end{equation*}
  while
  \begin{equation*}
    F_{\alpha_i,t} = \alpha_i\left[I(\omega)(t)\right]\in\mathcal F_t
  \end{equation*}
\end{proof}

  The same method can be used to show that if $A$ is an adapted multiplicator, 
  then $\mathrm{div}\,A\mathrm{grad}$ will not,
  in general, sends $\mathcal F_t$ in $\mathcal F_t$.

  \begin{thm}
    Using again the notation $D_0(\Omega)$ for the subset of $\mathrm{Der}(\Omega)$ 
    that is in bijection with $\mathrm{Der}^*(\Omega)$, we have: $D_0\subset\tilde H$
    and $D_0$ is dense in $\tilde H$, the density being conceived as usual, as simple
    convergence on $\mathbb D^\infty$-bounded subsets of $\mathbb D^\infty(\Omega)$.
  \end{thm}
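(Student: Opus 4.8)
The plan is to exploit the explicit description of the bijection $\theta = T_q$ between $\mathrm{Der}^*(\Omega)$ and $D_0(\Omega)$ recorded just before this theorem, namely $\theta(\alpha) = \sum_{i=1}^\infty \alpha(D_{v_i})\,v_i$ for a Hilbertian basis $(v_i)_{i\in\mathbb N_*}$ of $\tilde H$ (where $\alpha(v_i)$ is shorthand for $\alpha(D_{v_i})$). First I would check that the scalar coefficients $c_i := \alpha(D_{v_i})$ lie in $\dinf(\Omega)$: the family $(D_{v_i})$ is $\dinf$-bounded in $\mathrm{Der}(\Omega)$, so the Lemma of Section 3 asserting that an element $u\in\mathrm{Der}^*(\Omega)$ sends bounded subsets of $\mathrm{Der}(\Omega)$ to $\dinf$-bounded subsets of $\dinf(\Omega)$ gives $c_i \in \dinf(\Omega)$, uniformly in $i$.

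For the inclusion $D_0(\Omega)\subset\tilde H$, I would show that the series $v_\alpha := \sum_i c_i v_i$ converges in $\dinf(\Omega,\tilde H)$. The $\tilde H$-norm squared of a tail is $\sum_{i>N} c_i^2$, and $\sum_i c_i^2 = q(\alpha,\alpha) = \alpha(\delta_\alpha)\in\dinf(\Omega)$ by the computation of Section 8.1. To upgrade this $L^{\infty-0}$-control to control of all $\drp(\Omega,\tilde H)$-norms of the partial sums I would invoke the renormalisation theorem (the first theorem of Section 9) applied to the positive bilinear forms $(\delta,\delta')\mapsto\int\mathbb P(\mathrm d\omega)\,\psi\,[(1-L)^{r/2}(\cdots)]^2$, exactly as in its proof, obtaining that the tails are $\dinf$-small. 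Once $v_\alpha\in\dinf(\Omega,\tilde H)$ is secured, a direct computation identifies $\theta(\alpha)$ with the derivation $D_{v_\alpha}$: for $f\in\dinf(\Omega)$ one has $(T_q\alpha)\cdot f = q(\alpha,\lambda_f)=\sum_i \alpha(D_{v_i})\,\lambda_f(D_{v_i})=\sum_i c_i\,D_{v_i}f = D_{v_\alpha}f$. Hence every element of $D_0(\Omega)$ is of the form $D_v$ with $v\in\dinf(\Omega,\tilde H)$, that is $D_0(\Omega)\subset\tilde H$.

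For the density I would start from the observation that the new-gradient derivations $\delta_f = D_{\mathrm{grad}\,f}$ already belong to $D_0(\Omega)$, since $\delta_f = T_q(\lambda_f)$, using $q(\lambda_f,\lambda_g)=\langle\mathrm{grad}\,f,\mathrm{grad}\,g\rangle_{\tilde H}$ from the Remark of Section 8.1. More generally, every finite $\dinf$-combination $\sum_k \psi_k\,\delta_{F_{\alpha_k,t}} = T_q\big(\sum_k \psi_k\,\lambda_{F_{\alpha_k,t}}\big)$ lies in $D_0(\Omega)$. Using the partition-of-unity representation (Lemma 9.4) lifted to $\tilde H$ through the stochastic parallel transport, exactly as in the frame-recovery identity $t\,k_t=\sum_i\psi_i\langle k_t,\mathrm{grad}\,F_{\varphi_i,t}\rangle_{\tilde H}\,\mathrm{grad}\,F_{\alpha_i,t}$ established in Section 9, such combinations reproduce the frame fields $u_\mu(t,\omega)$, so by $\dinf$-linearity they are dense — in the sense of simple convergence on $\dinf$-bounded subsets of $\dinf(\Omega)$ — in the family $\{D_v:v\in\dinf(\Omega,\tilde H)\}=\tilde H$. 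Together with the continuity of $T_q$ from $\mathrm{Der}^*(\Omega)$ to $\mathrm{Der}(\Omega)$ and Theorem 3.2 (every $u\in\mathrm{Der}^*$ is the limit of a bounded net of sums $\sum_j f_j\lambda_{g_j}$), this gives $\overline{D_0(\Omega)}=\tilde H$.

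The main obstacle is the convergence in the full $\dinf(\Omega,\tilde H)$-topology of the defining series $\sum_i c_i v_i$: controlling the $L^2$-masses through $q(\alpha,\alpha)\in\dinf(\Omega)$ is immediate, but securing uniform bounds on all $\drp$-norms of the partial sums — so that the limit is a genuine $\dinf$-vector field and the associated $D_{v_\alpha}$ is $\dinf$-continuous — is precisely the delicate renormalisation step, where the renormalisation theorem together with the $\frac12$-$\dinf$-H\"olderian semimartingale structure of the $A(v_i)$ and $h_1(v_i)$ from Section 7 must be used carefully. A secondary difficulty is to verify that the frame-recovery representation yields density in the derivation topology, and not merely pointwise-in-$t$ convergence of the underlying $\tilde H$-fields.
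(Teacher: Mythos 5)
Your proposal is correct in substance, and its density half is exactly the paper's argument: the paper's entire proof is two lines, namely that the frame-recovery identity (Eq.\ 9,8), $tk_t=\sum_{i=1}^M\psi_i\langle k_t,\mathrm{grad}\,F_{\varphi_i,t}\rangle_{\tilde H}\,\mathrm{grad}\,F_{\alpha_i,t}$, shows $k_t\in D_0(\Omega)$ (each $\mathrm{grad}\,F_{\alpha_i,t}=T_q(\lambda_{F_{\alpha_i,t}})$ lies in $D_0$, and $D_0$ is a $\mathbb D^\infty$-module by the $\mathbb D^\infty$-linearity of $T_q$), and that the $\mathbb D^\infty$-linear sums of the $k_t$ are dense in $\tilde H$. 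Where you genuinely diverge is the inclusion $D_0\subset\tilde H$: the paper does not argue it at all beyond the Remark of Section 8 giving $\theta(\alpha)=\sum_i\alpha(D_{v_i})v_i$, whereas you try to prove honestly that $v_\alpha=\sum_i c_iv_i$ converges in $\mathbb D^\infty(\Omega,\tilde H)$ and that $\theta(\alpha)=D_{v_\alpha}$. That elaboration is reasonable and arguably the only consistent reading of the statement (taken literally, with deterministic coefficients $f^\mu\in H$, the inclusion would fail, since $f\,\mathrm{grad}\,g\in D_0$ with random $f$; your identification of $\tilde H$ with $\{D_v\,/\,v\in\mathbb D^\infty(\Omega,\tilde H)\}$ is an interpretive step the paper leaves implicit). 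Your approach buys an actual proof of the inclusion; the paper buys brevity by resting everything on (9,8).

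Two slips to correct in your elaboration. First, Theorem 9,1 (the renormalisation theorem) is not quite the tool for the tail control: it is proved for $p$-uples drawn from a basis of $H$, i.e.\ for constant-vector derivations, not for the family $(D_{v_i})$. The control you need, $\sum_i\alpha(D_{v_i})^2=q(\alpha,\alpha)\in\mathbb D^\infty$ together with $\mathbb D_r^p$-boundedness of the partial sums and $L^p$-convergence upgraded by interpolation (Theorem 2,12), is already supplied by the Section 8 computation $q(\alpha,\alpha)=\alpha(\delta_\alpha)$; invoke that instead. Second, your final appeal to Theorem 3,2 runs in the wrong direction: it approximates elements of $\mathrm{Der}^*$ by sums $\sum_jf_j\lambda_{g_j}$, hence shows elements of $D_0$ are limits of gradient combinations, not that elements of $\tilde H$ are limits of elements of $D_0$. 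It is also unnecessary: since $k_{j,t}\in D_0$ exactly (not merely approximately) and the $\mathbb D^\infty$-span of the $k_{j,t}$ is dense in $\tilde H$, density follows directly, as in the paper.
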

  \begin{proof}
    The (Eq 9, 8) show that $k_t\in D_0(\Omega)$. And the $\mathbb D^\infty$-linear 
    sums of $k_t$ will be dense in $\tilde H$.
  \end{proof}
  
  \begin{thm}
    Let $q_1$ a bilinear form, positive definite on\\
    $\mathrm{Der}^*(\Omega)\times\mathrm{Der}^*(\Omega)$, with values in $\mathbb D^\infty(\Omega)$,
    $D_0^{(1)}(\Omega)$ being the subset of $\mathrm{Der}(\Omega)$ which is in bijective correspondence
    to $\mathrm{Der}^*(\Omega)$, thanks to $q_1$. This bijection being denoted $\theta_1$, let $\alpha$ a map
    \begin{equation*}
      \alpha: D_0^{(1)}(\Omega)\times D_0^{(1)}(\Omega)\to \mathbb D^\infty(\Omega)
    \end{equation*}
    such that
    \begin{enumerate}
      \item[i)] $\alpha$ admits an extension on $\mathrm{Der}(\Omega)\times\mathrm{Der}(\Omega)\to\mathbb D^\infty(\Omega)$
      \item[ii)] $q_1+\alpha\circ\theta_1=q_2$ is a bilinear form, positive definite and continuous
      from $\mathrm{Der}^*(\Omega)\times\mathrm{Der}^*(\Omega)$ to $\mathbb D^\infty(\Omega)$. 
    \end{enumerate}
    Then the extension of $\alpha$ is unique, and $D_0^{(2)}\subset D_0^{(1)}(\Omega)$.
  \end{thm}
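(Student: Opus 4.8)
The plan is to work throughout with the representation map of a positive bilinear form. For a positive definite form $q$ on $(\mathrm{Der}\,\Omega)^*$ one has $T_q\colon(\mathrm{Der}\,\Omega)^*\to\mathrm{Der}\,\Omega$ with $(T_q u)\cdot f = q(u,\lambda_f)$; the map $u\mapsto T_q u$ is injective (Corollary after Theorem 3.2), and $D_0 = \mathscr D_q = \mathrm{range}\,T_q$. Thus $\theta_1 = T_{q_1}$ is a bijection of $(\mathrm{Der}\,\Omega)^*$ onto $D_0^{(1)}$, and $D_0^{(2)} = \mathrm{range}\,T_{q_2}$. The first step I would record is the symmetry identity that makes everything work: for $u,v \in (\mathrm{Der}\,\Omega)^*$,
\[ u(T_{q_1}v) = q_1(v,u) = q_1(u,v) = v(T_{q_1}u). \]
This follows by approximating $u$ by a bounded net $u_i = \sum_{j}h_{j}\lambda_{f_{j}}$ as in Theorem 3.2, using $\lambda_f(T_{q_1}v) = (T_{q_1}v)\cdot f = q_1(v,\lambda_f)$, the $\mathbb D^\infty$-bilinearity and continuity of $q_1$ in each argument, and its symmetry.

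The heart of the argument is the inclusion $D_0^{(2)}\subset D_0^{(1)}$. Take $\delta = T_{q_2}\beta \in D_0^{(2)}$ with $\beta\in(\mathrm{Der}\,\Omega)^*$. By hypothesis ii),
\[ \delta\cdot f = q_2(\beta,\lambda_f) = q_1(\beta,\lambda_f) + \alpha(\theta_1\beta,\theta_1\lambda_f). \]
Using the extension i), the functional $\nu_\beta := \alpha(\theta_1\beta,\cdot)$ is $\mathbb D^\infty$-linear and continuous on $\mathrm{Der}(\Omega)$, hence $\nu_\beta\in(\mathrm{Der}\,\Omega)^*$. Since $\theta_1\lambda_f = T_{q_1}\lambda_f$, the symmetry identity gives $\nu_\beta(T_{q_1}\lambda_f) = (T_{q_1}\nu_\beta)\cdot f$, so
\[ \delta\cdot f = (T_{q_1}\beta)\cdot f + (T_{q_1}\nu_\beta)\cdot f = \bigl(T_{q_1}(\beta+\nu_\beta)\bigr)\cdot f \]
for every $f\in\mathbb D^\infty(\Omega)$; as $\beta+\nu_\beta\in(\mathrm{Der}\,\Omega)^*$, this yields $\delta = T_{q_1}(\beta+\nu_\beta)\in D_0^{(1)}$, which is the desired inclusion.

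For uniqueness, note that $\alpha\circ\theta_1 = q_2 - q_1$ is completely determined by the data, so $\alpha$ is already fixed on $D_0^{(1)}\times D_0^{(1)} = \theta_1((\mathrm{Der}\,\Omega)^*)\times\theta_1((\mathrm{Der}\,\Omega)^*)$. A continuous bilinear extension to $\mathrm{Der}(\Omega)\times\mathrm{Der}(\Omega)$ is then unique as soon as $D_0^{(1)}$ is $\mathbb D^\infty$-dense in $\mathrm{Der}(\Omega)$. I would establish this density from Theorem 2.7 (every continuous derivation is a strong limit of $\mathbb D^\infty$-vector fields) together with Theorem 3.2: the $\mathbb D^\infty$-module generated by the $\lambda_f$ is dense in $(\mathrm{Der}\,\Omega)^*$, its image under the continuous injective $T_{q_1}$ generates $D_0^{(1)}$, and $T_{q_1}\lambda_{W(e_i)}$ recovers the basis directions $e_i = \mathrm{grad}\,W(e_i)$, so that the closure of $D_0^{(1)}$ absorbs a dense set of vector fields, hence all of $\mathrm{Der}(\Omega)$.

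The main obstacle is precisely this density of $D_0^{(1)}$ in $\mathrm{Der}(\Omega)$: it is what upgrades the a priori merely partial datum into a genuinely unique extension, and it has to be argued from the surjectivity-up-to-closure of $T_{q_1}$, i.e.\ from the positive definiteness of $q_1$ via Theorems 2.7 and 3.2 rather than taken for granted. A secondary point requiring care is to verify that the extension postulated in i) is continuous in the $(\mathrm{Der}\,\Omega)^*$-sense (net convergence on bounded parts), so that $\nu_\beta$ indeed belongs to $(\mathrm{Der}\,\Omega)^*$ and $T_{q_1}\nu_\beta$ is meaningful; the symmetry of $q_1$ used in the first step should also be invoked explicitly, since the whole collapse of $q_2$ onto $T_{q_1}$ rests on it.
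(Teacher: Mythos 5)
Your proof of the inclusion $D_0^{(2)}\subset D_0^{(1)}$ is correct and is essentially the paper's own argument: you set $\nu_\beta=\alpha(\theta_1\beta,\cdot)\in\mathrm{Der}^*(\Omega)$ and show $\theta_2(\beta)=\theta_1(\beta+\nu_\beta)$, testing against the $\lambda_f$ where the paper tests against arbitrary $\beta'\in\mathrm{Der}^*(\Omega)$; the representation identity $u(T_{q_1}v)=q_1(u,v)$, obtained via the bounded approximating nets of Theorem 3.2 together with the symmetry of $q_1$, plays the same role in both. Your remark that hypothesis i) must be read as giving an extension which is of $\mathrm{Der}^*$-type in each slot is also implicit in the paper's proof.

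The uniqueness half, however, has a genuine gap. You reduce uniqueness to the density of $D_0^{(1)}(\Omega)$ in $\mathrm{Der}(\Omega)$, and your sketch of that density fails at a concrete point: the identity $T_q\lambda_f=\delta_f$ with $\delta_f(g)=\langle\mathrm{grad}\,f,\mathrm{grad}\,g\rangle_H$ holds for the fundamental metric $q_0$, not for a general positive definite $q_1$. For general $q_1$ one only has $(T_{q_1}\lambda_{W(e_i)})\cdot f=q_1(\lambda_{W(e_i)},\lambda_f)$, so $T_{q_1}\lambda_{W(e_i)}$ does not ``recover the basis directions $e_i$'', and nothing in the paper guarantees that the closure of $\mathrm{range}\,T_{q_1}$ is all of $\mathrm{Der}(\Omega)$; moreover, since elements of $\mathrm{Der}^*(\Omega)$ are continuous only along bounded nets, you would in any case need approximation by \emph{bounded} nets, which your sketch does not provide. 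The point to notice is that no density is needed: positive definiteness of $q_1$ kills the difference directly. If $\tilde\alpha_1,\tilde\alpha_2$ are two extensions, fix $v_0\in D_0^{(1)}(\Omega)$ and set $\gamma(v)=(\tilde\alpha_1-\tilde\alpha_2)(v_0,v)$; then $\gamma\in\mathrm{Der}^*(\Omega)$ and $\gamma$ vanishes on $D_0^{(1)}(\Omega)$ — in particular at the single element $\theta_1(\gamma)\in D_0^{(1)}(\Omega)$ — so $q_1(\gamma,\gamma)=\gamma(\theta_1(\gamma))=0$, whence $\gamma=0$. Repeating with the other slot (for $v\in\mathrm{Der}(\Omega)$ fixed, $\gamma'(w)=(\tilde\alpha_1-\tilde\alpha_2)(v,w)$ now vanishes on $D_0^{(1)}(\Omega)$ by the first step) gives $\tilde\alpha_1=\tilde\alpha_2$. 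This self-pairing trick is the paper's proof; your density route, even where it could be repaired (e.g.\ for $q_1=q_0$), rests on an unestablished and likely false hypothesis in general.
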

  \begin{proof} ~ 

    \begin{enumerate}
      \item[i)] Let $\tilde \alpha_1$ and $\tilde \alpha_2$ be two extensions of $\alpha$ which
        are equal on $D_0^{(1)} \times D_0^{(1)}$. Suppose that there exists $v_0 \in D_0^{(1)}(\Omega)$
        such that $\gamma(v)=(\tilde \alpha_1 - \tilde\alpha_2)(v_0,v)$; then 
        $\gamma\in\mathrm{Der}^*(\Omega)$ and $\gamma\left(D_0^{(1)}(\Omega)\right)=0$.
        But $\theta(\gamma)\in D_0^{(1)}(\Omega)$. Then $\gamma(\theta(\gamma))=0 = q_1(\gamma,\gamma)$
        so $\gamma=0$. Same proof for $\gamma'(\omega) = (\tilde\alpha_1-\tilde\alpha_2)(v,w)$
        with $v\in\mathrm{Der}(\Omega)$ fixed.
        
      \item[ii)] Let $\beta$ and $\beta'\in\mathrm{Der}^*(\Omega)$, 
        \begin{equation*}
          q_1(\beta,\beta')+\alpha(\theta_1(\beta),\theta_1(\beta')) = q_2(\beta,\beta')
        \end{equation*}
        Then $\beta(\theta_1(\beta'))+\alpha(\theta_1(\beta),\theta_1(\beta'))=\beta(\theta_2(\beta'))$,
        $\theta_1$ and $\theta_2$ are the bijections of $\mathrm{Der}^*(\Omega)$ on $D_0^{(i)}(\Omega)$,
        $i = 1,2$, obtained with $q_1$ and $q_2$.

        Let $\mu(\beta'')=\alpha(\theta_1(\beta),\beta'')$. Then $\mu\in\mathrm{Der}^*(\Omega)$ and 
        \begin{equation*}
          \beta(\theta_1(\beta'))
            + \mu(\theta_1(\beta'))
            = \beta(\theta_2(\beta'))
            = \beta'(\theta_2(\beta))
        \end{equation*}
        as $q_2$ is symmetrical. So $(\beta+\mu)(\theta_1(\beta'))=\beta'(\theta_2(\beta))$ which implies 
        $\beta'(\theta_1(\beta+\mu))=\beta'(\theta_2(\beta))$. As this is valid 
        $\forall \beta'\in\mathrm{Der}^*(\Omega)$, 
        \begin{equation*}
          \theta_1(\beta+\mu)=\theta_2(\beta)
        \end{equation*}
        which implies 
        $D_0^{(2)}(\Omega)\subset D_0^{(1)}(\Omega)$.

        So on $D_0^{(2)}(\Omega)$, it is possible to make some variational calculus.
    \end{enumerate}
  \end{proof}

 \subsection{Another "renormalisation" theorem}

    Let $V_n$ an $n$-dimensional manifold, $q$ a metric on $V_n$ ($q$ is bilinear, symmetrical and
    positive definite), $\mu_q$ the canonical measure on $V_n$ associated to $q$, $\nabla$ the 
    Levi-Civita connection related to $q$, $\phi$ a $C^\infty$ density on $V_n$, $\phi >0$, and
    $m_0\in V_n$.
    
    We recall the following formulas:
    
    If $f,g$ are $C^\infty$ functions on $V_n$, and $u,v\in\Gamma(V_n)$ ($u$ and $v$ are vector fields),
    we have:
    \begin{align*}
      u\cdot f &=q\left(\mathrm{grad}\,f,u\right)\\
      \left(\mathrm{Hess}\,f\right)(u,v)&=u\cdot(v\cdot f)-\left(\nabla_uv\right)\cdot f\\
                                        &=\left(\mathrm{Hess}\,f\right)(v,u)
    \end{align*}
   
    If $(e_i)_{i=1,\dotsc,n}$ is an orthonormal basis on $T_mV_n$, $m$ being in a small neighbourhood
    of $m_0\in V_n$:
    \begin{align*}
      \Delta f&=\sum_{i=1}^n\left\{e_i\cdot(e_i\cdot f)-\left(\nabla_{e_i}e_i\right)\cdot f\right\} \\
              &=\sum_{i=1}^nq\left(\nabla_{e_i}\mathrm{grad}\,f,e_i\right)\\\\
      \mathrm{div}\,u&=\sum_{i=1}^nq\left(\nabla_{e_i}u,e_i\right)
    \end{align*}
    
    Then we define:
    \begin{equation*}
      \Delta u = \sum_{i=1}^n\left\{\nabla_{e_i}\left(\nabla_{e_i}u\right)-\nabla_{\nabla_{e_i}e_i}u\right\}
    \end{equation*}
    and the O.U. operator $L$ by
    \begin{equation*}
      \forall f,\forall g\in C^\infty(V_n),\qquad\int gL(f)\cdot\phi\,\mathrm d\mu_q
      =\int q(\mathrm{grad}\,f,\mathrm{grad}\,g)\phi\cdot\mathrm d\mu_q
    \end{equation*}
  From the last equation, we deduce:
  \begin{equation*}
    L(f) = -\Delta f-q(\mathrm{grad}\,\log\varphi,\mathrm{grad}\,f)
  \end{equation*}
  Then we define the operator $\mathrm{div}$ by
  \begin{equation*}
    \int g(\mathrm{div}_\varphi f)\varphi\mathrm d\mu_q 
      = -\int q(\mathrm{grad}\,f, \mathrm{grad}\,g)\varphi\,\mathrm d\mu_q
  \end{equation*}
  and
  \begin{equation*}
    L(u)=-\Delta u-\nabla_{\mathrm{grad}\,\log\varphi}u
  \end{equation*}
  From the symmetry of the Hessian, we have
  \begin{equation*}
    q(v,\nabla_u\mathrm{grad}\,f) = q(u,\nabla_v\mathrm{grad}\,f)
  \end{equation*}
  and
  \begin{equation*}
    \sum_{i=1}^nq(\nabla_{e_i}\mathrm{grad}\,f,\nabla_ue_i)
      =\sum_{i=1}^n\sum_{j=1}^nq(\nabla_{e_i}\mathrm{grad}\,f,e_j)q(e_j,\nabla_ue_i)=0
  \end{equation*}
  because $q(e_i,e_j)=0$ and the connection is the Levi-Civita connection.

  Now we are going to compute $q(\mathrm{grad}\,L(f),u)$:
  \begin{align*}
    q(\mathrm{grad}\,L(f),u)
      & = -q(\mathrm{grad}\,\Delta(f),u)-q(\mathrm{grad}\,q(\mathrm{grad}\,\log\varphi,\mathrm{grad}\,f),u) \\
      & = -u\cdot\sum_{i=1}^nq(\nabla_{e_i}\mathrm{grad}\,f,e_i)
          -u\cdot q(\mathrm{grad}\,\log\varphi,\mathrm{grad}\,f) \\
      & = -\sum_{i=1}^nq(\nabla_u(\nabla_{e_i}\mathrm{grad}\,f),e_i)
          -\sum_{i=1}^nq(\nabla_{e_i}\mathrm{grad}\,f,\nabla_ue_i) \\
      & \qquad -u\cdot q(\mathrm{grad}\,\log\varphi,\mathrm{grad}\,f) \\
      & = -\sum_{i=1}^nq(\nabla_u(\nabla_{e_i}\mathrm{grad}\,f),e_i) 
          -u\cdot q(\mathrm{grad}\,\log\varphi,\mathrm{grad}\,f) \\
      & = R(u,\mathrm{grad}\,f) 
          -\sum_{i=1}^nq(\nabla_{e_i}(\nabla_u\mathrm{grad}\,f),e_i) \\
      & \qquad -\sum_{i=1}^nq(\nabla_{[u,e_i]}\mathrm{grad}\,f,e_i) 
         -u\cdot q(\mathrm{grad}\,\log\varphi,\mathrm{grad}\,f)
        \tag{8'}\label{l8p}
  \end{align*}
  Using the Hessian symmetry:
  \begin{align*}
    q(\mathrm{grad}\,L(f),u)
      & = R(u,\mathrm{grad}\,f)
        -\sum_{i=1}^nq(\nabla_{e_i}\nabla_u\mathrm{grad}\,f,e_i)\\
      & \qquad  -\sum_{i=1}^nq(\nabla_{e_i}\mathrm{grad}\,f,[u,e_i])
        -u\cdot q(\mathrm{grad}\,\log\varphi,\mathrm{grad}\,f)
  \end{align*}
  Now we compute $\sum_{i=1}^nq(\nabla_{e_i}\mathrm{grad}\,f,[u,e_i])$:
  \begin{align*}
    \sum_{i=1}^nq(\nabla_{e_i}\mathrm{grad}\,f,[u,e_i])
      & = \sum_{i=1}^nq(\nabla_{e_i}\mathrm{grad}\,f,\nabla_ue_i)
          -\sum_{i=1}^nq(\nabla_{e_i}\mathrm{grad}\,f,\nabla_{e_i}u) \\
      & = -\sum_{i=1}^nq(\nabla_{e_i}\mathrm{grad}\,f,\nabla_{e_i}u)
  \end{align*}
  So
  \begin{align*}
    q(\mathrm{grad}\,L(f),u)
      & = R(u,\mathrm{grad}\,f)
        - \sum_{i=1}^nq(\nabla_{e_i}\nabla_u\mathrm{grad}\,f,e_i)\\
      & \qquad + \sum_{i=1}^nq(\nabla_{e_i}\mathrm{grad}\,f,\nabla_{e_i}u)
        - u\cdot q(\mathrm{grad}\,\log\varphi,\mathrm{grad}\,f)
  \end{align*}
  Now we compute $\sum_{i=1}^nq(\nabla_{e_i}\nabla_u\mathrm{grad}\,f,e_i)$:
  \begin{align*}
    \sum_{i=1}^nq(\nabla_{e_i}\nabla_u\mathrm{grad}\,f,e_i)
      & = \sum_{i=1}^ne_i\cdot q(\nabla_u\mathrm{grad}\,f,e_i) 
        - \sum_{i=1}^nq(\nabla_u\mathrm{grad}\,f,\nabla_{e_i}e_i) \\
      & = \sum_{i=1}^ne_i\cdot q(\nabla_{e_i}\mathrm{grad}\,f,u)
        - \sum_{i=1}^nq(\nabla_u\mathrm{grad}\,f,\nabla_{e_i}e_i) \\
      & = \sum_{i=1}^nq(\nabla_{e_i}\nabla_{e_i}\mathrm{grad}\,f,u)
        - \sum_{i=1}^nq(\nabla_u\mathrm{grad}\,f,\nabla_{e_i}e_i)\\
      & \qquad + \sum_{i=1}^nq(\nabla_{e_i}\mathrm{grad}\,f,\nabla_{e_i}u)
  \end{align*}
  Using this equality in \eqref{l8p} we have
  \begin{align*}
    q(\mathrm{grad}\,L(f),u)
      & = R(u,\mathrm{grad}\,f)
        - q(\Delta\mathrm{grad}\,f,u)
        - u\cdot q(\mathrm{grad}\,\log\varphi,\mathrm{grad}\,f)\\
      & = R(u,\mathrm{grad}\,f)
        - q(\Delta\mathrm{grad}\,f,u)
        - q(\nabla_u\mathrm{grad}\,\log\varphi,\mathrm{grad}\,f) \\
      & \qquad  - q(\mathrm{grad}\,\log\varphi,\nabla_u\mathrm{grad}\,f)
  \end{align*}
  We denote $Z(u)=\nabla_{u}\mathrm{grad}\,\log\varphi$ and 
  \begin{equation*}
    \tilde R(u,v)=\mathrm{Ricc}(u,v)-q(Z(u),v)
  \end{equation*}
  Then
  \begin{equation*}
    q(\mathrm{grad}\,L(f),u) = \tilde R(u,\mathrm{grad}\,f)+q(L(\mathrm{grad}\,f),u)\tag{9}\label{l9}
  \end{equation*}
  Now we compute $L(u\cdot f)$:
  \begin{equation*}
    L(u\cdot f) 
      = q(L(u),\mathrm{grad}\,f)
      + q(u,L(\mathrm{grad}\,f)
      - 2\sum_{i=1}^nq(\nabla_{e_i}u,\nabla_{e_i}\mathrm{grad}\,f))
      \tag{10}\label{l10}
  \end{equation*}
  and
  \begin{align*}
    \sum_{i=1}^nq(\nabla_{e_i}u,\nabla_{e_i}\mathrm{grad}\,f)
      & = \sum_{i=1}^n\left\{e_i\cdot q(u,\nabla_{e_i}\mathrm{grad}\,f) 
        - q(u,\nabla_{e_i}\nabla_{e_i}\mathrm{grad}\,f)\right\} \\
      & = \sum_{i=1}^ne_i\cdot q(e_i,\nabla_u\mathrm{grad}\,f)
        - \sum_{i=1}^nq(u,\nabla_{e_i}\nabla_{e_i}\mathrm{grad}\,ff) \\
      & = \sum_{i=1}^n\left\{ 
        q(\nabla_{e_i}\nabla_u\mathrm{grad}\,f,e_i)
        + q(\nabla_u\mathrm{grad}\,f,\nabla_{e_i}e_i)
        - q(u,\nabla_{e_i}\nabla_{e_i}\mathrm{grad}\,f) 
        \right\} \\
      & = \sum_{i=1}^nq(\nabla_{e_i}\nabla_u\mathrm{grad}\,f,e_i)
        - q(u,\Delta(\mathrm{grad}\,f))\\
      & = \sum_{i=1}^nq(\nabla_{e_i}\nabla_u\mathrm{grad}\,f,e_i)
        + q(u,L(\mathrm{grad}\,f))
        + q(\nabla_u\mathrm{grad}\,f,\mathrm{grad}\,\log\varphi) \\
      & = q(u,L(\mathrm{grad}\,f) 
        + \frac{1}{\varphi}\mathrm{div}\left(\varphi\left\{\nabla_u\mathrm{grad}\,f\right\}\right)
        \tag{11}\label{l11}
  \end{align*}
  Using \eqref{l9}, \eqref{l10} in \eqref{l11}, we get:
  \begin{align*}
    L(u\cdot f) 
      & = q(L(u),\mathrm{grad}\,f) 
        + \tilde R(u,\mathrm{grad}\,f) 
        - q(\mathrm{grad}\,L(f),u) 
        - \frac{2}{\varphi}\mathrm{div}\,\left\{\varphi\cdot\nabla_u\mathrm{grad}\,f\right\} \\
      & = q(L(u),\mathrm{grad}\,f) 
        + \tilde R(u,\mathrm{grad}\,f) 
        - q(\mathrm{grad}\,L(f),u) 
        - 2\mathrm{div}_\varphi(\nabla_u\mathrm{grad}\,f)
  \end{align*}
  
  Finally:
  \begin{equation*}
    q(L(u)+\tilde R(u),\mathrm{grad}\,f) 
      = L(u\cdot f)+q(\mathrm{grad}\,L(f),u) 
        + 2\mathrm{div}_\varphi(\nabla_u\mathrm{grad}\,f)
        \tag{12}\label{l12}
  \end{equation*}

  Now we will extrapolate the previous formula for a $\mathbb D^\infty$-stochastic manifold, 
  with an atlas having only one global chart, and such that the operator $\mathrm{div}$ sends
  $\mathcal D_0$ in $\mathbb D^\infty$.

  Then, if $u$ and $v$ are elements of $\mathcal D_0$:
  \begin{equation*}
    \mathrm{div}([u,v]) = (u\cdot \mathrm{div}\,v)-v\cdot(\mathrm{div}\,u)\in\mathbb D^\infty 
  \end{equation*}
  because $u,v\in\mathcal D_0$ and $\mathrm{div}\,u, \mathrm{div}\,v\in\mathbb D^\infty$.

  \begin{rem}
    Without the hypothesis  
    \begin{equation*}
      \mathrm{div}: \mathcal D_0 \to \mathbb D^\infty
    \end{equation*}
    we only have 
    \begin{equation*}
      \mathrm{div}\,u\in\mathbb D^{-\infty}.
    \end{equation*}
    But with this hypothesis on $\mathrm{div}$, we have
    if $u,v\in\mathcal D_0$: $\mathrm{div}\,\nabla_uv\in\mathbb D^\infty$, because
    \begin{equation*}
      \frac12\mathrm{div}(\nabla_{u+v}u+v)-\frac12\mathrm{div}\,\nabla_uu-\frac12\mathrm{div}\,\nabla_vv
    \end{equation*}
    and we know that (Lemma 3, 4) $\nabla_uu\in\mathcal D_0$ if $u\in\mathcal D_0$. 
  \end{rem}

  Now $\frac{2}{\varphi}\mathrm{div}\left\{\varphi\nabla_u\mathrm{grad}\,f\right\}$, the last
  item in \eqref{l12}, can be extrapolated by $\mathrm{div}\,\nabla_u\mathrm{grad}\,f$,
  and as $f\in\mathbb D^\infty$, $\mathrm{grad}\,f\in\mathcal D_0$, and 
  $\mathrm{div}\left(\nabla_u\mathrm{grad}\,f\right)\in\mathbb D^\infty$.

  In \eqref{l12} we have two more items, $L(u\cdot f)$ and $q(\mathrm{grad}\,L(f),u)$. As $u\in\mathcal D_0$, 
  and $\mathrm{grad}\,L(f)\in\mathcal D_0$, $q(\mathrm{grad}\,L(f),u)\in\mathbb D^\infty$ and 
  $u\cdot f\in\mathbb D^\infty$, $L(u\cdot f)\in\mathbb D^\infty$.

  So although quantities like $\tilde R$ and $L(u)$ cannot, generally, be defined on such a stochastic
  manifold, the quantity $\tilde R(u,\mathrm{grad}\,f)+L(u)$ can be given a meaning by
  \begin{equation*}
    q\left(\tilde R(u,\mathrm{grad}\,f)+L(u),\mathrm{grad}\,f\right)
      = L(u\cdot f)+q(\mathrm{grad}\,L(f),u)
        +\mathrm{div}\left(\nabla_u\mathrm{grad}\,f\right)
        \tag{13}\label{l13}
  \end{equation*}
  This extrapolation is legitimate because in the case of the Wiener space ``with $N$ dimensions'',
  \begin{align*}
    \varphi                                & = e^{-\frac12\sum_{i=1}^N W(e_i)^2} \\
    \nabla_u\mathrm{grad}\,f               & = \sum_{i=1}^N(u\cdot e_i\cdot f)e_i \\
    \mathrm{div}\,\nabla_u\mathrm{grad}\,f &=-\sum_{i=1}\left\{(u\cdot e_i f)W(e_i)+e_i\cdot(u\cdot e_if)\right\}
  \end{align*}
  In these two last formulas, $\nabla_u$ and $\mathrm{div}$ are operators on the Wiener space.

  And 
  \begin{equation*}
    \mathrm{div}_\varphi\left[\nabla_u\mathrm{grad}\,f\right]
      = q(\mathrm{grad}\,\log\varphi,\nabla_u\mathrm{grad}\,f)
      + \mathrm{div}(\nabla_u\mathrm{grad}\,f)
  \end{equation*}
  so when $\varphi = e^{-\frac12\sum_{i=1}^N W(e_i)^2}$,
  \begin{align*}
    \mathrm{grad}\,\log\varphi                                         & = -\sum_{i=1}^NW(e_i)e_i \\
     q\left(\mathrm{grad}\,\log\varphi,\nabla_u\mathrm{grad}\,f\right) & =-\sum_{i=1}W(e_i)\cdot u\cdot (e_if) \\
     \nabla_u\mathrm{grad}\,f                                          & =\sum_{i=1}^N(u\cdot(e_i\cdot f))\cdot e_i \\
     \mathrm{div}\left(\nabla_u\mathrm{grad}\,f\right)             
       & = \sum_{i,j=1}^Nq\left(\nabla_{e_j}\left((u\cdot(e_i\cdot f))e_i\right), e_j\right) \\
       & = \sum_{i=1}^Ne_i\cdot u\cdot(e_i\cdot f)
  \end{align*}
  So the formula \eqref{l13} is valid when the density is $\varphi = e^{-\frac12\sum_{i=1}^N W(e_i)^2}$ 
  and it does not depend on N; so it is valid when $N=+\infty$; in this case, the Wiener space comes with
  the standard quadratic form, $\tilde R = \mathrm{Id}$; so $L(u)$ then has meaning, even when
  $u\in\mathrm{Der}$, because we can give meaning to $\nabla_u\mathrm{grad}\,f$
  with $u\in\mathrm{Der}$, thanks to the extension theorem (tensor product by $H$).

  \begin{rem}
    If $\mathcal U(t,\omega)$ is a unitary process, adapted and multiplicator, but not continuous, it
    is still possible to approximate $\mathcal U$, in the multiplicator sense, with a sequence
    $\mathcal U_n$ of step-processes, adapted and $n$-uniformly multiplicators.

    The proof will use the Egorov theorem.
  \end{rem}
  
\subsection{Some open questions}
  \begin{enumerate}
    \item $A$ being an adapted process such that it sends $\mathbb D^\infty$-adapted vector 
    fields in $\mathbb D^\infty$-adapted vector fields, is $A$ a $\mathbb D^\infty$-multiplicator?
      
    \item If $\mathcal U$, unitary operator, sends $\mathbb D^\infty$-vector fields in 
    $\mathbb D^\infty$-vector fields, and admits an inverse, is $\mathcal U^{-1}$ a multiplicator?

    \item In the Wiener space case, is any $\mathbb D^\infty$-derivation the sum of a vector field
    and an operator which can be written as $\mathrm{div}\,A\mathrm{grad}$?
    We know that if the derivation is adapted with a null divergence, this is true.

    \item Does the $\mathbb D^\infty$-dual of $\mathbb D^\infty$-vector fields consist only of
    $\mathbb D^\infty$-vector fields?
  
    \item Is it possible to generalize the results about operators of the $\mathcal U\mathrm dB$
    form with multiples times Brownians?
  
    \item Given a Gaussian space $\Omega$, and a bilinear form $q$ on $\mathrm{Der}^*$, it is
    possible to obtain an O.U. operator associated with $q$, and then a $\mathbb D^\infty(q)$ 
    space. Under which conditions will we have $\mathbb D^\infty(q)\subseteq\mathbb D^\infty(\Omega)$?

    \item Given the map $\theta$ generated by a $\mathcal U\mathrm dB$ type of map, $\mathcal U$
    being an adapted multiplicator, is $\theta$ a $\mathbb D^\infty$-morphism? We know already that
    generally, it is not a $\mathbb D^\infty$-isomorphism.

    \item In the same setting than in (7), if $\mathcal U$ moreover is continuous relatively to
    $t$, $\mathbb P$-a.s., then is $\theta$ inversible in $L^{\infty-0}$?

    \item Given a diffeomorphism of $\mathbb D^\infty$ to $\mathbb D^\infty$, does it imply the
    existence of a $\mathbb D^\infty$-density?

    \item Given a derivation on $\mathbb D^\infty$, is its divergence an item of $\mathbb D^\infty$?
    If (3) is true, then (10) is true.

    \item If $\theta$ is a diffeomorphism generated by a $\mathcal U\mathrm dB$ type of map, of
    the Wiener in itself, does it induce a diffeomorphism of Wiener$^{\mathbb N}$ in itself?

    \item If we have a $p$-linear form $\varphi$ on $(\bigotimes \mathrm{Der})^p$ and a 
    $\mathbb D^\infty$-bilinear positive form $q$ on $\mathrm{Der}\times \mathrm{Der}$, after
    having chosen a basis of the Cameron-Martin space $H$, $(e_i)_{i\in\mathbb N_*}$, such
    that $q(e_i,e_j) = \delta_{ij}$, what are the NSC to have 
      \begin{equation*}
        \sum_{\left(e_{i_1},\dotsc,e_{i_p}\right)}\varphi(e_{i_1},\dotsc,e_{i_p})^2 \in \mathbb D^\infty\quad?
    \end{equation*}

    \item Is it possible to give meaning to the O.U. operator when it acts on a 
    $\mathbb D^\infty$-derivation $\delta$ such that $(\text{O.U})(\delta)\in\mathrm{Der}$? 
    Same question with O.U. acting on $\mathrm{Der}^*$.

    \item If a $\mathbb D^\infty$-derivation has the form $\mathrm{div}\,A\mathrm{grad}$, is the
    choice for $A$ unique? We know that it is true when the derivation is adapted, with zero divergence.

    \item If the matrix process defined by the antisymmetrical matrix $A$ is not adapted, is the
    multiplicator condition on $A$ necessary, so that $\mathrm{div}\,A\mathrm{grad}$ is a derivation
    on $\mathbb D^\infty$?

    \item Let $A$ be a multiplicator on the $\mathbb D^\infty$-module $\mathbb D^\infty(\Omega,H)$,
    $A: \mathbb D^\infty(\Omega,H)\to\mathbb D^\infty(\Omega,H)$. Can $A$ be written as a finite
    $\mathbb D^\infty$-linear arrangement of unitary multiplicators?

    \item Is it possible to generalize to the $n$-$\mathbb D^\infty$-linear forms on $\mathrm{Der}$,
    the approximation theorem 3, 2?

    \item Given an infinite sequence of morphisms $\varphi_n$, from a Wiener $\mathcal W$ to a Wiener $\mathcal W$
     under which
    conditions can we get an induced $\mathbb D^\infty$-morphism from $\mathcal W^{\mathbb N}$ to 
    $\mathcal W^{\mathbb N}$?

    \item Given $r\in \mathbb N_*$, does a derivation $\delta: \mathbb D^\infty \to \mathbb D^\infty$
    exist, such that there exists $f\in\mathbb D_r^\infty$ with $\delta f\in L^{\infty-0}$ and 
    $\delta f\notin\mathbb D_1^\infty$?

    \item If we define as a multiplicator, an operator $A$ that sends continuously 
    $\mathbb D^\infty(\Omega, H)$ to $\mathbb D^\infty(\Omega, H)$,
    then as in the adapted case, is it enough for $A$ to send continuously $\mathbb D_\infty^2(\Omega, H)$
    to $\mathbb D_\infty^2(\Omega, H)$ to be a multiplicator?

    \item $V_n$ being a Riemannian manifold, we define a random connection on $V_n$, the randomness
    according to $\omega\in\mathbb P(m_0,V_n)$. Then which results, that were sound for
    $\mathbb P(m_0,V_n)$, remain valid?

    \item Let us consider a map from a probabilised space to a subset of finite dimensional manifolds. 
    Under which conditions can the graph of this map be endowed with a $\mathbb D^\infty$-stochastic atlas?

    \item Under which conditions on the bilinear positive form $q$ can we generalize the results that
    were obtained for the standard bilinear form?

    \item In the case where $V_n$ is not a compact Riemannian manifold, let $\tau_B$ be the exit stopping
    time of the Brownian on $V_n$, distinct from $\infty$ when the manifold $V_n$ is not Brownian complete.
    Does a sequence of stopping times exist, $\tau_j$, $j\in\mathbb N_*$, $\tau_j\uparrow\tau_B$, such that
    on each stochastic interval $[\![0,\tau_j]\!]$, there exists a $\mathbb D^\infty$-manifold structure on
    $\mathbb P_{m_0}(V_n,g)$ and that the restriction to $[\![0,\tau_j]\!]$ of the $\mathbb D^\infty$-stochastic
    process on $[\![0,\tau_{j+1}]\!]$ is the $\mathbb D^\infty$-stochastic structure of $[\![0,\tau_j]\!]$?
\end{enumerate}

\addcontentsline{toc}{section}{\huge Bibliography}

\end{document}